\tikzset{
every picture/.style={line width=0.8pt, >=stealth,
                       baseline=-3pt,label distance=-3pt},
%%%%%%%%%%  Node styles
emptynode/.style={circle,minimum size=0pt, inner sep=0pt, outer
sep=0},
dotnode/.style={fill=black,circle,minimum size=2.5pt, inner sep=1pt, outer
sep=0},
small_dotnode/.style={fill=black,circle,minimum size=2pt, inner sep=0pt, outer
sep=0},
morphism/.style={fill=white,circle,draw,thin, inner sep=1pt, minimum size=15pt,
                 scale=0.8},
small_morphism/.style={fill=white,circle,draw,thin,inner sep=1pt,
                       minimum size=10pt, scale=0.8},
ellipse_morphism/.style args={#1}{fill=white,circle,draw,thin,inner sep=1pt,
                       minimum size=5pt, scale=0.8,
												ellipse, draw, rotate=#1},
%note that ellipse stretches based on the text inside, so put \;\;\; in label
coupon/.style={draw,thin, inner sep=1pt, minimum size=18pt,scale=0.8},
semi_morphism/.style args={#1,#2}{
                  fill=white,semicircle,draw,thin, inner sep=1pt, scale=0.8,
                  shape border rotate=#1,
                  label={#1-90:#2}},
%%can only rotate semi_morphisms by right angles tho
%%%% different line styles:
regular/.style={densely dashed}, %% for the regular color, i.e. sum d_i
edge/.style={very thick, draw=green, text=black},
overline/.style={preaction={draw,line width=2mm,white,-}},
thin_overline/.style={preaction={draw,line width=#1 mm,white,-}},
thin_overline/.default=2,
thick_overline/.style={preaction={draw,line width=3mm,white,-}},
really_thick/.style={line width=3mm, gray},
%drinfeld center/.style={>=stealth,green!60!black, double
%distance=1pt,text=black},
boundary/.style={thick,  draw=blue, text=black},
%arrow_decoration={markings, mark=at position 0.5 with {\arrow{>}}}
ribbon/.style={line width=1.5mm, postaction={draw,line width=1mm,white}},
ribbon_u/.style args={#1,#2}{line width=#1mm, postaction={draw,line width=#2mm,white}},
%use line width=0.4pt for thin lines to point to things
%%%%%%% Fill styles %%%%%%%%%%%%%%%
cell/.style={fill=black!10},
subgraph/.style={fill=black!30},
%%%%%%% Mid-path arrows
midarrow/.style={postaction={decorate},
                 decoration={
                    markings,% switch on markings
                    mark=at position #1 with {\arrow{>}},
                 }},
midarrow/.default=0.5,
%%%%% Mid-path arrow but reverse
midarrow_rev/.style={postaction={decorate},
                 decoration={
                    markings,% switch on markings
                    mark=at position #1 with {\arrow{<}},
                 }},
midarrow_rev/.default=0.5,
%%%%% for the flowchart; need align=center to allow multiline in node
block/.style={rectangle, rounded corners, text centered, draw=black, align=center}
}
\tikzstyle{block} = [rectangle, rounded corners, text centered, draw=black, align=center]
\newcommand{\al}{\alpha}
\newcommand{\albar}{{\overline{\alpha}}}
\newcommand{\be}{\beta}
\newcommand{\bebar}{{\overline{\beta}}}
\newcommand{\ga}{\gamma}
\newcommand{\ph}{\varphi}
\newcommand{\eps}{\varepsilon}
\newcommand{\lmb}{\lambda}
\newcommand{\Z}{\mathcal{Z}}      % Drinfeld center
\newcommand{\ZMu}{\mathcal{Z}^{\text{M\"u}}}
\newcommand{\VV}{\mathbf{V}}
\newcommand{\WW}{\mathbf{W}}
\newcommand{\Graph}{\text{Graph}}
\newcommand{\VGraph}{\text{VGraph}}
\newcommand{\ZTV}{Z_\text{TV}}    %Turaev-Viro theory
\newcommand{\ZRT}{Z_\text{RT}}    %Reshetikhin-Turaev theory
\newcommand{\ZRTCY}{Z_\text{RT}^\text{CY}}    %Reshetikhin-Turaev theory
\newcommand{\ZCY}{{Z_\text{CY}}}
\newcommand{\ZCYsk}{{Z_\text{CY}^\text{sk}}}
\newcommand{\hatZCY}{\hat{Z}_\text{CY}}
\newcommand{\hatZCYsk}{{\hat{Z}_\text{CY}^\text{sk}}}
\newcommand{\Zel}{{\mathcal{Z}^\text{el}}}
\newcommand{\ZA}{{\cZ(\cA)}}
\newcommand{\ZZA}{{\Zel(\cA)}}
\newcommand{\Iel}{\mathcal{I}^\text{el}}
\newcommand{\torus}{{\mathbf{T}^2}}
\newcommand{\punctorus}{{\mathbf{T}_0^2}}
\newcommand{\disk}{\DD^2}
\newcommand{\PI}{{P\times I'}}
\newcommand{\clI}{{[0,1]}}
\newcommand{\cB}{{\mathcal{B}}}
\newcommand{\cD}{{\mathcal{D}}}
\newcommand{\cN}{{\mathcal{N}}}
\newcommand{\cM}{\mathcal{M}}      % category 
\newcommand{\cW}{\mathcal{W}}
\newcommand{\cH}{\mathcal{H}}
\newcommand{\cS}{{\mathcal{S}}}
\newcommand{\Ann}{\text{Ann}}
\newcommand{\hA}{{\hat{\cA}}}
\newcommand{\hB}{{\hat{\mathcal{B}}}}
\newcommand{\lact}{\vartriangleright}
\newcommand{\ract}{\vartriangleleft}
\newcommand{\ihom}[2]{\Hom_{#1}^{#2}}
\newcommand{\hatbox}[1]{{\hat{\boxtimes}_{#1}}}
\newcommand{\wdtld}{\widetilde}
\DeclareMathOperator{\Kar}{Kar} % Karoubi envelope
\DeclareMathOperator{\htr}{hTr} % horizontal trace
\DeclareMathOperator{\Skein}{Skein}
\newcommand{\Vect}{\mathcal{V}ec}  % vector spaces 
\newcommand{\cc}[1]{\underset{\scriptstyle #1}{\circ}}
\newcommand{\st}{\; | \;}                               %%  such that
\newcommand{\ldim}[1]{d_{#1}^L}
\newcommand{\rdim}[1]{d_{#1}^R}
\newcommand{\rdual}[1]{\prescript{*}{}#1}
\newcommand{\rvee}[1]{\prescript{\vee}{}#1}
\newcommand{\intr}[1]{{\overset{\circ}{#1}}}
\newcommand{\sk}{{\mathrm{sk}}}
\newcommand{\TT}{{\mathbf{T}}}
\newcommand{\supp}{{\mathrm{supp}}}
\newcommand{\emptystate}[1]{{\widetilde{\emptyset}_{#1}}}
\newcommand{\emptyskein}[1]{{\emptyset_{#1}^{\text{sk}}}}
\newcommand{\emptynorm}[1]{\eps_{#1}}
\newcommand{\iI}{{I'}}
\newcommand{\CYA}{{\ZCY(\text{Ann})}}
\newcommand{\hCYA}{{\hatZCY(\text{Ann})}}
\newcommand{\hP}{{\hat{P}}}
\newcommand{\cAA}{{\cA \boxtimes \cA}}
\newcommand{\cAAbop}{{\cA^\bop \boxtimes \cA}}
\newcommand{\onebar}{{\overline{\one}}}
\DeclareMathOperator{\Mat}{Mat}
\DeclareMathOperator{\Vctsp}{Vec}
\newcommand{\matvec}[1]{\Mat_{#1}(\Vctsp)}
\newcommand{\tnsrbar}{{\, \overline{\tnsr} \,}}
\newcommand{\tnsrproj}[2]{\tensor[_{#1}]\tnsrbar{_{#2}}}
\newcommand{\tnsrz}[1]{{\, \overline{\tnsr}_{#1} \,}}
\newcommand{\htnsr}{{\hat{\tnsr}}}
\newcommand{\tnsrst}{\tnsrz{\text{st}}}
\newcommand{\onest}{{\onebar_{\text{st}}}}
\newcommand{\POP}{{M_\text{POP}}}
\newcommand{\YPOP}{{M_\text{Y}}}
\newcommand{\defend}{\hfill $\triangle$}
\newcommand{\rmkend}{\hfill $\triangle$}
\newcommand{\evbar}{{\ov{\ev}}}
\newcommand{\evsk}{{\ev^{sk}}}
\newcommand{\evdel}{{\ev^{\del}}}
\newcommand{\evdelbar}{{\ov{ev}^{\del}}}
\newcommand{\ups}{\upsilon}
\newcommand{\Ups}{\Upsilon}
\newcommand{\preequiv}{\simeq}
\newcommand{\smallerer}{\footnotesize}
\newcommand{\cev}[1]{\overset{\leftarrow}{#1}}
\newcommand{\EE}{{\mathbf{E}}}
\newcommand{\Fpl}[1]{{\cF_{#1}}}
\newcommand{\Fsk}[1]{{\cF_{#1}^{sk}}}
\newcommand{\Ftld}[1]{{\wdtld{\cF}_{#1}^{sk}}}
\newcommand{\cFx}{{\cF^{\mathbf{x}}}}
\newcommand{\vme}{x} %% v minus e
\newcommand{\cobx}{{\mathbf{Cob^x}}}
\newcommand{\taucy}{{\tau_{CY}}}
\definecolor{light-gray}{gray}{0.9}
\definecolor{med-gray}{gray}{0.6}
\begin{document}

%%%\title{Thesis\\
%%%On the Category of Boundary Values in the extended Crane-Yetter TQFT}
%%%
%%%\author{Ying Hong Tham}
%%%   \address{Department of Mathematics, Stony Brook University, 
%%%            Stony Brook, NY 11794, USA}
%%%    \email{yinghong.tham@stonybrook.edu}
%%%    \urladdr{http://www.math.sunysb.edu/\textasciitilde yinghong/}
%%%
%%%
%%%\maketitle
%%%

%TODO is ZRT always nonzero?

%TODO CY=signature proved for general modular or just Rep sl2?

%TODO but wait there's more

%\markdownInput{outline.md}
%\input{outline.tex}
%\newpage

\newpage
{\centering
\hspace{0pt}
\vfill
\textbf{On the Category of Boundary Values in the extended Crane-Yetter TQFT}
\\
\vspace{20pt}
A Dissertation Presented
\\
\vspace{20pt}
by
\\
\vspace{20pt}
\textbf{Ying Hong Tham}
\\
\vspace{20pt}
to
\\
\vspace{20pt}
The Graduate School
\\
\vspace{20pt}
in Partial Fulfillment of the
\\
\vspace{20pt}
Requirements
\\
\vspace{20pt}
for the Degree of
\\
\vspace{20pt}
\textbf{Doctor of Philosophy}
\\
\vspace{20pt}
in
\\
\vspace{20pt}
\textbf{Mathematics}
\\
\vspace{20pt}
Stony Brook University
\\
\vspace{20pt}
\textbf{August 2021}
\\
}
\vfill
\hspace{0pt}
%%%%%%%%%%%%%%%%%%%%%%%%%%%%%%%%%%%%%%%%%%%%%%%%%%%%%%%%%%%%%%%%%%
\newpage
%%%%%%%%%%%%%%%%%%%%%%%%%%%%%%%%%%%%%%%%%%%%%%%%%%%%%%%%%%%%%%%%%%
{\centering
\hspace{0pt}
\vfill
\textbf{Stony Brook University}
\\
The Graduate School
\\
\vspace{20pt}
\textbf{Ying Hong Tham}
\\
\vspace{20pt}
We, the dissertation committee for the above candidate for the Doctor of
Philosophy degree, hereby recommend acceptance of this dissertation.
\\
\vspace{40pt}
\textbf{Alexander Kirillov - Dissertation Advisor} \\
\textbf{Professor, Department of Mathematics}
\\
\vspace{40pt}
\textbf{Dennis Sullivan - Chairperson of Defense} \\
\textbf{Professor, Department of Mathematics}
\\
\vspace{40pt}
\textbf{Oleg Viro} \\
\textbf{Professor, Department of Mathematics}
\\
\vspace{40pt}
\textbf{David Ben-Zvi} \\
\textbf{Professor, University of Texas, Austin}
\\
\vspace{40pt}
This dissertation is accepted by the Graduate School.
\\
\vspace{20pt}
Eric Wertheimer \\
Dean of the Graduate School\\
}
\vfill
\hspace{0pt}
%%%%%%%%%%%%%%%%%%%%%%%%%%%%%%%%%%%%%%%%%%%%%%%%%%%%%%%%%%%%%%%%%%
\newpage
%%%%%%%%%%%%%%%%%%%%%%%%%%%%%%%%%%%%%%%%%%%%%%%%%%%%%%%%%%%%%%%%%%
{\centering
\hspace{0pt}
\vfill
Abstract of the Dissertation
\\
\vspace{20pt}
\textbf{On the Category of Boundary Values in the extended Crane-Yetter TQFT}
\\
\vspace{20pt}
by
\\
\vspace{20pt}
\textbf{Ying Hong Tham}
\\
\vspace{20pt}
\textbf{Doctor of Philosophy}
\\
\vspace{20pt}
in
\\
\vspace{20pt}
\textbf{Mathematics}
\\
\vspace{20pt}
Stony Brook University
\\
\vspace{20pt}
2021
\\
\vspace{20pt}
\begin{minipage}{10cm}
The Crane-Yetter state sum is an invariant of closed 4-manifolds,
defined in terms of a triangulation, based on 15-j symbols associated
to the category $\cA$ of representations over quantum $\mathfrak{s}\mathfrak{l}_2$
(at a root of unity).
In this thesis,
we define the state sum in terms of a ``PLCW decomposition'',
which generalizes triangulations,
and generalize $\cA$ to an arbitrary premodular category.
We extend the state sum to 4-manifolds with corners,
making it an extended TQFT.
We also develop a parallel theory based on skeins,
which are essentially $\cA$-colored graphs,
and we show that the two theories are equivalent.

Focusing on the 2-dimensional part,
we prove several properties of skein categories,
the most important of which is that they satisfy excision.
We provide explicit algebraic descriptions
of the category associated to the once-punctured torus and the annulus,
giving rise to a new tensor product on the Drinfeld center
of a premodular category.

As it is well-known that, when $\cA$ is modular,
the Crane-Yetter state sum computes the signature of a closed 4-manifold,
we connect the Crane-Yetter theory to the signature of a 4-manifold
with boundary and even corners.

Finally, we show that the Reshetikhin-Turaev TQFT
is the boundary theory of the Crane-Yetter theory
(up to a normalization).
%We provide a skein-theoretic formula for the error term
%in the Wall non-additivity of the signature.
\end{minipage}
\vfill
\hspace{0pt}
}
%%%%%%%%%%%%%%%%%%%%%%%%%%%%%%%%%%%%%%%%%%%%%%%%%%%%%%%%%%%%%%%%%%
\newpage
%%%%%%%%%%%%%%%%%%%%%%%%%%%%%%%%%%%%%%%%%%%%%%%%%%%%%%%%%%%%%%%%%%
{\centering
\hspace{0pt}
\vfill
To mom
\vfill
\hspace{0pt}
}
%%%%%%%%%%%%%%%%%%%%%%%%%%%%%%%%%%%%%%%%%%%%%%%%%%%%%%%%%%%%%%%%%%
\newpage
%%%%%%%%%%%%%%%%%%%%%%%%%%%%%%%%%%%%%%%%%%%%%%%%%%%%%%%%%%%%%%%%%%

\tableofcontents

%%\pagenumbering{arabic} should be right after declaring
%%	new section in intro.tex
\newpage
%\addcontentsline{toc}{section}{Acknowledgements}
\section*{Acknowledgements}
\vspace{0.8in}

First, I would like to thank Sasha Kirillov
for his guidance as my advisor.
Much of the work presented here is based off of Sasha's work,
and I am extemely grateful for his encouragement and
his help in navigating the field.

I must also thank Dennis Sullivan for many
stimulating conversations.
His unrelenting enthusiasm and absolute passion for math
was infectious and inspiring.

I also want to thank Oleg Viro for many interesting conversation
and particularly for sharing many concrete topological insights.

%I want to thank my family for their love and support
%throughout my mathematical journey,
%despite not understanding why new math has to be invented.
I want to thank my parents for going the extra mile,
sometimes literally,
in supporting my mathematical journey,
and my sisters for sharing quality internet content.
I am grateful for their unending love and support.

%Last but not least,
I also want to thank Alice for her care and kindness.
I look forward to many more collaborations with her.
%I thank Alice for being there for me in my lowest moments,
%and for having an excellent last name.

Finally, I thank my friends at Stony Brook
for keeping life at the office fun and lively,
in particular Tobias, Jae Ho, Yoon Joo, Sasha, Xujia, Dahye;
we thank China Station for their symplectic hospitality,
1089 for sparking amateur number theory speculations,
and Green Tea for reminding us of the taste of China.

\newpage
\section{Introduction}
\vspace{0.8in}
\pagenumbering{arabic}

The study of quantum invariants of knots and low-dimensional manifolds
is a rich and active field of research.
Central to the field is the Topological Quantum Field Theory
(TQFT),
which were introduced in \ocite{atiyah} and \ocite{witten}:
%Loosely speaking, a $n$-dimensional TQFT is an invariant
%of $n$-manifolds that behave well under gluing:

\begin{definition}
An $n$-dimensional TQFT $\tau$ is the following collection of data:
\begin{itemize}
\item to each $(n-1)$-dimensional manifold $M$, an assignment of a
	vector space $\tau(M)$, known as the ``state space'',
\item to each $n$-dimensional manifold $W$,
	an assignment of a vector $\tau(W) \in \tau(\del W)$,
	known as the ``partition function'',
\item to a homeomorphism $\vphi : M \to M'$,
	a natural isomorphism	$\tau(\vphi) : \tau(M) \simeq \tau(M')$,
\item functorial isomorphisms $\tau(\ov{M}) \simeq \tau(M)^*$,
	$\tau(\emptyset) \simeq \kk$,
	$\tau(M \sqcup M') \simeq \tau(M) \tnsr \tau(M')$.
\end{itemize}
These data are required to satisfy a list of axioms
(see e.g. \ocite{BK}*{Chapter 4}).
\label{d:TQFT}
\end{definition}

In 1993, Crane and Yetter \ocite{CY} defined a 4d-TQFT
based on ``15-$j$''-symbols arising out of
a category of representations of a quantum group,
inspired by Ooguri \ocite{ooguri},
who proposed a formal expression for an invariant
of 4-manifolds with an eye towards a theory of
quantum gravity.
It is constructed as a ``state sum'' which
assigns and combines local invariants to each 4-simplex in a
triangulation,
similar to the Turaev-Viro state sum which defines
a 3d-TQFT \ocite{TV}.
The Crane-Yetter TQFT is known to compute the signature
and Euler characteristic of a 4-manifold
\ocite{CKY-eval}, \ocite{roberts-chainmail}.

The Turaev-Viro TQFT is known to be an extended 3-2-1-TQFT
\ocite{balsam-kirillov},
that is, the TQFT also makes an assignment
of a category to each 1-manifold.
The Crane-Yetter TQFT is widely expected to
be an extended TQFT.

Another important object of study is the theory of skeins.
These first arose in Kauffman's work \ocite{kauffman-skein}
on the Jones polynomial \ocite{jones}.
The Kauffman skein relation imposes a relation
on the space of link diagrams
that also reflect the properties of a 
category of representations of a quantum group.

In \ocite{kirillov-stringnet},
Kirillov shows that the state space of the Turaev-Viro TQFT
can be defined as the space of skeins on the surface.
It is widely expected that the state spaces
of the Crane-Yetter TQFT can also be defined as the space
of skeins in the 3-manifold.

\subsection{Main results}
\par \noindent

The main results of this paper are as follows:

\begin{itemize}
\item extend the Crane-Yetter TQFT to an extended TQFT
	based on a ``PLCW decomposition'' of the manifold
	(a generalization of triangulations),
\item construct a parallel theory based on skeins,
	and prove equivalence with the previous constructions,
\item develop the properties of skein categories associated
	to surfaces, in particular the annulus,
\item discuss the signature formula
\item show that Reshetikhin-Turaev TQFT is the boundary theory
	of the extended Crane-Yetter TQFT
\end{itemize}

\newpage
\section{Background}
\vspace{0.8in}

\subsection{Categories}\par \noindent
\label{s:basic-cat}
%\label{s:categories}

%TODO say $\simeq, \cong$ denote isom..

In this section, we review some definitions in category theory,
leading up to the definition of modular categories.

Most of the categories that we deal with will be abelian
over some algebraically closed field $\kk$
and semisimple; then the structure maps for the various
definitions and constructions in this section
will implicitly be assumed to be appropriately multilinear
(see \rmkref{r:tnsr-linear}).

\begin{definition}
Let $\cC$ be a category.
A \emph{monoidal structure} on $\cC$
is a collection of data as follows:
\begin{itemize}
\item a bifunctor $\tnsr: \cC \times \cC \to \cC$
called the \emph{tensor product};

\item a natural isomorphism
$\alpha: (- \tnsr -) \tnsr - \simeq - \tnsr (- \tnsr -)$,
or more verbosely, for any three objects $U,V,W \in \cC$,
a natural isomorphism
\[
	\alpha_{U,V,W} : (U \tnsr V) \tnsr W \simeq U \tnsr (V \tnsr W)
\]
$\alpha$ is called the \emph{associativity constraint};
\item a \emph{unit object} $\one \in \cC$ and
\emph{unit constraint} $\lmb, \rho$ which are natural isomorphisms
\begin{align*}
\lmb_V : \one \tnsr V \simeq V \\
\rho_V : V \tnsr \one \simeq V
\end{align*}
\end{itemize}
which are subject to the \emph{triangle} and \emph{pentagon} axioms:
\[
\begin{tikzcd}[column sep=small]
(V \tnsr \one) \tnsr W
	\ar[rr, "\alpha"] \ar[rd, "\rho \tnsr \id"']
	& & V \tnsr (\one \tnsr W)
		\ar[ld, "\id \tnsr \lmb"]
	\\
	& V \tnsr W
\end{tikzcd}
\;\;\;
\begin{tikzcd}[column sep=0]
((U \tnsr V) \tnsr W) \tnsr X
	\ar[rr, "\alpha_{U,V,W} \tnsr \id_X"]
	\ar[d, "\alpha_{U \tnsr V, W, X}"]
	& & (U \tnsr (V \tnsr W)) \tnsr X
		\ar[d, "\alpha_{U, V \tnsr W, X}"]
	\\
(U \tnsr V) \tnsr (W \tnsr X)
	\ar[rd, "\alpha_{U,V, W \tnsr X}"']
	& & U \tnsr ((V \tnsr W) \tnsr X)
		\ar[ld, "\id_U \tnsr \alpha_{V,W,X}"]
	\\
& U \tnsr (V \tnsr (W \tnsr X))
\end{tikzcd}
\]
We say that $(\cC, \tnsr, \one, \alpha, \lmb, \rho)$
is a monoidal category.
\end{definition}

Sometimes we simply refer to $(\cC, \tnsr)$,
or even just $\cC$, as a monoidal category
when the other structures are understood.
We often hide the associativity and unit constraints for readability;
this is justified by \thmref{t:maclane-strictness}.

\begin{remark}
\label{r:tnsr-linear}
When $\cC$ is abelian, the tensor product of morphisms
should be a bilinear map: for $V,V',W,W' \in \cC$,
the map $\tnsr: \Hom(V,V') \times \Hom(W,W') \to
\Hom(V \tnsr V', W \tnsr W')$
is bilinear.
\end{remark}

\begin{example}
Here are some common examples of monoidal categories:

\begin{enumerate}
\item The category of vector spaces $\Vect_\kk$ over some field $\kk$,
with $\tnsr$ being the usual tensor product of vector spaces.

\item The category $\Rep_\kk(G)$ of representations of a finite group $G$
over some field $\kk$,
with $\tnsr$ being the usual tensor product of representations.

\end{enumerate}

\end{example}

\begin{definition}
Let $(\cC, \tnsr, \one, \alpha, \lmb, \rho)$
and $(\cC', \tnsr', \one', \alpha', \lmb', \rho')$
be monoidal categories.
A \emph{monoidal functor} from $\cC$ to $\cC'$ is
a pair $(F, J)$, where $F: \cC \to \cC'$ is a functor
such that $F(\one)$ is isomorphic to $\one'$, and
$J: \tnsr' \circ F \times F \to F \circ \tnsr : \cC \times \cC \to \cC'$
is a natural transformation that satisfies the hexagon axiom:
\begin{tikzcd}[column sep=tiny, row sep=small]
& (F(U) \tnsr' F(V)) \tnsr' F(W)
	\ar[rr, "\alpha' F"]
	\ar[dl, "J \tnsr' \id"]
& & F(U) \tnsr' (F(V) \tnsr' F(W))
	\ar[dr, "\id \tnsr' J"]
\\
F(U \tnsr V) \tnsr' F(W)
	\ar[dr, "J"]
& & & & F(U) \tnsr' F(V \tnsr W)
	\ar[dl, "J"]
\\
& F((U \tnsr V) \tnsr W)
	\ar[rr, "F(\alpha)"]
& & F(U \tnsr (V \tnsr W))
\end{tikzcd}

We call $J$ a \emph{monoidal structure} on $F$.
If $F$ is an equivalence of categories, we say that
$(F,J)$ is an \emph{equivalence of monoidal categories}.

Let $(F',J')$ be another monoidal functor from $\cC$ to $\cC'$.
A natural transformation of monoidal functors from $(F,J)$ to $(F',J')$
is a natural transformation $F \to F'$ that respects the
monoidal structures,
\end{definition}

\begin{definition}
A monoidal category $(\cC, \tnsr, \one, \alpha, \lmb, \rho)$ is
\emph{strict} if $(U \tnsr V) \tnsr W = U \tnsr (V \tnsr W)$ and
$\one \tnsr V = V = V \tnsr \one$,
and furthermore $\alpha, \lmb, \rho$ are the identity maps.
\end{definition}

\begin{theorem}[Maclane strictness]
\label{t:maclane-strictness}
Any monoidal category is monoidally equivalent to a strict monoidal category.
\end{theorem}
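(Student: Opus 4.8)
The plan is to build, from an arbitrary monoidal category $\cC$, a strict monoidal category $\cC^{\mathrm{str}}$ together with a monoidal equivalence between them. The standard approach is to take $\cC^{\mathrm{str}}$ to be a category of ``formal tensor words'' equipped with a strictly associative concatenation. Concretely, I would let the objects of $\cC^{\mathrm{str}}$ be finite (possibly empty) lists $(V_1,\dots,V_n)$ of objects of $\cC$, and define a functor $E\colon \cC^{\mathrm{str}}\to\cC$ sending such a list to the ``left-parenthesized'' product $((\cdots(V_1\tnsr V_2)\tnsr\cdots)\tnsr V_n)$, with $E$ of the empty list being $\one$. Then set $\Hom_{\cC^{\mathrm{str}}}\big((V_1,\dots,V_n),(W_1,\dots,W_m)\big) := \Hom_{\cC}\big(E(V_1,\dots,V_n),E(W_1,\dots,W_m)\big)$, so that $E$ is by construction fully faithful; it is essentially surjective since every object $V$ is $E$ of the one-element list $(V)$. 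Hence $E$ is an equivalence of categories.

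Next I would equip $\cC^{\mathrm{str}}$ with a monoidal structure given on objects by concatenation of lists, $(V_1,\dots,V_n)\tnsr^{\mathrm{str}}(W_1,\dots,W_m):=(V_1,\dots,V_n,W_1,\dots,W_m)$, with unit object the empty list. Concatenation of lists is \emph{strictly} associative and strictly unital, so the associativity and unit constraints of $\cC^{\mathrm{str}}$ can be taken to be identities --- this is the whole point. On morphisms, the tensor product $f\tnsr^{\mathrm{str}}g$ must be defined as a morphism $E(\text{list}_1\,\text{list}_2)\to E(\text{list}_1'\,\text{list}_2')$ in $\cC$; here one uses the associativity isomorphisms $\alpha$ of $\cC$ to reshuffle $E(\text{list}_1\,\text{list}_2)$ into $E(\text{list}_1)\tnsr E(\text{list}_2)$, applies $f\tnsr g$, and reshuffles back. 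One then checks the triangle and pentagon axioms for $\cC^{\mathrm{str}}$, which hold trivially since all the constraints are identities, and checks that $\tnsr^{\mathrm{str}}$ is a bifunctor.

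Finally I would promote the equivalence $E$ to a \emph{monoidal} equivalence by exhibiting the natural isomorphism $J_{\,\text{list}_1,\text{list}_2}\colon E(\text{list}_1)\tnsr E(\text{list}_2)\xrightarrow{\ \sim\ } E(\text{list}_1\,\text{list}_2)$, built from iterated $\alpha$'s, and verifying the hexagon axiom relating $J$ to the constraints of $\cC$ and the (identity) constraints of $\cC^{\mathrm{str}}$; one also checks $E(\emptyset)=\one$ compatibly with the unit constraints. The main obstacle --- and the only place where any real work happens --- is precisely the coherence bookkeeping: defining $f\tnsr^{\mathrm{str}}g$ and the isomorphisms $J$ via specific composites of $\alpha$, $\lmb$, $\rho$, and then verifying that the hexagon (and bifunctoriality of $\tnsr^{\mathrm{str}}$) follow from the pentagon and triangle axioms of $\cC$. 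These verifications are a diagram chase that is most cleanly organized by invoking the full Mac Lane coherence theorem for $\cC$ (``all formal diagrams built from $\alpha,\lmb,\rho$ commute''), which guarantees any two such composites with the same source and target agree; with that in hand every required identity is automatic. Alternatively, if one wishes to avoid circularity, the bicategorical Yoneda argument (embed $\cC$ into the strict monoidal category of endofunctors of $\cC$ via $V\mapsto (V\tnsr -)$) gives the result directly, and I would mention this as the slicker route.
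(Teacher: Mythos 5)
The paper states Theorem~\ref{t:maclane-strictness} as standard background and gives no proof of it, so there is nothing internal to compare against; your argument is the standard one and is correct in outline. The construction of $\cC^{\mathrm{str}}$ as tensor words with concatenation, the fully faithful and essentially surjective evaluation functor $E$, and the monoidal structure $J$ assembled from iterated associators is exactly how the result is proved in the standard references (Mac Lane; Joyal--Street; EGNO Section~2.8). You also correctly isolate the only delicate points --- well-definedness and strict associativity of $\tnsr^{\mathrm{str}}$ on morphisms, and the hexagon for $J$, all of which reduce to coherence for $\cC$ --- and you correctly flag the potential circularity together with the non-circular alternative (the embedding $V\mapsto V\tnsr -$ into a strict category of endofunctors), so the sketch is complete modulo routine diagram chases.
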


%\begin{example}
%TODO see EGNO
%Fix a field $\kk$ and a finite group $G$. Let $\Vect_G$ be
%the category of $G$-graded vector spaces over $\kk$,
%with tensor product
%$(V \tnsr W)_g = \dirsum_{g' g'' = g} V_{g'} \tnsr W_{g''}$
%and unit object $\kk_e$.\\
%Choose a function $f: G\times G \times G \to \kk^\times$;
%the associativity constraint $\alpha^f$ constructed from $f$
%is given by $\alpha^f = f(g,g',g'') \cdot \id : (U_g \tnsr V_{g'}) \tnsr W_{g''}
%U_g \tnsr (V_{g'} \tnsr W_{g''})$.
%The pentagon axiom dictates that $f$ is a 2-cocycle.
%Different $f$ will lead to monoidal structures on $\Vect_G$
%that are not monoidally equivalent.
%\end{example}

\begin{definition}
Let $\cC$ be a monoidal category, and $V \in \cC$ an object.
A \emph{left dual} to $V$ is an object $V^*$ together with
morphisms $\ev_V : V^* \tnsr V \to \one$ and
$\coev_V : \one \to V \tnsr V^*$, such that the compositions
\begin{align*}
V \xrightarrow{\coev \tnsr \id} V \tnsr V^* \tnsr V
\xrightarrow{\id \tnsr \ev} V
\\
V^* \xrightarrow{\id \tnsr \coev} V^* \tnsr V \tnsr V^*
\xrightarrow{\ev \tnsr \id} V^*
\end{align*}
are the identity morphisms.
Note that we have suppressed the unit and associativity constraints.

Similarly, a \emph{right dual}\footnotemark to $V$ is an object $\rdual{V}$
together with morphisms $\tilde{\ev}_V : V \tnsr \rdual{V} \to \one$
and $\tilde{\coev}_V : \one \to \rdual{V} \tnsr V$,
such that the compositions
\begin{align*}
V \xrightarrow{\tilde{\coev} \tnsr \id} V \tnsr \rdual{V} \tnsr V
\xrightarrow{\id \tnsr \tilde{\ev}} V
\\
\rdual{V} \xrightarrow{\id \tnsr \tilde{\coev}}
\rdual{V} \tnsr V \tnsr \rdual{V}
\xrightarrow{\tilde{\ev} \tnsr \id} \rdual{V}
\end{align*}
are the identity morphisms.

If $V, W$ have left duals, and $f\in \Hom(V,W)$,
the \emph{left dual} of $f$ is the morphism
$f^* \in \Hom(W^*, V^*)$ given by the composition
\[
f^* : W^* \xrightarrow{\coev_V} W^* \tnsr V \tnsr V^*
\xrightarrow{\id \tnsr f \tnsr \id} W^* \tnsr W \tnsr V^*
\xrightarrow{\ev_W} V^*
\]
Similarly, if $V,W$ have right duals, the right dual to
$f\in \Hom(V,W)$ is the morphism
$\rdual{f} \in \Hom(\rdual{W},\rdual{V})$
given by the composition
\[
\rdual{f} : \rdual{W}
\xrightarrow{\tilde{\coev}_V} \rdual{V} \tnsr V \tnsr \rdual{W}
\xrightarrow{\id \tnsr f \tnsr \id} \rdual{V} \tnsr W \tnsr \rdual{W}
\xrightarrow{\tilde{\ev}_W} \rdual{V}
\]
\end{definition}

\footnotetext{It is always confusing which side of $V$
to put the asterisk on. I remember by thinking,
left dual object evaluates from the left,
and the asterisk should point towards the original object,
thus $V^* \tnsr V \to \one$ is the evaluation map
for the left dual.}

\begin{proposition}
\label{p:dual-unique}
If $V \in \cC$ has a left (respectively right) dual object,
then it is unique up to unique isomorphism.
\end{proposition}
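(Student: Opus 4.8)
The plan is to adapt the classical uniqueness-of-adjoints argument to this monoidal setting. Suppose $(V^*, \ev_V, \coev_V)$ and $(V^\dagger, \ev'_V, \coev'_V)$ are both left duals of $V$. First I would write down the candidate isomorphism $\phi : V^* \to V^\dagger$ as the composition
\[
V^* \xrightarrow{\id \tnsr \coev'_V} V^* \tnsr V \tnsr V^\dagger
\xrightarrow{\ev_V \tnsr \id} V^\dagger,
\]
and similarly its candidate inverse $\psi : V^\dagger \to V^*$ built from $\coev_V$ and $\ev'_V$. Throughout I suppress the associativity and unit constraints, which is justified by \thmref{t:maclane-strictness}.

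Next I would verify $\psi \circ \phi = \id_{V^*}$ and $\phi \circ \psi = \id_{V^\dagger}$. Expanding $\psi\circ\phi$ gives a string of four structure maps on $V^* \tnsr V \tnsr V^\dagger \tnsr V \tnsr V^*$ (after inserting the appropriate identities); the key move is to use naturality/interchange to slide the two copies of the "zig-zag" past each other so that one of the triangle identities for $(V^\dagger, \ev'_V, \coev'_V)$ collapses $\coev'_V$ against $\ev'_V$, leaving exactly the zig-zag for $(V^*, \ev_V, \coev_V)$, which is $\id_{V^*}$. The computation for $\phi\circ\psi$ is symmetric. This is most transparently done with string diagrams, but can be written as a commuting diagram chase using only functoriality of $\tnsr$ and the two snake identities.

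Finally, for uniqueness of the isomorphism itself: if $\phi' : V^* \to V^\dagger$ is any morphism with $\ev'_V \circ (\phi' \tnsr \id_V) = \ev_V$ (i.e.\ compatible with the evaluation data), then I would show $\phi' = \phi$ by precomposing the defining expression for $\phi$ with the snake identity for $V^*$ and substituting; equivalently, the pair $(\ev_V, \coev_V)$ exhibits $V^*$ by a universal property (it is terminal among objects equipped with a map to $\one$ out of $-\tnsr V$ that is "cancelled" appropriately), and terminal objects are unique up to unique isomorphism. I would also note the right-dual statement follows by the same argument with the roles of left and right tensor factors swapped, or by passing to $\cC^{\mathrm{rev}}$.

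The main obstacle is purely bookkeeping: keeping track of where the suppressed associators and unitors go so that the two triangle identities can actually be applied, and making sure the "slide past each other" step is a genuine consequence of the interchange law $(f\tnsr \id)\circ(\id\tnsr g) = f\tnsr g = (\id\tnsr g)\circ(f \tnsr \id)$ rather than something stronger. Once the diagrams are set up carefully this is routine, so I would present it compactly, likely deferring the bulk of the diagram chase to a remark or leaving it to the reader as is standard.
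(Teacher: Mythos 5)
Your argument is correct and is exactly the standard one: the paper itself does not spell out a proof but simply cites \ocite{EGNO}*{Prop 2.10.5}, and the zig-zag construction of $\phi$ and $\psi$ together with the snake identities is precisely the argument given there. The uniqueness clause is also handled correctly by requiring compatibility with the evaluation data, so nothing is missing.
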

\begin{proof}
See \ocite{EGNO}*{Prop 2.10.5}.
\end{proof}

Observe that $V^*$ is a right dual to $V$,
and $\rdual{V}$ is a left dual to $V$.
Thus by the proposition above,
we can naturally identify
$\rdual{(V^*)} \simeq V \simeq (\rdual{V})^*$.

\begin{proposition}
Let $\cC$ be a monoidal category and $V$ an object in $\cC$.
If $V$ has a left dual, then there are natural adjunction isomorphisms
\begin{align*}
\Hom(U \tnsr V, W) &\simeq \Hom(U, W \tnsr V^*)
\\
\Hom(\rdual{V} \tnsr U, W) &\simeq \Hom(U, V \tnsr W)
\end{align*}
Similarly, if $V$ has a right dual,
then there are natural adjunction isomorphisms
\begin{align*}
\Hom(U \tnsr \rdual{V}, W) &\simeq \Hom(U, W \tnsr V)
\\
\Hom(\rdual{V} \tnsr U, W) &\simeq \Hom(U, \rdual{V} \tnsr W)
\end{align*}
\end{proposition}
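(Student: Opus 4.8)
The plan is to construct each adjunction isomorphism directly, using the (co)evaluation maps of the dual object, and then check naturality and invertibility by the standard zig-zag arguments. Concretely, suppose $V$ has a left dual $V^*$ with maps $\ev_V : V^* \tnsr V \to \one$ and $\coev_V : \one \to V \tnsr V^*$. For the first isomorphism $\Hom(U \tnsr V, W) \simeq \Hom(U, W \tnsr V^*)$, given $f : U \tnsr V \to W$ I would send it to the composite
\[
U \xrightarrow{\id \tnsr \coev_V} U \tnsr V \tnsr V^* \xrightarrow{f \tnsr \id} W \tnsr V^*,
\]
and given $g : U \to W \tnsr V^*$ I would send it back to
\[
U \tnsr V \xrightarrow{g \tnsr \id} W \tnsr V^* \tnsr V \xrightarrow{\id \tnsr \ev_V} W.
\]
That these two assignments are mutually inverse is exactly the content of the two zig-zag identities defining the left dual (inserting one map into the other and sliding the (co)evaluation past $f$ or $g$ using functoriality of $\tnsr$, i.e. the interchange law). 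The second isomorphism $\Hom(\rdual{V} \tnsr U, W) \simeq \Hom(U, V \tnsr W)$ is handled symmetrically, now using that $\rdual{V}$ is a left dual of... wait — more precisely, using the remark immediately preceding the proposition that $\rdual{V}$ is a left dual to $V$, so its structure maps are $\ev_{\rdual V} : V \tnsr \rdual V \to \one$ and $\coev_{\rdual V} : \one \to \rdual V \tnsr V$; alternatively one simply reads off the adjunction $\rdual V \dashv V$ hidden in the definition. The two statements for a right dual $\rdual V$ are literally the same two computations with $V$ and $\rdual V$ interchanged (again using $\rdual{(V^*)} \simeq V \simeq (\rdual V)^*$), so I would either repeat them verbatim or just say "by the same argument applied to the right dual."

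The steps, in order: (1) write down the forward map $f \mapsto (f \tnsr \id_{V^*}) \circ (\id_U \tnsr \coev_V)$ and the backward map $g \mapsto (\id_W \tnsr \ev_V) \circ (g \tnsr \id_V)$; (2) compose forward-then-backward and use the interchange law to move $\coev_V$ to the right of $g$, then collapse $(\id \tnsr \ev_V)(\coev_V \tnsr \id)$ via the first zig-zag identity to recover $g$; (3) compose backward-then-forward and similarly collapse using the same identity (or its mirror) to recover $f$; (4) observe naturality in $U$ and $W$ — this is immediate because the forward and backward maps are built only from $\tnsr$ with fixed morphisms, so they commute with post/pre-composition by arbitrary morphisms, hence the bijection is natural; (5) note that all four displayed isomorphisms are instances of one of these two, after the identifications $V^{**} \simeq$ (nothing needed here, actually just $\rdual{(V^*)} \simeq V \simeq (\rdual V)^*$ for the right-dual statements), and invoke symmetry.

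I do not expect a serious obstacle; this is a routine diagram chase. The only mild subtlety is bookkeeping the associativity and unit constraints that we have agreed to suppress (via \thmref{t:maclane-strictness} one may assume $\cC$ strict, which I would state explicitly to keep the computation honest), and keeping straight which of the two adjunctions $\rdual V \dashv V$ versus $V \dashv V^*$ one is using in each of the four lines — the footnote's mnemonic ("the asterisk points toward the original object, and a left dual evaluates from the left") is exactly what prevents sign-of-the-asterisk errors here. So the "hard part," such as it is, is purely notational: presenting the four cases without four nearly-identical computations, which I would do by proving the first in full and deducing the rest by symmetry and the canonical identifications of iterated duals.
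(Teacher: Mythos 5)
Your proposal is correct and matches the paper's proof essentially verbatim: the paper also defines the forward map as $f \mapsto (f \tnsr \id_{V^*}) \circ (\id_U \tnsr \coev_V)$ with inverse $g \mapsto (\id_W \tnsr \ev_V) \circ (g \tnsr \id_V)$, notes the remaining isomorphisms are similar, and defers the zig-zag verification to the reference. Your additional detail on the interchange-law/zig-zag check and on which adjunction ($\rdual V \dashv V$ versus $V \dashv V^*$) governs each line is exactly the content the paper leaves implicit.
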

\begin{proof}
The first natural isomorphism is given by
$f \mapsto (f \tnsr \id_{V^*}) \circ (\id_U \tnsr \coev_V)$,
with inverse
$g \mapsto (\id_W \tnsr \ev_V) \circ (g \tnsr \id_V)$;
other isomorphisms are similar. See \ocite{EGNO}*{Prop 2.10.8}.
Abstractly, this proposition says that when the left dual exists,
the multiplication functor $V^* \tnsr -$ (respectively $-\tnsr V^*$)
are left (respectively right) adjoints to the multiplication functor
$V \tnsr -$ (respectively $-\tnsr V$). (Similarly for right duals).
\end{proof}

\begin{definition}
We say that a monoidal category is \emph{rigid}
if every object has a left and right dual.
\end{definition}

In general, the left and right duals may not be isomorphic,
let alone naturally isomorphic.

\begin{definition}
Let $\cC$ be a rigid monoidal category.
A \emph{pivotal structure} on $\cC$ is a natural isomorphism
of monoidal functors $\delta : \id \simeq (-)^{**}$.
\end{definition}

By the observation after \prpref{p:dual-unique},
this is equivalent to having a natural isomorphism
$\rdual{V} \simeq V^*$.
Note that by virtue of being an isomorphism of \emph{monoidal}
functors, one has $\delta_{V\tnsr Y} = \delta_V \tnsr \delta_Y$.

\begin{definition}
Let $V$ be an object in a pivotal category $\cC$,
and let $f \in \End_{\cC}(V)$.
The \emph{left trace} of $f$ is the composition
\[
\tr^L(f) : \one \xrightarrow{\coev_V} V \tnsr V^*
\xrightarrow{\delta_V \tnsr f^*} V^{**} \tnsr V^*
\xrightarrow{\ev_{V^*}} \one
\]
and similarly, the \emph{right trace} is the composition
\[
\tr^R(f) : \one \xrightarrow{\tilde{\coev}_V} \rdual{V} \tnsr V
\xrightarrow{\delta_{\rdual{V}} \tnsr f} V^* \tnsr V
\xrightarrow{\ev_V} \one
\]
In particular, we define the \emph{left (respectively right) dimension},
denoted $\dim^L(V)$ (respectively $\dim^R(V)$
of an object $V$ to be the left (respectively right) trace
of $\id_V$.
\end{definition}

\begin{definition}
We say that a pivotal category $\cC$ is \emph{spherical}
if $\dim^L(V) = \dim^R(V) \in \End(\one)$ for every object $V \in \cC$.
\end{definition}

It is easy to check that $\dim^R(V^*) = \dim^L(V) = \dim^L(V^{**})$,
so an equivalent definition of sphericality
is $\dim^L(V) = \dim^L(V^*)$ for all $V \in \cC$.

\begin{definition}
Let $\cC$ be a monoidal category.
A \emph{braiding} on $\cC$ is a natural isomorphism
$c_{X,Y} : X \tnsr Y \simeq Y \tnsr X$
such that
\begin{align*}
c_{X, Y \tnsr Z} = (\id_Y \tnsr c_{X,Z}) \tnsr (c_{X,Y} \tnsr \id_Z)
c_{X \tnsr Y, Z} = (c_{X,Z} \tnsr \id_Y) \tnsr (\id_X \tnsr c_{Y,Z})
\end{align*}
\end{definition}

\begin{definition}
Let $\cC$ be a braided monoidal category with braiding $c$.
A \emph{twist} (or \emph{balancing transformation})
is a natural isomorphism $\theta \in \Aut(\id_\cC)$ such that
\[
\theta_{X \tnsr Y} = (\theta_X \tnsr \theta_Y) \circ c_{Y,X} \circ c_{X,Y}
\]
\end{definition}

\begin{definition}
Let $\cC$ be a monoidal category with a spherical and braided structure.
A \emph{ribbon structure} on $\cC$
is a twist $\theta$ such that $(\theta_X)^* = \theta_{X^*}$.
\end{definition}

\begin{definition}
Let $\cC$ be a $\kk$-linear abelian rigid monoidal category.
We say that $\cC$ is \emph{multifusion} if $\cC$ is finite semisimple
and $\tnsr$ is bilinear on morphisms.
We say that $\cC$ is \emph{fusion}
if $\End(\one) \cong \kk$.
\end{definition}

We denote by $\Irr(\cC)$ the set of isomorphism classes
of simple objects of a multifusion category $\cC$,
and pick representatives $X_i$ for each class $i\in \Irr(\cC)$.
and denote by $\Irr_0(\cC) \subseteq \Irr(\cC)$
the subset of simple objects appearing in the
direct sum decomposition of the unit object $\one$;
it is known (see \ocite{EGNO}*{Corollary 4.3.2})
that $\one$ decomposes into a direct sum of pairwise-distinct simples.
We also assume that the dual functor sends our choice of representatives
to themselves, resulting in an involution on $\Irr(\cC)$,
which we denote $i^*$ for $i \in \Irr(\cC)$.
We will delve into more detail in \secref{s:graphical-multifusion}.

\begin{definition}
A \emph{premodular} category is a ribbon fusion category.
\end{definition}

\begin{definition}
The \emph{$S$-matrix} of a premodular category $\cC$ is
the $\Irr(\cC) \times \Irr(\cC)$ matrix
\[
S := (s_{ij})_{i, j \in \Irr(\cC)}
\;\;\;
\text{where}
\;\;\;
s_{ij} = \tr(c_{X_j, X_i} \circ c_{X_i, X_j})
\]
\end{definition}

\begin{definition}
A premodular category $\cC$ is \emph{modular}
if its $S$-matrix is non-degenerate.
\end{definition}

We summarize the slew of definitions given above
in the flow chart below:
\[
\begin{tikzpicture}
\node[block] (start) at (0,0) {category $\cC$};
\node[block] (monoidal) at (-2,-2)
	{monoidal\\$\tnsr : \cC \times \cC \to \cC$};
\node[block] (rigid) at (-2, -4)
	{rigid \\ $X^*, \rdual{X}$};
\node[block] (pivotal) at (-2, -6)
	{pivotal \\ $\delta_X : X \simeq X^{**}$};
\node[block] (spherical) at (-3, -8)
	{spherical \\ $\dim^L \equiv \dim^R$};
\node[block] (abelian) at (2, -2)
	{abelian over $\kk$};
\node[block] (finitess) at (2,-4)
	{finite \\ semisimple};
\node[block] (multifusion) at (2,-6)
	{multifusion};
\node[block] (fusion) at (4,-8)
	{fusion \\ $\End(\one) \cong \kk$};
\node[block] (pivotalmultifusion) at (0,-8)
	{pivotal \\ multifusion};
\node[block] (sphericalfusion) at (0,-10)
	{spherical \\ fusion};
\node[block] (braided) at (-6,-4)
	{braiding \\ $c_{X,Y} : X \tnsr Y \simeq Y \tnsr X$};
\node[block] (twist) at (-6, -6)
	{twist \\ $\theta_X : X \to X$};
\node[block] (ribbon) at (-6, -8)
	{ribbon \\ $(\theta_X)^* = \theta_{X^*}$};
\node[block] (premodular) at (-4, -10)
	{premodular};
\node[block] (modular) at (-7,-10)
	{modular \\ $\det(S) \neq 0$};
\draw[->] (start) -- (monoidal);
\draw[->] (start) -- (abelian);
\draw[->] (monoidal) -- (rigid);
\draw[->] (monoidal) -- (braided);
\draw[->] (abelian) -- (finitess);
\draw[->] (braided) -- (twist);
\draw[->] (rigid) -- (pivotal);
\draw[->] (rigid) -- (ribbon);
\draw[->] (rigid) -- (multifusion);
\draw[->] (finitess) -- (multifusion);
\draw[->] (twist) -- (ribbon);
\draw[->] (pivotal) -- (spherical);
\draw[->] (pivotal) -- (pivotalmultifusion);
\draw[->] (multifusion) -- (pivotalmultifusion);
\draw[->] (multifusion) -- (fusion);
\draw[->] (ribbon) -- (premodular);
\draw[->] (spherical) -- (sphericalfusion);
\draw[->] (pivotalmultifusion) -- (sphericalfusion);
\draw[->] (fusion) -- (sphericalfusion);
\draw[->] (sphericalfusion) -- (premodular);
\draw[->] (premodular) -- (modular);
%TODO get vim-latex
\end{tikzpicture}
\]

\subsection{Graphical Calculus and Conventions: Pivotal Multifusion Categories}
\par \noindent
\label{s:graphical-multifusion}
%\label{s:appendix}

Most of the categories that we encounter in this thesis
will be pivotal multifusion categories.
We therefore dedicate this section to notation and basic results about
the ``internal'' structures of pivotal multifusion categories,
i.e. objects and morphisms;
the next section will be concerned with the ``external structures'',
i.e. how pivotal multifusion categories relate to other categories
via functors, module structures etc.
It is mostly taken from \ocite{KT} which in turn was
adapted from \ocite{kirillov-stringnet},
modified to accommodate for the non-spherical non-fusion case.
We also point the reader to \ocite{EGNO}*{Chapter 4}
and \ocite{BK} for further reference.

We express morphisms in the language of the graphical calculus
\ocite{penrose-string},
a convenient language to express morphisms and equations about them.
As there are many conventions surrounding the use of the
graphical calculus
(see for example \rmkref{r:rt-mirror}),
we explicitly state our conventions as follows:

\begin{convention}
All diagrams are implicitly blackboard-framed.

Morphism run top to bottom:
\begin{equation}
\ph \in \Hom(V_1 \tnsr \cdots \tnsr V_n, W_1 \tnsr \cdots \tnsr W_m)
\longleftrightarrow
\begin{tikzpicture}
\node[coupon] (ph) at (0,0) {$\ph$};
\draw[<-] (ph) -- (-0.5,1) node[above] {$V_1$};
\draw[<-] (ph) -- (0,1) node[above] {$\cdots$};
\draw[<-] (ph) -- (0.5,1) node[above] {$V_n$};
\draw[->] (ph) -- (-0.5,-1) node[below] {$W_1$};
\draw[->] (ph) -- (0,-1) node[below] {$\cdots$};
\draw[->] (ph) -- (0.5,-1) node[below] {$W_m$};
\end{tikzpicture}
\label{e:convention-down}
\end{equation}
\label{cvn:graphical-down}
\end{convention}

\begin{convention}
%Diagrams are drawn in a horizontal strip that is thickened,
%$\RR \times [0,1] \times [-\veps,\veps]$,
%and legs are attached to top $\RR \times 1 \times 0$
%and bottom $\RR \times 0 \times 0$.
%(TODO draw/combine with previous convention).
We draw diagrams in an ambient space
that is oriented by the standard basis
$((1,0,0),(0,1,0),(0,0,1))$ following ``Fleming's right-hand rule''
($(1,0,0)$ points to the right, $(0,1,0)$ points into the page,
$(0,0,1)$ points up).
\label{cvn:ambient-ort}
\end{convention}

\begin{convention}
The braiding $c$ of a category is drawn as
\begin{equation}
c_{X,Y} = 
\begin{tikzpicture}
\draw[->] (1,0.5) to[out=-90,in=90] (0,-0.5);
\draw[->, overline] (0,0.5) to[out=-90,in=90] (1,-0.5);
\node at (0,0.7) {$X$};
\node at (1,0.7) {$Y$};
\end{tikzpicture}
\label{e:braiding-convention}
\end{equation}
If a diagram is said to be drawn in $\RR \times [0,1] \times [-\veps,\veps]$
with the opposite orientation,
then applying the flipping map on the last coordinate
(to change the orientation back to the standard one)
amounts to changing all such crossings to its opposite.

%TODO make explicit the connection to RT eval and to ribbon graphs.
\label{cvn:braiding}
\end{convention}

Let $\cC$ be a $\kk$-linear pivotal multifusion category,
where $\kk$ is an algebraically closed field of characteristic 0.
In all our formulas and computations, we will be suppressing the
associativity and unit morphisms;
we also suppress the pivotal morphism
$V \simeq V^{**}$ when there is little cause for confusion.

We denote by $\Irr(\cC)$ the set of isomorphism classes of simple objects in $\cC$,
and denote by $\Irr_0(\cC) \subseteq \Irr(\cC)$
the subset of simple objects appearing in the direct sum decomposition of the
unit object $\one$;
it is known \ocite{EGNO}*{Corollary 4.3.2}
that $\one$ decomposes into a direct sum of distinct simples,
so $\End(\one) \cong \bigoplus_{l\in \Irr_0(\cC)} \End(\one_l)$.
We fix a representative $X_i$ for each isomorphism class $i\in \Irr(\cC)$;
abusing language, we will frequently use the same letter $i$ for
denoting both a simple object and its isomorphism class.
Rigidity gives us an involution $-^*$ on $\Irr(\cC)$;
it is known that $l^* = l$ for $l\in \Irr_0(\cC)$.
For $l\in \Irr_0(\cC)$, we may use the notation
$\one_l := X_l$ to emphasize that it is part of the unit.

For $k,l\in \Irr_0(\cC)$,
let $\cC_{kl} := \one_k \tnsr \cC \tnsr \one_l$,
so that $\cC = \bigoplus_{k,l\in \Irr_0(\cC)} \cC_{kl}$.
Any simple $X_i$ is contained in exactly one of these $\cC_{kl}$'s,
or in other words, there are unique $k_i,l_i \in \Irr_0(\cC)$
such that $\one_{k_i} \tnsr X_i \tnsr \one_{l_i} \neq 0$.
Since $\cC_{kl}^* = \cC_{lk}$,
we have that $k_{i^*} = l_i$.

When $\cC$ is spherical fusion, the categorical dimension
is a scalar, defined as a trace, but here the non-simplicity of $\one$
and non-sphericality complicates things.
To avoid confusion, denote by $\delta: V \to V^{**}$
the pivotal morphism.
The \emph{left dimension} of an object $V \in \Obj \cC$
is the morphism
\[
  \ldim{V} := (\one
                \xrightarrow{\coev} V \tnsr V^*
                \xrightarrow{\delta \tnsr \id} V^{**} \tnsr V^*
                \xrightarrow{\ev} \one)
                \in \End(\one)
\]
Similarly, the \emph{right dimension} of $V$ is the morphism
\[
  \rdim{V} := (\one
                \xrightarrow{\coev} \rdual{V} \tnsr V
                \xrightarrow{\id \tnsr \delta^\inv} \rdual{V}\tnsr \prescript{**}{}V
                \xrightarrow{\ev} \one)
                \in \End(\one)
\]
Note that these are \emph{vectors} and not scalars,
since $\one$ may not be simple.
It is easy to see that $\rdim{V} = \ldim{V^*} = \ldim{\rdual{V}}$.
When $\cC$ is spherical, we will drop the superscripts.

When $V = X_i$ is simple, we can interpret its left and right dimensions
as scalars as follows. We have $X_i \in \cC_{k_i l_i}$,
so $\Hom(\one, X_i \tnsr X_i^*) = \Hom(\one, \one_{k_i} \tnsr X_i \tnsr X_i^*)
\simeq \Hom(\one_{k_i}, X_i \tnsr X_i^*)$,
and likewise
$\Hom(X_i \tnsr X_i^*,\one) \simeq \Hom(X_i \tnsr X_i^*,\one_{k_i})$,
so $d_{X_i}^L$ factors through $\one_{k_i}$,
and hence we may interpret $d_{X_i}^L$ as an element of
$\End(\one_{k_i}) \cong \kk$.
Similarly, $d_{X_i}^R$ may be interpreted as an element of
$\End(\one_{l_i}) \cong \kk$.
We denote these scalar dimensions by $d_i^L, d_i^R$,
and fix square roots such that $\sqrt{d_i^L} = \sqrt{d_{i^*}^R}$.
The dimensions of simple objects are nonzero.

The \emph{dimension} of $\cC_{kl}$ is the sum
\begin{equation}\label{e:dimC}
\cD := \sum_{i\in \Irr(\cC_{kl})} d_i^R d_i^L
\end{equation}
or simply
\[
\cD = \sum d_i^2
\]
if $\cC$ is tensor.
By \ocite{ENO2005}*{Proposition 2.17},
this is the same for all pairs $k,l \in \Irr_0(\cC)$,
and by \ocite{ENO2005}*{Theorem 2.3},
they are nonzero.

We will fix fourth roots of $d_i^R$
and $\cD$ such that
$(d_i^L)^{1/4} = (d_{i^*}^R)^{1/4}$.

We define functors $\cC^{\boxtimes n}\to \Vect$ by
\begin{align}
\label{e:vev}
\eval{V_1,\dots,V_n} &=
  \Hom_\cC(\one, V_1\otimes\dots\otimes V_n) \\
\label{e:vev_multi}
\eval{V_1,\dots,V_n}_l &=
  \Hom_\cC(\one_l, V_1\otimes\dots\otimes V_n)
  \text{ for }l\in \Irr_0(\cC) \\
  &\simeq \eval{\one_l, V_1,\ldots,V_n} \nonumber
\end{align}
for any collection $V_1,\dots, V_n$ of objects of $\cC$.
Clearly $\eval{V_1,\ldots,V_n} = \bigoplus_l \eval{V_1,\ldots,V_n}_l$.

Note that the pivotal structure gives functorial isomorphisms
\begin{equation}
\label{e:cyclic}
z\colon\<V_1,\dots,V_n\>\simeq \<V_n, V_1,\dots,V_{n-1}\>
\end{equation}
such that $z^n=\id$ (see \ocite{BK}*{Section 5.3}); thus, up to a canonical
isomorphism, the space $\<V_1,\dots,V_n\>$ only depends on the cyclic order
of $V_1,\dots, V_n$.
In general, $z$ does not preserve the direct sum decomposition
of $\eval{V_1,\ldots,V_n}$ above.
For example, for a simple $X_i \in \cC_{k_i l_i}$,
we have $z : \eval{X_i, X_i^*}_{k_i} \simeq \eval{X_i^*, X_i}_{l_i}$.

We will commonly use graphic presentation of morphisms in a category,
representing a morphism
$W_1\otimes \dots \otimes  W_m\to V_1\otimes\dots\otimes V_n$ by a
diagram with $m$ strands at the top, labeled by $W_1, \dots, W_m$ and $n$ strands at the bottom, labeled
$V_1,\dots, V_n$ (Note: this differs from the convention in many other papers!).
We will allow diagrams with  with oriented strands,
using the convention that a strand labeled by $V$ is the same as the strands labeled by $V^*$ with opposite orientation
(suppressing isomorphisms $V\simeq V^{**}$).

We will show a morphism
$\ph\in \eval{V_1,\dots, V_n}$ by a
round circle labeled by $\ph$
with outgoing edges labeled $V_1, \dots, V_n$ in
counter-clockwise order,
as shown in \eqnref{e:morphism-semi-circle}
By \eqnref{e:cyclic} and the fact that $z^n=\id$,
this is unambiguous.
We will show a morphism
$\ph\in \eval{V_1,\ldots,V_n}_l$ by
a semicircle labeled by $\ph$ and $l$ as shown in
\eqnref{e:morphism-semi-circle};
in contrast with a circular node,
a semicircle imposes a strict ordering on the outgoing legs,
not just a cyclic ordering.
\begin{equation}
\label{e:morphism-semi-circle}
%\label{f:coloring}
\begin{tikzpicture}
\node[morphism] (ph) at (0,0) {$\ph$};
\draw[->] (ph)-- +(240:1cm) node[pos=0.7, left] {$V_n$} ;
\draw[->] (ph)-- +(180:1cm);
\draw[->] (ph)-- +(120:1cm);
\draw[->] (ph)-- +(60:1cm);
\draw[->] (ph)-- +(0:1cm);
\draw[->] (ph)-- +(-60:1cm) node[pos=0.7, right] {$V_1$};
\end{tikzpicture}
%%%%%%%%
\hspace{20pt}
%%%%%%%%
\begin{tikzpicture}
\node[semi_morphism={180,\small $l$}] (ph) at (0,0) {$\ph$};
\draw[->] (ph)-- +(210:1cm) node[pos=0.7, above] {$V_1$} ;
\draw[->] (ph)-- +(240:1cm);
\draw[->] (ph)-- +(270:1cm);
\draw[->] (ph)-- +(300:1cm);
\draw[->] (ph)-- +(-30:1cm) node[pos=0.7, above] {$V_n$};
\end{tikzpicture}
\end{equation}

We have a natural composition map
\begin{equation}\label{e:composition}
\begin{aligned}
 \<V_1,\dots,V_n, X\>\otimes\<X^*, W_1,\dots,
W_m\>&\to\<V_1,\dots,V_n, W_1,\dots, W_m\>\\
\ph\otimes\psi\mapsto \ph\cc{X}\psi= \ev_{X^*}\circ (\ph\otimes\psi)
\end{aligned}
\end{equation}
where $\ev_{X^*}\colon X\otimes  X^*\to \one$ is the evaluation morphism
(the pivotal structure is suppressed).
%; see
%\firef{f:local_rels1} for an illustration of this operation).

Repeated applications of the composition map above gives us
a non-degenerate pairing
\begin{equation}
\label{e:pairing}
\ev: \eval{V_1,\dots,V_n}\otimes \eval{V_n^*,\dots,V_1^*}\to \End(\one)
\end{equation}

More precisely, when restricted to the subspaces,
\begin{equation}
\label{e:pairing_sub}
\eval{V_1,\dots,V_n}_k \otimes \eval{V_n^*,\dots,V_1^*}_l \to \End(\one)
\end{equation}
the pairing is 0 if $k\neq l$, and is non-degenerate,
factoring through $\End(\one_k) \simeq \kk$, if $k=l$.
The pairing is illustrated below for
$\ph_1 \in \eval{V_1,\dots,V_n}_k,
\ph_2 \in \eval{V_n^*,\dots,V_1^*}_l$:
\[
(\ph_1, \ph_2) =
\begin{tikzpicture}
\begin{scope}[shift={(0,0.3)}]
\node[semi_morphism={180,\small $k$}] (ph1) at (0,0) {$\ph_1$};
\node[semi_morphism={180,\small $l$}] (ph2) at (1,0) {$\ph_2$};
\draw (ph1) to[out=-40,in=-140] (ph2);
%\draw (ph1) ..controls (0.2,-0.4) and (0.8,-0.4) .. (ph2);
\draw (ph1) ..controls (0,-0.5) and (1,-0.5) .. (ph2);
\draw (ph1) ..controls (-0.5,-0.8) and (1.5,-0.8) .. (ph2);
\draw (ph1) ..controls (-1.5, -1) and (2.5, -1) .. (ph2);
\end{scope}
\end{tikzpicture}
\overset{\text{if }\cC\text{ not spherical}}
  {\not\equiv}
\begin{tikzpicture}
\begin{scope}[shift={(0,0.3)}]
\node[semi_morphism={180,\small $k$}] (ph1) at (0,0) {$\ph_1$};
\node[semi_morphism={180,\small $l$}] (ph2) at (1,0) {$\ph_2$};
\draw (ph1) to[out=-40,in=-140] (ph2);
%\draw (ph1) ..controls (0.2,-0.4) and (0.8,-0.4) .. (ph2);
\draw (ph1) ..controls (0,-0.5) and (1,-0.5) .. (ph2);
\draw (ph1) ..controls (-0.5,-0.8) and (1.5,-0.8) .. (ph2);
\draw (ph1) ..controls (-1, 0) and (-0.3, 0.5) ..
(0.5, 0.5) .. controls (1.3, 0.5) and (2, 0) .. (ph2);
\end{scope}
\end{tikzpicture}
= (z^\inv \cdot \ph_1, z \cdot \ph_2)
\]
Thus, we have functorial isomorphisms
\begin{equation}\label{e:dual}
\<V_1,\dots,V_n\>^*\simeq \<V_n^*,\dots,V_1^*\>
\end{equation}

When $\cC$ is spherical,
this pairing is compatible with the cyclic permutations \eqref{e:cyclic},
in the sense that
$(\ph_1,\ph_2) = (z \cdot \ph_1, z^\inv \cdot \ph_2)$.
Compatibility fails when $\cC$ is not spherical;
for example, it is easy to see that for
$\ph_1 = \ph_2 = \coev_{X_i} \in \eval{X_i,X_i^*}$,
one has $(\ph_1,\ph_2) = d_i^L$,
while for
$z \cdot \ph_1 = z^\inv \cdot \ph_2 = \coev_{X_i^*} \in \eval{X_i^*,X_i}$,
one has instead $(z \cdot \ph_1, z^\inv \ph_2) = d_i^R$.

\begin{lemma}
\label{l:pairing_property}
For $\ph \in \eval{V_1,\dots,V_n}_l,
\ph' \in \eval{V_n^*,\ldots,V_1^*}_l,
\psi \in \eval{W_n^*,\dots,W_1^*}_l$,
and
$f \in \Hom(V_1 \tnsr \cdots \tnsr V_n, W_1 \tnsr \cdots \tnsr W_n)$,
we have
\begin{align}
  (\ph,\ph') &= (\ph',\ph) \\
  (f \circ \ph_1, \ph_2) &= (\ph_1, f^* \circ \ph_2)
\end{align}
\end{lemma}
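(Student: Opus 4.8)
The plan is to reduce both identities to the defining property of the composition maps $\cc{X}$ and the non-degenerate pairing \eqref{e:pairing}, using the graphical calculus. For the first identity $(\ph,\ph') = (\ph',\ph)$: unwinding the definition, $(\ph,\ph')$ is obtained by repeatedly applying the composition map \eqref{e:composition}, which at each stage glues a strand $V_j$ of $\ph$ to the strand $V_j^*$ of $\ph'$ via $\ev_{V_j^*}$; the result is a closed diagram (a scalar in $\End(\one_l)$). Swapping the roles of $\ph$ and $\ph'$ glues the same strands but now via $\ev_{V_j}$ rather than $\ev_{V_j^*}$. So the content of the first identity is that the closed diagram built from $\ph,\ph'$ by the ``$\ev$ on one side'' recipe equals the one built by the ``$\ev$ on the other side'' recipe. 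I would prove this by an isotopy argument: the pairing $(\ph,\ph')$ can be drawn as in the figure preceding \lemref{l:pairing_property} (a diagram with $\ph$ on the left, $\ph'$ on the right, and $n$ nested cap-arcs joining matching strands), and rotating this picture by $180^\circ$ in the plane exchanges the two semicircular nodes while carrying the cap-arcs to cap-arcs, exhibiting the same scalar. One must check that this rotation is a legal move in the graphical calculus — it is, since both nodes are genuine morphisms $\one_l \to (\cdots)$ and the diagram lies in a disk — and that the labels match up: rotating $\eval{V_1,\dots,V_n}_l$ reads off the strands in the reversed order, which is exactly the cyclic-order-respecting identification underlying \eqref{e:dual}, so no spherical hypothesis is needed here because both nodes are rotated together.

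For the second identity $(f \circ \ph_1, \ph_2) = (\ph_1, f^* \circ \ph_2)$: here $f \in \Hom(V_1 \tnsr \cdots \tnsr V_n, W_1 \tnsr \cdots \tnsr W_n)$, $\ph_1 \in \eval{V_1,\dots,V_n}_l$, so $f \circ \ph_1 \in \eval{W_1,\dots,W_n}_l$ and it pairs against $\ph_2 \in \eval{W_n^*,\dots,W_1^*}_l$; on the other side, $f^* \in \Hom(W_1^* \tnsr \cdots \tnsr W_n^*, V_1^* \tnsr \cdots \tnsr V_n^*)$ (read with the appropriate ordering), so $f^* \circ \ph_2 \in \eval{V_n^*,\dots,V_1^*}_l$ pairs against $\ph_1$. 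Graphically, the left-hand side is the closed diagram: $\ph_1$ at bottom, the box $f$ stacked above it, then all strands capped off into $\ph_2$. The right-hand side is: $\ph_2$, then the box $f^*$, then capping into $\ph_1$. The claim is that these two closed diagrams are equal, and this is precisely the defining adjunction/duality property of $f^* $ — namely that sliding a box $f$ across a system of $\ev$-caps turns it into $f^*$ on the other side, which is the string-diagram form of the definition of the dual morphism given in the excerpt (the composition defining $f^*$ via $\coev$, $\id \tnsr f \tnsr \id$, $\ev$). I would make this precise by induction on $n$: for $n=1$ it is the statement that $\ev_{W}\circ(\id_{W^*}\tnsr f) = \ev_{V}\circ(f^*\tnsr \id_{V})$ composed appropriately — i.e. $f^*$ is characterized by $\ev_W \circ (f^* \tnsr \id) = \ev_? \circ (\id \tnsr f)$ type relations — which is immediate from the definition of $f^*$ and the zig-zag identities; the inductive step peels off one strand at a time using \eqref{e:composition} and the fact that $f$ factors through the tensor product so one $\ev_{X^*}$ at a time can be commuted past it.

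The main obstacle I anticipate is bookkeeping rather than conceptual: keeping the orderings straight. Because a semicircular node $\eval{-}_l$ carries a \emph{strict} (not merely cyclic) ordering on its legs, I must be careful that $f^*$ is interpreted with the correct strand order — the definition of the left dual of a morphism in the excerpt produces $f^* \in \Hom(W^*, V^*)$ for a single object, and for a tensor product $W_1 \tnsr \cdots \tnsr W_n$ the dual is $W_n^* \tnsr \cdots \tnsr W_1^*$ with the order \emph{reversed}, so $f^*$ naturally lives in $\Hom(W_n^* \tnsr \cdots \tnsr W_1^*, V_n^* \tnsr \cdots \tnsr V_1^*)$, which is exactly what is needed for $f^* \circ \ph_2$ to make sense with $\ph_2 \in \eval{W_n^*,\dots,W_1^*}_l$. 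Once this indexing convention is pinned down, both identities follow from isotopy invariance of the graphical calculus together with the zig-zag identities, with no need for sphericality; I would state explicitly at the start of the proof that we suppress associativity, unit, and pivotal isomorphisms throughout, as licensed by \thmref{t:maclane-strictness} and the conventions of \secref{s:graphical-multifusion}.
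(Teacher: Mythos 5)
The paper offers no argument here (its proof reads ``straightforward from definitions''), so the only question is whether your argument stands on its own. Your treatment of the second identity is essentially correct: writing $A=V_1\tnsr\cdots\tnsr V_n$ and $B=W_1\tnsr\cdots\tnsr W_n$, the claim is $\tilde{\ev}_B\circ(f\tnsr\id_{B^*})=\tilde{\ev}_A\circ(\id_A\tnsr f^*)$, which is naturality of evaluation plus the zig-zag identities. The one input you should make explicit is that the pairing \eqref{e:pairing} is built from \emph{right} evaluations $\ev_{X^*}\circ(\delta_X\tnsr\id)$ while $f^*$ is defined via \emph{left} duality, so you need the standard pivotal compatibility $\rdual{f}=f^*$ under $\delta$; this is where pivotality (not just rigidity) enters.

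The first identity is where your argument has a genuine gap. A rigid $180^\circ$ rotation of the closed diagram is a planar isotopy, but it leaves both semicircular nodes upside down with the cap-arcs on top; to recover the \emph{standard} diagram for $(\ph',\ph)$ you must then rotate each node by $180^\circ$ about its own center, dragging the arcs along. Rotating a node is exactly the operation governed by the cyclic map $z$ of \eqref{e:cyclic}, and the display immediately preceding the lemma warns that $(\ph_1,\ph_2)\neq(z^{-1}\ph_1,z\,\ph_2)$ in the non-spherical case — so ``both nodes are rotated together, hence no spherical hypothesis is needed'' is precisely the assertion that needs proof, not a justification of it. One must check that the pivotal insertions produced by un-rotating the two nodes cancel against the rearrangement of the arcs; your write-up never does this bookkeeping, and it is the entire content of the identity.

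A clean repair that avoids the isotopy question altogether: since $\delta$ is monoidal, the nested right evaluations in $(\ph,\ph')$ assemble into $\tilde{\ev}_A$ and the nested evaluations in $(\ph',\ph)$ into $\ev_A$, for $A=V_1\tnsr\cdots\tnsr V_n$. Because $\one_l^*\cong\one_l$, every $\ph'\in\eval{V_n^*,\dots,V_1^*}_l=\Hom(\one_l,A^*)$ equals $f^*$ for a unique $f\in\Hom(A,\one_l)$. Then naturality of $\tilde{\ev}$ gives $(\ph,f^*)=\tilde{\ev}_A\circ(\id_A\tnsr f^*)\circ\ph=f\circ\ph$, and naturality of $\ev$ gives $(f^*,\ph)=\ev_A\circ(f^*\tnsr\id_A)\circ\ph=f\circ\ph$, proving (1); taking $\ph_2=g^*$ and using $(g\circ f)^*=f^*\circ g^*$ gives (2) by the same computation. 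This route makes both identities corollaries of the same duality facts and keeps all manipulations algebraic.
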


\begin{proof}
Straightforward from definitions.
\end{proof}

We will make two additional conventions related to the graphic presentation of morphisms.

\begin{notation}\label{n:dashed}
A dashed line in the picture stands for the sum of all colorings of an edge by
simple objects $i$, each taken with coefficient $d_i^R$:
     \begin{equation} \label{e:regular_color}
          \begin{tikzpicture}
        \draw[->, regular] (0, 0.5)--(0, -0.5);
       \end{tikzpicture}
      \;
      =\sum_{i\in \Irr(\cC)} d_i^R \quad
      \begin{tikzpicture}
        \draw[->] (0, 0.5)--(0, -0.5) node[pos=0.8, right] {$i$};
       \end{tikzpicture}
     \end{equation}
\end{notation}
When $\cC$ is spherical, the orientation of such a dashed line is irrelevant.

\begin{notation}\label{n:summation}
Let $\cC$ be spherical.
If a figure contains a pair of circles, one with outgoing edges labeled $V_1,\dots, V_n$ and
the other with edges labeled $V_n^*,\dots, V_1^*$ , and the vertices  are
labeled by the same letter $\al$  (or $\be$, or \dots)
it will stand for summation over the dual bases:
\begin{equation}\label{e:summation_convention}
%%%%%%%%
\begin{tikzpicture}
\node[morphism] (ph) at (0,0) {$\al$};
\draw[->] (ph)-- +(240:1cm) node[pos=0.7, left] {$V_n$};
\draw[->] (ph)-- +(180:1cm);
\draw[->] (ph)-- +(120:1cm);
\draw[->] (ph)-- +(60:1cm);
\draw[->] (ph)-- +(0:1cm);
\draw[->] (ph)-- +(-60:1cm) node[pos=0.7, right] {$V_1$};
\node at (1.5,0) {$\tnsr$};
\node[morphism] (ph') at (3,0) {$\al$};
\draw[->] (ph')-- +(240:1cm) node[pos=0.7, left]  {$V^{*}_1$};
\draw[->] (ph')-- +(180:1cm);
\draw[->] (ph')-- +(120:1cm);
\draw[->] (ph')-- +(60:1cm);
\draw[->] (ph')-- +(0:1cm);
\draw[->] (ph')-- +(-60:1cm) node[pos=0.7, right] {$V^{*}_n$};
\end{tikzpicture}
%%%%%%%%
\quad := \sum_\al\quad
%%%%%%%%
\begin{tikzpicture}
\node[morphism] (ph) at (0,0) {$\ph_\al$};
\draw[->] (ph)-- +(240:1cm) node[pos=0.7, left] {$V_n$};
\draw[->] (ph)-- +(180:1cm);
\draw[->] (ph)-- +(120:1cm);
\draw[->] (ph)-- +(60:1cm);
\draw[->] (ph)-- +(0:1cm);
\draw[->] (ph)-- +(-60:1cm) node[pos=0.7, right] {$V_1$};
\node at (1.5,0) {$\tnsr$};
\node[morphism] (ph') at (3,0) {$\ph {}^\al$};
\draw[->] (ph')-- +(240:1cm) node[pos=0.7, left] {$V^{*}_1$};
\draw[->] (ph')-- +(180:1cm);
\draw[->] (ph')-- +(120:1cm);
\draw[->] (ph')-- +(60:1cm);
\draw[->] (ph')-- +(0:1cm);
\draw[->] (ph')-- +(-60:1cm) node[pos=0.7, right] {$V^{*}_n$};
\end{tikzpicture}
%%%%%%%%
\end{equation}
where $\ph_\al\in \<V_1,\dots, V_n\>$, $\ph^\al\in \<V_n^*,\dots, V_1^*\>$
are dual bases with respect to pairing \eqnref{e:pairing}.
Note that the morphisms can be located far apart.

When $\cC$ is not spherical, the pairing is no longer
compatible with $z$ from \eqnref{e:cyclic},
so such notation can only make sense with semicircles:
\begin{equation}
\begin{tikzpicture}
  \node[semi_morphism={180,}] (al1) at (0,0.3) {$\al$};
\draw[->] (al1) -- +(-150:1cm) node[pos=0.8, left] {$V_1$};
\draw[->] (al1) -- +(-110:1cm);
\draw[->] (al1) -- +(-70:1cm);
\draw[->] (al1) -- +(-30:1cm) node[pos=0.8, right] {$V_n$};
\node at (1.5,0) {$\tnsr$};
\node[semi_morphism={180,}] (al2) at (3,0.3) {$\al$};
\draw[->] (al2) -- +(-150:1cm) node[pos=0.8, left] {$V_n^*$};
\draw[->] (al2) -- +(-110:1cm);
\draw[->] (al2) -- +(-70:1cm);
\draw[->] (al2) -- +(-30:1cm) node[pos=0.8, right] {$V_1^*$};
\end{tikzpicture}
%%%%%%%%
\quad := \sum_{\al, l} \quad
%%%%%%%%%
\begin{tikzpicture}
  \node[semi_morphism={180,\small $l$}] (al1) at (0,0.3) {$\ph_\al$};
\draw[->] (al1) -- +(-150:1cm) node[pos=0.8, left] {$V_1$};
\draw[->] (al1) -- +(-110:1cm);
\draw[->] (al1) -- +(-70:1cm);
\draw[->] (al1) -- +(-30:1cm) node[pos=0.8, right] {$V_n$};
\node at (1.5,0) {$\tnsr$};
\node[semi_morphism={180,\small $l$}] (al2) at (3,0.3) {\small $\ph^\al$};
\draw[->] (al2) -- +(-150:1cm) node[pos=0.8, left] {$V_n^*$};
\draw[->] (al2) -- +(-110:1cm);
\draw[->] (al2) -- +(-70:1cm);
\draw[->] (al2) -- +(-30:1cm) node[pos=0.8, right] {$V_1^*$};
\end{tikzpicture}
\end{equation}
where $\ph_\al\in \eval{V_1,\dots, V_n}_l$, $\ph^\al\in \eval{V_n^*,\dots, V_1^*}_l$
are dual bases with respect to the pairing \eqnref{e:pairing}.

\end{notation}

The following lemma illustrates the use of the notation above.

\begin{lemma}
\label{l:summation}
For any $V_1,\ldots,V_n \in \cC$, we have
$$
\begin{tikzpicture}
%\draw[<-] (0,-1) -- (0,1) node[above] {$X$};
\draw[->] (-0.4,1.2) node[above] {$V_1$} -- (-0.4,-1.2) node[below] {$V_1$};
\draw[->] (0,1.2) node[above] {$\cdots$} -- (0,-1.2) node[below] {$\cdots$};
\draw[->] (0.4,1.2) node[above] {$V_n$} -- (0.4,-1.2) node[below] {$V_n$};
\end{tikzpicture}
%%%%%%%%%%%%%%%%%%%%%%%
\quad = \quad
%%%%%%%%%%%%%%%%%%%%%%%
\sum_{i\in \Irr(\cC)} d_i^R
\begin{tikzpicture}
\node[semi_morphism={90,}] (T) at (0, 0.5) {$\al$};
\node[semi_morphism={90,}] (B) at (0, -0.5) {$\al$};
%\draw[<-] (T) -- +(0, 0.5) node[above] {$X$};
\draw (T) -- +(-0.3, 0.7) node[above left] {$V_1$};
\draw (T) -- +(0, 0.7) node[above] {$\cdots$};
\draw (T) -- +(0.3, 0.7) node[above right] {$V_n$};
%\draw[->] (B) -- +(0, -0.5) node[below] {$X$};
\draw[->] (B) -- +(-0.3, -0.7) node[below left] {$V_1$};
\draw[->] (B) -- +(0, -0.7) node[below] {$\cdots$};
\draw[->] (B) -- +(0.3, -0.7) node[below right] {$V_n$};
\draw[midarrow={0.6}] (T)--(B) node[pos=0.5, right] {$i$};
\end{tikzpicture}
%%%%%%%%%%%%%%%%%%%%%%%
\quad = \quad
%%%%%%%%%%%%%%%%%%%%%%%
\begin{tikzpicture}
\node[semi_morphism={90,}] (T) at (0, 0.5) {$\al$};
\node[semi_morphism={90,}] (B) at (0, -0.5) {$\al$};
%\draw[<-] (T) -- +(0, 0.5) node[above] {$X$};
\draw (T) -- +(-0.3, 0.7) node[above left] {$V_1$};
\draw (T) -- +(0, 0.7) node[above] {$\cdots$};
\draw (T) -- +(0.3, 0.7) node[above right] {$V_n$};
%\draw[->] (B) -- +(0, -0.5) node[below] {$X$};
\draw[->] (B) -- +(-0.3, -0.7) node[below left] {$V_1$};
\draw[->] (B) -- +(0, -0.7) node[below] {$\cdots$};
\draw[->] (B) -- +(0.3, -0.7) node[below right] {$V_n$};
\draw[->, regular] (T)--(B);
\end{tikzpicture}
%%%%%%%%%%%%%%%%%%%%%%%
\quad = \quad
\sum_{i\in \Irr(\cC)} d_i^L
%%%%%%%%%%%%%%%%%%%%%%%
\begin{tikzpicture}
\node[semi_morphism={270,}] (T) at (0, 0.5) {$\al$};
\node[semi_morphism={270,}] (B) at (0, -0.5) {$\al$};
%\draw[<-] (T) -- +(0, 0.5) node[above] {$X$};
\draw (T) -- +(-0.3, 0.7) node[above left] {$V_1$};
\draw (T) -- +(0, 0.7) node[above] {$\cdots$};
\draw (T) -- +(0.3, 0.7) node[above right] {$V_n$};
%\draw[->] (B) -- +(0, -0.5) node[below] {$X$};
\draw[->] (B) -- +(-0.3, -0.7) node[below left] {$V_1$};
\draw[->] (B) -- +(0, -0.7) node[below] {$\cdots$};
\draw[->] (B) -- +(0.3, -0.7) node[below right] {$V_n$};
\draw[midarrow={0.6}] (T)--(B) node[pos=0.5, right] {$i$};;
\end{tikzpicture}
%%%%%%%%%%%%%%%%%%%%%%%
\quad = \quad
%%%%%%%%%%%%%%%%%%%%%%%
\begin{tikzpicture}
\node[semi_morphism={270,}] (T) at (0, 0.5) {$\al$};
\node[semi_morphism={270,}] (B) at (0, -0.5) {$\al$};
%\draw[<-] (T) -- +(0, 0.5) node[above] {$X$};
\draw (T) -- +(-0.3, 0.7) node[above left] {$V_1$};
\draw (T) -- +(0, 0.7) node[above] {$\cdots$};
\draw (T) -- +(0.3, 0.7) node[above right] {$V_n$};
%%\draw[->] (B) -- +(0, -0.5) node[below] {$X$};
\draw[->] (B) -- +(-0.3, -0.7) node[below left] {$V_1$};
\draw[->] (B) -- +(0, -0.7) node[below] {$\cdots$};
\draw[->] (B) -- +(0.3, -0.7) node[below right] {$V_n$};
\draw[<-, regular] (T)--(B);
\end{tikzpicture}
$$
\end{lemma}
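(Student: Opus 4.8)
The plan is to read the lemma as a ``resolution of the identity'' and then obtain the several displayed forms as reformulations of a single computation. Since $V_1\tnsr\cdots\tnsr V_n$ is just one object $W\in\cC$, and since $\eval{V_1,\dots,V_n,X_i}=\eval{W,X_i}$ while the displayed diagrams do not record how the $n$ strands recombine, I would first reduce to $n=1$ and write $W:=V_1$ (the general statement is literally the same equation read with the bundled strand). Then I would record the semisimple decomposition of $\id_W$: for each $i\in\Irr(\cC)$ the composition pairing $\Hom(W,X_i)\tnsr\Hom(X_i,W)\to\End(X_i)\cong\kk$ is perfect (here we use that $X_i$ is simple; note both hom-spaces vanish unless $W$ has a summand in $\cC_{k_il_i}$, which accounts for the multifusion grading and for the sum over $l$ hidden in Notation~\ref{n:summation}). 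Semisimplicity of $\cC$ is precisely the statement that $\id_W=\sum_{i\in\Irr(\cC)}\sum_\al f_{i,\al}\circ g_i^\al$, where $\{f_{i,\al}\}_\al$ is a basis of $\Hom(X_i,W)$ and $\{g_i^\al\}_\al$ the dual basis of $\Hom(W,X_i)$ under the composition pairing.

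Next I would transport this to the graphical calculus. Using rigidity one has isomorphisms $\Hom(X_i,W)\xrightarrow{\sim}\eval{W,X_i^*}$ and $\Hom(W,X_i)\xrightarrow{\sim}\eval{X_i,W^*}$, obtained by bending the $X_i$-leg with $\coev$ and $\ev$; the two target spaces are dual under the pairing \eqnref{e:pairing}. The key point is that these bendings do not intertwine the composition pairing with \eqnref{e:pairing} on the nose: they close up an $X_i$-loop, which acts by a scalar on $\End(X_i)\cong\kk$, and by the computation recorded just above for $\ph_1=\ph_2=\coev_{X_i}$ that scalar is $d_i^L$ for one bending direction and $d_i^R$ for the other. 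Hence, if $\{\ph_{i,\al}\}$ and $\{\ph_i^\al\}$ are the \eqnref{e:pairing}-dual bases appearing in Notation~\ref{n:summation}, then the corresponding composition-dual pair is $\{f_{i,\al}\}$ together with $\{d_i^{R}\,g_i^\al\}$; plugging into the decomposition of $\id_W$ and using that capping $\ph_{i,\al}$ with $\ph_i^\al$ along the $X_i$-strand realizes $f_{i,\al}\circ g_i^\al$ (a zig-zag identity, cf.\ Lemma~\ref{l:pairing_property}), one gets exactly $\id_W=\sum_i d_i^{R}\sum_\al(\ph_{i,\al}\text{ capped with }\ph_i^\al)$, the first displayed equality. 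The second equality is then just the definition of the dashed strand (Notation~\ref{n:dashed}). For the last two equalities I would rerun the argument with the $X_i$-leg bent the other way, producing the coefficient $d_i^{L}$ and the oppositely rotated semicircles; equivalently, reversing the orientation of the dashed strand replaces $i$ by $i^*$ and turns $\sum_i d_i^{R}(\cdots)$ into $\sum_i d_{i^*}^{R}(\cdots)=\sum_i d_i^{L}(\cdots)$ since $d_{i^*}^{R}=d_i^{L}$, so all four expressions agree.

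The main obstacle is the bookkeeping of the dimension factors in the non-spherical multifusion setting: one must keep straight which dual ($X_i^*$ versus $\rdual{X_i}$) and which of $\ev,\coev$ appears in each bending, which component $\one_l$ a given hom-space lives over, and the fixed square roots $\sqrt{d_i^L}=\sqrt{d_{i^*}^R}$; the asymmetry $d_i^L\neq d_i^R$ is exactly the reason two genuinely different (though equal) forms of the resolution, with the two orientations of the dashed strand and the two rotations of the semicircles, must both be stated. Everything else is formal and follows the conventions already fixed in this section.
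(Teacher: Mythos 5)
Your proposal is correct and follows essentially the same route as the paper, which only sketches the argument ("show it for $V_1\tnsr\cdots\tnsr V_n$ simple, then extend to direct sums", citing the spherical case in Kirillov's string-net paper): your semisimple decomposition of $\id_W$ is exactly that reduction, and your careful tracking of which bending produces $d_i^R$ versus $d_i^L$ supplies the one detail the paper leaves implicit in the non-spherical multifusion setting.
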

Proof of this lemma is straightforward:
first show it for $V_1 \tnsr \cdots \tnsr V_n = \text{simple}$,
then follows for direct sums;
interested reader can find a proof
for spherical $\cC$ in \ocite{kirillov-stringnet}.

\begin{lemma}
\label{l:summation-alpha-tunnel}
Morphisms can ``tunnel through $\al$'':
\begin{equation}
\begin{tikzpicture}
\node[small_morphism] (a1) at (0,0.8) {$\al$};
\node[small_morphism] (a2) at (1.5,0.8) {$\al$};
\node[small_morphism] (f) at (0,0) {\smallerer $f$};
\draw[midarrow={0.6}] (a1) -- (f) node[pos=0.5,left] {\smallerer $V$};
\draw[->] (f) -- (0,-0.8) node[below] {\smallerer $W$};
\node at (0.75,0) {$\tnsr$};
\draw[->] (a2) -- (1.5,-0.8) node[below] {\smallerer $V^*$};
\end{tikzpicture}
%%%%%%%%%%%%%%%%%%%%%%%%%%%%%%%%%%%%%%
=
%%%%%%%%%%%%%%%%%%%%%%%%%%%%%%%%%%%%%%
\begin{tikzpicture}
\node[small_morphism] (a1) at (0,0.8) {$\al$};
\node[small_morphism] (a2) at (1.5,0.8) {$\al$};
\node[small_morphism] (f) at (1.5,0) {\smallerer $f^*$};
\draw[->] (a1) -- (0,-0.8) node[below] {\smallerer $W$};
\draw[midarrow={0.6}] (a2) -- (f) node[pos=0.5,right] {\smallerer $W^*$};
\draw[->] (f) -- (1.5,-0.8) node[below] {\smallerer $V^*$};
\node at (0.75,0) {$\tnsr$};
\end{tikzpicture}
\end{equation}
\end{lemma}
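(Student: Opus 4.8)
The plan is to unfold Notation~\ref{n:summation} and reduce everything to the defining property of dual bases together with the adjunction identity of Lemma~\ref{l:pairing_property}. After the unfolding, both sides of the asserted equation live in $\langle W\rangle\otimes\langle V^*\rangle$ (a finite sum over $l\in\Irr_0(\cC)$ in the non-spherical case). Fixing dual bases $\{\ph_\al\}\subset\langle V\rangle$, $\{\ph^\al\}\subset\langle V^*\rangle$ and $\{\psi_\al\}\subset\langle W\rangle$, $\{\psi^\al\}\subset\langle W^*\rangle$ with respect to the pairing~\eqref{e:pairing}, the left-hand side reads $\sum_\al (f\circ\ph_\al)\otimes\ph^\al$ and the right-hand side reads $\sum_\al \psi_\al\otimes(f^*\circ\psi^\al)$, where $\circ$ is ordinary composition ($\one\to V\to W$ on the left, $\one\to W^*\to V^*$ on the right). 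So it suffices to prove this equality of elements.

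To compare the two sides I would pair the second tensor factor against an arbitrary $\chi\in\langle V\rangle$ using the nondegenerate pairing $\langle V^*\rangle\otimes\langle V\rangle\to\End(\one)$ of~\eqref{e:pairing}. On the left, the dual-basis property collapses the sum: $\sum_\al(f\circ\ph_\al)\,(\ph^\al,\chi)=f\circ\big(\sum_\al\ph_\al\,(\ph^\al,\chi)\big)=f\circ\chi$. On the right, I first use Lemma~\ref{l:pairing_property} (together with the suppressed pivotal identification $(f^*)^*=f$) to move $f^*$ across the pairing, $(f^*\circ\psi^\al,\chi)=(\psi^\al,f\circ\chi)$, and then collapse the remaining dual-basis sum to get $\sum_\al\psi_\al\,(\psi^\al,f\circ\chi)=f\circ\chi$. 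Since the two pairings agree for every $\chi$ and~\eqref{e:pairing} is nondegenerate, the two sides of the lemma coincide. Conceptually this is the elementary fact that for a linear map $g$ of finite-dimensional spaces, $(g\otimes\id)$ applied to the copairing of the source equals $(\id\otimes g^\vee)$ applied to the copairing of the target, both being the element that corresponds to $g$; here $g$ is post-composition by $f$ and $g^\vee$ is post-composition by $f^*$, the match $g^\vee\leftrightarrow f^*$ being exactly Lemma~\ref{l:pairing_property}. (A purely diagrammatic variant would instead resolve the $W$-strand as a sum over simple-colored dual pairs via Lemma~\ref{l:summation} and slide $f$ through, but the argument above is cleaner.)

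Only two points need care rather than insight, and I expect the bookkeeping to be the sole obstacle. First, in the non-spherical case Notation~\ref{n:summation} also sums over $l\in\Irr_0(\cC)$; one runs the argument one graded summand at a time, using that~\eqref{e:pairing_sub} is nondegenerate on each $\langle-\rangle_l\otimes\langle-\rangle_l$ and zero across distinct $l$, and that composition with $f$, with $f^*$, and the pairing all respect this decomposition (no homogeneity of $f$ is required, just linearity over the decomposition). Second, one must apply Lemma~\ref{l:pairing_property} with the single object $V$ (resp.\ $W$) in the correct slot, so that no $z$-twist from~\eqref{e:cyclic} — the would-be source of spurious $d_i^L/d_i^R$ factors when $\cC$ is not spherical — creeps in; since $f\colon V\to W$ is a genuine morphism rather than a cyclic rearrangement, Lemma~\ref{l:pairing_property} applies verbatim and no such factor appears.
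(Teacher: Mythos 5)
Your proof is correct, and it takes a slightly different (though ultimately equivalent) route from the paper's. The paper's one-line proof decomposes $V$ and $W$ into simples so that $f$ becomes a collection of matrices, and reads the identity off as the elementary fact that a matrix and its transpose act dually on dual bases. You stay coordinate-free: you test both sides against an arbitrary $\chi\in\eval{V}$ under the non-degenerate pairing \eqref{e:pairing} and reduce the claim to the adjunction $(f\circ\ph_1,\ph_2)=(\ph_1,f^*\circ\ph_2)$ of \lemref{l:pairing_property} together with the dual-basis expansion. These are two packagings of the same copairing identity; yours avoids choosing a decomposition into simples and, as you note, makes it manifest that no $d^L/d^R$ discrepancies can enter in the non-spherical case, since only the un-rotated pairings on $\eval{V}\otimes\eval{V^*}$ and $\eval{W}\otimes\eval{W^*}$ are ever invoked (for a single tensor slot the cyclic map $z$ is the identity, so the semicircle bookkeeping is vacuous here). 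Your graded treatment of the $\End(\one)$-valued pairing, one $l\in\Irr_0(\cC)$ at a time, is the correct way to handle the multifusion case, and the argument applies verbatim when $V$ and $W$ are themselves tensor products, which is how the lemma is used in \lemref{l:halfbrd} and \eqnref{e:ups-isom-01}.
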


\begin{proof}
By identify $V,W$ with a direct sum of simples,
$f$ becomes a collection of matrices (one for each simple),
then the lemma follows easily from linear algebra.
(See \ocite{kirillov-stringnet}*{Lemma 3.6} for details.)
See \lemref{l:halfbrd}, \eqnref{e:ups-isom-01}
for typical use cases.
\end{proof}

\begin{lemma}
\label{l:summation-separate}
\begin{equation}
\begin{tikzpicture}
\node (c1) at (0,0) {};
\node (c2) at (2,0) {};
\draw[midarrow={0.55}] (c1) to[out=60,in=120] (c2);
\draw[midarrow={0.55}] (c1) to[out=20,in=160] (c2);
\draw[midarrow={0.55}] (c1) to[out=-20,in=-160] (c2);
\draw[midarrow={0.55}] (c1) to[out=-60,in=-120] (c2);
\node at (1,-0.9) {$V_1$};
\node at (1,0.9) {$V_n$};
\draw[fill=white,line width=0pt] (c1) circle(0.5cm);
\draw[fill=white,line width=0pt] (c2) circle(0.5cm);
\draw[fill=gray, opacity=0.6,line width=0pt] (c1) circle(0.5cm);
\draw[fill=gray, opacity=0.6,line width=0pt] (c2) circle(0.5cm);
\end{tikzpicture}
=
\begin{tikzpicture}
\node (c1) at (0,0) {};
\node (c2) at (2,0) {};
\node[small_morphism] (a1) at (0.8,0) {\tiny $\al$};
\node[small_morphism] (a2) at (1.2,0) {\tiny $\al$};
\draw[midarrow={0.8}] (c1) .. controls +(60:0.8cm) and +(90:0.6cm) .. (a1);
\draw[midarrow={0.9}] (c1) .. controls +(20:0.5cm) and +(135:0.3cm) .. (a1);
\draw[midarrow={0.9}] (c1) .. controls +(-20:0.5cm) and +(-135:0.3cm) .. (a1);
\draw[midarrow={0.8}] (c1) .. controls +(-60:0.8cm) and +(-90:0.6cm) .. (a1);
\draw[midarrow_rev={0.8}] (c2) .. controls +(120:0.8cm) and +(90:0.6cm) .. (a2);
\draw[midarrow_rev={0.9}] (c2) .. controls +(160:0.5cm) and +(45:0.3cm) .. (a2);
\draw[midarrow_rev={0.9}] (c2) .. controls +(-160:0.5cm) and +(-45:0.3cm) .. (a2);
\draw[midarrow_rev={0.8}] (c2) .. controls +(-120:0.8cm) and +(-90:0.6cm) .. (a2);
\node at (0.7,-0.7) {\smallerer $V_1$};
\node at (0.7,0.7) {\smallerer $V_n$};
\node at (1.3,-0.7) {\smallerer $V_1$};
\node at (1.3,0.7) {\smallerer $V_n$};
\draw[fill=white,line width=0pt] (c1) circle(0.5cm);
\draw[fill=white,line width=0pt] (c2) circle(0.5cm);
\draw[fill=gray, opacity=0.6,line width=0pt] (c1) circle(0.5cm);
\draw[fill=gray, opacity=0.6,line width=0pt] (c2) circle(0.5cm);
\end{tikzpicture}
\end{equation}
\end{lemma}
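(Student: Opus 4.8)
The plan is to obtain Lemma~\ref{l:summation-separate} as a contextual application of Lemma~\ref{l:summation}: one inserts a resolution of the identity into the cable of $n$ strands joining the two shaded regions.

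First, observe that the shaded blobs $c_1$ and $c_2$ on the left-hand side are merely windows onto some larger diagram; all that matters for the move is that between them runs a cable of $n$ strands colored $V_1,\dots,V_n$. After a planar isotopy we may assume that in some horizontal slab these $n$ strands are parallel and coherently oriented, so that the cable there represents $\id_{V_1\tnsr\cdots\tnsr V_n}$. Since equal morphisms may be substituted for one another inside any diagram (functoriality of $\circ$ and $\tnsr$ and the interchange law), it suffices to rewrite this identity morphism, wherever it occurs.

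Now apply Lemma~\ref{l:summation}, which — in the spherical setting implicit in the circular-node notation — expresses $\id_{V_1\tnsr\cdots\tnsr V_n}$ as the ``dumbbell'': the $n$ legs enter a node $\al$, a summation strand joins it to a second node $\al$, and one sums over dual bases $\ph_\al\in\<V_1,\dots,V_n\>$, $\ph^\al\in\<V_n^*,\dots,V_1^*\>$ with respect to the pairing \eqnref{e:pairing}, the sum over simples and the weights $d_i^R$ being repackaged by this dual-basis summation exactly as in that lemma and Notations~\ref{n:dashed} and~\ref{n:summation}. Substitute the dumbbell for the cable and then slide one $\al$-node up against $c_1$ and the other against $c_2$; this is legal because the two $\al$-nodes act on disjoint sets of strands and therefore slide freely past everything between them. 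The result is precisely the right-hand side, where the two $\al$-labelled circular nodes are read via the summation convention of Notation~\ref{n:summation}.

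The only thing requiring care, and the closest thing to an obstacle, is the bookkeeping of orientations and cyclic orders: one must check that the coherent orientation chosen across the slab, the induced leg-labels $V_n^*,\dots,V_1^*$ on the $c_2$-side node (recall a $V$-colored strand reversed is a $V^*$-colored strand), and the direction of the summation strand are all mutually compatible with the pairing \eqnref{e:pairing}. If one wants to drop the sphericality hypothesis, the circular $\al$-nodes should be replaced by semicircles and an extra $\sum_l$ inserted, exactly as in the last display of Notation~\ref{n:summation}; with that replacement the argument goes through verbatim. Beyond this bookkeeping there is nothing to compute — the statement is Lemma~\ref{l:summation} applied pointwise along the cable.
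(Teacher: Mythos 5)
There is a genuine gap, and it sits exactly where the actual content of the lemma lives. Applying \lemref{l:summation} to the cable of $n$ strands does \emph{not} produce the right-hand side of \lemref{l:summation-separate}: it produces the dumbbell, i.e.\ two $\al$-nodes still joined by a connecting strand colored by a simple object $i$, summed over $i\in\Irr(\cC)$ with weight $d_i^R$. The right-hand side of the statement has \emph{no} strand joining the two $\al$-nodes. Sliding the nodes toward $c_1$ and $c_2$ does not remove that strand, so your claim that the substitution ``is precisely the right-hand side'' is false as written. The missing step is the observation that the blobs are \emph{closed}: no strands leave $c_1$ except the $n$ cable strands, so after inserting the dumbbell the entire left half of the diagram is a morphism $\one\to X_i$ (it factors through $\Hom(\one,X_i)$), which vanishes unless $X_i$ is a direct summand of $\one$ — for $\cC$ fusion, unless $i=\one$. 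Only the $i=\one$ term survives, $d_\one^R=1$, and a $\one$-colored edge may be erased, which is what finally separates the two nodes. This is exactly the content of the paper's one-line proof (``notice that the connecting edge must be $i=\one$'') and of the cited \ocite{balsam-kirillov}*{Lemma 2.2.3}.

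Relatedly, your framing that the shaded blobs are ``merely windows onto some larger diagram'' and that ``all that matters is that between them runs a cable of $n$ strands'' is the root of the error: if any other strand exited $c_1$ or $c_2$, the connecting edge would not be forced to be $\one$ and the lemma would simply be false. The closedness of the two regions is an essential hypothesis, not bookkeeping. Your remarks on orientations and on the non-spherical semicircle variant are fine but peripheral; once you add the $i=\one$ argument the proof matches the paper's.
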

\begin{proof}
Apply \lemref{l:summation}, then notice that the ``connecting edge''
must be $i = \one$.
(See \ocite{balsam-kirillov}*{Lemma 2.2.3}).
\end{proof}

\begin{lemma}
\label{lem:sliding}
The following is a generalization of the ``sliding lemma'':
\begin{equation}
\label{e:sliding}
\begin{tikzpicture}
  \draw[regular, midarrow={0.5}] (0,0) circle(0.4cm);
 \path[subgraph] (0,0) circle(0.3cm);
 \draw (0,1.2)..controls +(-90:0.8cm) and +(90:0.4cm) ..
                 (-0.6, 0) ..controls +(-90:0.4cm) and +(90:0.8cm) ..
                 (0,-1.2);
\end{tikzpicture}
%%%%%%%%%%%%%%%%%%
\quad=\quad
%%%%%%%%%%%%%%%%%%
\begin{tikzpicture}
  \draw[regular, midarrow={0.5}] (0,0) circle(0.4cm);
  \path[subgraph] (0,0) circle(0.3cm);
  \draw (0,1.2)..controls +(-90:0.8cm) and +(90:0.4cm) ..
                 (0.6, 0) ..controls +(-90:0.4cm) and +(90:0.8cm) ..
                 (0,-1.2);
\end{tikzpicture}
\end{equation}
These relations hold regardless of the contents of the shaded region.
\end{lemma}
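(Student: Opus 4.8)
The plan is to reduce the statement to the summation identity of Lemma~\ref{l:summation} together with Lemma~\ref{l:summation-separate}. The dashed circle on the left encodes $\sum_{i\in\Irr(\cC)} d_i^R$ times a strand colored $i$ running around the shaded region; the key point is that the vertical strand (entering from the top, passing to the \emph{left} of the shaded disk, and exiting at the bottom) should be allowed to pass through this dashed loop without interacting with the shaded region at all. So first I would isolate a small annular neighborhood of the dashed circle, in which the only data are: the dashed loop itself, and the vertical strand crossing the annulus once (on the left side in the first picture, on the right side in the second). Everything inside the inner disk (the shaded region) is irrelevant by naturality, since it is simply tensored along and the identity we need is a local move in the annular neighborhood.

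Next I would apply Lemma~\ref{l:summation} to the vertical strand: insert a pair of $\al$-vertices on the strand (cut it open), producing $\sum_j d_j^R$ over a strand colored $j$ with two semicircle vertices. Now in the annular region I have the dashed loop (colored $i$, weighted $d_i^R$) together with a short strand colored $j$ and the two $\al$-semicircles. I would then fuse the $i$-loop with the $j$-strand at the place where they are adjacent; the relevant instance is exactly the configuration of Lemma~\ref{l:summation-separate}, whose conclusion is that the "connecting edge" between the two bundles must be colored $i=\one$. Concretely, passing the $j$-strand from the left of the $i$-loop to the right of it requires braiding or isotopy through the loop, and Lemma~\ref{l:summation-separate} (via Lemma~\ref{l:summation} applied in the transverse direction) forces the only surviving term to be the trivial one, which is isotopy-invariant — the $j$-strand can be slid freely past the loop. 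Reassembling the $\al$-vertices back into a plain strand via Lemma~\ref{l:summation} again yields the right-hand picture.

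Finally I would check the bookkeeping of dimension factors: the factor $d_i^R$ attached to the dashed loop and the factor $d_j^R$ from cutting the vertical strand match up correctly because, after Lemma~\ref{l:summation-separate} collapses the connecting color to $\one$, we are left with the same dashed loop and the same reconstituted strand on both sides, so no spurious scalar appears. The statement that the identity holds "regardless of the contents of the shaded region" is then automatic from the first step, where we noted the shaded disk plays no role in the local move.

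The main obstacle I expect is the second step: making precise the sense in which the vertical strand "passes through" the dashed loop, i.e.\ correctly identifying the local configuration with the hypothesis of Lemma~\ref{l:summation-separate} and handling the orientation/crossing conventions (Conventions~\ref{cvn:ambient-ort} and~\ref{cvn:braiding}) so that the braiding contributions genuinely cancel and only the $i=\one$ term remains. In the non-spherical setting one must also be careful that the dashed-line orientation (Notation~\ref{n:dashed}) is tracked consistently, though since the only surviving internal color is $\one$ this ultimately causes no trouble.
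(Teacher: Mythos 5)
There is a genuine gap at the heart of your argument, namely in the step where the strand actually changes sides. \lemref{l:summation-separate} does not apply to the configuration you describe: its hypothesis is two subdiagrams joined only by a parallel bundle of strands, and its conclusion is that a pair of dual bases can be inserted across that bundle because the intermediate channel factors through the unit. A closed $i$-colored loop sitting next to a $j$-colored strand is not such a configuration, and the reasoning ``only the trivial term survives, so the strand slides freely'' is really the mechanism of the killing lemma (\lemref{l:killing}) --- which concerns strands \emph{linked} with the regular-colored circle and requires modularity --- not of the sliding lemma, which holds for any premodular category. Worse, as written your argument never uses the dashed circle in an essential way: if cutting the vertical strand open and observing that some channel is trivial sufficed to move it past the shaded region, the identity would hold with the circle deleted, which is false (the contents of the shaded region may extend out of the picture, threaded through the circle, so the two sides are genuinely different diagrams a priori).

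The paper's proof instead applies \lemref{l:summation} to the \emph{circle}: the regular-colored circle is cut at its top and bottom into a pair of dual-basis vertices $\al$ whose remaining legs absorb the external strand, and a second application of \lemref{l:summation} re-contracts the other dashed arc, re-emitting the strand on the opposite side. The regular coloring of the circle is exactly what makes each cut and re-contraction an equality, so the circle is used essentially. Your closing remark that the orientation of the dashed line ``ultimately causes no trouble'' is also incorrect: the weights $d_i^R$ in \lemref{l:summation} match only one orientation of the circle, and the paper explicitly notes that the lemma fails for the oppositely oriented circle unless the category is spherical.
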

\begin{proof}
\[
 \begin{tikzpicture}
   \draw[regular, midarrow={0.5}] (0,0) circle(0.4cm);
   \path[subgraph] (0,0) circle(0.3cm);
   \draw (0,1.2)..controls +(-90:0.8cm) and +(90:0.4cm) ..
                   (-0.6, 0) ..controls +(-90:0.4cm) and +(90:0.8cm) ..
                   (0,-1.2);
 \end{tikzpicture}
%%%%%%%%%%%%%
\quad=\quad
%%%%%%%%%%%%%
\begin{tikzpicture}
  \draw[regular, midarrow={0.53}] (0,0) circle(0.45cm);
  % want to get second arrow, weird things happen...
  % hack: draw circle again
  \draw[regular, midarrow={0.03}] (0,0) circle(0.45cm);
  %\node[dotnode, label=45:$\al$] (top) at (0,0.4) {};
  \node[semi_morphism={0,}] (top) at (0,0.4) {$\al$};
  %\node[dotnode, label=-45:$\al$] (bot) at (0,-0.4) {};
  \node[semi_morphism={180,}] (bot) at (0,-0.4) {$\al$};
  \path[subgraph] circle(0.3cm);
  \draw (0,1.2)--(top) (bot)--(0,-1.2);
\end{tikzpicture}
%%%%%%%%%%%%%%
\quad=\quad
%%%%%%%%%%%%%
  \begin{tikzpicture}
    \draw[regular, midarrow={0.55}] (0,0) circle(0.4cm);
    \path[subgraph] (0,0) circle(0.3cm);
    \draw (0,1.2)..controls +(-90:0.8cm) and +(90:0.4cm) ..
                   (0.6, 0) ..controls +(-90:0.4cm) and +(90:0.8cm) ..
                   (0,-1.2);
  \end{tikzpicture}
\]
where we use \lemref{l:summation} in the equalities.
See also \ocite{kirillov-stringnet}*{Corollary 3.5}.
Note this trick doesn't work when the circle is oriented the other way
(unless of course if $\cC$ is spherical).
\end{proof}

\begin{lemma}[Killing Lemma, Charge Conservation]
\label{l:killing}
%\label{e:charge_conservation}.
When $\cC$ is modular,
\begin{equation}
\frac{1}{\cD}
\begin{tikzpicture}
\draw[midarrow={0.9}] (0,0) -- (0,-0.7);
\node at (0.15,-0.6) {\smallerer $i$};
\draw[overline={1.5},regular,midarrow={0.5}] (0,0.1) circle (0.35cm);
\draw[overline={1.5}] (0,0) -- (0,0.7);
\end{tikzpicture}
=
\delta_{i,1} \id_{X_i}
\end{equation}
\end{lemma}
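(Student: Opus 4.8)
The plan is to reduce the picture to a scalar multiple of $\id_{X_i}$ via Schur's lemma, evaluate that scalar on the nose when $i=1$, and kill it when $i\neq 1$ using the sliding lemma together with nondegeneracy of the $S$-matrix. Since $\cC$ is modular it is in particular ribbon fusion, hence spherical (so $d_i^R=d_i^L=d_i$ throughout) and, as $\cC$ is fusion and $\kk$ is algebraically closed, $\End(X_i)\cong\kk$ for every simple $X_i$. The endomorphism of $X_i$ on the left-hand side is therefore $\lambda_i\,\id_{X_i}$ for a unique $\lambda_i\in\kk$, and the assertion is exactly $\lambda_i=\cD\,\delta_{i,1}$.

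First I would dispose of $i=1$: then $X_1=\one$ is invisible and the diagram is just the closed dashed loop, which by the dashed-line convention (Notation~\ref{n:dashed}) evaluates to $\sum_{j\in\Irr(\cC)}d_j^R\dim(X_j)=\sum_j d_j^R d_j^L=\cD$ (see \eqnref{e:dimC}). Hence $\lambda_1=\cD$, and $\tfrac1\cD\,\lambda_1\,\id_\one=\id_\one=\delta_{1,1}\id_{X_1}$, as claimed.

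For $i\neq 1$ the goal is $\lambda_i=0$, and this is where the work is. The key point is that a strand lying inside a dashed encircling loop becomes ``transparent'': by repeated use of \lemref{lem:sliding} — whose orientation caveat is vacuous since $\cC$ is spherical — any other strand may be slid across the dashed loop, so in particular an $X_j$-loop linked once with the $X_i$-strand that sits inside the $\omega$-loop can be unlinked from it. Comparing the two evaluations of ``$X_i$-strand, $\omega$-loop, linked $X_j$-loop'' as an endomorphism of $X_i$ — on one side a small $X_j$-loop around $X_i$ contributes the scalar $s_{ij}/d_i$ (by Schur and the trace of the Hopf link, using $d_i\neq 0$), on the other a freed $X_j$-loop contributes $\dim(X_j)=d_j$ — gives $\lambda_i\bigl(s_{ij}-d_i d_j\bigr)=0$ for every $j\in\Irr(\cC)$. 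If $\lambda_i\neq 0$ then $s_{ij}=d_i d_j$ for all $j$; but $s_{1j}=d_j$ since the braiding with $\one$ is trivial, so row $i$ of the $S$-matrix equals $d_i$ times row $1$ — two proportional rows, contradicting $\det S\neq 0$. Hence $\lambda_i=0$, and combining the two cases, $\tfrac1\cD(\text{diagram})=\tfrac{\lambda_i}{\cD}\id_{X_i}=\delta_{i,1}\id_{X_i}$.

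The crux — and the only place modularity enters — is the transparency step: one must carefully cash out \lemref{lem:sliding} into the statement that whatever a dashed loop encloses isotopes past every other strand, keeping track of orientations; this is the familiar ``handle-slide'' behaviour of the Kirby colour, but it deserves to be written out. An alternative to the last paragraph is the identity $\lambda_i^2=\cD\,\lambda_i$, obtained by nesting two concentric $\omega$-loops and using that the inner one already makes everything it encloses transparent, which forces $\lambda_i\in\{0,\cD\}$; one then still needs $\det S\neq0$ to see $\lambda_i=\cD$ only at $i=1$. A third route is to compute $\lambda_i d_i=\tr(\text{diagram})=\sum_j d_j s_{ij}=(S^2)_{i,1}$ and invoke the standard evaluation of $S^2$ in a modular category, but that identity is itself downstream of this lemma, so the sliding-lemma argument is the honest one.
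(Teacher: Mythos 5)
The paper states this lemma without proof, so there is nothing to compare your argument against; what you have written is correct and is the standard argument (Schur's lemma, the sliding lemma, and nondegeneracy of $S$), and your scalar bookkeeping $\lambda_i(s_{ij}-d_id_j)=0$ together with $s_{1j}=d_j$ is exactly right. The one step you rightly flag — unlinking the $X_j$-meridian from the $X_i$-strand — is the handle-slide form of \lemref{lem:sliding} rather than its literal statement, but it follows from the same cutting-the-dashed-circle-with-dual-bases mechanism used in its proof (and is how the paper itself invokes that lemma for 2-handle slides in \prpref{p:zcysk-corner-indep}), so there is no genuine gap.
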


\begin{lemma} \label{l:dashed_circle}
\[
\frac{1}{|\Irr_0(\cC)|\cD}
\begin{tikzpicture}
\draw[regular, midarrow] (0,0) circle(0.4cm);
\end{tikzpicture}
=
\id_\one
=
\frac{1}{|\Irr_0(\cC)|\cD}
\sum_{i\in \Irr(\cC)} d_i^L
\begin{tikzpicture}
\draw[midarrow_rev={0.1}] (0,0) circle(0.4cm);
\node at (0.5,0.3) {\small $i$};
\end{tikzpicture}
\]
\end{lemma}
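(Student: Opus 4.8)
The plan is to unfold the dashed circle by Notation~\ref{n:dashed}, to recognize each resulting loop colored by a simple object as a scalar multiple of a summand of $\id_\one$, and then to resum using the definition of $\cD$. By Notation~\ref{n:dashed} the left-hand side equals $\tfrac{1}{|\Irr_0(\cC)|\cD}\sum_{i\in\Irr(\cC)} d_i^R\, C_i$, where $C_i\in\End(\one)$ denotes the circle colored by $X_i$ in the orientation in which Notation~\ref{n:dashed} unfolds the dashed loop. Such a closed loop computes either $\ldim{X_i}$ or $\rdim{X_i}$ depending on its handedness; reading it top-to-bottom as in Convention~\ref{cvn:graphical-down} it is the composite $\one\xrightarrow{\coev}X_i\tnsr X_i^*\xrightarrow{\delta\tnsr\id}X_i^{**}\tnsr X_i^*\xrightarrow{\ev}\one$, so $C_i=\ldim{X_i}$. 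Since $X_i\in\cC_{k_i l_i}$ forces $X_i\tnsr X_i^*\in\cC_{k_i k_i}$, both $\coev$ and $\ev$ factor through $\one_{k_i}$, whence $C_i=\ldim{X_i}=d_i^L\,\id_{\one_{k_i}}$ --- exactly the way the scalar $d_i^L\in\kk$ was introduced in \secref{s:graphical-multifusion}.

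Substituting and collecting terms according to the summand $\one_k$ of the unit,
\[
\frac{1}{|\Irr_0(\cC)|\cD}\sum_{i\in\Irr(\cC)} d_i^R\, C_i
=\frac{1}{|\Irr_0(\cC)|\cD}\sum_{k\in\Irr_0(\cC)}\Bigl(\ \sum_{i\,:\,k_i=k} d_i^R d_i^L\Bigr)\id_{\one_k}.
\]
For fixed $k\in\Irr_0(\cC)$ the simples with $k_i=k$ partition according to the value of $l_i$, that is $\{\,i:k_i=k\,\}=\bigsqcup_{l\in\Irr_0(\cC)}\Irr(\cC_{kl})$, and $\sum_{i\in\Irr(\cC_{kl})} d_i^R d_i^L=\cD$ for every $l$ by \eqnref{e:dimC} (this value being independent of the pair $(k,l)$ and nonzero, by \ocite{ENO2005}). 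So the inner sum is $|\Irr_0(\cC)|\,\cD$, the prefactor cancels, and the expression collapses to $\sum_{k\in\Irr_0(\cC)}\id_{\one_k}=\id_\one$, using that $\one$ is the direct sum of the pairwise-distinct simples $\one_k$. For the second equality: the loop colored $X_i$ in the reversed orientation is the loop colored $X_i^*=X_{i^*}$, so with $d_{i^*}^L=d_i^R$ the reindexing $i\mapsto i^*$ turns $\sum_i d_i^L\cdot(\text{reversed loop colored }X_i)$ into $\sum_j d_j^R\, C_j$, i.e. into the dashed circle; hence the second equality follows from the first (equivalently, one reruns the argument with $L\leftrightarrow R$ and $k_i\leftrightarrow l_i$).

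Nothing here is analytically delicate; the one step that demands genuine care is the orientation bookkeeping in the first paragraph, namely checking against the conventions of \secref{s:graphical-multifusion} that Notation~\ref{n:dashed} unfolds the dashed loop into circles computing $\ldim{X_i}$ rather than $\rdim{X_i}$, so that the coefficient $d_i^R$ multiplies $d_i^L$ and the inner sum matches the definition \eqnref{e:dimC} of $\cD$. With the opposite orientation each term would instead contribute $(d_i^R)^2$, and the stated identity would fail as soon as $\cC$ is non-spherical; so fixing this orientation (and, dually, that of the second diagram) really is the substance of the proof, the remainder being the routine reindexing above.
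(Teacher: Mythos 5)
Your proof is correct and follows essentially the same route as the paper's: unfold the dashed circle via Notation~\ref{n:dashed}, evaluate each simple loop as $d_i^L\,\id_{\one_{k_i}}$, and partition $\Irr(\cC)$ into the blocks $\Irr(\cC_{kl})$ so that \eqnref{e:dimC} collapses the sum to $|\Irr_0(\cC)|\cD\,\id_\one$. The paper disposes of the second equality with "proved in a similar manner," while you give the explicit reindexing $i\mapsto i^*$; otherwise the arguments coincide.
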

\begin{proof}
Let $\Irr^{kl} = \Irr(\cC_{kl})$,
and let $\Irr^{k*} := \bigcup_l \Irr(\cC_{kl})$,
i.e. the set of simples $X_i$ such that $\one_k \tnsr X_i = X_i$.
Then
\begin{gather*}
\begin{tikzpicture}
\draw[regular, midarrow] (0,0) circle(0.4cm);
\end{tikzpicture}
=
\sum_{k\in \Irr_0(\cC)} \sum_{i \in \Irr^{k*}(\cC)} d_i^R
\hspace{3pt}
\begin{tikzpicture}
\draw[midarrow={0.5}] (0,0) circle(0.4cm);
\node at (0.5,-0.2) {$i$};
\end{tikzpicture}
=
\sum_{k\in \Irr_0(\cC)} \sum_{i \in \Irr^{k*}} d_i^R d_i^L \id_{\one_k} \\
=
\sum_{k\in \Irr_0(\cC)} \sum_{l \in \Irr_0(\cC)}
\sum_{i \in \Irr^{kl}} d_i^R d_i^L \id_{\one_k}
=
\sum_{k\in \Irr_0(\cC)} \sum_{l \in \Irr_0(\cC)} \cD \id_{\one_k}
=
|\Irr_0(\cC)|\cD \id_\one
\end{gather*}
The second equality is proved in a similar manner.
\end{proof}

The following lemma is used to prove that \eqnref{e:I(M)} is a half-braiding
and the functor $G$ in the proof of \thmref{t:htr-center} respects composition:

\begin{lemma}
\label{l:halfbrd}
For $X \in \Obj \cC$, define the following element $\Gamma_X$ of
$\Hom(X \tnsr X_i, X_j) \tnsr \Hom(X_i^*, X_j^* \tnsr X)$:
\begin{equation}
\Gamma_X :=
\begin{tikzpicture}
%%%% left side
\node[semi_morphism={90,}] (lf) at (0,0) {$\albar$};
\draw[midarrow_rev] (lf) -- +(90:1cm) node[pos=0.5, right] {$i$};
\draw[midarrow] (lf) -- +(270:1cm) node[pos=0.5, right] {$j$};
\draw[midarrow_rev] (lf) -- +(150:1cm) node[above left] {$X$};
\node at (1,0) {$\tnsr$};
%%
%%%% right side
\node[semi_morphism={270,}] (rt) at (2,0) {$\albar$};
\draw[midarrow] (rt) -- +(90:1cm) node[pos=0.5, left] {$i$};
\draw[midarrow_rev] (rt) -- +(270:1cm) node[pos=0.5, left] {$j$};
\draw[midarrow] (rt) -- +(-30:1cm) node[below right] {$X$};
\end{tikzpicture}
%%%%%%%%%%%%
\quad := \quad
%%%%%%%%%%%%
\sum_{i,j \in \Irr(\cC)} \sqrt{d_i^R}\sqrt{d_j^R}
\begin{tikzpicture}
%%%% left side
\node[semi_morphism={90,}] (lf) at (0,0) {$\al$};
\draw[midarrow_rev] (lf) -- +(90:1cm) node[pos=0.5, right] {$i$};
\draw[midarrow] (lf) -- +(270:1cm) node[pos=0.5, right] {$j$};
\draw[midarrow_rev] (lf) -- +(150:1cm) node[above left] {$X$};
\node at (1,0) {$\tnsr$};
%%
%%%% right side
\node[semi_morphism={270,}] (rt) at (2,0) {$\al$};
\draw[midarrow] (rt) -- +(90:1cm) node[pos=0.5, left] {$i$};
\draw[midarrow_rev] (rt) -- +(270:1cm) node[pos=0.5, left] {$j$};
\draw[midarrow] (rt) -- +(-30:1cm) node[below right] {$X$};
\end{tikzpicture}
\end{equation}

Note that the strands labeled $i,j$, for which we sum over simples,
must be the two strands at the extremes of the semicircle;
when $\cC$ is spherical, this is not canonical, but usually it is
clear which two strands are chosen.
$\Gamma_X$ satisfies the following properties:

\begin{enumerate}
\item $\Gamma_-$ respects tensor products:
\begin{equation}
\begin{tikzpicture}
%%%% left side
\node[semi_morphism={90,}] (lf) at (0,0) {$\albar$};
\draw[midarrow_rev] (lf) -- +(90:1cm) node[pos=0.5, right] {$i$};
\draw[midarrow] (lf) -- +(270:1cm) node[pos=0.5, right] {$j$};
\draw[midarrow_rev] (lf) -- +(150:1cm) node[above left] {$X$};
\draw[midarrow_rev] (lf) -- +(120:1cm) node[above] {$Y$};
\node at (1,0) {$\tnsr$};
%%
%%%% right side
\node[semi_morphism={270,}] (rt) at (2,0) {$\albar$};
\draw[midarrow] (rt) -- +(90:1cm) node[pos=0.5, left] {$i$};
\draw[midarrow_rev] (rt) -- +(270:1cm) node[pos=0.5, left] {$j$};
\draw[midarrow] (rt) -- +(-60:1cm) node[below] {$X$};
\draw[midarrow] (rt) -- +(-30:1cm) node[below right] {$Y$};
\end{tikzpicture}
%%%%%%%%%%%%%%%%
=
%%%%%%%%%%%%%%%%
%%%% left side
\begin{tikzpicture}
\node[semi_morphism={90,}] (lf) at (0,0.4) {\tiny $\albar$};
\node[semi_morphism={90,}] (lf2) at (0,-0.4) {\tiny $\bebar$};
\draw[midarrow_rev] (lf) -- +(90:0.7cm) node[pos=0.5, right] {$i$};
\draw[midarrow={0.7}] (lf) -- (lf2) node[pos=0.5, right] {$k$};
\draw[midarrow] (lf2) -- +(270:0.7cm) node[pos=0.5, right] {$j$};
\draw[midarrow_rev] (lf) -- +(150:1cm) node[left] {$Y$};
\draw[midarrow_rev] (lf2) -- +(150:1cm) node[left] {$X$};
\node at (1,0) {$\tnsr$};
%%
%%%% right side
\node[semi_morphism={270,}] (rt) at (2,0.4) {\tiny $\albar$};
\node[semi_morphism={270,}] (rt2) at (2,-0.4) {\tiny $\bebar$};
\draw[midarrow] (rt) -- +(90:0.7cm) node[pos=0.5, left] {$i$};
\draw[midarrow_rev={0.7}] (rt) -- (rt2) node[pos=0.5, left] {$k$};
\draw[midarrow_rev] (rt2) -- +(270:0.7cm) node[pos=0.5, left] {$j$};
\draw[midarrow_rev] (rt) -- +(-30:1cm) node[right] {$Y$};
\draw[midarrow_rev] (rt2) -- +(-30:1cm) node[right] {$X$};
\end{tikzpicture}
\end{equation}

\item $\Gamma_X$ is natural in $X$: for $f: X \to Y$,
\begin{equation}
\begin{tikzpicture}
%%%% left side
\node[semi_morphism={90,}] (lf) at (0,0) {\tiny $\albar$};
\draw[midarrow_rev] (lf) -- +(90:1cm) node[pos=0.5, right] {$i$};
\draw[midarrow] (lf) -- +(270:1cm) node[pos=0.5, right] {$j$};
\node[small_morphism] (f) at (-0.5,0.3) {\tiny $f$};
\draw[midarrow_rev={0.7}] (lf) -- (f);
\draw[midarrow_rev] (f) -- (-1,0.6) node[left] {$X$};
\node at (1,0) {$\tnsr$};
%%
%%%% right side
\node[semi_morphism={270,}] (rt) at (2,0) {\tiny $\albar$};
\draw[midarrow] (rt) -- +(90:1cm) node[pos=0.5, left] {$i$};
\draw[midarrow_rev] (rt) -- +(270:1cm) node[pos=0.5, left] {$j$};
\draw[midarrow] (rt) -- +(-30:1cm) node[right] {$Y$};
\end{tikzpicture}
%%%%%%%%%%%%%%%%
\quad = \quad
%%%%%%%%%%%%%%%%
\begin{tikzpicture}
%%%% left side
\node[semi_morphism={90,}] (lf) at (0,0) {\tiny $\bebar$};
\draw[midarrow_rev] (lf) -- +(90:1cm) node[pos=0.5, right] {$i$};
\draw[midarrow] (lf) -- +(270:1cm) node[pos=0.5, right] {$j$};
\draw[midarrow_rev] (lf) -- +(150:1cm) node[left] {$X$};
\node at (1,0) {$\tnsr$};
%%
%%%% right side
\node[semi_morphism={270,}] (rt) at (2,0) {\tiny $\bebar$};
\draw[midarrow] (rt) -- +(90:1cm) node[pos=0.5, left] {$i$};
\draw[midarrow_rev] (rt) -- +(270:1cm) node[pos=0.5, left] {$j$};
\node[small_morphism] (f) at (2.5,-0.3) {\tiny $f$};
\draw[midarrow={0.7}] (rt) -- (f);
\draw[midarrow] (f) -- (3,-0.6) node[right] {$Y$};
\end{tikzpicture}
\end{equation}

\end{enumerate}
\end{lemma}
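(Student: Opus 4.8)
The plan is to treat $\Gamma_X$ as a normalized ``resolution of the identity'' and to deduce both properties from the bookkeeping lemmas already in place, chiefly the summation lemma (\lemref{l:summation}) and the tunneling lemma (\lemref{l:summation-alpha-tunnel}). Unwinding the dashed-line / matched-$\al$-node conventions of \secref{s:graphical-multifusion}, the definition says $\Gamma_X = \sum_{i,j}\sqrt{d_i^R}\sqrt{d_j^R}\sum_{\al,l}\ph_\al\tnsr\ph^\al$, where $\ph_\al\in\eval{\dots}_l$ and $\ph^\al\in\eval{\dots}_l$ are dual bases (with respect to the pairing \eqnref{e:pairing}) of $\Hom(X\tnsr X_i,X_j)$ and its dual $\Hom(X_i^*,X_j^*\tnsr X)$. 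Each of the two asserted identities is thus an equality of two explicit elements of $\bigoplus_{i,j}\Hom(X\tnsr X_i,X_j)\tnsr\Hom(X_i^*,X_j^*\tnsr X)$, which I would verify by direct graphical manipulation; alternatively one can pair both sides with an arbitrary morphism and invoke non-degeneracy of \eqnref{e:pairing}.

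For part (1) I would read the right-hand side from the inside out. The two stacked semicircle pairs are glued along the strand labeled $k$, which is summed over $\Irr(\cC)$ and flanked on each side by the $\al$-nodes and $\be$-nodes realizing dual bases. Regarding the lower ($\be$-) pair together with the $k$-strand as a copairing, \lemref{l:summation} collapses the $k$-strand and merges the two semicircles on each tensor factor into a single semicircle carrying the legs $X$, $Y$, $i$, $j$ --- which is precisely the unwound form of $\Gamma_{X\tnsr Y}$. The delicate point is orientation: the semicircles must be drawn so that \lemref{l:summation} contributes the coefficient $d_k^R$ (not $d_k^L$), and one checks that the two factors of $\sqrt{d_k^R}$ already present in the picture supply exactly this, leaving only $\sqrt{d_i^R}\sqrt{d_j^R}$, as required. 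This uses the consistent choice of square roots $\sqrt{d_i^L}=\sqrt{d_{i^*}^R}$, and it is precisely because $\cC$ need not be spherical that the $i$- and $j$-strands must sit at the extremes of the semicircles, as flagged in the statement.

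Part (2) is essentially a restatement of \lemref{l:summation-alpha-tunnel}. Precomposing $f\colon X\to Y$ on the $X$-leg of the left semicircle of $\Gamma_X$ equals, by that lemma, postcomposing $f^*$ on the matching leg of the right semicircle; since on the right the strand carries the oppositely oriented copy of $X$, $f^*$ there is the same datum as $f$ on a $Y$-leg (bend the strand and suppress $X\simeq X^{**}$), and the accompanying change of dual basis is absorbed into the relabeling $\al\mapsto\be$ of the summation index. That is exactly the claimed equality.

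Accordingly, the only genuine work lies in part (1), and I expect the main obstacle there to be the orientation-and-normalization accounting: pinning down which of the two forms of \lemref{l:summation} applies for the semicircles as drawn, and checking that every dimension factor introduced by a duality bend or by the summation lemma reassembles into exactly $\sqrt{d_i^R}\sqrt{d_j^R}$ with no stray left/right discrepancy. Once the conventions of \secref{s:graphical-multifusion} are fixed this is routine, but it is where an error would most easily hide.
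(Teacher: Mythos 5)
Your proposal is correct and follows essentially the same route as the paper: part (2) is exactly an application of \lemref{l:summation-alpha-tunnel}, and part (1) is handled by combining the two $\sqrt{d_k^R}$ factors from $\albar$ and $\bebar$ into $d_k^R$ so that \lemref{l:summation} contracts the $k$-strand (the paper additionally notes that one first uses the tunneling lemma to pull $\bebar$ through $\albar$ before contracting, which is the rearrangement implicit in your "read from the inside out" step). Your closing remarks about the orientation/normalization bookkeeping and the non-canonical choice of extreme strands in the non-spherical case match the paper's own caveats.
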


\begin{proof}
The second property follows from \lemref{l:summation-alpha-tunnel}.
The first property follows from using \lemref{l:summation-alpha-tunnel}
to ``pull'' $\bebar$ through $\albar$,
then use \lemref{l:summation} to contract the $k$ strand
(note the $\sqrt{d_k^R}$ coefficients from $\albar$ and $\bebar$
combine to give $d_k^R$,
allowing us to use \lemref{l:summation}).
\end{proof}

\begin{lemma}
\label{l:IM_htr_proj}
Let $M \in \cM$. The morphism
\begin{equation}
P'_M :=
%%%%%%%%
\sum_{i,j,k \in \Irr(\cC)} \frac{\sqrt{d_i^L}\sqrt{d_j^L}d_k^R}{|\Irr_0(\cC)| \cD}
\;
\begin{tikzpicture}
\node[semi_morphism={90,}] (L) at (0.5,0) {$\al$};
\node[semi_morphism={270,}] (R) at (-0.5,0) {$\al$};
\draw (0,1) -- (0,-1) node[below] {$M$} node[above,pos=0] {$M$};
\draw[midarrow_rev] (L)-- +(0,1) node[pos=0.5,right] {$i$};
\draw[midarrow] (L) -- +(0,-1) node[pos=0.5,right] {$j$};
\draw[midarrow] (R) -- +(0,1) node[pos=0.5,left] {$i$};
\draw[midarrow_rev] (R)-- +(0,-1) node[pos=0.5,left] {$j$};
\draw[midarrow={0.6}] (L) to[out=-80,in=180] (1.2,0);
\node at (1, -0.5) {$k$};
\draw[midarrow_rev={0.6}] (R) to[out=-100,in=0] (-1.2,0);
\node at (-1, -0.5) {$k$};
\end{tikzpicture}
%%%%%%%%
=
%%%%%%%%
\sum_{i,j \in \Irr(\cC)} \frac{\sqrt{d_i^L}\sqrt{d_j^L}}{|\Irr_0(\cC)| \cD}
\;
\begin{tikzpicture}
\draw (0,1) -- (0,-1) node[below] {$M$} node[above,pos=0] {$M$};
%% top branches
\draw[midarrow_rev] (0.4,1) to[out=-90,in=180] (1,0.1);
\draw[midarrow] (-0.4,1) to[out=-90,in=0] (-1,0.1);
\node at (-0.7,0.7) {$i$};
%% bottom branches
\draw[midarrow] (0.4,-1) to[out=90,in=180] (1,-0.1);
\draw[midarrow_rev] (-0.4,-1) to[out=90,in=0] (-1,-0.1);
\node at (-0.7,-0.7) {$j$};
\end{tikzpicture}
%%%%%%%%
\end{equation}
is a projection in $\End_{\htr(\cM)}(\bigoplus X_i \lact M \ract X_i^*)$.
Furthermore, it can be written as a composition
$P'_{(M,\gamma)} = \hat{P}'_{(M,\gamma)} \circ \check{P}'_{(M,\gamma)}$,
where
\begin{equation}
\check{P}'_{(M,\gamma)} :=
\sum_{i \in \Irr(\cC)} \frac{\sqrt{d_i^L}}{\sqrt{|\Irr_0(\cC)| \cD}}
\;
\begin{tikzpicture}
\draw (0,0.6) -- (0,-0.6) node[below] {$M$} node[above,pos=0] {$M$};
%% top branches
\draw[midarrow_rev] (0.3,0.6) to[out=-90,in=180] (0.8,0);
\draw[midarrow] (-0.3,0.6) to[out=-90,in=0] (-0.8,0);
\node at (-0.6,0.4) {$i$};
\end{tikzpicture}
%%%%%%%%%%%%%%%%%%%%%%%%%%%%%%%%%
\;\;\;
,
\;\;\;
%%%%%%%%%%%%%%%%%%%%%%%%%%%%%%%%%
\hat{P}'_{(M,\gamma)} :=
\sum_{j \in \Irr(\cC)} \frac{\sqrt{d_j^L}}{\sqrt{|\Irr_0(\cC)| \cD}}
\;
\begin{tikzpicture}
\draw (0,0.6) -- (0,-0.6) node[below] {$M$} node[above,pos=0] {$M$};
%% bottom branches
\draw[midarrow] (0.3,-0.6) to[out=90,in=180] (0.8,0);
\draw[midarrow_rev] (-0.3,-0.6) to[out=90,in=0] (-0.8,0);
\node at (-0.6,-0.4) {$j$};
\end{tikzpicture}
%%%%%%%%
\end{equation}
such that $\check{P}'_{(M,\gamma)} \circ \hat{P}'_{(M,\gamma)} = \id_M$,
thus as objects
in $\Kar(\htr(\cM))$, we have
$M \simeq (\bigoplus X_i \lact M \ract X_i^*, P_{(M,\gamma)}')$.
\end{lemma}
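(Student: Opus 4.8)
The plan is to derive all of the assertions — idempotency of $P'_M$, the factorization $P'_M=\hat P'_{(M,\gamma)}\circ\check P'_{(M,\gamma)}$ with $\check P'_{(M,\gamma)}\circ\hat P'_{(M,\gamma)}=\id_M$, and the resulting isomorphism in $\Kar(\htr(\cM))$ — from the two summation identities \lemref{l:summation} and \lemref{l:dashed_circle} together with formal manipulation in a Karoubi envelope. Throughout I write $P'_M$, $\check P'$, $\hat P'$ for $P'_{(M,\gamma)}$, $\check P'_{(M,\gamma)}$, $\hat P'_{(M,\gamma)}$.

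First I would check that the two displayed expressions for $P'_M$ coincide. In the first one the semicircular vertices $\al$ on the two sides of the $M$-strand are joined by a single strand colored $k$ running behind $M$, summed over $k$ with weight $d_k^R$ and (per the summation convention) over a dual basis labelled $\al$. Applying \lemref{l:summation} with the $i$- and $j$-strands as the fixed outgoing legs and $k$ as the summation index collapses the two $\al$-vertices, replacing the $k$-strand by the two through-strands colored $i$ and $j$ that run behind $M$ in the second expression; the $d_k^R$ is absorbed by the lemma, leaving precisely the coefficient $\sqrt{d_i^L}\sqrt{d_j^L}/(|\Irr_0(\cC)|\cD)$. (One must be careful about orientations and about which of $d_\bullet^L,d_\bullet^R$ occurs at each vertex; this is the first place where bookkeeping matters.)

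Next, the factorization $P'_M=\hat P'\circ\check P'$ is read off by stacking diagrams: since morphisms compose top-to-bottom, $\hat P'\circ\check P'$ places $\check P'$ above $\hat P'$, and concatenating their pictures (the $M$-defect runs straight through, the two $i$-legs of $\check P'$ sit at the top, the two $j$-legs of $\hat P'$ at the bottom) reproduces exactly the second expression for $P'_M$, with the coefficient factoring as $(\sqrt{d_i^L}/\sqrt{|\Irr_0(\cC)|\cD})\cdot(\sqrt{d_j^L}/\sqrt{|\Irr_0(\cC)|\cD})$. The substantive point is the opposite composite $\check P'\circ\hat P'$, now with $\hat P'$ on top: the two legs colored $i$ produced by $\hat P'$ get glued to the two legs colored $j$ consumed by $\check P'$, which forces $j=i$ and closes them into a single unknotted circle colored $i$ which — after an isotopy of the annular diagram, using \lemref{lem:sliding} to pull the $M$-defect out of it if necessary — is disjoint from the $M$-strand. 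The surviving coefficient is $d_i^L/(|\Irr_0(\cC)|\cD)$, so summing over $i$ and invoking \lemref{l:dashed_circle} (second equality) evaluates the circle-sum to $\id_\one$, whence $\check P'\circ\hat P'=\id_M$. I expect this to be the main obstacle: not because it is deep, but because it requires rendering the annular diagrammatics of $\htr(\cM)$ (with the $M$-defect present) rigorously and confirming that the normalizing constant is exactly the one \lemref{l:dashed_circle} demands — the constants have manifestly been rigged for this to work, so the real content is the careful diagram analysis.

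The rest is formal. Idempotency: $(P'_M)^2=\hat P'\circ(\check P'\circ\hat P')\circ\check P'=\hat P'\circ\id_M\circ\check P'=P'_M$. For the last assertion, write $N=\bigoplus X_i\lact M\ract X_i^*$ and note that in $\Kar(\htr(\cM))$ the morphisms $\check P'\colon(N,P'_M)\to(M,\id_M)$ and $\hat P'\colon(M,\id_M)\to(N,P'_M)$ are well defined, since $\check P'\circ P'_M=\check P'\circ\hat P'\circ\check P'=\check P'$ and $P'_M\circ\hat P'=\hat P'\circ\check P'\circ\hat P'=\hat P'$ (using $\check P'\circ\hat P'=\id_M$); and they are mutually inverse there because $\check P'\circ\hat P'=\id_M=\id_{(M,\id_M)}$ and $\hat P'\circ\check P'=P'_M=\id_{(N,P'_M)}$. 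This yields $M\simeq(\bigoplus X_i\lact M\ract X_i^*,P'_M)$ in $\Kar(\htr(\cM))$, as claimed.
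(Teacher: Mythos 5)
Your proposal is correct and follows essentially the same route as the paper, which proves this lemma by reference to the detailed argument for \lemref{l:Mga_proj}: equate the two forms of $P'_M$ via \lemref{l:summation}, establish $\check{P}'\circ\hat{P}'=\id_M$ by closing the $i$- and $j$-strands into a free circle evaluated by \lemref{l:dashed_circle}, and deduce idempotency and the Karoubi-envelope isomorphism formally. The only small imprecision is the appeal to \lemref{lem:sliding} to free the circle from the $M$-strand — in the $\htr$ picture (unlike the $\cZ_\cC(\cM)$ version, where the half-braiding $\ga$ does this job) the circle produced by the coend composition rule is already unlinked from $M$, so no sliding is needed.
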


\begin{proof}
Essentially the same as \lemref{l:Mga_proj}.
(Note the use of left dimensions $d_i^L$ instead of right dimensions
$d_i^R$.)
\end{proof}

\subsection{Module categories, balanced tensor product, and center}
\par \noindent
%\label{s:module}
\label{s:bg-htr}

%(TODO subsumes Drinfeld Center)
In this section, we review the results about balanced tensor product of module 
categories.
Our main goal is to give two constructions of the center
of an $\cC$-bimodule category $\cM$ -
$\cZ_\cC(\cM)$ (\defref{d:center}) and $\htr_\cC(\cM)$ (\defref{d:htr}),
and show that when $\cC$ is pivotal multifusion,
they are equivalent (\thmref{t:htr-center}).

Recall our convention that all categories considered in this paper
are locally finite  $\kk$-linear. Most of the time, they will be abelian; however, 
in some cases we will need to use $\kk$-linear additive (but not necessarily abelian) 
categories. For such a  category $\cA$, we will denote by $\Kar(\cA)$ the Karoubi envelope (also known as idempotent completion) of $\cA$. By definition, an object of $\Kar(\cA)$ is a pair $(A, p)$, where $A$ is an object of $\cA$ and $p\in \Hom_\cA(A,A)$ is an 
idempotent: $p^2=p$. Morphisms in $\Kar(\cA)$ are defined by 
$$
\Hom_{\Kar(\cA)}((A_1, p_1), (A_2, p_2))=\{f\in \Hom_\cA(A_1, A_2)\st p_2fp_1=f\}
$$

Throughout this section $\cC$ is a pivotal category,
though in the definitions $\cC$ is only required to be monoidal.
When $\cC$ is multifusion,
we use the conventions and notation laid out in the Appendix.
In particular, $\Irr(\cC)$ is the set of isomorphism classes,
$\Irr_0(\cC)$ are those simples appearing as direct summands
of the unit $\one$,
$\{X_i\}$ will be a fixed set of representatives of $\Irr(\cC)$,
$d_i^R$ is the (right) dimension of $X_i$,
and we will be using graphical presentation of morphisms.

We assume that the reader is familiar with the notions of module categories
and module structures on functors between module categories
(refer to \ocite{EGNO}*{Chapter 7});
for a left module category $\cM$ over $\cC$, we will denote the action of 
$A\in\cC$ on $M\in\cM$ by $A\lact M$. Similarly, we use $M\ract A$ for right action. 
In this paper, all module categories are assumed to be semisimple (as abelian categories).

%In all the examples we consider, the Karoubi envelope will be abelian
%(even though in general it is not guaranteed). 

This section is organized as follows:
Subsection~\ref{s:zM} provides the definition
and some properties of $\cZ_\cC(\cM)$,
Subsection~\ref{s:htrM} does so for $\htr_\cC(\cM)$,
and Subsection~\ref{s:equiv} shows that when
$\cC$ is pivotal multifusion,
these definitions are essentially the same.

\subsubsection{$\cZ_\cC(\cM)$}\par \noindent
\label{s:zM}

The following definition is essentially given in  \ocite{GNN}*{Definition 2.1}
(there $\cC$ is assumed to be fusion).

\begin{definition}\label{d:center}
Let $\cC$ be a finite multitensor  category,
and let $\cM$ be a $\cC$-bimodule category.
The center of $\cM$, denoted $\cZ_\cC(\cM)$, is the category with the following objects and morphisms:

Objects: pairs $(M,\ga)$, where $M\in \cM$ and $\ga$ is an isomorphism of functors  
$\ga_A\colon A \lact M\to M \ract A$, $A\in \cC$ (half-braiding) satisfying natural compatibility conditions.

Morphisms: $\Hom ((M,\ga), (M',\ga'))=\{f\in \Hom_\cM(M,M')\st f\ga=\ga' f\}$. 
\end{definition}

In particular, in the special case $\cM=\cC$, this construction gives the Drinfeld center $\cZ(\cC)$. 

\begin{remark}
    Equivalently, the center $\cZ_\cC(\cM)$ can be described as the category of $\cC$-bimodule functors $\cC\to \cM$; see \ocite{GNN} for details. 
\end{remark}    

%\begin{remark}
%In \ocite{BBJ1}, they use notation $\tr(\cM)$ for the same thing we denote here by $\cZ_\cC(\cM)$. 
%\end{remark}

\begin{theorem}
\label{t:center-adjoint}
%\label{t:center}
\par\noindent
Let $\cC$ be pivotal multifusion,
and $\cM$ a $\cC$-bimodule category.
\item Let $F\colon  \cZ_\cC(\cM)\to \cM$ be the natural forgetful functor $F\colon  (M,\ga) \mapsto M$. 
Then it has a two-sided adjoint functor $I\colon \cM\to \cZ_\cC(\cM)$, given by 
\begin{equation}
\label{e:induction}
I(M)=\bigoplus_{i\in \Irr(\cC)} X_i \lact M \ract X_i^*
\end{equation}
with the half-braiding (recall \lemref{l:halfbrd})
\begin{equation}
\label{e:I(M)}
\begin{tikzpicture}
\draw (0,1)--(0,-1);
\node[above] at (0,1) {$M$};
\node[semi_morphism={90,}] (L) at (-0.5,0) {$\albar$};
\node[semi_morphism={270,}] (R) at (0.5,0) {$\albar$};
\draw (L) -- +(0,1) node[above] {$i$};
\draw (L) -- +(0,-1) node[below] {$j$}; 
\draw (R) -- +(0,1) node[above] {$i^*$};
\draw (R) -- +(0,-1) node[below] {$j^*$}; 
\draw (-1, 1) .. controls +(down:0.5cm) and +(135:0.5cm) .. (L);
\draw (1, -1) .. controls +(up:0.5cm) and +(-45:0.5cm) .. (R);
\end{tikzpicture}
\end{equation}

The adjunction isomorphism for $F\colon \cZ(\cM) \rightleftharpoons \cM \colon  I$,
\[
\Hom_{\cZ(\cM)}((M_1,\ga), I(M_2)) \simeq \Hom_\cM(M_1,M_2)
\]
is given by:
\begin{align}
%%%%%%%
\label{e:adj_isom}
\sum_{i\in \Irr(\cC)} 
\begin{tikzpicture}
\node[morphism] (ph) at (0,0) {$\ph_i$};
\draw (ph) -- +(90:1cm) node[above] {$M_1$};
\draw (ph) -- +(270:1cm) node[below] {$M_2$};
\draw[midarrow] (ph) to[out=180,in=90] (-1,-1);
\node at (-0.9,-0.1) {$i$};
\draw[midarrow_rev] (ph) to[out=0,in=90] (1,-1);
\node at (0.9,-0.1) {$i$};
\end{tikzpicture}
%%%%%%
&\mapsto
%%%%%
\sum_{l \in \Irr_0(\cC)}
\begin{tikzpicture}
\draw[midarrow_rev={0.5}] (0,0) circle(0.5cm);
\node at (-0.7,0.1) {$l$};
\draw (0,1) -- (0,-1) node[pos=0, above] {$M_1$} node[below] {$M_2$};
\node[small_morphism] at (0,0.5) {$\ga$};
\node[small_morphism] at (0,-0.5) {\small $\ph_l$};
\end{tikzpicture}
%%%%%%
\\
%%%%%%
\label{e:adj_isom_2}
\sum_{j\in \Irr(\cC)} \sqrt{d_j^R}
\begin{tikzpicture}
\node[small_morphism] (ga) at (0,0.3) {$\ga$};
\draw (ga) -- (0,1) node[above] {$M_1$};
\draw (ga) -- (0,-1) node[below] {$M_2$};
\draw[midarrow] (ga) to[out=180,in=90] (-1,-1);
\node at (-0.9,0.1) {$j$};
\draw[midarrow_rev] (ga) to[out=0,in=90] (1,-1);
\node at (0.9,0.1) {$j$};
\node[small_morphism] at (0,-0.3) {\small $f$};
\end{tikzpicture}
%%%%%%
&\leftmapsto
%%%%%
\hspace{10pt}
\begin{tikzpicture}
\node[morphism] (f) at (0,0) {$f$};
\draw (f) -- +(90:1cm) node[above] {$M_1$};
\draw (f) -- +(270:1cm) node[below] {$M_2$};
\end{tikzpicture}
\end{align}
(Note the sum on the right in \eqnref{e:adj_isom}
is over $\Irr_0(\cC)$ and not $\Irr(\cC)$.)
The other adjunction isomorphism
for $I\colon \cM \rightleftharpoons \cZ(\cM) :F$,
\[
\Hom_\cM(M_1,M_2) \simeq \Hom_{\cZ(\cM)}(I(M_1), (M_2,\ga)) 
\]
is given by a similar formula,
essentially obtained by rotating all the diagrams above.
\end{theorem}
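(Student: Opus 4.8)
The plan is to verify in turn that (a) $I$ is a well-defined functor $\cM\to\cZ_\cC(\cM)$, (b) the two assignments \eqnref{e:adj_isom} and \eqnref{e:adj_isom_2} are well-defined linear maps, (c) they are mutually inverse, and (d) everything is natural in $M_1,M_2$; one then repeats the argument for the second adjunction. For step (a), observe that the half-braiding \eqnref{e:I(M)} is built from the elements $\Gamma_A$ of \lemref{l:halfbrd}, with the two $\albar$-nodes threaded through the summands $X_i\lact M\ract X_i^*$ and $X_j\lact M\ract X_j^*$ of $I(M)$ (the factors $\sqrt{d_i^R}\sqrt{d_j^R}$ being already absorbed into the $\albar$ notation). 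Consequently, naturality of $\ga_A$ in $A$ is exactly property~(2) of \lemref{l:halfbrd}, and the hexagon/compatibility axiom that a half-braiding must satisfy is exactly property~(1) (tensor-product compatibility). Invertibility of $\ga_A$ follows because $\Gamma_A$ is assembled from dual bases; an explicit inverse is obtained by reversing the orientations of the $i,j$ strands. On morphisms $I$ is $f\mapsto\bigoplus_i \id_{X_i}\lact f\ract\id_{X_i^*}$, and that this intertwines the half-braidings is again property~(2) of \lemref{l:halfbrd}; hence $I$ lands in $\cZ_\cC(\cM)$.

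For step (b), the assignment \eqnref{e:adj_isom} sends $\ph=\sum_i\ph_i$ to the closed-up diagram shown, which is manifestly a morphism $M_1\to M_2$ in $\cM$, so nothing needs to be checked. For \eqnref{e:adj_isom_2}, given $f\colon M_1\to M_2$ one must verify that the resulting diagram commutes with the half-braiding of $I(M_1)$ (given by \eqnref{e:I(M)}) and with $\ga$, i.e.\ genuinely lies in $\Hom_{\cZ_\cC(\cM)}(I(M_1),(M_2,\ga))$. This is a short graphical computation: push $\ga$ and $f$ around using naturality and the half-braiding axiom of $(M_2,\ga)$, and slide the $\al$-nodes past $f$ using \lemref{l:summation-alpha-tunnel}.

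Step (c) is the heart of the argument. Composing \eqnref{e:adj_isom_2} and then \eqnref{e:adj_isom}, one obtains from $f$ a diagram in which the two $\al$-nodes close off into a dashed loop; by \lemref{l:dashed_circle} this loop evaluates to the scalar $|\Irr_0(\cC)|\cD$, which cancels the normalization, and the half-braiding axiom for $\ga$ straightens the remaining strand, leaving $f$. For the reverse composition one starts from $\ph=\sum_i\ph_i$ satisfying the intertwining relation with the half-braiding of $I(M_2)$; feeding this relation back in unthreads the diagram, and \lemref{l:summation}, read in the direction that recombines a pair of $\al$-nodes into a dashed strand, recovers $\sum_i\ph_i$. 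I expect the main obstacle to be purely bookkeeping here: keeping the normalization factors ($\sqrt{d_i^R}$, $\tfrac1{|\Irr_0(\cC)|\cD}$, and the $\sqrt{d_i^L}$ that surface if one routes through \lemref{l:IM_htr_proj}) together with the non-spherical corrections (semicircles versus circles, $d^L$ versus $d^R$, and the placement of the extremal $i,j$ strands in the $\Gamma$-nodes) mutually consistent throughout the manipulation.

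Finally, for step (d), naturality of \eqnref{e:adj_isom} in $M_1$ and $M_2$ is immediate from functoriality of the graphical calculus, which establishes $F\dashv I$. The second adjunction $\Hom_\cM(M_1,M_2)\simeq\Hom_{\cZ_\cC(\cM)}(I(M_1),(M_2,\ga))$ is proved by the same method after reflecting every diagram top-to-bottom, as indicated in the statement; the only effect of this reflection on the formulas is to replace each right dimension $d_i^R$ by the corresponding left dimension $d_i^L$, exactly as anticipated in \lemref{l:IM_htr_proj}. Together these give the claimed two-sided adjunction.
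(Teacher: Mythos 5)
Your plan follows the same route as the paper: verify that \eqnref{e:I(M)} is a half-braiding via \lemref{l:halfbrd}, check that \eqnref{e:adj_isom_2} lands in the correct Hom-space by a graphical intertwining computation, and show the two maps are mutually inverse using \lemref{l:summation} and the intertwining relation satisfied by $\ph$. The step the paper actually writes out in detail is the composition you call the ``reverse'' one (start from $\ph=\sum\ph_i$, apply \eqnref{e:adj_isom} then \eqnref{e:adj_isom_2}, and use the fact that $\ph$ intertwines half-braidings to unthread the diagram), and your description of that step is essentially right.

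However, your description of the other composition is concretely wrong. You claim that applying \eqnref{e:adj_isom_2} and then \eqnref{e:adj_isom} to $f$ produces ``two $\al$-nodes'' that ``close off into a dashed loop'' evaluating to $|\Irr_0(\cC)|\cD$, ``which cancels the normalization.'' Neither formula contains $\al$-nodes, a dashed strand, or a normalization factor $\tfrac{1}{|\Irr_0(\cC)|\cD}$ — you appear to be conflating the adjunction isomorphisms with the projector $P_{(M,\ga)}=\hat P\circ\check P$ of \lemref{l:Mga_proj}, where that cancellation genuinely occurs. If you inserted a dashed loop here the answer would come out off by a factor of $|\Irr_0(\cC)|\cD$. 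The correct (and easier) argument is: \eqnref{e:adj_isom_2} sends $f$ to $\sum_j\sqrt{d_j^R}\,\ga^{(j)}\circ f$, and \eqnref{e:adj_isom} then retains only the components with $j=l\in\Irr_0(\cC)$, closing each into an $l$-labelled circle; since $\sqrt{d_l^R}=1$ for $l\in\Irr_0(\cC)$ and the unit axiom for the half-braiding makes the two stacked copies of $\ga_{\one_l}$ collapse, the sum over $l\in\Irr_0(\cC)$ of the resulting $\id_{\one_l}$-loops reassembles $\id_\one$ and returns $f$. (A small related slip: \eqnref{e:adj_isom_2} lands in $\Hom_{\cZ_\cC(\cM)}((M_1,\ga),I(M_2))$, not $\Hom_{\cZ_\cC(\cM)}(I(M_1),(M_2,\ga))$, so the intertwining check in your step (b) is between $\ga$ on $M_1$ and the half-braiding \eqnref{e:I(M)} on $I(M_2)$.) With these corrections your outline matches the paper's proof and in fact covers more ground, since the paper only writes out one of the two compositions.
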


\begin{proof}
%\label{pf:t:center}
This is essentially the same as when $\cC$ is spherical
as in \ocite{kirillov-stringnet},
but we provide it to assuage any doubts that the non-sphericality,
manifested in requiring semicircular morphisms $\al$,
does not lead to problems.
Note that the isomorphisms here differ slightly from that
of \ocite{kirillov-stringnet}.

Let us check that the morphism on the left side of \eqnref{e:adj_isom_2}
intertwines half-braidings:
\begin{equation*}
\sum_{j\in \Irr(\cC)} \sqrt{d_j^R}
\begin{tikzpicture}
\node[small_morphism] (ga2) at (0,0.9) {$\ga$};
\node[small_morphism] (ga) at (0,0.3) {$\ga$};
\node[small_morphism] (f) at (0,-0.3) {\small $f$};
%% vertical lines
\draw (ga) -- (ga2);
\draw (ga2) -- (0,1.5) node[above] {$M_1$};
\draw (ga) -- (f);
\draw (f) -- (0,-1.5) node[below] {$M_2$};
%% half-brd lines
\draw (ga2) to[out=0,in=-90] (1.5,1.5) node[above] {$X$};
\draw (ga2) to[out=180,in=90] (-1.5,-1.5) node[below] {$X$};
%% adjunction cup lines
\draw[midarrow] (ga) to[out=180,in=90] (-1,-1.5);
\draw[midarrow_rev] (ga) to[out=0,in=90] (1,-1.5);
\node at (0.9,0.1) {$j$};
\end{tikzpicture}
%%%%%%%%%%%%%%%%%%
=
%%%%%%%%%%%%%%%%%%
\sum_{i,j\in \Irr(\cC)} d_i^R \sqrt{d_j^R}
\begin{tikzpicture}
\node[small_morphism] (ga2) at (0,0.9) {$\ga$};
\node[small_morphism] (ga) at (0,0.3) {$\ga$};
\node[small_morphism] (f) at (0,-0.3) {\small $f$};
%% vertical lines
\draw (ga) -- (ga2);
\draw (ga2) -- (0,1.5) node[above] {$M_1$};
\draw (ga) -- (f);
\draw (f) -- (0,-1.5) node[below] {$M_2$};
%% half-brd line
\draw (ga2) to[out=0,in=-90] (1.5,1.5) node[above] {$X$};
%% adjunction cup line right side
\draw[midarrow_rev] (ga) to[out=0,in=90] (1,-1.5);
\node at (0.9,0.1) {$j$};
%% alpha's
\node[semi_morphism={90,}] (al1) at (-1,-0.3) {\tiny $\al$};
\node[semi_morphism={90,}] (al2) at (-1,-0.9) {\tiny $\al$};
%% lines ga's to alpha's
\draw (ga) to[out=180,in=80] (al1);
\draw (ga2) to[out=180,in=110] (al1);
%% line between alpha's
\draw[midarrow={0.7}] (al1) to[out=-110,in=110] (al2);
\node at (-1.3,-0.6) {$i$};
%% alpha to bottom
\draw (al2) -- (-1,-1.5);
\draw (al2) to[out=-150,in=90] (-1.5,-1.5) node[below] {$X$};
\end{tikzpicture}
%%%%%%%%%%%%%%%%%%
=
%%%%%%%%%%%%%%%%%%
\sum_{i\in \Irr(\cC)} \sqrt{d_i^R}
\begin{tikzpicture}
\node[small_morphism] (ga) at (0,0.5) {$\ga$};
\node[small_morphism] (f) at (0,-0.3) {\small $f$};
%% vertical lines
\draw (ga) -- (0,1.5) node[above] {$M_1$};
\draw (ga) -- (f);
\draw (f) -- (0,-1.5) node[below] {$M_2$};
%% alpha's
\node[semi_morphism={90,}] (al1) at (-1,-0.3) {\tiny $\albar$};
\node[semi_morphism={270,}] (al2) at (1,-0.3) {\tiny $\albar$};
%% half-brd line
\draw (al2) to[out=0,in=-90] (1.5,1.5) node[above] {$X$};
\draw (al1) to[out=180,in=90] (-1.5,-1.5) node[below] {$X$};
%% lines ga to alpha's
\draw[midarrow] (ga) to[out=180,in=90] (al1);
\node at (-0.8, 0.6) {$i$};
\draw (ga) to[out=0,in=90] (al2);
%% alpha to bottom
\draw (al1) -- (-1,-1.5);
\draw (al2) -- (1,-1.5);
\end{tikzpicture}
\end{equation*}

In the first equality, we use \lemref{l:summation};
in the second equality, we use the naturality of $\ga$
to pull the top $\al$ to the right,
and absorb the $\sqrt{d_i^R}\sqrt{d_j^R}$ factor
into $\al$ to get $\albar$.

Next we check that applying \eqnref{e:adj_isom} then \eqnref{e:adj_isom_2}
is the identity map.
Let $m_i = 1$ if $i\in \Irr_0(\cC)$, 0 otherwise.
In the following diagrams,
we implicitly sum lowercase latin alphabets
over $\Irr(\cC)$.
Then the composition is the map

\begin{gather*}
\begin{tikzpicture}
\node[morphism] (ph) at (0,0) {\small $\ph_i$};
\draw (ph) -- (0,1.5) node[above] {$M_1$};
\draw (ph) -- (0,-1.5) node[below] {$M_2$};
\draw[midarrow] (ph) to[out=180,in=90] (-1,-1.5);
\node at (-0.9,-0.1) {$i$};
\draw[midarrow_rev] (ph) to[out=0,in=90] (1,-1.5);
\node at (0.9,-0.1) {$i$};
\end{tikzpicture}
%%%%%%%%%%%%
\mapsto
%%%%%%%%%%%%
m_i\sqrt{d_j^R}
\begin{tikzpicture}
%% draw circle first
  \draw[midarrow_rev={0.6}] (0,0) circle(0.4cm);
\node at (-0.4,-0.4) {$i$};
%% morphisms
\node[small_morphism] (ph) at (0,-0.4) {\small $\ph_i$};
\node[small_morphism] (ga1) at (0,0.9) {$\ga$};
\node[small_morphism] (ga2) at (0,0.4) {$\ga$};
%% vertical lines
\draw (ga1) -- (0,1.5) node[above] {$M_1$};
\draw (ga1) -- (ga2);
\draw (ga2) -- (ph);
\draw (ph) -- (0,-1.5) node[below] {$M_2$};
%% ga to bottom
\draw[midarrow] (ga1) to[out=180,in=90] (-1,-1.5);
\node at (1.15,0) {$j$};
\draw[midarrow_rev] (ga1) to[out=0,in=90] (1,-1.5);
\end{tikzpicture}
%%%%%%%%%%%%%%
=
%%%%%%%%%%%%%%
m_i \sqrt{d_j^R} d_k^R
\begin{tikzpicture}
\node[semi_morphism={90,}] (al1) at (-1,0) {\tiny $\al$};
\node[semi_morphism={270,}] (al2) at (1,0) {\tiny $\al$};
\node[small_morphism] (ga) at (0,0.9) {$\ga$};
\node[small_morphism] (ph) at (0,0) {\small $\ph_i$};
%% ph to alpha's
\draw[midarrow] (ph) to[out=180,in=-80] (al1);
\node at (-0.5,-0.3) {$i$};
\draw (ph) to[out=0,in=-100] (al2);
%% ga to alpha's
\draw[midarrow] (ga) to[out=180,in=90] (al1);
\node at (-0.7,1) {$k$};
\draw (ga) to[out=0,in=90] (al2);
%% vertical lines
\draw (ga) -- (0,1.5) node[above] {$M_1$};
\draw (ga) -- (ph);
\draw (ph) -- (0,-1.5) node[below] {$M_2$};
%% alpha to bottom
\draw[midarrow] (al1) to[out=-100,in=90] (-1,-1.5);
\node at (-0.9,-0.8) {$j$};
\draw (al2) to[out=-80,in=90] (1,-1.5);
\end{tikzpicture}
%%%%%%%%%%%%%%
=
%%%%%%%%%%%%%%
m_i \sqrt{d_j^R} d_k^R
\sqrt{d_i^R} \sqrt{d_l^R}
\begin{tikzpicture}
\node[semi_morphism={90,}] (al1) at (-1,-0.5) {\tiny $\al$};
\node[semi_morphism={270,}] (al2) at (1,-0.5) {\tiny $\al$};
\node[small_morphism] (ph) at (0,0.6) {\small $\ph_l$};
\node[semi_morphism={90,}] (bt1) at (-0.5,0) {\tiny $\beta$};
\node[semi_morphism={270,}] (bt2) at (0.5,0) {\tiny $\beta$};
%% ph to beta's
\draw[midarrow] (ph) to[out=180,in=90] (bt1);
\node at (-0.5,0.6) {$l$};
\draw (ph) to[out=0,in=90] (bt2);
%% beta to alpha's - old ga to alpha's
\draw[midarrow] (bt1) to[out=180,in=90] (al1);
\node at (-0.8,0.2) {$k$};
\draw (bt2) to[out=0,in=90] (al2);
%% beta to alpha's - part of old ph to alpha's
\draw[midarrow] (bt1) to[out=-90,in=-80] (al1);
\node at (-0.5,-0.6) {$i$};
\draw (bt2) to[out=-90,in=-100] (al2);
%% vertical lines
\draw (ph) -- (0,1.5) node[above] {$M_1$};
\draw (ph) -- (0,-1.5) node[below] {$M_2$};
%% alpha to bottom
\draw[midarrow] (al1) to[out=-100,in=90] (-1,-1.5);
\node at (-0.9,-1.1) {$j$};
\draw (al2) to[out=-80,in=90] (1,-1.5);
\end{tikzpicture}
%%%%%%%%%%%%%%
\\
=
%%%%%%%%%%%%%%
m_i \sqrt{d_j^R} \sqrt{d_i^R} \sqrt{d_l^R}
\begin{tikzpicture}
\node[small_morphism] (ph) at (0,0.6) {\small $\ph_l$};
\node[semi_morphism={90,}] (bt1) at (-0.5,0) {\tiny $\beta$};
\node[semi_morphism={270,}] (bt2) at (0.5,0) {\tiny $\beta$};
%% ph to beta's
\draw[midarrow] (ph) to[out=180,in=90] (bt1);
\node at (-0.5,0.6) {$l$};
\draw (ph) to[out=0,in=90] (bt2);
%% beta to beta loop;
\draw[midarrow] (bt1)
  ..controls (-0.4,-0.5) and (-0.6,-0.8) .. (-0.8,-0.6)
  ..controls (-1,-0.4) and (-0.7,-0.2) .. (bt1);
\draw[midarrow_rev] (bt2)
  ..controls (0.4,-0.5) and (0.6,-0.8) .. (0.8,-0.6)
  ..controls (1,-0.4) and (0.7,-0.2) .. (bt2);
\node at (-0.5,-0.7) {$i$};
%% vertical lines
\draw (ph) -- (0,1.5) node[above] {$M_1$};
\draw (ph) -- (0,-1.5) node[below] {$M_2$};
%% beta's to bottom
\draw[midarrow] (bt1) to[out=180,in=90] (-1,-1.5);
\node at (-0.9,-1.1) {$j$};
\draw (bt2) to[out=0,in=90] (1,-1.5);
\end{tikzpicture}
%%%%%%%%%%%%%%
=
%%%%%%%%%%%%%%
m_i \sqrt{d_j^R} \sqrt{d_i^R} \sqrt{d_l^R}
\begin{tikzpicture}
\node[small_morphism] (ph) at (0,0.6) {\small $\ph_l$};
\node[semi_morphism={90,}] (bt1) at (-1,0.1) {\tiny $\beta$};
\node[semi_morphism={270,}] (bt2) at (1,0.1) {\tiny $\beta$};
%% ph to beta's
\draw[midarrow] (ph) to[out=180,in=90] (bt1);
\node at (-0.5,0.8) {$l$};
\draw (ph) to[out=0,in=90] (bt2);
%% i circle
\draw[midarrow_rev={0.75}] (-0.5,-0.4) circle(0.3cm);
\node at (-0.5,-0.9) {$i$};
%% vertical lines
\draw (ph) -- (0,1.5) node[above] {$M_1$};
\draw (ph) -- (0,-1.5) node[below] {$M_2$};
%% beta's to bottom
\draw[midarrow] (bt1) -- (-1,-1.5);
\node at (-1.1,-1.1) {$j$};
\draw (bt2) -- (1,-1.5);
\end{tikzpicture}
%%%%%%%%%%%%%
=
%%%%%%%%%%%%%
\begin{tikzpicture}
\node[morphism] (ph) at (0,0) {\small $\ph_l$};
\draw (ph) -- (0,1.5) node[above] {$M_1$};
\draw (ph) -- (0,-1.5) node[below] {$M_2$};
\draw[midarrow] (ph) to[out=180,in=90] (-1,-1.5);
\node at (-0.9,-0.1) {$l$};
\draw[midarrow_rev] (ph) to[out=0,in=90] (1,-1.5);
\end{tikzpicture}
\end{gather*}

The first equality is the same as the previous computation.
The second equality uses the fact that $\sum \ph_i$
intertwines half-braidings,
so that we ``pull'' the $k$ strand through $\ph_i$
The third equality comes from ``pulling'' $\al$
through $\beta$.
The fourth equality comes from ``pulling''
the $i$ loop through $\beta$.
Finally, for the last equality,
we observe that (1) only $j=k$ terms in the sum contribute,
and so we have a $d_j^R$ coefficient,
and we may apply \lemref{l:summation};
(2) since
$d_i^R = 1$ for $i\in \Irr_0(\cC)$,
\[
\sum_i m_i \sqrt{d_i^R}
\begin{tikzpicture}
\draw[midarrow_rev={0}] (0,0) circle(0.3cm);
\node at (0.4,-0.2) {$i$};
\end{tikzpicture}
= \sum_{l\in \Irr_0(\cC)} \id_{\one_l}
= \id_\one
\]
\end{proof}

An important special case is when $\cM=\cM_1\boxtimes\cM_2$, where $\cM_1$ is a right module  category over a pivotal multifusion category  $\cC$, and $\cM_2$ is a left module category over $\cC$. In this case, by \ocite{ENO10}*{Proposition~3.8},
one has that $\cZ_\cC(\cM_1\boxtimes \cM_2)$ is naturally equivalent to 
the balanced tensor product of categories:
\begin{equation}\label{e:center-product}
\cZ_\cC(\cM_1\boxtimes \cM_2)\simeq \cM_1\boxtimes_\cC \cM_2
\end{equation}
where the balanced tensor product is defined by the universal property: for any abelian  category $\cA$, we have a natural equivalence
$$
Fun_{bal}(\cM_1\times \cM_2, \cA)=Fun(\cM_1\boxtimes_\cC \cM_2, \cA)
$$
where $Fun$, $Fun_{bal}$ stand for category of  $\kk$-linear additive functors (respectively, category of $\kk$-linear additive  $\cC$- balanced functors); see details in \ocite{ENO10}*{Definition 3.3}.

Under the equivalence \eqref{e:center-product}, the natural functor
$\cM_1\boxtimes \cM_2\to \cM_1\boxtimes_\cC \cM_2$ 
is identified with the functor
$I\colon  \cM_1\boxtimes \cM_2\to \cZ_\cC(\cM_1\boxtimes \cM_2)$ 
constructed in \thmref{t:center-adjoint}.

Recall that a functor $F\colon \cA\to \cB$, where $\cB$ is abelian and $\cA$
additive (not necessarily abelian) is called \emph{dominant} if any object of
$\cB$ appears as a subquotient of $F(X)$ for some $X \in \Obj\cA$.
Similarly, we say that a full  subcategory $\cA\subset \cB$ is dominant if any
object of $\cB$ appears as a subquotient of some $X\in \Obj \cA$.
In the case when $\cA$ is a
full additive subcategory in a semisimple abelian category $\cB$,
this immediately implies that the  Karoubi envelope of $\cA$
is equivalent to $\cB$
(in particular, this implies that $\Kar(\cA)$ is abelian).

\begin{proposition} \label{p:I_dominant}
Under the hypotheses of \thmref{t:center-adjoint},
the functor $I\colon \cM \to \cZ_\cM(\cM)$ is dominant.
\end{proposition}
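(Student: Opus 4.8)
The plan is to deduce dominance formally from the fact, established in \thmref{t:center-adjoint}, that $I$ is a two-sided adjoint of the forgetful functor $F\colon \cZ_\cC(\cM)\to \cM$, together with two elementary observations about $F$: it is \emph{faithful} (by \defref{d:center}, $\Hom_{\cZ_\cC(\cM)}((M,\ga),(M',\ga'))$ is a subspace of $\Hom_\cM(M,M')$) and it is \emph{exact} (kernels and cokernels in $\cZ_\cC(\cM)$ are computed on underlying objects, the half-braiding being induced; alternatively, $F$ is simultaneously a left and a right adjoint). A faithful exact functor between abelian categories reflects epimorphisms: if $F(f)$ is an epimorphism then $\operatorname{coker}(F(f))=0$, hence $F(\operatorname{coker} f)=0$, hence $\operatorname{coker} f=0$, so $f$ is an epimorphism.

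Fix $(M,\ga)\in \cZ_\cC(\cM)$; the goal is to exhibit it as a quotient of $I(M)=I(F(M,\ga))$. Since $I$ is a \emph{left} adjoint of $F$, this adjunction carries a counit $\eps\colon I\circ F\to \id_{\cZ_\cC(\cM)}$, and in particular a morphism $\eps_{(M,\ga)}\colon I(M)\to (M,\ga)$ in $\cZ_\cC(\cM)$ --- concretely, the image of $\id_M$ under the isomorphism $\Hom_\cM(M,M)\simeq \Hom_{\cZ_\cC(\cM)}(I(M),(M,\ga))$ of \thmref{t:center-adjoint} (the ``rotated'' form of \eqref{e:adj_isom}). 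The relevant triangle identity for this adjunction asserts that the composite $M\xrightarrow{\eta_M} F I(M)\xrightarrow{F(\eps_{(M,\ga)})} M$ equals $\id_M$, where $\eta$ denotes the unit; hence $F(\eps_{(M,\ga)})\colon F I(M)\to M$ is a split epimorphism in $\cM$, a fortiori an epimorphism.

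Since $F$ reflects epimorphisms, $\eps_{(M,\ga)}\colon I(M)\to (M,\ga)$ is an epimorphism in $\cZ_\cC(\cM)$. Thus $(M,\ga)$ is a quotient --- in particular a subquotient --- of $I(M)=I(F(M,\ga))$, which is exactly the assertion that $I$ is dominant. Moreover, because $\cM$, and therefore $\cZ_\cC(\cM)$, is semisimple, this epimorphism splits, so in fact $(M,\ga)$ is a direct summand of $I(M)$.

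There is essentially no hard step here: all the substantive content is carried by \thmref{t:center-adjoint}. The only points demanding attention are bookkeeping: using the adjunction in the direction for which $I$ is the \emph{left} adjoint, so that its counit is a genuine arrow of $\cZ_\cC(\cM)$ landing on $(M,\ga)$ and is seen, via a triangle identity, to become split after applying $F$; and verifying that the faithful exact functor $F$ reflects epimorphisms.
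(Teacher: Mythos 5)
Your argument is correct, but it is a genuinely different route from the paper's. The paper deduces the proposition from \lemref{l:Mga_proj}, which writes down an explicit idempotent $P_{(M,\ga)}$ on $I(M)$ and verifies by a graphical-calculus computation that $\check{P}_{(M,\ga)}\circ\hat{P}_{(M,\ga)}=\id_{(M,\ga)}$ (the dashed circle evaluating to $|\Irr_0(\cC)|\cD$ and cancelling the normalization), thereby exhibiting $(M,\ga)$ as a \emph{direct summand} of $I(M)$. You instead argue formally: the counit of $I\dashv F$ becomes a split epimorphism after applying $F$ by the triangle identity, and the faithful exact $F$ reflects epimorphisms, so $(M,\ga)$ is a quotient of $I(F(M,\ga))$ — which is all that the paper's definition of dominance (subquotient) requires. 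Your approach is shorter and works for any two-sided adjoint of a faithful exact functor, with no computation; the paper's explicit computation buys the stronger splitting $(M,\ga)\simeq (I(M),P_{(M,\ga)})$, which is what is actually reused later (e.g.\ in the proof of \thmref{t:htr-center}). One small caution on your closing remark: you invoke semisimplicity of $\cZ_\cC(\cM)$ to split the epimorphism, but in the paper that semisimplicity (\prpref{p:ZM_ss}) is itself proved using \lemref{l:Mga_proj} and the present proposition, so if you wanted the direct-summand statement by your route you should either drop that remark or note that it is not needed for dominance.
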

\begin{proof}
This follows immediately from \lemref{l:Mga_proj} below.
\end{proof}

\begin{lemma} \label{l:Mga_proj}
Let $(M,\ga) \in \cZ_\cC(\cM)$. The morphism
\begin{equation}
\label{e:P_Mga}
P_{(M,\ga)} :=
\frac{1}{|\Irr_0(\cC)| \cD} G(\sum d_i^R \ga_{X_i})
=
%%%%%%%%
\sum_{i,j,k \in \Irr(\cC)} \frac{\sqrt{d_i^R}\sqrt{d_j^R}d_k^R}{|\Irr_0(\cC)| \cD}
\begin{tikzpicture}
\node[small_morphism] (ga) at (0,0) {$\ga$};
\node[semi_morphism={90,}] (L) at (-1,0) {$\al$};
\node[semi_morphism={270,}] (R) at (1,0) {$\al$};
\draw (ga)-- +(0,1.5) node[above] {$M$}; \draw (ga)-- +(0,-1.5) node[below] {$M$};
\draw[midarrow_rev] (L)-- +(0,1.5) node[pos=0.5,left] {$i$};
\draw[midarrow] (L) -- +(0,-1.5) node[pos=0.5,left] {$j$};
\draw[midarrow] (R) -- +(0,1.5) node[pos=0.5,right] {$i$};
\draw[midarrow_rev] (R)-- +(0,-1.5) node[pos=0.5,right] {$j$};
\draw[midarrow={0.6}] (L) to[out=-80,in=180] (ga);
\node at (-0.5, -0.5) {$k$};
\draw[midarrow_rev={0.6}] (R) to[out=-100,in=0] (ga);
\node at (0.5, -0.5) {$k$};
\end{tikzpicture}
%%%%%%%%
=
%%%%%%%%
\sum_{i,j \in \Irr(\cC)} \frac{\sqrt{d_i^R}\sqrt{d_j^R}}{|\Irr_0(\cC)| \cD}
\begin{tikzpicture}
\node[small_morphism] (ga) at (0,0.4) {$\ga$};
\node[small_morphism] (ga2) at (0,-0.4) {$\ga$};
%% vertical lines
\draw (ga) -- (0,1.5) node[above] {$M$};
\draw (ga) -- (ga2);
\draw (ga2) -- (0,-1.5) node[below] {$M$};
%% cup branches
\draw[midarrow_rev] (ga) to[out=180,in=-90] (-1,1.5);
\node at (-0.7,1) {$i$};
\draw[midarrow] (ga) to[out=0,in=-90] (1,1.5);
%% cap branches
\draw[midarrow] (ga2) to[out=180,in=90] (-1,-1.5);
\node at (-0.7,-1) {$j$};
\draw[midarrow_rev] (ga2) to[out=0,in=90] (1,-1.5);
\end{tikzpicture}
%%%%%%%%
\end{equation}
is a projection in $\End_{\cZ_\cC(\cM)}(I(M))$.
Furthermore, it can be written as a composition
$P_M = \hat{P}_M \circ \check{P}_M$,
where
\begin{equation}
\check{P}_{(M,\ga)} :=
\sum_{i \in \Irr(\cC)} \frac{\sqrt{d_i^R}}{\sqrt{|\Irr_0(\cC)| \cD}}
\;
\begin{tikzpicture}
\node[small_morphism] (ga) at (0,0) {$\ga$};
%% vertical lines
\draw (ga) -- (0,1) node[above] {$M$};
\draw (ga) -- (0,-1) node[below] {$M$};
%% cup branches
\draw[midarrow_rev] (ga) to[out=180,in=-90] (-1,1);
\node at (-0.7,0.6) {$i$};
\draw[midarrow] (ga) to[out=0,in=-90] (1,1);
\end{tikzpicture}
%%%%%%%%%%%%%%%%%%%%%%%%%%%%%%%%%
\;\;\;
,
\;\;\;
%%%%%%%%%%%%%%%%%%%%%%%%%%%%%%%%%
\hat{P}_{(M,\ga)} :=
\sum_{j \in \Irr(\cC)} \frac{\sqrt{d_j^R}}{\sqrt{|\Irr_0(\cC)| \cD}}
\;
\begin{tikzpicture}
\node[small_morphism] (ga) at (0,0) {$\ga$};
%% vertical lines
\draw (ga) -- (0,1) node[above] {$M$};
\draw (ga) -- (0,-1) node[below] {$M$};
%% cap branches
\draw[midarrow] (ga) to[out=180,in=90] (-1,-1);
\node at (-0.7,-0.6) {$j$};
\draw[midarrow_rev] (ga) to[out=0,in=90] (1,-1);
\end{tikzpicture}
%%%%%%%%
\end{equation}
such that $\check{P}_{(M,\ga)} \circ \hat{P}_{(M,\ga)} = \id_{(M,\ga)}$,
thus exhibiting $(M,\ga)$ as a direct summand of $I(M)$.
Note that $\check{P}_{(M,\ga)}, \hat{P}_{(M,\ga)}$
are multiples of the morphisms corresponding to $\id_M$
under the adjunctions of \thmref{t:center-adjoint}.
\end{lemma}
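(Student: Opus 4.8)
\emph{Proof proposal.} The strategy is to treat $\check P_{(M,\ga)}$ and $\hat P_{(M,\ga)}$ as the basic data, establish the identity $\check P_{(M,\ga)}\circ\hat P_{(M,\ga)}=\id_{(M,\ga)}$, and then deduce everything else formally; essentially all the content is one diagrammatic identity. To begin, $\check P_{(M,\ga)}$ and $\hat P_{(M,\ga)}$ are, up to a scalar, the morphisms corresponding to $\id_M$ under the two adjunction isomorphisms of \thmref{t:center-adjoint} (this is the content of the last sentence of the statement, obtained from \eqnref{e:adj_isom_2} and its ``rotated'' companion by setting $f=\id_M$), so in particular they are genuine morphisms of $\cZ_\cC(\cM)$, i.e.\ they intertwine the half-braidings of $I(M)$ and $(M,\ga)$. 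If one prefers a direct argument, the intertwining property is checked by the same manipulation used in the proof of \thmref{t:center-adjoint} to show that the left-hand side of \eqnref{e:adj_isom_2} intertwines half-braidings: expand the relevant circle with \lemref{l:summation} and slide the resulting $\al$-semicircle past $\ga$ using naturality of $\ga$.

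Next I would compute the two composites graphically. Stacking $\check P_{(M,\ga)}$ above $\hat P_{(M,\ga)}$ along their common central $M$-leg produces, term by term, exactly the two-$\ga$ diagram on the far right of \eqnref{e:P_Mga}; identifying this in turn with the $\al$-decorated expression and with $\frac{1}{|\Irr_0(\cC)|\cD}G(\sum_i d_i^R\ga_{X_i})$ is a routine diagram chase --- unwind the definition of $G$, then use \lemref{l:summation} to introduce/contract the $k$-coloured strand (the factors $\sqrt{d_i^R}\sqrt{d_j^R}$ combining with $d_k^R$ exactly as that lemma requires) and \lemref{l:summation-alpha-tunnel} to move legs through the $\al$'s. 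This gives $\hat P_{(M,\ga)}\circ\check P_{(M,\ga)}=P_{(M,\ga)}$. The key step is $\check P_{(M,\ga)}\circ\hat P_{(M,\ga)}=\id_{(M,\ga)}$: stacking in the other order glues the $X_j$-leg of $\hat P_{(M,\ga)}$ to the $X_i$-leg of $\check P_{(M,\ga)}$, which forces $j=i$ and leaves, for each $i$, a ``bubble'' formed by two copies of $\ga$ joined by a closed $X_i$-loop. Merging the two copies of $\ga$ via the half-braiding compatibility condition $(\ga_{X_i}\ract X_i^*)\circ(X_i\lact\ga_{X_i^*})=\ga_{X_i\tnsr X_i^*}$, then applying naturality of $\ga$ along $\coev_{X_i}\colon\one\to X_i\tnsr X_i^*$ together with $\ga_\one=\id_M$, collapses the bubble to the endomorphism of $M$ obtained by letting the closed $X_i$-loop --- which evaluates to $d_i^L\,\id_{\one_{k_i}}\in\End(\one)$ --- act through the right module structure. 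Summing over $i$ with the coefficients $d_i^R/(|\Irr_0(\cC)|\cD)$ and invoking \lemref{l:dashed_circle}, which says precisely that $\sum_i d_i^R d_i^L\,\id_{\one_{k_i}}=|\Irr_0(\cC)|\cD\,\id_\one$, we obtain $\id_M=\id_{(M,\ga)}$.

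With $\check P_{(M,\ga)}\circ\hat P_{(M,\ga)}=\id_{(M,\ga)}$ in hand everything else is formal: $P_{(M,\ga)}^2=\hat P_{(M,\ga)}(\check P_{(M,\ga)}\hat P_{(M,\ga)})\check P_{(M,\ga)}=\hat P_{(M,\ga)}\check P_{(M,\ga)}=P_{(M,\ga)}$, so $P_{(M,\ga)}$ is an idempotent in $\End_{\cZ_\cC(\cM)}(I(M))$; moreover $\hat P_{(M,\ga)}$ is then a split monomorphism with retraction $\check P_{(M,\ga)}$, so $(M,\ga)$ is a direct summand of $I(M)$ (with complementary idempotent $\id_{I(M)}-P_{(M,\ga)}$). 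The one place that genuinely needs care is the bubble computation: since $\cC$ is only pivotal, not spherical, one must track the orientations of the $\al$-semicircles and keep left and right quantum dimensions straight throughout --- so that the closed loop really evaluates to $d_i^L\,\id_{\one_{k_i}}$ and the sum matches \lemref{l:dashed_circle} --- after which the argument runs exactly as in the spherical case of \ocite{kirillov-stringnet}.
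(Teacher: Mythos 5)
Your proposal is correct and follows essentially the same route as the paper: both take $\check P_{(M,\ga)}$ and $\hat P_{(M,\ga)}$ as the adjunction images of $\id_M$, verify the alternative forms of $P_{(M,\ga)}$ via \lemref{l:summation} and pulling $\al$ through $\ga$, and reduce everything to the bubble computation $\check P_{(M,\ga)}\circ\hat P_{(M,\ga)}=\id_{(M,\ga)}$, where the closed loop is slid off the $M$-strand using the half-braiding and then evaluated by \lemref{l:dashed_circle}. Your explicit attention to left versus right dimensions in the non-spherical setting is exactly the point the paper's terser proof leaves implicit.
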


\begin{proof}
The second equality in \eqnref{e:P_Mga}
follows from pulling $\al$ through $\ga$
and using \lemref{l:summation}.
$\hat{P}_{(M,\ga)}$ was shown to be a morphism
in $\Hom_{\cZ_\cC(\cM)}((M,\ga), I(M))$
in the proof of \thmref{t:center-adjoint},
and one shows $\check{P}_{(M,\ga)} \in \Hom_{\cZ_\cC(\cM)}(I(M), (M,\ga))$
in a similar fashion.
The following computation shows that
$\check{P}_{(M,\ga)} \circ \hat{P}_{(M,\ga)} = \id_{(M,\ga)}$:
\begin{align*}
\check{P}_{(M,\ga)} \circ \hat{P}_{(M,\ga)} =
%%%%%%%%%%%%%%
\sum_{i\in \Irr(\cC)} \frac{d_i^R}{|\Irr_0(\cC)| \cD}
\begin{tikzpicture}
\draw (0,-0.8) -- (0,0.8) node[above] {$M$} node[pos=0,below] {$M$};
\draw[midarrow] (0,0) circle(0.4cm);
\node at (-0.6,0) {\small $i$};
%% draw nodes over circle
\node[small_morphism] (ga2) at (0,0.4) {\tiny $\ga$};
\node[small_morphism] (ga3) at (0,-0.4) {\tiny $\ga$};
\end{tikzpicture}
%%%%%%%%%%%%%%
=
%%%%%%%%%%%%%%
\frac{1}{|\Irr_0(\cC)| \cD}
\begin{tikzpicture}
\draw[regular, midarrow] (0,0) circle(0.4cm);
\draw (0.6,-0.8) -- (0.6,0.8) node[above] {$M$} node[pos=0,below] {$M$};
\end{tikzpicture}
=
\id_{(M,\ga)}
\end{align*}
The second equality comes from ``pulling'' the $j$ loop
out to the left,
and the last equality follows from \lemref{l:dashed_circle}.
\end{proof}

\begin{proposition} \label{p:ZM_ss}
Under the hypotheses of \thmref{t:center-adjoint},
if $\cM$ is finite semisimple, then so is $\cZ_\cC(\cM)$.
\end{proposition}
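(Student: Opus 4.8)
The plan is to deduce finite semisimplicity of $\cZ_\cC(\cM)$ from that of $\cM$ using the biadjunction $(I,F)$ of \thmref{t:center-adjoint} together with the domination statement of \prpref{p:I_dominant} and the explicit splitting of \lemref{l:Mga_proj}. First I would check that $\cZ_\cC(\cM)$ is a locally finite abelian category and that the forgetful functor $F\colon \cZ_\cC(\cM)\to \cM$ is exact. Given a morphism $f\colon (M,\ga)\to (M',\ga')$, its kernel and cokernel computed in $\cM$ inherit half-braidings: since $\cM$ is semisimple every module-action functor $A\lact-$ and $-\ract A$ is exact, so restricting $\ga$ along $\ker f\injto M$ and corestricting $\ga'$ along $M'\twoheadrightarrow \mathrm{coker}\, f$ produces natural isomorphisms exhibiting these as kernel and cokernel in $\cZ_\cC(\cM)$. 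Since $\Hom$-spaces in $\cZ_\cC(\cM)$ are subspaces of the (finite-dimensional) $\Hom$-spaces of $\cM$ and the length of $(M,\ga)$ is bounded by the length of $M$, the category $\cZ_\cC(\cM)$ is locally finite and $F$ is exact (and faithful).

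Next I would run the standard adjoint-functor argument. Because $F$ is exact and $I$ is left adjoint to $F$, the functor $I$ carries projective objects to projective objects (the functor $\Hom(I(M),-)\simeq \Hom(M,F(-))$ is exact when $M$ is projective). Since $\cM$ is semisimple, every object of $\cM$ is projective, hence every $I(M)$ is projective in $\cZ_\cC(\cM)$. By \lemref{l:Mga_proj}, every object $(M,\ga)$ is a direct summand of $I(M)$, and therefore is itself projective. In a locally finite abelian category in which every object is projective every short exact sequence splits, so every object is a finite direct sum of simple objects; thus $\cZ_\cC(\cM)$ is semisimple with finite-dimensional $\Hom$-spaces.

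Finally, for finiteness of $\Irr(\cZ_\cC(\cM))$, I would use semisimplicity of $\cM$ to write any $M$ as a finite sum of simple objects, so that by additivity of $I$ every object of $\cZ_\cC(\cM)$ is a summand of a finite sum $I(S_1)\oplus\cdots\oplus I(S_k)$ with each $S_t\in\Irr(\cM)$; in particular every simple object of $\cZ_\cC(\cM)$ is a summand of $I(S)$ for some simple $S\in\cM$. There are finitely many such $S$, and each $I(S)=\bigoplus_{i\in\Irr(\cC)}X_i\lact S\ract X_i^*$ is a finite direct sum of finite-length objects of $\cM$, hence of finite length in $\cZ_\cC(\cM)$; so only finitely many isomorphism classes of simples occur, and $\cZ_\cC(\cM)$ is finite semisimple.

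I expect the only mildly delicate point to be the very first step — verifying that kernels, cokernels, and the induced half-braidings genuinely make $\cZ_\cC(\cM)$ abelian with $F$ exact — but this is routine given the semisimplicity (hence exactness of the actions) of $\cM$; once that and the biadjunction are in hand, the rest is formal.
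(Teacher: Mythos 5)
Your proposal is correct and follows essentially the same route as the paper: abelianness of $\cZ_\cC(\cM)$ via exactness of the actions and restriction of half-braidings to kernels and cokernels, semisimplicity from the splitting $(M,\ga)\hookrightarrow I(M)$ of \lemref{l:Mga_proj} combined with semisimplicity of $\cM$, and finiteness from the dominance of $I$. Your adjoint-functor argument that $I(M)$ is projective is just the standard way of filling in the paper's one-line claim that ``semisimplicity follows,'' so there is nothing genuinely different here.
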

\begin{proof}
Using exactness in $\cM$ of the left and right actions,
abelianness of $\cM$ transfers to $\cZ(\cM)$.
For example, the kernel $K$ of a morphism
$f: M_1 \to M_2$ such that $f\in \Hom_{\cZ(\cM)}((M_1,\ga^1),(M_2,\ga^2))$
would inherit a half-braiding $\ga^1|_K$.
Semisimplicity follows from the semisimplicity of $\cM$
and \lemref{l:Mga_proj}.
Finiteness follows from \prpref{p:I_dominant};
$I$ ensures there can't be too many simples in $\cZ(\cM)$.
%See \ocite{tham_elliptic} for a similar proof for $\cM = \cC$.
\end{proof}

%
%\begin{corollary}\label{c:box_ss}
%If $\cC$ is a pivotal multifusion category, and $\cM_1$, $\cM_2$ are finite semisimple right (respectively left) module category over $\cC$, then $\cM_1\boxtimes_cC \cM_2$ is also finite semisimple. 
%\end{corollary}
%
%Indeed, this immediately follows form \prpref{p:ZM_ss} and \eqref{e:center-product}. 
%    

For applications, we will need to consider centers over
a full, dominant, monoidal subcategory $\cC' \subseteq \cC$.
Equivalently,
$\cC'$ is a pivotal category
whose Karoubi envelope is multifusion.

\begin{lemma} \label{l:ZM_sub}
Let $\cC'$ be a pivotal locally finite $\kk$-linear additive category
whose Karoubi envelope $\cC = \Kar(\cC')$ is multifusion.
Let $\cM$ be a $\cC$-bimodule category,
and hence naturally a $\cC'$-bimodule category (as before, we assume that
 $\cM$ is a semisimple abelian category).
Then there is a natural equivalence
\[
  \cZ_\cC(\cM) \simeq \cZ_{\cC'}(\cM)
\]
In particular, for right, left $\cC$-module categories
$\cM_1,\cM_2$, there is a natural equivalence
\[
  \cM_1 \boxtimes_\cC \cM_2 \simeq \cM_1 \boxtimes_{\cC'} \cM_2.
\]
\end{lemma}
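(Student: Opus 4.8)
The plan is to exhibit the evident restriction functor
\[
  R\colon \cZ_\cC(\cM)\to\cZ_{\cC'}(\cM),\qquad (M,\ga)\mapsto (M,\ga|_{\cC'}),
\]
which acts as the identity on the underlying morphisms in $\cM$, and to prove that $R$ is an equivalence; the statement about balanced tensor products will then follow formally. The functor $R$ is manifestly $\kk$-linear and commutes with the forgetful functors to $\cM$, so it remains only to check that $R$ is fully faithful and essentially surjective.

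The heart of the argument is an extension claim: a half-braiding $\ga$ on $M$ over $\cC'$ extends uniquely to a half-braiding $\bar\ga$ on $M$ over $\cC=\Kar(\cC')$. Recall that $\cC'$ sits inside $\cC$ as the full subcategory of objects $(A,\id_A)$, that an object of $\cC$ is a pair $(A,p)$ with $p\in\End_{\cC'}(A)$ idempotent, and that the additive functors $-\lact M$ and $M\ract -$ carry $(A,p)$ to the images of the idempotents $p\lact\id_M\in\End_\cM(A\lact M)$ and $\id_M\ract p\in\End_\cM(M\ract A)$, which split since $\cM$ is abelian. Naturality of $\ga$ applied to $p\colon A\to A$ gives $\ga_A\circ(p\lact\id_M)=(\id_M\ract p)\circ\ga_A$, so $\ga_A$ restricts to an isomorphism $\bar\ga_{(A,p)}\colon (A,p)\lact M\to M\ract(A,p)$; being a restriction of $\ga_A$, it is forced, hence unique. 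One then checks that $\bar\ga$ is natural in $(A,p)$ (from naturality of $\ga$ at morphisms $f\colon A\to B$ with $qfp=f$) and satisfies the hexagon-type compatibility with the tensor product, using $(A,p)\tnsr(B,q)=(A\tnsr B,\,p\tnsr q)$, the compatibility of the module actions with idempotent splittings, and the corresponding axiom for $\ga$ on $\cC'$; compatibility with the unit is automatic since $\one=(\one,\id_\one)\in\cC'$.

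Granting this, essential surjectivity of $R$ is immediate, since $(M,\ga)\in\cZ_{\cC'}(\cM)$ is the image of $(M,\bar\ga)$. For full faithfulness, observe that for $(M_1,\ga^1),(M_2,\ga^2)\in\cZ_\cC(\cM)$ a morphism $f\colon M_1\to M_2$ in $\cM$ intertwines $\bar\ga^1,\bar\ga^2$ over all of $\cC$ if and only if it intertwines $\ga^1,\ga^2$ over $\cC'$: one direction is restriction, and for the other the commuting square for $(A,p)$ is obtained from the square for $A$ by pre- and post-composing with the split idempotents $p\lact\id$ and $\id\ract p$, with which $f$ commutes by functoriality of the actions. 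Hence $\Hom_{\cZ_\cC(\cM)}((M_1,\ga^1),(M_2,\ga^2))=\Hom_{\cZ_{\cC'}(\cM)}(R(M_1,\ga^1),R(M_2,\ga^2))$, so $R$ is fully faithful and therefore an equivalence; it is natural, being defined without any choices.

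Finally, for the balanced tensor product: the same idempotent-splitting argument, applied now to the balancing isomorphisms rather than to a half-braiding, shows that a $\kk$-linear additive functor $\cM_1\times\cM_2\to\cA$ is $\cC$-balanced if and only if it is $\cC'$-balanced (a $\cC'$-balancing extends uniquely to a $\cC$-balancing). Thus the categories of balanced functors agree for every target $\cA$, and the defining universal property gives $\cM_1\boxtimes_\cC\cM_2\simeq\cM_1\boxtimes_{\cC'}\cM_2$; alternatively one applies the equivalence $\cZ_\cC\simeq\cZ_{\cC'}$ to $\cM=\cM_1\boxtimes\cM_2$ with its natural bimodule structure and invokes \eqref{e:center-product}. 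The one place requiring genuine care is the verification of the hexagon compatibility for $\bar\ga$ and, relatedly, the fact that the left and right actions on $\cM$ commute with splitting idempotents; this is routine but is where all the bookkeeping lives.
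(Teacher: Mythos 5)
Your proposal is correct and follows essentially the same route as the paper: both define the restriction functor and show that a half-braiding over $\cC'$ extends uniquely to $\cC=\Kar(\cC')$ by conjugating with the splitting of the idempotent (your $\bar\ga_{(A,p)}$ is exactly the paper's $\gamma_X=(\id\ract p)\circ\gamma_Y\circ(\iota\lact\id)$), and both deduce full faithfulness from this unique extension. Your direct verification that $\cC$-balanced and $\cC'$-balanced functors coincide is a slightly more self-contained way to get the second statement than the paper's appeal to the center--balanced-product equivalence, but the substance is the same.
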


\begin{proof}
The equivalence is given as follows:
objects $(M,\gamma)$ in $\cZ_\cC(\cM)$ are naturally objects in $\cZ_{\cC'}(\cM)$
by forgetting some of the half-braiding, i.e. $(M, \gamma|_{\cC'})$;
morphisms $f: (M,\gamma) \to (M',\gamma')$
are naturally morphisms $f: (M,\gamma|_{\cC'}) \to (M',\gamma|_{\cC'})$.
We need to check that this is an equivalence.

The functor is essentially surjective: any half-braiding over $\cC'$
can be completed to a half-braiding over $\cC$.
To see this, let $\gamma$ be a half-braiding over $\cC'$.
Let $X \in \Obj \cC \backslash \Obj \cC'$,
and let it be a direct summand of some $Y \in \Obj \cC'$,
$X \overset{\iota}{\underset{p}{\rightleftharpoons}} Y$.
%\begin{tikzcd}
% X \ar[r, "\iota", shift left=0.5, rightharpoonup]
% & Y \ar[l, "p", shift left=00.55, rightharpoonup]
%end{tikzcd}.
Then we define the extension of $\gamma$ to $X$ by
$\gamma_X = (\id_{M_2} \ract p) \circ \gamma_Y \circ (\iota \lact \id_{M_1})$.
It is easy to check, using the semisimplicity of $\cC$,
that $\gamma_X$ is independent on the choice
of $Y$ and $p,\iota$.
It is also easy to check that the resulting extension
is indeed natural in $X$.

For morphisms, it is clear that this functor is faithful.
To show fullness, consider
$f \in \Hom_{\cZ_{\cC'}(\cM)}((M_1,\gamma^1),(M_2,\gamma^2))$.
We need to check that it also intertwines half-braiding
with $X \in \cC$, but this follows easily from the
definition of the extension of half-braiding given above.

Note since $\gamma$ has a unique extension to all of $\cC$,
this proof actually shows that the equivalence is an isomorphism.\\
\end{proof}

Note that in the proof above, we do not use the rigidity of $\cC$,
but we need it to conclude the second statement concerning
balanced tensor products.

\subsubsection{$\htr_\cC(\cM)$}\par \noindent
\label{s:htrM}

Next we define the other notion of center.

\begin{definition}\label{d:htr}
Let $\cC$ be monoidal, and
$\cM$ a $\cC$-bimodule category.
%Let $\cM$ be a finite semisimple $\cC$-bimodule category.
Define the {\em horizontal trace} 
$\htr_\cC(\cM)$ as the  category with the following objects and morphisms: 

Objects: same as in $\cM$

Morphisms: $\Hom_{\htr_\cC(\cM)}(M_1,M_2)=
\bigoplus_X \ihom{\cM}{X}(M_1,M_2)/\sim$,
%\bigoplus_X \Hom_\cM(X \lact M_1, M_2 \ract X)/\sim$
where $\ihom{\cM}{X}(M_1,M_2) := \Hom_\cM(X \lact M_1, M_2 \ract X)$,
the sum is over all (not necessarily simple) objects $X\in \cC$,
and $\sim$ is the equivalence relation generated by
the following:
%\begin{itemize}
%\item Linearity
%\item

For any $\psi\in \ihom{\cM}{Y,X}(M_1,M_2) := \Hom_\cM(Y \lact M_1, M_2 \ract X)$
  and $f\in \Hom_\cC(X,Y)$, we have 

\[
\begin{tikzpicture}
\node[morphism] (psi) at (0,0) {$\psi$};
\draw (psi) -- +(0,1) node[above] {$M_1$};
\draw (psi) -- +(0,-1) node[below] {$M_2$};
\coordinate (L) at (-1,1);
\coordinate (R) at (1,-1);
\node[above] at (L) {$X$};
\node[below] at (R) {$X$};
\draw (L) to[out=-90,in=170] (psi);
\draw (R) to[out=90,in=-10] (psi);
\node[morphism] at (-0.8,0.4) {$f$};
\node at (-0.5,-0.15) {$Y$};
\end{tikzpicture}
\quad \sim \quad
\begin{tikzpicture}
\node[morphism] (psi) at (0,0) {$\psi$};
\draw (psi) -- +(0,1) node[above] {$M_1$};
\draw (psi) -- +(0,-1) node[below] {$M_2$};
\coordinate (L) at (-1,1);
\coordinate (R) at (1,-1);
\node[above] at (L) {$Y$};
\node[below] at (R) {$Y$};
\draw (L) to[out=-90,in=170] (psi);
\draw (R) to[out=90,in=-10] (psi);
\node[morphism] at (0.8,-0.4) {$f$};
\node at (0.5,0.15) {$X$};
\end{tikzpicture}
\]
In other words,
$\Hom_{\htr_\cC(\cM)}(M_1,M_2) = \int^X \ihom{\cM}{X,X}(M_1, M_2)$
is the coend of the functor
$\ihom{\cM}{-,-}(M_1, M_2) : \cC^{\text{op}} \times \cC \to \Vect$
(see e.g. \ocite{maclane}).
%\end{itemize}

%\noindent
Composition is given by
\[
\ihom{\cM}{Y}(M_2,M_3)) \tnsr \ihom{\cM}{X}(M_1,M_2)
\to \ihom{\cM}{Y\tnsr X}(M_1,M_3)
\]
which sends $\psi \tnsr \vphi$ to
\[
  Y \lact (X \lact M_1)
  \xxto{\id_Y \lact \psi} Y \lact (M_2 \ract X)
  \simeq (Y \lact M_2) \ract X
  \xxto{\vphi \ract \id_X} (M_3 \ract Y) \ract X
\]

For right, left $\cC$-module categories $\cM_1,\cM_2$,
we denote $\cM_1 \hatbox{\cC} \cM_2 = \htr_\cC(\cM_1 \boxtimes \cM_2)$.
\end{definition}

When the context is clear, we will drop the subscript $\htr = \htr_\cC$.
We will write $[\vphi] \in \Hom_{\htr(\cM)}(M_1,M_2)$ for the morphism
represented by $\vphi \in \ihom{\cM}{X}(M_1,M_2)$ for some $X$.\\

%
%This definition is a minor variation of the definition given in \ocite{BHLZ}*{Section 2.4}: 
%in the case when $\cM=\cC$, considered as a bimodule category with one object, 
%the definition above coincides with the one in \ocite{BHLZ}. 

It can be shown that in a certain sense this definition is dual to the definition of center 
given above and is closely related to the notion of co-center as described in \ocite{DSSP}*{Section 3.2.2}. However, we will not be discussing the exact relation here. 

It is easy to see that the category $\htr(\cM)$ is 
additive but not necessarily abelian.

There is a natural inclusion functor $\htr \colon  \cM \to \htr(\cM)$
which is identity on objects,
and on morphisms it is the natural map
$\Hom_\cM(M_1,M_2) = \ihom{\cM}{\one}(M_1,M_2)
\to \Hom_{\htr(\cM)}(M_1,M_2)$.

The horizontal trace construction is functorial
with respect to bimodule functors:

\begin{lemma}
\label{l:htr-functorial}
Given a functor of $\cC$-bimodule categories
$(F,J): \cM \to \cM'$,
there is a natural functor $\htr(F,J): \htr(\cM) \to \htr(\cM')$
that is the same as $F$ on objects.
\end{lemma}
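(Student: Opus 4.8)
The plan is to write down $\htr(F,J)$ by hand and then check, in turn, that it is well defined on the quotients defining $\Hom_{\htr(\cM)}$ and that it is a functor. Recall that a $\cC$-bimodule functor comes equipped with module constraint isomorphisms, which I will write as $J^l_{A,M} : A \lact F(M) \simeq F(A \lact M)$ and $J^r_{M,A} : F(M \ract A) \simeq F(M) \ract A$ for $A \in \cC$ and $M \in \cM$, natural in both variables and subject to the coherence axioms of \ocite{EGNO}*{Chapter 7} (the two module-associativity hexagons, their unital counterparts, and the compatibility of $J^l$ with $J^r$ relative to the bimodule associativity). On objects put $\htr(F,J)(M) := F(M)$. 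On morphisms, for $\vphi \in \ihom{\cM}{X}(M_1,M_2) = \Hom_\cM(X \lact M_1, M_2 \ract X)$ set
\[
\Phi(\vphi) \;:=\; J^r_{M_2,X} \circ F(\vphi) \circ J^l_{X,M_1} \;\in\; \ihom{\cM'}{X}(F(M_1),F(M_2)),
\]
and declare $\htr(F,J)([\vphi]) := [\Phi(\vphi)]$.

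First I would check that this descends to the coend, i.e.\ that $\Phi$ takes the generating relation of \defref{d:htr} to the corresponding relation in $\htr(\cM')$. Given $\psi \in \Hom_\cM(Y \lact M_1, M_2 \ract X)$ and $f \in \Hom_\cC(X,Y)$, so that $\vphi_1 := \psi \circ (f \lact \id_{M_1})$ and $\vphi_2 := (\id_{M_2} \ract f) \circ \psi$ are identified in $\htr(\cM)$, functoriality of $F$ together with the naturality of $J^l$ and $J^r$ in the $\cC$-variable along $f\colon X \to Y$ gives
\[
\Phi(\vphi_1) = \widetilde\psi \circ (f \lact \id_{F(M_1)}), \qquad \Phi(\vphi_2) = (\id_{F(M_2)} \ract f) \circ \widetilde\psi,
\]
where $\widetilde\psi := J^r_{M_2,X} \circ F(\psi) \circ J^l_{Y,M_1} \in \Hom_{\cM'}(Y \lact F(M_1), F(M_2) \ract X)$. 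These two represent the same class in $\Hom_{\htr(\cM')}(F(M_1),F(M_2))$, by the defining relation applied to $\widetilde\psi$ and $f$, so $\htr(F,J)$ is well defined on morphism classes; it is additive by construction.

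Then I would check functoriality. For the identity, viewing $\id_M$ as an element of $\ihom{\cM}{\one}(M,M)$ via the unit constraints, the unitality axiom of $(F,J)$ forces $\Phi(\id_M) = \id_{F(M)}$ (up to the suppressed unit constraints of $\cM'$). For composition, given $\vphi \in \ihom{\cM}{X}(M_1,M_2)$ and $\psi \in \ihom{\cM}{Y}(M_2,M_3)$, the composite $[\psi] \circ [\vphi]$ in $\htr(\cM)$ is represented by $(\psi \ract \id_X) \circ m \circ (\id_Y \lact \vphi)$ with $m$ the middle module-associativity constraint $(Y \lact M_2) \ract X \simeq Y \lact (M_2 \ract X)$; applying $F$, splitting it up by functoriality, and then inserting and cancelling the isomorphisms $J^l$, $J^r$ by means of the two module-associativity hexagons and the $J^l$--$J^r$ compatibility, one identifies $\Phi\big((\psi \ract \id_X) \circ m \circ (\id_Y \lact \vphi)\big)$ with the composite $\Phi(\psi) \circ \Phi(\vphi)$ formed in $\htr(\cM')$. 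Since $\htr(F,J)$ is the identity on objects, this makes it a functor; taking $X = \one$ also yields $\htr(F,J) \circ \htr_\cM = \htr_{\cM'} \circ F$, and the explicit formula makes $\htr(\id_\cM) = \id_{\htr(\cM)}$ and compatibility with composition of bimodule functors transparent. The only step that is more than a one-line computation is this last composition check: it is a diagram chase using the full coherence package of a bimodule functor, and the main obstacle is purely organizational — arranging the constraint isomorphisms so that they cancel in the right order. Everything else is immediate from the naturality and unitality of $J$.
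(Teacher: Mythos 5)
Your construction is exactly the one in the paper: define $\htr(F,J)$ to be $F$ on objects and to send $\vphi\in\ihom{\cM}{X}(M_1,M_2)$ to $J^r\circ F(\vphi)\circ J^l$, then verify compatibility with the coend relation and with composition. The paper states the latter verifications in one line ("it is easy to check that composition is respected"), so your write-up is the same argument carried out in more detail, including the well-definedness on the quotient that the paper leaves implicit.
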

\begin{proof}
We define the functor $\htr(F,J)$ to act the same as $F$ on objects,
and sends a morphism $\vphi \in \ihom{\cM}{X}(M_1,M_2)$
to the composition
\[
X \lact F(M_1) \simeq F(X \lact M_1)
\xrightarrow{F(\vphi)}
F(M_2 \ract X) \simeq F(M_2) \ract X
\]
where the equivalences are from the bimodule structure $J$.
It is easy to check that composition is respected.
\end{proof}

We also consider $\cC' \subseteq \cC$
as in \lemref{l:ZM_sub},
but here we do not need rigidity nor semisimplicity on $\cC$:

\begin{lemma} \label{l:htr_sub}
Let $\cC'$ be monoidal,
and let $\cC = \Kar(\cC')$ be its Karoubi envelope.
%Let $\cC'$ be a pivotal category
%whose Karoubi envelope $\cC = \Kar(\cC')$ is multifusion.
Let $\cM$ be a $\cC$-bimodule category,
and hence naturally a $\cC'$-bimodule category.
Then there is a natural equivalence
\[
  \htr_{\cC'}(\cM) \simeq \htr_\cC(\cM)
\]
In particular, for right, left $\cC$-module categories
$\cM_1,\cM_2$, there is a natural equivalence
\[
  \cM_1 \hatbox{\cC} \cM_2 \simeq \cM_1 \hatbox{\cC'} \cM_2
\]

\end{lemma}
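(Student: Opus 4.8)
The plan is to exhibit an explicit equivalence $\Phi : \htr_{\cC'}(\cM) \to \htr_\cC(\cM)$ which is the identity on objects, and check it is fully faithful and essentially surjective. Essential surjectivity is immediate since both categories have the same objects (namely those of $\cM$), so the real content is in the hom-spaces. First I would write down what $\Phi$ does on morphisms: every object $X \in \cC = \Kar(\cC')$ is a direct summand of some $Y \in \Obj \cC'$, say $X \overset{\iota}{\underset{p}{\rightleftharpoons}} Y$ with $p\iota = \id_X$, and given $\vphi \in \ihom{\cM}{X}(M_1,M_2) = \Hom_\cM(X \lact M_1, M_2 \ract X)$ I send $[\vphi]$ to the class of the composite
\[
Y \lact M_1 \xxto{p \lact \id} X \lact M_1 \xxto{\vphi} M_2 \ract X \xxto{\id \ract \iota} M_2 \ract Y
\]
in $\ihom{\cM}{Y}(M_1,M_2)$. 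The first thing to verify is that this is well-defined: independent of the choice of $(Y,\iota,p)$ (any two choices are related by morphisms in $\cC'$, so the coend relation in $\htr_{\cC'}$ absorbs the difference — this is where one uses that the coend is taken over $\cC'$), and compatible with the coend relation defining $\Hom_{\htr_{\cC'}(\cM)}$. Functoriality of $\Phi$ (respecting composition and identities) is then a routine diagram chase using that composition in both horizontal traces is induced by $\tnsr$ and the $\lact,\ract$ actions.

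Next I would construct the would-be inverse, or more efficiently just argue directly that $\Phi$ is fully faithful. For faithfulness and fullness, the key observation is that the inclusion $\cC' \hookrightarrow \cC$ is dominant (every object of $\cC$ is a summand of an object of $\cC'$), so in the coend $\int^{X \in \cC} \ihom{\cM}{X,X}(M_1,M_2)$ computing $\Hom_{\htr_\cC(\cM)}(M_1,M_2)$ one may restrict the colimit to $X$ ranging over $\cC'$ without changing it: a cofinality/density argument for coends shows $\int^{X\in\cC} \simeq \int^{X\in\cC'}$ when every object of $\cC$ is a retract of one in $\cC'$ and the functor $\ihom{\cM}{-,-}(M_1,M_2)$ is additive. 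That restricted coend is exactly $\Hom_{\htr_{\cC'}(\cM)}(M_1,M_2)$, and unwinding the identification gives precisely $\Phi$. The ``in particular'' statement about balanced tensor products is then the special case $\cM = \cM_1 \boxtimes \cM_2$, combined with the notation $\cM_1 \hatbox{\cC} \cM_2 := \htr_\cC(\cM_1 \boxtimes \cM_2)$ from \defref{d:htr}.

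The main obstacle I anticipate is the bookkeeping in the well-definedness check: one must confirm that changing the auxiliary data $(Y,\iota,p)$ used to lift $X$ only changes the representative by a coend relation in $\htr_{\cC'}(\cM)$, and that two morphisms $\vphi, \vphi'$ over different objects $X, X'$ of $\cC$ that are identified in $\htr_\cC(\cM)$ have images identified in $\htr_{\cC'}(\cM)$. Both reduce to inserting idempotents $\iota p$ and sliding maps of $\cC'$ across $\psi$-type morphisms using the defining relation of the coend, but the diagrams are a little involved; crucially, unlike \lemref{l:ZM_sub}, no appeal to rigidity or semisimplicity of $\cC$ is needed, only that $\cC = \Kar(\cC')$, so idempotents split in $\cC$ and the lifts $(Y,\iota,p)$ exist. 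I would relegate the explicit diagram chases to the reader or to a short sequence of displayed equations, since they are mechanical once the strategy is fixed.
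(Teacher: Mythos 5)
Your proposal is correct and follows essentially the same route as the paper: the comparison functor is the identity on objects, and the hom-spaces are identified by using that every $X \in \cC = \Kar(\cC')$ is a retract of some $Y \in \Obj\cC'$ together with the fact that the coend relation absorbs the retraction data ($\vphi \sim \iota \circ \vphi \circ p$), with surjectivity obtained by lifting representatives and injectivity by lifting relations --- the latter being exactly the content, proved by hand in the paper, of your appeal to invariance of the coend under Karoubi completion of the indexing category. One small slip worth fixing: the lifting construction $\vphi \mapsto \iota \circ \vphi \circ p$ you display describes the inverse direction (equivalently, the surjectivity of the canonical map), not $\Phi : \htr_{\cC'}(\cM) \to \htr_\cC(\cM)$ itself, which is simply induced by the inclusion $\cC' \hookrightarrow \cC$ and requires no choice of $(Y,\iota,p)$.
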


\begin{proof}
The equivalence is given by the identity map on objects,
and for two objects $M_1,M_2 \in \Obj \cM$,
the map on morphisms is given by completing the bottom arrow:
\[
\begin{tikzcd}
  \bigoplus_{X \in \cC'} \ihom{\cM}{X}(M_1,M_2)
    \ar[r] \ar[d]
  & \bigoplus_{X \in \cC} \ihom{\cM}{X}(M_1,M_2)
    \ar[d]
  \\
  \Hom_{\htr_{\cC'}(\cM)}(M_1, M_2)
    \ar[r]
  & \Hom_{\htr_\cC(\cM)}(M_1, M_2)
\end{tikzcd}
\]
It remains to prove that the bottom arrow is an isomorphism.

Let us first observe the following.
Let $X, Y\in \Obj \cC'$, and suppose $X$ is a
direct summand of $Y$, with
$X \overset{\iota}{\underset{p}{\rightleftharpoons}} Y$.
%\begin{tikzcd}
%X \ar[r, "\iota", shift left=1, rightharpoonup]
%& Y \ar[l, "p", shift left=1, rightharpoonup]
%\end{tikzcd}.
Let $\vphi \in \ihom{\cM}{X}(M_1,M_2)$.
Then $\vphi = \vphi \circ p \circ \iota
\sim \iota \circ \vphi \circ p \in \ihom{\cM}{Y}(M_1,M_2)$,
where we write $p,\iota$ instead of
$p \lact \id_{M_1}, \id_{M_2} \ract \iota$
for simplicity. This works for $\cC$ too.
Thus one can identify $\ihom{\cM}{X}(M_1,M_2)$
with a subspace of $\ihom{\cM}{Y}(M_1,M_2)$.

Surjectivity: Essentially, we need to show that any
morphism in $\htr_\cC(\cM)$ can be ``absorbed'' into $\htr_{\cC'}(\cM)$.
Let $[\vphi] \in \Hom_{\htr_\cC(\cM)}(M_1,M_2)$
be represented by some $\vphi \in \ihom{\cM}{X}(M_1,M_2)$.
By the above observation,
we can choose $Y \in \Obj \cC'$
with $X$ a direct summand of $Y$,
then $\vphi \in \ihom{\cM}{X}(M_1,M_2)$
is identified with some morphism in
$\ihom{\cM}{Y}(M_1,M_2)$,
so $[\vphi]$ is in the image.

Injectivity: Essentially, we need to show that
relations can also be ``absorbed'' into $\htr_{\cC'}(\cM)$.
Let $[\vphi] \in \Hom_{\htr_{\cC'}(\cM)}(M_1,M_2)$
that is sent to 0.
By the observation above, we may represent it by some
$\vphi \in \ihom{\cM}{Y}(M_1,M_2)$ for some $Y\in \Obj \cC'$.
Since it is 0 in $\Hom_{\htr_\cC(\cM)}(M_1,M_2)$,
there exists
\begin{itemize}
\item a finite collection of objects $J = \{A_j\} \subset \Obj \cC$
  so that $A_0 = Y$.
\item $\Phi_i \in \ihom{\cM}{A_{m_i},A_{n_i}}(M_1,M_2)$,
\item $f_i : A_{n_i} \to A_{m_i}$,
\end{itemize}
such that $\vphi = \sum_i f_i \circ \Phi_i - \Phi_i \circ f_i
\in \bigoplus_{A_j \in J} \ihom{\cM}{A_j}(M_1,M_2)$.
%where for simplicity, we write $f_i$ where it should be
%$f_i \lact \id_{M_1}$ or $\id_{M_2} \ract f_i$.

We want to be able to replace the $A_j$'s with objects in $\cC'$.
For each $j \neq 0$, choose some $B_j \in \Obj \cC'$
such that $A_j$ is a direct summand of $B_j$:
$A_j \overset{\iota_j}{\underset{p_j}{\rightleftharpoons}} B_j$.
%\begin{tikzcd}
%A_j \ar[r, "\iota_j", shift left=1, rightharpoonup]
%& B_j \ar[l, "p_j", shift left=1, rightharpoonup]
%\end{tikzcd}.
For $j=0$, we take $B_0 = A_0 = Y$ and $\iota_0 = p_0 = \id_Y$.
This gives us maps
$\Theta_j: \psi \mapsto \iota_j \circ \psi \circ p_j:
  \ihom{\cM}{A_j}(M_1,M_2) \to \ihom{\cM}{B_j}(M_1,M_2)$.
Denote $\Theta = \sum \Theta_j$.

Now consider
\begin{itemize}
\item $L = \{B_j\} \subset \Obj \cC'$,
\item $\Psi_i = \iota_{n_i} \circ \Phi_i \circ p_{m_i}
  \in \ihom{\cM}{B_{m_i},B_{n_i}}(M_1,M_2)$,
\item $g_i = \iota_{m_i} \circ f_i \circ p_{n_i} :
  B_{n_i} \to B_{m_i}$.
\end{itemize}
It is a simple matter to verify that
$g_i \circ \Psi_i - \Psi \circ g_i =
  \Theta(f_i \circ \Phi_i - \Phi_i \circ f_i)$.
Hence $\vphi = \Theta(\vphi)
  = \Theta(\sum f_i \circ \Phi_i - \Phi_i \circ f_i)
  = \sum g_i \circ \Psi_i - \Psi \circ g_i$
is 0 in $\Hom_{\htr_{\cC'}}(M_1,M_2)$.

\end{proof}

The following lemma is similar in spirit to \lemref{l:Mga_proj}.

\begin{lemma}
\label{l:IM_htr_proj}
Let $M \in \cM$. The morphism
\begin{equation}
P'_M :=
%%%%%%%%
\sum_{i,j,k \in \Irr(\cC)} \frac{\sqrt{d_i^L}\sqrt{d_j^L}d_k^R}{|\Irr_0(\cC)| \cD}
\;
\begin{tikzpicture}
\node[semi_morphism={90,}] (L) at (0.5,0) {$\al$};
\node[semi_morphism={270,}] (R) at (-0.5,0) {$\al$};
\draw (0,1) -- (0,-1) node[below] {$M$} node[above,pos=0] {$M$};
\draw[midarrow_rev] (L)-- +(0,1) node[pos=0.5,right] {$i$};
\draw[midarrow] (L) -- +(0,-1) node[pos=0.5,right] {$j$};
\draw[midarrow] (R) -- +(0,1) node[pos=0.5,left] {$i$};
\draw[midarrow_rev] (R)-- +(0,-1) node[pos=0.5,left] {$j$};
\draw[midarrow={0.6}] (L) to[out=-80,in=180] (1.2,0);
\node at (1, -0.5) {$k$};
\draw[midarrow_rev={0.6}] (R) to[out=-100,in=0] (-1.2,0);
\node at (-1, -0.5) {$k$};
\end{tikzpicture}
%%%%%%%%
=
%%%%%%%%
\sum_{i,j \in \Irr(\cC)} \frac{\sqrt{d_i^L}\sqrt{d_j^L}}{|\Irr_0(\cC)| \cD}
\;
\begin{tikzpicture}
\draw (0,1) -- (0,-1) node[below] {$M$} node[above,pos=0] {$M$};
%% top branches
\draw[midarrow_rev] (0.4,1) to[out=-90,in=180] (1,0.1);
\draw[midarrow] (-0.4,1) to[out=-90,in=0] (-1,0.1);
\node at (-0.7,0.7) {$i$};
%% bottom branches
\draw[midarrow] (0.4,-1) to[out=90,in=180] (1,-0.1);
\draw[midarrow_rev] (-0.4,-1) to[out=90,in=0] (-1,-0.1);
\node at (-0.7,-0.7) {$j$};
\end{tikzpicture}
%%%%%%%%
\end{equation}
is a projection in $\End_{\htr(\cM)}(\bigoplus X_i \lact M \ract X_i^*)$.
Furthermore, it can be written as a composition
$P'_M = \hat{P}'_M \circ \check{P}'_M$,
where
\begin{equation}
\check{P}'_M :=
\sum_{i \in \Irr(\cC)} \frac{\sqrt{d_i^L}}{\sqrt{|\Irr_0(\cC)| \cD}}
\;
\begin{tikzpicture}
\draw (0,0.6) -- (0,-0.6) node[below] {$M$} node[above,pos=0] {$M$};
%% top branches
\draw[midarrow_rev] (0.3,0.6) to[out=-90,in=180] (0.8,0);
\draw[midarrow] (-0.3,0.6) to[out=-90,in=0] (-0.8,0);
\node at (-0.6,0.4) {$i$};
\end{tikzpicture}
%%%%%%%%%%%%%%%%%%%%%%%%%%%%%%%%%
\;\;\;
,
\;\;\;
%%%%%%%%%%%%%%%%%%%%%%%%%%%%%%%%%
\hat{P}'_M :=
\sum_{j \in \Irr(\cC)} \frac{\sqrt{d_j^L}}{\sqrt{|\Irr_0(\cC)| \cD}}
\;
\begin{tikzpicture}
\draw (0,0.6) -- (0,-0.6) node[below] {$M$} node[above,pos=0] {$M$};
%% bottom branches
\draw[midarrow] (0.3,-0.6) to[out=90,in=180] (0.8,0);
\draw[midarrow_rev] (-0.3,-0.6) to[out=90,in=0] (-0.8,0);
\node at (-0.6,-0.4) {$j$};
\end{tikzpicture}
%%%%%%%%
\end{equation}
such that $\check{P}'_M \circ \hat{P}'_M = \id_M$,
thus as objects
in $\Kar(\htr(\cM))$, we have
$M \simeq (\bigoplus X_i \lact M \ract X_i^*, P_M')$.
\end{lemma}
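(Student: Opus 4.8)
The plan is to mirror the proof of \lemref{l:Mga_proj} almost line for line, transporting it from $\cZ_\cC(\cM)$ to $\htr(\cM)$. The dictionary is: the half-braiding $\ga$ that is used in \lemref{l:Mga_proj} to drag strands ``around the back'' of $M$ is no longer available, and its role is taken over by the coend relation $\sim$ of \defref{d:htr}; correspondingly, the right dimensions $d_i^R$ that appear when one closes off a loop there become left dimensions $d_i^L$ here, which is exactly the parenthetical remark in the statement.

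First I would verify the second equality in the displayed formula for $P'_M$. In the first picture the two semicircular morphisms $\al$ are joined along the $k$-strand, and the coefficient already carries the factor $d_k^R$; this is precisely the input required by \lemref{l:summation}, so contracting the $k$-strand replaces the two $\al$'s and their connecting edge by the two ``caps'' joining the $i$- and $j$-legs that appear in the second picture. (This is the analogue of ``pulling $\al$ through $\ga$ and using \lemref{l:summation}'' in \lemref{l:Mga_proj}, with the $\ga$ absent.)

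Next I would check that each summand of $\check{P}'_M$ and of $\hat{P}'_M$ is a bona fide element of $\ihom{\cM}{X}(-,-)$ for the appropriate labelling object $X$ (here $X=X_i^*$, respectively $X_j^*$, read off the pictures), so that $\check{P}'_M$ and $\hat{P}'_M$ do define morphisms of $\htr(\cM)$; unlike in $\cZ_\cC(\cM)$ there is no half-braiding to intertwine, so nothing further is needed here. Then comes the heart of the argument: compute $\check{P}'_M \circ \hat{P}'_M$ using the composition rule of \defref{d:htr} --- stack $\hat{P}'_M$ over $\check{P}'_M$ and regard the result as living over the tensor product $X_j^* \tnsr X_i^*$ of the two labels. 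The $i$-loop and the $j$-loop then join into a single closed strand, and applying the relation $\sim$ lets one slide this strand to the outside so that it becomes a free circle; summing over $i$ with coefficient $d_i^L$ turns it into $|\Irr_0(\cC)|\,\cD\cdot\id_\one$ by the left-dimension form of \lemref{l:dashed_circle}, which cancels the normalization and leaves $\id_M$. I expect this last step to be the main obstacle: one must keep careful track of the orientation of the strand being closed off (this is what forces $d_i^L$ rather than $d_i^R$) and apply $\sim$ in the correct direction so that the loop really does come free, playing the role that naturality of $\ga$ played in \lemref{l:Mga_proj}.

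Finally, once $\check{P}'_M \circ \hat{P}'_M = \id_M$ is established, idempotency of $P'_M = \hat{P}'_M \circ \check{P}'_M$ is formal: $(P'_M)^2 = \hat{P}'_M \circ (\check{P}'_M \circ \hat{P}'_M) \circ \check{P}'_M = \hat{P}'_M \circ \check{P}'_M = P'_M$. Hence $P'_M$ is a projection in $\End_{\htr(\cM)}(\bigoplus_i X_i \lact M \ract X_i^*)$, and the pair $(\check{P}'_M, \hat{P}'_M)$ exhibits $M$ as isomorphic to $(\bigoplus_i X_i \lact M \ract X_i^*, P'_M)$ in $\Kar(\htr(\cM))$.
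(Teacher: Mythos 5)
Your proposal is correct and follows exactly the route the paper intends: its entire proof of this lemma is the remark that it is ``essentially the same as \lemref{l:Mga_proj}'', and your argument is a faithful expansion of that, correctly substituting the coend relation $\sim$ for the naturality of $\ga$ and tracking why \lemref{l:summation} contributes $d_k^R$ while the loop closure via \lemref{l:dashed_circle} forces the left dimensions $d_i^L$. Nothing further is needed.
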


\begin{proof}
Essentially the same as \lemref{l:Mga_proj}.
(Note the use of both left and right dimensions.)
\end{proof}

\subsubsection{Equivalence}\par \noindent
\label{s:equiv}

\begin{theorem}
\label{t:htr-center}
%\label{t:htr}
Let $\cC$ be pivotal multifusion,
and $\cM$ a $\cC$-bimodule category.
One has a natural equivalence 
\[
  \Kar(\htr(\cM)) \simeq \cZ_\cC(\cM)
\]
Under this equivalence, the inclusion functor $\htr: \cM\to \htr(\cM)$ is 
identified with the functor $I : \cM\to \Z_\cC(\cM)$.

In particular, for right, left $\cC$-modules $\cM_1,\cM_2$,
we have
\[
  \cM_1 \boxtimes_\cC \cM_2 \simeq \cZ_\cC(\cM_1 \boxtimes \cM_2)
    \simeq \Kar(\cM_1 \hatbox{\cC} \cM_2)
\]

\end{theorem}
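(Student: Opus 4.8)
The plan is to construct an explicit functor $G\colon\htr_\cC(\cM)\to\cZ_\cC(\cM)$, show that it becomes an equivalence after applying $\Kar(-)$, and then obtain the ``in particular'' statement by specializing to $\cM=\cM_1\boxtimes\cM_2$ and using \eqnref{e:center-product}. On objects, $G$ is the induction functor, $G(M)=I(M)=\bigoplus_{i\in\Irr(\cC)}X_i\lact M\ract X_i^*$. On morphisms, a class $[\vphi]\in\Hom_{\htr_\cC(\cM)}(M_1,M_2)$ is represented by some $\vphi\in\ihom{\cM}{X}(M_1,M_2)=\Hom_\cM(X\lact M_1,M_2\ract X)$, and I set $G[\vphi]\colon I(M_1)\to I(M_2)$ to be the morphism obtained by contracting $\vphi$ against the element $\Gamma_X$ of \lemref{l:halfbrd} along the $X$-strand: $\vphi$ is inserted between the two $\al$-semicircles of $\Gamma_X$, and the surviving $X_i$- and $X_j$-legs become the summands of $I(M_1)$ and $I(M_2)$. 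This is the functor already implicit in \lemref{l:Mga_proj}, where $P_{(M,\ga)}=\tfrac{1}{|\Irr_0(\cC)|\cD}\,G\big(\sum_i d_i^R\ga_{X_i}\big)$.

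First I would verify the four points that make $G$ a well-defined functor landing in $\cZ_\cC(\cM)$ and compatible with the inclusions. (i) $G[\vphi]$ intertwines the half-braidings \eqnref{e:I(M)} of $I(M_1)$ and $I(M_2)$, hence is genuinely a morphism of $\cZ_\cC(\cM)$; this is the same graphical move as in the proof of \thmref{t:center-adjoint} (slide an $\al$ past the $\cC$-action and re-absorb a $\sqrt{d^R}$ to replace $\al$ by $\albar$). (ii) $G$ is well defined on $\sim$-classes: for $f\colon X\to Y$ the two pictures of \defref{d:htr} have equal image, which is exactly the naturality of $\Gamma_X$ in $X$, \lemref{l:halfbrd}(2). (iii) $G$ respects composition and identities: composition of classes corresponds to tensoring $\Gamma_X$ with $\Gamma_Y$ and contracting the connecting strand, which is \lemref{l:halfbrd}(1) followed by \lemref{l:summation}, while $\id_M$ is represented by $\id\in\ihom{\cM}{\one}(M,M)$. (iv) $G\circ\htr=I$: for $X=\one$ the defining formula collapses to $f\mapsto\bigoplus_i\id_{X_i}\lact f\ract\id_{X_i^*}=I(f)$; this also establishes the last clause of the theorem.

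Since $\cZ_\cC(\cM)$ is abelian (\prpref{p:ZM_ss}), hence idempotent-complete, $G$ extends to $\Kar(G)\colon\Kar(\htr_\cC(\cM))\to\cZ_\cC(\cM)$, and it suffices to show $G$ is fully faithful and $\Kar(G)$ essentially surjective. Essential surjectivity is the easy half: $I$ is dominant (\prpref{p:I_dominant}) and $\cZ_\cC(\cM)$ is semisimple (\prpref{p:ZM_ss}), so any $Z\in\cZ_\cC(\cM)$ is a direct summand of $G(M)=I(M)$ for some $M\in\cM$, cut out by $e\in\End_{\cZ_\cC(\cM)}(I(M))$; once $G$ is full we may write $e=G(\tilde e)$, faithfulness of $G$ forces $\tilde e^2=\tilde e$ (by \lemref{l:Mga_proj} one can take $\tilde e=\tfrac{1}{|\Irr_0(\cC)|\cD}\sum_i d_i^R\ga_{X_i}$), and then $\Kar(G)(M,\tilde e)\cong Z$. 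So the theorem reduces to full faithfulness of $G$, i.e.\ that $G\colon\Hom_{\htr_\cC(\cM)}(M_1,M_2)\to\Hom_{\cZ_\cC(\cM)}(I(M_1),I(M_2))$ is an isomorphism.

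I would prove this by computing both sides. Semisimplicity of $\cC$ collapses the coend on the left to $\bigoplus_{i\in\Irr(\cC)}\Hom_\cM(X_i\lact M_1,M_2\ract X_i)$; the two-sided adjunction of \thmref{t:center-adjoint} rewrites the right side as $\Hom_\cM(M_1,FI(M_2))=\bigoplus_{j\in\Irr(\cC)}\Hom_\cM(M_1,X_j\lact M_2\ract X_j^*)$; and the module/rigidity adjunctions in $\cM$, together with the idempotent splittings of \lemref{l:Mga_proj} and \lemref{l:IM_htr_proj}, match the two families of graded pieces. The crux — and the step I expect to be the main obstacle — is checking that $G$ actually realizes this identification: one plugs the explicit adjunction maps \eqnref{e:adj_isom}, \eqnref{e:adj_isom_2} and the definition of $\Gamma_X$ into one another and verifies that the $\al$-semicircles line up, after which everything collapses via \lemref{l:summation}, \lemref{l:summation-alpha-tunnel} and \lemref{l:dashed_circle}; this is purely diagrammatic but it is where all the content lies. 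Finally, applying the equivalence $\Kar(\htr_\cC(\cM))\simeq\cZ_\cC(\cM)$ to the $\cC$-bimodule category $\cM=\cM_1\boxtimes\cM_2$ gives $\cZ_\cC(\cM_1\boxtimes\cM_2)\simeq\Kar(\htr_\cC(\cM_1\boxtimes\cM_2))=\Kar(\cM_1\hatbox{\cC}\cM_2)$, and prepending $\cM_1\boxtimes_\cC\cM_2\simeq\cZ_\cC(\cM_1\boxtimes\cM_2)$ from \eqnref{e:center-product} yields the asserted chain.
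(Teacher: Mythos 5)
Your proposal is correct and follows essentially the same route as the paper: the same functor $G$ (your $\Gamma_X$-contraction is exactly the paper's $\albar$-semicircle formula), well-definedness via \lemref{l:halfbrd}, dominance via \lemref{l:Mga_proj}, and bijectivity on morphisms via the two-sided adjunction of \thmref{t:center-adjoint} combined with \lemref{l:isom1}. The only cosmetic difference is which side of the two-sided adjunction you use to compute $\Hom_{\cZ_\cC(\cM)}(I(M_1),I(M_2))$, and you are, if anything, more explicit than the paper in flagging that one must check $G$ actually induces the abstract isomorphism of hom-spaces.
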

Before proving the theorem, we will need the following lemma.
\begin{lemma}\label{l:isom1}
The natural linear map 
\begin{equation}\label{e:isom1}
  \bigoplus_{i \in \Irr(\cC)} \ihom{\cM}{X_i}(M_1,M_2) \to \Hom_{\htr(\cM)}(M_1, M_2)
\end{equation}
is an isomorphism.
Moreover, composition of morphisms
$\Hom_{\htr(\cM)}(M_2,M_3) \tnsr \Hom_{\htr(\cM)}(M_1,M_2)
\to \Hom_{\htr(\cM)}(M_1,M_3)$
carries over to the composition rule
\begin{align}
\label{e:isom3}
\ihom{\cM}{X_i}(M_2,M_3) \tnsr \ihom{\cM}{X_j}(M_1,M_2) 
&\to
\bigoplus_{k \in \Irr(\cC)} \ihom{\cM}{X_k}(M_1,M_3) 
\\
\vphi_2 \tnsr \vphi_1
&\mapsto
\sum_{k \in \Irr(\cC)} d_k^R
\begin{tikzpicture}
\node[small_morphism] (ph1) at (0,0.4) {\tiny $\vphi_1$};
\node[small_morphism] (ph2) at (0,-0.4) {\tiny $\vphi_2$};
\node[semi_morphism={180,}] (al1) at (-0.8,0) {\tiny $\al$};
\node[semi_morphism={180,}] (al2) at (0.8,0) {\tiny $\al$};
\draw (ph1) -- (0,1) node[above] {$M_1$};
\draw (ph1) -- (ph2);
\node at (0.2,0) {\tiny $M_2$};
\draw (ph2) -- (0,-1) node[below] {$M_3$};
\draw[midarrow={0.7}] (al1) to[out=10,in=180] (ph1);
\node at (-0.4,0.5) {\tiny $i$};
\draw (ph1) to[out=0,in=170] (al2);
\draw[midarrow={0.7}] (al1) to[out=-10,in=180] (ph2);
\node at (-0.4,-0.5) {\tiny $j$};
\draw (ph2) to[out=0,in=190] (al2);
\draw[midarrow_rev] (al1) to[out=180,in=-90] (-1.5,1)
	node[above] {$X_k$};
\draw (al2) to[out=0,in=90] (1.5,-1)
	node[below] {$X_k$};
\end{tikzpicture}
\end{align}
\end{lemma}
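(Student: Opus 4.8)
The plan is to exhibit an explicit two-sided inverse of the natural map \eqnref{e:isom1}, and then to read off the composition rule \eqnref{e:isom3} from the same construction; the only ingredients are the semisimplicity of $\cC$ and the dual-basis formalism of Notation~\ref{n:summation} and \lemref{l:summation}.

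For each $X \in \Obj\cC$ fix dual bases $\{\iota_{i,\al}\colon X_i \to X\}_\al$ of $\Hom_\cC(X_i, X)$ and $\{p_{i,\al}\colon X \to X_i\}_\al$ of $\Hom_\cC(X, X_i)$, normalised by $p_{i,\al}\circ\iota_{i,\be} = \delta_{\al\be}\,\id_{X_i}$ and $\sum_{i,\al}\iota_{i,\al}\circ p_{i,\al} = \id_X$, and define a linear map $T_X\colon \ihom{\cM}{X}(M_1,M_2) \to \bigoplus_{i\in\Irr(\cC)}\ihom{\cM}{X_i}(M_1,M_2)$ by declaring the $i$-th component of $T_X(\vphi)$ to be $\sum_\al (\id_{M_2}\ract p_{i,\al})\circ\vphi\circ(\iota_{i,\al}\lact\id_{M_1})$. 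Because $\sum_\al \iota_{i,\al}\tnsr p_{i,\al}$ is the canonical element of $\Hom_\cC(X_i,X)\tnsr\Hom_\cC(X,X_i)$, $T_X$ is independent of the chosen bases; moreover $T_{X_i}$ is the identity on its $i$-th component and vanishes on the others (since $\Hom_\cC(X_j,X_i)=0$ for $j\neq i$), so $T := \bigoplus_X T_X$ restricts to the identity on the subspace $\bigoplus_i \ihom{\cM}{X_i}(M_1,M_2)$ of $\bigoplus_X \ihom{\cM}{X}(M_1,M_2)$.

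Surjectivity of \eqnref{e:isom1} is now immediate: writing $\id_X = \sum_{i,\al}\iota_{i,\al}\circ p_{i,\al}$ and applying the generating relation of the coend (\defref{d:htr}) once to each summand gives $[\vphi] = \sum_{i,\al}\bigl[(\id\ract p_{i,\al})\circ\vphi\circ(\iota_{i,\al}\lact\id)\bigr]$, which is the class of $T_X(\vphi)$. For injectivity I would check that $T$ annihilates the generating relations of the coend. Given $f\colon X\to Y$ and $\psi\in\ihom{\cM}{Y,X}(M_1,M_2)$, expand $T_X\bigl(\psi\circ(f\lact\id)\bigr)$ using the bases of $X$ and $T_Y\bigl((\id\ract f)\circ\psi\bigr)$ using the bases of $Y$; both collapse to
\[
\sum_{i\in\Irr(\cC)}\ \sum_{\al,\be}\ \bigl(p^Y_{i,\be}\circ f\circ\iota^X_{i,\al}\bigr)\cdot(\id\ract p^X_{i,\al})\circ\psi\circ(\iota^Y_{i,\be}\lact\id),
\]
i.e.\ to literally the same expression --- this is just the cyclicity of the partial trace in the semisimple category $\cC$. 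Hence $T$ descends to a linear map $\overline{T}\colon \Hom_{\htr(\cM)}(M_1,M_2)\to\bigoplus_i\ihom{\cM}{X_i}(M_1,M_2)$, and $\overline{T}$ composed with \eqnref{e:isom1} is the identity; together with surjectivity this shows \eqnref{e:isom1} is an isomorphism, with inverse $\overline{T}$.

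For the composition rule, begin with the composition law of \defref{d:htr}: representatives $\vphi_1\in\ihom{\cM}{X_j}(M_1,M_2)$ and $\vphi_2\in\ihom{\cM}{X_i}(M_2,M_3)$ compose to a stacked morphism in $\ihom{\cM}{X_i\tnsr X_j}(M_1,M_3)$. Applying the inverse $\overline{T}$ amounts to decomposing $X_i\tnsr X_j \cong \bigoplus_k X_k\tnsr\Hom_\cC(X_k,X_i\tnsr X_j)$ and pushing the combined strand down onto the summands $X_k$; translating the dual bases $\{\iota_{k,\al}\colon X_k\to X_i\tnsr X_j\}$ and $\{p_{k,\al}\colon X_i\tnsr X_j\to X_k\}$ into the graphical $\al$-summation of Notation~\ref{n:summation}, via the adjunctions $\Hom_\cC(X_k,X_i\tnsr X_j)\simeq\eval{X_i,X_j,X_k^*}$ and $\Hom_\cC(X_i\tnsr X_j,X_k)\simeq\eval{X_k,X_j^*,X_i^*}$, yields exactly the picture on the right of \eqnref{e:isom3}. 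The step I expect to be the main obstacle is the normalisation here: the $\al$-convention pairs the spaces $\eval{\cdot}$ by the pairing \eqnref{e:pairing}, whereas $p_{k,\al}\circ\iota_{k,\be}=\delta_{\al\be}\id_{X_k}$, and reconciling the two --- tracking the semicircular orderings and the left- versus right-dimension normalisations --- is precisely what produces the coefficient $d_k^R$ rather than $d_k^L$, in exact parallel with the $d_i^R$ appearing in \lemref{l:summation}. By contrast, the ``cyclicity'' identity used in the injectivity step is routine, even though it carries most of the bare symbol-pushing.
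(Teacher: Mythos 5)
Your proof is correct and follows essentially the same route as the paper's: both construct the inverse by decomposing $X$ into simples via dual bases, check well-definedness against the coend relation, and obtain mutual inverseness from the completeness relation of \lemref{l:summation}. The only difference is cosmetic — you normalise by $p_{i,\al}\circ\iota_{i,\be}=\delta_{\al\be}\id$ and recover the $d_k^R$ factor at the end, whereas the paper builds it into the $\al$-notation (dual bases with respect to the pairing \eqnref{e:pairing}) from the start.
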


\begin{proof}
Define a linear map  
$$
\Hom_{\htr(\cM)}(M_1, M_2) \to 
\bigoplus_{i \in \Irr(\cC)} \ihom{\cM}{X_i}(M_1,M_2)
$$
by 

\begin{equation}\label{e:isom2}
\psi\mapsto 
\sum_{i\in \Irr(\cC)} d_i^R \quad
\begin{tikzpicture}
\node[morphism] (psi) at (0,0) {$\psi$};
\draw (psi) -- +(0,1.5) node[above] {$M_1$};
\draw (psi) -- +(0,-1.5) node[below] {$M_2$};
\node[semi_morphism={90,}] (al1) at (-1,0.6) {$\al$};
\node[semi_morphism={90,}] (al2) at (1,-0.6) {$\al$};
\draw[midarrow={0.6}] (al1) to[out=-90,in=180] (psi);
\draw[midarrow={0.6}] (psi) to[out=0,in=90] (al2);
\node at (-0.7,-0.3) {$X$};
\node at (0.7,0.3) {$X$};
\draw[midarrow_rev] (al1) -- (-1,1.5) node[above] {$X_i$};
\draw[midarrow] (al2) -- (1,-1.5) node[below] {$X_i$};
\end{tikzpicture}
\end{equation}
for $\psi \in \ihom{\cM}{X}(M_1,M_2)$;
$\al$ is a sum over dual bases - see Notation~\ref{n:summation}.
\eqnref{e:isom2} is well-defined by \lemref{l:pairing_property}.
Using \lemref{l:summation}, it is easy to see that
\eqref{e:isom1} and \eqref{e:isom2} are mutually inverse,
and it is clear that they respect composition.
\end{proof}

\begin{proof}[Proof of \thmref{t:htr-center}]
Define the functor $G : \htr(\cM)\to \cZ_\cC(\cM)$ on objects by $G(M)=I(M)$, and on morphisms by 
\begin{equation}
G(\psi) = 
\sum_{i,j \Irr(\cC)} \sqrt{d_i^R}\sqrt{d_j^R}
%%%%%%%%
\begin{tikzpicture}
\node[morphism] (psi) at (0,0) {$\psi$}; 
\node[semi_morphism={90,}] (L) at (-1,0) {$\al$};
\node[semi_morphism={270,}] (R) at (1,0) {$\al$};
\draw (psi)-- +(0,1.5) node[above] {$M_1$}; \draw (psi)-- +(0,-1.5) node[below] {$M_2$}; 
\draw[midarrow_rev] (L)-- +(0,1.5) node[pos=0.5,left] {$i$};
\draw[midarrow] (L) -- +(0,-1.5) node[pos=0.5,left] {$j$}; 
\draw[midarrow] (R) -- +(0,1.5) node[pos=0.5,right] {$i$};
\draw[midarrow_rev] (R)-- +(0,-1.5) node[pos=0.5,right] {$j$};
\draw[midarrow={0.6}] (L) to[out=-80,in=180] (psi);
\node at (-0.5, -0.5) {$X$};
\draw[midarrow_rev={0.6}] (R) to[out=-100,in=0] (psi);
\node at (0.5, -0.5) {$X$};
\end{tikzpicture}
%%%%%%%%
\end{equation}
for $\psi \in \ihom{\cM}{X}(M_1,M_2)$;
once again see Notation~\ref{n:summation}
for definition of $\al$.

It is easy to check the following properties: 
\begin{enumerate}
\item $G$ is well-defined on morphisms (i.e. it preserves the equivalence relation):
this follows from \lemref{l:halfbrd}.

\item $G$ is dominant: any $Y\in \cZ_\cC(\cM)$ appears as a direct summand 
of  $G(M)$ for some $M\in \cM$. Namely, if $Y=(M,\ga)$,
then it appears as a direct summand of $G(M)$;
the projection to $Y$ is, up to a factor, $G(\sum d_i^R \ga_{X_i})$
(see \lemref{l:Mga_proj} in Appendix for proof;
compare \prpref{p:I_dominant}).

\item $G$ is bijective on morphisms:
by adjointness property (\thmref{t:center-adjoint}), we have 
$$
\Hom_{\cZ_\cC(\cM)}(I(M), I(M')) \cong \Hom_{\cM}(I(M), M')
  =\bigoplus_{i}\Hom_\cM(X_i \lact M \ract X_i^*,  M')
$$
and by \lemref{l:isom1}, the right hand side coincides with $\Hom_{\htr(\cM)}(M,M')$.

\end{enumerate}

This immediately implies the statement of the theorem
by the universal properties of Karoubi envelopes.

\end{proof}

By \lemref{l:ZM_sub} and \lemref{l:htr_sub},
we extend the above theorem to $\cC' \subseteq \cC$:
\begin{corollary} \label{cor:center}
Let $\cC'$ be a pivotal category
whose Karoubi envelope $\cC = \Kar(\cC')$ is multifusion.
Let $\cM$ be a $\cC$-bimodule category,
and hence naturally a $\cC'$-bimodule category.
Then we have
\[
  \Kar(\htr_{\cC'}(\cM)) \simeq \Kar(\htr_\cC(\cM)) \simeq \cZ_\cC(\cM) \simeq \cZ_{\cC'}(\cM).
\]
\end{corollary}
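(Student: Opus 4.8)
The plan is to obtain the chain of equivalences by concatenating three results already at hand, checking only that the hypotheses of each line up in the present setting. First I would apply \thmref{t:htr-center} to the multifusion category $\cC$ itself: since $\cM$ is a semisimple abelian $\cC$-bimodule category, this gives the central equivalence $\Kar(\htr_\cC(\cM)) \simeq \cZ_\cC(\cM)$, together with the identification of $\htr\colon\cM\to\htr_\cC(\cM)$ with $I\colon\cM\to\cZ_\cC(\cM)$.

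Next I would handle the two ``restriction'' steps. On the horizontal-trace side, \lemref{l:htr_sub} requires only that $\cC'$ be monoidal with $\cC=\Kar(\cC')$ --- no rigidity or semisimplicity is needed --- and produces an equivalence $\htr_{\cC'}(\cM)\simeq\htr_\cC(\cM)$ that is the identity on objects. Applying the Karoubi envelope to both sides, and using that $\Kar(-)$ carries equivalences of $\kk$-linear additive categories to equivalences, yields $\Kar(\htr_{\cC'}(\cM))\simeq\Kar(\htr_\cC(\cM))$. On the center side, the hypotheses of \lemref{l:ZM_sub} are precisely ``$\cC'$ pivotal, $\Kar(\cC')=\cC$ multifusion, $\cM$ a semisimple abelian $\cC$-bimodule category'', all of which hold, so it gives $\cZ_\cC(\cM)\simeq\cZ_{\cC'}(\cM)$ --- indeed an isomorphism, by the remark at the end of that proof.

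Stringing these together gives
\[
\Kar(\htr_{\cC'}(\cM))\;\simeq\;\Kar(\htr_\cC(\cM))\;\simeq\;\cZ_\cC(\cM)\;\simeq\;\cZ_{\cC'}(\cM),
\]
which is the assertion. There is essentially no obstacle here beyond bookkeeping; the only points deserving a line of justification are (i) that $\htr_{\cC'}(\cM)$ and $\htr_\cC(\cM)$ are $\kk$-linear additive, so that passing to $\Kar$ is legitimate and preserves the equivalence of \lemref{l:htr_sub}, and (ii) that $\cM$, regarded as a $\cC'$-bimodule category, is still semisimple abelian --- immediate, since its underlying category is unchanged. If one additionally wants the inclusion functor $\htr\colon\cM\to\htr_{\cC'}(\cM)$ to correspond to $I\colon\cM\to\cZ_{\cC'}(\cM)$ after Karoubi completion, this follows by tracking functors through \lemref{l:htr_sub} and \lemref{l:ZM_sub} --- each is the identity on objects --- and combining with the functor identification in \thmref{t:htr-center}. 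The same concatenation applied to $\cM=\cM_1\boxtimes\cM_2$ yields the $\boxtimes_{\cC'}$ version of the balanced tensor product statement.
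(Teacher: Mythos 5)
Your proposal is correct and matches the paper's own derivation exactly: the corollary is obtained by combining \lemref{l:htr_sub}, \thmref{t:htr-center}, and \lemref{l:ZM_sub}, just as you do. The extra bookkeeping you note (Karoubi envelopes preserving equivalences of additive categories, semisimplicity of $\cM$ being unaffected by restriction of the acting category) is harmless and accurate.
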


Note $\Kar(\cM)$ inherits a $\cC'$-bimodule structure from $\cM$.
For example, $A \lact (M,p) = (A \lact M, \id_A \lact p)$.
We compare these constructions for $\cM$ and its Karoubi envelope:
\begin{lemma}
%\label{lem:M_Karubi}
\label{lem:M_Karoubi}
Under the same hypotheses as \corref{cor:center},
\[
  \Kar(\htr_{\cC'}(\cM)) \simeq \Kar(\htr_{\cC'}(\Kar(\cM)))
\]
In particular, if $\cM'$ is a dominant submodule category of $\cM$,
then
\[
  \Kar(\htr_{\cC'}(\cM')) \simeq \Kar(\htr_{\cC'}(\cM))
\]
\end{lemma}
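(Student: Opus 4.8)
The plan is to deduce both displayed equivalences from a single observation: if $\cN \subseteq \cN'$ is a full, additive $\cC'$-bimodule subcategory which is \emph{dominant}, meaning every object of $\cN'$ is a direct summand of an object of $\cN$, then the functor $\htr_{\cC'}(\cN) \to \htr_{\cC'}(\cN')$ induced by the inclusion (via \lemref{l:htr-functorial}) becomes an equivalence after passing to Karoubi envelopes. Granting this, the two statements of the lemma are the cases $(\cN,\cN') = (\cM, \Kar(\cM))$ — here $\cM$ is dominant in $\Kar(\cM)$ because every $(M,p)$ splits off $(M,\id_M)$ — and $(\cN,\cN') = (\cM', \cM)$ — here $\cM$ is semisimple, so the hypothesis that $\cM'$ is a dominant submodule category, phrased via subquotients, gives that every object of $\cM$ is in fact a direct summand of an object of $\cM'$. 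For the first statement one may alternatively note that $\cM$, being abelian, is already idempotent-complete, so $\Kar(\cM) \simeq \cM$ and there is nothing to prove; the genuine content is the second statement.

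To prove the observation I would first check that the induced functor is \emph{fully faithful}. Since $\cN$ is closed under the $\cC'$-bimodule action, for $M_1, M_2 \in \cN$ and $X \in \cC'$ the objects $X \lact M_1$ and $M_2 \ract X$ lie in $\cN$, so by fullness $\ihom{\cN}{X}(M_1,M_2) = \Hom_{\cN'}(X \lact M_1, M_2 \ract X) = \ihom{\cN'}{X}(M_1,M_2)$; and the coend relations cutting these spaces down to $\Hom_{\htr_{\cC'}}(M_1,M_2)$ involve only these internal Hom-spaces and morphisms of $\cC'$, hence coincide for $\cN$ and $\cN'$. So the induced map on Hom-spaces is a bijection. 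Dominance of the functor is then immediate: it is the inclusion on objects, and every object of $\cN'$ is by hypothesis a direct summand of an object of $\cN$, a relation preserved by the additive inclusion functor $\cN' \to \htr_{\cC'}(\cN')$. Finally, a fully faithful additive functor $G$ with the property that every object of the target splits off some $G(A)$ induces an equivalence $\Kar(G)$: full faithfulness is inherited by idempotent completions, and given $(B,q) \in \Kar(\cN')$ one picks a splitting $B \rightleftharpoons GA$, observes that the corresponding idempotent $\iota q \pi \in \End(GA)$ equals $G(p)$ for a necessarily idempotent $p \in \End(A)$, and checks that $(GA, \iota q \pi) \cong (B,q)$, so that $\Kar(G)(A,p) \cong (B,q)$.

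I do not expect any step to be deep; the one requiring care is the full-faithfulness verification — specifically the claim that the coend defining $\Hom_{\htr_{\cC'}(\cN')}(M_1,M_2)$ is computed from exactly the same data as for $\cN$, which is precisely where fullness of $\cN$ in $\cN'$ and stability under the bimodule action are used. Note that no rigidity or semisimplicity of $\cC'$ enters here; semisimplicity of the module category is used only to ensure that ``dominant'' (defined via subquotients) coincides with ``every object is a direct summand'', which is the input to the Karoubi-equivalence argument.
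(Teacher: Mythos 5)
Your proof is correct and follows essentially the same route as the paper: show the inclusion induces a fully faithful, dominant functor on horizontal traces and conclude the Karoubi envelopes agree. You are in fact slightly more careful than the paper, which asserts only that the induced functor is ``full and dominant''; your verification that the coend defining $\Hom_{\htr_{\cC'}}$ is computed from identical data for a full, action-stable subcategory supplies the faithfulness that the equivalence of Karoubi envelopes genuinely requires.
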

\begin{proof}
The natural inclusion $\cM \to \Kar(\cM)$ 
is a full, dominant functor of $\cC'$-bimodules,
and it is easy to see that the corresponding functor
$\htr(\cM) \to \htr(\Kar(\cM))$
is also full and dominant.
It follows that the induced functor
on their Karoubi envelopes is an equivalence.

The second statement follows because $\Kar(\cM') \simeq \Kar(\cM)$.
\end{proof}

\subsection{Piecewise-Linear Topology}
\label{s:bg-PLtop}
\par \noindent

In this section, we provide some background on piecewise-linear
(henceforth abbreviated as PL) topology.
Most of the background material in this section is obtained from
\ocite{hudson};
we provide a lightning recap of basic definitions
and quote results without proof.

As most of the PL topological arguments are in dimensions 4
and below, the reader who is unfamiliar with PL topology
may quite safely replace all things PL with their smooth counterparts,
as it is known that 
every PL manifold of dimension less than or equal to 6 admits
a unique smooth structure up to diffeomorphism
(see \ocite{hirschmazur}, \ocite{kervairemilnor}).
%https://en.wikipedia.org/wiki/Talk%3APiecewise_linear_manifold
%We warn the reader that vector fields can be tricky to define
%in the PL category, so we avoid them.

Most objects and maps discussed in this section are PL,
and are implicitly assumed to be such when no qualifiers are added;
when we want to emphasize that a map is merely continuous,
we say it is $C^0$.
Technically the term \emph{affine linear} should be used instead of linear,
but we simply say linear for brevity.

%Stuff to put in here:
%-some background in PL
%-the lemmas used to prove stuff about isotopies, e.g. the breaking isotopy into
%smaller ones (there is direct refernce 6.2.1 Hudson)
%
%\ocite{kirillov-PL}

A \emph{convex linear cell}, or simply a \emph{cell},
in $\RR^n$ is the convex hull of a finite set of points in $\RR^n$;
the \emph{dimension} of the cell is the dimension of the affine
subspace spanned by the cell.
A \emph{face} of a $k$-dimensional cell is the intersection of
the cell with an affine plane of dimension $k-1$ which does not
meet the interior of the cell.

A \emph{polyhedron} in $\RR^n$ is the union of a finite collection
of cells in $\RR^n$.

A continuous map $f : P \to Q$ from one polyhedron $P\subset \RR^p$
to another polyhedron $Q \subset \RR^q$
is \emph{PL} if the graph of $f$,
$\Gamma_f = \{(x,f(x)|x\in P\} \subset \RR^{p+q}$,
is a polyhedron.
The cells that make up $\Gamma_f$ project to cells in $\RR^p$
to make up $P$, such that the restriction of $f$ to these cells
is linear, so an equivalent definition of $f$ being PL is if
$P$ can be presented as a union of cells such that $f$
is linear on each cell.
The composition of PL maps is PL.

The \emph{cone} over a polyhedron $P \subset \RR^p$,
denoted $\text{cone}(P)$,
is the polyhedron $P * e_{p+1} \subset \RR^{p+1}$,
where $e_{p+1} = (0,\ldots,0,1)$.
For a PL map $f: P \to Q$, the $\text{cone}(f)$ is naturally defined
and is also PL.

A \emph{coordinate map} of a topological space $X$
is a $C^0$-embedding $f: P \to X$ of a polyhedron $P$ in $X$;
we write $(f,P)$ to denote such a map.
Two maps are \emph{compatible} if their overlap is a coordinate map;
more precisely, $(f,P)$ and $(g,Q)$ are compatible if either
their images don't intersect or there exists a third coordinate map
$(h,R)$ such that $h(R) = f(P) \cap g(Q)$ and
$f^\inv h$, $g^\inv h$ are PL.

A \emph{PL structure} on $X$ is a family $\mathcal{F}$ of coordinate maps satisfying:
\begin{itemize}
\item Any two elements are compatible
\item For every $x \in X$ has a coordinate neighborhood,
	i.e. there exists $(f,P) \in \mathcal{F}$ such that $f(P)$ is a neighborhood
	of $x$.
\item $\mathcal{F}$ is maximal.
\end{itemize}
If $\mathcal{F}$ is not necessarily maximal, it is a \emph{basis for a PL
structure} on $X$.

A \emph{convex linear cell complex} $K$ in $\RR^n$ is a finite collection
of cells that is closed under taking faces of cells and taking intersections
of pairs of cells that meet.
We write $\sigma \leq \tau$ for $\sigma, \tau \in K$
when there is a sequence
$\sigma_0 = \sigma,\sigma_1,\ldots,\sigma_k = \tau$, $k\geq 0$,
such that $\sigma_i$ is a face of $\sigma_{i+1}$;
note by the conditions of being a complex, if $\sigma \subset \tau$
as sets, $\sigma = \sigma \cap \tau$ is a face of $\tau$,
so the relations `$\leq$' and `$\subseteq$' are equivalent for complexes.
We denote by $|K| \subset \RR^n$ the union of cells.

An $n$-simplex in $\RR^N$ is the convex hull of $(n+1)$ linearly independent
points; a \emph{simplicial complex} is simply a cell complex whose cells
are simplices.
A \emph{triangulation} of a topological space $X$ is a simplicial complex $K$
together with an identification of $X$ with $|K|$ as $C^0$-spaces.
A triangulated space is naturally a PL space.

By \ocite{hudson}*{Chapter 3.2},
any PL space $X$ (i.e. a space with a PL structure)
admits a triangulation $|K| \to X$
that is a PL map,
i.e. the triangulation gives the same PL structure on $X$.

A cell complex $K$ is a subdivision of another $L$ if $|K|=|L|$ and
every cell of $K$ is a subset of some cell of $L$.
%The \emph{radial subdivision of $K$ at cell $C$}
%is TODO?

%%%%%%%%%%%%%%%%%%%%%%%%%%%%%%%%%%%%%%%%%%%%%%%%%%%%%%%%%%%%%%%%%%%%%
% putting only stuff I really need?

%We assume the reader is familiar with basic concepts of PL;
%we refer the reader to \ocite{hudson} for definitions and proofs.

Here $I = [0,1]$.

An \emph{isotopy} $F$ between embeddings $f,g:M \to Q$
is a map $F: M \times I \to Q$ such that,
writing $F_t = F(-,t)$,
$F_0 = f, F_1 = g$ and $F_t$ is an embedding for each $t$.
(Warning: the linear maps $\id_\RR$ and $x \mapsto 2x$
are indeed PL-isotopic, but $F_t(x) = tx$ is \emph{not} a PL-isotopy,
as $F$ is not linear map as a map $\RR \times I \to \RR$.)
An equivalent formulation of $F$ is as a \emph{level-preserving} embedding
$\overline{F} : M \times I \to Q \times I$;
we will write $F$ instead of $\overline{F}$ when it does not lead to confusion.
We say $f$ and $g$ are \emph{ambient isotopic}
if there exists an \emph{ambient isotopy} $H : Q \times I \to Q$
such that $g = H_1 \circ f$;
we say $H$ \emph{throws} $f(A)$ onto $g(A)$,
where $A \subseteq M$ is some subset of $M$.
If $F$ is an isotopy from $f$ to $g$,
and $H|_M = F$, then we say that $H$ \emph{carries} $F$.

The \emph{support} of a self-homeomorphism $h:Q \to Q$
is $\supp(h) := \{x \in Q| h(x) \neq x\}$;
we say $h$ is \emph{supported by} $X \subseteq Q$
if $\supp(h) \subseteq X$.
Similarly, an ambient isotopy is \emph{supported by}
$X \subseteq Q$ if the isotopy is fixed away from $X$.

An \emph{$m$-ball} is a polyhedron which is PL-homeomorphic
to an $m$-simplex.
%An \emph{$m$-sphere} is a polyhedron which is PL-homeomorphic
%to the boundary of an $(m+1)$-simplex.
A \emph{PL manifold} of dimension $m$ is a polyhedron in which
every point has a (closed) neighborhood which is
a $m$-ball.
We denote an $m$-ball by $B^m$ or $\DD^m$.

A \emph{move} on a $q$-manifold $Q$ is a homeomorphism $h$ that is
supported by a $q$-ball $B$ in $Q$; we call $B$ a \emph{supporting ball}.
A move $h$ is a \emph{proper move} if either
$h$ is fixed on $\del Q$ or $B$ meets $\del Q$
in at most one face of $B$.
By (the proof of) \ocite{hudson}*{Lemma 6.1},
a proper move supported by a ball $B$
is isotopic to the identity via an isotopy supported by $B$.

Two embeddings $f,g : M \to Q$ of manifolds are
\emph{isotopic by moves} if they are related by a finite sequence of moves,
i.e. $g = h_k \circ \cdots \circ h_1 \circ f$.
By \ocite{hudson}*{Theorem 6.2},
if $H$ is an ambient isotopy of a compact manifold $Q$,
then $H_1$ is isotopic by moves to the identity.
This result can be refined in several ways:

\begin{theorem}[\ocite{hudson}*{Theorem 6.2.3}]
Given an open cover $\{U_\al\}$,
if $f,g : M \to Q$ are ambient isotopic embeddings,
where $M$ is compact,
then $f,g$ are isotopic by moves
such that each move is supported on a ball contained in some $U_\al$.
Moreover, if the ambient isotopy carrying $f$ to $g$
is fixed on the boundary,
then each move can be chosen to be fixed on the boundary.
\label{t:isotopy-open-cover-boundary}
\end{theorem}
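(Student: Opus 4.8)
The plan is to fix an ambient isotopy $H\colon Q\times I\to Q$ realising the hypothesis, so that $H_0=\id_Q$ and $g=H_1\circ f$, and to decompose the homeomorphism $H_1$ into a composition of moves, each supported on a ball contained in some $U_\al$ (and, in the relative case, fixed on $\del Q$). Since $g=H_1\circ f$, any such decomposition $H_1=h_k\circ\cdots\circ h_1$ exhibits $f$ and $g$ as isotopic by moves of the required kind.

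The first step is to reduce to ``small'' isotopies. Fix a metric on $Q$ and let $\eps>0$ be a Lebesgue number for $\{U_\al\}$. By uniform continuity of $H$ on the compact space $Q\times I$ (if $Q$ is not compact, work inside a compact PL submanifold containing $\bigcup_t H_t(f(M))$, which is compact because $f(M)$ is), choose a partition $0=t_0<t_1<\cdots<t_N=1$ so fine that, for each $i$, the partial isotopy $s\mapsto H_{(1-s)t_{i-1}+st_i}\circ H_{t_{i-1}}^{-1}$ --- which runs from $\id_Q$ to $G_i:=H_{t_i}\circ H_{t_{i-1}}^{-1}$ --- has all of its point-tracks of diameter less than $\eps$. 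Because $H_1=G_N\circ\cdots\circ G_1$, it suffices to decompose a single such $\eps$-small ambient isotopy into moves supported on balls inside members of $\{U_\al\}$.

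The second step fragments an $\eps$-small ambient isotopy $G$ along a fine triangulation. Choose a triangulation $K$ of $Q$ all of whose simplices have diameter less than $\eps/3$; then each open star $\mathrm{st}(v,K)$ of a vertex lies in the closed star $\overline{\mathrm{st}}(v,K)$, which is a PL ball of diameter less than $\eps$ and hence contained in some $U_{\al(v)}$, while the $\mathrm{st}(v,K)$ cover $Q$. Enumerating the vertices $v_1,\dots,v_m$, one straightens $G$ one star at a time, producing ambient isotopies $G^{(1)},\dots,G^{(m)}$ with $\supp(G^{(j)})\subseteq \mathrm{st}(v_j,K)$ and $G_1=G^{(m)}_1\circ\cdots\circ G^{(1)}_1$; each $G^{(j)}_1$ is then a move supported on the ball $\overline{\mathrm{st}}(v_j,K)\subseteq U_{\al(v_j)}$. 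This fragmentation is carried out by the same technique as the proof of \ocite{hudson}*{Theorem 6.2}; alternatively one triangulates $H$ itself so that it is simplicial for product-compatible triangulations on the slices $Q\times\{t_i\}$ and decomposes the passage across each prism $\sigma\times[t_{i-1},t_i]$ into elementary moves supported in ball neighbourhoods of the simplices $\sigma$, these neighbourhoods being small because $K$ is fine. In either formulation, $\eps$-smallness is what makes it work: it forces the track of each point into a single open star of $K$, so the local corrections can be localised there and realised by isotopies supported in balls via \ocite{hudson}*{Lemma 6.1} (or the PL Alexander trick).

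For the relative statement, if $H$ is fixed on $\del Q$ one runs everything rel $\del Q$: take $K$ to restrict to a triangulation of $\del Q$, fix a PL collar $\del Q\times[0,1)$ of $\del Q$ in $Q$, observe that the $\eps$-small pieces $G_i$ are the identity on $\del Q$, and use a standard collaring argument to replace them by isotopies supported in the interior of $Q$; in the fragmentation step only interior vertex-stars are then required, so every resulting move is fixed on $\del Q$. I expect the fragmentation in the second step to be the main obstacle --- it is a relative, parametrised refinement of the ``isotopy by moves'' argument, whose delicate point is controlling how the tracks of points meet several stars of $K$ at once; the first step exists precisely to defuse this by confining each track to one open star. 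The remaining ingredients --- Lebesgue numbers, stars of triangulations, PL collars, and the PL Alexander trick --- are routine.
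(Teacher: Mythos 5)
The paper does not actually prove this statement: it is quoted from Hudson (Theorem 6.2 together with its addenda) without proof, so there is no in-paper argument to compare against. Your proof follows the standard argument behind Hudson's result --- subdivide the time interval so that each partial isotopy is small, then fragment each small isotopy over a fine triangulation into moves supported on closed vertex stars --- and it is correct in outline; you have rightly identified the fragmentation of a small isotopy as the real content and deferred it to Hudson's technique, which is reasonable given that the paper defers the entire statement to the same source.

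One quantifier slip should be repaired. You choose the partition of $[0,1]$ so that all tracks have diameter less than $\eps$ (a Lebesgue number for $\{U_\al\}$) and only afterwards choose $K$ with mesh $\eps/3$; but a track of diameter just under $\eps$ need not lie in any single open star of such a $K$ (those open stars have diameter at most $2\eps/3$), so the claim that ``$\eps$-smallness forces the track of each point into a single open star of $K$'' is false as stated. The fix is to reverse the order: first choose $K$ fine enough that every closed star lies in some $U_\al$, then take a Lebesgue number $\delta$ for the open-star cover $\{\mathrm{st}(v,K)\}_v$, and only then refine the partition of $[0,1]$ until all tracks have diameter less than $\delta$. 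With that adjustment --- and with the caveat that for noncompact $Q$ one should first shrink the support of $H$ to a compact neighbourhood of the track of $f(M)$, since only the restriction of $H_1$ to $f(M)$ matters for ``isotopic by moves'' --- the argument goes through, including the relative version via a collar of $\del Q$.
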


A \emph{manifold pair} $(Q,M)$ is a pair of manifolds
with $M$ embedded in $Q$.
We say $(Q,M)$ is \emph{proper} if $M \cap \del Q = \del M$,
and is \emph{locally unknotted} if for any $x\in M$,
there exists a $Q$-neighborhood $V$ of $x$ such that
$(V, V\cap M)$ is an unknotted ball pair
(i.e. homeomorphic to the standard pair $(I^q, I^m \times 0)$;
note locally unknotted implies proper).
% page 109 of Hudson
Furthermore, an isotopy $F : M \times I \to Q \times I$
is \emph{locally unknotted} if, for all $0\leq s \leq t \leq 1$,
the proper manifold pair
$(Q \times [s,t], F(M \times [s,t]))$
is locally unknotted.

\begin{theorem}[\ocite{hudson}*{Theorem 6.12},
Isotopy Extension Theorem]
Let $F : M \times I \to Q \times I$, $M$ compact,
be a proper locally unknotted isotopy.
Then there exists an ambient isotopy $H$ of $Q$
that carries $F$, i.e.
\[
F_t = H_t \circ F_0 \times \id_I
\]
Furthermore, if $F$ is fixed on $\del M$,
then we may choose $H$ to be fixed on $\del Q$.
\label{t:isotopy-ext}
\end{theorem}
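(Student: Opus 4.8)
The plan is to reduce the global statement to a local model by compactness, construct an ambient isotopy for the local model by hand, and then patch the local pieces together. \emph{Localization.} Since $M \times I$ is compact and, by hypothesis, every pair $(Q \times [s,t], F(M \times [s,t]))$ is locally unknotted, a Lebesgue-number argument produces a triangulation $K$ of $M$ together with a partition $0 = t_0 < t_1 < \cdots < t_N = 1$ such that for each simplex $\sigma \in K$ and each index $i$ the restriction $F|_{\sigma \times [t_i,t_{i+1}]}$ lies in a coordinate neighbourhood $V_{\sigma,i} \subset Q$ on which the pair is an unknotted ball pair, PL-homeomorphic to the standard $(I^q, I^m \times 0)$. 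Composing ambient isotopies over successive slabs $[t_i,t_{i+1}]$ yields an ambient isotopy over all of $I$, so it suffices to handle one slab; relabel and assume $I = [t_i,t_{i+1}]$, so $F$ is now a ``small'' isotopy in the above sense.

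\emph{The local model and skeletal induction.} Over this slab I would build the ambient isotopy $H$ by induction on the skeleta of $K$, maintaining the inductive hypothesis that $H$ has already been defined, is supported in a small regular neighbourhood of the $(k-1)$-skeleton, and agrees there with $F$. For a $k$-simplex $\sigma$, the unknotting chart $V_{\sigma,i}$ straightens $F|_{\sigma \times I}$ into the standard motion of a sub-ball inside $I^q$; this straightened motion, together with the already-constructed $H$ on $\del\sigma \times I$, is realised by a proper move of $I^q$ — essentially an Alexander cone over $\del(\sigma \times I)$ — which by \ocite{hudson}*{Lemma 6.1} is covered by an ambient isotopy supported in $V_{\sigma,i}$. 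Transporting this back through the chart and taking the union over all $k$-simplices (whose supports can be arranged to interact only over the lower skeleton, where $H$ is already fixed) extends $H$ over the $k$-skeleton. After $\dim M$ steps this produces an ambient isotopy of $Q$ carrying $F$ over the slab.

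\emph{The boundary refinement.} For the last sentence, one chooses every chart $V_{\sigma,i}$ and every supporting ball used in the cone construction to be either disjoint from $\del Q$ or to meet $\del Q$ in a single face, so that each elementary ambient isotopy — and hence the composite $H$ — is fixed on $\del Q$; if $F$ is already fixed on $\del M$, the simplices of $K$ contained in $\del M$ contribute no motion, so no support near $\del Q$ is needed and $H$ is automatically boundary-fixing. If desired, one may afterwards invoke \thmref{t:isotopy-open-cover-boundary} to re-express the outcome as an isotopy by moves supported inside the chosen charts.

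\emph{Main obstacle.} I expect the patching in the skeletal induction to be the crux: matching the locally built ambient isotopies along $\del\sigma \times I$, controlling their supports so that the union over a single skeleton is well defined, and verifying that the composite across all skeleta and then across all time slabs is still an honest PL ambient isotopy. This is precisely where local unknottedness is indispensable (it supplies the straightening charts) and where the cone construction must be carried out relative to data already fixed on the boundary; by comparison the compactness localization and the boundary-support bookkeeping are routine.
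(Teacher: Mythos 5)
This statement is not proved in the thesis at all: it is quoted verbatim from Hudson (\ocite{hudson}*{Theorem 6.12}) and used as a black box, so there is no in-paper argument to compare yours against. Your sketch does follow the general shape of Hudson's own proof --- subdivide the time interval so each slab is ``small,'' subdivide $M$ so that each cell's track lies in an unknotting chart, and extend cell by cell using Alexander-type cone extensions covered by ambient isotopies supported in balls (Lemma 6.1). So the strategy is the right one.

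However, as a proof the proposal has a genuine gap, and it sits exactly where you flag it. The cone construction over $\del(\sigma\times I)$ produces a single homeomorphism of $I^q$, not a level-preserving family; to get an ambient \emph{isotopy} of $Q$ over the slab that restricts on $\del\sigma\times I$ to the isotopy already built over the $(k-1)$-skeleton, you need a \emph{relative} covering-isotopy statement for ball pairs: given an unknotted pair $(B^q,B^k)$ and an allowable isotopy of $B^k$ that is already covered on $\del B^k$, extend the covering over $B^q$ keeping it fixed where prescribed. That relative statement is the actual content of the theorem, and it is not supplied by Lemma 6.1 together with the Alexander trick; Hudson devotes most of Chapter 6 (cellular moves, regular-neighborhood arguments) to it. A second, smaller gap: local unknottedness is a pointwise condition, and your Lebesgue-number step only guarantees that the track $F(\sigma\times[t_i,t_{i+1}])$ lies inside \emph{some} chart in which the pair $(V,\,V\cap F(M\times[t_i,t_{i+1}]))$ is standard; you then still have to arrange that the straightening of this chart is compatible with the charts of adjacent simplices and with the already-constructed $H$, which need not preserve $V_{\sigma,i}$. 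Neither issue is fatal to the approach, but both must be resolved for the induction to close, so the proposal should be read as a correct outline rather than a proof.
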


\begin{remark}
We will be using this result particularly for extending an
isotopy of a ribbon graph to the ambient manifold
(see \secref{s:cy-ribbongraphs}, \secref{s:skein_modules}).
Strictly speaking, \thmref{t:isotopy-ext} does not apply there,
as the graphs, as surfaces, have boundary in the interior
(thus violating local-unknottedness).
%TODO proper map just means preimage of compact is compact,
%so technically this is OK (just seems to fail local unknottednes..?)
However, one can make a double of the ribbon graph,
and glue it to the original graph,
making it a closed surface except at the boundary.
Then \thmref{t:isotopy-ext} applies to this double,
and thus to the original graph.
\label{r:isotopy-ext-ribbongraph}
\end{remark}

\begin{theorem}[\ocite{hudson}*{Theorem 6.11}]
A manifold with boundary admits a collar neighborhood,
unique up to ambient isotopy fixing the boundary.
\label{t:collar-exist-unique}
\end{theorem}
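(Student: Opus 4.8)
The statement is the classical PL collar neighbourhood theorem; since it is only quoted here, I will sketch the standard argument rather than grind it out. The plan is to realise the collar as a \emph{derived neighbourhood} of $\del M$ and then obtain uniqueness by reducing to a normal-form problem inside $\del M \times [0,1)$.

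\emph{Existence.} I would first fix a triangulation of $M$ in which $\del M$ is a full subcomplex $L$, pass to the first derived (barycentric) subdivision, and let $N$ be the simplicial neighbourhood of $L$ in that subdivision. Standard regular-neighbourhood theory identifies $N$ with the mapping cylinder of the projection of its frontier $\overline{\del N \setminus \del M}$ onto $\del M$. Because $M$ is a manifold with boundary, the pair $(M,\del M)$ is locally unknotted along $\del M$ — near each boundary point it looks like $(I^n, I^{n-1}\times 0)$ — so $N$ is locally a product $\mathbb{R}^{n-1}\times[0,1]$. To glue these local products into a global homeomorphism $N \cong \del M \times [0,1]$ I would induct over the skeleta of $L$: having built a collar over the $k$-skeleton, extend it over each $(k+1)$-simplex using the already-established ball case of the theorem (a collar defined over the boundary of the simplex extends over the simplex in its regular neighbourhood). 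Restricting $\del M \times [0,1] \to M$ to $\del M \times [0,1)$ is then the desired collar. I expect this skeleton-by-skeleton patching to be the main difficulty: the local product structures do not canonically agree on overlaps, so uniqueness-on-balls must be fed in at every inductive step, and the clean way to organise this is a simultaneous induction on $\dim M$ of existence together with uniqueness.

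\emph{Uniqueness.} Given two collars $c_0, c_1 : \del M \times [0,1) \to M$, I would precompose with $c_0$ so that $c_0$ becomes the standard inclusion, and then rescale the $[0,1)$-coordinate of $c_1$ — visibly an ambient isotopy of $M$ fixing $\del M$, being supported inside the collar — so that the image of $c_1$ lies in $c_0(\del M \times [0,1))$. The problem then reduces to: two collars of $\del M \times 0$ inside $\del M \times [0,1)$, one of them standard, are ambient isotopic rel $\del M \times 0$. This is an Alexander-trick argument in the interval direction (straighten $c_1$ to the standard collar by an isotopy fixing $\del M \times 0$ and the complement of a slightly larger collar), which one can also run through the isotopy extension theorem \thmref{t:isotopy-ext} applied to the properly embedded, locally unknotted submanifold $c_1(\del M \times [0,1])$. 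Transporting back through $c_0$ gives the ambient isotopy fixing $\del M$.

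I would also remark that in the smooth category the theorem has a short proof — extend an inward-pointing vector field from $\del M$ and flow briefly for existence, interpolate two such fields for uniqueness — and since the present paper only invokes this result in dimensions $\le 4$, where PL and smooth structures coincide, either route is available.
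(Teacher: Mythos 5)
This theorem is quoted in the paper verbatim from \ocite{hudson}*{Theorem 6.11} with no proof supplied, so there is no in-paper argument to compare yours against; the relevant question is only whether your sketch is a faithful account of the standard PL argument, and it essentially is. Your existence argument (derived neighbourhood of the full subcomplex $\del M$, identified as a mapping cylinder, with the local product structures glued by induction over skeleta using the ball case) and your uniqueness argument (reduce to two collars inside $\del M \times [0,1)$ with one standard, then straighten) are the classical route, and your closing remark that in dimensions $\le 4$ one may as well work smoothly is consistent with the paper's own standing convention in \secref{s:bg-PLtop}. One caveat worth making explicit: the "rescale the $[0,1)$-coordinate" and "Alexander trick in the interval direction" steps must be implemented piecewise-linearly, and the paper itself warns exactly here that the naive straight-line homotopy $F_t(x) = tx$ is \emph{not} a PL isotopy; so the straightening has to be done by an explicitly PL reparametrization (as in the paper's formula for $m_3^t$ in \secref{s:skprop-stacking}) or by appeal to \thmref{t:isotopy-ext}, rather than by coning. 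With that adjustment your outline is sound, though of course it remains a sketch that defers the real work (the skeleton-by-skeleton patching and the simultaneous induction on existence and uniqueness) to the reference.
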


\begin{corollary}[\ocite{hudson}*{Corollary 6.12}]
Let $(Q,M)$ be a locally unknotted compact proper manifold pair.
Any boundary collar on $M$ extends to a compatible boundary collar
on $Q$.
\label{c:collar-extend}
\end{corollary}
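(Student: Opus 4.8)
The plan is to build the desired collar of $Q$ out of an arbitrary one, correcting it in two stages with the help of \thmref{t:collar-exist-unique} (existence and uniqueness of boundary collars) and \thmref{t:isotopy-ext} (the isotopy extension theorem). Write $c_M\colon\del M\times I\to M$ for the given collar; since $(Q,M)$ is proper, $\del M=M\cap\del Q$, so $\del M$ is a properly embedded, locally unknotted submanifold of $\del Q$, and $c_M$ is in particular a $C^0$-embedding $\del M\times I\to Q$ landing in $M$. We seek a boundary collar $c_Q\colon\del Q\times I\to Q$ whose restriction to $\del M\times I$ is $c_M$.

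\emph{Step 1: a $Q$-collar adapted to $M$.} By \thmref{t:collar-exist-unique} choose any boundary collar $c\colon\del Q\times I\to Q$. A generic such $c$ need not satisfy $c(\del M\times I)\subseteq M$, and arranging this is where local unknottedness enters. Cover the compact set $\del M$ by finitely many $Q$-charts in which the pair $(Q,M)$ is standard and in which $\del Q$ appears as a coordinate face, so that the inward normal half-line is common to $Q$ and to $M$; in each chart one may modify $c$ by a move, supported inside the chart and inside a small neighbourhood of $\del M$, so that the collar lines of $c$ emanating from points of $\del M$ run inside $M$. Performing these moves one chart at a time, each leaving the already-corrected portion undisturbed, yields an ambient isotopy of $Q$ fixed on $\del Q$; replacing $c$ by its image under this isotopy we may assume $c(\del M\times I)\subseteq M$, so that $c|_{\del M\times I}$ is now a boundary collar of $\del M$ inside $M$.

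\emph{Step 2: matching $c_M$.} Apply the uniqueness clause of \thmref{t:collar-exist-unique} to the manifold $M$: there is an ambient isotopy $H$ of $M$, fixed on $\del M$, with $H_1\circ(c|_{\del M\times I})=c_M$. Regard $H$ as the isotopy $F_t=\iota\circ H_t\colon M\to Q$ of the inclusion $\iota\colon M\hookrightarrow Q$. Since each $H_t$ is a self-homeomorphism of $M$, one has $F_t(M)=M$ and $F_t(\del M)=\del M=M\cap\del Q$, so $F$ is proper; and $F(M\times[s,t])=M\times[s,t]\subseteq Q\times[s,t]$, which is locally unknotted because $(Q,M)$ is, so $F$ is a proper locally unknotted isotopy fixed on $\del M$. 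By \thmref{t:isotopy-ext} there is an ambient isotopy $\wdtld H$ of $Q$ carrying $F$ and fixed on $\del Q$, so that $\wdtld H_t|_M=H_t$. Then $c_Q:=\wdtld H_1\circ c\colon\del Q\times I\to Q$ is a boundary collar of $Q$ (a $\del Q$-fixed homeomorphism composed with a collar), and on $\del M\times I$ it restricts to $\wdtld H_1\circ(c|_{\del M\times I})=H_1\circ(c|_{\del M\times I})=c_M$, using $c(\del M\times I)\subseteq M$. Hence $c_Q$ is the required compatible extension.

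\emph{Main obstacle.} The substance is entirely in Step 1: an arbitrary collar of $Q$ has no reason to be tangent to $M$ along $\del M$, and the local-to-global patching of the straightening isotopy must be arranged so that correcting the collar over one chart does not spoil earlier corrections — routine for PL moves, but it is precisely here that the local unknottedness hypothesis is indispensable (were $M$ knotted into $\del Q$, no compatible collar could exist). Step 2 is then formal once one checks, as above, that $H$ viewed inside $Q$ meets the hypotheses of \thmref{t:isotopy-ext}, which again reduces to local unknottedness of $(Q,M)$.
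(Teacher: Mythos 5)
The paper itself offers no argument for this corollary --- it is quoted directly from \ocite{hudson} --- so your proposal is being measured against the standard proof rather than against anything in the text. Your Step 2 is exactly that standard proof's second half, and it is correct: once one has a collar $c$ of $Q$ whose restriction to $\del M\times I$ is a collar of $M$, the uniqueness clause of \thmref{t:collar-exist-unique} applied \emph{inside} $M$ produces an ambient isotopy $H$ of $M$ rel $\del M$ matching $c|_{\del M\times I}$ to $c_M$, and your verification that $F_t=\iota\circ H_t$ is a proper, locally unknotted isotopy (so that \thmref{t:isotopy-ext} applies and yields $\wdtld{H}$ fixed on $\del Q$) is sound, since $F(M\times[s,t])=M\times[s,t]$ and local unknottedness is preserved under taking products with an interval.

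The genuine gap is Step 1. You correctly flag it as the main obstacle, but the argument you give does not overcome it: a PL move supported in one chart near $\del M$ necessarily displaces points in the overlap with adjacent charts, and hence disturbs the collar lines emanating from boundary points of $\del M$ that were ``already corrected'' there. The assertion that each move ``leaves the already-corrected portion undisturbed'' is precisely what needs proof, and controlling this interference is the entire content of the existence theorem for collars of \emph{pairs}; as written, Step 1 assumes what it sets out to establish. The way out is not to repair an arbitrary collar of $Q$ at all, but to construct a collar of the pair $(\del Q,\del M)$ in $(Q,M)$ directly: triangulate the pair so that $M$ is a full subcomplex of $Q$ (possible since the pair is locally unknotted and compact) and take a derived neighbourhood of $\del Q$; the resulting collar of $Q$ restricts to a collar of $M$ by construction, because the simplicial mapping cylinder structure respects the subcomplex. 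With that substitution for Step 1, your Step 2 completes the proof.
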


%We will also make use of 

The following is adapted from \ocite{PL-az}*{Definition 2}:

\begin{definition}
Let $P,Q,M$ be manifolds with $P \subset Q$.
Let $f: M \to Q$ be a map, and $x \in \Int(M)$
such that $f(x) \in P$.
We say $f$ is \emph{transversal to $P$ at $x$}
if there is a commutative diagram
\[
\begin{tikzcd}
D^{q-p} \times D^{m+p-q}, 0 \times 0
\ar[r,"\id \times k"]
\ar[d,"\phi"]
& D^{q-p} \times D^p, 0 \times 0
\ar[d,"\psi"]
& 0 \times D^p, 0 \times 0
\ar[l,"0 \times \id"]
\ar[d,"\psi"]
\\
M,x \ar[r,"f"]
& Q, f(x)
& P, f(x) \ar[l]
\end{tikzcd}
\]
where $\phi,\psi$ are embeddings onto neighborhoods of $x,f(x)$,
respectively,
and $k$ is some map $D^{m+p-q} \to D^p$.
We say $f$ is \emph{transversal to $P$} if it is transversal
to $P$ at all such points $x$.
\label{d:transversal-map}
\end{definition}

Note that this definition was originally meant for
closed manifolds, but we will be using it for
$M = $surface with boundary
(more specifically, ribbon graphs in \secref{s:cy-ribbongraphs}).
An obvious deficiency in this definition is that
$M$ may intersect $P$ only at its boundary,
but the definition says nothing about this intersection.
We will not go into any detail on this,
as our use case is very simple,
and will be more interested in a stronger notion of transversality
for ribbon graphs (see \defref{d:ribbon-transverse}).

%TODO hudson page 82 PL manifold same as underlying space of
%simplicial complex plus its PL structure.
%(see
%https://mathoverflow.net/questions/85595/relation-between-combinatorial-manifolds-and-pl-manifolds\\
%https://math.stackexchange.com/questions/846972/triangulations-pl-triangulations-and-related-conecpts\\
%https://math.stackexchange.com/questions/3081118/definition-of-transversal-intersection-for-piecewise-linear-submanifolds
%)

\begin{proposition}
Let $f : M \to \RR$ be a proper map.
We say $s \in \RR$ is a \emph{regular value} if
$f$ is transversal to $s$ (as a 0-dim submanifold of $\RR$).
We say $s$ is a \emph{critical value} if it is not a regular value.

The set of regular values is dense in $\RR$.
\label{p:regular-level-set}
\end{proposition}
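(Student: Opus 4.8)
The plan is to reduce the statement to a standard fact about PL maps to $\RR$: namely, that a proper PL map $f: M \to \RR$ has only finitely many critical values on any compact piece of $M$, and hence only countably many (in fact locally finitely many) critical values in total, so the set of regular values is dense (indeed open and dense). First I would recall that, since $f$ is PL, we may choose a triangulation $K$ of $M$ on which $f$ is linear (affine) on each simplex; this is possible by the results quoted from \ocite{hudson}*{Chapter 3.2}, after subdividing so that $f$ restricted to each cell is affine. Properness of $f$ ensures that the preimage of any compact interval $[a,b] \subset \RR$ meets only finitely many simplices of $K$, so locally the analysis is finite.

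Next I would pin down what it means for $s$ to be a regular value in terms of this triangulation. Unwinding \defref{d:transversal-map} with $P = \{s\}$ a $0$-dimensional submanifold of $\RR = Q$ (so $q=1$, $p=0$), transversality of $f$ to $s$ at an interior point $x$ with $f(x)=s$ means that near $x$ the map $f$ looks like the projection $D^1 \times D^{m-1} \to D^1$, i.e. $x$ lies in the interior of $M$ and $f$ has no local extremum or degeneracy at $x$; equivalently the level set $f^{-1}(s)$ is a properly embedded PL submanifold of codimension $1$ near $x$. The key observation is that this can fail only at the images of vertices of $K$ — more precisely, on the image under $f$ of the (finite, on each compact piece) set of vertices of $K$, together with values where the affine pieces fail to assemble into a manifold chart. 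On each simplex $\sigma$ where $f$ is affine and nonconstant, the level sets $f^{-1}(s) \cap \sigma$ vary "nicely" for $s$ in the open interval $(\min_\sigma f, \max_\sigma f)$; the only potentially bad values are the finitely many values $f(v)$ for $v$ a vertex of $\sigma$.

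Then the argument concludes as follows: the set of critical values is contained in $f(K^{(0)})$, the image of the $0$-skeleton of $K$. Since $f$ is proper and $K$ is locally finite, $f(K^{(0)})$ is a locally finite (hence closed, nowhere dense) subset of $\RR$; its complement, which contains the regular values, is open and dense. The main obstacle I expect is the bookkeeping in the second step: carefully verifying that at a value $s \notin f(K^{(0)})$ the local pieces $f^{-1}(s) \cap \sigma$ genuinely glue to give the PL chart required by \defref{d:transversal-map}, i.e. that "not hitting a vertex" really is equivalent to transversality in the sense defined. This requires using that $M$ is a PL manifold (so the star of each simplex is a ball) and that $f$ is affine on stars; once one knows $s$ is attained only in the interiors of top-dimensional simplices and their faces that are not local extrema, the standard model $D^1 \times D^{m-1} \to D^1$ can be produced. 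I would handle this by working in the open star of a point $x$ with $f(x) = s$ and checking that, since $s$ avoids all vertex values, $x$ lies in the relative interior of a simplex on which $f$ is a nonconstant affine function, giving the desired submersion chart. (For the case $m \le 6$ one may, as the paper remarks, alternatively pass to the smooth category and invoke Sard's theorem.)
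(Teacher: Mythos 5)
Your proposal is correct and follows essentially the same route as the paper: triangulate so that $f$ is affine on each cell, observe that critical values can only arise from cells on which $f$ is constant (whose images coincide with the images of the vertices, your $f(K^{(0)})$), and use properness to conclude that this bad set is locally finite in $\RR$. The paper simply asserts the transversality check at non-vertex values is easy, whereas you flag it as the step requiring care; otherwise the arguments match.
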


\begin{proof}
By definition of PL maps,
$M$ can be presented as a union of cells
such that the restriction of $f$ to each cell is linear.
Let $P$ be the collection of cells on which $f$ is constant
(in particular, $P$ contains all 0-cells).
Then it is easy to see that any $s \nin f(P)$ is a regular value.

By propreness of $f$, the preimage $f^\inv([a,b])$ of any
compact interval only meets finitely many cells in $M$,
thus $f^\inv(P) \cap [a,b]$ is a finite set,
and we are done.
%Let $K, \vphi : |K| \to M$ be a triangulation of $M$.
%Composition with $\vphi$ defines a PL function
%$f \circ \vphi$ on $K$.
%Let $\sigma$ be a simplex in $K$.
%refine so that $f$ is affine linear on each simplex.
%regular values are all $x$ that are not $f(p)$ for some
%vertex $p$ of the triangulation.
\end{proof}

\begin{remark}
\label{r:orientations}
\textbf{Orientations and vectors.}
It is somewhat tricky to define the tangent space of a point
in a PL manifold, as the linear structure is broken,
thus it is not immediately clear how to adapt the usual
definition of orientation using an ordered tuple of vectors.
Strictly speaking, one should use local homology groups to define
orientations, but this is too cumbersome for our setup.
Instead, we first pick a triangulation of the manifold,
and since each simplex has a well-defined linear structure,
the orientation at a point can be defined in the usual way;
if a point belongs to multiple simplices,
an orientation in one simplex naturally defines an orientation
in another simplex.
%we can pick orientations for each simplex,
%and an orientation of the manifold is a compatible choice of orientations.
If we present the PL structure of a manifold $M$ as a collection
of charts $f: U \to M$, such that the transition maps are PL,
then an orientation at a point can again be defined the usual way
in $\RR^N$.
If the point belongs to another chart,
it should define a corresponding orientation in the other chart
as follows: propogate the orientation to the rest of the first chart,
then, since the transition map must be linear on some open subset,
the transition map transfers the propogated orientation to the other chart.

In short, we will define orientations at points
by describing an ordered tuple of vectors at that point,
implicitly assuming that some linear structure is chosen as reference.

We will also be discussing co-orientations of surfaces in a solid.
As we only deal with locally unknotted manifold pairs,
a co-orientation at an interior point of the surface
can be defined as a choice of one component of $B^3 \backslash B^2$.

Finally, the convention on outward orientation of the boundary
of an oriented manifold is: in some coordinate chart,
at a point on the boundary,
$o = (v_1,\ldots,v_{k-1})$ defines the outward orientation if
$(\vec{n}, v_1,\ldots,v_{k-1})$ defines the orientation
of the manifold, where $\vec{n}$ is an outward-pointing vector.
\end{remark}

Next we discuss cobordisms and handle decompositions.
We will only apply these results to 4-dimensional cobordisms,
but we present them in general for clarity;
the reader may assume $n=4$, $W$ is a 4-manifold,
$M$ is a 3-manifold, and $N$ is a 2-manifold.

\begin{definition}
A \emph{cobordism from $M$ to $M'$} is an oriented manifold $W$
with a partition of its boundary into disjoint manifolds
$\del W = \ov{M} \sqcup M'$
(so $M'$ has the outward orientation, and $M$ the inward orientation,
with respect to $W$).
We will denote a cobordism by $W: M \to M'$.
\label{d:cobordism}
\end{definition}

We will be considering 4-manifolds with corners
as part of our extended TQFT.
In particular, we will have to consider cobordisms
between 3-manifolds with boundary.
The following is a variation of a ``cobordism with boundary''
(as in \ocite{PL-rourke-sanderson}*{Chapter 6});
Yetter calls them ``cobordism with corners''
\ocite{yetter-hopf}.

\begin{definition}
A \emph{cornered cobordism over $N$ from $M$ to $M'$}
is an oriented manifold $W$
with $\del W = \ov{M} \cup_N M'$,
$\ov{M} \cap M' = N = \del M = \del M'$.

We may regard a cobordism as a cornered cobordism
over the empty manifold.
\label{d:cornered-cobordism}
\end{definition}

\begin{definition}
Let $M$ be a manifold (with boundary $N$).
The \emph{identity cobordisms on $M$},
denoted $\id_M$,
is the cornered cobordism
$W = M \times I : M \to_{N \times \{0\}} M$.
\label{d:cornered-cobordism-identity}
\end{definition}

\begin{definition}
Given cornered cobordisms $W : M \to_N M'$,
$W : M' \to_N M''$,
their \emph{composition},
denoted $W' \circ W$, is the cornered cobordism
\[
W' \circ W = W' \cup_{M'} W
	: M \to_N M''
\]
\label{d:cornered-cobordism-composition}
\end{definition}

We also consider a slightly variation of composition,
but first:

\begin{definition}
Let $W: M_0 \to_N M_1$ be a cornered cobordism,
and let $M : N \to N'$ be a cobordism.
The cornered cobordism
\emph{obtained from $W$ by extending along $M$},
denoted $W_{M: N \to N'}$,
%or simply $W_{N \to N'}$,
is a cornered cobordism $M_0 \cup_N M \to_{N'} M_1 \cup_N M$
whose underlying manifold is obtained by gluing $W$
to the identity cobordism on $M_1 \cup_N M$
(or equivalently, gluing $W$ to the identity cobordism on $M_0 \cup_N M$).
\label{d:cornered-cobordism-extend}
\end{definition}

\begin{definition}
Let $W,W'$ be cornered cobordisms
\begin{align*}
W &: M_0 \to_N M_1
\\
W' &: M_0' \to_{N'} M_1'
\end{align*}
Suppose $M_1 \subset M_0'$ is a submanifold
that does not meet $N'$.
Then $M_0' \backslash M_1$ is a cobordism
$N \to N'$.
The \emph{extended composition}
of $W$ and $W'$ is
\[
W' \wdtld{\circ} W := W' \circ W_{M_0' \backslash M_1 : N \to N'}
	: M_0 \to_{N'} M_1' \cup (M_0' \backslash M_1)
\]

Similarly, if $M_0' \subset M_1$ is a submanifold
that does not meet $N$,
then $M_1 \backslash M_0'$ is a cobordism
$N' \to N$,
and we define
\[
W' \wdtld{\circ} W := W'_{M_1 \backslash M_0' : N' \to N} \circ W
	: M_0  \cup (M_1 \backslash M_0') \to_N M_1'
\]
\label{d:composition-extended}
\end{definition}

The simplest cobordisms, besides the
\emph{identity cobordisms},
arise from attaching balls to $M$
in a particular manner, known as \emph{handles}.
Here we present handles as cornered cobordisms:

\begin{definition}
Fix a dimension $n$,
and let $0 \leq k \leq n$.
The \emph{$n$-dimensional $k$-handle}, denoted $\cH_k^n$,
or simply $\cH_k$,
is the cornered cobordism
\[
\cH_k = B^k \times B^{n-k} : \del B^k \times B^{n-k}
	\to_{\del B^k \times \del B^{n-k}} B^k \times \del B^{n-k}
\]
Note that $\cH_k$ and $\cH_{n-k}$ are the same as manifolds with corners.

Some useful terminology associated to handles are as follows:
\begin{itemize}
\item \emph{core}: $B^k \times \{0\}$; the handle $\cH_k$ should be
	regarded as a thickening of the core,
\item \emph{co-core}: $\{0\} \times B^{n-k}$,
\item \emph{attaching sphere}: $\del B^k \times \{0\}$;
	this will make more sense in the context of attaching handles,
\item \emph{attaching region}: $\del B^k \times B^{n-k}$;
	again, this will make more sense in the context of attaching handles,
\item \emph{belt sphere}: $\{0\} \times \del B^{n-k}$.
\end{itemize}
\label{d:handle}
\end{definition}

\begin{definition}
Let $W : M_0 \to_N M_1$ be a cornered cobordism.
We say a cornered cobordisms $W' : M_0 \to_N M_1'$
is \emph{obtained from $W$ by attaching a $k$-handle}
if $W' = W \cup_f \cH_k$,
where $f : \ov{\del B^k \times B^{n-k}} \hookrightarrow M_1'$
is an embedding, called the \emph{attaching map},
such that its image is disjoint from $N$.
We call the image of the attaching sphere/region
the \emph{attaching sphere/region of $f$}.
\label{d:handle-attachment}
\end{definition}

\begin{definition}
An \emph{elementary cornered cobordism of index $k$}
is a cobordism obtained from an identity cobordism
by attaching a $k$-handle.
In other words, it is an extension of a $k$-handle
by some cobordism.
\label{d:elementary-cobordism}
\end{definition}

\begin{lemma}
Given an elementary cornered cobordism
$W : M \to_N M'$ of index $k$,
the \emph{dual elementary cornered cobordism}
is obtained by reversing the direction of $W$ by
reversing the orientations of $M,M'$,
so that we have
\[
W : \ov{M'} \to_{\ov{N}} \ov{M}
\]
This is an elementary cornered cobordism of index $n-k$.
\label{l:dual-elementary-cobord}
\end{lemma}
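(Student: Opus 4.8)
The plan is to reduce everything to the model handle by using the ``extension'' description of elementary cornered cobordisms (\defref{d:elementary-cobordism}), and there to exploit the transposition homeomorphism $B^k \times B^{n-k} \cong B^{n-k}\times B^k$. So I would first write $W = (\cH_k)_Q$, the extension (\defref{d:cornered-cobordism-extend}) of the model $k$-handle $\cH_k = B^k \times B^{n-k}$ along some cobordism $Q \colon \del B^k \times \del B^{n-k} \to N$; unwinding \defref{d:cornered-cobordism-extend}, this says $W$ is obtained by gluing $\cH_k$ onto the identity cobordism on $(B^k \times \del B^{n-k}) \cup_{\del B^k \times \del B^{n-k}} Q$ (equivalently on $(\del B^k \times B^{n-k})\cup Q$). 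Reversing the direction of $W$, i.e.\ flipping the orientations of $M, M', N$ and relabelling which end is incoming, leaves the orientation of the total space $W$ unchanged, by \defref{d:cornered-cobordism}.

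Next I would treat the model piece. Regard $\cH_k$ as a cornered cobordism from its attaching region $\del B^k \times B^{n-k}$ to its free part $B^k \times \del B^{n-k}$, over the corner $\del B^k \times \del B^{n-k}$; its reverse runs from $\ov{B^k \times \del B^{n-k}}$ to $\ov{\del B^k \times B^{n-k}}$ over $\ov{\del B^k \times \del B^{n-k}}$, with the same orientation on $B^k \times B^{n-k}$. The transposition $\tau \colon B^k \times B^{n-k} \to B^{n-k}\times B^k$, $(x,y) \mapsto (y,x)$, carries the free part $B^k \times \del B^{n-k}$ onto the attaching region $\del B^{n-k}\times B^k$ of $\cH_{n-k}$, carries the attaching region $\del B^k \times B^{n-k}$ onto the free part $B^{n-k}\times \del B^k$ of $\cH_{n-k}$, and matches the corners. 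Hence $\tau$ exhibits the reverse of $\cH_k$ as the model $(n-k)$-handle $\cH_{n-k}$ --- this is the cornered-cobordism refinement of the observation in \defref{d:handle} that $\cH_k$ and $\cH_{n-k}$ coincide as manifolds with corners. (If $k(n-k)$ is odd, $\tau$ reverses orientation; precomposing a reflection of the $B^{n-k}$ factor, which preserves $\del B^{n-k}$, restores orientation-preservation, so this is harmless.)

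Finally I would observe that reversal commutes with extension. Extension along $Q$ is, by definition, gluing on an identity cobordism, reversal distributes over gluing of cobordisms, and reversal sends an identity cobordism $\id_A$ to $\id_{\ov A}$ (since $\id_A = A \times I$ and $\ov{A\times I} \cong \ov A \times I$ as oriented manifolds, with the incoming/outgoing ends and the corner swapped accordingly). Therefore the reverse of $W = (\cH_k)_Q$ is $(\ov{\cH_k})_{\ov Q} \cong (\cH_{n-k})_{\ov{Q}}$, with $\ov Q \colon \ov{\del B^{n-k}\times\del B^k} \to \ov N$; this is precisely an extension of an $(n-k)$-handle, hence an elementary cornered cobordism of index $n-k$, and it is the cobordism $\ov{M'} \to_{\ov N} \ov M$ claimed. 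The only genuinely delicate point is the bookkeeping along the corner: one must check that $\tau$ and all the gluings are compatible on $\del B^k \times \del B^{n-k}$ (and hence on $N$) and that the extension identity cobordism is attached along the correct faces. In the alternative picture $W = (M \times I)\cup_f \cH_k$ this compatibility is exactly the hypothesis in \defref{d:handle-attachment} that the attaching region is disjoint from $N$, while in the extension picture it is automatic; the remaining verifications are routine.
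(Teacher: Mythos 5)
Your proof is correct and rests on the same key observation as the paper's: the reverse of the model handle $\cH_k$ is the model handle $\cH_{n-k}$ because the two coincide as manifolds with corners with attaching region and free part interchanged. The paper packages this as the one-line identity $W = \cH_k \cup_f \id_M = \id_{M'} \cup_{f'} \cH_k$ with $f'$ the inclusion of the belt region, whereas you work through the equivalent ``extension'' description of \defref{d:elementary-cobordism} and the transposition homeomorphism; this is only a difference of bookkeeping, not of substance.
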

\begin{proof}
It is easy to check that if
$f$ is the attaching map of the $k$-handle to $M$,
and $f'$ is the inclusion map of the belt sphere
into $M'$, then
\[
W = \cH_k \cup_f \id_M = \id_{M'} \cup_{M'} \cH_k \cup_f \id_M
= \id_{M'} \cup_{f'} \cH_k
\]
\end{proof}

\begin{definition}
A \emph{handle decomposition} of a cornered cobordism
$W : M \to_N M'$
is an identification of $W$
with a composition of elementary cornered cobordisms.
\label{d:handle-decomposition}
\end{definition}

\begin{definition}
Given a handle decomposition
$W = W_l \circ \cdots \circ W_1 : M \to_N M'$,
the \emph{dual handle decomposition}
is the handle decomposition
$W = W_1^* \circ \cdots \circ W_l^* : \ov{M'} \to_{\ov{N}} \ov{M}$,
where $W_i^*$ is the dual elementary cobordism
from \lemref{l:dual-elementary-cobord}.
\label{d:dual-handle-decomp}
\end{definition}

\begin{proposition}
Handle decompositions exist for any cornered cobordism.
\label{p:handle-decomposition-exist}
\end{proposition}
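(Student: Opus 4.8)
The plan is to reduce to the standard existence of handle decompositions for ordinary cobordisms (as in \ocite{PL-rourke-sanderson} or \ocite{hudson}), and then handle the corners. Given a cornered cobordism $W : M \to_N M'$, the boundary decomposes as $\del W = \ov{M} \cup_N M'$ with $\ov{M} \cap M' = N = \del M = \del M'$. The first step is to pick a collar of $N$ in $W$: by \thmref{t:collar-exist-unique} applied appropriately, $N$ has a collar $N \times I \times I$ in $W$ where one $I$-direction points into $M'$ and the other into $\ov{M}$, so that near $N$ the manifold $W$ looks like $N \times [0,1]^2$, a ``corner''. Removing (or rather, staying away from) this corner region, one would like to think of $W$ as an ordinary cobordism between manifolds \emph{without} boundary after rounding the corner; more precisely, smoothing/straightening the corner along $N \times [0,1]^2$ produces a genuine cobordism $\wdtld{W}$ from $M$ to $M'$ in the ordinary sense (gluing in the collar identifies $\del\wdtld W$ as the double-along-$N$ picture), and $W$ is recovered from $\wdtld W$ by the reverse corner-introduction. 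This straightening is a PL homeomorphism, unique up to ambient isotopy, by the collar uniqueness results quoted above.

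Next I would invoke the classical theorem that every compact (PL) cobordism admits a handle decomposition: $\wdtld W = \wdtld W_l \circ \cdots \circ \wdtld W_1$, where each $\wdtld W_i$ is obtained from an identity cobordism by attaching a single handle, and moreover — by a general-position / Morse-theoretic argument on a PL map $\wdtld W \to I$ — the attaching maps can be taken to avoid any prescribed codimension-$0$ submanifold of the relevant level set, in particular the image of the $N\times I$ collar. This is exactly the clause ``such that its image is disjoint from $N$'' in \defref{d:handle-attachment}: since all handle attachments happen away from the corner collar, each elementary piece $\wdtld W_i$ carries over verbatim to an elementary \emph{cornered} cobordism $W_i$ by re-attaching the corner collar $N \times [0,1]^2$ along the whole composition (equivalently, extending each handle by the cobordism supplying the corner, as in \defref{d:cornered-cobordism-extend}). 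Composing, $W = W_l \circ \cdots \circ W_1$ is the desired handle decomposition.

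The main obstacle is the corner-straightening step: one must be careful that introducing and removing the corner along $N \times [0,1]^2$ is well-defined in the PL category and compatible with the handle structure — i.e. that the collar of $N$ in $W$ can be chosen so that it is \emph{disjoint} from all the handles of a decomposition of the straightened cobordism, rather than merely isotopic to such a configuration. This is where \corref{c:collar-extend} and \thmref{t:isotopy-ext} do the work: start with any handle decomposition of $\wdtld W$, use general position to push the corner collar off the finitely many attaching regions (they are proper submanifolds of the level sets, and the collar is also proper), and then apply ambient isotopy extension to realize this as an honest isotopy of the whole picture. The remaining verifications — that each resulting piece is elementary of the correct index, and that the indices match up with the dual decomposition of \defref{d:dual-handle-decomp} via \lemref{l:dual-elementary-cobord} — are routine.
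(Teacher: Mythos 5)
Your proof takes a genuinely different route from the paper's, which simply triangulates $W$ and lets each $k$-simplex not contained in $M$ be the core of a $k$-handle, citing \ocite{bryant-PL}*{Section 6} for details; that construction applies directly to the pair $(W,\ov{M})$ and keeps all attachments away from $N$ automatically, since $N\subset M$ is covered by simplices of $M$.

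Unfortunately the reduction you propose has a genuine gap at the corner-straightening step. You claim that straightening the corner along $N\times[0,1]^2$ produces ``a genuine cobordism $\wdtld{W}$ from $M$ to $M'$ in the ordinary sense.'' By \defref{d:cobordism} an ordinary cobordism from $M$ to $M'$ would have $\del\wdtld{W}=\ov{M}\sqcup M'$; but the boundary of a compact manifold is closed, whereas $\ov{M}\sqcup M'$ has boundary $N\sqcup N\neq\emptyset$ whenever $N\neq\emptyset$. So no such $\wdtld{W}$ exists: the corner is not an artifact that can be rounded away, it is forced by $\del M\neq\emptyset$. (If instead one reads $\del\wdtld{W}$ as the single closed manifold $\ov{M}\cup_N M'$, then $\wdtld{W}$ is a cobordism $\emptyset\to\del W$, and a handle decomposition of that builds $W$ from nothing rather than from a collar of $M$, which is not what \defref{d:handle-decomposition} requires.) Consequently the ``classical theorem'' you invoke must already be the relative one --- handle decompositions of a manifold built on a prescribed codimension-$0$ piece of its boundary --- which is essentially the statement being proved; invoking it only for cobordisms between closed manifolds does not apply, and invoking it in the relative form is circular. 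The concluding general-position step is also not the right fix: the handles of any decomposition cover all of $W$, so a collar of $N$ cannot be made disjoint from the handles; what must be arranged is that each attaching region, inside its level set $M_t$, misses $\del M_t=N$, and that is exactly what the triangulation construction (or a PL Morse function made into a product near $N$, which would have to be set up explicitly) guarantees from the outset.
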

\begin{proof}
In broad strokes,
for a cornered cobordism $W : M \to_N M'$,
take a triangulation of $W$,
then each $k$-simplex not in $M$
defines a $k$-handle
(with the $k$-simplex as its core).
Details can be found in \ocite{bryant-PL}*{Section 6}.
\end{proof}

\begin{definition}
\label{d:handle-decomp-modification}
Let $W_i, W_{i+1}$ be two successive elementary cornered cobordisms
in a handle decomposition.
Let $W_i = \cH_k \wdtld{\circ} \id_{M_i} : M_i \to_{N_i} M_{i+1}$,
and $W_{i+1} = \id_{M_{i+1}} \wdtld{\circ} \cH_{k'}
	: M_{i+1} \to_{N_{i+1}} M_{i+2}$.
Let $f$ be the attaching map for $\cH_{k'}$.
There are several ways to modify a handle decomposition:
\begin{itemize}
\item reordering: if $f$ misses $\cH_k$, we may swap $W_i$ and $W_{i+1}$.
	This can be arranged if $k' \leq k$ by general position and transversality
	arguments
	(isotope $f$ to be transversal to the co-core of $\cH_k$,
	so by dimensional considerations, the attaching sphere for $f$
	is disjoint from it;
	then apply an isotopy to ``push away'' from the co-core).
\item handle slide, or handle addition/subtraction:
	when $k=k'$, arrange so $f$ misses $\cH_k$ as above,
	then isotope $f$ by dragging a part of the attaching sphere
	through the core of $\cH_k$,
	so that, if $S,S'$ are the attaching spheres
	of $\cH_k$ and $\cH_{k'}$,
	then the new attaching sphere for $\cH_{k'}$
	is $S \# S'$.
\item handle pair cancellation/creation: if $k' = k+1$
	and the attaching sphere of $f$ intersects the co-core of $\cH_k$
	transversally,
	then $W_{i+1} \circ W_i$ is simply an identity cobordism,
	and we may eliminate this pair from the handle decomposition.
	(Pair creation is just the inverse operation of adding such a
	canceling pair).
\end{itemize}
\end{definition}

Once again, we refer the reader to the reference texts for details.

\begin{proposition}
Any two handle decompositions of a given cornered cobordism
of dimension $\leq 6$
are related by a sequence of modifications
as in \defref{d:handle-decomp-modification}.
\label{p:handle-decomp-relate}
\end{proposition}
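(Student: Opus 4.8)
The plan is to follow the classical Cerf-theory/Morse-theory argument, transported to the PL setting, exactly as in the smooth case (see \ocite{PL-rourke-sanderson}, \ocite{kervairemilnor}), using the existence of handle decompositions (\prpref{p:handle-decomposition-exist}) and the modification moves of \defref{d:handle-decomp-modification} as the allowed generating operations. First I would reduce the statement to a comparison between the two handle decompositions and a common ``interpolating'' family. Concretely, given a cornered cobordism $W : M \to_N M'$ of dimension $n \leq 6$ with two handle decompositions $\mathcal{H}^0$ and $\mathcal{H}^1$, one realizes each as arising (via \prpref{p:handle-decomposition-exist}) from a triangulation, hence from a PL ``Morse function'' $f_i : W \to I$ with $f_i^{-1}(0) = M$, $f_i^{-1}(1) = M'$, in general position with respect to the corner locus $N \times I$ (the hypothesis that attaching regions miss $N$ is exactly what lets us keep everything away from the corner throughout). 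Because $n \leq 6$, PL and smooth category coincide (the remark after \ocite{hirschmazur}, \ocite{kervairemilnor} quoted in \secref{s:bg-PLtop}), so I may pass to smooth Morse functions and invoke the smooth theory outright, then translate the conclusion back.

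The core of the argument is then the standard one: connect $f_0$ to $f_1$ by a generic path $f_t$, $t \in [0,1]$, in the space of PL (equivalently smooth) functions on $W$ that are constant on $M$ and on $M'$ and transverse to $N \times I$. By Cerf theory, such a generic path is Morse for all but finitely many $t$, and the non-Morse times are of exactly two kinds: (i) a birth/death of a canceling pair of critical points of adjacent indices $k, k+1$, and (ii) a crossing of critical values (two critical points at the same level). I would then match these two elementary events in the path with the moves of \defref{d:handle-decomp-modification}: event (i) is precisely \emph{handle pair cancellation/creation}, and event (ii) — when the relevant stable/unstable spheres are disjoint — is \emph{reordering}; when they are not disjoint but the indices are equal, pushing the crossing through produces a \emph{handle slide} (the connect-sum $S \# S'$ of attaching spheres appearing in \defref{d:handle-decomp-modification} is exactly the effect of the isotopy of the attaching map across the core). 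The dimension bound $n \leq 6$ is used again here to guarantee that the generic-position and transversality arguments underlying these moves actually go through in the PL category (so that, e.g., attaching spheres can be isotoped off co-cores as stated in \defref{d:handle-decomp-modification}); this is where I would cite \ocite{hudson} and \ocite{PL-rourke-sanderson} for the PL general-position lemmas, and \ocite{bryant-PL} for the handle-theoretic details.

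The main obstacle I anticipate is not the conceptual skeleton but the \emph{cornered} aspect: one must ensure at every stage of the Cerf path that the function stays transverse to the corner $N \times I$ and that no critical point wanders into a neighborhood of $N$, so that every move produced is a legitimate move \emph{of cornered cobordisms} and never disturbs the corner structure $\ov{M} \cap M' = N$. This is handled by choosing a collar of $N$ in $W$ (available and essentially unique by \thmref{t:collar-exist-unique} and \corref{c:collar-extend}), working relative to that collar throughout, and performing all genericity arguments in the complement of a smaller collar; since attaching regions are required to miss $N$, no handle ever needs to be slid or born near the corner, so the standard (boundaryless, or boundary-fixed) Cerf theory applies verbatim away from the collar. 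Given the length and technicality of a complete write-up, and since the statement is standard once the $n\leq 6$ PL$=$DIFF reduction is invoked, I would present the proof in broad strokes and refer the reader to \ocite{PL-rourke-sanderson}*{Chapter 6}, \ocite{bryant-PL}*{Section 6}, and the smooth Cerf-theoretic treatments for the remaining details.
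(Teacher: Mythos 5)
Your proposal is correct and takes essentially the same route as the paper's own sketch: both reduce to smooth Cerf theory (handle decompositions as Morse functions, a generic interpolating family whose birth--death and crossing events realize the moves of \defref{d:handle-decomp-modification}) and both rely on the smoothability of PL manifolds in dimension $\leq 6$ to pass between categories. The paper is slightly more explicit about the return trip to PL (embedding in Euclidean space and PL-approximating the resulting isotopy, following \ocite{cairns}), and your added care about keeping the Cerf path away from the corner via collars is a reasonable refinement, but the substance is the same.
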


\begin{proof}
While we do rely on this proposition for a proof of
\prpref{p:zcysk-corner-indep},
that result is also proved by a different method in
\thmref{t:sk-equiv-4mfld}.
Thus, we merely outline a sketch of the proof of this proposition here,
leaving the reader to details.

This is a well-known result in the smooth category,
and is one of the main results of Cerf theory.
The general outline of the proof goes as follows.
A handle decomposition determines a Morse function on the cobordism
$W \to \RR$, i.e. a smooth function whose critical points
are all non-degenerate.
The two handle decompositions yield Morse functions $f_0,f_1$.
For a generic smooth family $\{f_t\}$ interpolating them,
$f_t$ is either a Morse function or has ``birth-death'' degenerate
critical points.
Between $f_t$ of the birth-death type, the $f_t$'s lead to the same
handle decomposition,
and each passing of a birth-death type,
the handle decomposition undergoes a modification
of the type discussed in \defref{d:handle-decomp-modification}.
See for example \ocite{milnor-cobord} for details.

%For our use case of manifolds of dimension $\leq 4$,
%the proposition follows from the smooth case
%by the uniqueness of smoothings.
%We present the rest of the argument for higher dimensions anyway.

Now we transfer the smooth result to the PL category.
Apply the unique smoothing of $W$,
and embed it smoothly in some Euclidean space $\iota : W \to E^k$.
Add an extra coordinate $z$ to the Euclidean space,
and modify the embedding to an isotopy
$\iota_t = (\iota,f_t) : W \times I \to E^{k+1}$
such that the $z$ coordinate is $f_t$.
Then apply a PL approximation by choosing a dense (in the colloquial sense)
set of points on $\iota_t(W \times I)$ as vertices
and triangulate accordingly (see \ocite{cairns}).
%(See also \ocite{kearton-lickorish} for a theory of ``critical levels''
%that serves as an analog of Morse theory in PL)
\end{proof}

%Later we will also consider cornered cobordisms for combinatorial manifolds
%(\prpref{p:zcy-corner-gluing-map}).

%We refer the reader to \ocite{PL-rourke-sanderson}*{Chapter 6},
%\ocite{bryant-PL}*{Section 6} for details.

\newpage
\section{Crane-Yetter Invariant for PLCW Decompositions}
\label{s:cy-plcw}
\vspace{0.8in}

Recall that the Crane-Yetter invariant for 4-manifolds,
as defined in \ocite{CKY}, is an invariant constructed
using a triangulation on the 4-manifold.
We give a definition using a more general cellular decomposition,
here called a ``PLCW decomposition'',
and prove that they agree.
The methods and exposition presented here closely mirror \ocite{balsam-kirillov},
where they define the Turaev-Viro invariant
using a polytope decomposition of a 3-manifold.
We also draw heavily from \ocite{kirillov-PL}, where PLCW decomposition
is defined.
%TODO: check smooth = PL in dim 4?
%(Smale conjecture? Tadayuki Watanabe has disproof?)

In what follows, the word ``manifold'' denotes a compact,
oriented, piecewise-linear (PL) manifold;
unless otherwise noted, we assume that it has no boundary.
Similarly, all maps are assumed to be piecewise-linear
unless otherwise specified.

%%%%%%%%%%%%%%%%%%%%%%%%%%%%%%%%%%%%%%%%%%%%%%%%%%%%%%%%%%%%%%%%%%%%%%%%%%%%%%%%
% subsection on plcw, topology stuff

\subsection{PLCW Decompositions}
\label{s:cy-top-plcw}
\par \noindent

%note to self: outward orientation is defined as:
%or = {out-n, out-or}
%(or = orientation on bulk, out-n = vector pointing out at boundary,
%out-or is outward-orientation)

Most of the definitions and results here are taken from
\ocite{kirillov-PL};
the main new additions are on ``restricted elementary moves'',
``collapsible pairs'', and 
\thmref{t:elementary-boundary}
on relating relative PLCW decompositions via elementary moves.

Recall that a cellular decomposition of a topological manifold $M$ is a collection of
inclusion maps $\DD^d \to M$, where $\DD^d$ is the (open) $d$-dimensional ball,
satisfying certain conditions.
Equivalently, we can replace d-dimensional balls with $d$-dimensional cubes
$(0, 1)^d$.

%%%%%%%%%%%%%%%%%%%%%%%%%%%%%%%%%%%%%%%%%%%%%%%%%%%%%%%%%%%%%%%%%%%%%%%%%%%%%%%%

%\begin{definition}
%For a PL manifold $M$, we will call a cellular decomposition a
%\emph{PL decomposition} if each inclusion map
%$(0, 1)^d \to M$ is a PL map.
%\end{definition}
%
%In particular, every triangulation of a PL manifold gives such a
%cellular decomposition
%(each open $d$-dimensional simplex is PL homeomorphic to a
%open $d$-dimensional cube).
%
%\begin{definition}
%We call a cell in a PL decomposition is \emph{regualr}
%if the corresponding map $(0, 1)^d \to M$ extends
%to a map of the closed cube $[0, 1]^d \to M$ which is a homeomorphism onto its
%image.
%\end{definition}

We denote the $n$-dimensional (PL) ball by
$B^n = [-1,1]^n$ for $n>0$, and $B^0 = \{*\}$.

%\begin{definition}
%(regular map)
%Let $C \subseteq \RR^N$ be a compact polyhedron.TODO?
%\end{definition}

\begin{definition}[\ocite{kirillov-PL}*{Definition 3.3}]
Let $M$ be a PL manifold possibly with boundary.
A \emph{generalized $n$-cell} in $M$ is a subset $C \subseteq M$
together with a map $\vphi : B^n \to C \subseteq M$,
such that the restriction of $\vphi$ to the interior
is a homeomorphism onto its image;
$\vphi$ is called the \emph{characteristic map}.
The \emph{interior} $\intr{C}$ of the generalized $n$-cell $(C,\vphi)$
is the image $\vphi((-1,1)^n) \subseteq M$.
\end{definition}

\begin{figure}
\begin{tikzpicture}
\draw (0,0) circle (0.5cm);
\draw (0,0) circle (1.5cm);
\node[dotnode] at (0,0.5) {};
\node[dotnode] at (0,1.5) {};
\draw (0,0.5) to (0,1.5);
\node at (0,-1) {$\intr{C}$};
\end{tikzpicture}
\caption{Example of generalized 2-cell \ocite{kirillov-PL}*{Figure 1 (a)}}
\label{f:generalized-cell}
\end{figure}

By abuse of notation, we sometimes refer to $C$ as the
generalized $n$-cell,
which can be justified by the following lemma:

\begin{proposition}[\ocite{kirillov-PL}*{Theorem 3.4}]
The characteristic map of a generalized $n$-cell is
unique up to a homeomorphism of $B^n$;
more precisely, given two generalized $n$-cells
$(C, \vphi)$, $(C, \vphi')$
with the same image,
there exists a PL homeomorphism $\psi : B^n \to B^n$
such that $\vphi' = \vphi \circ \psi$.
\end{proposition}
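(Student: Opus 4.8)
The plan is to construct $\psi$ first over the interiors of the balls and then extend it across $\partial B^n$; essentially all of the work lies in the extension. As a first step I would show that the image of the open ball is the same for the two characteristic maps, i.e. that $\vphi\bigl((-1,1)^n\bigr)=\vphi'\bigl((-1,1)^n\bigr)=:\intr{C}$. The input here is the disjointness $\vphi(\partial B^n)\cap\vphi\bigl((-1,1)^n\bigr)=\emptyset$: if an interior point $x$ and a boundary point $y$ of $B^n$ had $\vphi(x)=\vphi(y)$, then continuity of $\vphi$ together with continuity of $\bigl(\vphi|_{(-1,1)^n}\bigr)^{-1}$ would force interior points arbitrarily close to $y$ to be simultaneously arbitrarily close to $x$, which is impossible; hence $\intr{C}=\vphi\bigl((-1,1)^n\bigr)$ is open in $C$ (its complement $\vphi(\partial B^n)$ being compact) and dense, and the same holds for $\vphi'$ — and one deduces, using the definition of a generalized cell, that these two open sets coincide. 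Since $\vphi$ and $\vphi'$ are PL, the restrictions $\vphi|_{(-1,1)^n}$ and $\vphi'|_{(-1,1)^n}$ are PL homeomorphisms onto $\intr{C}$, so $\psi_0:=\bigl(\vphi'|_{(-1,1)^n}\bigr)^{-1}\circ\vphi|_{(-1,1)^n}$ is a PL homeomorphism of $(-1,1)^n$ with $\vphi'\circ\psi_0=\vphi$ on $(-1,1)^n$.

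The remaining, and main, task is to extend $\psi_0$ to a homeomorphism $\psi\colon B^n\to B^n$; granting this, $\vphi'\circ\psi=\vphi$ holds on all of $B^n$ by continuity and density, which is the assertion. To build the extension I would choose finite triangulations $L$ of $B^n$ (with $\partial B^n$ a subcomplex) and $T$ of $C$ with respect to which $\vphi$ and $\vphi'$ are simplicial, and then subdivide $L$ further so that $\psi_0$ is affine on the interior of each top-dimensional simplex $\sigma$ of $L$, given there by the restriction of a nondegenerate affine map $A_\sigma$ of all of $\sigma$. Because $\vphi'\circ A_\sigma$ and $\vphi$ agree on the dense subset $\intr{\sigma}$, they agree on $\sigma$; and because $\psi_0$ is a proper self-map of the open ball, $A_\sigma$ carries the face(s) of $\sigma$ lying in $\partial B^n$ into $\partial B^n$. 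Assembling the $A_\sigma$ then produces a continuous map $\psi\colon B^n\to B^n$ extending $\psi_0$ with $\vphi'\circ\psi=\vphi$. Running the identical construction with $\vphi$ and $\vphi'$ interchanged yields a continuous extension of $\psi_0^{-1}$; the two extensions are mutually inverse on the dense set $(-1,1)^n$, hence mutually inverse on $B^n$, so $\psi$ is a homeomorphism, and it is PL since it is affine on each simplex of $L$.

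The two places that need genuine care are the equality $\vphi\bigl((-1,1)^n\bigr)=\vphi'\bigl((-1,1)^n\bigr)$ in the first step and, above all, the verification that the affine pieces $A_\sigma$ really do agree along the faces of $L$ contained in $\partial B^n$ — this is exactly where the possibly highly non-injective behaviour of $\vphi'$ on $\partial B^n$ must be controlled, and where one uses that $\psi_0$ is a \emph{global} homeomorphism of the ball rather than an uncoordinated collection of sheets. I expect this matching to be the main obstacle, and I would handle it by stratifying $\partial B^n$ (after subdivision, by a subcomplex structure) according to the fibres of $\vphi'$, observing that $\psi_0$ sends a deleted neighbourhood of a stratum to a deleted neighbourhood of the corresponding stratum for the other identification, and matching stratum by stratum in decreasing dimension.
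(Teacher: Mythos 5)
There is no proof in the paper to compare against --- the proposition is quoted from \ocite{kirillov-PL} and used as a black box --- so I am judging your argument on its own terms. Your second stage, the extension of $\psi_0$ across $\del B^n$, is sound: after a simultaneous triangulation and subdivision each affine piece $A_\sigma$ is forced, and the matching along boundary faces that you single out as the main obstacle is in fact automatic, because two affine maps on adjacent top simplices already agree on the interior of their common face (which lies in $(-1,1)^n$, where $\psi_0$ is defined and continuous) and hence, being affine, on the whole face; no stratification of $\del B^n$ by fibres of $\vphi'$ is needed.

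The genuine gap is in your first stage, at the sentence ``one deduces, using the definition of a generalized cell, that these two open sets coincide.'' That deduction is not available: equality of the images does not imply equality of the interiors, and the proposition is false as literally stated. Your disjointness argument correctly shows that $C$ is the disjoint union of the open dense set $\vphi((-1,1)^n)$ and the compact set $\vphi(\del B^n)$, but such a decomposition of $C$ is far from unique. Take $n=1$ and $C=S^1$: the maps wrapping $[-1,1]$ once around the circle starting at two distinct basepoints $p\neq q$ are both generalized $1$-cells with image $S^1$, with interiors $S^1\setminus\{p\}$ and $S^1\setminus\{q\}$; since any homeomorphism $\psi$ of $[-1,1]$ preserves $(-1,1)$, the relation $\vphi'=\vphi\circ\psi$ would force $\vphi'((-1,1))=\vphi((-1,1))$, which fails. (The slit annulus of \figref{f:generalized-cell} with the slit placed at two different angles gives the same phenomenon for $n=2$.) The hypothesis that must be added --- and which is implicit in the cited source and in every use of the proposition in this paper, where $\intr{C}$ is pinned down by the ambient PLCW structure as the complement of the lower-dimensional skeleton --- is that the two characteristic maps also have the same interior, $\vphi((-1,1)^n)=\vphi'((-1,1)^n)$. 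Once that is assumed, the rest of your argument goes through.
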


Thus, we will consider generalized cells up to such equivalences,
although we will often pick a characteristic map arbitrarily
for concreteness.

\begin{definition}[\ocite{kirillov-PL}*{Definition 4.1}]
A \emph{generalized cell complex structure}
on a PL manifold $M$ is a finite collection $\{(C, \vphi_C)\}$ of
generalized cells in $M$ such that
\begin{itemize}
\item $\intr{C} \cap \intr{C'} = \emptyset$ for distinct cells,
\item $\bigcup C = M$.
\end{itemize}

The $k$-skeleton $\sk^k(M)$ is the union of all $k$-cells.
\end{definition}

\begin{definition}[\ocite{kirillov-PL}*{Definition 4.3}]
Let $M,M'$ be manifolds with generalized cell complex structures
$\cM, \cM'$.
A map $f : M \to M'$ is a \emph{cellular map}
if for each cell $(C,\vphi_C) \in \cM$,
the composition $f \circ \vphi_C$ is a characteristic map
of some cell $C'$ of $\cM'$.
%(Note we drop the \emph{regular} qualifier TODO check this is needed?.)
\end{definition}

\begin{definition}[\ocite{kirillov-PL}*{Definition 5.1}]
A generalized cell complex structure on an $n$-manifold $M$
is a \emph{PLCW decomposition} or a \emph{PLCW structure} if
for each $k$-cell $(C,\vphi)$ of $M$ with $k>0$,
there exists a generalized cell complex structure $\cB_\vphi$ on $\del B^k$
such that the restriction $\vphi|_{\del B^k}$ is a
cellular map.

We call a manifold $M$ endowed with a PLCW decomposition $\cM$
a \emph{PLCW combinatorial manifold},
or simply \emph{PLCW manifold};
we often refer to the PLCW manifold simply by $\cM$.
%TODO hmm Husdon calls manifold with triangulation a combinatorial mfld,
%maybe should just call ours PLCW mfld.

We say that two PLCW decompositions on $M$ are equivalent
if there is a homeomorphism from $M$ to itself
that identifies them.
\end{definition}

Note that by induction on $k$,
the generalized cell complex structure $\cB_\vphi$ on $\del B^k$
is itself a PLCW decomposition,
hence this definition coincides with
\ocite{kirillov-PL}*{Definition 5.1};
we call it the \emph{induced PLCW decomposition}.
We also call $\cB_\vphi \cup B^k$
the induced PLCW decomposition on the $k$-cell $\vphi(B^k)$.

%It is often convenient to discuss cells in isolation from a manifold:
\begin{definition}
A PLCW decomposition $\cB$ of $B^k$ with exactly one $k$-cell
is called \emph{cell-like}.
\end{definition}
In particular, the induced PLCW decomposition by a characteristic map
is cell-like.

One convenient feature of PLCW decompositions is that products are
easy to define:

\begin{definition}
Let $\cM,\cM'$ be two PLCW manifolds.
For cells $(C,\vphi : C \to M) , (C', \vphi' : C' \to M')$,
we define their product cell as
$(C \times C', \vphi \times \vphi' : C \times C' \to M \times M')$.
It is easy to check that the product cells define
a PLCW structure on $M \times M'$;
we denote this structure by $\cM \times \cM'$.
\label{d:plcw-product}
\end{definition}

A PLCW decomposition determines the PL structure
of the underlying manifold.
A natural question to ask is whether there is a simple
set of modifications that can turn any PLCW decomposition
into another (like the Pachner (a.k.a. bistellar) moves
for triangulations).
The answer is in the affirmative, given by the ``elementary
subdivisions'' described later.

\begin{definition}[\ocite{kirillov-PL}*{Definition 6.1}]
Let $\cM, \cM'$ be PLCW decompositions of a manifold $M$.
We say that $\cM$ is \emph{finer} than $\cM'$,
or is a \emph{subdivision} of $\cM'$, if every cell
$C \in \cM$ is contained in a cell $C' \in \cM'$;
we say $\cM'$ is \emph{coarser} than $\cM$.
\end{definition}

A subdivision of the induced PLCW decomposition
on a cell yields a subdivision of the PLCW decomposition
of the entire manifold.
In particular, we consider the simplest possible subdivision:

\begin{definition}
Let $\cB$ be a cell-like PLCW decomposition on $B^k$.
An \emph{elementary subdivision} of $\cB$ is a PLCW decomposition
on $B^k$ that is the same as $\cB$ on the boundary,
but has exactly two $k$-cells, separated by exactly one $(k-1)$-cell;
if $F$ is the new $(k-1)$-cell, we say the elementary subdivision
is \emph{along $F$}.
\label{d:elem-subdiv-cell}
\end{definition}

Note that our definition differs slightly from
\ocite{kirillov-PL}*{Section 7}.
%Informally, an elementary subdivison divides a cell into two
%along an equator, although the equator may be degenerate.
%We think of an elementary subdivision as a
%way modify a PLCW decompositione related by an

\begin{definition}
An elementary subdivision of a cell-like ball along $F$ is \emph{restricted}
if $F$ is embedded.
%TODO coincides with \ocite{kirillov-PL}?
\label{d:elem-subdiv-cell-restr}
\end{definition}

Clearly, such a subdivision on a cell in some PLCW decomposition
induces a subdivision of the PLCW decomposition itself:

\begin{definition}[\ocite{kirillov-PL}*{Lemma 7.1, Definition 7.2}]
Let $\cM$ be a PLCW decomposition of an $n$-manifold $M$.
Let $C \in \cM$ be an $k$-cell, $k>0$.
Consider a
elementary subdivision of $C$ along some $(k-1)$-cell $F$,
say, $C \to \del C \cup \{C_1,C_2,F\}$.
Then this defines an \emph{induced elementary subdivision
of $\cM$ along $F$},
\[
\cM \to_e \cM' := (\cM \backslash C) \cup \{C_1,C_2,F\}
\]

We say that $\cM'$ is obtained from $\cM$ by performing a
\emph{elementary move} if one is a
elementary subdivision of the other, and denote it by
\[
\cM \leftrightarrow_e \cM'
\]

\label{d:elem-subdiv}
\end{definition}

\begin{lemma}
Bistellar (Pachner) moves can be obtained by sequence of elementary moves.
(see \ocite{kirillov-PL}*{Section 6, 8}).
\label{l:bistellar-elem}
\end{lemma}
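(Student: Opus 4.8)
The plan is to use the fact that a bistellar (Pachner) move is a \emph{local} modification of a triangulation. Recall that an $(i,j)$-move on an $n$-manifold $M$ is encoded by a partition $A \sqcup B$ of the $n+2$ facets of $\del\Delta^{n+1}$ into subsets of sizes $i$ and $j$: the unions $D_A = \bigcup_{k\in A} F_k$ and $D_B = \bigcup_{k\in B} F_k$ are $n$-balls with $D_A \cap D_B = \del D_A = \del D_B$, and the move replaces a copy of $D_A$ sitting inside $M$ by $D_B$. Thus if $\cM \leftrightarrow \cM'$ is a Pachner move between triangulations of $M$, regarded as PLCW decompositions, then $\cM$ and $\cM'$ agree outside an $n$-ball $B$, restrict inside $B$ to $D_A$ and $D_B$, and induce the \emph{same} (simplicial) structure on $\del B$. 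Since an elementary subdivision of any cell contained in $B$ is also an elementary subdivision of that cell in the ambient decomposition (\defref{d:elem-subdiv}), it suffices to connect $D_A$ to $D_B$ by a sequence of elementary moves of $B$ supported away from $\del B$.

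First I would route both triangulations through the coarsest decomposition. Let $\cB_0$ be the cell-like PLCW structure on $B$ with a single $n$-cell, whose boundary carries the common structure of $D_A$ and $D_B$ on $\del B$. Every cell of $D_A$ (resp.\ $D_B$) lies in this single $n$-cell, so $D_A$ and $D_B$ are subdivisions of $\cB_0$ in the sense of \ocite{kirillov-PL}*{Definition 6.1}. Hence it is enough to prove the sub-claim: a PLCW subdivision of a cell-like ball, relative to its (unchanged) boundary, is connected to the coarse cell-like decomposition by a sequence of elementary moves supported in the interior. Applying this to $D_A$ and to $D_B$ and concatenating (running the first chain backwards) produces $D_A \leftrightarrow_e \cdots \leftrightarrow_e \cB_0 \leftrightarrow_e \cdots \leftrightarrow_e D_B$, which by the previous paragraph realizes the Pachner move.

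The sub-claim is proved by a double induction --- on the dimension, and then on the number of cells of the subdivision in the interior of the ball --- as in \ocite{kirillov-PL}*{Sections 6, 8}. The inductive step requires locating an $(n-1)$-cell $F$ in the interior across which one can perform an \emph{inverse} elementary move: $F$ must separate a sub-ball into exactly two $n$-cells whose union is again cell-like, and merging those two cells (deleting $F$) must strictly decrease the cell count while fixing the boundary. Exhibiting such an $F$ --- equivalently, certifying that the subdivision can be ``uncollapsed'' one elementary move at a time down to $\cB_0$ --- is the hard part, and it is precisely what the collapsibility discussion and the notion of restricted elementary moves recorded earlier are designed to supply: after a general-position isotopy one arranges that the relevant separating cells are embedded, so that the elementary subdivisions used are restricted, and then peels off cells one move at a time. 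Low-dimensional cases can be checked by hand --- for instance, the $2$-dimensional $(1,3)$-move is realized by three elementary moves: cut the triangle by an interior arc joining two of its vertices, subdivide that arc at a new interior vertex, then join that vertex to the third vertex by an interior arc --- and the general induction is carried out in \ocite{kirillov-PL}.
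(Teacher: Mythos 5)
The paper offers no argument of its own here --- it simply cites \ocite{kirillov-PL} --- and your overall strategy (localize the Pachner move to a ball $B$, and connect the two configurations $D_A$ and $D_B$ through the coarse single-cell decomposition $\cB_0$ of $B$) is exactly the strategy of that reference. The one genuine problem is the sub-claim you route everything through: ``a PLCW subdivision of a cell-like ball, relative to its boundary, is connected to the coarse decomposition by interior elementary moves.'' Stated in that generality this is essentially \thmref{t:elementary-boundary}, and in this thesis both \thmref{t:elementary} and \thmref{t:elementary-boundary} are \emph{proved using} \lemref{l:bistellar-elem} (via Pachner's theorem). So you cannot appeal to the general relative statement, or to the ``double induction as in Sections 6, 8,'' without checking that you are not running in a circle; as written, the hard step of your argument is deferred to machinery that sits downstream of the lemma you are proving.

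The fix is that you do not need the general sub-claim. The balls $D_A$ and $D_B$ occurring in an $(i,j)$-Pachner move are not arbitrary subdivisions: each is a union of facets of $\del\Delta^{n+1}$, hence a \emph{shellable} simplicial $n$-ball. A shelling order $\sigma_1,\ldots,\sigma_i$ of $D_A$ has the property that each $\sigma_k$ meets $\sigma_1\cup\cdots\cup\sigma_{k-1}$ in an $(n-1)$-ball contained in $\del\sigma_k$, which is precisely the data of a restricted elementary subdivision in the sense of \defref{d:elem-subdiv-cell-restr}: splitting off $\sigma_k$ from the complementary $n$-cell along that embedded $(n-1)$-ball. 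Reading the shelling backwards therefore gives an explicit chain $\cB_0\to_e\cdots\to_e D_A$ of $i-1$ elementary subdivisions supported in the interior of $B$ (after which one subdivides the new $(n-1)$-cells into simplices by lower-dimensional elementary moves, handled by induction on dimension), and similarly for $D_B$; concatenating the first chain reversed with the second realizes the Pachner move. With the sub-claim specialized to these shellable balls, your argument is complete and self-contained, and your $(1,3)$-move example in dimension $2$ is exactly an instance of this shelling procedure.
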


It is not so easy to ``undo'' an elementary subdivision,
e.g. after performing an elementary subdivisions along some $k$-cell,
one might perform another subdivision that attaches a $(k+1)$-cell
along the new $k$-cell, then one can no longer ``undo''
the first elementary subdivision.
The following operation is in some sense a way to ``undo''
an elementary subdivision:

\begin{definition}
Let $C$ be a $k$-cell in some PLCW manifold $\cM$,
and consider a $(k-1)$-cell $F$ in its boundary,
$k\geq 1$.
Suppose $F$ is not identified with any other boundary $(k-1)$-cell of $C$;
that is, $C$ only meets the interior of $F$ once.
Moreover, suppose the characteristic map of $F$ is an embedding
(not just on its interior).
Then $\del F$ separates $\del C$ into two disks,
$\del C = F \cup_{\del F} D$.
We say the pair $(C,F)$ is \emph{collapsible}.

Arrange the cells that have $C$ in its boundary in increasing dimension,
$C_1,C_2,\ldots,C_m$ (beginning with dimension $k+1$);
cells that have $C$ in its boundary multiple times
should appear once for each time.
Let $X = \{C\}$; we iteratively add elements to $X$,
each element will be a subspace of some $C_i$,
with one such element for each $C_i$.
From $i=1$ to $m$, consider the union $Y$
of elements $y \in X$ that are in the boundary of $C_i$,
then consider a point $p \in C_i$ that is ``close'' to $Y$
(say in some collar neighborhood of $\del C_i$),
and add the cone of $p$ over $Y$, denoted $x_i$, to $X$.
Thus we have $|X| = m+1$ elements.

Let $f: F \to D$ be some homeomorphism that fixes $\del F$.
Remove the interior of $F$ from $F$,
and remove the interior of $x_i$ from $C_i$.
We patch them up by apply $f$ to glue $F$ to $D$,
then the cone over $f$ in $C_1$ will glue the cone over $F$
to the cone over $D$, and so on,
iteratively performing cones over maps in the boundary.
Now we have a new PLCW decomposition with all the same cells but
$F$ and $C$.

We call this operation \emph{collapsing $(C,F)$}.
(Diagram below depicts $k=1$.)

\begin{equation}
\begin{tikzpicture}
\node[dotnode] (a) at (0,0) {};
\node[dotnode] (b) at (0,2) {};
\node[dotnode] (c) at (0,1) {};
\draw (a) -- (b);
\draw (a)
	.. controls +(170:2cm) and +(-90:0.2cm) .. (-2,0.9)
	.. controls +(90:0.2cm) and +(180:2cm) .. (0,1.4)
	.. controls +(0:2cm) and +(90:0.2cm) .. (2,0.9)
	.. controls +(-90:0.2cm) and +(10:2cm) .. (a);
\draw (b)
	.. controls +(170:2cm) and +(-90:0.2cm) .. (-2,2.9)
	.. controls +(90:0.2cm) and +(180:2cm) .. (0,3.4)
	.. controls +(0:2cm) and +(90:0.2cm) .. (2,2.9)
	.. controls +(-90:0.2cm) and +(10:2cm) .. (b);
\draw (a) -- +(-150:1.6cm);
\draw (b) -- +(-150:1.6cm);
\node[dotnode] (d) at (0.5,2.1) {};
\draw (c) -- (d);
%%labels
\node at (0.2,0.5) {\smallerer $C$};
\node at (0.2,1) {\smallerer $F$};
%%%%%%%%%%%%%%%%%%%%%%%%%%%%%%%%%%%%%%%%%%%%%%%%%%%%%
%\draw[->] (3,1.5) -- (4,1.5);
%%%%%%%%%%%%%%%%%%%%%%%%%%%%%%%%%%%%%%%%%%%%%%%%%%%%%
\begin{scope}[shift={(5,0)}]
\node[dotnode] (a) at (0,0) {};
\node[dotnode] (b) at (0,2) {};
\node[dotnode] (c) at (0,1) {};
\draw (a) -- (b);
\draw (a)
	.. controls +(170:2cm) and +(-90:0.2cm) .. (-2,0.9)
	.. controls +(90:0.2cm) and +(180:2cm) .. (0,1.4)
	.. controls +(0:2cm) and +(90:0.2cm) .. (2,0.9)
	.. controls +(-90:0.2cm) and +(10:2cm) .. (a);
\draw (b)
	.. controls +(170:2cm) and +(-90:0.2cm) .. (-2,2.9)
	.. controls +(90:0.2cm) and +(180:2cm) .. (0,3.4)
	.. controls +(0:2cm) and +(90:0.2cm) .. (2,2.9)
	.. controls +(-90:0.2cm) and +(10:2cm) .. (b);
\draw (a) -- +(-150:1.6cm);
\draw (b) -- +(-150:1.6cm);
\node[dotnode] (d) at (0.5,2.1) {};
\draw (c) -- (d);
%%C1,2,3
\draw[line width=0.4pt] (a) -- (-0.3,0.6) -- (c);
\draw[line width=0.4pt] (a) -- (0.3,0.6) -- (c);
\draw[line width=0.4pt] (a) -- (-0.25,0.3) -- (c);
%%labels
\node at (-0.8,0.4) {\smallerer $C_1$};
\node at (-0.3,0.8) {\tiny $x_1$};
\node at (0.8,0.4) {\smallerer $C_2$};
\node at (0.3,0.8) {\tiny $x_2$};
\node at (-0.8,-0.2) {\smallerer $C_3$};
\node at (-0.3,0.18) {\tiny $x_3$};
\end{scope}
%%%%%%%%%%%%%%%%%%%%%%%%%%%%%%%%%%%%%%%%%%%%%%%%%%%%%
%\draw[->] (3,1.5) -- (4,1.5);
%%%%%%%%%%%%%%%%%%%%%%%%%%%%%%%%%%%%%%%%%%%%%%%%%%%%%
\begin{scope}[shift={(10,0)}]
\node[dotnode] (a) at (0,0) {};
\node[dotnode] (b) at (0,2) {};
\node[dotnode] (c) at (0,1) {};
\draw (a) -- (b);
\draw (a)
	.. controls +(170:2cm) and +(-90:0.2cm) .. (-2,0.9)
	.. controls +(90:0.2cm) and +(180:2cm) .. (0,1.4)
	.. controls +(0:2cm) and +(90:0.2cm) .. (2,0.9)
	.. controls +(-90:0.2cm) and +(10:2cm) .. (a);
\draw (b)
	.. controls +(170:2cm) and +(-90:0.2cm) .. (-2,2.9)
	.. controls +(90:0.2cm) and +(180:2cm) .. (0,3.4)
	.. controls +(0:2cm) and +(90:0.2cm) .. (2,2.9)
	.. controls +(-90:0.2cm) and +(10:2cm) .. (b);
\draw (a) -- +(-150:1.6cm);
\draw (b) -- +(-150:1.6cm);
\node[dotnode] (d) at (0.5,2.1) {};
\draw (c) -- (d);
%%C1,2,3
\draw[line width=0.4pt] (a) -- (-0.3,0.6) -- (c);
\draw[line width=0.4pt] (a) -- (0.3,0.6) -- (c);
\draw[line width=0.4pt] (a) -- (-0.25,0.3) -- (c);
\draw[line width=0.4pt] (-0.3,0.6) -- (-0.25,0.3) -- (0.3,0.6) -- (-0.3,0.6);
%%labels
\node at (-1.3,-0.1) {\smallerer $C_4$};
\node at (0.6,-0.6) {\smallerer $C_5$};
\node at (-0.3,1.2) {\smallerer $C_6$};
\node at (-0.5,0.4) {\tiny $x_4$};
\node[inner sep=0] (l5) at (0.5,-0.1) {\tiny $x_5$};
\draw[line width=0.4pt] (l5) -- (0.1,0.5);
\node[inner sep=0] (l6) at (-0.5,0.8) {\tiny $x_6$};
\draw[line width=0.4pt] (l6) -- (-0.08,0.6);
\end{scope}
%%%%%%%%%%%%%%%%%%%%%%%%%%%%%%%%%%%%%%%%%%%%%%%%%%%%%
%%%%%%%%%%%%%%%%%%%%%%%%%%%%%%%%%%%%%%%%%%%%%%%%%%%%%
\begin{scope}[shift={(6,-5)}]
\draw[->] (-5,1.5) -- (-3,1.5);
\node at (-4,1.8) {collapse};
\node[dotnode] (a) at (0,0) {};
\node[dotnode] (b) at (0,1) {};
%\node[dotnode] (c) at (0,1) {};
\draw (a) -- (b);
\draw (a)
	.. controls +(170:2cm) and +(-90:0.2cm) .. (-2,0.9)
	.. controls +(90:0.2cm) and +(180:2cm) .. (0,1.4)
	.. controls +(0:2cm) and +(90:0.2cm) .. (2,0.9)
	.. controls +(-90:0.2cm) and +(10:2cm) .. (a);
\draw (b)
	.. controls +(170:2cm) and +(-90:0.2cm) .. (-2,2.4)
	.. controls +(90:0.2cm) and +(180:2cm) .. (0,3.4)
	.. controls +(0:2cm) and +(90:0.2cm) .. (2,2.4)
	.. controls +(-90:0.2cm) and +(10:2cm) .. (b);
\draw (a) -- +(-150:1.6cm);
\draw (b) -- (-1.4,1.2);
\node[dotnode] (d) at (0.5,1.1) {};
\draw (a) -- (0.2,0.7) -- (d);
%%C1,2,3
\draw[line width=0.4pt] (a) -- (-0.3,0.5);
\draw[line width=0.4pt] (a) -- (0.3,0.5);
\draw[line width=0.4pt] (a) -- (-0.25,0.2);
\draw[line width=0.4pt] (-0.3,0.5) -- (-0.25,0.2) -- (0.3,0.5) -- (-0.3,0.5);
%%labels
\end{scope}
\end{tikzpicture}
\end{equation}

\label{d:collapse}
\end{definition}

In particular, if $F, C_1$ are such that one could have obtained them
by performing a \emph{restricted} elementary subdivision
of a cell $C$ along $F$, splitting $C$ into $C_1,C_2$,
then we may collapse $C_1$ (or $C_2$) by $F$.

\begin{lemma}
Let $F$ restricted-elementarily subdivide $C$ of $\cM$
into $C_1, C_2$ of $\cM'$.
Then $(C_1,F)$, $(C_2,F)$ are collapsible pairs,
and performing the collapsing operation on either one
will return $\cM'$ to $\cM$.
\label{l:collapse-inverse}
\end{lemma}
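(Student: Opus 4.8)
The plan is to show that, starting from $\cM'$, the collapsing operation applied to $(C_1, F)$ is literally the inverse of the restricted elementary subdivision that produced $\cM'$ from $\cM$, so that after collapsing we recover $\cM$ cell-by-cell. First I would verify the hypotheses of \defref{d:collapse} are met: since $F$ restricted-elementarily subdivides $C$ into $C_1, C_2$, by \defref{d:elem-subdiv-cell-restr} the new $(k-1)$-cell $F$ is embedded; moreover $F$ lies in $\del C_1$ exactly once (it is the single $(k-1)$-cell separating the two new $k$-cells), and $\del F$ separates $\del C_1$ into $F$ and a complementary disk $D_1$, where $D_1 \subseteq \del C$ is the portion of the old boundary belonging to $C_1$. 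Hence $(C_1, F)$ is collapsible; the same argument applies to $(C_2, F)$.

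Next I would track what the collapsing operation does to each cell. By construction of the elementary subdivision, the cells of $\cM'$ are exactly those of $\cM$ with $C$ replaced by $\{C_1, C_2, F\}$, and every cell of $\cM'$ other than $C_1, C_2, F$ is a cell of $\cM$, unchanged. Thus in the list $C_1', \ldots, C_m'$ of cells of $\cM'$ having $C_1$ in their boundary, each $C_i'$ either (a) already had $C$ in its boundary in $\cM$, in which case the cone-patching in \defref{d:collapse} reattaches the relevant piece to reproduce the corresponding cell of $\cM$, or (b) is $C_2$ (if $k \geq 1$ and $C_2$ has $C_1$ adjacent through $F$) — but in a restricted elementary subdivision along an \emph{embedded} $F$, $C_1$ appears in $\del C_2$ only via $F$, which is precisely the cell being removed. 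The key point is that the collapsing glues $F$ to $D_1$ via a homeomorphism fixing $\del F$, and $F \cup_{\del F} D_1 = \del C_1$ together with the interior of $C_1$ reconstitutes a ball whose union with $C_2$ (glued along the image of $F$) is exactly $C$. Iterating the cones over the gluing maps into higher cells $C_i'$ then reproduces each higher cell of $\cM$, since those cells differ from their $\cM$-counterparts only by the subdivision of the $C$-facet into $C_1, C_2, F$.

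The main obstacle I anticipate is the bookkeeping in the iterative cone construction: one must check that the cone $x_i$ removed from each $C_i'$ and the cone over the gluing homeomorphism $f$ are compatible at each stage, i.e. that the patched-up cells are again well-defined generalized cells with the cell-like induced PLCW structure on their boundaries, and that the result is independent of the auxiliary choices (the point $p \in C_i$, the homeomorphism $f$). This is where \ocite{kirillov-PL}*{Theorem 3.4} (uniqueness of characteristic maps up to homeomorphism of $B^n$) does the work: any two choices give PL-homeomorphic results, so the collapsed complex is well-defined up to the equivalence of PLCW decompositions, and comparing with the explicit description of $\cM'$ as a subdivision of $\cM$ shows it equals $\cM$. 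Once this compatibility is in hand, the equality $\text{collapse}(C_1, F)(\cM') = \cM$ — and likewise for $(C_2, F)$ — follows by matching cells dimension by dimension.
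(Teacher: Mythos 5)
Your argument is correct: the paper's own proof of this lemma is literally the single word ``Obvious,'' and what you have written is exactly the direct verification (checking the collapsibility hypotheses from the definitions and observing that collapsing $(C_1,F)$ merges $C_1\cup F\cup C_2$ back into $C$ while leaving every other cell untouched) that the author is leaving to the reader. The only small wobble is your case (b): $C_2$ is a $k$-cell and so never appears among the cells having $C_1$ in their boundary in the sense of \defref{d:collapse} (those all have dimension $\geq k+1$), so that case simply does not arise.
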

\begin{proof}
Obvious.
\end{proof}

\begin{lemma}
Let $(C,F)$ be a collapsible pair in $\cM$.
Consider an elementary move that does not affect $C$ nor $F$
(that is, neither $C$ nor $F$ is removed from $\cM$
when performing this elementary move).
Then the operation of collapsing $(C,F)$ and the elementary move
commute.
\label{l:collapse-commute}
\end{lemma}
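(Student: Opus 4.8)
The plan is a case analysis according to how the elementary move interacts with the cells that the collapse of $(C,F)$ actually touches. Write the move as a subdivision $\cM \to_e \cM'$ that splits a $d$-cell $C'$ into two $d$-cells $C'_1,C'_2$ separated by a $(d-1)$-cell $F'$; the case in which the move goes the other way (an inverse subdivision) will follow formally, since once two operations are shown to commute, so do either one of them and the inverse of the other. The hypothesis that the move does not affect $C$ or $F$ says exactly that $\{C,F\}\cap\{C',C'_1,C'_2,F'\}=\emptyset$. First I would record the (easy) well-definedness points: $(C,F)$ is still a collapsible pair in $\cM'$, since the move changes neither the cells $C,F$ nor the separation $\partial C=F\cup_{\partial F}D$ beyond refining cell structures that do not enter \defref{d:collapse}; and, in whichever case we land, the cell $C'$ — or its image under the collapse — is still a $d$-cell that can be subdivided by (the trace of) $F'$.

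Recall from \defref{d:collapse} that the collapse of $(C,F)$ deletes $C$ and $F$ and, for every cell $C_i$ having $C$ in its boundary (enumerated by increasing dimension), excises the interior of an iterated cone $x_i\subseteq C_i$, leaving all other cells untouched. Case~1 is that $C$ does not lie in the boundary of $C'$. Then, because $\overline{C'_1},\overline{C'_2},\overline{F'}\subseteq\overline{C'}$, the cell $C$ lies in the boundary of none of $C'_1,C'_2,F'$ either, so the elementary move neither deletes nor creates any cell on which the collapse acts. Choosing, by general position, the cone points defining the $x_i$ so that each $x_i$ avoids $C'$ (possible since $C'$ is distinct from $C$ and from every $x_j$), the two operations act on disjoint collections of cells, and performing them in either order visibly produces the same PLCW decomposition.

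Case~2, the substantive one, is that $C$ lies in the boundary of $C'$, so $C'=C_{i_0}$ is one of the cells modified by the collapse. Here I would spend the freedom in \defref{d:collapse} — the choice of the separating homeomorphism $f\colon F\to D$ and of each cone point — together with the freedom in choosing the new cell $F'$ inside $C'$, to arrange that $F'$ is in general position with respect to the cone $x_{i_0}$, meeting it in a sub-cone. With such compatible choices, collapsing $(C,F)$ first replaces $C'$ by the cell $C'\backslash\intr x_{i_0}$, and subdividing the latter by the trace of $F'$ on it produces two $d$-cells and a $(d-1)$-cell; performing the move first replaces $C'$ by $C'_1,C'_2,F'$, and then the collapse — $C$ lying in the boundary of whichever of these it meets — excises from each exactly the portion of $x_{i_0}$ it inherits. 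One then checks that these two outcomes agree cell-by-cell, and that in every cell strictly above $C'$ the iterated cones of the collapse restrict to the subdivided picture by the same transversality; this propagates upward through the enumeration and yields equality of the two PLCW decompositions.

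The main obstacle is precisely this matching in Case~2: one has to verify that the entire cascade $\{x_i\}$ of iterated cones used by the collapse — which a priori depends on an enumeration of all cells above $C$ and on a cone-point choice in each — can be chosen simultaneously for $\cM$ and for $\cM'$ so that it is, honestly, "the same collapse, subdivided". This is where the general-position input and the latitude built into \defref{d:collapse} are used; granting compatible choices, commutativity is a routine if tedious verification, and the reverse direction (the move being an inverse subdivision) then needs no extra argument.
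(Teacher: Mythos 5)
The paper offers no proof here (it is ``left as an exercise''), so there is nothing to compare your argument against except the definitions themselves. Your case split — according to whether the subdivided cell $C'$ has $C$ in its boundary — is the natural one, and Case~1 is indeed immediate once one notes that $C\subseteq\del C_1'$ (or $\del C_2'$, or $\ov{F'}$) forces $C\subseteq\del C'$, so the two operations touch disjoint sets of cells. Your reduction of the inverse-move case to the subdivision case is also fine, granted the preliminary observation that $(C,F)$ remains collapsible after the move.

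The one place where your write-up stops short of a proof is the matching in Case~2, and I would flag two points there. First, the commutativity asserted in the lemma can only be meant up to equivalence of PLCW decompositions, because the collapse of \defref{d:collapse} itself depends on choices (the homeomorphism $f\colon F\to D$ and the cone points $p\in C_i$); you implicitly use that the result of a collapse is independent of these choices up to a homeomorphism of $M$ fixing all untouched cells, and that fact, while true (any two ``small'' iterated cones on $Y$ inside $C_i$ are related by an ambient isotopy supported near $\del C_i$), deserves to be stated, since it is exactly what licenses you to make the choices on the $\cM$-side and the $\cM'$-side compatibly rather than proving agreement for arbitrary choices. Second, in the upward propagation you should note that for every cell $C_j$ strictly above $C'$ that contains $C$, the set $Y_j$ over which one cones changes from containing $x_{i_0}$ to containing the union of the cones excised from whichever of $C_1',C_2',F'$ contain $C$; the compatible-choice convention makes these two unions literally equal, after which the cones $x_j$ for $j>i_0$ can be taken identical in both computations and the two resulting decompositions coincide cell by cell. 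With those two points made explicit your argument is complete and is, if anything, more detailed than what the paper supplies.
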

\begin{proof}
Left as an exercise to the reader.
\end{proof}

\begin{theorem}[\ocite{kirillov-PL}*{Theorem 8.1}]
Let $\cM, \cM'$ be two PLCW decompositions of an $n$-manifold $M$.
Then there exists a finite sequence of elementary moves that take
$\cM$ to $\cM'$.
%Then there exists a sequence of PLCW decompositions
%$\cM_0 = \cM, \cM_1, \ldots, \cM_l = \cM'$,
%such that adjacent PLCW decompositions are related by elementary
%moves.
\label{t:elementary}
\end{theorem}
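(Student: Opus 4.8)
The relation ``$\cM$ and $\cM'$ are connected by a finite sequence of elementary moves'' is symmetric (the relation $\leftrightarrow_e$ of \defref{d:elem-subdiv} is, by definition, undirected) and transitive, hence an equivalence relation on the set of PLCW decompositions of $M$. So the plan is to reduce the statement to two facts: (a) every PLCW decomposition of $M$ lies, in this equivalence, in the class of some triangulation of $M$; and (b) any two triangulations of $M$ are equivalent. Fact (b) I would get for free from the classical theorem of Pachner that two triangulations of a PL manifold are related by a finite sequence of bistellar (Pachner) moves, combined with \lemref{l:bistellar-elem}, which rewrites each bistellar move as a sequence of elementary moves. Thus the theorem reduces entirely to fact (a): \emph{any PLCW decomposition of $M$ can be carried to a triangulation by elementary moves.}

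For (a) I would induct on the skeleton dimension $d = 1, 2, \dots, n$, maintaining the invariant that after stage $d$ the $d$-skeleton is a genuine simplicial complex (simplices determined by their vertex sets, pairwise intersections being common faces, characteristic maps embeddings). At the start of stage $d$ the $(d-1)$-skeleton is already simplicial. First I would perform finitely many restricted elementary subdivisions (\defref{d:elem-subdiv-cell-restr}) on the $(d-1)$- and $d$-cells so that the characteristic map $\varphi_C\colon B^d \to M$ of each $d$-cell $C$ is injective on $\partial B^d$ and carries distinct faces of the induced simplicial boundary structure to distinct simplices of $M$; this uses only subdivisions. Then, for each $d$-cell $C$ whose boundary $\partial C = \sigma_1 \cup \dots \cup \sigma_m$ is now a simplicial $(d-1)$-sphere, I would pick a point $p$ in the interior of $C$ and realize the cone decomposition $C \rightsquigarrow \{\sigma_1 * p, \dots, \sigma_m * p\}$ by peeling off the $d$-simplices $\sigma_i * p$ one at a time: each peel separates off a single $d$-simplex along an embedded $(d-1)$-cell, hence is a restricted elementary subdivision in the precise sense of \defref{d:elem-subdiv-cell} and \defref{d:elem-subdiv}. (The interior points chosen for different cells cause no clash since interiors of distinct cells are disjoint.) Once all $d$-cells are processed the $d$-skeleton is simplicial, completing stage $d$; after stage $n$ we obtain a triangulation of $M$, and the inductively recorded moves realize it.

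\textbf{Main obstacle.} The delicate point is the preliminary sub-step of stage $d$: a PLCW cell's characteristic map may fold its boundary (for instance a $2$-sphere realized with a single $2$-cell whose boundary circle collapses to a loop, or to a point), so that ``simplicializing the boundary'' genuinely requires auxiliary subdivisions whose existence and effect must be tracked carefully, and one must verify that ``peeling the cone'' decomposes into elementary subdivisions of the exact shape demanded by \defref{d:elem-subdiv-cell} (one new $(d-1)$-cell, two ball-like $d$-cells at each step) — this bookkeeping is precisely what is carried out in \ocite{kirillov-PL}. The only external input is Pachner's theorem for fact (b), which belongs to the same circle of ideas as \lemref{l:bistellar-elem}; everything else (reversibility of moves, transitivity, and the reduction above) is formal.
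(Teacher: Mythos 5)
Your proposal is correct and follows essentially the same route as the paper's own (sketched) argument: reduce to showing every PLCW decomposition reaches a triangulation by elementary moves, invoke Pachner's theorem together with \lemref{l:bistellar-elem} for the triangulated case, and realize the reduction by radial (cone-over-boundary) subdivisions applied to cells in order of increasing dimension. Your identification of the folding of characteristic maps on cell boundaries as the delicate point, with the detailed bookkeeping deferred to \ocite{kirillov-PL}, matches exactly what the paper does.
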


\begin{proof}
We simply sketch a proof, referring the reader to \ocite{kirillov-PL}
for details.
It suffices to show that there is a sequence of elementary moves
that take any PLCW decomposition
to a triangulation; the result then follows from Pachner's theorem
\ocite{Pachner-kons}, \ocite{Pachner-PL},
which says that any two triangulations are related by bistellar moves,
and \lemref{l:bistellar-elem}.

One shows that one can perform, on any cell,
a ``radial subdivision'' by a sequence of elementary moves;
a radial subdivision of a cell-like ball is the cone
over the boundary.
Apply radial subdivisions on cells in order of increasing dimension,
and after enough steps, we arrive at a triangulation.
\end{proof}

In order to prove a relative version of the result above,
we need to strengthen the result about elementary moves to
triangulations:

\begin{lemma}
For $n \leq 3$, $\cM$ a PLCW $n$-manifold,
there exists a sequence of restricted elementary subdivisions
that takes $\cM$ to a triangulation;
more precisely, there exists PLCW decompositions
$\cM_0 = \cM, \cM_1, \ldots, \cM_k$ such that
$\cM_{i+1}$ is an elementary subdivision of $\cM_i$,
and $\cM_k$ is a triangulation.
\label{l:elementary-oneway}
\end{lemma}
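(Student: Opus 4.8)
The plan is to retrace the proof of \thmref{t:elementary} --- which reduces an arbitrary PLCW decomposition to a triangulation by performing ``radial subdivisions'' (cones over cell boundaries) in order of increasing dimension --- while checking at each step that the moves involved can be taken to be \emph{restricted} elementary subdivisions (\defref{d:elem-subdiv-cell-restr}, \defref{d:elem-subdiv}) and, crucially, that no inverse (``un--subdividing'') moves are ever needed. The convenient way to organize this is to build the triangulation one skeleton at a time: I would prove by induction on $j$ that, using only restricted elementary subdivisions, $\cM$ can be modified so that $\sk^j(\cM)$ is the underlying space of a simplicial complex. Since $\dim M = n \le 3$, carrying this out for $j = 1, 2, 3$ produces a triangulation of $M$, which is exactly the desired conclusion (and is the only case we shall need, namely for relative decompositions of $4$-manifolds with boundary).

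For the base case: every elementary subdivision of a $1$-cell is automatically restricted, since the new $0$-cell is a point. Subdividing a loop once replaces it by two embedded edges, and subdividing an edge that occurs with multiplicity (either as a loop or between a fixed pair of vertices) strictly reduces the multiplicity without creating new loops or multiple edges; after finitely many such moves $\sk^1(\cM)$ is a simplicial graph. For the inductive step, assume $\sk^{j-1}(\cM)$ has been made simplicial. Then for each $j$-cell $C$ the induced boundary decomposition $\cB_\varphi$ on $\partial B^j$ has all its cells genuine (embedded) simplices. I would first subdivide the cells of $\partial C$ --- these are restricted elementary subdivisions of cells of dimension $< j$ --- sufficiently finely that $\cB_\varphi$ becomes a genuine simplicial decomposition of the sphere $S^{j-1}$ (so that distinct cells meet in common faces, not just along several faces), and then perform a radial subdivision of $C$: the cone, from an interior point $p$, over the now-simplicial boundary sphere. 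The cone over a simplicial complex is again a simplicial complex --- each new cell $p \ast \sigma$ has distinct vertices and is therefore embedded --- and two $j$-simplices arising from different $j$-cells meet only along $\sk^{j-1}(\cM)$, which is simplicial; hence $\sk^j(\cM)$ becomes simplicial. Doing this for every $j$-cell (in any order, keeping track that subdividing boundary cells of one cell affects the neighbouring cells of the same dimension) completes the inductive step.

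The technical heart of the argument, and the step I expect to be the main obstacle, is the claim that a radial subdivision of a cell whose boundary sphere carries a simplicial decomposition can be realized by a sequence of \emph{restricted}, purely \emph{forward} elementary subdivisions. This is essentially the cone construction already used in the proof of \thmref{t:elementary} (and in \ocite{kirillov-PL}); the new input is that, once the boundary is a simplicial complex, every cutting $(j-1)$-cell $F$ that appears in that construction is a genuine simplex and is therefore embedded, so each move is restricted --- and, with a simplicial boundary to work from, one never needs an inverse move. Making all of this precise requires some bookkeeping: choosing the order in which cells are processed, counting exactly how finely boundary edges must be subdivided so that each $2$-cell (and, for $j=3$, the boundary $2$-sphere of each $3$-cell) becomes strictly simplicial rather than merely a union of embedded simplices, and checking that these preliminary subdivisions, being elementary subdivisions of lower-dimensional cells, are themselves restricted. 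In dimensions $n \le 3$ these verifications are routine (the cone of a triangulated $2$-sphere is a triangulated $3$-ball, etc.), which is why the statement is given only in this range.
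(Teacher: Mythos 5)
Your setup — first make the lower-dimensional skeleta simplicial by restricted subdivisions, then deal with the top cells — matches the paper's proof up through dimension $2$, and your treatment of $1$-cells and $2$-cells is fine (for a $2$-cell one adds a chord, subdivides it to create the interior vertex $p$, and then adds chords from $p$, so the radial subdivision really is reachable by forward restricted moves). The gap is in the step you yourself flag as the "technical heart": the claim that the radial subdivision of a $3$-cell with triangulated boundary can be realized by \emph{forward-only} restricted elementary subdivisions. You justify this by saying it is "essentially the cone construction already used in the proof of \thmref{t:elementary}," but that proof (and the one in \ocite{kirillov-PL}) is allowed to use \emph{inverse} elementary moves, which is precisely what this lemma must avoid — so the reduction is circular. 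Moreover, your stated reason ("every cutting $(j-1)$-cell $F$ is a genuine simplex") is not what actually happens: any forward realization of a cone subdivision must begin by inserting a $2$-cell whose boundary is a simple closed curve of several edges in $\partial C$ (and later cells of the form $p\ast\delta$ for arcs $\delta$), none of which are simplices. Embeddedness of these cells is not the problem; the problem is exhibiting a concrete forward sequence and verifying that each intermediate stage is a legitimate elementary subdivision. That verification is the entire content of the lemma in dimension $3$ and is missing.

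The paper avoids cone subdivisions of $3$-cells altogether. After making the $2$-skeleton simplicial, it argues by induction on the number $k$ of boundary triangles of a $3$-cell $C$: pick a vertex $v_0$ meeting $l$ triangles, insert a $2$-cell $F$ along the link of $v_0$ separating the star of $v_0$ from the rest, cut $F$ into $l-2$ triangles by $l-3$ diagonals, triangulate the star-side piece directly (one $2$-cell per diagonal), and recurse on the other piece, which now has $k-2$ boundary triangles. Every move there is visibly a forward restricted subdivision, and no interior vertex is ever created. To repair your proof you would either need to supply the explicit forward realization of the cone (choosing the separating curves and arcs and checking each stage), or replace the radial subdivision by an argument of this vertex-splitting type.
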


\begin{proof}
For $n=0,1$ there is nothing to prove, and for $n=2$
it is easy, simply subdivide each 1-cell many times,
then perform a radial subdivision on every 2-cell.

Let $n=3$. We can easily perform elementary subdivisions
to make the 2-skeleton of $\cM$ a simplicial complex.
In particular, each 3-cell is now a cell-like ball
whose boundary is a triangulated 2-sphere.

Claim: a cell-like 3-ball whose boundary is a triangulated
2-sphere can be converted to a triangulation by
elementary subdivisions along interior cells.

We proceed by induction on the number $k$ of triangles
in the boundary of such a 3-cell $C$.
The base case is $k=4$, where $C$ must be a tetrahedron
and there is nothing to be done.
So assume $k>4$,
and select a vertex $v_0$ arbitrarily;
$v_0$ meets $l$ triangles
$T_1,\ldots, T_l$
(note that $3 < l < k$).

Apply an elementary subdivision to split $C$,
separating all the $T_i$ from the triangles not meeting $v_0$,
then apply $l-3$ elementary subdivisions
(i.e. adding diagonals) on that new 2-cell $F$
so it is made up of $l-2$ triangles
Now we have two 3-cells like $C$,
one with $2l-2$ triangles (the one meeting $v_0$),
the other with $k-2$ triangles,
in their boundaries.

The former is easy to triangulate - simply add a 2-cell
for each diagonal.
To apply the induction hypothesis to the latter,
we need to ensure that the resulting boundary is
still a triangulation.
It is a simple exercise to show that there exists
a choice of triangulation of $F$ that does this.
Thus we are done.

(Note that all the elementary subdivisions performed here are restricted.)
\end{proof}

\begin{theorem}
Let the boundary $N=\del M$ of an $n$-manifold $M$ have a PLCW decomposition
$\cN$, with $n \leq 4$.
Then $\cN$ can be extended into a PLCW decomposition for $M$,
and this extension is unique up to elementary subdivisions
of interior cells;
more precisely, if $\cM,\cM'$ are two PLCW decompositions of $M$
that agree on $\del M$, then
$\cM, \cM'$ are related by a sequence of elementary subdivisions
as in \defref{d:elem-subdiv}, where only interior cells
are used for subdivision.
\label{t:elementary-boundary}
\end{theorem}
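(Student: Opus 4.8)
First I would fix a collar $c\colon N\times[0,1]\hookrightarrow M$ with $c(N\times\{0\})=\del M$ (\thmref{t:collar-exist-unique}), and set $M_0=\overline{M\setminus c(N\times[0,1))}$, a PL manifold PL-homeomorphic to $M$ with $\del M_0=c(N\times\{1\})$. Choose any PLCW decomposition $\cM_0$ of $M_0$ (these exist, since triangulations do and are PLCW), and let $\cN_1=\cM_0|_{\del M_0}$. Since $\del M$ has dimension at most $3$, \thmref{t:elementary} applied to $N$, refined to restricted moves via \lemref{l:elementary-oneway} and \lemref{l:bistellar-elem}, connects $\cN$ and $\cN_1$ by a sequence of restricted elementary moves. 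To a restricted elementary move of $N$ subdividing a cell $C$ along $F$ one associates its \emph{trace}: the PLCW decomposition of $N\times[0,1]$ that is the product decomposition away from $C$ and fills the prism $C\times[0,1]$ with a single top cell whose boundary sphere is decomposed by $C\times\{0\}$, by the cells $C_1,C_2,F$ of $C\times\{1\}$, and by $\del C\times[0,1]$. Concatenating these traces produces a PLCW decomposition $T$ of the collar restricting to $\cN$ on $N\times\{0\}$ and to $\cN_1$ on $N\times\{1\}$; gluing $\cM_0$ to $T$ along $\del M_0$ gives a PLCW decomposition of $M$ restricting to $\cN$ on $\del M$.

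\textbf{Uniqueness: setup.} Let $\cM,\cM'$ be PLCW decompositions of $M$ with $\cM|_{\del M}=\cM'|_{\del M}=\cN$; here ``interior cell'' means a cell not contained in $\del M$, so interior elementary moves are exactly those fixing $\cM|_{\del M}$. By \thmref{t:elementary} applied to $M$, refined so that every move subdividing a cell of $\del M$ is restricted (which needs only $\dim\del M\le 3$, by \lemref{l:elementary-oneway}), there is a sequence of elementary moves $\cM=\mathcal{L}_0\leftrightarrow_e\cdots\leftrightarrow_e\mathcal{L}_s=\cM'$. A move $\mathcal{L}_i\leftrightarrow_e\mathcal{L}_{i+1}$ alters $\del M$ if and only if it subdivides or merges a cell contained in $\del M$, in which case it induces a restricted elementary move on $\cN$; every other move uses an interior cell and fixes $\cM|_{\del M}$. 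Thus the induced boundary sequence $\cN=\mathcal{L}_0|_{\del M}\leftrightarrow\cdots\leftrightarrow\mathcal{L}_s|_{\del M}=\cN$ is a closed loop of restricted elementary moves on $\del M$, and it suffices to modify the sequence on $M$ so that none of its moves alters $\del M$: what remains is then a sequence of interior elementary subdivisions relating $\cM$ and $\cM'$.

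\textbf{Uniqueness: unwinding the loop.} This is where the collapsible-pair technology enters: by \lemref{l:collapse-inverse} the inverse of a restricted elementary subdivision is a collapse (\defref{d:collapse}); by \lemref{l:collapse-commute} a collapse commutes with any elementary move that does not delete the two cells of its collapsible pair; and an interior move never deletes a cell contained in $\del M$. I would then run an innermost-pair induction on the number of boundary-altering moves: locate a boundary subdivision creating a cell $F\subseteq\del M$ together with a later collapse deleting $F$, chosen so that no intervening move touches the cells $C_1,C_2,F$ of that subdivision; commute the collapse leftward past all intervening moves (legitimate by \lemref{l:collapse-commute}) until it is adjacent to the creating subdivision; and cancel the pair. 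This removes two boundary-altering moves and creates none, so iteration terminates with a sequence of interior elementary subdivisions, as desired.

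\textbf{The main obstacle.} The hard part is the bookkeeping in the last step. One must show that a matching, non-interfering subdivision/collapse pair always exists in the boundary loop (using that the boundary cell-counts must return to those of $\cN$); that commuting a collapse leftward past interior subdivisions which attach new cells \emph{along} $C_1,C_2$ or $F$ (without deleting them) stays within the hypotheses of \lemref{l:collapse-commute}, while correctly tracking through \defref{d:collapse} what happens to those newly attached cells; and that the subdivision and collapse genuinely annihilate once brought together, with the correct member of $\{C_1,C_2\}$ being collapsed. Two auxiliary claims used above also need care: that elementary moves of a PL manifold of dimension $\le 3$ can be taken restricted, and that the trace construction really yields a PLCW decomposition of $N\times[0,1]$. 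It is through the former, applied to $\del M$, that the dimension hypothesis $n\le 4$ is used.
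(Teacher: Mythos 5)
Your existence argument is a workable alternative to the paper's (the paper instead inducts backward from Casali's relative result for triangulations, extending $\cN$ over a collar by the product decomposition $\cN\times[0,1]$ and performing the subdivision only on the outer copy $\cN\times\{1\}$), though you would still need to check that your ``trace'' of a move is a genuine PLCW decomposition of the prism over a generalized cell, and to account for the fact that the sequence connecting $\cN$ to $\cN_1$ supplied by \thmref{t:elementary} contains inverse moves as well as subdivisions. The real problem is in your uniqueness argument, which takes a genuinely different route from the paper and has a gap.

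Two issues. First, \lemref{l:elementary-oneway} does not deliver what you invoke it for: it produces a one-way sequence of restricted subdivisions from a fixed decomposition of a manifold of dimension $\le 3$ to a triangulation; it does not let you refine an arbitrary, already-given sequence of elementary moves on the $4$-manifold $M$ so that every move touching a boundary cell is restricted. Second, and more seriously, the ``matching, non-interfering subdivision/collapse pair'' you need for the unwinding induction need not exist. Consider the boundary loop: subdivide $C$ along $F$ into $C_1,C_2$; subdivide $C_1$ along $F'$ into $C_{11},C_{12}$; collapse $(C_{12},F)$; collapse $(C_{11},F')$. For the pair creating and deleting $F$, the intervening move deletes $C_1$; for the pair creating and deleting $F'$, the intervening move deletes $C_{12}$. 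Every matched pair is interfered with, so no cancellation is available until one first proves a confluence or normal-form statement for loops of boundary moves --- which is essentially the entire difficulty, not bookkeeping. The paper sidesteps this: it inducts on the boundary decomposition along a chain of restricted subdivisions terminating in a triangulation (where \ocite{casali-relative-pachner} applies directly); at each inductive step the hypothesis already provides a sequence of \emph{interior} moves between $\widetilde{\cM}$ and $\widetilde{\cM'}$, and the single collapse $(C_1,F)$ is pushed through that entire sequence at once by \lemref{l:collapse-commute}. No pairing or cancellation of boundary-altering moves is ever required.
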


\begin{proof}
The existence result is known for when $\cN$ is a triangulation
\ocite{casali-relative-pachner}.
To prove the existence result for general PLCW $\cN$,
by \lemref{l:elementary-oneway},
we may ``induct'' on PLCW decompositions, with ``base case''
$\cN$ being triangulations, and ``inductive steps''
being applying inverse elementary subdivisions.

Suppose $\cN'$ is a PLCW decomposition that can be extended
to a PLCW decomposition on $M$.
Consider $\cN$ such that $\cN'$ is an elementary subdivision
of $\cN$ along a cell $C$.
By \ocite{bryant-PL}*{Corollary 2.5},
$N$ has a collar neighborhood,
i.e. there is a PL-embedding $N \times [0,1] \hookrightarrow M$
such that $N \times 0$ is sent to $\del M$.
Thus we may extend $\cN$ to the collar neighborhood
by using the product PLCW decomposition $\cN \times [0,1]$.
Apply the elementary subdivision to the copy of $C$ in $\cN \times 1$
so that $N \times 1$ now has the PLCW decomposition $\cN'$.
Now $M \backslash N \times [0,1] \simeq M$ as PL-manifolds,
so by our ``inductive hypothesis'',
we may extend the PLCW decomposition to the rest of $M$.

Now we show uniqueness.
Once again we employ a similar ``inductive'' strategry,
beginning with the ``base case'' of triangulations.
For $\cN$ a triangulation,
uniqueness follows from \ocite{casali-relative-pachner},
where it is shown that two triangulations
that agree on the boundary are related by Pachner moves
(which can be converted into a sequence of elementary moves
involving interior cells only).

Now suppose, as before, a PLCW decomposition $\cN'$
that has unique extensions to $M$ up to interior elementary moves,
and consider $\cN$ such that $\cN'$ is a \emph{restricted}
elementary subdivision of $\cN$ along a cell $F$,
splitting the cell $C$ into $C_1, C_2$.
Consider two extensions $\cM,\cM'$ of $\cN$ to the interior of $M$.
By the ``inductive hypothesis'',
if we perform the restricted elementary subdivision to $\cN$
and turn it into $\cN'$,
then there is a sequence of elementary moves that takes
$\widetilde{\cM}$ to $\widetilde{\cM'}$
(obtained from $\cM,\cM'$ by the same subdivision),
say $\widetilde{\cM} = \cM_0 \leftrightarrow_e \cM_1 \leftrightarrow_e
\ldots \leftrightarrow_e \cM_l = \widetilde{\cM'}$.
Now collapse $(C_1,F)$ in each of $\cM_i$;
by \lemref{l:collapse-commute}, we have a sequence of
elementary moves from $\cM$ to $\cM'$
that does not affect the boundary.
\end{proof}

\begin{proposition}
Let $\cM$ be a PLCW manifold, possibly with boundary,
with underlying manifold $M$ of dimension $n \leq 4$,
and let $C$ be a top dimensional cell.
A \emph{single-cell subdivision at $C$}
is a subdivison $\cM'$ of $\cM$ that keeps all cells except $C$ fixed;
as with \defref{d:elem-subdiv},
we say that $\cM$ and $\cM'$ are related by a
\emph{single-cell move}.
(In particular, it leaves the boundary fixed.)

Then any two PLCW decompositions of $M$ are related by
a sequence of single-cell moves.
\label{p:single-cell-subdivision}
\end{proposition}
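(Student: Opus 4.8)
The plan is to reduce to \thmref{t:elementary} and then upgrade a general elementary move to a sequence of single-cell moves. I would first record that ``related by a single-cell move'' is a symmetric relation: if $\cM'$ is a single-cell subdivision of $\cM$ at a top cell $C$, the reverse operation --- coarsening $\cM'$ by merging the cells interior to $C$ back into one top cell --- is again a single-cell move. So a sequence of single-cell moves is a zigzag of top-cell subdivisions and coarsenings, and I may use such zigzags freely. By \thmref{t:elementary} (and, when $\del M\neq\emptyset$, by its relative version \thmref{t:elementary-boundary}), any two PLCW decompositions of $M$ are joined by finitely many elementary moves, so it is enough to realize a single elementary move by single-cell moves.

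Recall from \defref{d:elem-subdiv} that an elementary move replaces a single $k$-cell $C$ by two $k$-cells $C_1,C_2$ and a separating $(k-1)$-cell $F$, leaving all other cells fixed. If $k=n$ then $C$ is a top cell, the cells $C_1,C_2,F$ all lie inside $C$, and nothing else changes; thus the elementary move \emph{is} a single-cell subdivision at $C$ and there is nothing to do. The real case is $k<n$: here $C$ lies on the boundaries of several top cells $D_1,\dots,D_m$, and the elementary move changes the cellulation of each $\del D_j$ without, in itself, subdividing the $D_j$.

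To simulate this with top-cell moves I would work locally in the union $\Sigma$ of the closed top cells containing $C$ (outside $\Sigma$ the two decompositions agree) and use the mechanism that already occurs in codimension one when an edge of a surface is split: subdivide the two adjacent faces by pushing a copy of the new vertex, with its two half-edges, a little into each face, and then merge one of the resulting collar strips with the face on the far side of the old edge. In general, running through $D_1,\dots,D_m$ in turn, perform a single-cell subdivision at $D_j$ that introduces inside $\intr D_j$ a thin collar of $C$ whose inner wall carries push-off copies of $F$, $C_1$, $C_2$; once this has been done compatibly all the way around $C$, coarsen the complex by absorbing $C$ together with the now-redundant collar walls, leaving precisely $C_1,C_2,F$ and recovering the complex produced by the elementary move. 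Concatenating these zigzags over the elementary moves supplied by \thmref{t:elementary} would then complete the argument.

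The hard part will be arranging the collars and their cellulations in the different $D_j$ so that they fit together around $C$ and so that every intermediate complex is an honest PLCW decomposition --- each push-off must restrict to a cellular map on the appropriate boundary sphere, and the final coarsening must land exactly on the elementarily-subdivided complex. This is where $n\leq 4$ and the earlier results enter: using \lemref{l:elementary-oneway} and \thmref{t:elementary-boundary}, and inducting on the codimension $n-k$, I would first reduce to a standard local model for $\Sigma$ (a product of $C\cong B^k$ with the cone on the link of $C$) in which the collars and push-offs can be written down explicitly, and I would implement the concluding coarsening via the collapsing operation of \defref{d:collapse}, using \lemref{l:collapse-inverse} and the fact (\lemref{l:collapse-commute}) that collapsing commutes with elementary moves that avoid the collapsed pair. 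Everything else is routine bookkeeping.
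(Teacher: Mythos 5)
Your overall strategy is the paper's: observe that single-cell moves may be run in both directions, reduce to elementary moves via \thmref{t:elementary} (and \thmref{t:elementary-boundary} in the relative case), note that an elementary subdivision of an $n$-cell is already a single-cell move, and for $k<n$ cordon off a neighborhood of the $k$-cell $C$, merge it into one top cell, and re-subdivide with $C$ replaced by $C_1,C_2,F$. For $k=n-1$ your collar construction does work: the lens $\ov{T_1}\cup C\cup\ov{T_2}$ bounded by the two pushed-in walls is a single $n$-ball whose only interior cell is $C$, so one inverse single-cell move and one single-cell move complete the exchange.

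The gap is in codimension $\geq 2$. A single-cell subdivision at a top cell $D_j$ must keep every cell of $\del D_j$ fixed. If $k<n-1$, any ``thin collar'' of $C$ inside $D_j$ has a wall that necessarily cuts through the intermediate cells of $\del D_j$ of dimensions $k+1,\dots,n-1$ that contain $C$ in their boundary; since those cells cannot be subdivided by a move at $D_j$, the only cell-respecting neighborhood you can carve out of $D_j$ contains the entire closed star of $C$ in $\del D_j$. The union of these stars over all $D_j$ then contains the intermediate cells themselves, which are shared between several $D_j$'s and are not interior to any single top cell, so your concluding coarsening is not an inverse single-cell move (nor is the collapsing operation of \defref{d:collapse}, which you invoke, itself a single-cell move). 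The missing ingredient is the paper's downward induction on $k$: assuming elementary subdivisions of cells of dimension $>k$ are already realizable by single-cell moves, first elementarily subdivide \emph{every} cell (of every dimension $l$ with $k<l\leq n$) containing $C$ in its boundary so as to cordon off a genuinely thin $n$-ball neighborhood of $C$; only then does the one-merge-one-resubdivide step go through. Your ``induction on the codimension'' gestures in this direction but is deployed only to normalize the local model, not to supply the subdivisions of the intermediate cells, which is where the induction actually does its work.
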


\begin{proof}
It suffices to check that an elementary subdivision of a $k$-cell can be
achieved as a sequence of single-cell moves.
It is clear that for $k=n$, an elementary subdivision is a
special type of single-cell subdivision.
Now, we induct on $k$ from $n$ down to 1.
Let $C$ be a $k$-cell that we wish to subdivide.
For every $l$-cell containing $C$ in its boundary,
apply elementary subdivision to ``cordon'' off $C$.
Let $\cM''$ be this new PLCW decomposition.
Then the union of the new cells that meet $C$ is an $n$-ball,
and applying an inverse single-cell subdivision,
and then applying a single-cell subdivision that basically
returns to $\cM''$, except that $C$ is subdivision.
(This is essentially the same trick as in \figref{f:M1}.
\end{proof}

\subsection{Colored Ribbon Graphs}
\label{s:cy-ribbongraphs}
\par \noindent

Next, we discuss (co-)ribbon graphs embedded in 3-manifolds,
which is adapted from \ocite{rt-ribbon}, \ocite{rt-3mfld}.
Note that they work in the smooth category
while we are in PL,
but since these are equivalent in low dimensions,
this should not be a problem.

\begin{definition}[\ocite{rt-ribbon}*{4.1}]
Let $M$ be a PL 3-manifold.
A \emph{band} is the image of $B^2$ under a
locally unknotted embedding into $M$;
the (images of the) segments $[-1,1] \times \{-1\}$
and $[-1,1] \times \{1\}$ are its \emph{bases},
and $\{0\} \times [-1,1]$ is its \emph{core}.
%the intersection of a band with the boundary $\del M$ of $M$
%must be a subset of its bases.

An \emph{annulus} is the image of $B^1 \times S^1$
under a locally unknotted embedding into $M$;
the (image of) $\{0\} \times S^1$ is its \emph{core}.

A band or annulus is \emph{directed} if its core is oriented;
we name the bases \emph{incoming} and \emph{outgoing}
so that the orientation of the core points from the former
to the latter. (\ocite{rt-ribbon} calls them initial and final.)
%An \emph{annulus} is the image of $[-1,1] \times S^1$
%under an embedding into $M$;
%the image of $\{0\} \times S^1$ is its \emph{core}.
\end{definition}

The following is essentially \ocite{rt-ribbon}*{4.2},
where they have $M = \RR^2 \times [0,1]$,
and ribbon graphs must meet the top and bottom boundary
in particular positions.

\begin{definition}
\label{d:coribbon-graph}
A \emph{ribbon graph (resp. co-ribbon graph)} in a PL 3-manifold $M$
is an embedded oriented (resp. co-oriented) surface $S$
that is presented as a union of
a finite collection of bands and annuli,
and each band is assigned a type of either
\emph{ribbon} or \emph{coupon}
(annuli are considered ribbons),
such that
\begin{itemize}
\item bands of the same type are pairwise disjoint;
\item a ribbon and coupon may only meet along their bases,
	in which case the base of a ribbon is entirely contained
	in the base of the coupon;
\item coupons are disjoint from the boundary $\del M$;
\item a ribbon may only meet $\del M$ along its bases,
	in which case the base is entirely contained
	in $\del M$.
\end{itemize}

A \emph{directed} (co-)ribbon graph is one in which
the core of each band is oriented.

The \emph{boundary} of a ribbon graph $\Gamma$,
denoted $\del \Gamma$,
is the intersection of $\Gamma$ with the boundary $\del M$,
$\del \Gamma = \Gamma \cap \del M$
(not the boundary of the surface $S$);
it is a collection of (co-)oriented arcs in $\del M$.
\end{definition}

\begin{remark}
%In the original definition in \ocite{rt-ribbon},
%they also allow annuli in ribbon graphs;
%we omit annuli as they are essentially equivalent to
%a coupon and a ribbon looping around to join the two bases,
%and do not make a difference in the algebraic theory
%of colored graphs.
%
%
Note also that we also consider \emph{co-oriented} graphs
in addition to the usual oriented ribbon graph
(as is done in \ocite{rt-ribbon});
it is merely a convenient way of assigning
an orientation to the surface which
changes when the orientation of the ambient 3-manifold changes.
Of course, when the ambient 3-manifold is oriented,
orientation and co-orientation of a surface are equivalent notions;
we use the convention where, if $\hat{n}$ defines a co-orientation,
and $(\hat{x},\hat{y})$ defines an orientation,
then $(\hat{n},\hat{x},\hat{y})$ defines the ambient orientation.

We will \emph{never} color a co-oriented ribbon graph,
unless the ambient manifold is oriented
(so that, by the discussion above, the graph is automatically oriented).
\label{r:co-ribbon}
\end{remark}

Later, we will see that essentially only a circular ordering,
rather than a linear ordering, on the ribbons
attached to a coupon is needed (see \rmkref{r:circular-coupons}),
so coupons may be drawn as a disk,
without notion of a core or bases.

\begin{definition}
An \emph{isotopy} of (co-)ribbon graphs is a family
$\{\Gamma_t\}_{t \in [0,1]}$ of (co-)ribbon graphs that can be presented
as a union of isotopies of bands.
More precisely, there is a collection of isotopies of bands
$\{\psi_t^j : B^2 \to M\}_{t\in [0,1]}$, $j=1,2,\ldots,n$,
such that $\bigcup_j \psi_t^j(B^2) = \Gamma_t$
for each $t$.
An isotopy does not necessarily fix the boundary.
\label{d:graph-isotopy}
\end{definition}

For a (co-)ribbon graph in a PLCW 3-manifold $\cM$ with boundary,
we typically assume that the graph meets the faces of $\cM$ 
in ``general position''; more precisely:

\begin{definition}
We say that a (co-)ribbon graph $\Gamma$ intersects a
(PLCW) surface $N$
\emph{transversally}
if
\begin{itemize}
\item the coupons of $\Gamma$ do not meet $N$,
\item $\Gamma$ only meets $N$ in the interior (of 2-cells),
\item the ribbons of $\Gamma$ transversally intersect $N$ in arcs,
and each arc crosses every para-core of the ribbon exactly once,
where a para-core is the image of $\{*\} \times [-1,1]$ parallel to
the core.
\end{itemize}
If the third condition is relaxed to
the core being transverse to $N$,
along with a small neighborhood of the core
being transverse to $N$ also,
then we say $\Gamma$ is \emph{weakly transverse} to $N$.
\label{d:ribbon-transverse}
\end{definition}

Note that being transversal as a graph is stronger than
having the surface of the graph $S$ be transversal to the other surface $N$
(in the sense of \defref{d:transversal-map}).
For example, $S$ may intersect $N$ in ``unnecessary'' circle components.
However, the lemma below shows that one simply needs to
``trim the fat''.

\begin{definition}
A \emph{narrowing} of a (co-)ribbon graph $\Gamma^0$ in $M$
is an isotopy $\Gamma^t = \Phi^t(\Gamma^0)$ such that
\begin{itemize}
\item $\Gamma^t \subset \Gamma^s$ for $t > s$,
\item coupons are sent into themselves,
\item $\Phi^t$ ``respects the para-core foliation'' of ribbons
away from the boundary $\del \Gamma^0$,
i.e., for a ribbon $e : [-1,1] \times [-1,1] \to M$ of $\Gamma^0$,
the only ribbon of $\Gamma^0$ that $e^t = \Phi^t(e)$ meets
is $e$ itself, and the para-cores match up except near $\del M$
(see \eqnref{e:narrowing}).
\end{itemize}
We also say that $\Gamma^1$ is a narrowing of $\Gamma^0$.

If the narrowing preserves the core of ribbons,
then we say it is a \emph{strict narrowing}.
\begin{equation}
\label{e:narrowing}
\begin{tikzpicture}
\draw (-1,1) -- (1,1);
\node at (1.3,1) {$\del M$};
\draw (-0.5,0) -- (0.5,0) -- (0.5,-0.5) -- (-0.5,-0.5) -- (-0.5,0);
\foreach \x in {-2,...,2} {
	\draw (\x * 0.125, 1) -- (\x * 0.125,0);
}
%%arrow
\draw[->] (1.5,0.2) -- (2,0.2);
\begin{scope}[shift={(3,0)}]
\draw (-1,1) -- (1,1);
\node at (1.3,1) {$\del M$};
\draw (-0.3,-0.2) -- (0.3,-0.2) -- (0.3,-0.4) -- (-0.3,-0.4) -- (-0.3,-0.2);
\foreach \x in {-2,...,2} {
	\draw (\x * 0.125, 1) -- (\x * 0.0625,0.8);
}
\foreach \x in {-2,...,2} {
	\draw (\x * 0.0625,0.8) -- (\x * 0.0625,-0.2);
}
%%everything again but lighter
\begin{scope}[opacity=0.5]
\draw (-1,1) -- (1,1);
\node at (1.3,1) {$\del M$};
\draw (-0.5,0) -- (0.5,0) -- (0.5,-0.5) -- (-0.5,-0.5) -- (-0.5,0);
\foreach \x in {-2,...,2} {
	\draw (\x * 0.125, 1) -- (\x * 0.125,0);
}
\end{scope}
\end{scope}
\end{tikzpicture}
\end{equation}
\label{d:graph-narrowing}
\end{definition}

\begin{lemma}
Suppose the surface $S$ of a (co-)ribbon graph $\Gamma$
intersects another surface $N$ in $M$ transversally
(see \defref{d:transversal-map}),
and $N$ does not meet $\del \Gamma$.
Then there exists a narrowing $\Gamma'$ of $\Gamma$
that intersects $N$ transversally (as a ribbon graph).

Moreover, if $\Gamma$ is \emph{weakly} transverse to $N$,
then there exists a \emph{strict} narrowing $\Gamma'$ of $\Gamma$
that intersects $N$ transversally (as a ribbon graph).
\label{l:narrow-transverse}
\end{lemma}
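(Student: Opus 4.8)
The plan is to carry out the entire cleanup through the shrinking freedom already built into the notion of a narrowing, rather than by a separate perturbation. Since $S$ meets $N$ transversally in the sense of \defref{d:transversal-map}, the intersection $S\cap N$ is a compact $1$-manifold in $S$, hence a finite disjoint union of circles and arcs; because $N$ avoids $\del\Gamma$, the endpoints of these arcs all lie on the long edges of the ribbons (the part of $\del S$ coming from the sides $\{\pm1\}\times[-1,1]$ of the bands). The goal is a narrowing $\Gamma^1$ of $\Gamma$ whose intersection with $N$ lies inside the ribbons, away from the coupons, as a union of arcs each crossing every para-core once, and meeting only the interiors of the $2$-cells of $N$.

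For the first assertion I would proceed as follows. For each ribbon or annulus $e$ of $\Gamma$, choose a para-core $\ell_e$ of $e$ so that $\ell_e$ meets $N$ transversally in finitely many points, none on the $1$-skeleton of $N$, and so that in addition some thin sub-band $R_e\subset e$ around $\ell_e$ meets $N$ transversally; this holds for a generic $\ell_e$, because $S\cap N$ is a $1$-manifold in $S$, so a generic para-core meets it transversally within $S$, and near each crossing point $S\cap N$ is then an arc crossing the nearby para-cores each exactly once. For each coupon $D$ choose an interior point $z_D\notin N$ (possible as $N\cap D$ is nowhere dense in $D$), chosen compatibly with the bands $R_e$ at the ribbon--coupon attachments. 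Now build a narrowing $\Phi^t$ of $\Gamma$ that, away from a collar of $\del M$, shrinks each ribbon $e$ onto $R_e$ and each coupon $D$ onto a small disk about $z_D$, keeping the ribbons fat inside the collar as in \eqnref{e:narrowing}, consistently at the attachments; by construction the family is nested, sends coupons into themselves, and respects the para-core foliation away from $\del\Gamma$, so it is a narrowing. Then for $\Gamma^1=\Phi^1(\Gamma)$: the coupons are tiny disks about points off $N$, hence (shrinking far enough) disjoint from $N$; $\Gamma^1$ meets $N$ only inside the $R_e$; for $R_e$ thin enough the transversality of a neighbourhood of $\ell_e$ forces each component of $R_e\cap N$ to be an arc running across the full width of $R_e$ and crossing each para-core exactly once; and since $\ell_e\cap N$ misses the $1$-skeleton of $N$, a thin enough $R_e$ meets only interiors of $2$-cells. (A circle or a non-monotone arc of $S\cap N$ inside the fat ribbon is automatically ``cut'' by this narrowing, as only its portion near $\ell_e$ survives in $R_e$.) Hence $\Gamma^1$ is transversal to $N$ as a ribbon graph.

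For the ``moreover'' clause, weak transversality (\defref{d:ribbon-transverse}) supplies the cleanup for free: the coupons already miss $N$, each core meets $N$ transversally, and a small neighbourhood of each core meets $N$ transversally. So I would take $\ell_e$ to be the given core and let $\Phi^t$ shrink each ribbon towards its core (coupons shrunk within themselves, already clear of $N$). Since cores are preserved this is a \emph{strict} narrowing, and the same check as above---with $R_e$ a thin neighbourhood of the core inside the given transverse neighbourhood---shows $\Gamma^1$ is transversal to $N$ as a ribbon graph.

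The main obstacle is the bookkeeping in the second paragraph: arranging the shrinking of the ribbons to be compatible with the shrinking of the coupons along the attaching arcs, and reconciling this with the ``fat near $\del M$'' requirement, so that what one writes down is genuinely a narrowing and not merely an ambient isotopy. The geometric content---that a thin transverse neighbourhood of a generic para-core meets $N$ in para-core-monotone arcs, and that generic para-cores and coupon points exist---is routine PL general position, using the local normal form of \defref{d:transversal-map} and a density argument in the style of \prpref{p:regular-level-set}; the care is entirely in the packaging.
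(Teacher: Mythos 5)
Your proposal is correct and follows essentially the same route as the paper's proof: shrink the coupons onto points off $N$, choose for each ribbon a generic para-core (a regular value of the projection $e\cap N\to[-1,1]$ onto the first factor) and narrow the ribbon into a thin band around it, with the weakly transverse hypothesis letting you take the core itself and hence a strict narrowing. The extra detail you supply (arcs crossing each para-core once, avoiding the $1$-skeleton of $N$, compatibility at the attachments) is exactly the bookkeeping the paper leaves implicit.
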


Note that we can always apply a small isotopy to make
the surfaces transversal, so a simple corollary
is that we can isotope a ribbon graph to be transverse
to a surface.

\begin{proof}
By isotoping a coupon into itself, making it very small
(with respect to some $C^0$ metric),
and dragging attached ribbons along with it,
we can make coupons avoid $N$.

Now consider a ribbon
$e : [-1,1] \times [-1,1] \to M$ of the graph.
The projection to the first factor
(whose fibers are parallel to the core)
defines a function $e \cap N \subset e \to [-1,1]$.
Take a regular value $x \neq -1,1$,
and perform a narrowing so that $e$ is squeezed
into a sufficiently small neighborhood of $\{x\} \times [-1,1]$
(except at the bases of $e$,
which may need to be fixed say if the base is on the boundary of $M$).
Then it is clear that the graph is now transversal to $N$.

The weakly transverse condition essentially allows
us to take the regular value to be $x = 0$,
so the argument above applies, where we may use strict narrowings instead.
\end{proof}

It is also clear that narrowings preserve transversality
(i.e. a narrowing of a graph that is
already transverse to $N$ is also transverse to $N$)
if $N$ is sufficiently far from the boundary.
Furthermore, strict narrowings are essentially closed under intersection
(with perhaps some inconsequential differences near the boundary).

\begin{remark}
From the discussions and lemmas above,
we may assume that coupons are small enough
and ribbons are always narrow enough.
Indeed, it is possible to rewrite the definition of ribbon graphs
in terms of ``infinitesimal ribbon graphs'',
where we have a typical graph $\Gamma$ of vertices and edges,
but it comes with the germ of a surface $\mathcal{S}(\Gamma)$,
i.e. a maximal collection of surfaces $\{S_\al\}$
such that each $S_\al$ includes $\Gamma$ in its interior,
the collection is closed under finite intersections.
Similarly, markings (as discussed in the next section)
can be taken to be a point with a germ of an arc.

Thus, in effect, we can just focus on the core of a ribbon
when dealing with transversality.
\label{r:narrowing-infinitesimal}
\end{remark}

\subsubsection{Co-Ribbon Graphs in PLCW Decompositions}

To define the Crane-Yetter invariant for PLCW decompositions,
we will need to consider co-ribbon graphs in 3-cells
of a particularly simple kind,
which we call an \emph{anchor}.

\begin{definition}
A \emph{marking} of a 2-cell $F$ in a PLCW decomposition
is a co-oriented embedded line segment in the interior of $F$,
$[0,1] \hookrightarrow \intr{F}$.
A \emph{marked PLCW decomposition} is a PLCW decomposition
with a choice of marking on each 2-cell.
\label{d:marking}
\end{definition}

\begin{definition}
An \emph{anchor} of a cell-like PLCW 3-ball $(B^3, \cB)$
is a co-ribbon graph $\psi$ that meets each 2-face of $\cB$
at a marking,
%exactly once and in its interior,
and is isotopic (not fixing the boundary) to the following co-ribbon graph,
which we refer to as the \emph{standard anchor}:
\begin{itemize}
\item exactly one coupon $[0,0.5] \times [0, 0.5] \times 0$,
\item a ribbon $[\frac{2i}{4k}, \frac{2i+1}{4k}] \times [-1,0] \times 0$
	for each $i = 0,1,\dots,k-1$, where $k$ is the number of 2-cells
	of $\cB$.
\end{itemize}
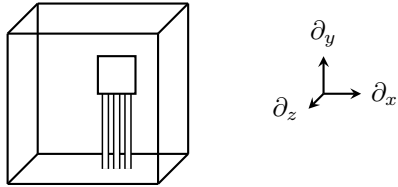
\begin{figure}[ht]
\begin{tikzpicture}
%% cube
\node (000) at (-0.8,-0.8) {};
\node (100) at (1.2,-0.8) {};
\node (010) at (-0.8,1.2) {};
\node (110) at (1.2,1.2) {};
\node (001) at (-1.2,-1.2) {};
\node (101) at (0.8,-1.2) {};
\node (011) at (-1.2,0.8) {};
\node (111) at (0.8,0.8) {};
%horizontal edges
\draw (000.center) -- (100.center);
\draw (001.center) -- (101.center);
\draw (010.center) -- (110.center);
\draw (011.center) -- (111.center);
%vertical edges
\draw (000.center) -- (010.center);
\draw (001.center) -- (011.center);
\draw (100.center) -- (110.center);
\draw (101.center) -- (111.center);
%diagonal edges
\draw (000.center) -- (001.center);
\draw (010.center) -- (011.center);
\draw (100.center) -- (101.center);
\draw (110.center) -- (111.center);
%%%%%%%%%%%%%%%%%%%%%%%%
%% graph
\draw[ribbon_u={1,0.6}] (0.1,0) to (0.1,-1);
\draw[ribbon_u={1,0.6}] (0.25,0) to (0.25,-1);
\draw[ribbon_u={1,0.6}] (0.4,0) to (0.4,-1);
\draw (0,0) rectangle (0.5,0.5);
%%%%%%%%%%%%%%%%%%%%%%%%
%$axes
\begin{scope}[shift={(3,0)}]
\draw[->] (0,0) -- (0.5,0) node[right] {$\del_x$};
\draw[->] (0,0) -- (0,0.5) node[above] {$\del_y$};
\draw[->] (0,0) -- (-0.2,-0.2) node[left] {$\del_z$};
\end{scope}
\end{tikzpicture}
\caption{Standard anchor}
\label{f:standard-anchor}
\end{figure}
An anchor endows the ribbons with a natural ``monotonic'' structure,
in that there is a well-defined notion of one ribbon being
in between two ribbon, but there is no canonical choice of ordering.\\

When $B^3$ is \emph{oriented}, an anchor also defines an \emph{ordering}
on the ribbons, and hence the faces, as follows:
let $p$ be the point of intersection of the core of the coupon
and the base meeting the ribbons; if $u,v,w$ are vectors at $p$
with
\begin{itemize}
\item $u$ tangent to the base of the coupon, pointing in increasing
	ordering of the ribbons,
\item $v$ is tangent to the core of the coupon, pointing inwards,
\item $w$ is a vector giving the co-orientation,
\end{itemize}
then $(u,v,w)$ gives the ambient orientation of $B^3$.
(See \rmkref{r:orientations} on orientations in PL manifolds).
In other words, if $B^3 \subset \RR^3$ is given the standard orientation
$(\del_x, \del_y, \del_z)$,
and we have isotoped the co-ribbon graph into the standard anchor,
we have $u=\del_x$, $v=\del_y$, $w=\del_z$,
so the ordering of ribbons is from low to high $x$-value.

Two anchors are \emph{equivalent} if they are isotopic fixing
the boundary.
\hfill $\triangle$
\label{d:anchor}
\end{definition}

The main point of an anchor is to make the identification of
the boundary sphere $\del B^3$ with some standard $S^2$
explicit and concrete.
Such a graph is essentially ``radial'', having no knotting or linking.
More precisely, if we thicken the coupon into a ball,
the ribbons of an anchor should form a $k$-strand braid
in $S^2 \times [0,1]$; this connection is pursued later.

Note that changing the orientation of $B^3$ flips the ordering
on the faces.

It is clear that an anchor $\Gamma$ is determined up to equivalence
by the movement of its boundary;
that is, if $\Phi_t$ is an isotopy from the standard anchor
to the anchor $\Gamma$,
and $s_i = [\frac{2i}{4k}, \frac{2i+1}{4k}] \times -1 \times 0$
is the $i$-th boundary segment of the standard anchor,
then $\Phi_t|_{\{s_i\}}$, the isotopy restricted to $s_i$'s,
determines $\Gamma$ up to equivalence.
Moreover, if $\Phi_t'$ is another isotopy such that
$\Phi_t'|_{\{s_i\}}$ is isotopic to $\Phi_t|_{\{s_i\}}$,
then $\Phi_t'$ also determines an equivalent anchor.

Intuitively, if we imagine each face has a special marked point,
then an anchor is essentially equivalent to a choice
of how to move the special marked points into a line;
this choice is not just an ordering on the faces,
but also involves braid groups, as we discuss below.

Let $\mathfrak{F}_n$ be the \emph{framed braid group}
as defined in \ocite{ko-smolinsky},
namely, $\mathfrak{F}_n$ is a semi-direct product
$\ZZ^n \rtimes \mathfrak{B}_n$,
where $\mathfrak{B}_n$ is the braid group on $n$ points.
The braid group naturally projects onto the symmetric group on
$n$ elements, $S_n = \Aut(\{1,\ldots,n\})$,
which gives a right action of $\mathfrak{B}_n$
on $\ZZ^n = \ZZ^{\{1,\ldots,n\}}$, given by precomposition:
$(r_1,\ldots,r_n) \cdot \sigma = (r_{\sigma(1)},\ldots,r_{\sigma(n)})$.
$\mathfrak{F}_n$ is generated by $\sigma_i$, $i= \sigma_i, i=1,2,\ldots,n-1$,
which generate $\mathfrak{B}_n$,
and $t_j, j = 1,2,\ldots,n$, which generate $\ZZ^n$,
with the following relations:
\begin{align}
\begin{split}
\sigma_i \sigma_j &= \sigma_j \sigma_i \;\;,\;\; |i-j|>1 \\
\sigma_i \sigma_{i+1} \sigma_i &= \sigma_{i+1} \sigma_i \sigma_{i+1} \\
t_j t_k &= t_k t_j\\
\sigma_i t_j &= t_{\sigma_i(j)} \sigma_i
\end{split}
\end{align}
For $\mathbf{r} = (r_1,\ldots,r_n)$,
we write $t^\mathbf{r} = t_1^{r_1} \cdots t_n^{r_n}$,
so we have
\begin{equation}
\sigma t^\mathbf{r}
= t_{\sigma(1)}^{r_1} \cdots t_{\sigma(n)}^{r_n} \sigma
= t_1^{r_{\sigma^\inv(1)}} \cdots t_n^{r_{\sigma^\inv(n)}} \sigma
= t^ {\mathbf{r} \cdot \sigma^\inv} \sigma.
\end{equation}
Geometrically, an element $t^\mathbf{r} \sigma$
is represented by the geometric braid $\sigma$
with the label $r_i$ on the $i$-th strand from the left according
to the top endpoint.
Then the product $t^\mathbf{s} \tau \cdot t^\mathbf{r} \sigma
= t^{\mathbf{s} + \mathbf{r}\cdot \tau^\inv} \tau \sigma$
amounts to stacking $\tau$ on top of $\sigma$,
and labeling each strand with the sum of the two labelings
(see \figref{f:framed-braids-product}).
\begin{figure}[ht]
\includegraphics[width=12cm]{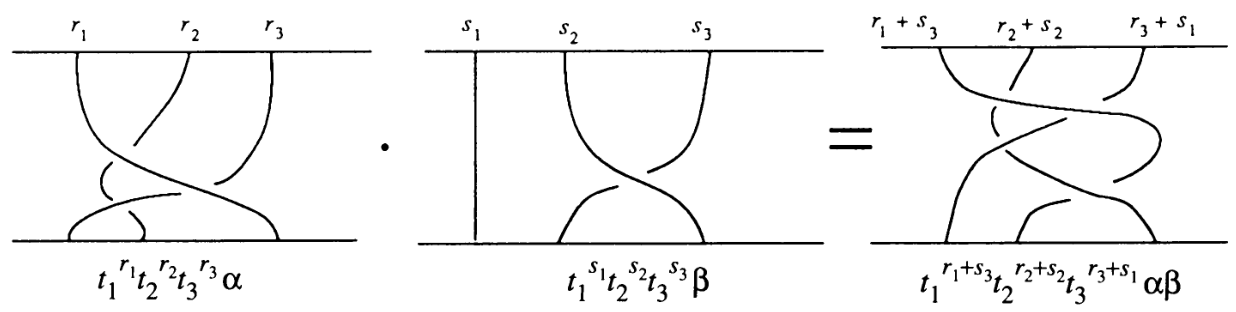}
\caption{Product of framed braids; this was taken from \ocite{ko-smolinsky}}
\label{f:framed-braids-product}
\end{figure}

As the name of $\mathfrak{F}_n$ suggests,
these labels can be interpreted as framings for the strands of the braid,
or equivalently and more fittingly for this discussion,
a strand labeled with $r$ can be interpreted
as a co-oriented ribbon, making $r$ positive twists about its core
relative to the blackboard framing.
%TODO projection to braid group on sphere.
%useful to have notation $\bar{\sigma}$ for reduction to sphere.

Given an oriented cell-like PLCW 3-ball $(B^3, \cB)$,
$\mathfrak{F}_n$ naturally acts (from the left) on the set of anchors
up to equivalence, by inserting the framed braid near the coupon:

\begin{equation}
\sigma \;\; \cdot \;\;
\begin{tikzpicture}
\begin{scope}[shift={(0,0.3)}]
\draw[ribbon_u={1.4,0.9}] (-0.3,0) to[out=-90,in=60] (-0.6,-1);
\draw[ribbon_u={1.4,0.9}] (-0.1,0)
	.. controls +(down:0.1cm) and +(60:0.3cm) .. (-0.2,-0.6)
	.. controls +(-120:0.3cm) and +(0:0.3cm) .. (-1,-0.5);
\draw[ribbon_u={1.4,0.9}] (0.1,0) to[out=-90,in=100] (0.3,-1);
\draw[ribbon_u={1.4,0.9}] (0.3,0)
	.. controls +(down:0.4cm) and +(-150:0.2cm) .. (0.8,-0.6);
\draw (-0.4, 0) rectangle (0.4,0.3);
\end{scope}
\end{tikzpicture}
\;\; = \;\;
\begin{tikzpicture}
%\begin{scope}[shift={(0,0.05)}]
%%%% same as before
\draw[ribbon_u={1.4,0.9}] (-0.3,0) to[out=-90,in=60] (-0.6,-1);
\draw[ribbon_u={1.4,0.9}] (-0.1,0)
	.. controls +(down:0.1cm) and +(60:0.3cm) .. (-0.2,-0.6)
	.. controls +(-120:0.3cm) and +(0:0.3cm) .. (-1,-0.5);
\draw[ribbon_u={1.4,0.9}] (0.1,0) to[out=-90,in=100] (0.3,-1);
\draw[ribbon_u={1.4,0.9}] (0.3,0)
	.. controls +(down:0.4cm) and +(-150:0.2cm) .. (0.8,-0.6);
%%%%
\draw[ribbon_u={1.4,0.9}] (-0.3,0) -- (-0.3,0.6);
\draw[ribbon_u={1.4,0.9}] (-0.1,0) -- (-0.1,0.6);
\draw[ribbon_u={1.4,0.9}] (0.1,0) -- (0.1,0.6);
\draw[ribbon_u={1.4,0.9}] (0.3,0) -- (0.3,0.6);
\draw[fill=white] (-0.4, 0) rectangle (0.4,0.3) node[pos=0.5] {$\sigma$};
\draw[fill=white] (-0.4, 0.5) rectangle (0.4,0.8);
%\end{scope}
\end{tikzpicture}
\end{equation}

\begin{lemma}
Let $X$ be the boundary of some anchor
in an oriented cell-like PLCW 3-ball $(B^3, \cB)$
(thus $X$ is a collection of co-oriented arcs in $\del B^3$,
one for each 2-face of $\cB$).
Let $\mathfrak{A}_X$ be the set of anchors $\Gamma$,
considered up to equivalence,
such that $\del \Gamma = X$ (as co-oriented 1-manifolds).
Then $\mathfrak{F}_n$ acts transitively on $\mathfrak{A}_X$,
and the stabilizer group for any anchor is
the normalizer of $x = t_1^2 \sigma_1 \sigma_2 \cdots \sigma_{n-1}^2 \sigma_{n-2} \cdots \sigma_1$.
\label{l:framed-action-anchor}
\end{lemma}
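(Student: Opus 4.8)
The plan is to model $\mathfrak{A}_X$ by deleting a regular neighbourhood of the coupon and reducing to framed braids in $S^2\times[0,1]$. Fix once and for all a basepoint anchor $\Gamma_0\in\mathfrak{A}_X$ (one exists since, by hypothesis, $X$ bounds some anchor), with coupon $D_0$, and a small $3$-ball neighbourhood $N_0$ of $D_0$ chosen so that $W:=\overline{B^3\setminus N_0}$ is identified with $S^2\times[0,1]$, with $S^2\times\{1\}=\partial B^3$ and $S^2\times\{0\}=\partial N_0$. Given any $\Gamma\in\mathfrak{A}_X$: its coupon is an embedded co-oriented disk (with attaching base) in $\mathrm{int}(B^3)$, and any two such are ambient isotopic inside $\mathrm{int}(B^3)$ by general position and isotopy extension (\thmref{t:isotopy-ext}), so we may isotope $\Gamma$ rel $\partial B^3$ until its coupon is $D_0$ and it agrees with $\Gamma_0$ on $N_0$. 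After narrowing the ribbons (\lemref{l:narrow-transverse}), $\Gamma\cap W$ is a co-oriented — hence framed — $n$-strand braid in $S^2\times[0,1]$ from the $n$ standard ribbon-ends on $S^2\times\{0\}$ to the markings $X$ on $S^2\times\{1\}$, and this framed braid determines $\Gamma$ up to equivalence. Thus $\mathfrak{A}_X$ is the set of such framed braids, taken up to the isotopies of $W$ rel $S^2\times\{1\}$ that are allowed to move $S^2\times\{0\}$ through motions of the coupon.

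With this model in hand, transitivity is the statement that, for $\Gamma$ normalised as above, the framed braid $\beta$ of $\Gamma$ and the framed braid $\beta_0$ of $\Gamma_0$ have the same endpoints, so $\beta_0^{-1}\beta$ is a framed loop based at the standard configuration; since that configuration sits in a small disk, the loop lies in the image of the natural map $\mathfrak{F}_n\to\pi_1$ of the space of framed $n$-point configurations in $S^2$, and splicing the corresponding framed braid in near the coupon of $\Gamma_0$ produces $\Gamma$, so $\Gamma=g\cdot\Gamma_0$. For the stabiliser, $g\cdot\Gamma_0\sim\Gamma_0$ rel $\partial B^3$ exactly when the framed braid representing $g$ dies in the group of framed $n$-braids in $S^2\times[0,1]$ (with the inner boundary, i.e. the coupon, allowed to move); so $\mathrm{Stab}(\Gamma_0)$ is the kernel of the surjection $\mathfrak{F}_n\twoheadrightarrow$ (framed braid group of $S^2$). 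I would then identify this kernel with the normal closure $\langle\langle x\rangle\rangle$ of $x=t_1^2\,\sigma_1\sigma_2\cdots\sigma_{n-1}^2\sigma_{n-2}\cdots\sigma_1$: the underlying unframed relation $\sigma_1\sigma_2\cdots\sigma_{n-1}^2\sigma_{n-2}\cdots\sigma_1=1$ is the classical Fadell--Van Buskirk presentation of the sphere braid group (the first strand slides across a complementary disk on $S^2$ meeting no other strand), while the framing correction is two twists because dragging a co-oriented strand once around $S^2$ changes its framing, relative to the blackboard framing, by the Euler number of $TS^2$, namely $2$. Since $\langle\langle x\rangle\rangle$ is normal and the action is transitive, it is the stabiliser of every anchor.

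The reduction to $W\cong S^2\times[0,1]$ and the transitivity statement are routine applications of general position, narrowing (\lemref{l:narrow-transverse}), and isotopy extension (\thmref{t:isotopy-ext}, \rmkref{r:isotopy-ext-ribbongraph}). The hard part is the stabiliser computation, where two points need genuine care. First, the framing bookkeeping: one must show the correction is exactly $t_1^2$ (the sign being immaterial since $\langle\langle x\rangle\rangle=\langle\langle x^{-1}\rangle\rangle$, and the term genuinely sitting on a single strand rather than being smeared over several) by carefully tracking the co-orientation of a band as its core is pushed once around the $2$-sphere with the blackboard framing held fixed; the clean route is to settle $n=1$ by hand — where $\mathfrak{A}_X$ is visibly a $\ZZ$-torsor and the lone relation reads $t_1^2=1$ — and then bootstrap to general $n$ via the unframed sphere relation. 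Second, one must verify that allowing the coupon to move during an equivalence contributes nothing new: a loop of the coupon in $\mathrm{int}(B^3)$ drags the attached ribbons and a priori produces a nontrivial framed braid, so one must check — by an explicit isotopy that uses precisely how the co-orientation of a band transforms under such a loop — that any such contribution is already a product of conjugates of $x$. This interplay between coupon motions and band framings is the crux of the argument.
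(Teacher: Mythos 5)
Your proposal follows essentially the same route as the paper: isotope anchors to a standard coupon, identify $\mathfrak{A}_X$ with the framed braid group $\mathfrak{F}_n(S^2)$ of the sphere via the complement $S^2\times[0,1]$ of a coupon neighbourhood, and compute the stabilizer as the kernel of $\mathfrak{F}_n\twoheadrightarrow\mathfrak{F}_n(S^2)$, which is the normal closure of $x$ by the Fadell--Van Buskirk relation together with the $t_1^2$ framing correction (the paper quotes this presentation from the literature on framed sphere braid groups and runs a short exact-sequence diagram chase, where you instead sketch the Euler-number argument). The extra care you flag about coupon motions is handled in the paper by first isotoping every equivalence to fix the coupon pointwise; otherwise the two arguments coincide.
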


\begin{proof}
For simplicity, we may assume, without loss of generality,
that $X$ is the boundary of the standard anchor.
It is clear that any isotopy of anchors can be isotoped to one
that fixes the coupon pointwise,
thus we may treat $\mathfrak{A}_X$ as the set of anchors
with the same fixed coupon,
up to isotopy of anchors that fixes the coupon and boundary
pointwise.

As pointed out after \defref{d:anchor},
upon thickening the coupon, we may identify
an anchor with a framed braid in $S^2 \times [0,1]$,
thus identifying $\mathfrak{A}_n$ with
$\mathfrak{F}_n(S^2)$, the framed braid group
on the sphere
(see \ocite{BG-framed});
clearly the actions of $\mathfrak{F}_n$ on $\mathfrak{A}_n$
and $\mathfrak{F}_n(S^2)$ agree.
It suffices to show that 
$\mathfrak{F}_n(S^2) \cong \mathfrak{F}_n / \eval{x}$.

Let $\mathfrak{B}_n(S^2)$ be the braid group on the sphere.
From \ocite{sphere-braid}, $\mathfrak{B}_n(S^2)$
has generators $\sigma_i$, $i=1,\ldots,n-1$,
with the relations from $\mathfrak{B}_n$
and the additional
$\bar{x} := \sigma_1\cdots\sigma_{n-1}\sigma_{n-1}\cdots\sigma_1=1$.
Thus we have the exact sequence
\[
1 \to N(\bar{x}) \to \mathfrak{B}_n \to \mathfrak{B}_n(S^2) \to 1
\]
where $N(\bar{x})$ is the normalizer of $\bar{x}$ in $\mathfrak{B}_n$.
It is not hard to see that the natural forgetful map
$\mathfrak{F}_n \to \mathfrak{B}_n$ that kills all $t_i$
sends $N(x)$ onto $N(\bar{x})$.
The exact sequence above fits in the following commutative diagram:
\[
\begin{tikzcd}
1 \ar[r]
	& N(x) \ar[r, hook] \ar[d, two heads]
	& \mathfrak{F}_n \ar[r] \ar[d, two heads]
	& \mathfrak{F}_n(S^2) \ar[d]
\\
1 \ar[r]
	& N(\bar{x}) \ar[r]
	& \mathfrak{B}_n \ar[r]
	& \mathfrak{B}_n(S^2) \ar[r]
	& 1
\end{tikzcd}
\]
Exactness of the bottom row and injectivity in top left
implies the exactness of top row, thus
$\mathfrak{F}_n(S^2) \cong \mathfrak{F}_n / \eval{x}$.
\end{proof}

\subsubsection{Colorings of Ribbon Graphs and Their Evaluations}

Now we consider colorings of ribbon graphs
in an oriented 3-manifold
(we will not need it for co-ribbon graphs).

\begin{definition}[\ocite{rt-ribbon}*{4.4}]
Let $\cA$ be a premodular category.
An \emph{$\cA$-coloring} $\Psi$ of a ribbon graph $\Gamma$
in an oriented 3-manifold $M$
is the following data:
\begin{itemize}
\item Choice of an object $\Psi(\vec{e}) \in \Obj \cA$
  for each oriented directed ribbon $\vec{e}$,
    so that $\Psi(\cev{e}) = \Psi(\vec{e})^*$,
		where $\cev{e}$ is the oppositely directed ribbon.
\item Choice of a vector
  $\Psi(\mathbf{c}) \in
		\Hom(\Psi(\vec{e}) \tnsr \cdots \tnsr \Psi(\vec{e}_k),
		\Psi(\vec{e}_1') \tnsr \cdots \tnsr \Psi(\vec{e}_l'))$,
  for each directed coupon $\mathbf{c}$,
	where $\vec{e}_i,\vec{e}_j'$ are the incoming and outgoing edges,
	such that $\Psi(\overline{\mathbf{c}}) = \Psi(\mathbf{c})^*$.
\end{itemize}
We say $(\Gamma,\Psi)$ is a $\cA$-colored ribbon graph.
\end{definition}

When the premodular category $\cA$ is clear,
we simply say coloring.
We often denote $(\Gamma,\Psi)$ by $\Gamma$ for simplicity.

A coloring $\Psi$ of a ribbon graph $\Gamma$
defines a coloring of its boundary $\del \Gamma$
in the following manner:
for a base $b \subset \vec{e} \cap \del M$,
we assign it the object $\Psi(b) = \Psi(\vec{e})$,
where $\vec{e}$ is taken with outward direction at $b$.
(If $\vec{e}$ meets the boundary again at $b'$,
then $b'$ is assigned $\Psi(b') = \Psi(b)^*$.)
Such $\del \Gamma$ with coloring is called
a \emph{boundary value} on $\del M$;
we will discuss this in more detail in \secref{s:sk-defn}.

\begin{definition}
An isomorphism $\Psi \simeq \Psi'$ between colorings
is an isomorphism $\Psi(\vec{e}) \simeq \Psi'(\vec{e})$
for each directed ribbon
which respects the dualities and
intertwines the assignments to coupons.
%\label{<+label+>}
\end{definition}

We give two closely related notions of evaluating a
colored ribbon graph, one is a ``global'' evaluation
while the other is a ``local'' evaluation:

\begin{proposition}
Given a colored ribbon graph $(\Gamma,\Psi)$
in an oriented $S^3$,
there is a well-defined \emph{Reshetikhin-Turaev evaluation}
$\ZRT(\Gamma,\Psi) \in \kk$
that is invariant with respect to isotopy and isomorphism
of colorings.
\label{p:rt-evaluation-S3}
\end{proposition}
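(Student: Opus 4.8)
The plan is to reduce the evaluation to the classical Reshetikhin–Turaev functor on ribbon tangle diagrams, and then to show invariance by appealing to the standard ``moves'' calculus. Since $\del S^3 = \emptyset$ we have $\del\Gamma = \emptyset$, and $\Gamma$ is compact, so it misses some point $p \in S^3$; identifying $S^3 \setminus \{p\}$ with $\RR^3 = \RR^2 \times \RR$ and isotoping $\Gamma$ into a slab, we may assume $\Gamma$ is a closed ribbon graph in $\RR^2 \times [0,1]$. By general position for surfaces in a $3$-manifold with respect to a projection, together with \lemref{l:narrow-transverse} and \rmkref{r:narrowing-infinitesimal} to keep the ribbons narrow, we isotope $\Gamma$ so that the projection $\RR^2 \times [0,1] \to \RR \times [0,1]$ presents it as a ``Morse-type'' diagram: the height coordinate has only finitely many critical points on the core of $\Gamma$, all coupons sit at distinct heights, and the diagram decomposes as a vertical composition of elementary pieces --- cups, caps, single crossings, kinks, and coupons.

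Next I would apply the Reshetikhin–Turaev functor $F$ of \ocite{rt-ribbon}: to each elementary slice assign the corresponding morphism of $\cA$ --- $\coev$ and $\ev$ (with pivotal/duality morphisms suppressed) for cups and caps, the braiding $c_{X,Y}$ and its inverse for the two crossings, the twist $\theta_X^{\pm 1}$ for kinks, and the coloring morphism $\Psi(\mathbf{c})$ for a coupon --- and compose them in order of height. Since $\Gamma$ is closed the result is an endomorphism of $\one$; as $\cA$ is premodular, hence fusion, $\End(\one) \cong \kk$, and we define $\ZRT(\Gamma,\Psi)$ to be the corresponding scalar.

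It remains to check well-definedness, which is the heart of the matter and where I would lean on existing work rather than compute. The key input is that any two Morse-type diagrams representing isotopic colored ribbon graphs in $\RR^2 \times [0,1]$ are related by a finite sequence of local moves: the framed (ribbon) analogues of the Reidemeister moves, moves sliding strands past coupons, and moves exchanging the heights of distant elementary pieces. Each such move translates into an identity valid in any ribbon category --- naturality of $c$ and $\theta$, the hexagon and twist axioms, the rigidity zig-zag identities, and $(\theta_X)^* = \theta_{X^*}$ --- so $F$ descends to isotopy classes; this is Turaev's theorem (\ocite{rt-ribbon}, \ocite{rt-3mfld}), which I would cite and, if needed, remark applies verbatim in the PL category by the low-dimensional PL--smooth equivalence recalled in \secref{s:bg-PLtop}. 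Independence of the choice of $p$ and of the embedding of the slab follows because any two such choices differ by an ambient isotopy of $S^3$ (Theorem~\ref{t:isotopy-ext}), which is absorbed into the isotopy invariance just established. Finally, an isomorphism $\Psi \simeq \Psi'$ of colorings induces isomorphisms on all objects labelling strands that are compatible with duality and intertwine the coupon morphisms; conjugating the composite by these leaves the resulting endomorphism of $\one$, hence the scalar, unchanged. The main obstacle is stating the move-generation result for ribbon graphs with coupons precisely enough to invoke cleanly, not any genuinely new calculation.
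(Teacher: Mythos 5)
Your proposal is correct and is essentially the argument the paper relies on: the paper states this proposition without proof, deferring to \ocite{rt-ribbon} and \ocite{BK}*{Theorem 2.3.8} (see \rmkref{r:ribbongraph-rt-evaluation}), and your reduction to a slab diagram plus Turaev's move-invariance theorem is exactly that standard route. The only point worth tightening is the passage from isotopy invariance in $\RR^3$ to isotopy invariance in $S^3$: rather than invoking \thmref{t:isotopy-ext}, note that the trace of an isotopy of the (2-dimensional) graph surface is 3-dimensional inside the 4-dimensional $S^3 \times I$, so by general position it can be pushed off $\{p\} \times I$, reducing any $S^3$-isotopy to one supported in $\RR^3$.
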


\begin{proposition}
Given a colored ribbon graph $(\Gamma,\Psi)$
in an oriented ball $D$,
and an identification of $D$ with $B^3$
so that $\Gamma \cap \del D$ is sent to $\Gamma' \cap \del B^3$,
where $\Gamma'$ is the standard anchor \figref{f:standard-anchor},
there is a well-defined \emph{Reshetikhin-Turaev evaluation}
$\ZRT((\Gamma,\Psi)) \in \Hom(\one,V_1 \tnsr \cdots \tnsr V_k)$,
that is invariant with respect to isotopy rel boundary and isomorphism
of colorings
(strictly speaking, covariant with respect to isomorphism of colorings
of the ribbons meeting the boundary).

Furthermore, for $\Gamma = \Gamma', D = B^3$,
with the coupon labeled by $\vphi \in \Hom(\one,V_1 \tnsr \cdots \tnsr V_k)$,
the Reshetikhin-Turaev evaluation is simply the labeling of the coupon,
i.e. $\ZRT(\Gamma,\Psi) = \vphi$.

If $\Gamma$ does not meet the boundary,
then $\ZRT((\Gamma,\Psi)) \in \kk$,
with the empty graph identified with 1,
and the evaluation is independent of the identification of $D$ with $B^3$.
(In general, the evaluation does depend on this identification.)
\label{p:rt-evaluation-ball}
\end{proposition}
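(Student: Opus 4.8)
The plan is to reduce the evaluation to the usual Reshetikhin--Turaev functor $F$ on colored ribbon tangles in $\RR^2\times[0,1]$ (as in \ocite{rt-ribbon}), which satisfies $F(\emptyset)=\id_\one$, is invariant under ambient isotopy rel boundary and under isomorphism of colorings (covariantly in the objects coloring the boundary strands), and sends an elementary coupon tangle to the morphism labeling it. The role of the anchor structure is precisely to furnish the data needed to place $(\Gamma,\Psi)$ in a canonical ``box'' position to which $F$ applies.

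Using the given identification we may assume $D=B^3$ and $\Gamma\cap\del B^3=\Gamma'\cap\del B^3=:X$, the boundary of the standard anchor of \figref{f:standard-anchor}; recall that $X$ carries a linear ordering of its $k$ arcs, which orders the boundary objects $V_1,\dots,V_k$ with $V_i=\Psi(b_i)$. The coupons of $\Gamma$ miss $\del B^3$ and its ribbons meet $\del B^3$ only along $X$, so after a small isotopy rel $X$ (general position together with collaring, cf. \lemref{l:narrow-transverse}, \thmref{t:collar-exist-unique}, \corref{c:collar-extend}) we may choose a concentric ball $B'\subset\Intr B^3$ so that $B^3\setminus\Intr B'$ is a collar $S^2\times[0,1]$ on which $\Gamma$ is a union of $k$ product ribbons joining $X\subset S^2\times\{1\}$ to $k$ bases on $\del B'$, lined up along a fixed arc in the prescribed order and with the framings determined by the boundary data. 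Identifying $B'$ with a box $[0,1]^2\times[0,1]\hookrightarrow\RR^2\times[0,1]$ carrying that arc to the standard bottom endpoints turns $\Gamma\cap B'$ into a colored ribbon tangle from $\emptyset$ to $(V_1,\dots,V_k)$, and we set $\ZRT((\Gamma,\Psi)):=F(\Gamma\cap B')\in\Hom(\one,V_1\tnsr\cdots\tnsr V_k)$.

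For well-definedness, invariance under isomorphism of colorings is inherited from $F$, and invariance under isotopy of $\Gamma$ rel $\del D$ follows since such an isotopy may be taken to fix a boundary collar, hence restricts to an isotopy of the tangles $\Gamma\cap B'$ rel boundary. The remaining point is that the auxiliary choices --- the combing isotopy, the ball $B'$, and the identification of $B'$ with the box --- yield ribbon tangles that differ by an isotopy rel boundary in $\RR^2\times[0,1]$, so that $F$ returns the same morphism; this reduces, via uniqueness of collars (\thmref{t:collar-exist-unique}, \corref{c:collar-extend}) and the anchor normal form (\defref{d:anchor}, \lemref{l:framed-action-anchor}), to the triviality of $\pi_0$ of the group of orientation-preserving self-homeomorphisms of $B^3$ fixing $X$ and the product form near $\del B^3$, which holds in dimension $3$.

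The supplementary assertions follow readily. When $\Gamma=\Gamma'$ and $D=B^3$ the graph is already in box form --- a single coupon with $k$ parallel legs --- so $F$ returns the coupon label $\vphi$. When $\Gamma$ misses $\del D$ we have $k=0$, hence $V_1\tnsr\cdots\tnsr V_k=\one$ and $\ZRT\in\End(\one)=\kk$ since $\cA$ is fusion, with $\ZRT(\emptyset)=F(\emptyset)=1$; and as there is then no boundary data to preserve, any two identifications $D\simeq B^3$ differ by an orientation-preserving self-homeomorphism of $B^3$, which is isotopic to the identity, so $F(\Gamma)$ is unchanged, giving independence of the identification. I expect the main obstacle to be the well-definedness step: arranging the combing near $\del B^3$, the choice of $B'$, and the box identification to be simultaneously canonical up to boundary-fixing isotopy is exactly where the low-dimensional uniqueness-of-collar results of \secref{s:bg-PLtop} and the anchor normal form genuinely enter.
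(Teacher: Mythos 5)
Your argument is correct and is essentially the route the paper intends: the paper states this proposition without proof, delegating it to \ocite{rt-ribbon} and \ocite{BK}*{Theorem 2.3.8}, and \rmkref{r:ribbongraph-rt-evaluation} sketches exactly your reduction to the Reshetikhin--Turaev functor on ribbon tangles in $\RR^2\times[0,1]$. Your filling-in of the well-definedness step (combing into a collar, the Alexander-trick triviality of boundary-fixing self-homeomorphisms of $B^3$) is the standard argument behind that citation.
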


Thus, given a colored ribbon graph $\Gamma$ in some oriented 3-manifold $M$,
and some embedded ball $D$,
we may speak of a ``local evaluation'' of $\Gamma$,
and we can use this to cut down the space of colored ribbon graphs,
saying two graphs are equivalent if they agree everywhere outside $D$,
and have the same evaluation in $D$;
this is the idea behind skeins, and we will discuss this in detail
in \secref{s:sk-defn}.

\begin{remark}
\label{r:ribbongraph-rt-evaluation}
\prpref{p:rt-evaluation-ball} is often stated in terms of
graphs in a thickened plane $\RR^2 \times [0,1]$,
and the graphs should have endpoints in particular positions.
More precisely,
treat $\RR^2 \times [0,1]$ as a ball (missing an $S^1$ at its equator,
which does not affect the discussion),
and suppose the graph meets the boundary planes $\RR^2 \times 0$,
$\RR^2 \times 1$ along their $x$-axes
$\RR \times 0 \times 0$, $\RR \times 0 \times 1$,
and the graph is always co-oriented towards the $y$-axis.
Then under the graphical calculus,
the graph defines a morphism
$V_1 \tnsr \cdots \tnsr V_n \to W_1 \tnsr \cdots \tnsr W_m$,
where the $V$'s are the colors of the legs meeting $\RR^2 \times 0$,
directed inwardly,
and the $W$'s are the colors of the legs meeting $\RR^2 \times 1$,
directed outwardly.

Then the results of \ocite{rt-ribbon} state that
this morphism is invariant under isotopy of the graph.
(See \ocite{BK}*{Theorem 2.3.8}.)
\end{remark}

%%%%%%%%%%%%%%%%%%%%%%%%%%%%%%%%%%%%%%%%%%%%%%%%%%%%%%%%%%%%%%%%%%%%%%%%%%%%%%%%
% algebra stuff

\subsection{The Crane-Yetter Invariant}
\label{s:cy-defn}
\par \noindent

%TODO make a tree diagram, like for categories,
%of all the state-spaces.., and also include extended version

The original Crane-Yetter invariant, as given in
\ocite{CKY} (first announced in \ocite{CY})
is for a triangulated 4-manifold.
We give a definition of the Crane-Yetter invariant
for a 4-manifold with a PLCW decomposition;
we prove it is equivalent to the original definition
in the next section.

The constructions presented here closely mirror those
of \ocite{balsam-kirillov}, which recasts the definition
of the Turaev-Viro invariant in terms of a 3-manifold
with PLCW decomposition
(there they call it a ``polytope decomposition'').
The following additional structure is meant
to impose an ordering (and more) on the boundary 2-faces
of a 3-cell;
in one dimension lower, such a structure is not needed in
the Turaev-Viro case as the orientation of a 2-cell
furnishes a (cyclic) ordering on the boundary edges.

%Conventions of this section:
%\begin{itemize}
%\item $F$: 2-cell
%\item $C$: 3-cell
%\item $T$: 4-cell
%\item $\psi$: anchors
%\item $\vphi$: (pre-)state
%\item $l$: labeling
%\end{itemize}

\begin{definition}
An \emph{anchoring} of a combinatorial 3- or 4-manifold
is the choice of an anchor for each 3-cell
that agree on every 2-cell, i.e. there is a marking
of each 2-cell such that every anchor meets a 2-cell
in exactly that marking.
We say that a combinatorial 4-manifold equipped with an
anchoring is \emph{anchored}.
\label{d:anchoring}
\end{definition}

Let $\cA$ be a premodular category as in \secref{s:basic-cat},
and $\cW$ a combinatorial 4-manifold, possibly with boundary.
We denote by $F^\textrm{or}$ the set of oriented 2-cells of $\cW$.
For an oriented 2-cell $f$, denote by $\ov{f}$ the same 2-cell
with the opposite orientation.

\begin{definition}
A \emph{labeling} of a combinatorial 2-, 3-, or 4-manifold $\cM$
(possibly with boundary)
is an assignment $l : F^\textrm{or} \to \Obj \cC$ such that
$l(\ov{f}) \simeq l(f)^*$.
A labeling is called \emph{simple} if for every 2-cell,
$l(f)$ is simple.

Two labelings $l_1,l_2$ are \emph{equivalent} if $l_1(f) \simeq l_2(f)$
for every 2-cell $f$.

The \emph{dual labeling} $l^*$ assigns the dual objects:
$l^*(f) = l(f)^*$.
\end{definition}

If a labeling $l$ is simple, we will assume that $l(f) = X_i$
(the chosen representative for isomorphism class $i$)
for some $i$, so in particular we may write
$l(\ov{\ov{f}}) = l(\ov{f})^* = l(f)^{**} = l(f)$.

We will often denote $d_l = \prod_{f} d_{l(f)}$
where the product is over all unoriented faces in the labeling $l$.

\begin{definition}
Let $\cW$ be an anchored combinatorial 4-manifold with a labeling $l$,
and $C \in F^{\textrm{or}}$ be an oriented 3-cell.
The \emph{local state space} is the vector space
\[
	H(C,l,\psi_C) := \eval{l(f_1),\ldots,l(f_k)}
\]
where $f_i$ are the faces of $\del C$, with the outward orientation,
taken in the order imposed by the anchor,
and $\psi_C$ is the anchor for $C$.
We refer its elements as \emph{local states}.
\label{d:local-state-space}
\end{definition}

The definition of local state space depends on a choice of anchor,
but different choices of anchors give local state spaces
that are clearly isomorphic, and in fact canonically so:

\begin{definition}
Let $C$ be as in \defref{d:local-state-space},
and let $\psi, \psi'$ be two anchors for $C$;
by \lemref{l:framed-action-anchor},
$\psi' = \sigma \cdot \psi$ for some framed braid $\sigma \in \mathfrak{F}_n$.
Define
\begin{align*}
	f_\sigma : H(C,l,\psi) &\simeq H(C,l,\psi')
	\\
	\vphi &\mapsto \sigma^\inv \circ \vphi
\end{align*}
to be the map obtained from interpreting
the braid $\sigma^\inv$ in the graphical calculus.
(Note it is ``contravariant'' in the sense that
$f_{\tau \sigma} = f_\sigma f_\tau$.)
\label{d:anchor-morphism}
\end{definition}

Observe that $f_{\tau \sigma} = f_\tau \circ f_\sigma$,
and if $\sigma \in \mathfrak{F}_n$ fixes an anchor $\psi$,
i.e. $\sigma \cdot \psi = \psi$,
then clearly $f_\sigma = \id$.
Thus, the isomorphisms
$H(C,l,\psi) \simeq H(C,l,\psi')$ are canonical
in the sense that it does not depend on the choice of $\sigma$
that relates $\psi$ and $\psi'$.

\begin{definition}
Let $\cM$ be an anchored oriented combinatorial 3-manifold with a labeling $l$,
with anchor $\psi_C$ for 3-cell $C$.
We define the \emph{pre-state space of $(\cM,l,\psi_C)$} as the vector space
\[
	H(\cM, l, \{\psi_C\}) := \bigotimes_C H(C,l,\psi_C)
\]
where the tensor product is over all 3-cells $C$ of $\cM$;
$H(\cM,l,\{\psi_C\})$ transforms functorially with change of anchors,
as in \defref{d:anchor-morphism},
so we write $H(\cM,l)$ for short.

The \emph{pre-state space of $\cM$} is
\[
	H(\cM) := \bigoplus_{\textrm{simple } l} H(\cM, l)
\]
where the sum is over all simple labelings up to equivalence.
We refer to elements of $H(\cM,l)$ and $H(\cM)$
as \emph{pre-states}.
%\label{d:state-space}
\label{d:pre-state-space}
\end{definition}

The corresponding state space will be defined in
\prpref{p:projectors-compose}.

\begin{definition}
Let $\cM, l, \psi_C$ be as in \defref{d:pre-state-space},
and suppose the underlying 3-manifold of $\cM$ is $S^3$.
Suppose we are given $\vphi_C \in H(C,l,\psi_C)$ for each
3-cell $C$ (oriented the same as $\cM$),
defining an element $\bigotimes_C \vphi_C \in H(\cM,l,\{\psi_C\})$.
We define its \emph{Reshetikhin-Turaev evaluation} as follows:

Consider the graph $\Gamma$ obtained as the union of the anchors,
$\Gamma = \cup \psi_C$.
We make $\Gamma$ a directed co-ribbon graph
such that (1) the ribbons are attached to the outgoing base of the coupon
and (2) ribbons are directed arbitrarily.
Furthermore, the ambient orientation of $C$ picks out an
orientation on the graph, making it a ribbon graph.
Label the coupon in $C$ by $\vphi_C$,
and label the ribbon intersecting the 2-face $f$ by $l(f)$,
where, if the core of the ribbon is directed outwardly from $C$,
then $f$ is taken with the outward orientation with respect to $C$.
Then we define the Reshetikhin-Turaev evaluation
of $\bigotimes_C \vphi_C$ to be
$\ZRT(\Gamma, l, \bigotimes_C \vphi_C)$
(see \prpref{p:rt-evaluation-S3}).

This is multilinear in $\vphi_C$'s, so defines a linear map
\begin{equation}
	\ZRT(\Gamma, l, -) : H(\cM,l,\{\psi_C\}) \to \kk
\label{e:rt-ev}
\end{equation}

$\ZRT$ is also functorial with respect to anchor change:
if $\{\psi_C'\}$ is another collection of anchors,
related to the old by $\psi_C' = \sigma_C \cdot \psi_C$,
with union $\Gamma' = \cup \psi_C'$,
then clearly
\[
	\ZRT(\Gamma', l, \bigotimes_C f_{\sigma_C}(\vphi_C))
	= \ZRT(\Gamma, l, \bigotimes_C \vphi_C)
\]
\label{d:zrt-eval}
\end{definition}

%\begin{definition}
%TODO RT evalution for graphs in a 3-cell.
%\label{d:zrt-eval-3cell}
%\end{definition}

Since $H(\ov{C},l) = \eval{l(f_k)^*,\ldots,f(f_1)^*}$
(recall that flipping orientation of $C$ also flips
the ordering on the faces),
$H(\ov{C},l)$ and $H(C,l)$ are naturally dual
via \eqnref{e:pairing}.
The following lemma states that the pairing
is natural with respect to the choice of anchor:

\begin{lemma}
Let $C$ be an oriented 3-cell with an anchor $\psi$,
as in \defref{d:local-state-space}.
Consider the orientation-preserving PL homeomorphism
$C \cup_{\del} \ov{C} \simeq S^3$, where $\del C$ and $\del \ov{C}$
are glued via the identity map;
from \defref{d:zrt-eval}, we have the following map:
\[
	\ZRT: H(C,l,\psi) \tnsr H(\ov{C},l,\psi) \to \kk
\]
We call this the \emph{local state space pairing}.
Then this pairing agrees with $\ev$ (see \eqnref{e:pairing}).
Furthurmore, this pairing is natural with respect to choice of anchors:
if $\psi' = \sigma \cdot \psi$,
\[
\begin{tikzcd}
H(C,l,\psi) \tnsr H(\ov{C},l,\psi)
	\ar[d, "f_{\sigma} \tnsr f_\sigma"]
	\ar[r]
	& \kk \ar[d, equal]
\\
H(C,l,\psi') \tnsr H(\ov{C},l,\psi')
	\ar[r] & \kk
\end{tikzcd}
\]
\label{l:local-state-space-dual}
\end{lemma}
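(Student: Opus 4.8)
The plan is to identify the ribbon graph $\Gamma=\psi_C\cup\psi_{\ov{C}}$ that defines the pairing (see \defref{d:zrt-eval}) with the standard diagram for $\ev$ in \eqnref{e:pairing}, and then to read off naturality from the anchor‑change functoriality of $\ZRT$ already recorded in \defref{d:zrt-eval}.

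First I would bring $\Gamma$ into standard position. Using that each anchor is, up to isotopy within its $3$-cell, the radial standard anchor of \figref{f:standard-anchor} — so it has no self‑knotting or linking — and that $C$ and $\ov{C}$ are glued along $\id_{\partial C}$, one sees that $\Gamma$ is isotopic in $S^3$ to the following diagram: two coupons, labeled $\vphi_C$ and $\vphi_{\ov{C}}$, joined by $k$ unknotted, pairwise‑unlinked, blackboard‑framed ribbons, where $k$ is the number of $2$-faces and the ribbon through $f_i$ carries the label $l(f_i)$. Moreover — and this is the point that needs care — since flipping the orientation of $C$ reverses the anchor's induced ordering of the $2$-faces (\defref{d:anchor}), the reversed ordering $\ov{f_k},\dots,\ov{f_1}$ being exactly the one built into $H(\ov{C},l)=\eval{l(f_k)^*,\dots,l(f_1)^*}$, the $k$ ribbons join the two coupons in the nested ("rainbow") pattern.

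Next I would evaluate this graph. Drawn with the two coupons side by side and all their legs pointing downward, each coupon becomes a semicircular morphism node in the sense of \eqnref{e:morphism-semi-circle} — and the need for semicircles here is precisely the need, reflected in the choice of anchor, to fix a strict rather than a merely cyclic ordering of the legs. With the $k$ nested arcs drawn below, $\Gamma$ is literally the left‑hand picture of \eqnref{e:pairing}, with $\ph_1=\vphi_C$ and $\ph_2=\vphi_{\ov{C}}$. What remains is to check the conventions: the co‑orientation of $\Gamma$, together with the ordering direction and the inward coupon core, reproduces the ambient orientation by the very $(u,v,w)$-rule of \defref{d:anchor}, which is the same rule relating co‑orientation and ambient orientation in Convention~\ref{cvn:ambient-ort}; and since no strand crosses and every framing is blackboard, neither $\theta$ nor $c$ can enter. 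Then \prpref{p:rt-evaluation-ball} — whose last clause gives that the evaluation of a ball with the standard anchor is just its coupon label — together with the graphical calculus shows that $\ZRT(\Gamma,l,\vphi_C\otimes\vphi_{\ov{C}})$ is the iterated composition $\ev_{X^*}\circ\cdots\circ\ev_{X^*}$, i.e.\ the map $\ev$ of \eqnref{e:pairing} under $\End(\one_m)\cong\kk$; in particular the summand $\eval{l(f_1),\dots,l(f_k)}_m\otimes\eval{l(f_k)^*,\dots,l(f_1)^*}_{m'}$ is sent to $0$ for $m\ne m'$ and to the pairing valued in $\End(\one_m)\cong\kk$ for $m=m'$, matching \eqnref{e:pairing_sub}.

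Finally, naturality is immediate. If $\psi'=\sigma\cdot\psi$ in $C$ for a framed braid $\sigma$, the very same co‑ribbon graph $\psi'$ is also an anchor for $\ov{C}$ and is related there to $\psi$ by some framed braid (\lemref{l:framed-action-anchor}); applying the anchor‑change identity from the end of \defref{d:zrt-eval} to the collection of cells $\{C,\ov{C}\}$ yields $\ZRT(\Gamma',l,f_\sigma(\vphi_C)\otimes f_\sigma(\vphi_{\ov{C}}))=\ZRT(\Gamma,l,\vphi_C\otimes\vphi_{\ov{C}})$, where both occurrences of $f_\sigma$ denote the canonical anchor‑change isomorphisms of \defref{d:anchor-morphism} — which, as noted there, do not depend on the chosen braid. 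This is exactly the commutativity of the square in the statement. I expect the only real work to be the orientation bookkeeping in the middle two paragraphs: tracking the face‑order reversal under $C\leftrightarrow\ov{C}$, the co‑orientation conventions, and the framed‑braid action simultaneously, so as to be sure that the glued diagram is \eqnref{e:pairing} on the nose and not some conjugate of it by a braiding; once that is pinned down, both assertions are formal.
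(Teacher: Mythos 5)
Your sketch is sound, and it is worth noting that the paper states this lemma with no proof at all --- the only thing following it is an informal remark that mirroring a braid inverts it --- so there is nothing to compare against except the definitions you invoke. Your reduction is the right one: the double of the standard anchor glued along $\id_{\del C}$ is, after the order reversal forced by $H(\ov{C},l)=\eval{l(f_k)^*,\dots,l(f_1)^*}$, exactly the nested diagram of \eqnref{e:pairing}, and \prpref{p:rt-evaluation-ball} plus the graphical calculus identifies its evaluation with the iterated $\ev_{X^*}$ composition. The naturality square is indeed just the anchor-change functoriality recorded at the end of \defref{d:zrt-eval}, applied to the pair $\{C,\ov{C}\}$, with the observation that the braid relating $\psi$ to $\psi'$ inside $\ov{C}$ is the mirror of $\sigma$ and that \defref{d:anchor-morphism} makes the resulting isomorphism independent of which braid one uses.

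Two small points. First, your opening step isotopes each anchor to the standard one ``within its $3$-cell,'' but that isotopy is not rel boundary, so it cannot be performed on the two halves independently while keeping them glued; the clean order of argument is to prove the naturality square first and then use it to reduce the first assertion to the standard anchor, where the glued graph is literally the rainbow. You essentially have both ingredients, so this is a matter of arrangement rather than a gap. Second, the worry you flag at the end --- that the diagram might come out as \eqnref{e:pairing} conjugated by a cyclic rotation $z^{\pm1}$ --- is genuine in the nonspherical setting of \eqnref{e:pairing_sub}, but here $\cA$ is premodular, hence spherical, and the paper notes that the pairing is then $z$-invariant; so the residual convention-matching is harmless and your sketch closes.
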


It is useful to keep in mind that when the ambient orientation
is flipped, we can draw/visualize things by flipping them
left/right, up/down, or front/back.
For example, a braid that is mirrored up/down is
taken to its inverse.

\begin{remark}
Note that for the braid $\sigma$ defined below,
$f_\sigma = z$ from \eqnref{e:cyclic},
thus, we may use circular coupons instead of rectangular ones
in (co-)ribbon graphs.
\[
\begin{tikzpicture}
%%to node
\node[small_morphism] (a) at (0,1.4) {};
\draw[ribbon] (a) to[out=-150,in=90] (-0.3,1);
\draw[ribbon] (a) to[out=-90,in=90] (0,1);
\draw[ribbon] (a) to[out=-30,in=90] (0.3,1);
%%sigma part, main
\draw[ribbon] (-0.3,1) to[out=-90,in=90] (0,0);
\draw[ribbon] (0,1) to[out=-90,in=90] (0.3,0);
\draw[ribbon,overline] (0.3,1) to[out=-90,in=90] (-0.3,0);
%%twists at the bottom
\draw (-0.365,0) to[out=-90,in=90] (-0.236,-0.2);
\draw[thin_overline={0.7}] (-0.236,0) to[out=-90,in=90] (-0.365,-0.2);
\draw (-0.365,-0.2) to[out=-90,in=90] (-0.236,-0.4);
\draw[thin_overline={0.7}] (-0.236,-0.2) to[out=-90,in=90] (-0.365,-0.4);
\draw[ribbon] (0,0) -- (0,-0.4);
\draw[ribbon] (0.3,0) -- (0.3,-0.4);
%%brace label sigma
\draw[decorate,decoration={brace,amplitude=8pt,mirror,raise=3pt}]
	(-0.5,1) -- (-0.5,-0.4);
\node at (-1.1,0.3) {\smallerer $\sigma$};
%%%%%%%%%%%%%%%%%%%%%%%%%%%%%%%%%%%%%%
\node at (1,0.5) {=};
%%%%%%%%%%%%%%%%%%%%%%%%%%%%%%%%%%%%%%
\begin{scope}[shift={(2,0)}]
%%to node
\node[small_morphism] (a) at (0,1.4) {};
\draw[ribbon] (a) to[out=-150,in=90] (-0.3,1);
\draw[ribbon] (a) to[out=-90,in=90] (0,1);
\draw[ribbon] (a)
	.. controls +(-30:0.3cm) and +(-30:0.3cm) .. (0.2,1.7)
	.. controls +(150:0.4cm) and +(90:0.8cm) .. (-0.6,1);
%%rest
\draw[ribbon] (-0.3,1) to[out=-90,in=90] (0,-0.4);
\draw[ribbon] (0,1) to[out=-90,in=90] (0.3,-0.4);
\draw[ribbon] (-0.6,1) to[out=-90,in=90] (-0.3,-0.4);
\end{scope}
\end{tikzpicture}
\]
\label{r:circular-coupons}
\end{remark}

We may simply write $H(C,l)$ for the local state space
when it does not lead to confusion;
thus $H(C,l)$ is naturally dual to $H(\ov{C},l)$.
When we construct elements of $H(C,l)$,
typically a choice of anchor is needed,
and the resulting element should vary functorially
with change of anchor;
when we say two elements of $H(C,l)$ are equal,
we implicitly assumed that a choice of anchor $\psi$ is made,
and they are equal in $H(C,l,\psi)$.

\begin{definition}
Let $\cM$ be an oriented combinatorial 3-manifold,
and $l$ a labeling of $\cM$.
Applying the local state space pairing of \lemref{l:local-state-space-dual}
to each 3-cell, we have a natural pairing
\begin{equation}
\label{e:state-space-pairing-labeling}
ev: H(\cM, l) \tnsr H(\overline{\cM}, l) \to \kk
\end{equation}
Summing up over simple labeling, we thus get the
\emph{state space pairing}:
\begin{equation}
ev: H(\cM) \tnsr H(\overline{\cM}) \to \kk
\label{e:state-space-pairing}
\end{equation}
\label{d:state-space-pairing}
\end{definition}

\begin{definition}
Let $T$ be a 4-cell in a combinatorial 4-manifold $\cW$
with labeling $l$. We define the \emph{local invariant}
\begin{equation}
\label{e:z-4cell}
	Z(T, l) \in H(\del T,l)
	\simeq H(\overline{\del T}, l)^*
\end{equation}
as the functional that defines the Reshetikhin-Turaev evaluation
from \defref{d:zrt-eval}; that is, upon choosing anchors
$\{\psi_C\}_{C\in \del T}$,
\[
	Z(T, l) : \bigotimes_C \vphi_{\ov{C}} \mapsto \ZRT(\Gamma, l, \{\vphi_{\ov{C}}\})
\]
where $\vphi_{\ov{C}} \in H(\ov{C}, l, \psi_C)$
for each inwardly-oriented 3-cell $\ov{C} \in \overline{\del T}$,
and $\Gamma = \cup \psi_C$ is the graph obtained from
some choice of anchoring of $\del T$.

Equivalently, upon choosing a basis $\{\vphi_{C,\al}\}_\al$ of $H(C,l)$
for each 3-cell $C$,
\[
Z(T,l) = \sum_\al \big( \ZRT(\Gamma, l, \{\ov{\vphi}_{C,\al}\})
	\bigotimes_C \vphi_{C,\al} \big)
\]
where the sum is over all assignments of a basis vector $\vphi_{C,\al}$
to each $C$.
\label{d:ZCY3cell}
\end{definition}

\begin{definition}
Let $\cW$ be an oriented combinatorial 4-manifold,
possibly with boundary.
For a labeling $l$, we define
\begin{equation}
Z(\cW,l) =
\ev (\bigotimes_T Z(T,l)) \in H(\del \cW, l)
\label{e:zcy-label}
\end{equation}
where $\ev$ applies the dual pairing of \lemref{l:local-state-space-dual}
to every interior 3-cell,
and $T$ runs over every 4-cell.

The \emph{Crane-Yetter invariant} $\ZCY(\cW)$ is
\begin{equation}
%	\ZCY(\cW) = \cD^{v(\cW)-e(\cW)} \sum_l \prod_f d_{l(f)}^{n_f} Z(\cW,l)
%	\in H(\del \cW)
	\ZCY(\cW) = \cD^{\vme(\cW \backslash \del \cW) + \frac{1}{2}\vme(\del \cW)}
		\sum_l \prod_f d_{l(f)}^{n_f} Z(\cW,l)
	\in H(\del \cW)
\label{e:cy}
\end{equation}
where
\begin{itemize}
\item the sum is taken over all equivalence classes of simple labelings
\item $f$ runs over the set of unoriented 2-cells of $\cW$
\item $n_f =
	\begin{cases}
		1 &\textrm{if } f \textrm{ is an internal 2-face} \\
		\frac{1}{2} & f \in \del \cW
	\end{cases}
	$
\item $\cD$ is the dimension of the category (see \eqnref{e:dimC})
\item $\vme(X) = v(X) - e(X) =
	\textrm{number of 0-cells} - \textrm{number of 1-cells}$
%\item $e(\cW) = \textrm{number of internal edges} +
%	\frac{1}{2}\textrm{number of boundary edges}$
%\item $v(\cW) = \textrm{number of internal vertices} +
%	\frac{1}{2}\textrm{number of boundary vertices}$
\item $d_{l(f)}$ is the categorical dimension of $l(f)$
\end{itemize}
\label{d:zcy}
\end{definition}

Note that the evaluation giving $Z(\cW,l)$ is well-defined
by \lemref{l:local-state-space-dual},
in that it varies functorially with a change of anchors
in boundary 3-cells, and is invariant under change of anchors
in interior 3-cells.

The factor of $1/2$ are meant to account for the fact that,
when a boundary is glued to another,
that boundary becomes part of the interior,
so the coefficients combine to a whole.

It is helpful to write $\ZCY(\cW)$ in terms of bases.
Choose some anchoring of $\cW$.
For each labeling $l$, choose a basis $\{\vphi_{C,\al}\}$
of $H(C,l)$ for each oriented 3-cell $C$, so that $C$ and $\ov{C}$
have dual bases.
Then
\begin{align}
\ZCY(\cW,l) &=
	\sum_{\al}
	\prod_{T \in \cW} \ZRT(\Gamma_T, l, \{\ov{\vphi}_{C,\al}\})
	\cdot
	\ev\big(\bigotimes_{ \substack{C \in \del T \\ T \in \cW} } \vphi_{C,\al} \big)
\\
&=
	\sum_\al\nolimits'
	\prod_{T \in \cW} \ZRT(\Gamma_T, l, \{\ov{\vphi}_{C,\al}\})
	\cdot
	\bigotimes_{C \in \del \cW} \vphi_{C,\al}
\label{e:cy-basis-2}
\\
&=
	\sum_\al\nolimits'
	\ZRT(\coprod \Gamma_T, l, \{\ov{\vphi}_{C,\al}\})
	\cdot
	\bigotimes_{C \in \del \cW} \vphi_{C,\al}
\end{align}

\begin{equation}
\ZCY(\cW)
%= \cD^{v(\cW)-e(\cW)} \sum_l \prod_f d_{l(f)}^{n_f}
= \cD^{\vme(\cW \backslash \del \cW) + \frac{1}{2}\vme(\del \cW)}
	\sum_l \prod_f d_{l(f)}^{n_f}
	\sum_\al\nolimits'
	\ZRT(\coprod \Gamma_T, l, \{\ov{\vphi}_{C,\al}\})
	\cdot
	\bigotimes_{C \in \del \cW} \vphi_{C,\al}
\label{e:cy-basis}
\end{equation}
where the sum $\sum_\al$ is over all assignments of a basis vector
$\vphi_{C,\al}$ to each oriented 3-cell $C$,
while $\sum_\al'$ is restricted to those assignments where
$\vphi_{C,\al}$ and $\vphi_{\ov{C},\al'}$ are dual
(others are eliminated by the evaluation);
$\Gamma_T$ is graph obtained from the union of anchors
in $\del T$,
and $\coprod \Gamma_T$ is the union of those graphs placed in a single $S^3$,
each in a ball disjoint from the others.
Each coupon of $\coprod \Gamma_T$ corresponds to an oriented 3-cell;
internal 3-cells appear twice (once for each orientation),
and boundary 3-cells only once (with the outward orientation).

\begin{theorem}
For an oriented combinatorial 4-manifold $\cW$, possibly with boundary,
$\ZCY(\cW)$ is independent of the choice of PLCW decomposition
in the interior;
that is, if $\cW'$ is another PLCW decomposition that agrees with $\cW$
on the boundary, then
$\ZCY(\cW') = \ZCY(\cW) \in H(\del \cW)$.
In particular, $\ZCY$ is a well-defined invariant of closed PL 4-manifolds.
\label{t:invariance-zcy}
\end{theorem}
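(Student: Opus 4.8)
The plan is to reduce to a purely local check for a single elementary move and then dispatch that move using the summation lemmas of \secref{s:graphical-multifusion}. First I would invoke \thmref{t:elementary-boundary} (applicable since $\dim\cW=4$): if $\cW$ and $\cW'$ are PLCW decompositions of the same manifold agreeing on $\del\cW$, they are joined by a finite chain of elementary moves each of which subdivides an \emph{interior} cell. Hence it suffices to prove $\ZCY(\cM)=\ZCY(\cM')$ when $\cM'$ replaces one interior $k$-cell $C$ of $\cM$ by two $k$-cells $C_1,C_2$ separated by a new interior $(k-1)$-cell $F$, for $k\in\{1,2,3,4\}$. Because $\ZCY$ does not depend on the anchoring --- functorially so on boundary $3$-cells, strictly so on interior ones, by the remark after \defref{d:zcy} together with \lemref{l:local-state-space-dual} --- I am free to choose, on each side of the move, an anchoring in which the anchor of $C$ literally decomposes into the anchors of $C_1$ and $C_2$ joined along the marking of $F$. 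All cells involved being interior, every new $2$-cell carries exponent $n_f=1$, and the comparison of the two instances of \eqnref{e:cy-basis} reduces to a statement inside each graph $\Gamma_T$: the coupon of $C$ is split into two coupons joined by a ribbon ($k=3$), a ribbon is doubled ($k=2$), a coupon is cut in two ($k=4$), or nothing changes ($k=1$).

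The cases $k=1$ and $k=3$ are the easy ones. For $k=1$ no $2$-cell, coupon, or ribbon is touched, and $\vme$ is unchanged (one new vertex, one new edge), so \eqnref{e:cy} is literally unaltered. For $k=3$ the new $2$-cell $F$ contributes a ribbon joining the coupons of $C_1$ and $C_2$; its boundary circle contributes equally many new vertices and edges so $\vme$ is again unchanged; and summing over the simple label $l(F)$ with weight $d_{l(F)}$ together with the basis sums of $H(C_1,l)$ and $H(C_2,l)$ reproduces $H(C,l)$ with its basis sum. This is exactly the composition rule \eqnref{e:composition} followed by \lemref{l:summation} (``straightening out the dashed strand''), which carries no power of $\cD$, matching the unchanged normalization.

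The case $k=2$ is where the normalization first becomes nontrivial. Here the ribbon of $C$ is replaced, at every coupon where $C$ occurred, by a pair of parallel ribbons for $C_1$ and $C_2$ running between the same two coupons (the two $3$-cells of $\Gamma_T$ on either side of $C$). By \lemref{l:summation-separate} this double ribbon reduces to a single $\one$-coloured strand decorated by a pair of $\al$'s, and summing over the labels $l(C_1),l(C_2)$ against their weights $d_{l(C_1)}d_{l(C_2)}$ produces exactly one factor of $\cD$, the underlying fact being $\sum_i d_i^R d_i^L=\cD$, i.e.\ $R\tnsr R\simeq\cD\,R$ for the regular object $R=\bigoplus_i d_i^R X_i$. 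This cancels the $\cD^{-1}$ in \eqnref{e:cy} coming from the single new edge $F$ (whose endpoints are old vertices, so contributing nothing further to $\vme$). Keeping track of the $\sqrt{\,\cdot\,}$-normalizations hidden inside $Z(T,l)$ and inside the $\al$'s is the fiddly part of this case.

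The case $k=4$ carries the real content. For a single elementary move on a $4$-cell the manifold condition forces $\del F$ to be a subcomplex of the existing $2$-skeleton, so the only new cell is the interior $3$-cell $F$ and both $\vme$ and $\prod_f d_{l(f)}^{n_f}$ are unchanged; subdividing $T$ into $T_1\cup_F T_2$ then replaces $Z(T,l)$ in \eqnref{e:cy-basis} by the contraction of $Z(T_1,l)\tnsr Z(T_2,l)$ along $F$. That this contraction returns $Z(T,l)$ is the locality of the Reshetikhin--Turaev evaluation: one cuts $\del T\cong S^3$ along the $2$-sphere $\del F$, evaluates the two half-graphs in the balls $\del T_1\supset F$ and $\del T_2\supset F$ separately --- the $F$-part being just the anchor $\psi_F$, whose evaluation is the labeling of its coupon by \prpref{p:rt-evaluation-ball} --- and the sum over dual bases of $H(F,l)\tnsr H(\ov F,l)$, an instance of \lemref{l:summation}, reglues them into the $S^3$-evaluation $Z(T,l)$ of $\Gamma_T$. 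The main obstacle is making this cut-and-reglue rigorous: checking that the anchors meet $\del F$ transversally and precisely at the markings of the $2$-faces of $F$, so that \prpref{p:rt-evaluation-S3} and \prpref{p:rt-evaluation-ball} may be applied as honest gluing statements; the $k=2$ normalization count above is the other delicate point. With all four cases in hand, $\ZCY(\cW')=\ZCY(\cW)\in H(\del\cW)$ follows, and combined with anchoring-independence this yields that $\ZCY$ descends to a well-defined invariant of closed PL $4$-manifolds.
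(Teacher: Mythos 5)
Your overall strategy is the paper's: reduce via \thmref{t:elementary-boundary} to a single elementary subdivision of an interior $k$-cell and check $k=1,2,3,4$ separately. Your treatments of $k=1$, of $k=3$ (dual bases compose along the new ribbon, weighted by $d_{l(F)}$), and of $k=4$ (the Reshetikhin--Turaev evaluation factors over the separating $2$-sphere, which is exactly what \lemref{l:summation-separate} encodes) all match the paper's. Note, however, that you have the difficulty inverted: in the paper the $k=4$ case is dispatched in one line, while $k=2$ is the case carrying the real content.

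The genuine gap is in your case $k=2$. A $2$-cell in a $4$-manifold has codimension $2$, so it is adjacent to a whole cyclically ordered family of $3$-cells $C^{(1)},\dots,C^{(m)}$ and to $m$ (not necessarily distinct) $4$-cells, not just to ``the two $3$-cells of $\Gamma_T$ on either side of $C$.'' The labels $l(C_1),l(C_2)$ of the two new $2$-cells therefore appear simultaneously in the local state spaces of all $m$ adjacent $3$-cells and in all of the corresponding graphs $\Gamma_T$, so the sum over $i=l(C_1)$, $j=l(C_2)$ does not localize to a single closed loop; your claim that it ``produces exactly one factor of $\cD$'' via $\sum_i d_i^R d_i^L=\cD$ is only valid when the doubled ribbon joins exactly one pair of dual coupons. (Also, \lemref{l:summation-separate} is not the relevant identity here; what is needed is $\tfrac{1}{\cD}\sum_{i,j}d_id_j\,[\text{doubled edge between dual coupons}]=\sum_k d_k\,[\text{single edge}]$.) The paper closes this gap with a preliminary reduction: using elementary subdivisions of $3$- and $4$-cells, already known to preserve $\ZCY$ by the cases $k=3,4$, one ``cordons off'' the $2$-cell so that it lies in the boundary of exactly one $3$-cell, which in turn lies in the boundary of exactly one $4$-cell (appearing there with both orientations); see \figref{f:M1}. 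Only after this reduction does the move become the purely local two-coupon identity you have in mind. Without that step your $k=2$ argument does not go through as written.
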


\begin{proof}
By \corref{t:elementary-boundary},
we just need to show that elementary subdivision of an interior $k$-cell
$C$ of $\cW$ preserves $\ZCY(\cW)$. Let $\cW'$ be the new
PLCW decomposition after subdivision. We consider different $k$
in separate cases:

\textbf{Case k = 1:} The only thing that changes in $\ZCY(\cW)$ are
the number of internal 0- and 1-cells,
but $\vme(\cW \backslash \del \cW)$ remains constant.
%$v(\cW)$ and $e(\cW)$, but they each increase by 1,
%so $\ZCY(\cW)$ remains invariant.

The rest of the cases follow from arguments very similar to
the proof of invariance of the TV state sum in
\ocite{balsam-kirillov}*{Section 5}.

\textbf{Case k = 3:}
This follows from the fact that if $\{\vphi_\al^i\}, \{\phi_\beta^i\}$
are bases for
$\eval{A_1,\ldots,A_k,X_i}, \eval{X_i^*,B_1,\ldots,B_l}$,
and $\{\ov{\vphi}_\al^i\}, \{\ov{\phi}_\beta^i\}$
are their dual bases for
$\eval{X_i^*,A_k^*,\ldots,A_1^*}, \eval{B_l^*,\ldots,B_1^*,X_i}$,
then
$\{\sqrt{d_i}\ev_{X_i}(\vphi_\al^i \tnsr \phi_\beta^i)\},
\{\sqrt{d_i}\ev_{X_i}(\ov{\phi}_\beta^i \tnsr \ov{\vphi}_\al^i)\}$
are dual bases for
$\eval{A_1,\ldots,A_k,B_1,\ldots,B_l},
\eval{B_l^*,\ldots,B_1^*,A_k^*,\ldots,A_1^*}$.

More precisely,
let the new 2-face added be $F$,
splitting an old 3-cell $C$ of $\cW$ into $C_1$ and $C_2$.
There are exactly two 4-cells $T_1,T_2$ that sandwich $C$ in $\cW$,
thus also sandwich $C_1$ and $C_2$ in $\cW'$.
To fix notation, we take $C_1, C_2$ to have the outward orientation
with respect to $T_1$, and take $F$ to have the outward orientation
with respect to $C_1$.
For a labeling with $l(F) = X_i$,
$H(C_1,l) = \eval{\ldots,X_i}$ and $H(C_2,l) = \eval{X_i^*,\ldots}$;
choose bases $\{\vphi_\al^i\}, \{\phi_\beta^i\}$ for them.
Then in the $\ZRT(\cdots)$ coefficient in \eqnref{e:cy-basis},
for $\ZCY(\cW')$, the 3-cells $C_1,C_2$ contribute the subgraph on the left
(with the extra $d_i$ coming from the $d_{l(f)}^{n_f}$ term,
while for $\ZCY(\cW)$, the 3-cell $C$ contributes the subgraph on the right:
\[
\sum_{i,\al}
d_i
\begin{tikzpicture}
%\node[morphism] (a) at (0,0) {$\vphi_\al^i$};
%\node[morphism] (b) at (1,0) {$\psi_\beta^i$};
\node[morphism] (a) at (0,0) {$\al$};
\node[morphism] (b) at (1,0) {$\be$};
\draw (a) -- (b) node[pos=0.5, above] {$i$};
\draw (a) -- +(150:1cm);
\draw (a) -- +(180:1cm);
\draw (a) -- +(210:1cm);
\draw (b) -- +(30:1cm);
\draw (b) -- +(0:1cm);
\draw (b) -- +(-30:1cm);
\node at (0.5,-1) {$\del T_1$};
%\node[morphism] (a) at (3.5,0) {$\ov{\psi}_\beta^i$};
%\node[morphism] (b) at (4.5,0) {$\ov{\vphi}_\al^i$};
\node[morphism] (a) at (3.5,0) {$\be$};
\node[morphism] (b) at (4.5,0) {$\al$};
\draw (a) -- (b) node[pos=0.5, above] {$i$};
\draw (a) -- +(150:1cm);
\draw (a) -- +(180:1cm);
\draw (a) -- +(210:1cm);
\draw (b) -- +(30:1cm);
\draw (b) -- +(0:1cm);
\draw (b) -- +(-30:1cm);
\node at (4,-1) {$\del T_2$};
\end{tikzpicture}
\;\;\;=\;\;\;
\begin{tikzpicture}
%\node[morphism] (a) at (0,0) {$\phi$};
\node[morphism] (a) at (0,0) {$\ga$};
\draw (a) -- +(150:1cm);
\draw (a) -- +(180:1cm);
\draw (a) -- +(210:1cm);
\draw (a) -- +(30:1cm);
\draw (a) -- +(0:1cm);
\draw (a) -- +(-30:1cm);
\node at (0,-1) {$\del T_1$};
%\node[morphism] (a) at (2.5,0) {$\ov{\phi}$};
\node[morphism] (a) at (2.5,0) {$\ga$};
\draw (a) -- +(150:1cm);
\draw (a) -- +(180:1cm);
\draw (a) -- +(210:1cm);
\draw (a) -- +(30:1cm);
\draw (a) -- +(0:1cm);
\draw (a) -- +(-30:1cm);
\node at (2.5,-1) {$\del T_2$};
\end{tikzpicture}
\]

\textbf{Case k = 4:}
This follows easily from \lemref{l:summation-separate}.

\textbf{Case k = 2:}
Let $F$ be the 2-cell to be elementarily subdivided.
We first use elementary subdivisions of 3-cells and 4-cells
to modify $\cW$ so that $F$ meets exactly one 3-cell.
Since $F$ is of codimension 2, the (germ of neighborhood
of $F$ in the) 3-cells
adjacent to $F$ are naturally cyclically ordered
(up to a choice of reversing the cyclic order).
Let $C_1,\ldots,C_m$ be the 3-cells in such an ordering;
note that $C_i$ are not necessarily distinct.
Let $T_i$ be 4-cell between $C_i$ and $C_{i+1}$ (indices taken
modulo $m$); as with $C_i$, the $T_i$'s are not necessarily distinct.

For each $i$, perform an elementary subdivision on $C_i$
that separates $F$ from the rest of the boundary;
let $C_i'$ be the new 3-cell that is adjacent to $F$.
Now for each $i$, perform an elementary subdivision on $T_i$
that separates $C_i'$ and $C_{i+1}'$ from the rest of the boundary;
let $T_i'$ be the new 4-cell that meets $C_i'$ and $C_{i+1}'$.
Note that $C_i'$, $T_i'$ are pairwise distinct.
Then we sequentially remove $C_2',C_3',\ldots,C_m'$
using inverse elementary subdivisions (of 4-cells),
leaving $C_1'$ as the only 3-cell meeting $F$.

We essentially performed the following operations,
but in one dimension higher:
\begin{figure}[ht]
\includegraphics[width=8cm]{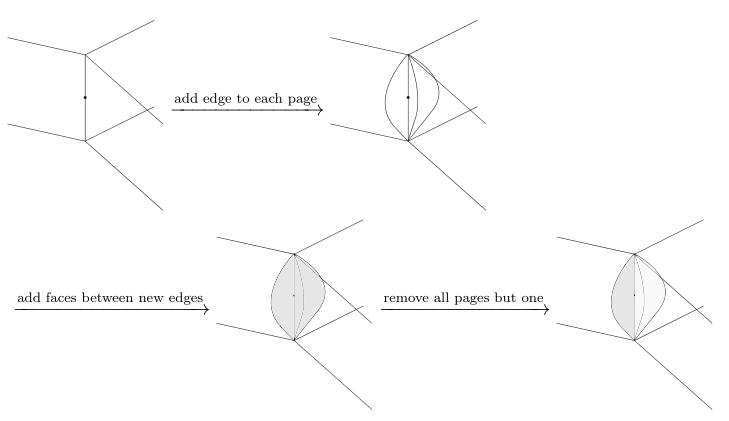}
\caption{Preparing for elementary subdivision
along 2-cell, here showing the same move but
in one dimension lower.
(Figure taken from \ocite{balsam-kirillov})}
\label{f:M1}
\end{figure}

Thus we may assume $F$ appears in the boundary of exactly
one 3-cell $C$,
and $C$ appears in the boundary of exactly one 4-cell $T$
(appearing twice, once for each orientation).
Now the invariance follows from the following identity
(the two nodes represent the oriented 3-cells $C, \ov{C}$,
and edges represent 2-cells;
on the right, the single edge between nodes is $F$,
and on the left, the two edges are from the two 2-cells
into which $F$ is subdivided):
\[
\frac{1}{\cD} \sum_{i,j} d_i d_j
\begin{tikzpicture}
\node[morphism] (a) at (0,0) {$\al$};
\node[morphism] (b) at (1,0) {$\al$};
\draw (a) to[out=30,in=150] (b);
\node at (0.5,0.4) {\small $i$};
\draw (a) to[out=-30,in=-150] (b);
\node at (0.5,-0.4) {\small $j$};
\draw (a) -- +(150:1cm);
\draw (a) -- +(180:1cm);
\draw (a) -- +(210:1cm);
\draw (b) -- +(30:1cm);
\draw (b) -- +(0:1cm);
\draw (b) -- +(-30:1cm);
\end{tikzpicture}
\;\;\;=\;\;\;
\sum_i d_i
\begin{tikzpicture}
\node[morphism] (a) at (0,0) {$\al$};
\node[morphism] (b) at (1,0) {$\al$};
\draw (a) -- (b) node[pos=0.5,above] {\small $i$};
\draw (a) -- +(150:1cm);
\draw (a) -- +(180:1cm);
\draw (a) -- +(210:1cm);
\draw (b) -- +(30:1cm);
\draw (b) -- +(0:1cm);
\draw (b) -- +(-30:1cm);
\end{tikzpicture}
\]

\end{proof}

\begin{corollary}
$\ZCY$ satisfies the gluing axiom:
if $\cW$ is a combinatorial 4-manifold with boundary
$\del \cW = \cM_0 \sqcup \cM \sqcup \overline{\cM}$,
and $\cW'$ is the manifold obtained by identifying
boundary components $\cM, \overline{\cM}$,
then
\begin{equation}
\ZCY(\cW') = \ev (\ZCY(\cW))
\end{equation}
where the $\ev$ applies the pairing of \eqnref{e:state-space-pairing}.
\end{corollary}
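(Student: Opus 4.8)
The plan is to unwind both sides using the basis expansion \eqnref{e:cy-basis} and reduce the identity to \lemref{l:local-state-space-dual}, which already identifies the local state space pairing with the evaluation $\ev$ used to contract interior $3$-cells in \eqnref{e:zcy-label}. The point is that $\ZCY(\cW')$ need not be recomputed from scratch: by \thmref{t:invariance-zcy} we may compute it using the PLCW decomposition and anchoring induced from $\cW$, under which the cells of $\cM$ (with their induced anchors and markings) simply become interior cells of $\cW'$, while the cells of $\cM_0$ remain on $\del \cW' = \cM_0$.

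First I would record the dictionary between data for $\cW$ and for $\cW'$. A simple labeling $l'$ of $\cW'$ is the same as a simple labeling $l$ of $\cW$ whose restrictions to $\cM$ and $\overline{\cM}$ correspond under the gluing identification; this correspondence is automatic since $l(\ov f) \simeq l(f)^*$. The $4$-cells of $\cW$ and $\cW'$ coincide, so the local invariants $Z(T,l)$ are literally unchanged, and hence the $\ZRT$ coefficients $\ZRT(\coprod \Gamma_T, l, \{\ov{\vphi}_{C,\al}\})$ appearing in \eqnref{e:cy-basis} for $\ZCY(\cW')$ are the same as for $\ZCY(\cW)$. Thus the two sides already agree term by term except for three bookkeeping points: the power of $\cD$, the product of the $d_{l(f)}$, and the identification of the extra contractions as $\ev$.

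For the power of $\cD$: writing $\cW^\circ$ for the set of interior cells of $\cW$, the exponent for $\cW$ is $\vme(\cW^\circ) + \tfrac12\bigl(\vme(\cM_0) + \vme(\cM) + \vme(\overline{\cM})\bigr)$, which equals $\vme(\cW^\circ) + \vme(\cM) + \tfrac12\vme(\cM_0)$ because $\vme(\cM) = \vme(\overline{\cM})$; after gluing, the cells of $\cM$ are interior and $\del\cW' = \cM_0$, so this is precisely $\vme(\cW'\setminus\del\cW') + \tfrac12\vme(\del\cW')$, the exponent for $\cW'$. For the $d_{l(f)}$ product: an interior $2$-face and a $2$-face of $\cM_0$ contribute the same power on both sides, while a $2$-face $f$ of $\cM$ contributes $d_{l(f)}^{1/2}\,d_{l(\ov f)}^{1/2} = d_{l(f)}$ in $\ZCY(\cW)$ — using sphericality, so that $d_{l(f)^*} = d_{l(f)}$ — matching $d_{l(f)}^{1}$ for the now-interior face in $\ZCY(\cW')$. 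Finally, the evaluation defining $Z(\cW',l')$ applies the dual pairing of \lemref{l:local-state-space-dual} to every interior $3$-cell of $\cW'$; these split as the interior $3$-cells of $\cW$, whose contractions are already built into $Z(\cW,l)$, together with the $3$-cells of $\cM$, whose contractions against the matching $3$-cells of $\overline{\cM}$ are — again by \lemref{l:local-state-space-dual} — exactly the state space pairing $\ev$ of \eqnref{e:state-space-pairing}. Assembling these three points gives $\ZCY(\cW') = \ev(\ZCY(\cW))$.

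The step I expect to demand the most care is the compatibility of anchors across the gluing: the anchor on a $3$-cell of $\overline{\cM}$ is the orientation-reverse of that on the matching $3$-cell of $\cM$, which reverses the induced ordering on its $2$-faces, so the bases $\{\vphi_{C,\al}\}$ and $\{\vphi_{\ov C,\al}\}$ of the local state spaces must be dual under $\ev$ in the correctly (cyclically) ordered sense. This is precisely the content of the naturality square in \lemref{l:local-state-space-dual}; once that lemma is in hand, the remainder is the routine accounting above.
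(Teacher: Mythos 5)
Your proposal is correct and is exactly the argument the paper intends: the corollary is stated without proof because it is immediate from \defref{d:zcy} (the paper even remarks that the $\tfrac{1}{2}$ exponents were chosen precisely so that the coefficients "combine to a whole" when a boundary is glued), and your bookkeeping of the $\cD$ exponent, the $d_{l(f)}^{n_f}$ factors, and the identification of the extra contractions with the pairing of \lemref{l:local-state-space-dual} fills in the details faithfully. The anchor-compatibility point you flag at the end is handled, as you say, by the naturality statement in that lemma, so there is nothing missing.
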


Given a combinatorial 4-manifold $\cW$,
we may arbitrarily partition its boundary components into two sets,
and call them \emph{incoming} and \emph{outgoing} boundary components,
respectively.
Denote by $\cM_{in}$ the disjoint union of incoming boundary components,
taken with inward orientation, and
$\cM_{out}$ the disjoint union of outgoing boundary components,
taken with outgoing orientation.
In other words, we treat $\cW$ as a cobordism
$\cW : \cM_{in} \to \cM_{out}$.
Then we may interpret
\[
\ZCY(W) \in H(\overline{\cM_{in}}) \tnsr H(\cM_{out})
\]
as a linear operator
\begin{align}
\label{e:zcy-linear-operator}
\ZCY(W) : H(\cM_{in}) &\to H(\cM_{out})
\\
\vphi &\mapsto \ev (\ZCY(W), \vphi)
\end{align}
obtained by applying the pairing of \eqnref{e:state-space-pairing}
to $\cM_{in}$.
Here we may use $W$, the underlying PL manifold of $\cW$,
because of \thmref{t:invariance-zcy}.

\begin{lemma}
Let $\cW$ be a combinatorial 4-manifold
with $\del \cW = \overline{\cM_{in}} \sqcup \cM_{out}$,
interpreted as a linear operator as in \eqnref{e:zcy-linear-operator} above.
Then its adjoint map is also given by $\ZCY(W)$:
\begin{equation*}
\begin{tikzcd}
H(\cM_{out})^* \ar[r, "\ZCY(W)^*"] \ar[d, "\simeq"]
	& H(\cM_{in})^* \ar[d, "\simeq"]
\\
H(\overline{\cM_{out}}) \ar[r, "\ZCY(W)"]
	& H(\overline{\cM_{in}})
\end{tikzcd}
\end{equation*}
\label{l:zcy-adjoint}
\end{lemma}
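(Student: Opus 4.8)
The plan is to show that the linear operator $\ZCY(W): H(\cM_{out})^* \to H(\cM_{in})^*$ obtained by dualizing the map of \eqnref{e:zcy-linear-operator}, transported across the identifications $H(\cM)^* \simeq H(\overline{\cM})$ from \eqnref{e:state-space-pairing}, coincides with the operator $\ZCY(W): H(\overline{\cM_{out}}) \to H(\overline{\cM_{in}})$ obtained by viewing $W$ as a cobordism $\overline{\cM_{out}} \to \overline{\cM_{in}}$ (i.e. reversing the roles of incoming and outgoing). First I would unwind both sides purely in terms of the vector $\ZCY(W) \in H(\overline{\cM_{in}}) \tnsr H(\cM_{out})$, which is a single intrinsic object attached to $W$ by \eqnref{e:cy}, independent of any choice of "in/out" partition; the two operators in the lemma are just two different ways of reading this vector as a map, related by the adjointness of the pairing $\ev$.

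The key steps, in order: (1) Recall from \defref{d:zcy} and \eqnref{e:cy-basis} that $\ZCY(W)$ lives in $H(\del \cW) = H(\overline{\cM_{in}}) \tnsr H(\cM_{out})$ (and is a sum over simple labelings of the obvious tensor of local-state-space vectors weighted by $\ZRT$ of the disjoint union of anchor graphs and the categorical dimension factors). (2) Observe that the operator in \eqnref{e:zcy-linear-operator} is, by definition, the composite $H(\cM_{in}) \xrightarrow{-\tnsr \ZCY(W)} H(\cM_{in}) \tnsr H(\overline{\cM_{in}}) \tnsr H(\cM_{out}) \xrightarrow{\ev \tnsr \id} H(\cM_{out})$, using the state-space pairing of \eqnref{e:state-space-pairing} on the $\cM_{in}$-factors. (3) The adjoint $\ZCY(W)^*: H(\cM_{out})^* \to H(\cM_{in})^*$ is then, tautologically, the composite $H(\cM_{out})^* \xrightarrow{\ZCY(W) \tnsr -} H(\overline{\cM_{in}}) \tnsr H(\cM_{out}) \tnsr H(\cM_{out})^* \xrightarrow{\id \tnsr \ev'} H(\overline{\cM_{in}})$, where $\ev'$ is the canonical pairing of $H(\cM_{out})$ with its linear dual. (4) The vertical isomorphisms in the diagram are exactly the duality $H(\cM) \simeq H(\overline{\cM})^*$ coming from \eqnref{e:state-space-pairing}, which by \lemref{l:local-state-space-dual} is built cell-by-cell from the nondegenerate pairing $\ev$ of \eqnref{e:pairing} — in particular it identifies $\ev'$ (pairing with the linear dual) with $\ev$ (the state-space pairing). (5) Substituting this identification into step (3) shows $\ZCY(W)^*$, conjugated by the vertical isomorphisms, is the composite $H(\overline{\cM_{out}}) \xrightarrow{\ZCY(W)\tnsr -} H(\overline{\cM_{in}}) \tnsr H(\cM_{out}) \tnsr H(\overline{\cM_{out}}) \xrightarrow{\id \tnsr \ev} H(\overline{\cM_{in}})$, which is precisely $\ZCY(W)$ read as a map $\overline{\cM_{out}} \to \overline{\cM_{in}}$. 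All of this is the formal statement that contracting a nondegenerate bilinear form against a fixed tensor in one slot produces a map whose transpose is obtained by contracting the same tensor in the other slot, together with the symmetry $(\ph,\ph') = (\ph',\ph)$ of \lemref{l:pairing_property}.

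I expect the only genuine subtlety — hence the main obstacle — to be bookkeeping the sphericality-related asymmetry of the pairing $\ev$: because $\cC$ need not be spherical, the pairing $H(\cM,l) \tnsr H(\overline{\cM},l) \to \kk$ is not symmetric under the cyclic rotation $z$, and one must check that the duality isomorphism $H(\cM)^* \simeq H(\overline{\cM})$ used in the vertical arrows is consistently the one from \lemref{l:local-state-space-dual} (which matches $\ZRT$-evaluation on $C \cup_\del \overline{C} \simeq S^3$) rather than its "other" version. Since \lemref{l:local-state-space-dual} already establishes that this pairing is natural with respect to anchor change and agrees with $\ev$, and since the symmetry $(\ph,\ph')=(\ph',\ph)$ holds on the graded pieces by \lemref{l:pairing_property}, this reduces to tracking which slot each $\overline{C}$ sits in; I would handle it by fixing, once and for all, dual bases $\{\vphi_{C,\al}\}, \{\overline{\vphi}_{C,\al}\}$ for $H(C,l)$ and $H(\overline{C},l)$ as in \eqnref{e:cy-basis}, writing both operators in these bases, and observing they have literally the same matrix (the coefficient $\ZRT(\coprod \Gamma_T, l, \{\overline{\vphi}_{C,\al}\})$ times the dimension factors), so the equality is immediate. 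The argument is thus short once the formalism is set up; there is no topology involved beyond what is already packaged in \thmref{t:invariance-zcy} (which lets us write $W$ rather than $\cW$).
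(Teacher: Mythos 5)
Your proposal is correct, and it is essentially the argument the paper intends: the lemma is stated without proof precisely because it is the formal observation that the single vector $\ZCY(W)\in H(\overline{\cM_{in}})\tnsr H(\cM_{out})$ read as a map by contracting one slot of the nondegenerate pairing has, as its transpose, the map obtained by contracting the other slot. Your attention to the non-spherical asymmetry of the pairing and the symmetry $(\ph,\ph')=(\ph',\ph)$ from \lemref{l:pairing_property} is exactly the right bookkeeping, and the basis-level check via \eqnref{e:cy-basis} settles it.
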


\begin{proposition}
\label{p:projectors-compose}
For a combinatorial closed 3-manifold $\cM$
with underlying PL manifold $M$,
the linear map
\begin{equation}
A_\cM := \ZCY(M \times I) : H(\cM) \to H(\cM)
\end{equation}
is a projector.  Moreover, the spaces
\begin{equation}
\ZCY(\cM) := \im ( A_\cM )
\end{equation}
are canonically identified by the system of isomorphisms
\[
A_{\cM', \cM''} := \ZCY(M \times I) : H(\cM') \to H(\cM'')
\]
where $\cM', \cM''$ are PLCW structures on $M$.
Furthermore, $A_\cM$ is self-adjoint,
and the pairing of \eqnref{e:state-space-pairing}
restricts to a non-degenerate pairing
\[
\ZCY(\cM) \tnsr \ZCY(\overline{\cM}) \to \kk
\]
which transforms functorially with $A_{\cdot,\cdot}$.
%We call $\ZCY(\cM)$ the \emph{state space of $\cM$}.
Thus, we may define $\ZCY(M) := \ZCY(\cM)$ for some PLCW structure
$\cM$ on $M$,
which we call the \emph{state space of $M$},

We will often simple denote $A_{\cM,\cM'}$ by $A$
when the context is clear.
\end{proposition}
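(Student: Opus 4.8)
The plan is to deduce everything formally from the invariance theorem \thmref{t:invariance-zcy} and the gluing corollary following it; no new topology is needed. First I would fix, for each ordered pair of PLCW structures $\cM', \cM''$ on $M$, a PLCW structure on the cylinder $M \times I$ whose boundary $M \times 0 \sqcup M \times 1$ carries $\cM' \sqcup \cM''$ — such an extension exists by \thmref{t:elementary-boundary} — and set $A_{\cM',\cM''} := \ZCY(M \times I) : H(\cM') \to H(\cM'')$, viewing $M \times I$ as a cobordism $\cM' \to \cM''$. By \thmref{t:invariance-zcy} this is independent of the chosen extension, and the product structure $\cM \times I$ gives $A_{\cM,\cM} = A_\cM$. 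The one computational fact, used repeatedly, is the composition law $A_{\cM'',\cM'''} \circ A_{\cM',\cM''} = A_{\cM',\cM'''}$: gluing the cylinder with ends $(\cM',\cM'')$ to the cylinder with ends $(\cM'',\cM''')$ along their common $\cM''$-face yields a PLCW structure on $M \times I$ with ends $(\cM',\cM''')$, so the gluing corollary identifies the left-hand side with $\ZCY$ of that manifold, and \thmref{t:invariance-zcy} identifies that with $A_{\cM',\cM'''}$. Gluing instead the $\cM'$-faces gives the variant $A_{\cM',\cM''} \circ A_{\cM'} = A_{\cM',\cM''}$, and gluing $\cM''$-faces gives $A_{\cM''} \circ A_{\cM',\cM''} = A_{\cM',\cM''}$ and $A_{\cM'} \circ A_{\cM'',\cM'} = A_{\cM'',\cM'}$.

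Taking $\cM' = \cM'' = \cM$ in the composition law gives $A_\cM^2 = A_\cM$, so $A_\cM$ is a projector and $\ZCY(\cM) := \im A_\cM$ is well-defined. From $A_{\cM'',\cM'} \circ A_{\cM',\cM''} = A_{\cM'}$ and $A_{\cM',\cM''} \circ A_{\cM'',\cM'} = A_{\cM''}$ together with the absorption identities above, one reads off that $A_{\cM',\cM''}$ maps $\im A_{\cM'}$ into $\im A_{\cM''}$, annihilates $\ker A_{\cM'}$, and restricts to an isomorphism $\ZCY(\cM') \xrightarrow{\sim} \ZCY(\cM'')$ with inverse $A_{\cM'',\cM'}$; the composition law shows these identifications are transitive. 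Hence $\ZCY(M) := \ZCY(\cM)$ is well-defined.

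For self-adjointness I would apply \lemref{l:zcy-adjoint} to $W = M \times I$: it shows that, under the identifications $H(\cM)^* \simeq H(\overline{\cM})$ furnished by the state-space pairing, the adjoint of $A_{\cM',\cM''}$ is the operator induced by the same cylinder read as a cobordism $\overline{\cM''} \to \overline{\cM'}$, i.e. $A_{\overline{\cM''},\overline{\cM'}}$; in particular $A_\cM$ and $A_{\overline\cM}$ are mutually adjoint, which is the meaning of self-adjointness here: $\ev(A_\cM\vphi,\psi) = \ev(\vphi, A_{\overline\cM}\psi)$ for $\vphi \in H(\cM)$, $\psi \in H(\overline\cM)$. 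Finally, the state-space pairing $\ev : H(\cM) \tnsr H(\overline\cM) \to \kk$ is non-degenerate, being block-diagonal over simple labelings and on each block the product over $3$-cells of the local pairings of \lemref{l:local-state-space-dual}, each non-degenerate by \eqnref{e:pairing}. Restricting to $\ZCY(\cM) \tnsr \ZCY(\overline\cM)$: if $\vphi \in \im A_\cM$ pairs to zero with all of $\im A_{\overline\cM}$, then for every $\psi \in H(\overline\cM)$ one has $0 = \ev(\vphi, A_{\overline\cM}\psi) = \ev(A_\cM\vphi,\psi) = \ev(\vphi,\psi)$, so $\vphi = 0$; the other variable is symmetric. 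Functoriality under $A_{\cdot,\cdot}$ then follows from mutual adjointness: for $\vphi \in \ZCY(\cM')$, $\psi \in \ZCY(\overline{\cM'})$,
\[
\ev\bigl(A_{\cM',\cM''}\vphi,\; A_{\overline{\cM'},\overline{\cM''}}\psi\bigr)
= \ev\bigl(\vphi,\; A_{\overline{\cM''},\overline{\cM'}}A_{\overline{\cM'},\overline{\cM''}}\psi\bigr)
= \ev\bigl(\vphi,\; A_{\overline{\cM'}}\psi\bigr)
= \ev(\vphi,\psi).
\]

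I do not expect a real obstacle: the essential topology is already contained in \thmref{t:invariance-zcy} and the gluing corollary, and the rest is linear algebra of idempotents. The part demanding the most care is purely bookkeeping — keeping straight which PLCW structure sits on which end of each cylinder so that the "stacking cylinders is again a cylinder" identities are invoked with matching boundary data — and making sure the non-degeneracy argument uses only self-adjointness together with non-degeneracy of the ambient state-space pairing, both of which are in hand.
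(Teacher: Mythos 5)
Your argument is correct and is exactly the intended one — the paper leaves this as an exercise, and your proof fills it in using precisely the tools set up for it (invariance under interior PLCW change, the gluing corollary, and \lemref{l:zcy-adjoint}), with the right reading of "self-adjoint" as mutual adjointness of $A_\cM$ and $A_{\overline\cM}$ under the $H(\cM)\tnsr H(\overline\cM)$ pairing. No gaps.
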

\begin{proof}
Simple exercise left to the reader.
\end{proof}

%Thus, we may define $\ZCY(M) := \ZCY(\cM)$ for some PLCW structure
%$\cM$ on $M$,
%which we also call the \emph{state space of $M$},
By the existence of collar neighborhoods of boundaries
and invariance under choice of PLCW decomposition,
$\ZCY(W) \in \ZCY(\del W)$ and is well-defined.
We also have a canonical pairing
\begin{equation}
\label{e:zcy-pairing}
\ZCY(M) \tnsr \ZCY(\overline{M}) \to \kk
\end{equation}
inherited from \eqnref{e:state-space-pairing}.

\begin{theorem}
The invariants $\ZCY$ above define a TQFT.
%TODO hmm elaborate?
\label{t:zcy-tqft}
\end{theorem}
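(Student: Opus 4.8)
The plan is to verify, one by one, the data and axioms of \defref{d:TQFT}, observing that all the genuinely topological content has already been extracted: \thmref{t:invariance-zcy} gives independence of the PLCW decomposition in the interior (hence the gluing corollary that follows it), and \prpref{p:projectors-compose} produces the projector $A_\cM = \ZCY(M\times I)$ together with the state spaces $\ZCY(M) = \im(A_\cM)$ and the canonical identifications between them. Write $\taucy$ for the prospective TQFT. On objects we set $\taucy(M) := \ZCY(M)$; this is finite-dimensional because the pre-state space $H(\cM) = \bigoplus_{\text{simple }l} H(\cM,l)$ is a finite direct sum of finite-dimensional $\Hom$-spaces. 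On a 4-manifold $\cW$ we set $\taucy(\cW) := \ZCY(\cW) \in \ZCY(\del\cW)$, well-defined by \thmref{t:invariance-zcy} and the existence and uniqueness-up-to-isotopy of boundary collars (\thmref{t:collar-exist-unique}).

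Next I would check the structural isomorphisms. The empty $3$-manifold carries the empty PLCW decomposition, for which $H(\emptyset) = \kk$ is the empty tensor product and $A_\emptyset = \id$, so $\taucy(\emptyset)\simeq\kk$ and the empty $4$-manifold is sent to $1\in\kk$. Since the pre-state space is a tensor product over $3$-cells (\defref{d:pre-state-space}) and the $3$-cells of $\cM\sqcup\cM'$ are the disjoint union of those of $\cM$ and $\cM'$, one has $H(\cM\sqcup\cM') = H(\cM)\tnsr H(\cM')$ and $A_{\cM\sqcup\cM'} = A_\cM\tnsr A_{\cM'}$, whence $\taucy(M\sqcup M')\simeq\taucy(M)\tnsr\taucy(M')$; likewise $\ZCY(\cW\sqcup\cW') = \ZCY(\cW)\tnsr\ZCY(\cW')$ from \eqnref{e:cy} since the cell counts and labelings multiply. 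The duality $\taucy(\ov M)\simeq\taucy(M)^*$ is precisely the non-degenerate pairing $\ZCY(\cM)\tnsr\ZCY(\ov\cM)\to\kk$ of \prpref{p:projectors-compose}, inherited from \eqnref{e:state-space-pairing}; this is where the left/right-dimension bookkeeping and the anchor-naturality of \lemref{l:local-state-space-dual} get used, together with self-adjointness of $A_\cM$ and \lemref{l:zcy-adjoint} to see the pairing is compatible with all the other structure maps.

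For functoriality in homeomorphisms: a PL homeomorphism $\vphi\colon M\to M'$ transports a marked anchored PLCW structure $\cM$ on $M$ to one $\vphi(\cM)$ on $M'$ with a bijection of cells, inducing an isomorphism $H(\cM)\simeq H(\vphi(\cM))$ that intertwines the $A$'s, hence an isomorphism $\taucy(\vphi)\colon\ZCY(M)\simeq\ZCY(M')$; independence of the chosen $\cM$ follows from \prpref{p:projectors-compose} and \thmref{t:invariance-zcy}, and $\taucy(\vphi'\vphi) = \taucy(\vphi')\taucy(\vphi)$, $\taucy(\id) = \id$ are immediate, as is naturality of $\ZCY$ for homeomorphisms of $4$-manifolds restricting on the boundary. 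The gluing axiom is the corollary following \thmref{t:invariance-zcy}: the half-integer exponents $n_f = \tfrac12$ on boundary $2$-faces and the $\tfrac12\vme(\del\cW)$ in \eqnref{e:cy} are exactly what make the normalization factors combine to the interior values across a glued face. Composition of cobordisms $\cW'\circ\cW$ then corresponds, via \eqnref{e:zcy-linear-operator}, to composition of the associated linear operators (apply gluing to $\cW\sqcup\cW'$), and the identity cobordism $M\times I$ induces $A_\cM$, which by \prpref{p:projectors-compose} restricts to the identity on $\ZCY(M) = \im A_\cM$ — this is the unit axiom.

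\textbf{Expected main obstacle.} There is no single hard estimate or construction left: the real content was the PLCW-invariance \thmref{t:invariance-zcy} and the projector property \prpref{p:projectors-compose}. The one point demanding genuine care is \emph{coherence} — that the isomorphisms above ($\taucy(\vphi)$, the duality pairing, the monoidal isomorphisms, the identifications $A_{\cM',\cM''}$) are mutually natural and satisfy the list of axioms of \ocite{BK}*{Chapter 4}; concretely, one must check that the anchor-change functoriality of \secref{s:cy-defn} (\defref{d:anchor-morphism}, \lemref{l:local-state-space-dual}) is compatible with homeomorphisms permuting cells and with the orientation-reversal and disjoint-union operations. This is routine but slightly tedious, which is why only the normalization and orientation bookkeeping merit explicit discussion and the remainder can reasonably be left as an exercise.
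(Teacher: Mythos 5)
Your verification is correct and is exactly the argument the paper intends: the theorem is stated without a written proof precisely because all the content lies in \thmref{t:invariance-zcy}, the gluing corollary, \prpref{p:projectors-compose}, and \lemref{l:zcy-adjoint}, which you assemble in the standard way. Your identification of coherence of the various structural isomorphisms as the only remaining (routine) check is also the right assessment.
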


It is often more convenient to consider elements of $H(\cM)$
instead of the subspace $\ZCY(\cM) \subset H(\cM)$:

\begin{definition}
A \emph{lift} of a state $\vphi \in \ZCY(\cM)$ is a pre-state
$\wdtld{\vphi} \in H(\cM)$ that projects to $\vphi$
under $A_\cM$.
\label{d:H-lift}
\end{definition}

\begin{definition}
Let $\cM,\cM'$ be PLCW structures on a closed 3-manifold $M$.
We say two pre-states $\vphi \in H(\cM), \vphi' \in H(\cM')$
are \emph{equivalent as states}, or simple \emph{equivalent},
denoted $\vphi \preequiv \vphi'$,
if for some (and hence any) PLCW decomposition $\cM''$ of $M$,
$A_{\cM,\cM''} (\vphi) = A_{\cM',\cM''}(\vphi')$.
\label{d:pre-state-equiv}
\end{definition}

\begin{lemma}
For a combinatorial 4-manifold $\cW$,
let $\wdtld{\vphi}$ be a lift of $\vphi \in H(\ov{\del \cW})$.
Then
\[
\ev (\ZCY(\cW), \vphi) = \ev (\ZCY(\cW), \wdtld{\vphi})
\]
\label{l:H-lift}
\end{lemma}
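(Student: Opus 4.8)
The plan is to deduce this from two facts already available in the text: that $\ZCY(\cW)$ is a genuine \emph{state}, i.e.\ $\ZCY(\cW)\in\ZCY(\del\cW)=\im A_{\del\cW}\subseteq H(\del\cW)$, and that the projectors $A$ are self-adjoint with respect to the pairing of \eqnref{e:state-space-pairing}. Recall also that, by definition of a lift (\defref{d:H-lift}) applied with $\cM=\ov{\del\cW}$, one has $\vphi=A_{\ov{\del\cW}}(\wdtld\vphi)$ (here $\vphi$ is understood as a state in $\ZCY(\ov{\del\cW})\subseteq H(\ov{\del\cW})$).

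First I would record that $A_{\del\cW}(\ZCY(\cW))=\ZCY(\cW)$. This is the remark made just before \defref{d:H-lift}: by existence of collar neighbourhoods together with \thmref{t:invariance-zcy}, $\ZCY(\cW)\in\ZCY(\del\cW)$, and $A_{\del\cW}$ is a projector onto that subspace. Concretely, gluing a collar $\del\cW\times I$ onto $\cW$ along $\del\cW$ leaves the PL-homeomorphism type of $\cW$ unchanged, so by the gluing axiom this identifies $\ZCY(\cW)$ with the image of $\ZCY(\cW)$ under the operator $A_{\del\cW}=\ZCY(\del\cW\times I)$ of \eqnref{e:zcy-linear-operator}.

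Then I would invoke self-adjointness of the projectors — part of \prpref{p:projectors-compose}, or equivalently \lemref{l:zcy-adjoint} applied to the cobordism $\del\cW\times I\colon\del\cW\to\del\cW$, whose adjoint operator is again $\ZCY(\del\cW\times I)=A_{\ov{\del\cW}}$ — which says that for all $u\in H(\del\cW)$ and $v\in H(\ov{\del\cW})$,
\[
\ev(A_{\del\cW}u,v)=\ev(u,A_{\ov{\del\cW}}v).
\]
Assembling the three observations gives
\[
\ev(\ZCY(\cW),\vphi)=\ev(\ZCY(\cW),A_{\ov{\del\cW}}\wdtld\vphi)=\ev(A_{\del\cW}\ZCY(\cW),\wdtld\vphi)=\ev(\ZCY(\cW),\wdtld\vphi),
\]
which is the claim.

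I do not expect a real obstacle here; the only step that is more than a one-line citation is the identity $A_{\del\cW}(\ZCY(\cW))=\ZCY(\cW)$, and even that is spelled out in the discussion preceding \defref{d:H-lift}. The argument above is merely the algebraic shadow of a geometric picture: pairing $\ZCY(\cW)$ against the pre-state $\wdtld\vphi$ equals pairing it against $\wdtld\vphi$ after pushing $\wdtld\vphi$ through one copy of the projector $A$, and that copy of $A$ can instead be absorbed into $\ZCY(\cW)$ by a collar, where it acts as the identity.
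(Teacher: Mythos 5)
Your proof is correct and follows the same route as the paper's: both use $A_{\del\cW}(\ZCY(\cW))=\ZCY(\cW)$ (from invariance under interior PLCW decomposition), the self-adjointness of $A$ with respect to the pairing, and $A(\wdtld\vphi)=\vphi$ to move the projector from one slot to the other. Your write-up merely spells out the citations (\lemref{l:zcy-adjoint}, \prpref{p:projectors-compose}) more explicitly than the paper does.
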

\begin{proof}
By the invariance of $\ZCY$ with respect to PLCW decomposition,
$A_{\del \cW} (\ZCY(\cW)) = \ZCY(\cW)$, thus
\[
\ev(\ZCY(\cW), \wdtld{\vphi}) = \ev(A(\ZCY(\cW)), \wdtld{\vphi})
= \ev(\ZCY(\cW), A(\wdtld{\vphi}))
= \ev(\ZCY(\cW), \vphi)
\]
\end{proof}

There is a canonical element of $H(\cM)$
in the subspace $H(\cM,l \equiv \one)$
where $l$ labels all 2-cells with $\one$:
\begin{equation}
%\label{e:emptyconf}
\label{e:emptystate}
\emptystate{\cM} := \bigotimes \id_\one \in
\bigotimes \Hom(\one, \one \tnsr \cdots \tnsr \one)
= H(\cM, l \equiv \one) \subset H(\cM)
\end{equation}
which we call the \emph{empty pre-state}.
We claim that these are, up to factors of $\cD$,
equivalent as states;
these will be discussed further later
(see \eqnref{e:empty-state-relate} and \secref{s:sk-equiv}).

%identified under the maps $A_{-,-}$
%of \corref{c:projectors-compose}, or more precisely,
%$A_{\cM,\cM''}(\emptystate{\cM}) = A_{\cM',\cM''}(\emptystate{\cM'})
%\in H(\cM'')$
%in particular,
%$\ZCY(\cM \times I) (A(\emptystate{\cM})) = A(\emptystate{\cM})$.
%However, they are not TODO make this clear
%$\ZCY(\cM \times I) (\emptystate{\cM}) \neq \emptystate{\cM}$.

However, we warn the reader that for lifts,
$\wdtld{\vphi} \in H(\cM),\wdtld{\vphi'} \in H(\overline{\cM})$
of $\vphi \in \ZCY(\cM), \vphi' \in \ZCY(\overline{\cM})$,
the pairing \eqnref{e:state-space-pairing}
and \eqnref{e:zcy-pairing} disagree;
they are related by
\begin{equation}
\ev (\vphi, \vphi')
= \ev (\wdtld{\vphi}, A(\wdtld{\vphi}))
\neq 
(\wdtld{\vphi}, \wdtld{\vphi'})
\label{e:pairing-diff}
\end{equation}

Thus, we make the following definition:

\begin{definition}
\label{d:state-space-pairing-reduced}
For $\vphi \in H(\cM), \vphi' \in H(\ov{\cM})$,
their \emph{reduced pre-state space pairing} is defined as
\begin{equation}
\evbar (\vphi, \vphi')
:= \ev (\vphi, A(\vphi'))
\label{e:pairing-A}
\end{equation}

Equivalently, treating $M \times I$ as a cobordism
$M \sqcup \ov{M} \to \empty$,
$\ZCY(M \times I)$ defines the same pairing
$H(\cM) \tnsr H(\ov{\cM}) \to \kk$.
\end{definition}

Note that if either of $\vphi$ or $\vphi'$ is in the
image of $A$, i.e. in $\ZCY$,
then $\evbar(\vphi, \vphi') = \ev(\vphi, \vphi')$.
%then $(\vphi, \vphi')_{CY} = (\vphi, \vphi')$.

%% old newcommand: \emptyconf_X = \emptystate{X}
%%%We will see later TODO that $\emptyconf_\cM$
%%%are canonically identified
%%%by virtue of \lemref{l:empty-conf} below,
%%%we may simply denote it as $\emptyconf$.
%%%
%%%\begin{lemma}
%%%For a fixed closed 3-manifold $M$,
%%%the empty configurations $\emptyconf$ of each $H(\cM)$
%%%are identified in $\ZCY(M)$.
%%%\label{l:empty-conf}
%%%\end{lemma}
%%%
%%%However, the proof will depend on the result that
%%%$H(M)$ is 1-dimensional, so we postpone the proof
%%%to later (see TODO).
%%%We also note that $\cW : \cM_{in} \to \cM_{out}$
%%%does not map $\emptyconf$ to $\emptyconf$.

%\begin{proof}
%We have to show that,
%for PLCW decompositions $\cM, \cM', \cM''$ of $M$,
%the maps $A_{\cM,\cM''}, A_{\cM', \cM''}$
%send $\emptyconf_\cM, \emptyconf_{\cM'}$, respectively,
%to the same element of $H(\cM'')$.
%\end{proof}

\begin{remark}
Warning.
We point out another potentially confusing thing.
A 4-ball ball $\cW \simeq B^4$
defines a Crane-Yetter invariant
$\ZCY(\cW) \in \ZCY(\del \cW)$.
On the other hand, if we ignore the PLCW decomposition
in the interior of $\cW$ and
treat it as a cell-like 4-ball,
then we have $Z(\cW, l) \in H(\del \cW, l)$.
The $H(\del \cW,l)$ component of $\ZCY(\cW)$
does not equal $Z(\cW,l)$
(however the difference is simply by a factor
coming from the $d_{l(f)}^{n_f}$ term).

Personally, I make the mistake of conflating the two
most often when I am using them as functionals
$H(\ov{\del \cW},l) \to \kk$,
and particularly when using the skein definition
of state spaces.
This will be more apparent in computations
as we will see in \secref{s:signature}.
(see \eqnref{e:z-4cell}).
\label{r:z-zcy-confusion}
\end{remark}

To summarize the section,
we first have local state spaces $H(C,l)$
for each 3-cell $C$
(\defref{d:local-state-space}).
The pre-state space of $\cM$ is the tensor product
of local state spaces, summed over all labelings
(\defref{d:pre-state-space}).
A 4-cell in a combinatorial 4-manifold $\cW$
is given its local invariant
$Z(T,l) \in H(\del T,l)$,
defined as the (dual to) evaluating colored ribbon graphs
(\defref{d:ZCY3cell}).
The Crane-Yetter invariant $\ZCY(\cW) \in H(\del \cW)$
is obtained by taking the tensor product of local invariants,
applying the pairing \eqnref{e:pairing} to matching 3-cells,
and summing over all labelings with certain coefficients
(\defref{d:zcy}).
Finally, the state space $\ZCY(M)$
is the ``common'' subspace of $H(\cM)$
that is the image of the cylinder map $\ZCY(\cM \times I)$.

%%%%%%%%%%%%%%%%%%%%%%%%%%%%%%%%%%%%%%%%%%%%%%%%%%%%%%%%%%%%%%%%%%%%%%%%%%%%%%%%
% equivalence to original CY

\subsection{Equivalence of $\ZCY$ with Crane-Yetter}
\par \noindent

Let us recall the original construction of the Crane-Yetter
state sum, as laid out in \ocite{CY} and generalized in \ocite{CKY}.

As before, we fix a premodular category $\cA$
(``semisimple tortile categories'' in \ocite{CKY}).
We will present their construction using our notation and conventions,
but let us point out one potentially confusing difference.  \footnote{
In \ocite{CKY},
\begin{itemize}
\item $\cA[X,Y] = \Hom_\cA(X,Y)$.
\item Composition is written left to right:
	$\circ : \cA[X,Y] \tnsr \cA[Y,Z] \to \cA[X,Z]$.
\item Morphisms in diagrams go from top to bottom
\end{itemize}
}

Let $S$ be a simple object, and $\cB = \{b_\al\}$ a basis of $\Hom(S,X)$.
Then $\overline{\cB} = \{\overline{b_\beta}\}$ is the basis of $\Hom(X,S)$
such that $\overline{b_\beta} \circ b_\al = \delta_{\al,\beta}\id_S$.
In other words, $\overline{\cB}$ is the dual basis to $\cB$
with respect to the pairing
\begin{align}
\Hom(X,S) \tnsr \Hom(S,X) &\to \kk \\
\psi \tnsr \vphi &\mapsto a, \;\;\text{where } \psi \circ \vphi = a \cdot \id_S
\label{e:pairing-cky}
\end{align}
which is $1/d_S$ times the pairing defined by \eqnref{e:pairing}.
Thus, $\overline{b_\al} = d_S \cdot b^\al$,
where $\{b^\al\}$ is the dual basis to $\{b_\al\}$
as seen in \eqnref{e:summation_convention}.

For each triple of simple objects $i,j,k \in \Irr(\cA)$,
choose a basis $\cB_k^{ij}$ of $\Hom(X_i X_j, X_k)$,
and write $\cB = \bigsqcup \cB_k^{ij}$.

Let $\TT$ be an ordered triangulation on a 4-manifold $M$
(that is, the set of vertices are ordered).
Denote by $\TT_{(i)}$ the set of $i$-simplices of $\TT$.
The ordering defines, on each simplex, an orientation as follows:
if $\{v_0, v_1,\ldots,v_i\}$ are the vertices of an $i$-simplex ($i>0$)
in increasing order, then the ordered set of directed line segments
$(\overrightarrow{v_0v_1},\ldots,\overrightarrow{v_0v_i})$
defines an orientation at $v_0$, which is extended to the rest of the simplex.
We call this the \emph{ordering-orientation} on $\xi$.

\begin{definition}[\ocite{CKY}, Definition 3.1]
A CSB-coloring of $\TT$ is a triple of maps
$\lmb = (\lmb_0, \lmb^+, \lmb^-)$, where
\begin{align*}
\lmb_0 &: \TT_{(2)} \sqcup \TT_{(3)}  \to \Irr(\cA)
\\
\lmb^+ &: \TT_{(3)} \to \cB
\\
\lmb^- &: \TT_{(3)} \to \cB
\end{align*}
such that for 3-simplex $\tau$, letting $\sigma_k$ denote the boundary 2-simplex
of $\tau$ opposite the $k$-th vertex (w.r.t. the ordering),
\begin{align*}
\lmb^+(\tau) &\in
\cB_{\lmb_0(\tau)}^{\lmb_0(\sigma_0), \lmb_0(\sigma_2)}
\\
\lmb^-(\tau) &\in
\cB_{\lmb_0(\tau)}^{\lmb_0(\sigma_1), \lmb_0(\sigma_3)}
\end{align*}
The set of CSB-colorings of $\TT$ is denoted by
$\Lambda_{CBS}(\TT)$.
%such that, for a 3-simplex 
%$\{a,b,c,d\}$ with $a<b<c<d$ in the ordering,
%(writing $\lmb = \lmb_0$ for simplicity)
%\begin{align*}
%\lmb^+\{a,b,c,d\} &\in
%\cB_{\lmb\{a,b,c,d\}}^{\lmb\{b,c,d\},\lmb\{a,b,d\}}
%\\
%\lmb^-\{a,b,c,d\} &\in
%\cB_{\lmb\{a,b,c,d\}}^{\lmb\{a,c,d\},\lmb\{a,b,c\}}
%\end{align*}
\end{definition}

Observe that the ordering-orientation on $\sigma_0,\sigma_2$
agrees with the outward orientation on $\del \tau$
(when $\tau$ is given the ordering-orientation),
while the orientations disagree for $\sigma_1,\sigma_3$.

We sometimes denote $\lmb_0$ simply by $\lmb$.

\begin{definition}
Let $\lmb$ be a CSB-coloring of an ordered triangulation $\TT$
of an oriented 4-manifold $M$,
and let $\xi$ be a 4-simplex in $\TT$.
We define the quantity $||\lmb,\xi|| \in \kk$ as follows.
Let vertices of $\xi$ be $v_0,\ldots,v_4$ in increasing order.
Suppose the ordering-orientation on $\xi$ agrees with the orientation of $M$.
Then $||\lmb,\xi||$ is defined as the evaluation of the
graph in \figref{f:network-evaluation}.
\footnote{When $\cA = \Rep U_q sl_2$ (with $q$ a root of unity),
with appropriate choice of bases,
$||\lmb,\xi||$ is the quantum 15-j symbol
as described in \ocite{CY}.}
\begin{figure}[ht]
\begin{tikzpicture}
\node[morphism] (v2) at (-1,2) {\Large $\widehat{v_2}$};
\node[morphism] (v0) at (1,2) {\Large $\widehat{v_0}$};
\node[morphism] (v1) at (-2,0) {\Large $\widehat{v_1}$};
\node[morphism] (v4) at (0,0) {\Large $\widehat{v_4}$};
\node[morphism] (v3) at (2,0) {\Large $\widehat{v_3}$};
%% v2 to others
\draw (v2) to[out=-60, in=60] (v4);
\draw (v2) .. controls +(60:0.8cm) and +(120:0.3cm) .. (0.7,1)
	.. controls +(-60:0.3cm) and +(120:0.5cm) .. (v3);
\draw (v2) .. controls +(120:0.8cm) and +(120:1cm) .. (0.4,2)
	.. controls +(-60:0.2cm) and +(-120:0.4cm) .. (v0);
\draw (v2) to[out=-120, in=-60] (v1);
%% v0 to others
\draw[overline] (v0) to[out=-60, in=120] (v4);
\draw (v0) to[out=60, in=60] (v3);
\draw (v0) to[out=120, in=90] (-2,2) to[out=-90, in=60] (v1);
%% v1 to others
\draw (v1) .. controls +(120:1cm) and +(180:4cm) .. (0,-1)
	.. controls +(0:2cm) and +(-60:0.5cm) .. (v3);
\draw (v1) .. controls +(-120:0.8cm) and +(-120:1cm) .. (v4);
%% v4 to v3
\draw (v4) to[out=-60, in=-120] (v3);
\end{tikzpicture}
\;\;
\begin{tikzpicture}
\node at (0,2.5) {Type I};
\node[morphism] (+) at (0,1) {$\lmb^+$};
\node[morphism] (-) at (0,0) {$\overline{\lmb^-}$};
\draw[midarrow] (+) -- (-);
\draw[midarrow_rev] (+) -- +(135:1cm);
\draw[midarrow_rev] (+) -- +(45:1cm);
\draw[midarrow] (-) -- +(-135:1cm);
\draw[midarrow] (-) -- +(-45:1cm);
\node at (0.5,0.5) {$\lmb(\widehat{v_j})$};
\node at (-0.7,2) {$\lmb(f_{j,0})$};
\node at (0.7,2) {$\lmb(f_{j,2})$};
\node at (-0.7,-1) {$\lmb(f_{j,1})$};
\node at (0.7,-1) {$\lmb(f_{j,3})$};
\end{tikzpicture}
\;\;
%\begin{tikzpicture}
%\node[morphism] at (0,0.5) {\Large $\widehat{v_j}$};
%\end{tikzpicture}
%\;\;
\begin{tikzpicture}
\node at (0,2.5) {Type II};
\node[morphism] (+) at (0,1) {$\overline{\lmb^+}^*$};
\node[morphism] (-) at (0,0) {${\lmb^-}^*$};
\draw[midarrow=0.6] (-) -- (+);
\draw[midarrow] (+) -- +(135:1cm);
\draw[midarrow] (+) -- +(45:1cm);
\draw[midarrow_rev] (-) -- +(-135:1cm);
\draw[midarrow_rev] (-) -- +(-45:1cm);
\node at (0.5,0.5) {$\lmb(\widehat{v_j})$};
\node at (-0.7,2) {$\lmb(f_{j,2})$};
\node at (0.7,2) {$\lmb(f_{j,0})$};
\node at (-0.7,-1) {$\lmb(f_{j,3})$};
\node at (0.7,-1) {$\lmb(f_{j,1})$};
\end{tikzpicture}
\caption{Evaluation of the left graph yields $||\lmb,\xi||$;
the circles labeled with $\widehat{v_j}$ contain subgraphs,
defined on the right;
for $j=0,2,4$, it contains the Type I subgraph,
and for $j=1,3$, it contains the Type II subgraph.
Here $f_{j,k}$ is the 2-simplex obtained from $\widehat{v_j}$ by
omitting the $k$-th vertex
(for example, $\widehat{v_2} = \{v_0,v_1,v_3,v_4\}$,
so $f_{2,2} = \{v_0,v_1,v_4\}$).}
%in \figref{f:network-evaluation-2}.}
\label{f:network-evaluation}
\end{figure}
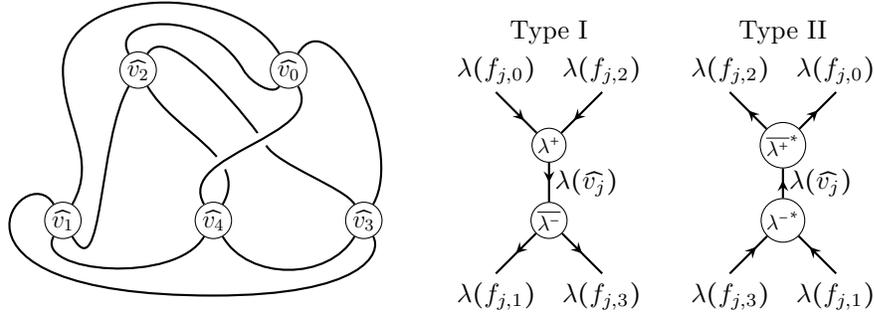

If the ordering-orientation on $\xi$ differs from the orientation of $M$,
then we use the mirror image of the graph in \figref{f:network-evaluation},
and the choice of subgraphs for each $\widehat{v_j}$ is flipped
(i.e. for $j=0,2,4$, Type II subgraph is used,
while for $j=1,3$, Type I subgraph is used).

\end{definition}

\begin{definition}
For a CSB-coloring $\lmb$ of an ordered triangulation $\TT$,
we define
\[
\eval{\eval{\lmb}} =
\cD^{n_0-n_1}
\prod_{\sigma : 2-splx.} d_{\lmb(\sigma)}
\prod_{\tau : 3-splx.} d_{\lmb(\tau)}^\inv
\prod_{\xi : 4-splx.} ||\lmb,\xi||
\]
\label{d:csb-eval}
where $n_0,n_1$ are the number of vertices and edges, respectively.
\end{definition}

\begin{theorem}[\ocite{CKY}, Theorem 3.2]
The state-sum
\[
CY(M) = \sum_{\lmb \in \Lambda_{CSB}(\TT)} \eval{\eval{\lmb}}
\]
is an invariant of the PL manifold $M$.
\label{t:cky-original}
\end{theorem}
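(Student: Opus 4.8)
The plan is to deduce this from the invariance of the PLCW Crane--Yetter invariant: I will show that for an ordered triangulation $\TT$ of a closed oriented $4$-manifold $M$ one has $CY(M) = \ZCY(M)$, where $\ZCY(M) \in H(\emptyset) = \kk$ is the invariant of \defref{d:zcy}; then \thmref{t:invariance-zcy}, which already establishes independence of $\ZCY$ on the interior PLCW decomposition (hence on the triangulation, as $M$ is closed), gives the claim.

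First I would promote the ordered triangulation to the structure required by \defref{d:zcy}. A triangulation is in particular a PLCW decomposition, and the vertex ordering orients every simplex via its ordering-orientation. The vertex ordering also canonically determines an anchoring in the sense of \defref{d:anchoring}: on each tetrahedron the ordering of its four vertices induces an ordering of its four triangular faces, which together with the ordering-orientation pins down an anchor up to equivalence by \lemref{l:framed-action-anchor}; one checks that on a shared $2$-face two adjacent tetrahedra induce the same marking, so these anchors do agree on $2$-cells.

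Next I would match the summation data. For a tetrahedron $C$ with a simple labeling $l$ of its faces, the local state space $H(C,l) = \eval{l(f_1),\dots,l(f_4)}$ of \defref{d:local-state-space} decomposes, by inserting a complete family of simples on an internal edge and splitting into two trivalent vertices, into a sum over a simple object $X_m$ of a tensor product of two $\Hom$-spaces; a basis of $H(C,l)$ is therefore indexed precisely by the data $(\lmb_0(\tau), \lmb^+(\tau), \lmb^-(\tau))$ of a CSB-coloring restricted to $\tau$. Hence the index set of the basis expansion \eqnref{e:cy-basis} of $\ZCY$ --- simple labelings $l$ together with a choice of basis vector $\vphi_{C,\al} \in H(C,l)$ for every $3$-cell --- is in bijection with $\Lambda_{CSB}(\TT)$. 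Under this bijection I would identify the evaluation $\ZRT(\coprod \Gamma_T, l, \{\ov\vphi_{C,\al}\})$, where $\Gamma_T$ is the graph built from the anchors on $\del T$, with the product over $4$-simplices $\xi$ of the $15$-$j$ symbols $\|\lmb,\xi\|$ of \figref{f:network-evaluation}: each tetrahedral anchor becomes one of the five ``$\widehat{v_j}$'' boxes, the Type~I versus Type~II distinction records whether the ordering-orientation of $\widehat{v_j}$ agrees or disagrees with the outward orientation from $\xi$ (which by a parity count alternates exactly as in the caption of \figref{f:network-evaluation}, paralleling the $\sigma_0,\sigma_2$ versus $\sigma_1,\sigma_3$ observation one dimension down), and the mirror-image clause in both definitions is governed by the global orientation of $M$ relative to the ordering-orientation of $\xi$. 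This step is essentially a diagram chase recognizing two presentations of the same colored ribbon-graph morphism in $S^3$.

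Finally I would reconcile the scalar coefficients. The one genuine discrepancy is the normalization recorded in \eqnref{e:pairing-cky}: the pairing used by \ocite{CKY} to define dual bases is $1/d_S$ times the pairing \eqnref{e:pairing} used in the definition of $\ZCY$, and correspondingly $\ov{b_\al} = d_S\, b^\al$ in the notation of \eqnref{e:summation_convention}. Propagating these factors through the contractions over internal $3$-cells in \eqnref{e:cy-basis} contributes exactly one factor $d_{\lmb(\tau)}^{-1}$ per tetrahedron (all tetrahedra are internal since $M$ is closed), matching $\prod_\tau d_{\lmb(\tau)}^{-1}$ in \defref{d:csb-eval}; the factor $\prod_f d_{l(f)}^{n_f}$ of \eqnref{e:cy} becomes $\prod_\sigma d_{\lmb(\sigma)}$ since $n_f = 1$ for every (internal) $2$-face; and $\cD^{\vme(\cW)} = \cD^{n_0 - n_1}$. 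Collecting terms gives $\ZCY(M) = \sum_\lmb \eval{\eval{\lmb}} = CY(M)$, and \thmref{t:invariance-zcy} finishes the proof. The heaviest part, and the one most prone to a slipped sign or dual, is the middle step --- verifying that $\coprod \Gamma_T$ evaluates to the precise network of \figref{f:network-evaluation} with all the duality, orientation and Type~I/II conventions aligned; the coefficient bookkeeping is routine once the $1/d_S$ pairing discrepancy is handled honestly.
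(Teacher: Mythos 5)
Your overall strategy --- prove $CY(M)=\ZCY(M)$ for a triangulation and then invoke \thmref{t:invariance-zcy} --- is exactly the route the paper takes (the statement itself is only cited from \ocite{CKY}, whose original argument is a direct Pachner-move verification; the paper's re-derivation is \thmref{t:cy-equiv} plus \thmref{t:invariance-zcy}). Your coefficient bookkeeping is also reconcilable with the paper's: you keep the bare triangulation and split $H(\tau,l)$ algebraically over an intermediate simple, whereas the paper performs an elementary subdivision of each tetrahedron by the quadrilateral $2$-cell $f_\tau$ so that $\lmb_0(\tau)$ becomes the label of an honest $2$-cell; these agree by the $k=3$ case of the invariance proof, provided you carry the $\sqrt{d_m}$ normalization of glued dual bases, and your net $d_{\lmb(\tau)}^{-1}$ per tetrahedron then does match \defref{d:csb-eval}.

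The genuine gap is in your first step. You assert that the vertex ordering ``pins down an anchor up to equivalence by \lemref{l:framed-action-anchor}.'' That lemma says the opposite: the set of anchors with fixed boundary is a transitive $\mathfrak{F}_n$-set with stabilizer $N(x)$, i.e.\ a torsor over the framed braid group of the sphere, which is infinite for $n\geq 2$. The face ordering only determines the image in the symmetric group; inequivalent anchors inducing the same ordering differ by braidings and framings, and by \defref{d:anchor-morphism} these change the identification of bases and hence the value of $\ZRT(\Gamma_\xi,\cdot)$ by twist and crossing insertions. So until a specific anchor is chosen, the identity $\ZRT(\coprod\Gamma_T,\dots)=\prod_\xi\|\lmb,\xi\|$ is not even well-posed, and for a generic choice it is false. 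This is precisely why the paper constructs the anchor explicitly (\figref{f:anchor-tetrahedron}, including the half-braiding where the anchor crosses $f_\tau$ and the half-twists near $\hat v_0$) and then verifies by \figref{f:anchors-xi} and \figref{f:anchors-xi-simplified} that the assembled graph in $\overline{\del\xi}$ reproduces \figref{f:network-evaluation} with the correct Type~I/II assignments and crossings. Your proof needs that construction, or an equivalent one, spelled out; it cannot be absorbed into ``canonically determined.''
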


The main result of this section is to show that
this definition is equivalent to ours:

\begin{theorem}
For a closed PL manifold $M$,
\[
CY(M) = \ZCY(M)
\]
\label{t:cy-equiv}
\end{theorem}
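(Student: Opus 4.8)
The plan is to compute $\ZCY(M)$ on a triangulation and match it term by term against the Crane--Yetter state sum. Since $CY(M)$ is independent of the ordered triangulation (\thmref{t:cky-original}) and $\ZCY(M)$ is independent of the PLCW decomposition (\thmref{t:invariance-zcy}), it suffices to fix one ordered triangulation $\TT$ of $M$ and prove $\ZCY(\TT) = CY(M)$ for that $\TT$. I would regard $\TT$ as a PLCW decomposition; the vertex ordering gives each simplex its ordering-orientation, and I would use it to pin down an anchoring: on each $3$-simplex $\tau$, choose an anchor whose induced linear order on the four boundary triangles is $(\sigma_0,\sigma_2,\sigma_3,\sigma_1)$, where $\sigma_k$ is the face opposite the $k$-th vertex, taken with the outward orientation with respect to $\tau$. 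Any two anchorings give canonically isomorphic local data by \defref{d:anchor-morphism}, so this costs nothing; the non-standard order is chosen so that the even/odd split of the faces below becomes a split into adjacent pairs. Thus $H(\tau,l) = \eval{l(\sigma_0),l(\sigma_2),l(\sigma_3),l(\sigma_1)}$.

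Next I would build a dictionary between the summation data in the basis formula \eqnref{e:cy-basis} for $\ZCY(\TT)$ --- specialized to closed $M$, where there are no boundary cells, $n_f\equiv 1$, and the exponent of $\cD$ equals $n_0-n_1$ --- and the CSB-colorings of $\TT$. A simple labeling $l$ of $\TT$ up to equivalence is exactly a choice of $\lambda_0$ on $2$-simplices, once each $2$-simplex is oriented by its ordering-orientation and $l(\ov f)=l(f)^*$ is imposed. For each $3$-simplex $\tau$, semisimplicity together with the composition and pairing maps \eqnref{e:composition}, \eqnref{e:pairing} give a natural decomposition
\[
H(\tau,l) \;\simeq\; \bigoplus_{s\in\Irr(\cA)} \Hom\big(X_{l(\sigma_0)}\tnsr X_{l(\sigma_2)},\, X_s\big) \tnsr \Hom\big(X_{l(\sigma_1)}\tnsr X_{l(\sigma_3)},\, X_s\big),
\]
so that choosing a basis vector of $H(\tau,l)$ is the same as choosing $s=\lambda_0(\tau)\in\Irr(\cA)$ together with basis vectors $\lambda^+(\tau),\lambda^-(\tau)$ of the two $\Hom$-spaces; this even/odd split of the faces of $\tau$ is precisely the convention underlying the $\lambda^\pm$ data and the Type I / Type II subgraphs of \figref{f:network-evaluation}. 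Under this dictionary the double sum $\sum_l\sum'_\al$ of \eqnref{e:cy-basis} becomes $\sum_{\lambda\in\Lambda_{CSB}(\TT)}$, and the prefactor $\cD^{\,n_0-n_1}\prod_f d_{l(f)}$ matches $\cD^{\,n_0-n_1}\prod_\sigma d_{\lambda(\sigma)}$ of \defref{d:csb-eval}.

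It then remains to match, for each $4$-simplex $\xi$, the Reshetikhin--Turaev evaluation $\ZRT(\Gamma_\xi,l,\{\ov{\vphi}_{C,\al}\})$ of the union $\Gamma_\xi$ of the five boundary anchors of $\xi$ with the $15$-$j$ symbol $\|\lambda,\xi\|$, and to account for the remaining factor $\prod_\tau d_{\lambda(\tau)}^{-1}$ of \defref{d:csb-eval}. The key observation is that the decomposition above is realized graphically by cutting along an edge colored $s$ (an instance of \lemref{l:summation}), so a $4$-valent coupon of $\Gamma_\xi$ labeled by a split basis vector is literally one of the circled subgraphs $\widehat{v_j}$ of \figref{f:network-evaluation}, and the way these five subgraphs connect along the ten internal edges is the combinatorics of $\partial\xi$, which is identical on both sides. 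The orientation bookkeeping --- outward versus ordering-orientation of the $\sigma_k$, together with the ``mirror-image'' convention of \ocite{CKY} when the ordering-orientation of $\xi$ disagrees with that of $M$ --- is exactly what dictates whether each $\widehat{v_j}$ is of Type I or Type II. Finally, the power $d_{\lambda(\tau)}^{-1}$ arises because the dual bases of $H(\tau,l)$ and $H(\ov{\tau},l)$ are taken with respect to \eqnref{e:pairing}, which on the $X_s$-isotypic piece exceeds the pairing \eqnref{e:pairing-cky} used in \ocite{CKY} by the factor $d_s = d_{\lambda_0(\tau)}$; propagating this over all internal $3$-simplices produces precisely $\prod_\tau d_{\lambda(\tau)}^{-1}$. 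Assembling these identifications over all simplices gives $\ZCY(\TT)=CY(M)$, hence $\ZCY(M)=CY(M)$.

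The main obstacle is this last step: tracking orientations and normalizations carefully enough to see that $\ZRT(\Gamma_\xi)$, with its coupons labeled by the split basis vectors, is exactly the evaluated network of \figref{f:network-evaluation} times the appropriate power of $d_{\lambda_0(\tau)}$, and verifying that neither the passage from PLCW decompositions to triangulations nor the translation between this paper's graphical conventions and those of \ocite{CKY} introduces a stray scalar. Everything else is a direct comparison of the two definitions.
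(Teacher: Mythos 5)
Your proposal is correct in outline and follows essentially the same strategy as the paper's proof: fix an ordered triangulation, identify CSB-colorings with labelings plus basis choices, trace the $d_{\lambda(\tau)}^{\pm1}$ discrepancy to the two different normalizations of the dual pairing ($\overline{b_\al}=d_S\,b^\al$), and match the assembled anchor graph of each $4$-simplex with the $15$-$j$ network of \figref{f:network-evaluation}. The one genuine organizational difference is that the paper does not use the triangulation itself as the PLCW decomposition: it first performs an elementary subdivision of every $3$-simplex $\tau=\{a,b,c,d\}$ along a quadrilateral $2$-cell $f_\tau$, splitting $\tau$ into halves $\tau^{\pm}$. The CSB datum $\lambda_0(\tau)$ then labels an actual $2$-cell, the factor $d_{\lambda(\tau)}$ comes from the state-sum weight $\prod_f d_{l(f)}^{n_f}$, each local state space is a trivalent multiplicity space whose chosen basis is directly $\cB_k^{ij}$, and the coupons of $\Gamma_\xi$ are literally the trivalent vertices of the $15$-$j$ network. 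You instead keep $\tau$ whole and encode the same data as a basis of the $4$-valent space $H(\tau,l)$ adapted to the semisimple decomposition along the internal edge $s$; this is legitimate, and amounts to the paper's construction composed with one application of the Case $k=3$ invariance move --- the $\sqrt{d_s}$-normalization needed to make the composite bases dual under \eqnref{e:pairing} is exactly where your $d_{\lambda(\tau)}$ factor comes from. Two places where your sketch is thinner than it needs to be, both hiding inside the ``main obstacle'' you flag: (i) prescribing only a linear order $(\sigma_0,\sigma_2,\sigma_3,\sigma_1)$ on the faces does not determine an anchor --- by \lemref{l:framed-action-anchor} the anchors form a torsor over a quotient of the framed braid group, and the residual braiding and framing data is precisely what produces the crossings and half-twists of \figref{f:network-evaluation}, so it must be pinned down explicitly (the paper does this with the normal-quadrilateral anchors of \figref{f:anchor-tetrahedron}); and (ii) one must verify that these per-simplex choices assemble into a global anchoring, i.e.\ that the anchors of adjacent $3$-simplices meet each $2$-simplex in a common marking. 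These are where the actual work of the proof lives, but your overall plan is sound.
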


\begin{proof}
Fix an ordered triangulation $\TT$ on $M$, so that we have
\[
CY(M) = \cD^{n_0-n_1} \sum_{\lmb \in \Lambda_{CSB}(\TT)}
\prod_{\sigma : 2-splx.} d_{\lmb(\sigma)}
\prod_{\tau : 3-splx.} d_{\lmb(\tau)}^\inv
\prod_{\xi : 4-splx.} ||\lmb,\xi||
\]

Consider the PLCW structure $\cM = \cM_\TT$ on $M$ derived from $\TT$,
where we apply, to each 3-simplex $\tau = \{a,b,c,d\}$ (with $a<b<c<d$),
the elementary subdivision move that adds the 2-cell $f_\tau$ with boundary
$\overline{abcda}$, i.e. it separates $\tau$ into a top half
$\tau^+$ (bounded by 2-cells $\{a,b,d\}, \{b,c,d\}, f_\tau$)
and bottom half $\tau^-$ (bounded by 2-cells $\{a,b,c\}, \{a,c,d\}, f_\tau$).
They are called top and bottom by virtue of their ordering-orientations:
the ordering-orientations of $\{a,b,d\}$ and $\{b,c,d\}$
coincide with the outward-orientation with respect to
the ordering orientation of $\tau$,
while it is the opposite for $\{a,b,c\}$ and $\{a,c,d\}$.
Thus, the 2-cells of $\cM$ come in two varieties,
the \emph{triangular} type (coming from $\TT$),
and \emph{quadrilateral} type (the added 2-cells $f_\tau$).

\[
\begin{tikzpicture}
\tikzmath{
	\ax = -1;
	\ay = 0;
	\bx = 2;
	\by = 2.5;
	\cx = 2;
	\cy = 1;
	\dx = -2;
	\dy = 3;
}
\node[emptynode, label=-170:{$a$}] (a) at (\ax,\ay) {};
\node[emptynode, label=0:{$b$}] (b) at (\bx,\by) {};
\node[emptynode, label=-90:{$c$}] (c) at (\cx,\cy) {};
\node[emptynode, label=180:{$d$}] (d) at (\dx,\dy) {};
\draw (a) -- (b);
\draw (a) -- (d);
\draw (b) -- (c);
\draw (b) -- (d);
\draw[opacity=0.4] (c) -- (d);
\draw (a) -- (c);
%\draw[dashed, line width=1.5pt, opacity=0.3] (a) -- (b) -- (c) -- (d) -- (a);
\node[emptynode] (abcd) at
	(\ax/4+\bx/4+\cx/4+\dx/4,\ay/4+\by/4+\cy/4+\dy/4) {};
\draw[opacity=0.2] (abcd) -- (a);
\draw[opacity=0.2] (abcd) -- (b);
\draw[opacity=0.2] (abcd) -- (c);
\draw[opacity=0.2] (abcd) -- (d);
\fill[opacity=0.3] (abcd) -- (a.center) -- (b) -- cycle;
\fill[opacity=0.3] (abcd) -- (c.center) -- (b) -- cycle;
\fill[opacity=0.3] (abcd) -- (c.center) -- (d) -- cycle;
\fill[opacity=0.3] (abcd) -- (a.center) -- (d) -- cycle;
\draw[line width=0.4pt] (\ax*0.8+\dx*0.2+0.2,\ay*0.8+\dy*0.2+0.2)
	-- +(-150:1.2cm) node[left] {$f_\tau$};
\draw[line width=0.4pt] (\bx*0.7+\dx*0.3,\by*0.7+\dy*0.3-0.2)
	-- +(60:0.6cm) node[above] {$\tau_+$};
\draw[line width=0.4pt] (\ax*0.3+\cx*0.7,\ay*0.3+\cy*0.7+0.2)
	-- +(-60:0.6cm) node[below] {$\tau_-$};
\end{tikzpicture}
\]

Since $M$ is closed, $v(\cM) = n_0, e(\cM) = n_1$,
so the exponents of $\cD$ agree.
Every 2-cell in $\cM$ corresponds to either an original 2-simplex of $\TT$
or to a 3-simplex of $\TT$ (as its separating 2-cell $f_\cdot$).
Thus, we have
\begin{align*}
\ZCY(\cM) &= \cD^{x(\cW)} \sum_l \prod_f d_{l(f)}^{n_f} Z(\cM,l)
\\
&= \cD^{n_0 - n_1} \sum_l \prod_{\sigma : 2-splx.} d_{l(\sigma)}
	\prod_{\tau : 3-splx.} d_{l(f_\tau)}
	Z(\cM,l)
\end{align*}
which is intentionally written to resemble $CY(M)$.
There are two main differences to note: the summing index
(labelings vs. CSB-colorings) and the $d_-^\pm$ term
in the product over 3-simplices.

The second difference arise simply because of the different
convention of dual pairing used.
Namely, recall that $\overline{b_\al} = d_S b^\al$
for a basis $\{b_\al\}$ of $\Hom(S,X)$.
Thus, if we define $|\lmb,\xi|$ to be the evaluation of the same
graph as in $||\lmb,\xi||$, except that we use
$b^\al$ instead of $\overline{\lmb^\pm} = \overline{b_\al}$
in the Type I,II subgraphs.
Then
\[
||\lmb,\xi|| = |\lmb,\xi| \prod_{\tau \subset \xi} d_{\lmb(\tau)}
\]
where the product is over 3-simplices of $\xi$.
Since each 3-simplex appears in exactly two 4-simplices
(and hence exactly two terms $||\lmb,\xi||$),
we see that
\[
CY(M) = \cD^{n_0-n_1} \sum_{\lmb \in \Lambda_{CSB}(\TT)}
\prod_{\sigma : 2-splx.} d_{\lmb(\sigma)}
\prod_{\tau : 3-splx.} d_{\lmb(\tau)}
\prod_{\xi : 4-splx.} |\lmb,\xi|
\]

Thus, we move on to addressing the first difference,
that of summing over labelings vs. CSB-colorings,
for which it suffices to show that
\[
Z(\cM,l) = \sum_{\lmb | \lmb_0 = l} \prod_\xi |\lmb,\xi|
\]
From \eqnref{e:cy-basis-2},
\[
Z(\cM,l) = \sum_\al\nolimits' \prod_\xi \ZRT(\Gamma_\xi, l,
\{\vphi_{\overline{C},\al}\})
\]
where recall that $\Gamma_\xi$ is the union of anchors
in $\del \xi$, a basis $\{\vphi_{C,\al}\}$ of $H(C,l)$
is chosen for each oriented 3-cell $C$ such that $C$ and $\bar{C}$
have dual bases (w.r.t. \eqnref{e:pairing}),
and the sum $\sum_\al\nolimits'$ is summed over all choices of assignments
$C \mapsto \vphi_{C,\al}$ such that the assignments of $C$ and $\bar{C}$ are dual.
We may choose the basis of $H(C,l)$ to be the appropriate $\cB_k^{ij}$;
then it is apparent that the sum $\sum_\al\nolimits'$
is the same as $\sum_{\lmb | \lmb_0 = l}$.
With this choice of basis, it remains to show that
there is a choice of anchoring of $\cM$ such that for any 4-simplex $\xi$,
\begin{equation}
|\lmb,\xi| = \ZRT(\Gamma_\xi, \lmb_0, \{\overline{\lmb^\pm}\})
\end{equation}
where $\overline{\lmb^\pm}$ is the dual to $\lmb^\pm$ with respect to the
pairing of \eqnref{e:pairing} (not from \eqnref{e:pairing-cky}).

We claim that the anchors as in \figref{f:anchor-tetrahedron}
fits the bill.
\figref{f:anchors-xi} show how the anchors fit together
when the ordering orientation of $\xi$ agrees with the orientation of $M$,
and \figref{f:anchors-xi-simplified} show a more simplified version,
which can be easily seen to match \figref{f:network-evaluation},
both in terms of the graph and the labels.

Let us point out a sticky orientation matter that may be confusing.
Recall that our convention for drawing figures/graphical calculus
is that we follow the right-hand rule (see Convention \ref{cvn:ambient-ort}).
When $\xi = \{v_0,\ldots,v_4\}$ has the ordering orientation,
the face $\hat{v}_4 = \{v_0,\ldots,v_3\}$ has the outward-orientation
given by
$(\overrightarrow{v_0v_1},\overrightarrow{v_0v_2},\overrightarrow{v_0v_3})$.
This means that \figref{f:anchors-xi} is drawing $\overline{\del \xi}$.
This is exactly what we want, since the Reshetikhin-Turaev evaluation
$\ZRT(\Gamma_\xi, \lmb_0, \{\overline{\lmb^\pm}\})$
takes place in $\overline{\del \xi}$, not $\del \xi$.

\begin{figure}[ht]
\begin{tikzpicture}
\tikzmath{
	\ax = 0;
	\ay = 0;
	\bx = 4;
	\by = -2;
	\cx = 6;
	\cy = 1;
	\dx = 3;
	\dy = 4;
}
\node at (1,-1.4) {$\tau = \{a,b,c,d\}$};
\node at (1,-1.8) {$a < b < c < d$};
\node[emptynode, label=-170:{$a$}] (a) at (\ax,\ay) {};
\node[emptynode, label=-80:{$b$}] (b) at (\bx,\by) {};
\node[emptynode, label=20:{$c$}] (c) at (\cx,\cy) {};
\node[emptynode, label=90:{$d$}] (d) at (\dx,\dy) {};
\draw (a) -- (b);
\draw (a) -- (d);
\draw (b) -- (c);
\draw (b) -- (d);
\draw (c) -- (d);
\draw[opacity=0.4] (a) -- (c);
\node[dotnode, label=120:{$m_{ad}$}] (ad) at (\ax/2 +\dx/2, \ay/2 + \dy/2) {};
\node[dotnode, label=-30:{$m_{bd}$}] (bd) at (\bx/2 +\dx/2, \by/2 + \dy/2) {};
\node[dotnode, label=-30:{$m_{bc}$}] (bc) at (\bx/2 +\cx/2, \by/2 + \cy/2) {};
\node[dotnode, label=-90:{$m_{ac}$}] (ac) at (\ax/2 +\cx/2, \ay/2 + \cy/2) {};
\node[dotnode, label=-100:{$m_{abd}$}] (abd)
	at (\ax/3 + \bx/3 +\dx/3, \ay/3 + \by/3 +\dy/3) {};
\node[dotnode, label=70:{$m_{bcd}$}] (bcd)
	at (\bx/3 + \cx/3 +\dx/3, \by/3 + \cy/3 +\dy/3) {};
\node[dotnode, label=-100:{$m_{abc}$}] (abc)
	at (\ax/3 + \bx/3 +\cx/3, \ay/3 + \by/3 +\cy/3) {};
\node[dotnode, label=70:{$m_{acd}$}] (acd)
	at (\ax/3 + \cx/3 +\dx/3, \ay/3 + \cy/3 +\dy/3) {};
\draw (ad) -- (abd) -- (bd) -- (bcd) -- (bc);
\draw[opacity=0.3] (bc) -- (abc) -- (ac) -- (acd) -- (ad);
%%%right figure
\tikzmath{
	%%variables for other side
	\madx = 0;
	\mady = 0;
	\mbdx = 2;
	\mbdy = -2;
	\mbcx = 4;
	\mbcy = 0;
	\macx = 2;
	\macy = 2;
}
\begin{scope}[shift={(8,1)}]
\node[emptynode, label=180:{$m_{ad}$}] (mad) at (\madx,\mady) {};
\node[emptynode, label=-90:{$m_{bd}$}] (mbd) at (\mbdx,\mbdy) {};
\node[emptynode, label=0:{$m_{bc}$}] (mbc) at (\mbcx,\mbcy) {};
\node[emptynode, label=90:{$m_{ac}$}] (mac) at (\macx,\macy) {};
\node[emptynode, label=-135:{$m_{abd}$}] (mabd)
	at (\madx/2+\mbdx/2,\mady/2+\mbdy/2) {};
\node[emptynode, label=-45:{$m_{bcd}$}] (mbcd)
	at (\mbcx/2+\mbdx/2,\mbcy/2+\mbdy/2) {};
\node[emptynode, label=45:{$m_{abc}$}] (mabc)
	at (\macx/2+\mbcx/2,\macy/2+\mbcy/2) {};
\node[emptynode, label=135:{$m_{acd}$}] (macd)
	at (\madx/2+\macx/2,\mady/2+\macy/2) {};
\node[small_morphism] (mbot) at
	(\madx/3+\mbdx/3+\mbcx/3,\mady/3+\mbdy/3+\mbcy/3) {};
\node[small_morphism] (mtop) at
	(\madx/3+\macx/3+\mbcx/3,\mady/3+\macy/3+\mbcy/3-0.1) {};
%%ribbons
\node[emptynode] (mbot') at
	(\madx/3+\mbdx/3+\mbcx/3,\mady/3+\mbdy/3+\mbcy/3-0.4) {};
\draw[ribbon] (mbot) to[out=-30,in=30] (mbot')
	to[out=-150,in=45] (\madx/2+\mbdx/2,\mady/2+\mbdy/2);
\draw[ribbon,overline] (mbot) to[out=-150,in=150] (mbot')
	to[out=-30,in=135] (\mbcx/2+\mbdx/2,\mbcy/2+\mbdy/2);
\draw[ribbon] (mtop) to[out=10,in=-135] (\macx/2+\mbcx/2,\macy/2+\mbcy/2);
\draw[ribbon] (mtop) to[out=170,in=-45] (\macx/2+\madx/2,\macy/2+\mady/2);
\draw[ribbon] (mbot) -- (mtop);
\draw (mad) -- (mbd) -- (mbc) -- (mac) -- (mad) -- (mbc);
\draw[line width=0.4pt] (\madx*0.2+\mbcx*0.8,\mady*0.2+\mbcy*0.8)
	-- +(-30:1.5cm) node[right] {$f_\tau$};
\draw[line width=0.4pt] (\mbdx*0.8+\mbcx*0.2-0.2,\mbdy*0.8+\mbcy*0.2+0.2)
	-- +(-30:1.5cm) node[right] {$\tau^+$};
\draw[line width=0.4pt] (\macx*0.8+\mbcx*0.2-0.2,\macy*0.8+\mbcy*0.2-0.2)
	-- +(30:1.5cm) node[right] {$\tau^-$};
\end{scope}
\end{tikzpicture}
\caption{Choice of anchor for $\tau^\pm$.
$m_{ab}, m_{abc}$ stand for the midpoint and centroids of
$ab, abc$ respectively.
The anchors lie in a disk (except at the half braiding),
a ``normal quadrilateral'' in the sense of normal surface theory,
as seen on the right;
this disk is not part of the PLCW structure,
it is only there to guide the definition of the anchors.
The line connecting $m_{ad}$ to $m_{bc}$ is the intersection
of $f_\tau$ with this disk.
Note that the intersection of the anchor with the 2-simplex
$efg$, with $e < f < g$, is along a segment lying on
the line segment $\overline{m_{eg} m_{egh}}$
(is irrespective of which 3-simplex the anchor belongs to),
so the anchoring condition of agreeing at 2-cells is satisfied.
}
\label{f:anchor-tetrahedron}
\end{figure}
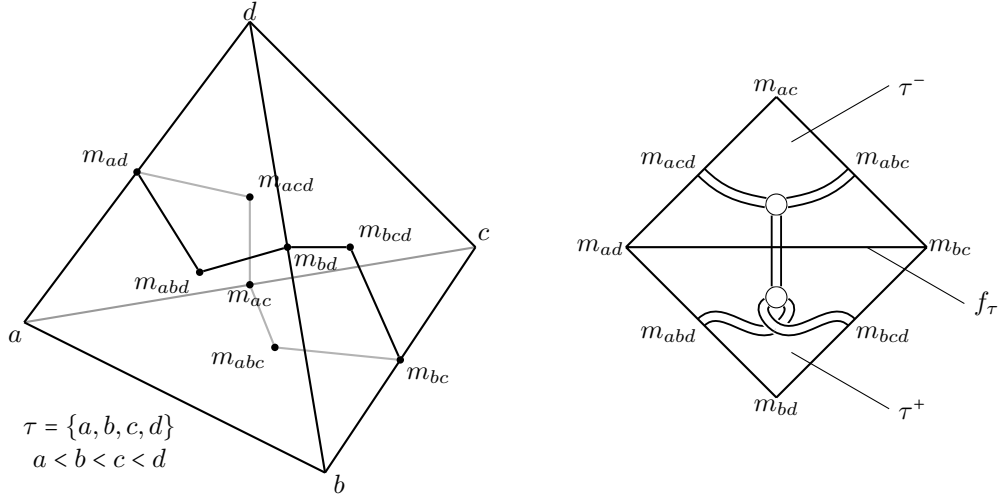

It remains to check that, for a 4-simplex $\xi$,
the anchors in $\del \xi$ glue up to be equivalent to
the graph of \figref{f:network-evaluation},
and that the labelings by objects and morphisms agree.

\begin{figure}[ht]
\begin{tikzpicture}
%% node (kl) is the l-vertex in \hat{v_k}
%% node (kabc) is marking in abc in \hat{v_k}
%%%%%%%%%%%%%%%%%%%%%%%%%%%%%%%%%%%%%%%%%%%%%%%%%%%
%% hat v4
\node at (1,0.3) {$\hat{v}_4$};
\node at (0.65,2.3) {\smallerer $\hat{v}_4^+$};
\node at (0.85,1.75) {\smallerer $\hat{v}_4^-$};
\node[emptynode, label=-90:0] (40) at (0,0) {};
\node[emptynode, label=0:1] (41) at (2,4) {};
\node[emptynode, label=-135:2] (42) at (0,2) {};
\node[emptynode, label=180:3] (43) at (-2,4) {};
\node[emptynode] (4012) at (0.9,1.4) {};
\node[emptynode] (4013) at (-0.4,2.7) {};
\node[emptynode] (4023) at (-0.5,1) {};
\node[emptynode] (4123) at (0.2,3.2) {};
\node[ellipse_morphism={20},minimum size=10pt] (v4+) at (0.2,2.3) {\;\;\;};
\node[ellipse_morphism={20},minimum size=10pt] (v4-) at (0.4,1.7) {\;\;\;};
%%ribbon
\draw[ribbon] (v4-) to[out=-120,in=60] (4023);
\draw[ribbon] (v4-) to[out=-45,in=135] (4012);
\draw[ribbon] (v4-) -- (v4+);
\draw[ribbon] (v4+) to[out=60,in=-30] (4013);
\draw[ribbon,overline] (v4+) to[out=180,in=-135] (4123);
%%frame
\draw[thin_overline={1}] (40) -- (42) -- (43) -- (41) -- (42);
\draw (40) to[out=30,in=-90] (41);
\draw (40) to[out=150,in=-90] (43);
%%f_\tau
\draw[opacity=0.3] (0,0.8)
	to[out=30,in=-135] (4012)
	to[out=45,in=-60] (1,3)
	to[out=180,in=-45] (4123)
	to[out=135,in=-80] (-0.3,4)
	to[out=-90,in=60] (4013)
	to[out=-120,in=30] (-1.6,1.8)
	to[out=-45,in=150] (4023)
	to[out=-30,in=150] (0,0.8);
%%%%%%%%%%%%%%%%%%%%%%%%%%%%%%%%%%%%%%%%%%%%%%%%%%%
%% hat v1
\begin{scope}[shift={(-3,-4)}]
\node at (-1,0.5) {$\hat{v}_1$};
\node at (0,2.) {\smallerer $\hat{v}_1^+$};
\node at (-1.15,2.25) {\smallerer $\hat{v}_1^-$};
\node[emptynode, label=-90:0] (10) at (0,0) {};
\node[emptynode, label=0:2] (12) at (1,4) {};
\node[emptynode, label=-135:3] (13) at (-2,6) {};
\node[emptynode, label=180:4] (14) at (-0.3,3) {};
\node[emptynode] (1024) at (0.4,1.5) {};
\node[emptynode] (1034) at (-0.8,1.6) {};
\node[emptynode] (1234) at (0.1,3.8) {};
\node[emptynode] (1023) at (-1.3,3.3) {};
\node[ellipse_morphism={90},minimum size=10pt] (v1+) at (0,2.5) {\;\;\;};
\node[ellipse_morphism={120},minimum size=10pt] (v1-) at (-0.8,2.5) {\;\;\;};
%%ribbons
\draw[ribbon] (v1-) to[out=10,in=170] (v1+);
\draw[ribbon] (v1-) .. controls +(-120:0.3cm) and +(45:0.3cm) .. (1034);
\draw[ribbon] (v1-) to[out=140,in=-10] (1023);
\draw[ribbon] (v1+) .. controls +(60:0.3cm) and +(120:0.3cm) ..
	(0.5,2.5) to[out=-80,in=150] (1024);
\draw[ribbon,overline] (v1+) .. controls +(-60:0.3cm) and +(-100:0.3cm) ..
	(0.3,2.5) to[out=80,in=-135] (1234);
%%frame
\draw[thin_overline={1}] (10) to[out=100,in=-90] (14);
\draw[thin_overline={1}] (12) -- (14);
\draw (10) to[out=60,in=-90] (12);
\draw (10) to[out=130,in=-90] (13) -- (12);
\draw (14) to[out=135,in=-80] (13);
%%f_\tau
\draw[opacity=0.3] (-0.15,1.05)
	to[out=30,in=-120] (1024)
	to[out=60,in=-45] (0.4,3.5)
	-- (1234)
	to[out=135,in=-80] (-0.4,4.9)
	to[out=-150,in=80] (1023)
	-- (-1.4,2.2)
	-- (1034)
	to[out=-45,in=150] (-0.15,1.05);
\end{scope}
%%%%%%%%%%%%%%%%%%%%%%%%%%%%%%%%%%%%%%%%%%%%%%%%%%%
%% hat v3
\begin{scope}[shift={(3,-3)}]
\node at (1,0.5) {$\hat{v}_3$};
\node at (0.5,1.85) {\smallerer $\hat{v}_3^+$};
\node at (-0.4,2.15) {\smallerer $\hat{v}_3^-$};
\node[emptynode, label=-90:0] (30) at (0,0) {};
\node[emptynode, label=180:2] (32) at (-1,4) {};
\node[emptynode, label=135:1] (31) at (2,6) {};
\node[emptynode, label=180:4] (34) at (0.3,3) {};
\node[emptynode] (3024) at (-0.4,1.7) {};
\node[emptynode] (3014) at (0.8,1.6) {};
\node[emptynode] (3012) at (-0.1,2.8) {};
\node[emptynode] (3124) at (0.7,4.3) {};
\node[ellipse_morphism={120},minimum size=10pt] (v3+) at (0.5,2.3) {\;\;\;};
\node[ellipse_morphism={80},minimum size=10pt] (v3-) at (0,2.2) {\;\;\;};
%%ribbon
\draw[ribbon] (v3-) .. controls +(0:0.2cm) and +(-150:0.2cm) .. (v3+);
\draw[ribbon] (v3-) .. controls +(-130:0.5cm) and +(45:0.3cm) .. (3024);
\draw[ribbon] (v3-) .. controls +(130:0.4cm) and +(-45:0.3cm) .. (3012);
\draw[ribbon] (v3+) .. controls +(80:0.3cm) and +(120:0.3cm) ..
	(0.9,2.3) to[out=-60,in=150] (3014);
\draw[ribbon,overline] (v3+) .. controls +(-30:0.3cm) and +(-100:0.3cm) ..
	(0.9,2.8) to[out=80,in=-135] (3124);
%%frame
\draw[thin_overline={1}] (30) to[out=80,in=-90] (34);
\draw[thin_overline={1}] (34) to[out=45,in=-100] (31);
\draw (30) to[out=120,in=-90] (32);
\draw (30) to[out=50,in=-90] (31);
\draw (31) -- (32) -- (34);
%%f_\tau
\draw[opacity=0.3] (0.2,1.2)
	to[out=20,in=-120] (3014)
	to[out=60,in=-90] (1.2,4)
	to[out=170,in=-45] (3124)
	to[out=135,in=-90] (0.5,4.9)
	to[out=-110,in=45] (3012)
	to[out=-135,in=60] (-0.8,2)
	to[out=-30,in=135] (3024)
	to[out=-45,in=150] (0.2,1.2);
\end{scope}
%%%%%%%%%%%%%%%%%%%%%%%%%%%%%%%%%%%%%%%%%%%%%%%%%%%
%% hat v2
\begin{scope}[shift={(-4.5,3)}]
\node at (-2,0.8) {$\hat{v}_2$};
\node at (-0.4,2.3) {\smallerer $\hat{v}_2^+$};
\node at (-0.85,1.7) {\smallerer $\hat{v}_2^-$};
\node[emptynode, label=-90:0] (20) at (0,0) {};
\node[emptynode, label=-30:3] (23) at (0,2.5) {};
\node[emptynode, label=0:1] (21) at (1,4) {};
\node[emptynode, label=180:4] (24) at (-3,3.5) {};
\node[emptynode] (2013) at (0.5,2) {};
\node[emptynode] (2014) at (-1.5,2.5) {};
\node[emptynode] (2034) at (-0.8,1.1) {};
\node[emptynode] (2134) at (-0.6,3.4) {};
\node[ellipse_morphism={30},minimum size=10pt] (v2+) at (-0.8,2.2) {\;\;\;};
\node[ellipse_morphism={30},minimum size=10pt] (v2-) at (-0.5,1.8) {\;\;\;};
%%ribbons
\draw[ribbon] (v2-) -- (v2+);
\draw[ribbon] (v2-) .. controls +(-30:0.3cm) and +(170:0.3cm) .. (2013);
\draw[ribbon] (v2-) .. controls +(-100:0.4cm) and +(75:0.2cm) .. (2034);
\draw[ribbon] (v2+) .. controls +(80:0.4cm) and +(-30:0.3cm) .. (2014);
\draw[ribbon,overline] (v2+) .. controls +(170:0.5cm) and +(-140:0.3cm) .. (2134);
%%frame
\draw[thin_overline={1}] (20) -- (23) -- (24);
\draw (23) -- (21);
\draw (20) to[out=150,in=-80] (24);
\draw (20) to[out=60,in=-90] (21);
\draw (21) to[out=170,in=20] (24);
%%f_\tau
\draw[opacity=0.3] (0,1)
	to[out=45,in=-100] (2013)
	to[out=80,in=-90] (0.5,3.25)
	to[out=180,in=-40] (2134)
	to[out=140,in=-70] (-1,4.05)
	to[out=-100,in=60] (2014)
	to[out=-120,in=60] (-2,1.5)
	to[out=-30,in=165] (2034)
	to[out=-15,in=180] (0,1);
\end{scope}
%%%%%%%%%%%%%%%%%%%%%%%%%%%%%%%%%%%%%%%%%%%%%%%%%%%
%% hat v0
\begin{scope}[shift={(0,5)}]
\node at (-1.4,3.4) {$\hat{v}_0$};
\node at (-0.4,1.4) {\smallerer $\hat{v}_0^+$};
\node at (0.4,1.9) {\smallerer $\hat{v}_0^-$};
\node[emptynode, label=-90:2] (02) at (0,0) {};
\node[emptynode, label=0:1] (01) at (2,1) {};
\node[emptynode, label=180:3] (03) at (-2,1) {};
\node[emptynode, label=90:4] (04) at (0,4) {};
\node[emptynode] (0123) at (0.2,0.7) {};
\node[emptynode] (0124) at (0.7,2.5) {};
\node[emptynode] (0134) at (1,1.7) {};
\node[emptynode] (0234) at (-0.7,2.2) {};
\node[ellipse_morphism={45},minimum size=10pt] (v0+) at (-0.3,1.8) {\;\;\;};
\node[ellipse_morphism={45},minimum size=10pt] (v0-) at (0.3,1.5) {\;\;\;};
%%ribbon (1--3 goes under)
\draw[thin_overline={1}] (03) -- (01);
\draw[ribbon] (v0-) -- (v0+);
\draw[ribbon] (v0-) .. controls +(-90:0.3cm) and +(120:0.3cm) .. (0123);
\draw[ribbon] (v0-) .. controls +(-10:0.3cm) and +(150:0.3cm) .. (0134);
\draw[ribbon] (v0+) .. controls +(70:0.3cm) and +(-30:0.3cm) .. (0234);
\draw[ribbon] (v0+) .. controls +(170:0.4cm) and +(-150:0.2cm) ..
	(-0.3,2.2) .. controls +(30:0.3cm) and +(-90:0.2cm) .. (0124);
%%frame
\draw[thin_overline={1}] (02) -- (04);
\draw (03) -- (02) -- (01);
\draw (04) to[out=-150,in=90] (03);
\draw (04) to[out=-30,in=90] (01);
%%f_\tau
\draw[opacity=0.3] (0123)
	to[out=30,in=-135] (0.6,1)
	to[out=60,in=-120] (0134)
	to[out=60,in=-120] (1.45,2.7)
	to[out=-150,in=0] (0124)
	to[out=180,in=-20] (0,2.7)
	to[out=-150,in=60] (0234)
	to[out=-120,in=90] (-1,0.5)
	to[out=0,in=-150] (0123);
\end{scope}
%%%%%%%%%%%%%%%%%%%%%%%%%%%%%%%%%%%%%%%%%%%%%%%%%%%
%% stuff in between
%v1 -- v4
\draw[opacity=0.1, line width=3pt] (1023)
	.. controls +(150:1cm) and +(150:1cm) .. (-3,1);
\draw[opacity=0.1, line width=3pt, overline] (-3,1)
	.. controls +(-30:0.5cm) and +(-120:0.4cm) .. (4023);
%v1 -- v3
\draw[opacity=0.1, line width=3pt, overline] (1024) to[out=-30,in=-135] (3024);
%v1 -- v2
\draw[opacity=0.1, line width=3pt, overline] (1034) to[out=-135,in=-120] (2034);
%v4 -- v3
\draw[opacity=0.1, line width=3pt, overline] (4012)
	.. controls +(-45:0.2cm) and +(135:0.2cm) .. (1.5,1);
\draw[opacity=0.1, line width=3pt] (1.5,1)
	.. controls +(-45:0.2cm) and +(135:0.2cm) .. (3012);
%v4 -- v0
\draw[opacity=0.1, line width=3pt, overline] (4123)
	.. controls +(45:0.5cm) and +(-90:0.5cm) .. (0.7,4.5);
\draw[opacity=0.1, line width=3pt] (0.7,4.5)
	.. controls +(90:0.5cm) and +(-60:0.5cm) .. (0123);
%v4 -- v2
\draw[opacity=0.1, line width=3pt] (4013)
	.. controls +(150:1cm) and +(-30:1cm) .. (-2,4.5);
\draw[opacity=0.1, line width=3pt, overline] (-2,4.5)
	.. controls +(150:1cm) and +(0:0.5cm) .. (2013);
%v3 -- v2
\draw[opacity=0.1, line width=3pt, overline] (3014)
	.. controls +(-45:2cm) and +(-60:5cm) .. (5,5);
\draw[opacity=0.1, line width=3pt] (5,5)
	.. controls +(120:6cm) and +(0:2cm) .. (-2,10)
	.. controls +(180:4cm) and +(150:2cm) .. (2014);
%v2 -- v0
\draw[opacity=0.1, line width=3pt, overline] (2134)
	.. controls +(45:3cm) and +(170:2.5cm) .. (0,9.4);
\draw[opacity=0.1, line width=3pt] (0,9.4)
	.. controls +(-10:3cm) and +(-30:1cm) .. (0134);
%v1 -- v0
\draw[opacity=0.1, line width=3pt, overline] (1234)
	.. controls +(60:1cm) and +(-90:3cm) .. (-3,4)
	.. controls +(90:3cm) and +(150:2cm) .. (0234);
%v3 -- v0
\draw[opacity=0.1, line width=3pt, overline] (3124)
	.. controls +(60:1cm) and +(-70:4cm) .. (2.5,7)
	.. controls +(110:2cm) and +(80:1cm) .. (0124);
%%squiggly
\draw (-2,9.35) to[out=30,in=-150] +(0:0.5cm);
\draw (-2.67,6.5) to[out=60,in=-120] +(90:0.5cm);
\draw (3.3,5) to[out=-60,in=120] +(-90:0.5cm);
\draw (0.8,4.3) to[out=90,in=-90] +(115:0.5cm);
\end{tikzpicture}
\caption{Anchors in $\overline{\del \xi}$ fitting together. The co-orientation of the
co-ribbon graph is pointing towards us (except near $\hat{v}_0$).
The squiggles on the gray ribbons going into $\hat{v}_0$
indicate that there should be a half-twist. }
\label{f:anchors-xi}
\end{figure}
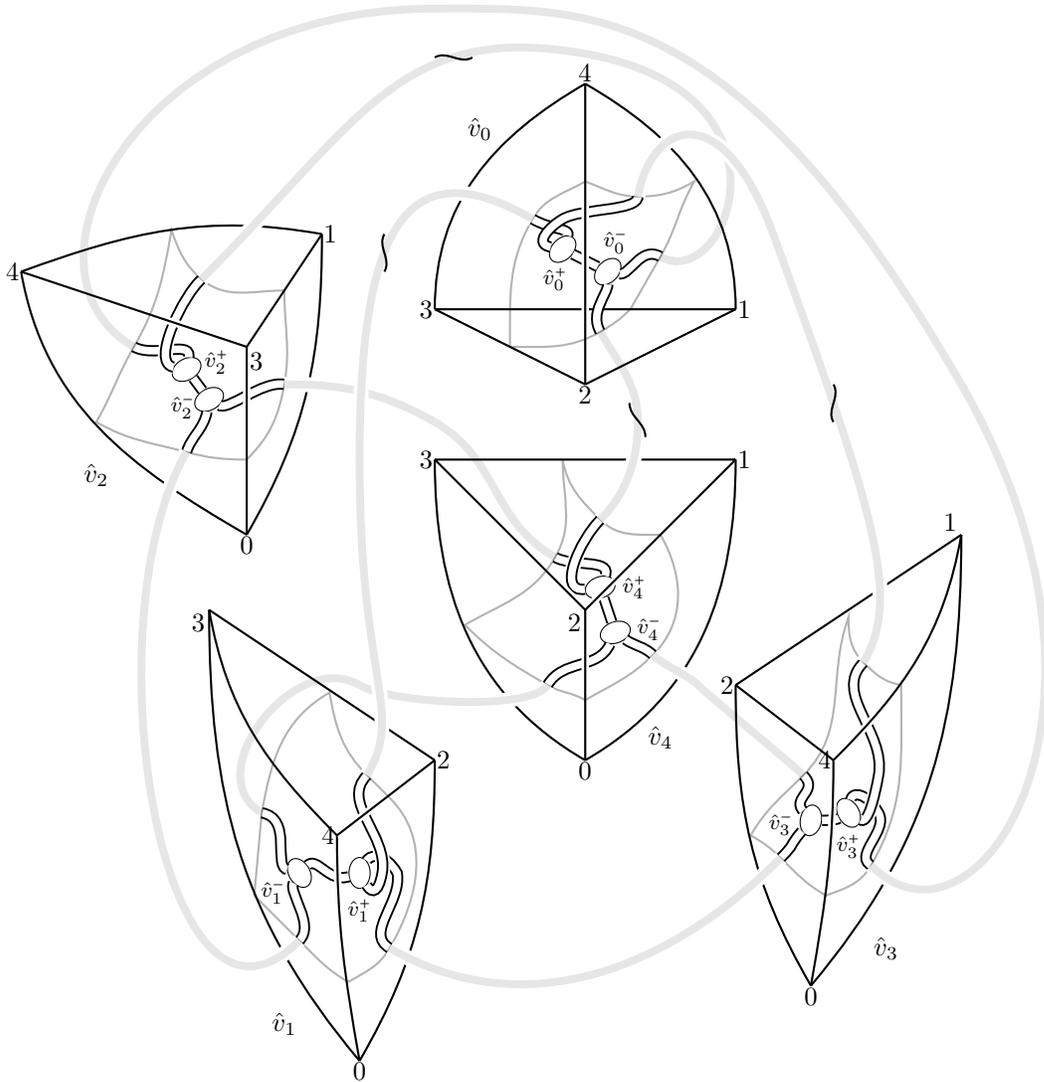
\begin{figure}[ht]
\begin{tikzpicture}
\node[small_morphism, label={[yshift=0.05cm]0:$\hat{v}_4^-$}] (v4-) at (0,0) {};
\node[small_morphism, label={[yshift=-0.1cm]0:$\hat{v}_4^+$}] (v4+) at (0,1) {};
\node[small_morphism, label={[xshift=0.1cm]-90:$\hat{v}_1^-$}] (v1-) at (-3,-1) {};
\node[small_morphism, label={[yshift=-0.00cm]0:$\hat{v}_1^+$}] (v1+) at (-2,-1) {};
\node[small_morphism, label={[xshift=0.05cm]-90:$\hat{v}_3^-$}] (v3-) at (2,-1) {};
\node[small_morphism, label={[xshift=-0.1cm]90:$\hat{v}_3^+$}] (v3+) at (3,-1) {};
\node[small_morphism, label={[yshift=0.05cm]0:$\hat{v}_2^-$}] (v2-) at (-2,3) {};
\node[small_morphism, label={[yshift=-0.1cm]0:$\hat{v}_2^+$}] (v2+) at (-2,4) {};
\node[small_morphism, label={[yshift=-0.1cm]0:$\hat{v}_0^-$}] (v0-) at (2,4) {};
\node[small_morphism, label={[yshift=0.05cm]0:$\hat{v}_0^+$}] (v0+) at (2,3) {};
%% v- -- v+
\draw (v0-) -- (v0+);
\draw (v1-) -- (v1+);
\draw (v2-) -- (v2+);
\draw (v3-) -- (v3+);
\draw (v4-) -- (v4+);
%v1- -- v4-
\draw (v1-) to[out=120,in=-150] (v4-);
%v1+ -- v3-
\draw (v1+) to[out=60,in=-120] (v3-);
%v1- -- v2-
\draw (v1-) to[out=-120,in=-150] (v2-);
%v4- -- v3-
\draw (v4-) to[out=-30,in=120] (v3-);
%v4+ -- v2-
\draw (v4+) to[out=30,in=-30] (v2-);
%v3+ -- v2+
\draw (v3+) to[out=60,in=-45] (1,2) to[out=135,in=30] (v2+);
%v2+ -- v0-
\draw (v2+) to[out=150,in=30] (v0-);
%v3+ -- v0+
\draw (v3+) to[out=-60,in=-45] (3,1) to[out=135,in=-150] (v0+);
%v1+ -- v0+
\draw[overline] (v1+) to[out=-60,in=-30] (v0+);
%v4+ -- v0-
\draw[overline] (v4+) to[out=150,in=150] (v0-);
\end{tikzpicture}
\caption{Simplified version of \figref{f:anchors-xi}.}
\label{f:anchors-xi-simplified}
\end{figure}

\end{proof}

\clearpage
\section{Crane-Yetter Invariant as an Extended TQFT}
%\subsubsection{Category of Boundary Values for PLCW Decomposition}
\label{s:cy-extended-plcw}
\vspace{0.8in}

\subsection{$\ZCY$ for 4-Manifolds with Corners}
\par \noindent

We discuss an extension of the definition of the Crane-Yetter invariant
to 4-manifolds with corners.
The modifications we make to the original state sum
seem very similar to \ocite{barrett-obs}
(they also work with the category of representations of
quantum $\mathfrak{s}\mathfrak{k}_2$),
but we were not aware of their work until recently.

%First, we need to consider 3-manifolds with boundary.

\begin{definition}
Let $N$ be a closed surface.
A \emph{colored marked PLCW decomposition} $(\cN,l)$ of $N$
is a marked PLCW decomposition $\cN$ with a simple labeling $l$.
\label{d:colored-marked-PLCW}
\end{definition}

It is best to think of $N$ as an unoriented
(but possibly orientable) surface
unless the orientation is explicitly needed;
for example,
$N$ may be a closed surface in the boundary of
an oriented 4-manifold,
say $\del W = M \cup_N M'$,
wherein the outward orientations on $N$ with respect to $M$ and $M'$,
themselves given the outward orientation with respect to $W$,
are different.

\begin{definition}
For an oriented PLCW 3-manifold $\cM$
with boundary $\cN$, let $l_\cN$ be a simple labeling of $\cN$.
We define the \emph{relative pre-state space} to be
\[
H(\cM; (\cN,l_\cN)) := \bigoplus_{l|_\cN = l_\cN} H(\cM; l)
\]
i.e. the sum of pre-state spaces over all labelings on $\cM$
agreeing with $l_\cN$ on $\cN$.
\end{definition}

The corresponding relative state space will be defined in
\defref{d:state-space-boundary}.

\begin{definition}
\label{d:relative-state-space-pairing}
The \emph{relative pre-state space pairing}
is the perfect pairing
\begin{align}
%ev: H(\cM;(\cN,l_\cN)) \tnsr H(\overline{\cM};(\cN,l_\cN)) \to \kk
\evdel : H(\cM;(\cN,l_\cN)) \tnsr H(\overline{\cM};(\cN,l_\cN))
	&\to \kk
\\
\bigotimes \vphi_C \tnsr \bigotimes \vphi_C'
	&\mapsto d_{l_\cN}^{1/2} \prod_C (\vphi_C,\vphi_C')
\label{e:relative-state-space-pairing}
\end{align}
that applies 
the local state space pairing of \lemref{l:local-state-space-dual}
to each corresponding pair of 3-cells.
(Note we do not have to deal with the orientation on $\cN$,
using the same colored marked PLCW decomposition $(\cN,l_\cN)$
for both $M$ and $\ov{M}$.)
%The relative state space $H(\ov{\cM}; (\cN,l_\cN))$
%is easily seen to be naturally dual to $H(\cM; (\cN, l_\cN))$
%by applying the local state space pairing (\lemref{l:local-state-space-dual})
%%\eqnref{e:pairing}
%to each 3-cell,
%as in \eqnref{e:state-space-pairing}.
\end{definition}

\begin{definition}
Let $\cW$ be an oriented PLCW 4-manifold,
and let $\cN \subset \del \cW$ be a PLCW closed surface in its boundary.
Recall from \eqnref{e:zcy-label}:
\begin{equation}
	Z(\cW,l) =
		\ev (\bigotimes_T Z(T,l)) \in H(\del \cW, l)
\label{e:zcy-label-repeat}
\end{equation}

Given a simple labeling $l_\cN$ of $\cN$,
the \emph{restricted Crane-Yetter invariant} $\ZCY(\cW;(\cN,l_\cN))$
is
\begin{equation}
	\ZCY(\cW; (\cN,l_\cN)) :=
%	\cD^{v(\cW,\cN)-e(\cW,\cN)} \sum_l \prod_f d_{l(f)}^{n_f} Z(\cW,l)
	\cD^{\vme(\cW \backslash \del \cW) + \frac{1}{2}\vme(\del \cW \backslash \cN)}
		\sum_l \prod_f d_{l(f)}^{n_f} Z(\cW,l)
	\in H(\del \cW; (\cN, l_\cN))
\label{e:cy-restr}
\end{equation}
where
\begin{itemize}
\item the sum is taken over all equivalence classes of simple labelings $l$
	that agree with $l_\cN$ on $\cN$,
\item $f$ runs over the set of unoriented 2-cells of $\cW$ not in $\cN$,
\item $n_f =
	\begin{cases}
		1 &\textrm{if } f \textrm{ is an internal 2-face} \\
		\frac{1}{2} & f \in \del \cW \backslash \cN \\
		0 & f \in \cN \;\; (\text{redundant by previous point})
	\end{cases}
	$
\item $\cD$ is the dimension of the category (see \eqnref{e:dimC})
\item $\vme(X) = v(X) - e(X) =
	\textrm{number of 0-cells} - \textrm{number of 1-cells}$
%\item $e(\cW,\cN) = \textrm{number of internal 1-cells} +
%	\frac{1}{2}\textrm{number of boundary 1-cells not in } \cN$
%\item $v(\cW,\cN) = \textrm{number of internal vertices} +
%	\frac{1}{2}\textrm{number of boundary vertices not in } \cN$
\item $d_{l(f)}$ is the categorical dimension of $l(f)$
\end{itemize}

The \emph{extended Crane-Yetter invariant} $\ZCY(\cW; \cN)$ is
\begin{equation}
\ZCY(\cW; \cN) := \sum_{l_\cN} \ZCY(\cW; (\cN,l_\cN))
\in H(\del \cW)
\label{e:cy-extn}
\end{equation}

\label{d:zcy-corner}
\end{definition}

Essentially, the definition is the same as \defref{d:zcy},
but the coefficient for each labeling is weighted as if
$\cN$ was removed from $\cW$;
this was cooked up to make the gluing along manifolds with boundary
to hold
(similar to the $1/2$'s that appear in $\ZCY(\cW)$
(\defref{d:zcy}) to account for gluing along boundaries).

\begin{theorem}
For an oriented PLCW 4-manifold $\cW$,
with corner $\cN \subset \del \cW$,
$\ZCY(\cW;\cN)$ is independent of the choice of PLCW decomposition
in the interior;
that is, if $\cW'$ is another PLCW decomposition that agrees with $\cW$
on the boundary, then
$\ZCY(\cW';\cN) = \ZCY(\cW;\cN) \in H(\del \cW)$.
\label{t:invariance-zcy-corner}
\end{theorem}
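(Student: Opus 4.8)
The plan is to mirror the proof of \thmref{t:invariance-zcy} essentially verbatim, the only new ingredient being the observation that subdividing an \emph{interior} cell of $\cW$ cannot touch $\cN$, nor any of the $\cN$-dependent factors in \eqnref{e:cy-restr}. First I would fix an anchoring of $\cW$ (anchorings exist, and, exactly as in the closed case, the resulting invariant is functorial under change of boundary anchors and independent of the choice of interior anchors, so this choice does not matter). By \thmref{t:elementary-boundary} any two PLCW decompositions of $\cW$ that agree on $\del\cW$ are related by a finite sequence of elementary subdivisions of interior cells, so it suffices to prove $\ZCY(\cW;\cN)=\ZCY(\cW';\cN)$ when $\cW'$ is obtained from $\cW$ by a single elementary subdivision of an interior $k$-cell $C$, for $k=1,2,3,4$.

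Since $\cN\subset\del\cW$ is a subcomplex and $C$ is interior, the subdivision leaves $\del\cW$ — hence $\cN$ — unchanged; in particular $\vme(\del\cW\backslash\cN)$ is constant, the factor $\prod_{f\subset\del\cW\backslash\cN} d_{l(f)}^{1/2}$ is constant, and the labelings $l$ of $\cN$ appearing in $\ZCY(\cW;\cN)=\sum_{l_\cN}\ZCY(\cW;(\cN,l_\cN))$ are in obvious bijection. Therefore comparing $\ZCY(\cW;\cN)$ with $\ZCY(\cW';\cN)$ reduces to precisely the same local identity in the interior that appears in \thmref{t:invariance-zcy}: the case $k=1$ because $\vme(\cW\backslash\del\cW)$ is unchanged; the case $k=3$ from the change-of-basis identity for $\ev_{X_i}$ applied along the new internal $2$-cell; the case $k=4$ from \lemref{l:summation-separate}; and the case $k=2$ from the cyclic-ordering identity on $\eval{\,\cdot\,}$. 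In each case the $\cN$-dependent prefactors pull out untouched, so each identity holds word for word as before.

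The one point that genuinely requires a remark, and which I expect to be the main (minor) obstacle, is the case $k=2$: there the argument first modifies $\cW$ by a cascade of elementary subdivisions (and inverse subdivisions) of the $3$- and $4$-cells cyclically arranged around $F=C$, so that $F$ meets a single $3$-cell, before performing the subdivision along $F$. Here one must check that none of these preparatory moves disturb $\del\cW$ or $\cN$. This holds because an interior $2$-cell has only interior $3$-cells in its link — if a $3$-cell with $F$ in its boundary lay in $\del\cW$ then $F\subset\del\cW$, contradicting that $F$ is interior — so all the cells subdivided during the preparation are themselves interior, and the whole procedure stays away from $\cN$. With this in hand, the computation of \thmref{t:invariance-zcy} applies unchanged and gives $\ZCY(\cW';\cN)=\ZCY(\cW;\cN)$, as claimed; everything else is bookkeeping inherited from the closed case via \defref{d:zcy-corner}.
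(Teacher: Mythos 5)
Your proposal is correct and follows essentially the same route as the paper, whose proof is just the one-line observation that the argument for \thmref{t:invariance-zcy} only ever involves the coefficients $d_{l(f)}$ of interior 2-cells, so it carries over verbatim. Your extra check for the $k=2$ case — that the preparatory subdivisions around an interior 2-cell only touch interior 3- and 4-cells — is a valid and worthwhile elaboration of the same point.
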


\begin{proof}
Follows from \thmref{t:invariance-zcy};
we just have to note that in the proof of invariance under
elementary moves, only coefficients $d_{l(f)}$ for 2-cells $f$
in the interior are involved.
\end{proof}

The gluing result holds:

\begin{proposition}
Let $\cW$ be an oriented PLCW 4-manifold,
and suppose the 3-cells of $\del \cW$ are partitioned
into three 3-manifolds, $\del \cW = \cM_0 \cup \cM \cup \cM'$,
such that $\cM$ and $\cM'$ are disjoint,
and $\cM' \simeq \overline{\cM}$.
In particular, $\del \cM_0 = \del \cM \sqcup \del \cM'$
as unoriented manifolds.
Let $\cW'$ be the PLCW manifold obtained from $\cW$ by identifying
$\cM, \cM'$.
%Let $\cN = \del \cM \sqcup \del \cM'$.
Then
\begin{align*}
\ZCY(\cW'; \del \cM_0)
	&= \ev (\ZCY(\cW; \del \cM_0))
\\
\ZCY(\cW')
	&= \evdel (\ZCY(\cW))
\end{align*}
where $\ev$ applies the local state space pairing 
(\lemref{l:local-state-space-dual}) %\eqnref{e:pairing}
to each 3-cell of $\cM$,
and $\evdel$ is the relative pre-state space pairing
\defref{d:relative-state-space-pairing}.
\label{p:zcy-corner-gluing}
\end{proposition}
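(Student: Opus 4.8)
\textbf{Proof proposal for Proposition~\ref{p:zcy-corner-gluing}.}

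The plan is to reduce both identities to the already-established (non-extended) gluing behavior of $Z(\cW,l)$, keeping careful track of the $\cD$-power prefactors and the $d_{l(f)}^{n_f}$ weights. First I would recall that $Z(\cW,l) = \ev(\bigotimes_T Z(T,l))$ depends only on the 4-cells and their local invariants, and is computed by contracting along \emph{interior} 3-cells only; this is a purely local piece of data that knows nothing about the $\cD$- or $d$-weights. So when we pass from $\cW$ to $\cW'$ by identifying $\cM$ with $\cM' \simeq \overline{\cM}$, the 3-cells of $\cM$ become interior 3-cells of $\cW'$, and by the very definition of $Z(\cW',l')$ together with the local state space pairing of \lemref{l:local-state-space-dual} (which, as stated there, agrees with $\ev$ on $H(C,l)\tnsr H(\ov C,l)$), we get $Z(\cW',l') = \ev_{\cM}(Z(\cW,l))$ for each labeling $l$ of $\cW$ restricting to $l'$, where $\ev_\cM$ applies the pairing to each 3-cell of $\cM$. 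This is essentially the content of the already-proved \corref{t:invariance-zcy}'s gluing corollary, restricted to labelings.

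Next I would do the bookkeeping on the prefactors. For the first identity, apply \eqnref{e:cy-restr} with corner $\del \cM_0$: in $\cW$, the 2-cells of $\cM$ carry weight $n_f = \tfrac12$ and are counted in $\vme(\del \cW \backslash \del\cM_0)$ with the $\tfrac12$ coefficient; after gluing, those same 2-cells become interior in $\cW'$ and carry weight $n_f = 1$, while the 1- and 0-cells of $\cM$ shift from the $\tfrac12\vme(\del\cW\backslash\del\cM_0)$ term into the $\vme(\cW'\backslash\del\cW')$ term, again doubling their contribution. Since $\cM$ and $\cM'$ are identified and $\cM'\simeq\overline{\cM}$ contributes an identical set of cells, the two halves combine to a whole in exactly the way the ``factor of $1/2$'' remark after \defref{d:zcy-corner} anticipates. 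Thus the total prefactor $\cD^{\vme(\cW\backslash\del\cW) + \frac12\vme(\del\cW\backslash\del\cM_0)}\prod_f d_{l(f)}^{n_f}$ for $\cW$ matches $\cD^{\vme(\cW'\backslash\del\cW') + \frac12\vme(\del\cW'\backslash\del\cM_0)}\prod_f d_{l(f)}^{n_f}$ for $\cW'$ term-by-term, once we sum over the labelings of $\cM$ being contracted. Summing over $l_{\del\cM_0}$ on both sides then yields the first identity.

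For the second identity one repeats the argument but now there is no corner on $\cW'$ (the corner $\del\cM = \del\cM'$ disappears on gluing, since it becomes a manifold sitting in the interior — more precisely, $\del\cM_0$ is no longer singled out), so one must use the relative pre-state space pairing $\evdel$ from \defref{d:relative-state-space-pairing}, which carries the extra $d_{l_\cN}^{1/2}$ factor precisely to account for the 2-cells of the corner $\cN = \del\cM$ that were given weight $0$ in the extended invariant. I would check that $\evdel = \ZCY(M\times I)$ in the sense indicated there, so that the $d_{l_\cN}^{1/2}$ in $\evdel$ exactly supplies the missing weight that upgrades $\ZCY(\cW)$ (which omits $\cN$-weights entirely) to $\ZCY(\cW')$ (which must include the full weight of the now-interior $\cN$). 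The main obstacle is this last accounting: making sure every cell of $\cM$, $\cM'$, and the corner $\cN$ is counted with exactly the right fractional weight on each side, and that the $\evdel$ pairing's $d_{l_\cN}^{1/2}$ prefactor lands on the right labelings — this is where a sign or factor-of-$\cD$ slip would occur, so I would write out the exponent of $\cD$ and the product of $d$'s explicitly for a generic labeling before and after gluing, and compare. Everything else is a direct appeal to \lemref{l:local-state-space-dual}, \thmref{t:invariance-zcy-corner}, and the definitions.
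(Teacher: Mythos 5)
The paper's own proof of this proposition is a one-line ``essentially by definition,'' so your instinct to supply the explicit bookkeeping is the right one, and your treatment of the first identity is correct. It is worth noting \emph{why} the first identity is clean: the corner $\del\cM_0$ contains all of $\del\cM$ and $\del\cM'$, so every cell of $\cN := \del\cM$ is excluded from the $\cD$-exponent and carries $d$-weight $0$ on both sides; the only cells whose status changes are those of $\cM\backslash\cN$, and for these the two weights of $\tfrac12$ genuinely combine to $1$.

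Your argument for the second identity, however, does not go through as described, and the step you defer (``write out the exponent of $\cD$ and the product of $d$'s explicitly\dots and compare'') is precisely where the entire content of the claim lies, so it cannot be waved through. Two concrete problems. First, you assert that $\ZCY(\cW)$ ``omits $\cN$-weights entirely''; that is $\ZCY(\cW;\cN)$, not $\ZCY(\cW)$. The plain invariant of \defref{d:zcy} assigns each of the \emph{two} copies of each 2-cell of $\cN$ the weight $d_{l(f)}^{1/2}$ and counts $\vme(\cN)$ twice with coefficient $\tfrac12$, so $\ZCY(\cW)$ already carries the full weight $d_{l_\cN}\cdot\cD^{\vme(\cN)}$ on the corner. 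Second, after gluing, the cells of $\cN$ do \emph{not} become interior: they lie in $\del\cW' = \cM_0/(\del\cM\sim\del\cM')$, so $\ZCY(\cW')$ assigns them only the half weight $d_{l_\cN}^{1/2}\cdot\cD^{\frac12\vme(\cN)}$. The ``two halves combine to a whole'' heuristic therefore fails exactly on $\cN$, and the extra $d_{l_\cN}^{1/2}$ carried by $\evdel$ points in the wrong direction to repair this. Carrying out the comparison for a fixed labeling, one finds
\[
\evdel\bigl(\ZCY(\cW)\bigr)\Big|_{l}
= \cD^{\frac12\vme(\cN)}\, d_{l_\cN}\cdot \ZCY(\cW')\Big|_{l},
\]
i.e.\ a discrepancy of $\cD^{\frac12\vme(\cN)}d_{l_\cN}$ per labeling rather than the clean cancellation you claim. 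So you must either locate an error in this count (for instance, a convention about the PLCW structure on $\cW'$ near the identified $\cN$ that you have not used), or conclude that the second identity requires a correcting factor of the form $\cD^{-\frac12\vme(\cN)}d_{l_\cN}^{-1}$ inside the sum over corner labelings. Either way, the mechanism you describe for why the identity holds is not the correct one, and the proof is incomplete until this accounting is actually done.
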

\begin{proof}
Essentially by definition.
\end{proof}

It is often convenient to state the gluing result as the
composition of maps:

\begin{proposition}
Let $\cW, \cW'$ be an oriented PLCW 4-manifolds,
and suppose $\del \cW = \overline{\cM} \cup_{\cN} \cM'$,
$\del \cW' = \overline{\cM'} \cup_{\cN} \cM''$.
We say $\cW$ is a \emph{cornered cobordism over $\cN$
from $\cM$ to $\cM'$},
denoted $\cW : \cM \to_\cN \cM'$
(likewise $\cW' : \cM' \to_\cN \cM''$;
see also \defref{d:cornered-cobordism}).

Then for a simple labeling $l_\cN$ of $\cN$,
\[
\ZCY(\cW \cup_{\cM'} \cW'; (\cN,l_\cN))
= \ev(\ZCY(\cW'; (\cN,l_\cN)) \tnsr \ZCY(\cW; (\cN,l_\cN)))
\]
where the evaluation applies the local state space pairing
(\lemref{l:local-state-space-dual}) %\eqnref{e:pairing}
to each 3-cell of $\cM'$.
Thus
\[
\ZCY(\cW \cup_{\cM'} \cW'; \cN)
= \ev(\ZCY(\cW'; \cN) \tnsr \ZCY(\cW; \cN))
\]

We may interpret $\ZCY(\cW; (\cN,l_\cN))$ as a map
\[
\ZCY(\cW; (\cN,l_\cN)) : H(\cM; (\cN,l_\cN)) \to H(\cM'; (\cN,l_\cN))
\]
and similarly, $\ZCY(\cW; \cN)$ as a map
\[
\ZCY(\cW; \cN) : H(\cM) \to H(\cM')
\]
Then
\begin{align*}
\ZCY(\cW \cup_{\cM'} \cW'; (\cN,l_\cN))
&= \ZCY(\cW'; (\cN,l_\cN)) \circ \ZCY(\cW; (\cN,l_\cN))
: H(\cM; (\cN, l_\cN)) \to H(\cM'; (\cN, l_\cN))
\\
\ZCY(\cW \cup_{\cM'} \cW'; \cN)
&= \ZCY(\cW'; \cN) \circ \ZCY(\cW; \cN)
: H(\cM) \to H(\cM')
\end{align*}

%\label{c:zcy-corner-gluing-map}
\label{p:zcy-corner-gluing-map}
\end{proposition}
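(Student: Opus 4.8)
The plan is to obtain both displayed identities from Proposition~\ref{p:zcy-corner-gluing} together with the elementary dictionary between contraction of tensors and composition of linear maps. Proposition~\ref{p:zcy-corner-gluing} is really a statement about a single 4-manifold in which a pair of complementary boundary pieces is glued, so the first step is to recast the present situation in that form. The disjoint union $\cW \sqcup \cW'$ is again an oriented PLCW 4-manifold, and since $Z(\cW,l)$, the prefactors $\cD^{\bullet}$ and $\prod_f d_{l(f)}^{n_f}$, and the sum over simple labelings are all multiplicative under disjoint union, one has
\[
\ZCY(\cW \sqcup \cW'; (\cN, l_\cN)) = \ZCY(\cW; (\cN, l_\cN)) \tnsr \ZCY(\cW'; (\cN, l_\cN))
\]
and the analogous identity after summing over $l_\cN$. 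Now the boundary of $\cW \sqcup \cW'$ contains the complementary pair $\cM' \subset \del \cW$ and $\overline{\cM'} \subset \del \cW'$, and identifying them produces $\cW \cup_{\cM'} \cW'$; applying Proposition~\ref{p:zcy-corner-gluing} to this gluing gives
\[
\ZCY(\cW \cup_{\cM'} \cW'; (\cN, l_\cN)) = \ev\big( \ZCY(\cW; (\cN, l_\cN)) \tnsr \ZCY(\cW'; (\cN, l_\cN)) \big),
\]
where $\ev$ applies the local state space pairing of Lemma~\ref{l:local-state-space-dual} to each 3-cell of $\cM'$. All of the combinatorial bookkeeping — the two weights $\tfrac12$ on each 2-cell of $\cM'$ combining to $1$ once that cell becomes interior, the two contributions $\tfrac12\vme(\cM' \setminus \cN)$ adding up, and the 2-cells lying in $\cN$ retaining weight $0$ — is exactly what is built into Definition~\ref{d:zcy-corner} and checked in Proposition~\ref{p:zcy-corner-gluing}, so nothing new is needed here; summing over $l_\cN$ yields the version without a fixed boundary coloring.

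It remains to read the displayed identity as a composition of maps. By Lemma~\ref{l:local-state-space-dual} the local state space pairing identifies $H(\overline{\cM}; (\cN, l_\cN))$ with $H(\cM; (\cN, l_\cN))^*$, three-cell by three-cell, and it is under exactly this identification that $\ZCY(\cW; (\cN, l_\cN)) \in H(\overline{\cM}; (\cN, l_\cN)) \tnsr H(\cM'; (\cN, l_\cN))$ is interpreted as a map $H(\cM; (\cN, l_\cN)) \to H(\cM'; (\cN, l_\cN))$, and likewise $\ZCY(\cW'; (\cN, l_\cN))$ as a map $H(\cM'; (\cN, l_\cN)) \to H(\cM''; (\cN, l_\cN))$. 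Under the isomorphism $\Hom(V, W) \cong V^* \tnsr W$, the composite of $A \in V^* \tnsr W$ with $B \in W^* \tnsr U$ is precisely the contraction of the $W$-slot of $A$ against the $W^*$-slot of $B$; carried out slotwise over the 3-cells of $\cM'$ this is exactly the $\ev$ appearing above (here one uses that $\cM'$ occurs in $\del \cW$ while $\overline{\cM'}$ occurs in $\del \cW'$, so the two families of slots are genuinely dual and the contraction is the ``diagonal'' one). Hence the displayed gluing identity becomes $\ZCY(\cW \cup_{\cM'} \cW'; (\cN, l_\cN)) = \ZCY(\cW'; (\cN, l_\cN)) \circ \ZCY(\cW; (\cN, l_\cN))$, and summing over $l_\cN$ gives the statement for $H(\cM) \to H(\cM'')$.

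The only point requiring any care is the orientation and corner bookkeeping in the reduction to Proposition~\ref{p:zcy-corner-gluing}: one must check that the boundary piece of $\cW$ being glued is $\cM'$ with its given orientation while the matching piece of $\cW'$ is $\overline{\cM'}$, so that the three-cell-wise pairing producing $\ZCY(\cW \cup_{\cM'} \cW'; (\cN, l_\cN))$ is the same one used to turn the invariants into maps, and that the corner $\cN$ is carried along correctly under the identification. Both of these are already implicit in Proposition~\ref{p:zcy-corner-gluing} and its proof, so beyond this there is no real obstacle — the remainder is the formal $\Hom(V, W) \cong V^* \tnsr W$ computation.
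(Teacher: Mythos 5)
Your argument is correct and matches what the paper intends: the paper gives no proof for this proposition, treating it as an immediate consequence of Proposition~\ref{p:zcy-corner-gluing} (itself ``essentially by definition''), and your reduction via multiplicativity under disjoint union followed by the $\Hom(V,W)\cong V^*\tnsr W$ dictionary is precisely the omitted unpacking. The only bookkeeping worth flagging explicitly (and which you do handle implicitly) is that the corner of $\cW\sqcup\cW'$ is $\cN\sqcup\cN$, whose two copies become identified under the gluing of $\cM'$ to $\overline{\cM'}$, and that the exponents $\tfrac12\vme(\del\cW\backslash\cN)+\tfrac12\vme(\del\cW'\backslash\cN)$ recombine into $\vme$ of the now-interior $\cM'$ plus $\tfrac12\vme$ of the remaining boundary.
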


The extended $\ZCY$ also respects the
extended composition (see \defref{d:composition-extended})
wherein the corners can be different,
which in fact is a generalization of the above result:

\begin{proposition}
\label{p:zcy-corner-gluing-alt}
Let $\cW_1,\cW_2$ be cornered cobordisms
\begin{align*}
\cW_1 &: \cM_1 \to_{\cN_1} \cM_1'
\\
\cW_2 &: \cM_2 \to_{\cN_2} \cM_2'
\end{align*}
Suppose $\cM_1' \subseteq \cM_2$ is a PLCW submanifold.
Let $\cW = \cW_1 \cup_{\cM_1'} \cW_2$,
and $\cM = (\cM_2 \backslash \cM_1') \cup \cM_1$,
so that $\cW$ is naturally a cornered cobordism
\[
\cW : \cM \to_{\cN_2} \cM_2'
\]

Then $\ZCY(\cW_1; \cN_1)$ and $\ZCY(\cW_2;\cN_2)$ compose to give
$\ZCY(\cW; \cN_2)$; more precisely,
\begin{align*}
\ZCY(\cW; \cN_2) &= \ZCY(\cW_2; \cN_2) \circ
	(\id \tnsr \ZCY(\cW_1; \cN_1))
\\
\ZCY(\cW;\cN_2)(\vphi \tnsr \psi)
&= \ZCY(\cW_2;\cN_2) (\vphi \tnsr \ZCY(\cW_1;\cN_1) (\psi))
\end{align*}
for $\psi \in H(\cM_1;(\cN_1,l)),
	\psi \in \bigotimes_{C \in \cM_2 \backslash \cM_1'} H(C,l)$,
$l$ a simple labeling of $\cM$.

Similarly, suppose that we have the reverse inclusion
$\cM_2 \subseteq \cM_1'$.
Let $\cW = \cW_1 \cup_{\cM_2} \cW_2$,
and $\cM' = (\cM_1' \backslash \cM_2) \cup \cM_2'$,
so that $\cW$ is naturally a cornered cobordism
\[
\cW : \cM_1 \to_{\cN_1} \cM'
\]

Then $\ZCY(\cW_1; \cN_1)$ and $\ZCY(\cW_2;\cN_2)$ compose to give
$\ZCY(\cW; \cN_1)$; more precisely,
\begin{align*}
\ZCY(\cW; \cN_1) &= (\id \tnsr \ZCY(\cW_2; \cN_2))
	\circ (\ZCY(\cW_1; \cN_1))
\\
\ZCY(\cW;\cN_1)(\Phi)
&= \sum_a \vphi^{(a)} \tnsr \ZCY(\cW_2;\cN_2) (\psi^{(a)})
\end{align*}
where $\ZCY(\cW_1;\cN_1) (\Phi) = \sum_a \vphi^{(a)} \tnsr \psi^{(a)}$,
with $\psi^{(a)} \in H(\cM_2)$.

\begin{equation}
\begin{tikzpicture}
\node[dotnode] (N2L) at (0,0) {};
\node[dotnode] (N2R) at (4,0) {};
\node[dotnode] (N1) at (2,0) {};
\draw (N2L) -- (N2R);
\draw (N2L) to[out=-80,in=-100] (N2R);
\draw (N1) to[out=80,in=100] (N2R);
\node at (2,-0.8) {$\cW_2$};
\node at (3,0.3) {$\cW_1$};
\node at (1,-0.25) {\smallerer $\cM_2 \backslash \cM_1'$};
\node at (3,-0.25) {\smallerer $\cM_1'$};
\node at (3,0.85) {\smallerer $\ov{\cM_1}$};
\node at (2,-1.4) {\smallerer $\cM_2'$};
\node at (-0.3,0) {\tiny $\cN_2$};
\node at (4.3,0) {\tiny $\cN_2$};
\node at (2,-0.2) {\tiny $\cN_1$};
\end{tikzpicture}
\;\;\;\;
\begin{tikzpicture}
\begin{scope}[shift={(0,-0.5)}]
\node[dotnode] (N2L) at (0,0) {};
\node[dotnode] (N2R) at (4,0) {};
\node[dotnode] (N1) at (2,0) {};
\draw (N2L) -- (N2R);
\draw (N2L) to[out=80,in=100] (N2R);
\draw (N1) to[out=-80,in=-100] (N2R);
\node at (2,0.7) {$\cW_1$};
\node at (3,-0.3) {$\cW_2$};
\node at (2,1.4) {\smallerer $\ov{\cM_1}$};
\node at (1,-0.25) {\smallerer $\cM_1' \backslash \cM_2$};
\node at (3,0.25) {\smallerer $\ov{\cM_2}$};
\node at (3,-0.85) {\smallerer $\cM_2'$};
\node at (-0.3,0) {\tiny $\cN_2$};
\node at (4.3,0) {\tiny $\cN_2$};
\node at (2,0.2) {\tiny $\cN_1$};
\end{scope}
\end{tikzpicture}
\end{equation}
(We label the 3-manifolds with the orientation that makes
the side on which the label appears the ``outside''.)
\footnote{``Are those ... invariants ... on a cob?''}
\end{proposition}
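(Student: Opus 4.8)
The two displayed identities are exchanged by reversing all orientations and relabelling: setting $\cW_1^{\mathrm{new}}=\overline{\cW_2}$, $\cW_2^{\mathrm{new}}=\overline{\cW_1}$ turns the hypothesis $\cM_1'\subseteq\cM_2$ of the first assertion into the hypothesis $\cM_2^{\mathrm{new}}\subseteq\cM_1'^{\mathrm{new}}$ of the second, and $\overline{\cW_1\cup_{\cM_1'}\cW_2}=\overline{\cW_2}\cup_{\overline{\cM_1'}}\overline{\cW_1}$. Since $\ZCY(\overline{\cdot};\cdot)$ is the adjoint of $\ZCY(\cdot;\cdot)$ by \lemref{l:zcy-adjoint}, and taking adjoints reverses the order of a composition, the second identity is precisely the adjoint of the first. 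So it suffices to prove the first.

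For the first, set $P:=\overline{\cM_2\backslash\cM_1'}$, a cobordism $\cN_1\to\cN_2$ (with $\del P=\overline{\cN_1}\sqcup\cN_2$, the $\cN_1$-boundary produced by cutting $\cM_2$ along $\del\cM_1'$). By \defref{d:cornered-cobordism-extend} the extension $(\cW_1)_P$ is a cornered cobordism $\cM_1\cup_{\cN_1}P\to_{\cN_2}\cM_1'\cup_{\cN_1}P=\cM_2$, and since $\cM_1\cup_{\cN_1}P=\cM$, \defref{d:composition-extended} identifies $\cW=\cW_1\cup_{\cM_1'}\cW_2$ with the ordinary composition $\cW_2\circ(\cW_1)_P$ of cornered cobordisms over the common corner $\cN_2$. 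Applying \prpref{p:zcy-corner-gluing-map} gives $\ZCY(\cW;\cN_2)=\ZCY(\cW_2;\cN_2)\circ\ZCY((\cW_1)_P;\cN_2)$, so everything reduces to the identity $\ZCY((\cW_1)_P;\cN_2)=\ZCY(\cW_1;\cN_1)\otimes\id_{H(P)}$ as maps $H(\cM_1)\otimes H(P)\to H(\cM_1')\otimes H(P)$ (the tensor factorizations being, as always, bookkeeping for the direct sum over the labelings of $\cN_1$ along which $\cM_1$, resp. $\cM_1'$, is glued to $P$).

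The cleanest way to see this last identity is to not introduce the auxiliary identity cobordism of \defref{d:cornered-cobordism-extend} at all, but to unwind the state sum for $\ZCY(\cW;\cN_2)$ directly, exactly as in the proof of \prpref{p:zcy-corner-gluing-map}: one already has $\cW=\cW_1\cup_{\cM_1'}\cW_2$, glued along the full boundary $3$-manifold $\cM_1'$ of $\cW_1$, which sits as the piece $\cM_1'\subseteq\cM_2$ of the incoming boundary of $\cW_2$. From \eqref{e:cy-restr} and \eqref{e:zcy-label}, the $4$-cells of $\cW$ are those of $\cW_1$ together with those of $\cW_2$, and the interior $3$-cells of $\cW$ are the interior $3$-cells of $\cW_1$, those of $\cW_2$, and the $3$-cells of $\cM_1'$ (boundary cells of both pieces, now interior). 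Applying $\ev$ (the local state-space pairing of \lemref{l:local-state-space-dual}) cell by cell, the pairings over the $\cM_1'$-cells are exactly what couples $Z(\cW_1,l)$ to $Z(\cW_2,l)$, whereas the factor $H(P)$ — which lies inside $\overline{\cM}\subset\del\cW$ and also inside $\overline{\cM_2}\subset\del\cW_2$ — is never paired and passes through untouched. Reading off the result yields $\ZCY(\cW;\cN_2)(\vphi\otimes\psi)=\ZCY(\cW_2;\cN_2)\bigl(\ZCY(\cW_1;\cN_1)(\psi)\otimes\vphi\bigr)$, which is the assertion; the independence of PLCW decomposition needed to carry out the gluing is \thmref{t:invariance-zcy-corner}.

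\textbf{Expected main obstacle.} The only genuine work is the coefficient bookkeeping, i.e. checking that $\cD^{\vme(\cW\backslash\del\cW)+\frac12\vme(\del\cW\backslash\cN_2)}\prod_f d_{l(f)}^{n_f}$ for $\cW$ factors into the corresponding data for $\cW_1$ (with corner $\cN_1$) and $\cW_2$ (with corner $\cN_2$). The delicate point is that the corner surface $\cN_1$ of $\cW_1$ is \emph{not} the corner of $\cW$: its $2$-cells carry weight $n_f=0$ in $\ZCY(\cW_1;\cN_1)$ but $n_f=\tfrac12$ both in $\ZCY(\cW_2;\cN_2)$ and in $\ZCY(\cW;\cN_2)$, so $0+\tfrac12=\tfrac12$ matches; the interior $\cM_1'$-cells contribute $\tfrac12+\tfrac12=1$; and the $\vme$-counts split because vertices and edges of $\cW$ are those of $\cW_1$ and $\cW_2$ with the shared $\cM_1'$-skeleton counted once. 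This is the same mechanism that makes \defref{d:zcy-corner} glue correctly and that underlies \prpref{p:zcy-corner-gluing-map}, so it should go through verbatim; no new idea is required beyond care with the half-integer weights and the labeling sum along $\cN_1$.
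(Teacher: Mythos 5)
Your proof is correct and follows essentially the same route as the paper, whose own argument is just a terse version of your weight-bookkeeping observation: the $2$-cells of $\cN_1$ carry weight $0$ in $\ZCY(\cW_1;\cN_1)$ and $\tfrac12$ in $\ZCY(\cW_2;\cN_2)$, matching the $\tfrac12$ they carry as boundary cells of $\cW$ not in $\cN_2$, while the remaining $\cM_1'$-cells contribute $\tfrac12+\tfrac12=1$ as newly interior cells. Your extra reduction of the second identity to the first via orientation reversal is a nice touch (the paper treats both cases as equally immediate), though note that \lemref{l:zcy-adjoint} is stated only for closed boundary components, so strictly you should either invoke its evident cornered analogue with the relative pairing or just run the same direct state-sum unwinding for the second configuration.
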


\begin{proof}
Note that we have a slightly more general situation than
\defref{d:composition-extended}, as $\cN,\cN'$ can meet here.
Once again, this is a straightforward consequence of the definitions.
As mentioned before, $\ZCY(\cW;\cN)$ is essentially the same as
$\ZCY(\cW)$, but the weights are designed to ``omit'' $\cN$.
Thus, intuitively, in the composition
$\ZCY(\cW_2; \cN_2) \circ (\id \tnsr \ZCY(\cW_1; \cN_1))$,
$\cN_1$ appears in both terms, but contributes to the $\cD$ and $d_{l(f)}$
weights in the $\ZCY(\cW_2; \cN_2)$ and not in
$(\id \tnsr \ZCY(\cW_1; \cN_1))$.

%
%We note that later, in \defref{d:cat-boundary-values},
%we define a notion of ``composing states'' $\tnsr_\cN$,
%which is the correct way to combine states along a boundary;
%the results above hold if we replace $\tnsr$ by $\tnsr_\cN$.
\end{proof}

Note that $\cN_1$ and $\cN_2$ may not be disjoint,
so $\cM_2 \backslash \cM_1'$ may not be a submanifold.

The following lemma is useful for computations:

\begin{lemma}
Let $\cW$ be a cell-like 4-ball,
and suppose $\del \cW = \overline{\cM}_{in} \cup_\cN \cM_{out}$,
where $\cM_-$ are 3-balls,% and $\cN \simeq S^2$,
so $\cW : \cM_{in} \to_\cN \cM_{out}$.
Furthermore,
suppose the bottom hemisphere $\cM_{out}$ consist of exactly one 3-cell $C_{out}$.

Choose some anchoring of $\cW$ and simple labeling $l$.
Consider some $\Phi = \bigotimes \vphi_C \in H(\cM_{in}; (\cN,l))$.
Assemble the anchors in $\cM_{in}$ to a ribbon graph $\Gamma$,
and color the coupons by $\vphi_C$,
giving us a colored ribbon graph $(\Gamma, \Phi)$.
Under an identification $\cM_{in} \simeq \cM_{out}$
as PL manifolds,
$(\Gamma, \Phi)$ may be viewed in $C_{out}$,
and has some Reshetikhin-Turaev evaluation (\defref{d:zrt-eval})
$\vphi_{ev}\in H(C_{out},l) = H(\cM_{out}; (\cN,l))$.
Then
\[
\ZCY(\cW; \cN) (\Phi) =
%\cD^x
\cD^{\frac{1}{2} \vme(\cM_{in} \backslash \cN)}
%\prod_{f \in \cM_{in} \backslash \cN} d_{l(f)}^{1/2}
d_{l^\circ}^{1/2}
	\cdot \vphi_{ev}
\]
where $d_{l^\circ}^{1/2} = \prod_{f \in \cM_{in} \backslash \cN} d_{l(f)}$.
%where
%$x = \frac{1}{2} (v(\cM_{in} \backslash \cN) - e(\cM_{in} \backslash \cN))
%= \frac{1}{2} (v(\cM_{in}) - v(\cN) - e(\cM_{in}) + e(\cN))$.

Thus, if $C_{out}$ is a 3-cell in some PLCW 3-manifold $\cM$,
and $\cM' = (\cM \backslash C_{out} ) \cup \cM_{in}$,
then for simple labeling $l$ of $\cM'$,
and $\Psi \in \prod_{C \in \cM \backslash C_{out}} H(C,l)$,
by \prpref{p:zcy-corner-gluing-alt},
$\Psi \tnsr \Phi$ and
$\Psi \tnsr (\cD^{\vme(\cM_{in} \backslash \del \cM_{in})/2}
	d_{l^\circ}^{1/2} \cdot \vphi_{ev})$
are equivalent as states (\defref{d:pre-state-equiv}),
i.e.
\begin{equation}
A_{\cM', \cM''}(\Psi \tnsr \Phi)
= A_{\cM,\cM''} (\Psi \tnsr (\cD^{\vme(\cM_{in} \backslash \del \cM_{in})/2}
	d_{l^\circ}^{1/2} \cdot \vphi_{ev}))
\label{e:lifts-relate}
\end{equation}
for any $\cM''$.

In words, if $\cM$ is obtained from $\cM'$ by ``merging''
several 3-cells into one 3-cell,
then a pre-state of $\cM'$ is equivalent (up to a factor)
to the pre-state
obtained by ``evaluating'' the subgraph in those 3-cells.
\label{l:state-space-moves}
\end{lemma}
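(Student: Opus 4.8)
The plan is to evaluate $\ZCY(\cW;(\cN,l))$ directly from a maximally coarse PLCW structure on $\cW$ and read off the formula; the ``thus'' statements are then formal consequences of the gluing results already in hand.

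First I would use \thmref{t:invariance-zcy-corner} to replace the interior of $\cW$ by the cell-like structure with a single $4$-cell $T=\cW$. For this structure $\cW$ has no cells in its open interior, so $\vme(\cW\backslash\del\cW)=0$ and there are no interior $3$-cells to contract; every $2$-cell of $\cW$ lies in $\del\cW$, and those outside $\cN$ are exactly the $2$-cells of $\cM_{in}\backslash\cN$, each weighted by $n_f=\tfrac12$. A short cell count --- using that $\cM_{out}=C_{out}$ has all of its $0$- and $1$-cells in $\cN$ --- gives $\vme(\del\cW\backslash\cN)=\vme(\cM_{in})-\vme(\cN)=\vme(\cM_{in}\backslash\cN)$. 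Hence \eqnref{e:cy-restr} collapses to
\[
\ZCY(\cW;(\cN,l))=\cD^{\frac12\vme(\cM_{in}\backslash\cN)}\,d_{l^\circ}^{1/2}\,Z(T,l)\in H(\del\cW,l),
\]
with $d_{l^\circ}^{1/2}=\prod_{f\in\cM_{in}\backslash\cN}d_{l(f)}$ exactly as in the statement.

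The substance of the argument is to identify the pairing of $Z(T,l)$ against $\Phi$ along the $3$-cells of $\cM_{in}$ with $\vphi_{ev}$. The $3$-cells of $\del T=\del\cW$ are those of $\cM_{in}$ together with $C_{out}$, and the graph $\Gamma_T=\bigcup_C\psi_C$ from \defref{d:ZCY3cell} is precisely $\Gamma\cup\psi_{C_{out}}$ in the notation of the lemma. By \defref{d:ZCY3cell}, $Z(T,l)$ corresponds under the local state space pairing to the Reshetikhin--Turaev functional $\ZRT(\Gamma_T,l,-)$ on $H(\overline{\del T},l)$; since that pairing is $\ev$ of \eqnref{e:pairing} by \lemref{l:local-state-space-dual}, contracting the $\cM_{in}$-tensor-factors of $Z(T,l)$ against the coupon labels $\vphi_C$ of $\Phi$ amounts to colouring the coupons of $\Gamma$ by the $\vphi_C$. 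What remains is a single coupon --- that of $\psi_{C_{out}}$ --- around which one may excise a ball identified with the standard anchor; by \prpref{p:rt-evaluation-ball} the resulting $S^3$-evaluation equals the ball evaluation of $(\Gamma,\Phi)$ in the complementary ball, paired against the remaining input. Under the identification $\cM_{in}\simeq\cM_{out}$ (with boundaries matched via the common anchor data on $\cN$ and the anchor on $C_{out}$) this complementary ball is exactly $C_{out}$ with $(\Gamma,\Phi)$ inside it, so the element of $H(C_{out},l)$ produced is $\vphi_{ev}$ by definition. Feeding this back through the map interpretation of \prpref{p:zcy-corner-gluing-map} and summing over $l$ extending $l_\cN$ gives $\ZCY(\cW;\cN)(\Phi)=\cD^{\frac12\vme(\cM_{in}\backslash\cN)}d_{l^\circ}^{1/2}\vphi_{ev}$. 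The hard part will be the orientation bookkeeping in this step --- the appearance of $\overline{\del T}$ rather than $\del T$, keeping the anchor on $C_{out}$ compatible with cutting $C_{out}$ out of $\del T\cong S^3$, and the identification $\cM_{in}\simeq\cM_{out}$ --- which I would isolate into a separate bookkeeping step so the main line stays clean.

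Finally, for the ``thus'' statements I would apply \prpref{p:zcy-corner-gluing-alt} to $\cW_1=\cW:\cM_{in}\to_\cN C_{out}$ and $\cW_2$ the identity cobordism on $\cM$, for which $C_{out}\subseteq\cM_2=\cM$, the glued manifold $\cW_1\cup_{C_{out}}\cW_2$ is PL-homeomorphic to $\cM'\times I$, and $\ZCY(\cW_2;\cdot)=A$. The composition formula there yields $A_{\cM',\cM''}(\Psi\tnsr\Phi)=A_{\cM,\cM''}\bigl(\Psi\tnsr\ZCY(\cW;\cN)(\Phi)\bigr)$, which together with the formula just proved is exactly \eqnref{e:lifts-relate} and the asserted equivalence as states.
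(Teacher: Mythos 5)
Your proposal is correct and follows essentially the same route as the paper: the paper likewise reduces to the coefficient count $\cD^{\vme(\cW\backslash\del\cW)+\frac12\vme(\del\cW\backslash\cN)}\prod_f d_{l(f)}^{n_f}=\cD^{\frac12\vme(\cM_{in}\backslash\cN)}d_{l^\circ}^{1/2}$ and then identifies $\ZRT(\Gamma\cup\Gamma',l,\Phi\tnsr\vphi')$ with $\ZRT(\Gamma'\cup\Gamma',l,\vphi_{ev}\tnsr\vphi')$ via the local Reshetikhin--Turaev evaluation, pairing against an arbitrary $\vphi'\in H(\ov{C_{out}},l)$ to conclude. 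The only cosmetic difference is that the paper makes the dual vector $\vphi'$ explicit throughout, which you leave implicit.
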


\begin{proof}
%Write $d_{l^\circ}^{1/2} = \prod_{f \in \cM_{in} \backslash \cN} d_{l(f)}$.
Note that
$\vme(\cW \backslash \del \cW) = 0$
and $\vme(\del \cW \backslash \cN) = \vme(\cM_{in} \backslash \cN)$.
We have, for any $\vphi' \in H(\overline{C_{out}},l)$,
\begin{align*}
\ev (\ZCY(\cW;\cN) (\Phi), \vphi')
&= \cD^{\vme(\cW \backslash \del \cW) + \frac{1}{2}\vme(\del \cW \backslash \cN)}
		d_{l(f)}^{n_f}
	\ev (Z(\cW,l), \Phi \tnsr \vphi')
\\
&= \cD^{\frac{1}{2} \vme(\cM_{in} \backslash \cN)} d_{l^\circ}^{1/2}
	\ev (Z(\cW,l), \Phi \tnsr \vphi')
\\
&= \cD^{\frac{1}{2} \vme(\cM_{in} \backslash \cN)} d_{l^\circ}^{1/2}
	\ZRT(\Gamma \cup \Gamma', l, \Phi \tnsr \vphi')
\text{ where }\Gamma' \text{ is anchor for }C_{out}
\\
&= \cD^{\frac{1}{2} \vme(\cM_{in} \backslash \cN)} d_{l^\circ}^{1/2}
	\ZRT(\Gamma' \cup \Gamma', l, \vphi_{ev} \tnsr \vphi')
\\
&= \ev (\cD^{\frac{1}{2} \vme(\cM_{in} \backslash \cN)} d_{l^\circ}^{1/2}
	\cdot \vphi_{ev}, \vphi').
\end{align*}
(recall \defref{d:ZCY3cell}).
\end{proof}

This fact is useful for dealing with lifts of states in $H(\cM)$,
as it relates them to states for different PLCW decomposition.

Recall the ``empty states'' $\emptystate{\cM}$
from \eqnref{e:emptystate}.
It follows from \lemref{l:state-space-moves} that,
for PLCW structures $\cM,\cM',\cM''$ on a closed 3-manifold,
\begin{equation}
\label{e:empty-state-relate}
A_{\cM,\cM''} (\cD^{-\frac{1}{2}\vme(\cM)} \emptystate{\cM})
=
A_{\cM',\cM''} (\cD^{-\frac{1}{2}\vme(\cM')} \emptystate{\cM'})
%A_{\cM,\cM''} (\cD^{(-v(\cM) + e(\cM))/2} \emptystate{\cM})
%=
%A_{\cM',\cM''} (\cD^{(-v(\cM') + e(\cM'))/2} \emptystate{\cM'})
\end{equation}
Indeed, by \thmref{t:elementary-boundary},
PLCW decompositions $\cM,\cM'$ are related by elementary moves
fixing the boundary.
The operation of changing $C_{out}$ to $\cM_{in}$
is a single-cell subdivision,
and by \prpref{p:single-cell-subdivision},
can relate any two PLCW decompositions.
It is clear that the relation above holds for $\cM,\cM'$
related by a single-cell move, so we are done.
This will be made more clear in \secref{s:sk-equiv}.

\begin{remark}
\label{r:state-space-null-graph}
\lemref{l:state-space-moves} is the analog of
``null graphs with respect to $D$'' in the skeins context
(see \secref{s:skein-cat}),
in the sense that \lemref{l:state-space-moves}
provides an explicit description of how states transform when the
PLCW decomposition changes locally,
while a null graph defines an equivalence between graphs
that are the same outside a local region.
\end{remark}

We may define the relative state space similar
to \prpref{p:projectors-compose}:

\begin{definition}
Let $M$ be a 3-manifold with boundary $\del M = N$.
Consider $W = M \times I$ with corner $N \times 0$,
so that it is a cornered cobordism from $M$ to $M$.
Let $\cM,\cM'$ be PLCW decompositions of $M$
with the same boundary $\cN$.
Let $\cW$ be a PLCW structure on $W$
that extends $\cM$ on the incoming $M$
and $\cM'$ on the outgoing $M$.

We define, for simple labeling $l_\cN$ on $\cN$,
\begin{align*}
A_{\cM,\cM'; l_\cN}
&:= \ZCY(\cW; (\cN,l_\cN)) :
H(\cM; (\cN,l_\cN)) \to H(\cM'; (\cN,l_\cN))
\\
A_{\cM,\cM'}
&:= \ZCY(\cW; \cN) :
H(\cM) \to H(\cM')
\end{align*}
Note $A_{\cM,\cM'} = \sum_l A_{\cM,\cM';l}$,
summed over all simple labelings of $\cN$.

The maps $A_{\cM,\cM'}$ compose as in \prpref{p:projectors-compose},
i.e. $A_{\cM,\cM''} = A_{\cM',\cM''} \circ A_{\cM,\cM'}$,
so in particular $A_\cM := A_{\cM,\cM}$ is a projection,
and the spaces
\[
\ZCY(\cM; (\cN,l)) := \im(A_{\cM,\cM; l})
\]
are canonically identified, thus we may define
$\ZCY(M; (\cN,l)) = \ZCY(\cM; (\cN,l))$
without ambiguity.
We call $\ZCY(M;(\cN,l))$ the
\emph{relative state space of $M$}.

We will often simple denote $A_{\cM,\cM'}$ by $A$
when the context is clear.
\label{d:state-space-boundary}
\end{definition}

Similar to before, we may talk about a \emph{lift} of a state
in $\ZCY(\cM; (\cN,l))$ to a pre-state in $H(\cM; (\cN,l))$.

We also consider the extension of 
\defref{d:state-space-pairing-reduced} to
3-manifolds with boundary:

\begin{definition}
\label{d:reduced-relative-state-space-pairing}
For $\vphi \in H(\cM;(\cN,l)), \vphi' \in H(\ov{\cM};(\cN,l))$,
the \emph{reduced relative pre-state space pairing} is defined as
\begin{equation}
\evdelbar (\vphi, \vphi')
:= \evdel (\vphi, A(\vphi'))
= \evdel (A(\vphi), \vphi').
\label{e:pairing-A-corner}
\end{equation}
\end{definition}

Just as in \defref{d:state-space-pairing-reduced},
the reduced pairing can also be given a more TQFT-esque flavor:

\begin{lemma}
\label{l:reduced-pairing-TQFT}
Observe that $M \times I$ has boundary $M \cup_N \ov{M}$
(where $N = \del M$);
we treat it as a cobordism
$M\times I : M \cup_N \ov{M} \to \emptyset$.
Give $M \cup_N \ov{M}$ the PLCW structure $\cM \cup_\cN \ov{\cM}$.
Then for $\vphi \in H(\cM;(\cN,l)), \vphi' \in H(\ov{\cM};(\cN,l))$,
$\vphi \tnsr \vphi'$ defines a pre-state in
$H(\cM \cup_\cN \ov{\cM})$,
and
\[
\ZCY(M \times I) (\vphi \tnsr \vphi')
= d_l^{1/2} \ev (\ZCY(M \times I; \cN) (\vphi), \vphi')
= \evdelbar(\vphi,\vphi')
\]
(here the $\ev$ is just $\evdel$ but without
the $d_l^{1/2}$ coefficient).
\end{lemma}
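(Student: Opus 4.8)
The plan is to prove the two equalities separately. The second equality, $d_l^{1/2}\,\ev(\ZCY(M\times I;\cN)(\vphi),\vphi')=\evdelbar(\vphi,\vphi')$, requires essentially no computation: by \defref{d:state-space-boundary} the operator $\ZCY(M\times I;\cN)$ is by definition the projector $A$ appearing in \defref{d:reduced-relative-state-space-pairing}, and by \defref{d:relative-state-space-pairing} the relative pairing $\evdel$ differs from the pairing $\ev$ (the local state space pairing applied to each matching pair of $3$-cells) exactly by the scalar $d_l^{1/2}$. Hence $\evdelbar(\vphi,\vphi')=\evdel(A(\vphi),\vphi')=d_l^{1/2}\,\ev(A(\vphi),\vphi')$, which is the claim.

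For the first equality I would fix a single PLCW structure $\cW$ on $M\times I$ that restricts to $\cM\cup_\cN\ov\cM$ on the boundary; the same $\cW$ presents $M\times I$ both as a cobordism $M\cup_N\ov M\to\emptyset$ and, reading $\cN\subset\del\cW$ as the corner, as a cornered cobordism $\cM\to_\cN\cM$ computing $A$. By \lemref{l:H-lift} the left-hand side equals $\ev(\ZCY(\cW),\vphi\tnsr\vphi')$, where $\ev$ is the pairing applied to all $3$-cells of the double, while by \prpref{p:zcy-corner-gluing-map} the right-hand side equals $d_l^{1/2}\,\ev(\ZCY(\cW;(\cN,l)),\vphi\tnsr\vphi')$. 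So it suffices to compare $\ZCY(\cW)$ with $\ZCY(\cW;(\cN,l))$ after pairing against a pre-state whose labeling on $\cN$ is $l$. Expanding both via \defref{d:zcy} and \defref{d:zcy-corner}, they are the same sum $\sum_{l'}(\text{a power of }\cD)\prod_f d_{l'(f)}^{n_f}Z(\cW,l')$, with exactly two differences: the extended invariant fixes $l'|_\cN=l$ while the plain one sums over all labelings of $\cN$, and every $2$-face $f$ of $\cN$ carries weight $n_f=\tfrac12$ in $\ZCY(\cW)$ but $n_f=0$ in $\ZCY(\cW;(\cN,l))$, with a corresponding change of the exponent of $\cD$.

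Pairing against $\vphi\tnsr\vphi'$ annihilates every term with $l'|_\cN\neq l$, since the local state space pairing is diagonal in labelings (\eqnref{e:pairing_sub}); among the surviving terms the $2$-faces of $\cN$ contribute precisely the extra factor $\prod_{f\in\cN}d_{l(f)}^{1/2}=d_l^{1/2}$, which is the factor in the statement. The one delicate point — and the step I expect to be the main obstacle — is verifying that after this the exponents of $\cD$ also agree, with no residual power of $\cD$ surviving alongside $d_l^{1/2}$; this is where the precise shape of $\cW$ near the corner matters (a product collar $\cN\times I$ in $\del\cW$ versus the bare gluing $\cM\cup_\cN\ov\cM$), and I would dispose of it by the same collar-collapse argument as in \lemref{l:state-space-moves} and the discussion around \eqnref{e:empty-state-relate}, where collapsing such a collar is shown to produce exactly the compensating power of $\cD$. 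Everything else is routine unwinding of the definitions.
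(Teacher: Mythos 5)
Your handling of the second equality is correct and purely definitional: $\ZCY(M\times I;\cN)$ restricted to the label $l$ is the projector $A$ of \defref{d:state-space-boundary}, and $\evdel$ differs from the bare product of local pairings by $d_l^{1/2}$. The first equality, however, contains a genuine gap at exactly the point you flagged, and the fix you propose does not close it. Since both $\ZCY(\cW)$ and $\ZCY(\cW;(\cN,l))$ are invariant under subdivision of the interior (\thmref{t:invariance-zcy}, \thmref{t:invariance-zcy-corner}), both sides are forced to be computed from one and the same PLCW $4$-manifold $\cW$ with one and the same boundary decomposition $\ov{\cM}\cup_\cN\cM$ — the statement of the lemma and \defref{d:state-space-boundary} both fix this, so there is no ``product collar versus bare gluing'' ambiguity to exploit. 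Comparing \defref{d:zcy} with \defref{d:zcy-corner} after pairing against $\vphi\tnsr\vphi'$, the surviving terms differ by the factor $d_l^{1/2}$ from the face weights, as you say, but also by $\cD^{\frac12(\vme(\del\cW)-\vme(\del\cW\setminus\cN))}=\cD^{\frac12\vme(\cN)}$, and this exponent is not zero in general: for $\cN$ a decomposition of a genus-$g$ surface with $f$ two-cells it equals $\frac12(2-2g-f)$. A collar collapse as in \lemref{l:state-space-moves} cannot remove it — that argument compares pre-states across \emph{different} boundary PLCW structures, whereas here the two boundary structures coincide — and in any case the factor it produces is $\cD^{-\frac12 e(\cN)}$, not $\cD^{-\frac12\vme(\cN)}$.

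You can see the discrepancy concretely with $M=B^3$, $\cM$ a single $3$-cell, $l\equiv\one$, $\vphi=\vphi'=\id_\one$, and $\cW=B^4$ cell-like: the left-hand side is $\cD^{\frac12\vme(\cN)}$, while $d_l^{1/2}\,\ev(A(\vphi),\vphi')=1$. (That value of the left-hand side is forced by consistency with \prpref{p:eval-zcy} together with the normalization $\ups(\emptystate{\cM_{in}})=\cD^{\frac12\vme(\cM_{in})}\cdot\emptyskein{S^3}$ used in the proof of \thmref{t:sk-equiv-4mfld}.) So either the lemma implicitly assumes $\vme(\cN)=0$ or its first equality should carry the extra factor $\cD^{\frac12\vme(\cN)}$; note that the paper offers no argument here (the proof is declared obvious), and the quarter-powers $\cD^{\frac14\vme(\cN)}d_{l_\cN}^{1/4}$ built into $\ups$ in \lemref{l:ups-projections-intertwine} are precisely what one would insert to absorb such a factor on the skein side. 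Your write-up should compute the exponent explicitly and either surface this correction factor or state the restriction on $\cN$ under which it disappears, rather than appealing to \lemref{l:state-space-moves}.
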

\begin{proof}
Obvious.
\end{proof}

\subsection{Category of Boundary Values: PLCW version}
\par \noindent

As an extended theory, $\ZCY$ should associate to each surface $N$
a category $\ZCY(N)$, known as the category of boundary
values. Here we define such a category;
however, we only develop many of its properties later,
in the more convenient language of skein categories.

\begin{definition}
Let $N$ be an \emph{oriented} closed surface.
Define the category $\hatZCY(N)$ whose objects are
``direct sums'' of colored marked PLCW compositions
$\bigoplus (\cN,l)$.
We describe morphisms for objects $(\cN,l)$ and
extend to direct sums.

Morphisms are given by
\[
\Hom_{\hatZCY(N)}((\cN,l), (\cN',l')) = \ZCY(N\times I; (\cN,l), (\cN',l')),
\]
where $(\cN,l)$ is imposed on $N \times 0$
and $(\cN',l')$ on $N \times 1$,
and $N \times I$ is given the orientation such that
the outward orientation at $N \times 1$ is the orientation of $N$
(under the obvious identification $N \simeq N \times 1$).
%These are extended to direct sums of objects in the obvious way.

The identity morphism for $(\cN,l)$ given by
\[
\id_{(\cN,l)} = 
\cD^{\frac{1}{2} e(\cN)} d_l^{-1/2}
A(\wdtld{\id}_{(\cN,l)})
\]
%\text{ where }
where
\begin{align*}
\wdtld{\id}_{(\cN,l)}
&= \bigotimes_{f\in \cN} \coev_{l(f)} \in
	H(\cN \times I, l \times I)
\\
d_l^{-1/2}
&= \prod_{f \in \cN} d_{l(f)}^{-1/2}
\end{align*}
where $l \times I$ is the labeling with $l$ on both copies
$\cN \times 0$ and $\cN \times 1$,
and $\one$ on all other 2-faces $e \times I$ of $\cN \times I$;
more intuitively, it is the ``vertical graph'' $m \times I$,
where $m$ is the marking.
(See \lemref{l:zcysk-identity}.)

The composition of morphisms is given by:
%\defref{d:morphism-compose-plcw}:
\begin{align*}
\Hom_{\hatZCY(N)}((\cN',l'), (\cN'',l''))
\tnsr
\Hom_{\hatZCY(N)}((\cN,l), (\cN',l'))
&\to
\Hom_{\hatZCY(N)}((\cN,l), (\cN'',l''))
\\
\vphi' \tnsr \vphi &\mapsto
\vphi' \circ \vphi :=
A(\vphi' \tnsr \vphi)
%A(\vphi' \tnsr_{\cN'} \vphi)
\end{align*}
%where
%\[
%\vphi' \tnsr_{\cN'} \vphi :=
%d_{l'}^{-1/2} \vphi' \tnsr \vphi,
%\;\;\;\;
%d_{l'} = \prod d_{l'(f)}
%\]
%where the product is taken over unoriented 2-cells in $\cN'$.
%(The factor of $d_{l'}^{-1/2}$ in $\tnsr_{\cN'}$ is necessary to ensure
%that the identity elements $\id_{(\cN,l)}$ behave appropriately.
%This is a simple consequence of \lemref{l:state-space-moves}.)

We define $\ZCY(N)$ to be the Karoubi envelope of $\hatZCY(N)$:
\[
\ZCY(N) = \Kar(\hatZCY(N))
\]
\label{d:cat-boundary-values}
\end{definition}

\begin{lemma}
\label{l:zcysk-identity}
The morphism $\id_{(\cN,l)}$ defined in \defref{d:cat-boundary-values}
indeed satisfy the properties of an identity morphism.
\end{lemma}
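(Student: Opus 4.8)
The plan is to verify the two defining properties of an identity morphism: $\id_{(\cN',l')} \circ \vphi = \vphi$ for any $\vphi \in \Hom_{\hatZCY(N)}((\cN,l),(\cN',l'))$, and symmetrically $\vphi \circ \id_{(\cN,l)} = \vphi$. Since composition is $\vphi' \circ \vphi = A(\vphi' \tnsr \vphi)$ and $\id_{(\cN',l')}$ is (up to the normalizing factor $\cD^{\frac12 e(\cN')} d_{l'}^{-1/2}$) the image under $A$ of the ``vertical graph'' pre-state $\wdtld{\id}_{(\cN',l')} = \bigotimes_{f \in \cN'} \coev_{l'(f)}$, the key is to understand what happens when one glues $N \times I$ (carrying $\vphi$) to another copy of $N \times I$ carrying the vertical graph. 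Topologically this glued manifold is again $N \times I$, and the gluing is exactly the situation of \prpref{p:zcy-corner-gluing-map}, so $\id_{(\cN',l')} \circ \vphi = \cD^{\frac12 e(\cN')} d_{l'}^{-1/2} \cdot A(A(\wdtld{\id}_{(\cN',l')}) \tnsr \vphi)$, and since $A$ is idempotent and $\vphi$ is already in the image of $A$, this equals $\cD^{\frac12 e(\cN')} d_{l'}^{-1/2} \cdot A(\wdtld{\id}_{(\cN',l')} \tnsr \vphi)$.

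The heart of the argument is then a local computation: I would show that gluing the vertical-graph slab $\cN' \times I$ on top of the cobordism carrying $\vphi$ does not change the state, up to precisely the factor $\cD^{-\frac12 e(\cN')} d_{l'}^{1/2}$ that cancels the normalization. This is an application of \lemref{l:state-space-moves}: the vertical graph $m \times I$ in the product PLCW structure $\cN' \times I$ is, cell by cell, the Reshetikhin-Turaev evaluation of the ``trivial'' anchored ribbon graph — each prism over a 2-cell $f$ contributes a single ribbon colored $l'(f)$ with coupon $\coev_{l'(f)}$, whose RT-evaluation back into the target 3-cell is the identity on the boundary value. Concretely, $\wdtld{\id}_{(\cN',l')}$ is a lift of the image state $\id_{(\cN',l')}$ in the sense of \defref{d:H-lift}, and merging the slab's cells back realizes \eqnref{e:lifts-relate} with $\vphi_{ev}$ equal to the restriction of $\vphi$; tracking the exponents $\vme(\cM_{in} \backslash \cN)$ and $d_{l^\circ}^{1/2}$ there against $e(\cN')$ and $d_{l'}^{1/2}$ gives the claimed cancellation. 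For a 3-manifold which is a slab $N \times [0,1]$ over a surface, $\vme$ of the interior vanishes and the boundary contribution reduces to $-e(\cN')$ (the vertices and edges of $\cN'$), matching the $\cD^{\frac12 e(\cN')}$ in the definition.

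I would organize the proof as: (1) reduce to showing $A(\wdtld{\id}_{(\cN',l')} \tnsr \vphi) = \cD^{-\frac12 e(\cN')} d_{l'}^{1/2} \vphi$ using idempotency of $A$ and $\vphi \in \im A$; (2) identify the relevant gluing as an instance of \lemref{l:state-space-moves} / \prpref{p:zcy-corner-gluing-alt}, where the slab $\cN' \times I$ is a PLCW decomposition of a collar that merges into the identity cobordism; (3) compute $\vphi_{ev}$ for the vertical graph — each $\coev_{l'(f)}$ ribbon evaluates to the appropriate boundary coloring, so $\vphi_{ev} = \vphi$ as a pre-state; (4) do the bookkeeping of $\cD$- and $d$-exponents to confirm the factors cancel; (5) repeat symmetrically for $\vphi \circ \id_{(\cN,l)}$, which is entirely parallel with $\cN$ in place of $\cN'$.

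The main obstacle I anticipate is step (3)–(4): carefully checking that the vertical-graph slab really does reduce to the identity map on pre-states via the Reshetikhin-Turaev evaluation, and that the normalization factors in \defref{d:cat-boundary-values} were chosen to exactly offset the $\cD^{\vme/2} d_{l^\circ}^{1/2}$ prefactor appearing in \lemref{l:state-space-moves}. One must be scrupulous about which 2-cells of $\cN' \times I$ count (only the ``vertical'' prisms $e \times I$ over edges $e$ of $\cN'$ carry color $\one$ and hence contribute trivially, while the top/bottom copies $\cN' \times \{0,1\}$ carry $l'$ but lie on the boundary of the slab), and about the interplay with the collar-neighborhood identification $M \times I \cup_{M \times 1} M \times I \simeq M \times I$ used implicitly throughout. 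Everything else — idempotency of $A$, the gluing formula, functoriality under anchor change — is already available from the cited results.
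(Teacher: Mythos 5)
Your proposal is correct and follows essentially the same route as the paper: take a lift, glue on the vertical-graph slab, invoke \lemref{l:state-space-moves} to merge the cells of $\cN \times I$ into the adjacent 3-cells (yielding the $d_l^{1/2}$ factor), and then account for the $\cD^{-\frac{1}{2}e(\cN)}$ coming from returning to the original PLCW structure via single-cell moves, so that the normalization in \defref{d:cat-boundary-values} cancels exactly. The paper's proof is just this computation written out, including the Euler-characteristic bookkeeping $\frac{1}{2}(\vme(\cN\times I)-\vme(\cN)) = -\frac{1}{2}e(\cN)$ that you flagged as the main point to verify.
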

\begin{proof}
Let $\wdtld{\vphi} \in H(\cM)$
represent some morphism in $\Hom_{\ZCY(N)}((\cN,l),(\cN',l'))$,
where $\cM$ is some PLCW structure on $N\times I$.
Consider $\wdtld{\vphi} \tnsr \wdtld{\id}_{(\cN,l)}
\in H(\cM \cup_\cN \cN \times I)$.
Using \lemref{l:state-space-moves},
we can merge each 3-cell of $\cN \times I$ with the adjacent
3-cell in $\cM$.
We then have essentially $\wdtld{\vphi}$ again,
with a factor of $d_l^{1/2}$ from the merging operations
(number of 0- and 1-cells remains the same, so no $\cD$ factor).
We then perform more single-cell moves (\prpref{p:single-cell-subdivision})
to modify the PLCW structure back to $\cM$;
this accumulates another factor of $\cD^{-\frac{1}{2}e(\cN)}$
(since
$\frac{1}{2} \vme(\cM \cup \cN \times I) - \frac{1}{2} \vme(\cM)
= \frac{1}{2}(\vme(\cN \times I) - \vme(\cN))
= \frac{1}{2}((2\vme(\cN) - v(\cN)) - \vme(\cN))
= -\frac{1}{2} e(\cN)$).
Thus it follows that
$A(\wdtld{\vphi} \tnsr A(\cD^{\frac{1}{2} e(\cN)} d_l^{-1/2} \wdtld{\id}))
= A(\wdtld{\vphi} \tnsr \cD^{\frac{1}{2} e(\cN)} d_l^{-1/2} \wdtld{\id})
= A(\wdtld{\vphi})$.
\end{proof}

\begin{proposition}
\label{p:zcy-surface-functors}
Let $N$ be an oriented closed surface.
A 3-manifold $M$ bounded by $\del M = N$
defines a functor
\begin{align*}
\cF_M: \ZCY(N) &\to \Vect
\\
(\cN,l) &\mapsto \ZCY(M;(\cN,l))
\end{align*}
and for a morphism $f : (\cN,l) \to (\cN',l')$,
\[
%\cF_M(f) = A_{\cN'} \circ (f \tnsr_\cN - )
\cF_M(f) = f \circ - := A(f \tnsr - )
\]
Furthermore, a cornered cobordism $W : M \to_\cN M'$
defines a natural transformation
\begin{align*}
\cF_W &: \cF_M \to \cF_{M'}
\\
(\cF_W)_{(\cN,l)} = \ZCY(W;l) &:
\ZCY(M;(\cN,l)) \to \ZCY(M'; (\cN,l))
\end{align*}

\end{proposition}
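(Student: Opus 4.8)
The plan is to construct $\cF_M$ first on the uncompleted category $\hatZCY(N)$ and then extend along the Karoubi envelope, leaning throughout on the gluing behaviour of the extended invariant (\prpref{p:zcy-corner-gluing-map}, \prpref{p:zcy-corner-gluing-alt}), on invariance under the interior PLCW decomposition (\thmref{t:invariance-zcy-corner}), and on the fact that a cylinder acts as an identity (\lemref{l:zcysk-identity}, \lemref{l:state-space-moves}). On objects I set $\cF_M(\cN,l) = \ZCY(M;(\cN,l))$, which by \defref{d:state-space-boundary} is a well-defined vector space canonically independent of the PLCW structure $\cM$ extending $(\cN,l)$, and then extend additively to formal direct sums. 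For a morphism $f \in \Hom_{\hatZCY(N)}((\cN,l),(\cN',l')) = \ZCY(N\times I;(\cN,l),(\cN',l'))$ I would pick a PLCW structure on $N\times I$ extending the two boundary structures, glue its $N\times 0$ face to $\del M$ to obtain a PLCW structure on $M\cup_N (N\times I)\cong M$ with boundary structure $(\cN',l')$, and set $\cF_M(f)(\psi):=A(f\tnsr\psi)$ for $\psi\in\ZCY(M;(\cN,l))$ -- precisely the ``$f\circ-$'' of the statement. Well-definedness (independence of the chosen lifts, of the PLCW structures, and of the identification $M\cup_N(N\times I)\cong M$) follows from \lemref{l:H-lift}, \thmref{t:invariance-zcy-corner} and \defref{d:state-space-boundary}; $\kk$-linearity in $f$ is immediate since $A(f\tnsr\psi)$ is bilinear.

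Next I would verify functoriality. For the identity axiom the computation is that of \lemref{l:zcysk-identity} with the second cylinder replaced by $M$: merging each $3$-cell of the glued collar $\cN\times I$ into the adjacent $3$-cell of $\cM$ via \lemref{l:state-space-moves} recovers $\psi$ up to a factor $d_l^{1/2}$, and re-standardizing the PLCW structure back to $\cM$ contributes $\cD^{-\frac12 e(\cN)}$, exactly cancelling the normalization $\cD^{\frac12 e(\cN)}d_l^{-1/2}$ built into $\id_{(\cN,l)}$. For composition with $g : (\cN',l')\to(\cN'',l'')$, the point is that gluing the two collars to $M$ is associative and that the projectors $A$ are idempotent and absorbed by gluings (\defref{d:state-space-boundary}, \prpref{p:zcy-corner-gluing-alt}):
\[
\cF_M(g)\bigl(\cF_M(f)(\psi)\bigr)=A\bigl(g\tnsr A(f\tnsr\psi)\bigr)=A(g\tnsr f\tnsr\psi)=A\bigl(A(g\tnsr f)\tnsr\psi\bigr)=\cF_M(g\circ f)(\psi).
\]

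Finally, since $\Vect$ is idempotent-complete, $\cF_M$ extends uniquely to $\ZCY(N)=\Kar(\hatZCY(N))$ by $((\cN,l),p)\mapsto\im\cF_M(p)$. For a cornered cobordism $W:M\to_\cN M'$, the map $\ZCY(W;l)$ of \prpref{p:zcy-corner-gluing-map} commutes with the $A$-projectors (compose $W$ with the cylinders $M\times I$ and $M'\times I$ and use $W\cup(M'\times I)\cong W\cong (M\times I)\cup W$), hence descends to $(\cF_W)_{(\cN,l)}:\ZCY(M;(\cN,l))\to\ZCY(M';(\cN,l))$, which is independent of the interior PLCW decomposition of $W$ by \thmref{t:invariance-zcy-corner}. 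Naturality in $(\cN,l)$ reduces to the observation that attaching the collar carrying $f$ on the outgoing side of $W$ and attaching it on the incoming side produce PL-homeomorphic $4$-manifolds (via \defref{d:composition-extended} and the collar identifications), so that both composites $\cF_{M'}(f)\circ(\cF_W)_{(\cN,l)}$ and $(\cF_W)_{(\cN',l')}\circ\cF_M(f)$ compute $\ZCY$ of the same $4$-manifold paired with $f$ and $\psi$; this follows from \thmref{t:invariance-zcy-corner} and \prpref{p:zcy-corner-gluing-alt}, and $\cF_W$ then passes to the Karoubi envelope by functoriality of $\Kar$.

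The only genuine subtlety I anticipate is the bookkeeping of the collar-absorption and normalization factors: one must confirm that gluing $N\times I$ onto $M$ (or onto a boundary component of $W$) contributes no net scalar beyond what \lemref{l:state-space-moves} prescribes, so that ``$f\circ-$'' is an honest action of $\hatZCY(N)$ on $\cF_M$ rather than merely a projective one. Once that normalization is pinned down -- and \lemref{l:zcysk-identity} already settles the crucial case -- everything else is formal from the gluing propositions and PLCW-invariance.
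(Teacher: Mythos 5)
Your proposal is correct and follows essentially the same route as the paper: define $\cF_M$ and $\cF_W$ on $\hatZCY(N)$, verify composition via the chain $A(g\tnsr A(f\tnsr\psi))=A(g\tnsr f\tnsr\psi)=A((g\circ f)\tnsr\psi)$ justified by \prpref{p:zcy-corner-gluing-alt} with suitable choices of corners, prove naturality of $\cF_W$ by comparing the two ways of attaching the collar carrying $f$ to $W$, and then extend to the Karoubi envelope. Your extra attention to the identity axiom and the normalization bookkeeping is sound but not a departure; the paper delegates that to \lemref{l:zcysk-identity}.
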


\begin{proof}
We first prove this for $\hatZCY(N)$.
The functor $\cF_M$ is shown to respect composition as follows.
For $\vphi \in \ZCY(M;(\cN,l))$,
$f \in \Hom_{\ZCY(N)}((\cN,l),(\cN',l'))$,
$g \in \Hom_{\ZCY(N)}((\cN',l'),(\cN'',l''))$,
consider the two diagrams

\begin{equation*}
\begin{tikzpicture}
\node[dotnode] (N1) at (1,0) {};
\node[dotnode] (N2) at (2,-1) {};
\node[dotnode] (N3) at (3,-2) {};
\draw (N1) -- (N3);
\draw (0,0) -- (N1);
\draw (0,-1) -- (N2);
\draw (0,-2) -- (N3);
\draw[dotted] (0,0) -- (-0.5,0);
\draw[dotted] (0,-1) -- (-0.5,-1);
\draw[dotted] (0,-2) -- (-0.5,-2);
\node at (1.6,0) {\smallerer $(\cN,l)$};
\node at (3,-1) {\smallerer $(\cN',l') = \cN_1$};
\node at (4,-2) {\smallerer $(\cN'',l'') = \cN_2$};
\node at (-0.7,0) {\smallerer $M$};
\node at (-0.7,-1) {\smallerer $M$};
\node at (-0.7,-2) {\smallerer $M$};
\node at (-0.3,-0.5) {\smallerer $W_1 = M \times I$};
\node at (-0.3,-1.5) {\smallerer $W_2 = M \times I$};
\node at (1.6,-0.4) {\smallerer $f$};
\node at (2.6,-1.4) {\smallerer $g$};
\node at (0.5,-0.2) {\smallerer $\vphi$};
%\node at (1,-1.2) {\smallerer $A(f \tnsr_\cN \vphi)$};
\node at (1,-1.2) {\smallerer $A(f \tnsr \vphi)$};
%\node at (1.5,-2.2) {\smallerer $A(g \tnsr_{\cN'} A(f \tnsr_\cN \vphi))$};
\node at (1.5,-2.2) {\smallerer $A(g \tnsr A(f \tnsr \vphi))$};
\end{tikzpicture}
\;\;\;\;
\begin{tikzpicture}
\node[dotnode] (N1) at (1,0) {};
\node[dotnode] (N2) at (2,-1) {};
\node[dotnode] (N3) at (3,-2) {};
\draw (N1) -- (N3);
\draw (0,0) -- (N1);
%\draw (0,-1) -- (N2);
\draw (0,-2) -- (N3);
\draw (N1) to[out=-90,in=170] (N3);
\draw[dotted] (0,0) -- (-0.5,0);
%\draw[dotted] (0,-1) -- (-0.5,-1);
\draw[dotted] (0,-2) -- (-0.5,-2);
\node (cN) at (1.6,0) {\smallerer $(\cN,l)$};
\node at (3,-1) {\smallerer $(\cN',l') = \cN_1$};
\node (cN'')at (3.6,-2) {\smallerer $(\cN'',l'')$};
\node (cN2) at (4,-0.5) {\smallerer $\cN_2$};
\draw[line width=0.4pt] (cN2) -- (cN);
\draw[line width=0.4pt] (cN2) to[out=-90,in=60] (cN'');
\node at (-0.7,0) {\smallerer $M$};
%\node at (-0.7,-1) {\smallerer $M$};
\node at (-0.7,-2) {\smallerer $M$};
\node at (1.6,-1) {\smallerer $W_1$};
\node at (-0.3,-1) {\smallerer $W_2 = M \times I$};
\node at (1.6,-0.4) {\smallerer $f$};
\node at (2.6,-1.4) {\smallerer $g$};
\node at (1.1,-1.4) {\smallerer $g \circ f$};
\node at (0.5,-0.2) {\smallerer $\vphi$};
%\node at (1.5,-2.2) {\smallerer $A((g \circ f) \tnsr_\cN \vphi)$};
\node at (1.5,-2.2) {\smallerer $A((g \circ f) \tnsr \vphi)$};
\end{tikzpicture}
\end{equation*}

To apply \prpref{p:zcy-corner-gluing-alt},
on the left, we give $W_1$ corner $\cN_1 = \cN'$,
and $W_2$ corner $\cN_2 = \cN''$;
on the right, we give $W_1$ corner $\cN_1 = \cN \sqcup \cN''$,
and $W_2$ corner $\cN_2 = \cN''$.
Then,
\begin{equation*}
\cF_M(g) (\cF_M(f) (\vphi))
%= A(g \tnsr_{\cN'} A(f \tnsr_\cN \vphi))
= A(g \tnsr A(f \tnsr \vphi))
%= A(g \tnsr_{\cN'} f \tnsr_\cN \vphi)
= A(g \tnsr f \tnsr \vphi)
%= A((g \circ f) \tnsr_\cN \vphi)
= A((g \circ f) \tnsr \vphi)
= \cF_M(g \circ f) (\vphi)
\end{equation*}

The naturality of $\cF_W$ follows from a similar argument:
from

\begin{equation*}
\begin{tikzpicture}
\node[dotnode] (N1) at (0,0) {};
\node[dotnode] (N2) at (0,-1) {};
\node at (0.3,0) {\smallerer $\cN_1$};
\node at (0.3,-1) {\smallerer $\cN_2$};
\node[emptynode] (A1) at (-2,0.5) {};
\node[emptynode] (A2) at (-2,-0.5) {};
\node[emptynode] (A3) at (-2,-1.5) {};
\draw[dotted] (A1) -- +(left:0.5cm) node[left] {\smallerer $M$};
\draw[dotted] (A2) -- +(left:0.5cm) node[left] {\smallerer $M'$};
\draw[dotted] (A3) -- +(left:0.5cm) node[left] {\smallerer $M'$};
\node at (-2,0) {\smallerer $W_1 = W$};
\node at (-1.8,-1) {\smallerer $W_2 = M' \times I$};
\draw (N1) -- (N2);
\node at (0.2,-0.5) {\smallerer $f$};
\draw (N1) to[out=150,in=0] (A1);
\draw (N2) to[out=-150,in=0] (A3);
\draw (N1) to[out=-150,in=0] (A2);
%\node at (-1.5,0.3) {\smallerer $\vphi$};
%\node at (-1.5,-2) {\smallerer $A(f \tnsr_{\cN_2} \ZCY(W;\cN_1)(\vphi))$};
\end{tikzpicture}
\;\;\;\;
\begin{tikzpicture}
\node[dotnode] (N1) at (0,0) {};
\node[dotnode] (N2) at (0,-1) {};
\node at (0.3,0) {\smallerer $\cN_1$};
\node at (0.3,-1) {\smallerer $\cN_2$};
\node[emptynode] (A1) at (-2,0.5) {};
\node[emptynode] (A2) at (-2,-0.5) {};
\node[emptynode] (A3) at (-2,-1.5) {};
\draw[dotted] (A1) -- +(left:0.5cm) node[left] {\smallerer $M$};
\draw[dotted] (A2) -- +(left:0.5cm) node[left] {\smallerer $M$};
\draw[dotted] (A3) -- +(left:0.5cm) node[left] {\smallerer $M'$};
\node at (-1.8,0) {\smallerer $W_1 = M \times I$};
\node at (-2,-1) {\smallerer $W_2 = W$};
\draw (N1) -- (N2);
\node at (0.2,-0.5) {\smallerer $f$};
\draw (N1) to[out=150,in=0] (A1);
\draw (N2) to[out=-150,in=0] (A3);
\draw (N2) to[out=150,in=0] (A2);
\end{tikzpicture}
\end{equation*}

we have
\[
\cF_{M'}(f) (\cF_{W_1} (\vphi))
%= A(f \tnsr_{\cN_1} \ZCY(W_1;\cN_1)(\vphi))
= A(f \tnsr \ZCY(W_1;\cN_1)(\vphi))
%= \ZCY(W_2;\cN_2)(A(f \tnsr_{\cN_1} \vphi))
= \ZCY(W_2;\cN_2)(A(f \tnsr \vphi))
= \cF_{W_2} ( \cF_M(f) (\vphi))
\]

The naturality of $\cF_W$
allows us to extend $\cF_M$ and $\cF_W$
to the Karoubi envelope:
for an object $((\cN,l),P) \in \ZCY(N)$,
\[
\cF_M( ( (\cN,l), P)) = P \cdot \cF_M( (\cN,l))
%\;\;\; (= \im (P \lcirclearrowright \cF_M( (\cN,l) ) ) )
\]
(here $P$ is acting by $(P \circ -)$)
and for a state
%$\vphi = A(P \tnsr_\cN \vphi) \in P \cdot \ZCY(M;(\cN,l))
%=\ZCY(M;((\cN,l),P))$,
%$\vphi = A(P \tnsr_\cN \vphi) \in P \cdot \cF_M((\cN,l))$,
$\vphi = P \circ \vphi \in P \cdot \cF_M((\cN,l))$,
\[
%\cF_W (\vphi) = \cF_W (A(P \tnsr_\cN \vphi))
\cF_W (\vphi) = \cF_W (A(P \tnsr \vphi))
%= A(P \tnsr_\cN \cF_W(\vphi))
= A(P \tnsr \cF_W(\vphi))
\in P \cdot \cF_{M'}( (\cN,l))
%\in P \cdot \ZCY(M';(\cN,l))
%= \ZCY(M';((\cN,l),P))
\]
\end{proof}

%TODO also talk about 2-category of surfaces in general,
%objects are $(\cN,l)$,
%1-morphisms are $M$,
%2-morphisms are $W$.
%composition of 1-morphism is $A(f \tnsr_{\cN'} g)$
%(note crucial to have $A$).

%\begin{remark}
%Note on notation: generally, the use of semicolon in
%symbols refers to restrictions from surfaces TODO
%\label{r:notation-semicolon}
%\end{remark}

\newpage
\section{Skein Categories}
\label{s:skein-cat}
\vspace{0.8in}

\subsection{Skeins}
\label{s:sk-defn}
\par \noindent
%\label{s:skeins}

%TODO smooth=PL in dim $\leq 4$,
%see references that Dennis sent.

%TODO discuss this definition vs framed graph defn from papers
%\rmkref{r:narrowing-infinitesimal}

In this section, we give a definition of colored graphs/skeins.
This definition essentially coincides with those given in
\ocite{KT}, \ocite{freyd}, \ocite{cooke},
however, here everything is performed in the PL category.

Throughout this section, all 3-manifolds are assumed to be oriented,
and may be non-compact and/or with boundary.
%$\cA$ will be a skeletal (TODO why?) premodular category.

In \ocite{KT}, we define skeins using framed graphs,
that is, an embedded graph with a normal vector field.
We can convert a (smooth) ribbon graph into a framed graph
by shrinking the coupons to points,
and taking the cores of ribbons to be the edges of the graph,
with normal vector field given by the normal to the surface of the
ribbon graph (these are defined up to a contractible choice).

The constructions in this section mirror those of
\secref{s:cy-extended-plcw};
boundary values (\defref{d:boundary-value-skein})
correspond to
colored marked PLCW decompositions on oriented surfaces,
and skeins (\defref{d:skein_3d})
correspond to states.
This will be made explicit in \secref{s:sk-equiv}.

\begin{definition}
Let $N$ be an (unoriented) surface, possibly noncompact or with boundary.
A \emph{boundary value} $\VV = (B, \{V_{\vec{b}}\}_{\vec{b}\in \vec{B}})$
is a finite collection $B$ of embedded unoriented arcs $b : [-1,1] \to N$,
together with an assignment of an object $V_{\vec{b}} \in \cA$
to each oriented arc $\vec{b}$,
such that $v_{\cev{b}} = v_{\vec{b}}^*$.
\label{d:boundary-value-skein}
\end{definition}

%This differs slightly from our definition of boundary value
%in \ocite{KT}, where 
The assignment $V_{\vec{b}}$,
in particular the duality under changing orientations,
is meant to mirror labelings
(see \secref{s:sk-equiv}).

\begin{definition}
The \emph{empty configuration} or \emph{empty boundary value}
on the surface $N$,
denoted $\EE_N$ or simply $\EE$,
is the boundary value with no marked points,
$\EE = (\emptyset,\{\}) \in \ZCYsk(N)$.
\label{d:empty-boundary-value}
\end{definition}

\begin{definition}
Given an embedding of surfaces $f: N \hookrightarrow N'$
and a boundary value $\VV$ on $N$,
we may define the boundary value
$f_*(\VV) = (f(B), \{V_{\vec{b}}\})$ on $N'$.
\end{definition}

%\begin{definition}
%The \emph{dual boundary value} $\ov{\VV}$
%of a boundary value $\VV = (B, \{V_b\}_{b\in B})$ on $N$
%is the boundary value on $\ov{N}$
%with the same $B$, but each arc is assigned the dual $V_b^*$,
%i.e. $\ov{\VV} = (B, \{V_b^*\})$.
%TODO not even needed?
%\label{d:boundary-value-dual}
%\end{definition}

\begin{definition}
Let $M$ be an oriented 3-manifold with boundary $N = \del M$.
A colored ribbon graph $(\Gamma,\Psi)$ in $M$ defines a boundary value
$\VV = (B, \{V_{\vec{b}}\})$ on $N$,
with $B = \Gamma \cap N$,
and for $\vec{b} = \vec{e} \cap N$, where $\vec{e}$ is a leg of $\Gamma$
taken with direction pointing outwards at $b$,
and $\vec{b}$ is given the outward orientation with respect to $\Gamma$
as an oriented surface,
we take $V_{\vec{b}} = \Psi(\vec{e})$,
and $V_{\cev{b}} = \Psi(\cev{e})$.
We define
\[
  \Graph(M;\VV) =
    \text{set of all colored ribbon graphs
      in $M$ with boundary value } \VV
\]
and similarly consider formal linear combinations:
\[
\VGraph(M;\VV)=\{\text{formal linear combinations of graphs }
  \Gamma \in \Graph(M;\VV)\}
\]

\label{d:vgraph}
\end{definition}

We denote the empty graph in $M$ by $\emptyset_M^{\Gamma}$;
it is a basis element of $\VGraph(M;\EE)$,
and should not be confused with the 0 element in $\VGraph(M;\EE)$.

%%%It follows from results of \ocite{rt-ribbon} that for every colored graph 
%%%$\Ga$ in a ball $D\subset \R^3$, one can define its ``evaluation'' 
%%%\[
%%%  \eval{\Ga}_D\in \ZRT(\del D; V(\vec{e}_1),\ldots,V(\vec{e}_n))
%%%    \cong \eval{V(\vec{e}_1),\ldots,V(\vec{e}_n)}
%%%\]
%%%where $\vec{e}_1,\dots, \vec{e}_n$ are the edges of $\Ga$ meeting the boundary
%%%of $D$ (legs), taken with outgoing orientation; 
%in particular, in the case when $\Ga$ is a star graph
%in the unit ball in $\R^3$,
%with one vertex at the center colored by
%$\vphi \in \ZRT(S_v^2; V(\vec{e}_1),\ldots,V(\vec{e}_n))$,
%then $\eval{\Ga}=\vphi$.
%\footnote{
%Here the identification
%$\ZRT(S_v^2; V(\vec{e}_i)) \cong \ZRT(\del D; V(\vec{e}_i))$
%is made using the natural maps 
%$\del D \hookrightarrow \R^3 \backslash 0
%\simeq T_0 \R^3 \backslash 0 \to S_v^2$.
%}

\begin{definition}
Let $\Gamma = \sum c_i \Gamma_i \in \VGraph(M;\VV)$
be a formal linear combination of colored graphs,
and $D$ an embedded closed ball in $M$.
(Note $D$ is allowed to touch the boundary $\del M$.)
We say that $\Gamma$ is \emph{null with respect to $D$}
if either
\begin{enumerate}
\item $\Gamma = \zeta^t(\Gamma^0) - \Gamma^0$,
	where $\zeta^t$ is an ambient isotopy supported on $D$,
	or
\item each $\Gamma_i$ meets $\del D$ transversally,
	all $\Gamma_i$ coincide outside of $D$ as colored graphs,
	and
	\[
		\eval{\Gamma}_D = \sum c_i \eval{\Gamma_i}_D = 0
	\]
	where $\eval{\Gamma}_D$ is the Reshetikhin-Turaev evaluation
	of $\Gamma$ in a ball $D$ (see \prpref{p:rt-evaluation-ball};
	some choice of identification $D \simeq B^3$ is needed,
	but it is clear that the choice is irrelevant in this context).
\end{enumerate}
\end{definition}

Note that if we are in the first situation,
and $\Gamma$ is transversal to $\del D$,
then the second condition subsumes the first,
since isotopic graphs have the same Reshetikhin-Turaev evaluation.
Thus, if we imagine all ribbon graphs to be narrow enough
(or even infinitesimally narrow as in
\rmkref{r:narrowing-infinitesimal}),
then we can essentially disregard the first condition.

\begin{definition}\label{d:skein_3d}
For an oriented 3-manifold $M$ and boundary condition
$\VV=(B, \{V_b\})$ on $\del M$, 
we define the
\emph{skein module of $M$ with boundary value $\VV$} as
%space of skeins by
\[
  \ZCYsk(M; \VV)=\VGraph(M;\VV)/Q 
\]
where $Q$ is the subspace spanned by all null graphs
(for all possible embedded balls).
An element of $\ZCYsk(M;\VV)$ is called a
\emph{skein with boundary value $\VV$},
or simple a skein.
\end{definition}

\begin{definition}
The \emph{empty skein}, denoted $\emptyskein{M}$,
is the equivalence class of the empty graph
$[\emptyset_M^\Gamma] \in \ZCYsk(M,\EE)$.
\label{d:emptyskein}
\end{definition}

For example, $\ZCYsk(S^3) \simeq \kk$,
as a ribbon graph $\Gamma \subset S^3$ is always contained in
the interior of some ball $D$,
which has some Reshetikhin-Turaev evaluation
$\ZRT(\Gamma) \in \kk$,
and so $\Gamma$ is equivalent, as a skein,
to $\ZRT(\Gamma) \cdot \emptyskein{M}$.

We also have $\ZCYsk(\emptyset) \simeq \kk$,
as expected from a 4-dimensional TQFT,
since the empty graph is the only graph in the empty 3-manifold,
and there are no relations.

Ribbons labeled with $\one$ can essentially be ignored:

\begin{equation*}
\begin{tikzpicture}
\begin{scope}[shift={(0,-0.25)}]
\draw[fill=gray, line width=0] (1,0.8) circle (0.8cm);
\node at (1.7,1.5) {\smallerer $D$};
\node at (2.1,0.5) {\smallerer $\one$};
\node[small_morphism] (ph1) at (0,0) {\tiny $\vphi_1$};
\node[small_morphism] (ph2) at (2,0) {\tiny $\vphi_2$};
\draw (ph1) to[out=90,in=90] (ph2);
\draw (ph1) -- +(-120:1cm);
\draw (ph1) -- +(-60:1cm);
\draw (ph2) -- +(-60:1cm);
\draw (ph2) -- +(-90:1cm);
\draw (ph2) -- +(-120:1cm);
\end{scope}
\end{tikzpicture}
%%%%%%%%%%%%%%%%%%%%%%%%%%%%%%%%%%%%%%%%%%%%%%%%%%%%%%%%%%%
\;\;=\;\;
%%%%%%%%%%%%%%%%%%%%%%%%%%%%%%%%%%%%%%%%%%%%%%%%%%%%%%%%%%%
\begin{tikzpicture}
\begin{scope}[shift={(0,-0.25)}]
\draw[fill=gray, line width=0] (1,0.8) circle (0.8cm);
\node at (1.7,1.5) {\smallerer $D$};
\node at (2.1,0.5) {\smallerer $\one$};
\node at (-0.1,0.5) {\smallerer $\one$};
\node[small_morphism] (ph1) at (0,0) {\tiny $\vphi_1$};
\node[small_morphism] (ph2) at (2,0) {\tiny $\vphi_2$};
%\draw (ph1) to[out=90,in=90] (ph2);
\node[dotnode] (d1) at (0.5,0.7) {};
\draw (d1) to[out=-150,in=90] (ph1);
\node[dotnode] (d2) at (1.5,0.7) {};
\draw (d2) to[out=-30,in=90] (ph2);
\draw (ph1) -- +(-120:1cm);
\draw (ph1) -- +(-60:1cm);
\draw (ph2) -- +(-60:1cm);
\draw (ph2) -- +(-90:1cm);
\draw (ph2) -- +(-120:1cm);
\end{scope}
\end{tikzpicture}
%%%%%%%%%%%%%%%%%%%%%%%%%%%%%%%%%%%%%%%%%%%%%%%%%%%%%%%%%%%
\;\;\;\;\;,\;\;\;\;\;
%%%%%%%%%%%%%%%%%%%%%%%%%%%%%%%%%%%%%%%%%%%%%%%%%%%%%%%%%%%
\begin{tikzpicture}
\draw[fill=gray, line width=0] (0,0) circle (0.8cm);
\node at (0.8,-0.6) {\smallerer $D$};
\node at (-0.1,0.5) {\smallerer $\one$};
\node[small_morphism] (ph1) at (0,0) {\tiny $\vphi_1$};
\node[dotnode] (d1) at (0.3,0.5) {};
\draw (d1) to[out=-150,in=90] (ph1);
\draw (ph1) -- +(-120:1cm);
\draw (ph1) -- +(-60:1cm);
\end{tikzpicture}
%%%%%%%%%%%%%%%%%%%%%%%%%%%%%%%%%%%%%%%%%%%%%%%%%%%%%%%%%%%
\;\;=\;\;
%%%%%%%%%%%%%%%%%%%%%%%%%%%%%%%%%%%%%%%%%%%%%%%%%%%%%%%%%%%
\begin{tikzpicture}
\draw[fill=gray, line width=0] (0,0) circle (0.8cm);
\node at (0.8,-0.6) {\smallerer $D$};
%\node at (-0.1,0.5) {\smallerer $\one$};
\node[small_morphism] (ph1) at (0,0) {\tiny $\vphi_1$};
%\node[dotnode] (d1) at (0.3,0.5) {};
%\draw (d1) to[out=-150,in=90] (ph1);
\draw (ph1) -- +(-120:1cm);
\draw (ph1) -- +(-60:1cm);
\end{tikzpicture}
\end{equation*}

\begin{definition}
Given a homeomorphism $f : M \simeq M'$
and a boundary value $\VV$ on $M$,
we define
\[
f_* : \ZCYsk(M;\VV) \simeq \ZCYsk(M';f_*(\VV))
\]
by simply applying $f$ to graphs.
\label{d:skein-pushforward}
\end{definition}

\begin{definition}
Let $M'$ be a 3-manifold obtained from $M$
by gluing two boundary components $N,N'$
together via a diffeomorphism $f : \ov{N} \simeq N'$.
Let $\VV$ be a boundary value on $N$,
and $\VV' = f_*(\VV)$.

Then for $\vphi \in \ZCYsk(M; \VV,\VV',\WW)$,
where $\WW$ is some boundary value on the other boundary components
of $M'$, we define $\vphi/_N$ to be the skein
obtained as the image of $\vphi$ under the gluing
$M \to M/f = M'$.
If $N$ separates $M = M_1 \cup_N M_2$,
and $\vphi_1 \in \ZCYsk(M_1; \VV,..),
\vphi_2 \in \ZCYsk(M_2;,\VV',..)$,
we also use the notation
\[
\vphi_1 \cup_N \vphi_2 = (\vphi_1 \tnsr \vphi_2)/_N
\]
\label{d:glue-skeins}
\end{definition}

We can now define the category of boundary conditions. 

\begin{definition}
Let $N$ be an oriented surface, possibly non-compact.
Suppose first $N$ has no boundary. Define $\hatZCYsk(N)$
as the category whose objects are boundary values on $N$,
and the morphisms are given by:
\[
  \Hom_{\hatZCYsk(N)}(\VV,\VV')
  = \ZCYsk(N \times [0,1]; \iota_*^0(\VV),\iota_*^1(\VV')),
  %\qquad \VV=(B,\{V_b\}),
  %\quad \VV'=(B',\{V_{b'}\})
\]
where $\iota^j : N \simeq N \times \{j\}$,
and $N \times [0,1]$ is oriented so that $N \times 1$
(with the orientation from $N$) has the outward orientation.
The identity morphism $\id_\VV$, for $\VV = (B,\{V_{\vec{b}}\})$,
is given by the ``vertical'' graph $B \times [0,1]$
with the obvious coloring.

Composition of morphisms $\vphi \in \Hom_{\hatZCYsk}(\VV,\VV')$,
$\vphi' \in \Hom_{\hatZCYsk}(\VV',\VV'')$
is given by
\[
\vphi' \circ \vphi = \vphi' \cup_N \vphi
\]
i.e. joining the underlying graphs along $\VV'$.

$\hatZCYsk$ is additive and $\kk$-linear.
We define
\begin{equation}
  \label{e:zcy}
\ZCYsk(N) = \Kar(\hatZCYsk(N)),
\end{equation}
its Karoubi envelope.

For $N$ with boundary, we define $\hatZCYsk(N) = \hatZCYsk(N \backslash \del N)$,
$\ZCYsk(N) = \ZCYsk(N \backslash \del N)$.

We call $\ZCYsk(N)$ the \emph{skein category of $N$},
or the \emph{category of boundary values on $N$}.
\end{definition}

It is immediate from the definition that for a 2-disk $\DD^2$,
$\ZCYsk(\DD^2) \simeq \cA$;
by choosing some arc $b$ in $\DD^2$,
for $A,A' \in \Hom_\cA(A,A')$,

\begin{equation}
\label{e:A-ZCYD}
\begin{tikzpicture}
\node[small_morphism] (f) at (0,0) {\smallerer $f$};
\node (A) at (0,0.7) {$A$};
\node (A')at (0,-0.7) {$A'$};
\draw (f) -- (A);
\draw (f) -- (A');
%%% arrows
\node at (0.5,0.7) {$\mapsto$};
\node at (0.5,-0.7) {$\mapsto$};
\node at (0.5,0) {$\mapsto$};
%%% right side
\begin{scope}[shift={(1,-0.1)}]
%front face
\draw (0,0.7) -- (1,0.7);
\draw (0,0.7) -- (0,-0.7);
\draw (0,-0.7) -- (1,-0.7);
\draw (1,0.7) -- (1,-0.7);
%back face
\draw (0.2,0.9) -- (1.2,0.9);
\draw (0.2,0.9) -- (0.2,-0.5);
\draw (0.2,-0.5) -- (1.2,-0.5);
\draw (1.2,0.9) -- (1.2,-0.5);
%between faces
\draw (0,0.7) -- +(0.2,0.2);
\draw (1,0.7) -- +(0.2,0.2);
\draw (0,-0.7) -- +(0.2,0.2);
\draw (1,-0.7) -- +(0.2,0.2);
%%graph
\node[small_morphism] (f) at (0.6,0.1) {\smallerer $f$};
\draw[ribbon] (f) -- (0.6,0.8);
\draw[ribbon] (f) -- (0.6,-0.6);
\draw[line width=0.3pt] (0.525,0.8) -- (0.675,0.8);
\draw[line width=0.3pt] (0.525,-0.6) -- (0.675,-0.6);
\node at (0.4,0.8) {\tiny $b$};
\node at (0.4,-0.6) {\tiny $b$};
\node at (0.6,1.1) {\smallerer $A$};
\node at (0.6,-0.9) {\smallerer $A'$};
%redraw top edge
\draw (0,0.7) -- (1,0.7);
\end{scope}
\end{tikzpicture}
\end{equation}

Next let us address the 4-dimensional component of $\ZCYsk$.

\begin{definition}
Let $W = \cH_k \wdtld{\circ} \id_M : M \to_N M'$
be an elementary cornered cobordism of index $k$.
(see \defref{d:elementary-cobordism}).
For a boundary value $\VV \in \ZCYsk(N)$,
we define
\[
\ZCYsk(W;N) : \ZCYsk(M;\VV) \to \ZCYsk(M';\VV)
\]
case-by-case: given a colored ribbon graph
$\Gamma \in \ZCYsk(M;\VV)$,
$\ZCYsk(W;N)(\Gamma)$ is constructed as follows:
\begin{itemize}
\item $k=0$: $\cH_0$ adds a new $S^3$ component to $M$;
	we define
	\[
		\ZCYsk(W;N)(\Gamma) :=
	\cD \cdot \Gamma \cup \emptyskein{S^3}
	\]
\item $k=4$: $\cH_4$ kills off an $S^3$ component of $M$;
	writing $\Gamma = \Gamma' \sqcup \Gamma''$
	with $\Gamma'$ in the $S^3$ component
	and $\Gamma''$ in the other component of $M$,
	\[
		\ZCYsk(W;N)(\Gamma) :=
		\ZRT(\Gamma') \cdot \Gamma''
	\]
\item $k=1$: the attaching region of $\cH_1$ is a pair of balls;
	by an isotopy, we may arrange that $\Gamma$ is disjoint
	from the attaching region,
	and regard $\Gamma$ as a graph in $M'$,
	and define
	\[
		\ZCYsk(W;N)(\Gamma) := \cD^\inv \cdot \Gamma
	\]
\item $k=2$: similar to the $k=1$ case,
	arrange $\Gamma$ to be disjoint from the attaching region.
	The belt sphere of $\cH_2$ (and its neighborhood)
	defines an embedding of the solid torus $B^2 \times \del B^2 \to M'$.
	Let $\gamma = [-\eps,\eps] \times \del B^2 \subset B^2 \times \del B^2$
	be the core of the solid torus, trivially framed,
	and let $\Gamma' = (\gamma, \text{regular})$
	be the colored ribbon graph obtained by applying the regular coloring
	to $\gamma$.
	Then, sending $\Gamma'$ to $M'$ under the embedding,
	\[
		\ZCYsk(W;N)(\Gamma) = \Gamma \cup \Gamma'
	\]
\item $k=3$: first suppose that $\Gamma$ intersects
	the co-core of $\cH_3$ transversally and in exactly one ribbon,
	and suppose the label of this ribbon is a simple object $i$.
	If $i=\one$, we may ignore this ribbon and isotope
	$\Gamma$ to be disjoint from $\cH_3$;
	then we may define
	\[
		\ZCYsk(W;N)(\Gamma) = \delta_{i,\one} \cdot \Gamma
	\]
	In general, by isotopy, we may arrange $\Gamma$ to be transverse
	to the co-core of $\cH_3$, and then apply \lemref{l:summation}
	to get $\Gamma = \sum_j \Gamma_j$,
	where each $\Gamma_j$ satisfies the previous assumption
	(see \eqnref{e:ups-isom-01}).
	Then we extend to this case by linearity.
\end{itemize}
\label{d:zcyksk-elementary}
\end{definition}

It is not hard to see that this definition is well-posed:
\begin{itemize}
\item for $k=0,3,4$, it is clear that $\ZCYsk(W;N)(\Gamma)$
is well-defined,
\item for $k=2$, well-defined-ness follows from the sliding lemma
\lemref{lem:sliding}, and
\item for $k=3$, well-defined-ness follows from
	\lemref{l:summation-alpha-tunnel}
	(see also \eqnref{e:ups-isom-01}).
\end{itemize}

\begin{definition}
Let $W : M \to_N M'$ be a cornered cobordism,
and let $W = W_l \circ \cdots \circ W_1$
be a handle decomposition
(\defref{d:handle-decomposition}).
We define
\[
\ZCYsk(W;N) = \ZCYsk(W_l;N) \circ \cdots \circ \ZCYsk(W_1;N)
\]
\label{d:zcysk-corner}
\end{definition}

\begin{proposition}
The map $\ZCYsk(W;N)$ defined in \defref{d:zcysk-corner}
is independent of handle decomposition of $W$.
\label{p:zcysk-corner-indep}
\end{proposition}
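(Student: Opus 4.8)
The plan is to show that $\ZCYsk(W;N)$ is invariant under each of the handle-decomposition modifications listed in \defref{d:handle-decomp-modification}, and then invoke \prpref{p:handle-decomp-relate} (noting $\dim W = 4 \leq 6$) to conclude that any two handle decompositions are connected by a finite sequence of such modifications, each of which preserves the map. This reduces the problem to three local checks: (1) reordering of two successive handles $\cH_k, \cH_{k'}$ whose attaching data are disjoint (so the associated skein operations are performed in disjoint regions of the cobordism and manifestly commute); (2) handle slides, where the attaching sphere of $\cH_{k'}$ is dragged through the core of $\cH_k$; and (3) handle-pair creation/cancellation, where $k' = k+1$ and the attaching sphere of $\cH_{k+1}$ meets the co-core of $\cH_k$ transversally in a single point.

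First I would dispatch reordering: if the attaching map $f$ of the second handle misses the first handle, then on the level of skeins the two operations of \defref{d:zcyksk-elementary} act on disjoint embedded regions (disjoint $S^3$ summands, disjoint balls forming attaching regions, disjoint solid tori, disjoint co-cores), so they commute on the nose; the only subtlety is that in each case one must first isotope $\Gamma$ to be disjoint from both attaching regions simultaneously, which is possible by general position since the relevant attaching/belt spheres are of positive codimension. Next, pair creation/cancellation: here $W_{i+1} \circ W_i$ is an identity cobordism (\defref{d:handle-decomp-modification}), so I must check $\ZCYsk(W_{i+1};N) \circ \ZCYsk(W_i;N) = \id$. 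The nontrivial cases pair up as $(k,k+1) \in \{(0,1),(1,2),(2,3),(3,4)\}$: the $(0,1)$ and $(3,4)$ cases follow from $\ZCYsk(S^3) \simeq \kk$ with empty skein $\mapsto 1$ together with the $\cD$ and $\cD^{-1}$ factors canceling; the $(2,3)$ case is the key one and follows from the Killing Lemma \lemref{l:killing} (when $\cA$ is modular): creating a $2$-handle inserts a regular-colored meridian circle $\Gamma'$ (the belt sphere of $\cH_2$), and the subsequent $3$-handle, whose co-core is a disk bounded exactly by that meridian, evaluates it via \lemref{l:summation} and \lemref{l:killing} to $\delta_{i,\one}$, recovering $\id$; the $(1,2)$ case is handled dually by the $\cD^{-1}$ factor and the fact that the $2$-handle's belt-sphere circle becomes contractible and can be absorbed (using \lemref{l:dashed_circle} and the sliding lemma \lemref{lem:sliding}).

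The main obstacle, as anticipated, will be the handle slide for $k=k'=2$ (and, dually, $k=k'=3$). Sliding $\cH_2$ over another $\cH_2$ replaces the attaching sphere $S'$ of the second handle by the connect-sum $S \# S'$; on the skein side this means the meridian circle $\Gamma'$ (regular-colored, via the belt sphere of $\cH_2$ in \defref{d:zcyksk-elementary}) gets dragged around through the core of the other $2$-handle. I would argue that the resulting change to $\Gamma \cup \Gamma'$ is exactly an instance of the sliding lemma \lemref{lem:sliding}: the regular-colored circle can be slid past the region affected by the other handle without changing the skein class, because \lemref{lem:sliding} holds \emph{regardless of the contents of the shaded region} and the regular (dashed) coloring is precisely what makes the slide valid (the orientation issue flagged in the proof of \lemref{lem:sliding} must be tracked carefully — one needs the regular circle oriented compatibly, or else invoke sphericality; since $\cA$ is premodular hence spherical, this is not an obstruction). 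For the $k=k'=3$ slide one passes to the dual handle decomposition via \defref{d:dual-handle-decomp} and \lemref{l:dual-elementary-cobord}, reducing it to the $k=k'=1$ case, which is trivial since $1$-handle operations are just multiplication by $\cD^{-1}$ and repositioning of $\Gamma$. Finally I would remark that $\ZCYsk(W;N)$ is well-defined on skeins (not just graphs) at each step — the checks in \defref{d:zcyksk-elementary} already guarantee this — so the composition descends to $\ZCYsk(M;\VV)$ throughout.
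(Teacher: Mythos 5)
Your overall strategy coincides with the paper's: reduce via \prpref{p:handle-decomp-relate} to checking invariance under reordering, slides, and pair creation/cancellation, and most of your sub-cases (reordering, the $(0,1)$, $(1,2)$, $(3,4)$ cancellations, and the $2$-handle slide via \lemref{lem:sliding}) match the paper's argument. One issue in the cancellation step: for the $(2,3)$ pair you invoke the Killing Lemma \lemref{l:killing}, which is stated only for \emph{modular} $\cA$, whereas the proposition (and the entire skein construction) is for premodular $\cA$. The Killing Lemma is not the relevant mechanism here anyway: the regular-colored belt circle inserted by $\cH_2$ meets the attaching sphere of the cancelling $\cH_3$ transversally in a single strand, so the $k=3$ rule of \defref{d:zcyksk-elementary} (\lemref{l:summation} followed by $\delta_{i,\one}$) cuts that circle directly, leaving only the trivial $\one$-labelled remnant and hence $\Gamma$ itself. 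No modularity is needed, and importing it would weaken the statement.

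The genuine gap is your treatment of the $3$-handle slide by passing to the dual handle decomposition. The dual decomposition computes a map $\ZCYsk(\ov{M'};\VV) \to \ZCYsk(\ov{M};\VV)$ in the \emph{opposite} direction, and at this stage of the paper there is no established relation between that map and $\ZCYsk(W;N)$. The natural candidate — adjointness with respect to the skein pairing $\evsk$ — requires the non-degeneracy of that pairing, which is only proved later (\corref{c:pairing-equiv}) as a consequence of \thmref{t:sk-equiv-4mfld}, itself advertised as an alternative proof of this very proposition; so the reduction risks circularity. The paper instead verifies the $3$-handle slide directly by a graphical identity: when one attaching $2$-sphere is tubed into another, cutting the graph along the two spheres with the $\al$, $\be$ summations of \lemref{l:summation} yields the same skein as cutting along the original disjoint spheres. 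You would need to supply this short computation (or some other direct argument) in place of the appeal to duality.
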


\begin{proof}
By \prpref{p:handle-decomp-relate},
it suffices to check that any modification as in
\defref{d:handle-decomp-modification} leaves
the overall $\ZCYsk(W;N)$ unchanged.

It is clear that reordering handles does not affect $\ZCYsk(W;N)$.

Let us consider handle pair cancellation.
We consider a graph $\Gamma$ in $M$,
then consider a pair of canceling elementary cobordisms,
and observe what happens to $\Gamma$.
Canceling a 0-1 pair is clear, as the $\cD^\pm$ factors cancel.
A 1-2 canceling pair looks like
\[
\frac{1}{\cD} \;\;\;
\begin{tikzpicture}
\draw[regular] (1,0.2) to[out=0,in=0] (1,-0.3);
\draw[overline] (0.5,-0.05) -- (2,-0.05);
\draw[regular, overline] (1,0.2) to[out=180,in=180] (1,-0.3);
\draw (0,0) circle (0.5cm);
\draw (2.5,0) circle (0.5cm);
\draw (-0.5,0) to[out=-90,in=-90] (0.5,0);
%\draw[dotted] (-0.5,0) to[out=90,in=90] (0.5,0);
\draw (2,0) to[out=-90,in=-90] (3,0);
%\draw[dotted] (2,0) .. controls +(90:0.2cm) and +(90:0.2cm) .. (3,0);
\node at (1.7,0.1) {\smallerer $n$};
%%labels
\node (a) at (-0.2,-1) {\smallerer 1-handle};
\node at (-0.2,-1.3) {\smallerer attaching region};
\node (b) at (2.2,-1) {\smallerer 2-handle};
\node at (2.2,-1.3) {\smallerer attaching sphere};
\draw[line width=0.4pt] (a) -- (0,0);
\draw[line width=0.4pt] (a) -- (2.5,0);
\draw[line width=0.4pt] (b) -- (1.6,-0.05);
\end{tikzpicture}
\]
The dashed loop, coming from the belt sphere of $\cH_2$,
can be pull out, and evaluated to $\cD$,
thus canceling the $1/\cD$ factor.
For a 2-3 pair, we first have $\Gamma \cup \Gamma'$
from $\cH_2$. Then the graph $\gamma$ from $\cH_2$ must meet
the co-core of $\cH_3$ exactly once,
and is cut, leaving only the $i=\one$ strand,
which leaves us with $\Gamma$ again.
For a 3-4 pair, the attaching region of $\cH_3$ is
$\del B^3 \times B^1$,
and the fact that $\cH_3$ cancels with $\cH_4$
implies this attaching region separates
$M$ into $M' \sqcup B^3$
(the $B^3$, after attaching $\cH_3$,
becomes an $S^3$ component that gets killed by $\cH_4$).
Thus, $\Gamma$ can be isotoped away from $B^3$,
and then away from the attaching region,
thus it is not affected by attaching both $\cH_3$ and $\cH_4$.

Finally let us consider handle slides.
There is nothing to prove for 0- and 4-handles,
and for 1-handles it is clear.
For 2-handles,
independence follows from the sliding lemma
(\lemref{lem:sliding}),
which is the same way that the Reshetikhin-Turaev invariant
for 3-manifolds is shown to be invariant with respect to
handle slides in Kirby calculus.
For 1-handles, we have
\[
\begin{tikzpicture}
\draw[densely dotted] (-0.8,0) to[out=90,in=90] (0.8,0);
\draw[densely dotted] (1.4,0) to[out=90,in=90] (3,0);
\draw[fill=med-gray,draw=white,line width=0pt] (0,0) circle (0.4cm);
\draw[fill=med-gray,draw=white,line width=0pt] (2.2,0) circle (0.4cm);
\draw[draw=med-gray,line width=7pt,opacity=0.3] (0,0) circle (0.8cm);
\draw[draw=med-gray,line width=7pt,opacity=0.3] (2.2,0) circle (0.8cm);
\draw (0,0) circle (0.8cm);
\draw (2.2,0) circle (0.8cm);
\draw (-0.8,0) to[out=-90,in=-90] (0.8,0);
\draw (1.4,0) to[out=-90,in=-90] (3,0);
%%graph left side
\node[small_morphism] (a1) at (-1.05,0) {\tiny $\al$};
\node[small_morphism] (a2) at (-0.55,0) {\tiny $\al$};
\draw (a1) -- +(180:0.5cm);
\draw (a1) -- +(150:0.5cm);
\draw (a1) -- +(-150:0.5cm);
\draw (a2) -- +(0:0.5cm);
\draw (a2) -- +(30:0.5cm);
\draw (a2) -- +(-30:0.5cm);
%%graph right side
\node[small_morphism,inner sep=0.5pt] (b1) at (3.25,0) {\tiny $\be$};
\node[small_morphism,inner sep=0.5pt] (b2) at (2.75,0) {\tiny $\be$};
\draw (b1) -- +(0:0.5cm);
\draw (b1) -- +(30:0.5cm);
\draw (b1) -- +(-30:0.5cm);
\draw (b2) -- +(180:0.5cm);
\draw (b2) -- +(150:0.5cm);
\draw (b2) -- +(-150:0.5cm);
\end{tikzpicture}
\;\;\;
=
\;\;\;
\begin{tikzpicture}
%\draw[densely dotted] (-0.8,0) to[out=90,in=90] (0.8,0);
\draw[densely dotted] (1.4,0) to[out=90,in=90] (3,0);
\draw[fill=med-gray,draw=white,line width=0pt] (0,0) circle (0.4cm);
\draw[fill=med-gray,draw=white,line width=0pt] (2.2,0) circle (0.4cm);
%\draw[draw=med-gray,line width=7pt,opacity=0.3] (0,0) circle (0.8cm);
\draw[draw=med-gray,line width=7pt,opacity=0.3] (2.2,0) circle (0.8cm);
\draw[preaction={draw=med-gray,line width=7pt,opacity=0.3,-}] (-0.8,0)
	.. controls +(-90:0.5cm) and +(180:0.5cm) .. (0,-0.8)
	.. controls +(0:0.6cm) and +(180:0.2cm) .. (0.9,-0.2)
	.. controls +(0:0.3cm) and +(180:1cm) .. (2.2,-1.1)
	.. controls +(0:0.7cm) and +(-90:0.7cm) .. (3.3,0)
	.. controls +(90:0.7cm) and +(0:0.7cm) .. (2.2,1.1)
	.. controls +(180:1cm) and +(0:0.3cm) .. (0.9,0.2)
	.. controls +(180:0.2cm) and +(0:0.6cm) .. (0,0.8)
	.. controls +(180:0.5cm) and +(90:0.5cm) .. (-0.8,0);
%\draw (0,0) circle (0.8cm);
\draw (2.2,0) circle (0.8cm);
%\draw (-0.8,0) to[out=-90,in=-90] (0.8,0);
\draw (1.4,0) to[out=-90,in=-90] (3,0);
%%graph left side
\node[small_morphism] (a1) at (-1.05,0) {\tiny $\al$};
\node[small_morphism] (a2) at (-0.55,0) {\tiny $\al$};
\draw (a1) -- +(180:0.5cm);
\draw (a1) -- +(150:0.5cm);
\draw (a1) -- +(-150:0.5cm);
\draw (a2) -- +(0:0.5cm);
\draw (a2) -- +(30:0.5cm);
\draw (a2) -- +(-30:0.5cm);
%%graph right side
\node[small_morphism,inner sep=0.5pt] (b1) at (2.75,0) {\tiny $\be$};
\node[small_morphism,inner sep=0.5pt] (b2) at (3.55,0) {\tiny $\be$};
\draw (b1) -- +(180:0.5cm);
\draw (b1) -- +(150:0.5cm);
\draw (b1) -- +(-150:0.5cm);
\draw (b2) -- +(0:0.5cm);
\draw (b2) -- +(30:0.5cm);
\draw (b2) -- +(-30:0.5cm);
\end{tikzpicture}
\]
\end{proof}

Note that \thmref{t:sk-equiv-4mfld}
provides an alternative proof to \prpref{p:zcysk-corner-indep}.

It is easy to get a formula relating connected sums
to their components:

\begin{proposition}
For non-empty closed 4-manifolds $W_1,W_2$,
\[
\ZCYsk(W_1 \# W_2) = \cD^\inv \cdot \ZCYsk(W_1) \cdot \ZCYsk(W_2)
\]
More generally, for non-empty 4-manifolds $W_1,W_2$,
possibly with boundary,
\[
\ZCYsk(W_1 \# W_2) = \cD^\inv \cdot \ZCYsk(W_1) \tnsr \ZCYsk(W_2)
\in \ZCYsk(\del W_1) \tnsr \ZCYsk(\del W_2)
\]
\label{p:zcysk-connectsum}
\end{proposition}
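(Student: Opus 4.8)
The plan is to reduce the connected-sum formula to the handle-theoretic description of $\ZCYsk$ given in \defref{d:zcyksk-elementary} and \defref{d:zcysk-corner}. Recall that for non-empty 4-manifolds $W_1, W_2$, forming $W_1 \# W_2$ amounts to removing an open 4-ball from each and gluing along the resulting $S^3$ boundary spheres; equivalently, one may build $W_1 \# W_2$ by starting from $W_1 \sqcup W_2$, attaching a single $1$-handle connecting the two components, and (if the $W_i$ are closed) this already yields the connected sum, while in the bounded case the disjoint union already has the correct boundary and the $1$-handle does not alter it. So first I would fix handle decompositions of $W_1$ and $W_2$ (relative to $\emptyset$, or relative to collars of $\del W_i$), take their disjoint union as a handle decomposition of $W_1 \sqcup W_2$, and then insert one extra $1$-handle joining the two $0$-handles (or, more carefully, joining the two components — one can always arrange the decompositions so that each $W_i$ has a single $0$-handle and slide the connecting $1$-handle so its attaching region is a pair of balls lying in these two $0$-handles).

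Second, I would compute the effect of this extra $1$-handle using the $k=1$ case of \defref{d:zcyksk-elementary}: attaching a $1$-handle acts on skeins by $\Gamma \mapsto \cD^{-1} \cdot \Gamma$ (after isotoping $\Gamma$ off the attaching region, which is possible since the attaching region is just two balls). Meanwhile, by the multiplicativity of $\ZCYsk$ on disjoint unions — which follows immediately from \defref{d:zcyksk-elementary} applied handle-by-handle, since handles attached within one component never interact with the other — we have $\ZCYsk(W_1 \sqcup W_2) = \ZCYsk(W_1) \tnsr \ZCYsk(W_2) \in \ZCYsk(\del W_1) \tnsr \ZCYsk(\del W_2)$ (using $\ZCYsk(M_1 \sqcup M_2;\VV_1 \sqcup \VV_2) \simeq \ZCYsk(M_1;\VV_1) \tnsr \ZCYsk(M_2;\VV_2)$ for the state spaces, which is clear from \defref{d:skein_3d} since graphs and null-graph relations in a disjoint union factor). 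Composing with the $\cD^{-1}$ from the connecting $1$-handle gives $\ZCYsk(W_1 \# W_2) = \cD^{-1} \cdot \ZCYsk(W_1) \tnsr \ZCYsk(W_2)$, and in the closed case the tensor product lives in $\ZCYsk(\emptyset) \tnsr \ZCYsk(\emptyset) \simeq \kk$, recovering the product formula.

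Third, I must check that the connecting $1$-handle can genuinely be chosen so that its two attaching balls sit one in $W_1$ and one in $W_2$, and that this is independent of all choices; but this is exactly what \prpref{p:zcysk-corner-indep} guarantees — $\ZCYsk(W;N)$ depends only on $W$, not on the handle decomposition — so once I exhibit \emph{one} handle decomposition of $W_1 \# W_2$ realizing the construction above, I am done. One subtlety is the bounded case: I should note that $\del(W_1 \# W_2) = \del W_1 \sqcup \del W_2$ and that the connecting $1$-handle's attaching region is disjoint from the boundary (it lies in the interiors of the $0$-handles), so the boundary corner data is untouched and the formula makes sense in $\ZCYsk(\del W_1) \tnsr \ZCYsk(\del W_2)$.

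The main obstacle I expect is the bookkeeping around the $0$-handles: a priori $W_1$ and $W_2$ may have several $0$-handles, and one needs that connected sum along an interior ball is realized, up to the handle modifications of \defref{d:handle-decomp-modification}, by attaching a single $1$-handle to a chosen pair of $0$-handles (one per component). This is standard handle-calculus folklore — one cancels excess $0$-$1$ handle pairs and slides — but spelling it out carefully, together with verifying that the ball in which connected sum is performed can be isotoped into these $0$-handles, is the only nontrivial point; everything downstream is then a one-line application of the $k=1$ rule and disjoint-union multiplicativity.
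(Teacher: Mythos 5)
There is a genuine gap: your topological identification of $W_1 \# W_2$ with ``$W_1 \sqcup W_2$ plus a single connecting $1$-handle'' is false. A quick Euler characteristic check shows this: attaching one $1$-handle to the disjoint union changes $\chi$ by $-1$, whereas $\chi(W_1 \# W_2) = \chi(W_1) + \chi(W_2) - \chi(S^4) = \chi(W_1)+\chi(W_2)-2$. In the closed case the construction is not even well-defined as a handle decomposition of a closed manifold: once the connecting $1$-handle (inserted at the level of the $0$-handles) merges the two top boundary spheres into a single $S^3$, only one of the two original $4$-handles can still be attached. What a $1$-handle joining the two boundaries actually produces is the \emph{boundary} connected sum $W_1 \natural W_2$ (with $\del W_1 \# \del W_2$ as boundary), not the interior connected sum.

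The correct handle-theoretic realization — and the one the paper uses — is: arrange for each $W_m$ to contain a $4$-handle $\cH_4^{(m)}$ (creating one by handle pair creation if necessary), remove it to get $W_m'$ with $\del W_m' = \del W_m \sqcup S^3$, attach a $1$-handle joining the two exposed $S^3$ components, and then cap off the resulting single $S^3$ with one new $4$-handle. The net handle count is $+1\,\cH_1$ and $-1\,\cH_4$, consistent with $\chi$ dropping by $2$. Your final coefficient $\cD^\inv$ happens to be correct because the extra steps contribute trivially here: removing a $4$-handle replaces $\ZCYsk(W_m)$ by $\ZCYsk(W_m) \tnsr \emptyskein{S^3}$ with coefficient $1$ (since $\ZRT(\emptyskein{S^3})=1$), and the final $4$-handle likewise contributes a factor of $1$ — so only the single $1$-handle's $\cD^\inv$ survives. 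But as written your argument computes the invariant of the wrong manifold, and the disjoint-union-plus-$1$-handle premise must be replaced by the remove-$4$-handles / tube / recap construction before the $k=1$ rule of \defref{d:zcyksk-elementary} and \prpref{p:zcysk-corner-indep} can be invoked. Your remaining ingredients (multiplicativity under disjoint union, independence of handle decomposition) are fine.
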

\begin{proof}
Let $m = 1,2$.
Choose some handle decomposition for $W_m$
as a cobordism $W_m:\emptyset \to \del W_m$
that contains at least one handle
(we can create one using handle pair creation;
see \defref{d:handle-decomp-modification}
or proof of \prpref{p:zcysk-corner-indep});
let $\cH_4^{(m)}$ be such a 4-handle.

Let $W_m' = W_m \backslash \cH_4^{(m)}$.
Note that $\del W_m' = \del W_m \sqcup S^3$.
It is clear from \defref{d:zcyksk-elementary} that
\[
\ZCYsk(W_m') = \ZCYsk(W_m) \tnsr \emptyskein{S^3}
\in \ZCYsk(\del W_m) \tnsr \ZCYsk(S^3)
\simeq \ZCYsk(\del W_m')
\]
We can obtain $W_1 \# W_2$
by attaching a 1-handle that connects the $S^3$ components
from $\del W_1'$ and $\del W_2'$,
then capping the resulting $S^3$ component off with a 4-handle.
Together these operations give
\begin{align*}
\ZCYsk(W_1') \tnsr \ZCYsk(W_2')
&= (\ZCYsk(W_1) \tnsr \emptyskein{S^3})
	\tnsr (\ZCYsk(W_2) \tnsr \emptyskein{S^3})
\\
&\overset{\text{1-handle}}{\mapsto}
	\cD^\inv \cdot \ZCYsk(W_1) \tnsr \tnsr \ZCYsk(W_2) \tnsr \emptyskein{S^3}
\\
&\overset{\text{4-handle}}{\mapsto}
	\cD^\inv \cdot \ZCYsk(W_1) \tnsr \tnsr \ZCYsk(W_2)
\end{align*}
(Note that the arguments above work for a self-connect sum,
i.e. if $W_1 = W_2$.)
\end{proof}

\begin{lemma}
Let $W: M_0 \to_N M_1$ be a cornered cobordism,
and let $M : N \to N'$ be a cobordism.
Recall the cornered cobordism obtained from $W$ by extending along $M$
from \defref{d:cornered-cobordism-extend}
is the cornered cobordism
$W_{M: N \to N'} = W \wdtld{\circ} \id_{M_0 \cup_N M}
	: M_0 \cup_N M \to_{N'} M_1 \cup_N M$.
Then for $\vphi \in \ZCYsk(M_0;\VV), \vphi' \in \ZCYsk(M;\VV,\VV')$,
\[
\ZCYsk(W_{M: N \to N'};N')(\vphi' \cup_N \vphi)
= \vphi' \cup_N \ZCYsk(W)(\vphi)
\]
\label{l:cornered-cobordism-extend-zcysk}
\end{lemma}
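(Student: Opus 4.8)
The plan is to reduce the statement to the definition of $\ZCYsk(W;N)$ via a handle decomposition, and then verify compatibility with $\cup_N$ handle-by-handle. First I would fix a handle decomposition $W = W_k \circ \cdots \circ W_1 : M_0 \to_N M_1$, so that by \defref{d:cornered-cobordism-extend} the extended cobordism $W_{M : N \to N'}$ admits the handle decomposition $(W_k)_M \circ \cdots \circ (W_1)_M$, where $(W_i)_M$ is $W_i$ extended by $\id_{M}$; each $(W_i)_M$ is again an elementary cornered cobordism of the same index, attached by the same attaching map (which, being disjoint from $N$, is still disjoint from the new corner $N'$ and, after isotopy, from the region of $M$ that was glued on). By \prpref{p:zcysk-corner-indep} both $\ZCYsk(W;N)$ and $\ZCYsk(W_{M:N\to N'};N')$ are independent of the chosen handle decomposition, so it suffices to prove the identity for a single elementary cornered cobordism $W = \cH_j \wdtld{\circ} \id_{M_0}$.

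Then I would go through the five cases of \defref{d:zcyksk-elementary}. For $j=0$: $\ZCYsk(W_{M:N\to N'};N')$ adds an $S^3$ component with the empty skein and multiplies by $\cD$; since this new $S^3$ is disjoint from everything, $(\cD \cdot (\vphi' \cup_N \vphi) \cup \emptyskein{S^3}) = \vphi' \cup_N (\cD \cdot \vphi \cup \emptyskein{S^3})$, which is exactly $\vphi' \cup_N \ZCYsk(W)(\vphi)$ because the glued skein $\vphi'$ lives in $M$, disjoint from the handle. The cases $j=4$ (removing an $S^3$ and multiplying by $\ZRT$ of its graph component), $j=1$ (isotope off the attaching pair of balls, multiply by $\cD^{-1}$), $j=2$ (isotope off the attaching region, adjoin the dashed belt-circle $\Gamma'$ from the belt sphere), and $j=3$ (make $\Gamma$ transverse to the co-core, apply \lemref{l:summation}, keep the $i=\one$ term) all follow the same pattern: the handle and its attaching/belt data sit inside $M_1$ (resp.\ $M_0$), hence away from $M$; the operation $\ZCYsk(W)$ touches only the part of the graph near the handle, and this part of the graph can be isotoped to be disjoint from $M$. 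Since $\cup_N$ is defined by simply identifying graphs along the gluing locus (\defref{d:glue-skeins}), and $\ZCYsk(W)$ commutes with such an identification when its support is disjoint from the gluing region, the computed skein is the same whether we first apply $\ZCYsk(W)$ to $\vphi$ and then glue $\vphi'$, or first glue and then apply $\ZCYsk(W_{M:N\to N'};N')$. One should also check that the isotopies used to move $\Gamma$ off the attaching regions can be chosen supported away from $M$; this is where \thmref{t:isotopy-ext} (isotopy extension) is used, noting that the attaching regions are compact and disjoint from $M$.

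The main obstacle I anticipate is bookkeeping in the $j=2$ and $j=3$ cases, specifically ensuring that the auxiliary graph data introduced by the handle (the dashed core $\Gamma'$ of the belt solid torus for $j=2$; the decomposition $\Gamma = \sum_j \Gamma_j$ coming from \lemref{l:summation} for $j=3$) interacts cleanly with the glued-on skein $\vphi'$. For $j=2$ this is fine because $\Gamma'$ is determined entirely by the belt sphere of $\cH_2$, which is disjoint from $M$; for $j=3$ one must observe that applying \lemref{l:summation} near the co-core of $\cH_3$ does not disturb the portion of the graph sitting in $M$, so the sum and the gluing commute. A secondary subtlety is that in the extended cobordism the corner changes from $N$ to $N'$, so strictly one must check that the well-definedness verifications (sliding lemma for $j=2$, \lemref{l:summation-alpha-tunnel} for $j=3$) localize near the handle and are insensitive to the corner; but since the handle's attaching map avoids the corner in both cases, this is immediate. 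Finally I would remark that, alternatively, this lemma is an easy consequence of the functoriality package established in \thmref{t:sk-equiv-4mfld}, giving a second, more conceptual proof.
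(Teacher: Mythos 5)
Your proposal is correct and is essentially the paper's argument: the paper's proof is literally ``By construction,'' and your handle-by-handle verification (a handle decomposition of $W$ induces one of $W_{M:N\to N'}$ with the same handles attached away from $M$, and each elementary operation of \defref{d:zcyksk-elementary} acts locally near its handle, hence commutes with gluing $\vphi'$ along $N$) is exactly the content that ``by construction'' elides. No gaps.
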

\begin{proof}
By construction.
\end{proof}

We consider a
skein-theoretic analog of \prpref{p:zcy-corner-gluing-alt}:

\begin{proposition}
\label{p:zcy-corner-gluing-skein}
Let $W_1,W_2$ be cornered cobordisms
\begin{align*}
W_1 &: M_1 \to_{N_1} M_1'
\\
W_2 &: M_2 \to_{N_2} M_2'
\end{align*}
Suppose $M_1' \subseteq M_2$ is a submanifold.
Let $W = W_1 \cup_{M_1'} W_2$,
and $M = (M_2 \backslash M_1') \cup M_1$,
so that $W$ is the extended cobordism (\defref{d:composition-extended})
\[
W : M \to_{N_2} M_2'
\]

Then $\ZCYsk(W_1; N_1)$ and $\ZCYsk(W_2;N_2)$ compose to give
$\ZCYsk(W; N_2)$; more precisely,
\begin{align*}
\ZCYsk(W; N_2) &= \ZCYsk(W_2; N_2) \circ
	(\id \cup \ZCYsk(W_1; N_1))
\\
\ZCYsk(W;N_2)(\Phi)
&= \ZCYsk(W_2;N_2) (\psi \cup \ZCYsk(W_1;N_1) (\vphi))
\end{align*}
where $\Phi \in \ZCYsk(M;\VV)$
can be obtained by gluing colored graphs
$\vphi \subset M_1, \psi \subset M \backslash M_1$,
and $\VV$ is some boundary value on $N_2$,

Similarly, suppose that we have the reverse inclusion
$M_2 \subseteq M_1'$.
Let $W = W_1 \cup_{M_2} W_2$,
and $M' = (M_1' \backslash M_2) \cup M_2'$,
so that $W$ is the extended cobordism (\defref{d:composition-extended})
\[
W : M_1 \to_{N_1} M_2'
\]

Then $\ZCYsk(W_1; N_1)$ and $\ZCYsk(W_2;N_2)$ compose to give
$\ZCYsk(W; N_1)$; more precisely,
\begin{align*}
\ZCYsk(W; N_1) &= (\id \tnsr \ZCYsk(W_2; N_2))
	\circ (\ZCYsk(W_1; N_1))
\\
\ZCYsk(W;N_1)(\Phi)
&= \sum_a \vphi^{(a)} \cup \ZCYsk(W_2;N_2) (\psi^{(a)})
\end{align*}
where $\ZCYsk(W_1;N_1) (\Phi) = \sum_a \vphi^{(a)} \cup \psi^{(a)}$,
with $\psi^{(a)} \in \ZCYsk(M_2;\VV)$ for some boundary value
$\VV$ on $N_2$.
\end{proposition}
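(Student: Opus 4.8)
The plan is to reduce this to the analog of \prpref{p:zcy-corner-gluing-alt} combined with the behavior of $\ZCYsk$ under extension of cornered cobordisms (\lemref{l:cornered-cobordism-extend-zcysk}). Recall from \defref{d:zcysk-corner} that $\ZCYsk(W;N)$ is built by choosing a handle decomposition and composing the elementary maps of \defref{d:zcyksk-elementary}; by \prpref{p:zcysk-corner-indep} the result is independent of that choice. So first I would choose a handle decomposition $W_1 = W_1^{(p)} \circ \cdots \circ W_1^{(1)}$ of $W_1$ and $W_2 = W_2^{(q)} \circ \cdots \circ W_2^{(1)}$ of $W_2$. The key geometric observation is that, since $M_1' \subseteq M_2$ is a submanifold, each elementary cobordism $W_1^{(i)}$ of $W_1$ can be reinterpreted as an elementary cobordism $\widetilde{W}_1^{(i)}$ of the \emph{extended} cobordism $(W_1)_{M_2 \backslash M_1' : N_1 \to N_2}$ (attaching the same handle, but now to the larger manifold obtained by gluing in $M_2 \backslash M_1'$), and these fit together with the $W_2^{(j)}$'s to give a handle decomposition of $W = W_1 \cup_{M_1'} W_2$.

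The second step is to check the compatibility at the level of the elementary maps. For each type $k = 0,1,2,3,4$ one verifies directly from \defref{d:zcyksk-elementary} that $\ZCYsk(\widetilde{W}_1^{(i)}; N_2)$ acts on a skein glued from $\vphi \subset M_1$ and $\psi \subset M \backslash M_1$ by leaving the $\psi$ part untouched and acting on the $\vphi$ part exactly as $\ZCYsk(W_1^{(i)}; N_1)$ does --- this is essentially the content of \lemref{l:cornered-cobordism-extend-zcysk}, applied one handle at a time, using that in each case the attaching region of the handle can be isotoped away from the part of the skein living in $M \backslash M_1$. Composing over all handles of $W_1$ then gives $\ZCYsk(W;N_2)$ restricted to the first block equals $\id \cup \ZCYsk(W_1;N_1)$ on the $\vphi$-part, and then the remaining handles, which are exactly those of $W_2$, contribute $\ZCYsk(W_2;N_2)$. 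This yields the first formula
\[
\ZCYsk(W;N_2)(\psi \cup \vphi) = \ZCYsk(W_2;N_2)(\psi \cup \ZCYsk(W_1;N_1)(\vphi)).
\]

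The second statement, with the reversed inclusion $M_2 \subseteq M_1'$, is handled symmetrically: now it is the handles of $W_2$ that get reinterpreted as handles of an extension, acting on the part of the output of $\ZCYsk(W_1;N_1)(\Phi) = \sum_a \vphi^{(a)} \cup \psi^{(a)}$ that lies in $M_2$, while leaving the $\vphi^{(a)}$ components (in $M_1' \backslash M_2$) alone; linearity of the skein modules lets us sum over $a$. The main obstacle I anticipate is purely bookkeeping: making precise the claim that a handle decomposition of $W_1$ (resp. $W_2$) ``extends'' to one of $W$ when the corners $N_1, N_2$ are allowed to intersect --- this is the subtlety flagged in the proof of \prpref{p:zcy-corner-gluing-alt} (``we have a slightly more general situation than \defref{d:composition-extended}''), and one must be careful that attaching regions stay disjoint from the relevant codimension-zero pieces after isotopy. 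Once that geometric setup is in place, each of the five handle-type checks is routine, so I would state it as ``a straightforward consequence of the definitions, via \lemref{l:cornered-cobordism-extend-zcysk} and \prpref{p:zcysk-corner-indep}'' after spelling out the reinterpretation of handle decompositions.
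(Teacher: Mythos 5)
Your proposal is correct and follows essentially the same route as the paper, whose entire proof is ``Also by construction'' --- i.e.\ exactly the unwinding of \defref{d:zcysk-corner} via handle decompositions, \lemref{l:cornered-cobordism-extend-zcysk}, and \prpref{p:zcysk-corner-indep} that you spell out. Your version just makes the implicit bookkeeping explicit.
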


\begin{proof}
Also by construction.
\end{proof}

We have an analog of \prpref{p:zcy-surface-functors}:

\begin{proposition}
\label{p:zcy-surface-functors-skein}
Let $N$ be an oriented closed surface.
A 3-manifold $M$ bounded by $\del M = N$
defines a functor
\begin{align*}
\Fsk{M} : \ZCY(N) &\to \Vect
\\
\VV &\mapsto \ZCYsk(M;\VV)
\end{align*}

and for a morphism $f : \VV \to \VV'$,
\[
\Fsk{M}(f) = f \cup_N -
\]
Furthermore, a cornered cobordism $W : M \to_N M'$
defines a natural transformation
\begin{align*}
\Fsk{W} &: \Fsk{M} \to \Fsk{M'}
\\
(\Fsk{W})_{\VV} = \ZCYsk(W;N) &:
\ZCYsk(M;\VV) \to \ZCYsk(M';\VV)
\end{align*}
\end{proposition}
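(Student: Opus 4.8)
The plan is to verify the functoriality claims directly, mimicking the proof of \prpref{p:zcy-surface-functors} but now working with skeins and handle decompositions in place of PLCW decompositions and the projector $A$. First I would check that $\Fsk{M}$ is a well-defined functor on $\hatZCYsk(N)$. On objects this is immediate: $\VV \mapsto \ZCYsk(M;\VV)$. On a morphism $f \in \Hom_{\hatZCYsk(N)}(\VV,\VV') = \ZCYsk(N\times[0,1]; \iota^0_*\VV, \iota^1_*\VV')$, the assignment $\Fsk{M}(f) = f \cup_N -$ is defined by the gluing operation of \defref{d:glue-skeins}: given $\vphi \in \ZCYsk(M;\VV)$, glue the underlying colored graphs of $f$ and $\vphi$ along $N$, using the collar $M \cup_N (N\times[0,1]) \simeq M$. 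That this descends to skein modules (i.e. respects null graphs) is clear, since a ball witnessing a null relation on either side remains such a ball after the gluing. Preservation of identities is exactly the statement that gluing the vertical graph $B\times[0,1]$ along $N$ does nothing up to the collar identification. Respect for composition, $\Fsk{M}(g) \circ \Fsk{M}(f) = \Fsk{M}(g\circ f)$, reduces to associativity of the gluing operation $\cup_N$ together with the identification $(N\times[0,1]) \cup_N (N\times[0,1]) \simeq N\times[0,1]$, which is how composition in $\hatZCYsk(N)$ is defined in the first place.

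Next I would establish that a cornered cobordism $W : M \to_N M'$ induces a natural transformation $\Fsk{W} : \Fsk{M} \to \Fsk{M'}$ with components $(\Fsk{W})_\VV = \ZCYsk(W;N)$. The maps $\ZCYsk(W;N) : \ZCYsk(M;\VV) \to \ZCYsk(M';\VV)$ are already defined and shown independent of handle decomposition in \defref{d:zcysk-corner} and \prpref{p:zcysk-corner-indep}. The content is the naturality square: for $f : \VV \to \VV'$, one needs
\[
  \Fsk{M'}(f) \circ (\Fsk{W})_\VV = (\Fsk{W})_{\VV'} \circ \Fsk{M}(f),
\]
i.e. $f \cup_N \ZCYsk(W;N)(\vphi) = \ZCYsk(W_{N\times[0,1]};N')\big(f \cup_N \vphi\big)$ after the appropriate collar identifications, where $W_{N\times[0,1]}$ is $W$ extended along the cylinder cobordism $N\to N$ on the corner (\defref{d:cornered-cobordism-extend}). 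This is precisely \lemref{l:cornered-cobordism-extend-zcysk}: applying $\ZCYsk$ of a cobordism extended along a cylinder commutes with gluing a skein onto the cylinder. Spelled out with a handle decomposition of $W$, each elementary handle $\cH_k$ acts on a region disjoint from the collar where $f$ is glued (for $k=1,2,3$ the attaching/belt data can be isotoped away from the collar; for $k=0,4$ the relevant $S^3$ component is disjoint from the collar entirely), so the $\cH_k$-step and the $f$-gluing commute step by step, and composing over the decomposition gives naturality. I would draw the two pictures (as in the proof of \prpref{p:zcy-surface-functors}) to make the bookkeeping transparent rather than grinding through cases.

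Finally, to pass from $\hatZCYsk(N)$ to $\ZCYsk(N) = \Kar(\hatZCYsk(N))$, I would use the naturality of $\Fsk{W}$ exactly as in \prpref{p:zcy-surface-functors}: for an idempotent-completed object $(\VV, P)$ set $\Fsk{M}((\VV,P)) = P \cdot \Fsk{M}(\VV) = \mathrm{im}(P\cup_N -)$, extend $\Fsk{M}$ on morphisms in the evident way, and define $\Fsk{W}$ on a skein $\vphi = P\cup_N\vphi$ in the image of $P$ by $\Fsk{W}(\vphi) = \Fsk{W}(P\cup_N\vphi) = P \cup_N \Fsk{W}(\vphi) \in P\cdot\Fsk{M'}(\VV)$; naturality of $\Fsk{W}$ against the morphism $P$ shows this lands in the right subspace and is well-defined. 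The main obstacle I anticipate is the careful handling of collar neighborhoods: the gluing $\cup_N$ implicitly uses the identification $M \cup_N (N\times I) \simeq M$, and one must check (via \thmref{t:collar-exist-unique} and \corref{c:collar-extend}, plus the isotopy-extension machinery of \remref{r:isotopy-ext-ribbongraph}) that all the collar identifications used in defining $\Fsk{M}$, composition, and $\Fsk{W}$ are mutually compatible up to ambient isotopy, so that the resulting maps on skein modules are genuinely well-defined and the diagrams commute on the nose rather than merely up to some unaccounted isomorphism. Everything else is a routine unwinding of the definitions in \defref{d:glue-skeins}, \defref{d:zcysk-corner}, and \lemref{l:cornered-cobordism-extend-zcysk}.
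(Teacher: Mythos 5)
Your proposal is correct and follows essentially the same route as the paper, which simply defers to the proof of \prpref{p:zcy-surface-functors}: functoriality and naturality via the skein gluing results (\prpref{p:zcy-corner-gluing-skein}, \lemref{l:cornered-cobordism-extend-zcysk}) in place of \prpref{p:zcy-corner-gluing-alt}, and the Karoubi extension via naturality of $\Fsk{W}$. Your additional care about collar identifications is a reasonable elaboration but not a departure from the paper's argument.
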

\begin{proof}
Similar to \prpref{p:zcy-surface-functors}.
\end{proof}

We also consider the analog of the reduced relative state space pairing
(\defref{d:reduced-relative-state-space-pairing}),
based on the more TQFT-esque interpretation
(see \lemref{l:reduced-pairing-TQFT}):

\begin{definition}
Observe that $W = M \times I$ has boundary $M \cup_N \ov{M}$
(where $N = \del M$);
we treat it as a cobordism
$W : M \cup_N \ov{M} \to \emptyset$
(see \lemref{l:reduced-pairing-TQFT}).
For a boundary value $\VV$ on $N$
and skeins $\vphi \in \ZCYsk(M;\VV), \vphi' \in \ZCYsk(\ov{M};\VV)$,
we define the \emph{skein pairing}
\begin{equation}
\evsk (\vphi, \vphi') = \ZCYsk(W) (\vphi \cup_N \vphi')
\label{e:pairing-skein}
\end{equation}
\label{d:pairing-skein}
\end{definition}

We will see in \secref{s:sk-equiv} that this pairing
is equivalent to the pairing in 
\defref{d:reduced-relative-state-space-pairing},
which is non-degenerate.

\begin{lemma}
A cornered cobordism $W : M \to_N M'$ defines a map
\[
\ZCYsk(W;N) : \ZCYsk(M;\VV) \to \ZCYsk(M';\VV)
\]
but as a cobordism $W : M \cup_N \ov{M'} \to \emptyset$,
we have
\[
\ZCYsk(W) : \ZCYsk(M;\VV) \tnsr \ZCYsk(\ov{M'};\VV)
	\to \ZCYsk(M \cup_N \ov{M'}) \to \kk
\]
Then for $\vphi \in \ZCYsk(M;\VV)$,
$\vphi' \in \ZCYsk(\ov{M'};\VV)$,
\[
\evsk(\ZCYsk(W;N)(\vphi), \vphi')
= \ZCYsk(W)(\vphi \cup \vphi')
\]
\label{l:skein-pairing-dual}
\end{lemma}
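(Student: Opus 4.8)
The plan is to unwind both sides of the claimed identity directly from the definitions, using the compatibility of $\ZCYsk$ with composition of cornered cobordisms. First I would recall that by \defref{d:pairing-skein}, $\evsk(\psi,\psi') = \ZCYsk(M' \times I)(\psi \cup_{N} \psi')$, where $M' \times I$ is viewed as a cobordism $M' \cup_N \ov{M'} \to \emptyset$. So the left-hand side is
\[
\evsk(\ZCYsk(W;N)(\vphi), \vphi') = \ZCYsk(M' \times I)\big( \ZCYsk(W;N)(\vphi) \cup_N \vphi' \big).
\]
The right-hand side is $\ZCYsk(W)(\vphi \cup \vphi')$, where now $W$ is regarded as a cobordism $M \cup_N \ov{M'} \to \emptyset$.

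The key step is to exhibit the cobordism $W : M \cup_N \ov{M'} \to \emptyset$ as a composition $(M' \times I) \circ \big( W \sqcup \id_{\ov{M'}} \big)$ — or more precisely, as an extended composition in the sense of \defref{d:composition-extended} — so that gluing $W : M \to_N M'$ to the identity cobordism on $\ov{M'}$, then capping with $M' \times I$, recovers $W$ as a closed-off cobordism to the empty manifold. Concretely, $W \wdtld{\circ} \id_M$ extended appropriately produces a cornered cobordism $M \cup_N \ov{M'} \to_\emptyset (\text{nothing})$ whose underlying manifold is $W$ itself. Then I would apply \prpref{p:zcy-corner-gluing-skein} (the skein-theoretic gluing for extended compositions) to factor $\ZCYsk(W)$ as $\ZCYsk(M' \times I) \circ (\ZCYsk(W;N) \cup \id)$. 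Since $\ZCYsk(\id_{\ov{M'}})$ acts as the identity on $\ZCYsk(\ov{M'};\VV)$ (this needs the observation, analogous to \lemref{l:zcysk-identity}, that the vertical graph represents the identity; for the pure skein theory this is immediate from \defref{d:zcyksk-elementary} applied to a trivial handle decomposition of $M' \times I$), we get exactly
\[
\ZCYsk(W)(\vphi \cup \vphi') = \ZCYsk(M' \times I)\big( \ZCYsk(W;N)(\vphi) \cup_N \vphi' \big) = \evsk(\ZCYsk(W;N)(\vphi), \vphi').
\]

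I expect the main obstacle to be bookkeeping the orientations and the placement of corners correctly: $W$ appears once as a cornered cobordism $M \to_N M'$ (contributing $\ZCYsk(W;N)$) and once as a plain cobordism $M \cup_N \ov{M'} \to \emptyset$ (contributing $\ZCYsk(W)$), and one must check that the handle decomposition of $W$ used in both interpretations is literally the same, so that the two maps $\ZCYsk(W;N)$ and $\ZCYsk(W)$ agree handle-by-handle up to the contraction along $N$ supplied by the capping $M' \times I$. Here \prpref{p:zcysk-corner-indep} guarantees independence of handle decomposition, so it suffices to pick any one; choosing a handle decomposition of $W : M \to_N M'$ and then observing that the same handles, now attached from the $M \cup_N \ov{M'}$ side, build $W$ as a cobordism to $\emptyset$, should make the identification transparent. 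Everything else is, as the paper says, ``by construction'', so once the cobordism-factorization picture is set up the proof is a short formal manipulation.

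\begin{proof}
By \defref{d:pairing-skein}, writing $W' = M' \times I$ as a cobordism $M' \cup_N \ov{M'} \to \emptyset$,
\[
\evsk(\ZCYsk(W;N)(\vphi), \vphi') = \ZCYsk(W')\big( \ZCYsk(W;N)(\vphi) \cup_N \vphi' \big).
\]
On the other hand, gluing the cornered cobordism $W : M \to_N M'$ along $M'$ to $W' : M' \cup_N \ov{M'} \to \emptyset$ (extending $W'$ trivially over $\ov{M'}$, i.e. via \defref{d:cornered-cobordism-extend} and \defref{d:composition-extended}) yields a cobordism whose underlying manifold is $W$, now viewed as $W : M \cup_N \ov{M'} \to \emptyset$. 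Applying \prpref{p:zcy-corner-gluing-skein} to this composition gives
\[
\ZCYsk(W)(\vphi \cup \vphi') = \ZCYsk(W')\big( \ZCYsk(W;N)(\vphi) \cup_N \vphi' \big),
\]
where we used that $\ZCYsk$ of the identity cobordism on $\ov{M'}$ acts as the identity on $\ZCYsk(\ov{M'};\VV)$ (immediate from \defref{d:zcyksk-elementary} for a handle decomposition of $\ov{M'} \times I$ with no handles, or from the analog of \lemref{l:zcysk-identity}). Independence of the chosen handle decomposition of $W$ is \prpref{p:zcysk-corner-indep}. Comparing the two displayed expressions proves the claim.
\end{proof}
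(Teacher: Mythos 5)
Your proof is correct and is essentially the paper's argument: the paper simply states that the lemma ``follows immediately from \prpref{p:zcy-corner-gluing-skein},'' and your write-up is exactly the factorization $W = W \cup_{M'} (M' \times I)$ viewed as an extended composition, with the capping cylinder supplying the skein pairing. The extra remarks about handle decompositions and the identity cobordism are harmless but not needed once \prpref{p:zcy-corner-gluing-skein} is invoked.
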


\begin{proof}
Follows immediately from \prpref{p:zcy-corner-gluing-skein}.
\end{proof}

\begin{example}
\label{x:skein-pairing-handlebody}
Let us give a more explicit description
of the skein pairing for handlebodies,
and show that it agrees (up to a factor of $\cD^{g/2}$)
with \ocite{BK}*{(4.4.5)}.
Consider the following skeins
in the genus 1 handlebodies (i.e. solid tori) $H$ and $\ov{H}$:
\begin{equation}
\label{e:skein-pairing-handlebodies-anchor}
\begin{tikzpicture}
%%morphism nodes
\node[small_morphism] (a) at (0,-1) {\smallerer $\vphi$};
%%graph (more at end)
\draw[ribbon] (a)
	.. controls +(0:1cm) and +(-90:0.6cm) .. (1.9,0)
	.. controls +(90:0.6cm) and +(0:1cm) .. (0,1)
	.. controls +(180:1cm) and +(90:0.6cm) .. (-1.9,0)
	.. controls +(-90:0.6cm) and +(180:1cm) .. (a);
\draw[->] (0.3,-0.99) to[out=0,in=-175] (0.7,-0.96);
\node at (0.6,-1.15) {\smallerer $i$};
\draw[ribbon] (a) -- (0,-0.5);
\draw[->] (0,-0.8) -- (0,-0.55);
\node at (-0.25,-0.65) {\smallerer $V$};
%%outside ellipse
\draw (-2.5,0)
	.. controls +(-90:0.6cm) and +(180:2cm) .. (0,-1.5)
	.. controls +(0:2cm) and +(-90:0.6cm) .. (2.5,0)
	.. controls +(90:0.6cm) and +(0:2cm) .. (0,1.5)
	.. controls +(180:2cm) and +(90:0.6cm) .. (-2.5,0);
%%top arc
\draw (-1.23,0)
	.. controls +(60:0.3cm) and +(180:0.7cm) .. (0,0.5)
	.. controls +(0:0.7cm) and +(120:0.3cm) .. (1.23,0);
%%bottom arc
\draw (-1.4,0.2)
	.. controls +(-60:0.3cm) and +(180:0.8cm) .. (0,-0.4)
	.. controls +(0:0.8cm) and +(-120:0.3cm) .. (1.4,0.2);
%\label
\node at (-2.3,-1.3) {$H$};
\end{tikzpicture}
%%%%%%%%%%%%%%%%%%%%%%%%%%%%%%%%%%%%%%%%%%%%%%%%%%%%%%%%%%%
\;\;\;\;\;\;\;\;\;
%%%%%%%%%%%%%%%%%%%%%%%%%%%%%%%%%%%%%%%%%%%%%%%%%%%%%%%%%%%
\begin{tikzpicture}
%%morphism nodes
\node[small_morphism] (a) at (0,-0.95) {\smallerer $\vphi^,$};
%%graph (more at end)
\draw[ribbon] (a)
	.. controls +(0:1cm) and +(-90:0.6cm) .. (1.9,0)
	.. controls +(90:0.6cm) and +(0:1cm) .. (0,1)
	.. controls +(180:1cm) and +(90:0.6cm) .. (-1.9,0)
	.. controls +(-90:0.6cm) and +(180:1cm) .. (a);
\draw[->] (0.3,-0.94) to[out=0,in=-175] (0.7,-0.91);
\node at (0.6,-1.15) {\smallerer $j$};
\draw[ribbon] (a) -- (0,-1.45);
\draw[->] (0,-1.2) -- (0,-1.4);
\node at (-0.3,-1.3) {\smallerer $V^*$};
%%outside ellipse
\draw (-2.5,0)
	.. controls +(-90:0.6cm) and +(180:2cm) .. (0,-1.5)
	.. controls +(0:2cm) and +(-90:0.6cm) .. (2.5,0)
	.. controls +(90:0.6cm) and +(0:2cm) .. (0,1.5)
	.. controls +(180:2cm) and +(90:0.6cm) .. (-2.5,0);
%%top arc
\draw (-1.23,0)
	.. controls +(60:0.3cm) and +(180:0.7cm) .. (0,0.5)
	.. controls +(0:0.7cm) and +(120:0.3cm) .. (1.23,0);
%%bottom arc
\draw (-1.4,0.2)
	.. controls +(-60:0.3cm) and +(180:0.8cm) .. (0,-0.4)
	.. controls +(0:0.8cm) and +(-120:0.3cm) .. (1.4,0.2);
%\label
\node at (2.3,-1.3) {$\ov{H}$};
\end{tikzpicture}
\end{equation}

Note that $\ov{H}$ is drawn after reflecting $H$
across a horizontal plane,
so that its interior appears with the same orientation
as $H$.

Let $N = \del H$.
Observe that $H$ is built from $N$ by first attaching
a 2-handle (its core is a vertical disc),
then a 3-handle (a ball that fills the rest of the interior of $H$).

The 4-manifold defining the skein pairing
(i.e. $H \times I$) is built similarly,
but with each handle bumped up one index.
We start with $H \cup_N \ov{H}$,
which we can visualize as follows:

\begin{equation}
\begin{tikzpicture}
%%morphism nodes
\node[small_morphism,minimum size=0pt] (b) at (0,-1.4) {\tiny $\vphi$};
\node[small_morphism,inner sep=0,minimum size=0pt]
	(a) at (0,-0.8) {\tiny $\vphi^,$};
%%vertical disc
\draw[fill=light-gray] (0.8,-0.6)
	.. controls +(90:0.2cm) and +(-130:0.2cm) .. (0.9,-0.1)
	.. controls +(50:0.4cm) and +(90:0.5cm) .. (1.6,-1.2)
	.. controls +(-90:0.3cm) and +(35:0.2cm) .. (1.4,-1.76)
	.. controls +(-145:0.3cm) and +(-90:0.4cm) .. (0.8,-0.6);
\draw (1.6,-1.31) -- (0.8,-0.53);
%%outside ellipse
\draw (-2.5,0)
	.. controls +(-90:1cm) and +(180:2cm) .. (0,-2)
	.. controls +(0:2cm) and +(-90:1cm) .. (2.5,0)
	.. controls +(90:1cm) and +(0:2cm) .. (0,2)
	.. controls +(180:2cm) and +(90:1cm) .. (-2.5,0);
%%equator
\draw (-2.5,0.2)
	.. controls +(-90:0.6cm) and +(180:2cm) .. (0,-1.7)
	.. controls +(0:2cm) and +(-90:0.6cm) .. (2.5,0.2);
%%top arc
\draw (-1,0)
	.. controls +(60:0.3cm) and +(180:0.7cm) .. (0,0.5)
	.. controls +(0:0.7cm) and +(120:0.3cm) .. (1,0);
%%bottom arc
\draw (-1.2,0.4)
	.. controls +(-80:0.3cm) and +(180:0.8cm) .. (0,-0.4)
	.. controls +(0:0.8cm) and +(-100:0.3cm) .. (1.2,0.4);
%%graph
\draw[midarrow={0.7}] (b) -- (a);
\draw[midarrow={0.9}] (b)
	.. controls +(0:1.5cm) and +(-90:0.6cm) .. (1.9,-0.2)
	.. controls +(90:0.6cm) and +(0:1.5cm) .. (0,1)
	.. controls +(180:1.5cm) and +(90:0.6cm) .. (-1.9,-0.2)
	.. controls +(-90:0.6cm) and +(180:1.5cm) .. (b);
\node[small_dotnode] at (1.2,-1.22) {};
\draw[midarrow={0.9}] (a)
	.. controls +(0:1.5cm) and +(-90:0.6cm) .. (1.9,0.4)
	.. controls +(90:0.6cm) and +(0:1.5cm) .. (0,1.6)
	.. controls +(180:1.5cm) and +(90:0.6cm) .. (-1.9,0.4)
	.. controls +(-90:0.6cm) and +(180:1.5cm) .. (a);
\node[small_dotnode] at (1.2,-0.62) {};
%%making top loop ribbon appear above bottom
\draw[thin_overline={0.8}] (1.79,-0.1) to[out=70,in=-95] (1.885,0.2);
\draw[thin_overline={0.8}] (-1.79,-0.1) to[out=110,in=-85] (-1.885,0.2);
%%labels
\node at (-1,-1.1) {\tiny $i$};
\node at (-1,-0.5) {\tiny $j$};
\node at (-0.2,-1.1) {\tiny $V$};
\node at (-2.5,1) {\smallerer $\ov{H}$};
\node at (-2.5,-1) {\smallerer $H$};
%%arrow ``glue''
\draw[<->] (2.6,-0.4) to[out=0,in=-90] (3,0) to[out=90,in=0] (2.6,0.4);
\node at (3.3,0) {\smallerer glue};
\end{tikzpicture}
\end{equation}
The gray disks are the cores of the (3-dimensional) 2-handles making up
$H$ and $\ov{H}$;
they glue up in $H \cup_N \ov{H}$ to form a 2-sphere,
to which we attach a (4-dimensional) 3-handle.
Similarly, the (3-dimensional) 3-handles
making up the rest of $H$ and $\ov{H}$
glue up in $H \cup_N \ov{H}$ to form a 3-sphere,
to which we attach a (4-dimensional) 4-handle.
Thus, based on \defref{d:zcyksk-elementary},
we have that
(writing $\vphi$ for both the coloring of the coupon
and the skein that results from the coloring)
\begin{equation}
\evsk(\vphi,\vphi') = \ZCYsk(H \times I) (\vphi \cup \vphi')
=
%%%%%%%%%%%%%%%%%%%%%%%%%%%%%%%%
\begin{tikzpicture}
\begin{scope}[shift={(0,-0.8)}]
\node[small_morphism,minimum size=0pt] (b) at (0,0) {\tiny $\vphi$};
\node[small_morphism,inner sep=0,minimum size=0pt]
	(a) at (0,1) {\tiny $\vphi^,$};
\node[small_morphism,minimum size=0pt] (a1) at (0.5,0.5) {\tiny $\al$};
\node[small_morphism,minimum size=0pt] (a2) at (1,0.5) {\tiny $\al$};
\draw[midarrow={0.7}] (a) to[out=0,in=120] (a1);
\draw[midarrow={0.7}] (b) to[out=0,in=-120] (a1);
\draw[midarrow={0.6}] (b) -- (a);
\node at (-0.2,0.5) {\tiny $V$};
\node at (0.5,0.85) {\tiny $j$};
\node at (0.5,0.15) {\tiny $i$};
\draw (a)
	.. controls +(180:0.4cm) and +(180:0.4cm) .. (0,1.4)
	.. controls +(0:0.7cm) and +(60:0.5cm) .. (a2);
\draw (b)
	.. controls +(180:0.8cm) and +(180:0.8cm) .. (0,1.7)
	.. controls +(0:1cm) and +(90:0.6cm) .. (1.5,0.5)
	.. controls +(-90:0.4cm) and +(-60:0.5cm) .. (a2);
\end{scope}
\end{tikzpicture}
%%%%%%%%%%%%%%%%%%%%%%%%%%%%%%%%
= \delta_{i*,j} \cdot d_i^\inv \cdot
%%%%%%%%%%%%%%%%%%%%%%%%%%%%%%%%
\begin{tikzpicture}
\begin{scope}[shift={(0,-0.5)}]
\node[small_morphism,minimum size=0pt] (b) at (0,0) {\tiny $\vphi$};
\node[small_morphism,inner sep=0,minimum size=0pt]
	(a) at (0,1) {\tiny $\vphi^,$};
\draw (a) -- (b);
\draw (a) to[out=-30,in=30] (b);
\draw (a) to[out=-150,in=150] (b);
\end{scope}
\end{tikzpicture}
%%%%%%%%%%%%%%%%%%%%%%%%%%%%%%%%
= \delta_{i*,j} \cdot d_j^\inv \cdot \ev(\vphi,\vphi')
\end{equation}

The computation above easily generalizes to handlebodies
of arbitrary genus:
in a handlebody $H$ of genus $g$,
with a boundary value $\VV = (B,\{V_{\vec{b}}\})$,
choose a co-ribbon graph with a single coupon
like in \eqnref{e:skein-pairing-handlebodies-anchor}
(more precisely, choose PLCW structure on $H$ with exactly one 3-cell,
and appropriate number of 2-cells on the boundary,
and take the anchor as our co-ribbon graph).
Then for labeling
$\vphi \in \Hom_\cA(\one,
	(\bigotimes_b V_{\vec{b}}) \tnsr
	(X_{i_1} \tnsr X_{i_1}^*) \tnsr \cdots \tnsr
	(X_{i_g} \tnsr X_{i_g}^*))$
of the coupon in $H$,
where $i_1,\ldots,i_g$ are simple labels of ribbons
in the interior,
and labeling
$\vphi' \in \Hom_\cA(\one,
	(X_{j_g}^* \tnsr X_{j_g}) \tnsr \cdots
	(X_{j_1}^* \tnsr X_{j_1}) \tnsr
	(\bigotimes_b V_{\vec{b}}^*))$
of the coupon in $\ov{H}$,
we have
\begin{equation}
\evsk(\vphi,\vphi')
= \prod_{k=1}^g \delta_{i_k^*,j_k} \cdot d_{j_g}^\inv \cdot
	\ev(\vphi,\vphi')
\end{equation}

It is easy to see that this pairing agrees
(up to a factor of $\cD^{g/2}$) with \ocite{BK}*{(4.4.5)}.
\end{example}

\subsection{Equivalence with $\ZCY$}
\label{s:sk-equiv}
\par \noindent

We show an equivalence between the theory of skeins and
the theory established with PLCW decompositions.
The proof of \lemref{l:ups-projections-intertwine}
and \lemref{l:ups-isom}
mirror the methods used in \ocite{kirillov-stringnet},
except that all dimensions increase by 1,
and we deal with manifolds with boundaries directly.

\begin{definition}
Let $N$ be a closed surface.
For a colored marked PLCW decomposition $(\cN,l)$,
we assign a boundary value
$\Ups((\cN,l)) = (B,\{V_{\vec{b}}\})$
where $B$ is the marking of $(\cN,l)$
(forgetting the co-orientation),
and, for the marking $b$,
with orientation $\vec{b}$, of some 2-cell $f$
with the orientation $(\vec{b},\hat{n})$,
we set $V_{\vec{b}} = l(f)$.
(Note this is opposite from the convention discussed in
\rmkref{r:co-ribbon},
but the reason for violating the convention will be clear soon.)
\label{d:marked-to-boundary-value}
\end{definition}

\begin{lemma}
Let $\cM$ be an oriented PLCW 3-manifold with boundary $\cN$.
Arbitrarily choose an anchoring of $\cM$,
giving us a marking of $\cN$.
Let $l_\cN$ be a simple labeling of $\cN$,
and let $\VV = \Ups( (\cN,l_\cN))$.

Consider some vector
$\Phi = \bigotimes \vphi_C \in \bigotimes H(C,l) \subset
H(\cM;(\cN,l_\cN))$,
where $l$ is a labeling agreeing with $l_\cN$ on the boundary.
The anchors assemble into a ribbon graph $\Gamma$,
and $\Phi$ naturally defines a coloring of $\Gamma$
which we also denote $\Phi$;
we consider a differently-normalized coloring
%$\cD^x d_{l^\circ}^{1/2} \wdtld{\Phi}$,
$\cD^{\frac{1}{2} \vme(\cM \backslash \cN) + \frac{1}{4} \vme(\cN)}
	d_{l^\circ}^{1/2} d_{l_\cN}^{1/4} \Phi$,
where $d_{l^\circ}^{1/2} = \prod d_{l(f)}^{1/2}$,
the product taken over unoriented internal 2-cells of $\cM$,
$d_{l_\cN}^{1/4} = \prod_{f \in \cN} d_{l(f)}^{1/4}$,
and $\vme$ is from \lemref{l:state-space-moves},
%$x = x(\cM;\cN) = \frac{1}{2}(v(\cM - \cN) - e(\cM - \cN))$.
This defines a linear map
\begin{align}
\ups : H(\cM; (\cN,l_\cN))
&\to \ZCYsk(M;\VV)
\\
%\Phi &\mapsto (\Gamma, \cD^{x(\cM;\cN)} d_{l^\circ}^{1/2} \wdtld{\Phi})
\Phi &\mapsto (\Gamma,
	\cD^{\frac{1}{2} \vme(\cM \backslash \cN) + \frac{1}{4} \vme(\cN)}
	d_{l^\circ}^{1/2} d_{l_\cN}^{1/4} \Phi
\label{e:equiv-map-morphism}
\end{align}

This map factors through another map.
Let $M^\circ = M \backslash \sk^1(\cM)$,
i.e. remove the 1-skeleton of $\cM$ from $M$.
Then \eqnref{e:equiv-map-morphism}
factors through

\begin{align}
\ups : H(\cM; (\cN,l_\cN))
&\to \ZCYsk(M^\circ; \VV)
\\
%\Phi &\mapsto (\Gamma, \cD^{x(\cM;\cN)} d_{l^\circ}^{1/2} \wdtld{\Phi})
\Phi &\mapsto (\Gamma,
	\cD^{\frac{1}{2} \vme(\cM \backslash \cN) + \frac{1}{4} \vme(\cN)}
	d_{l^\circ}^{1/2} d_{l_\cN}^{1/4} \Phi
\label{e:equiv-map-morphism-factors}
\end{align}

Now for an interior 1-cell $e$ of $\cM$,
define the map
$B_e : \ZCYsk(M^\circ; \VV) \to \ZCYsk(M^\circ; \VV)$,
where we add a ribbon loop, label with $\frac{1}{\cD}$
times the regular coloring.
By \lemref{lem:sliding},
$B_e$ is a projection.
It is also clear that $B_e, B_{e'}$ commute for any two interior
1-cells $e,e'$.
Thus, we may consider the projection
\begin{equation}
B_\cM = \prod B_e : \ZCYsk(M^\circ; \VV) \to \ZCYsk(M^\circ; \VV)
\label{e:edge-projection}
\end{equation}

Then $\ups$ intertwines $A_\cM$ and $B_\cM$,
i.e. $\ups \circ A_\cM = B_\cM \circ \ups$.

\label{l:ups-projections-intertwine}
%\label{l:projections-intertwine}
\end{lemma}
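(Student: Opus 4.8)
The plan is to verify the intertwining relation $\ups \circ A_\cM = B_\cM \circ \ups$ by reducing it, via the local structure of the operators involved, to the identities established in the background section — principally \lemref{l:state-space-moves}, \lemref{lem:sliding}, \lemref{l:summation}, and \lemref{l:dashed_circle}. First I would establish the two factorization claims (that \eqnref{e:equiv-map-morphism} lands in $\ZCYsk(M;\VV)$ and factors through $\ZCYsk(M^\circ;\VV)$): the former is immediate since an anchor graph is a genuine colored ribbon graph with the prescribed boundary value, and the latter holds because the ribbon graph $\Gamma$ assembled from the anchors is disjoint from the $1$-skeleton $\sk^1(\cM)$ by construction (anchors meet $2$-cells only at markings, which lie in cell interiors), so the graph already sits in $M^\circ$ and the class in $\ZCYsk(M^\circ;\VV)$ maps to the class in $\ZCYsk(M;\VV)$. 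The commutativity of the $B_e$ and the fact that each is a projection follows directly from \lemref{lem:sliding} applied to the meridian loop around $e$, exactly as stated.

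The heart of the argument is the intertwining identity. I would unwind both sides in terms of a fixed PLCW structure. On the right, $B_\cM \circ \ups$ takes a pre-state $\Phi$ to its anchor graph with a $\frac{1}{\cD}$-weighted regular loop inserted around every interior $1$-cell of $\cM$. On the left, $A_\cM \circ \ups$ is computed by the cylinder cobordism $M\times I$: using \lemref{l:state-space-moves} (the analog of "null graphs" pointed out in \rmkref{r:state-space-null-graph}), the projector $A_\cM$ amounts to passing to a subdivided PLCW structure, merging cells, and re-evaluating, and the net effect on the skein side is exactly to insert the regular coloring (coming from the dashed lines in the definition of $Z(\cW,l)$, Notation~\ref{n:dashed}) around each interior edge, weighted so as to match the $\cD$-powers in the normalization $\cD^{\frac12\vme(\cM\backslash\cN)+\frac14\vme(\cN)}d_{l^\circ}^{1/2}d_{l_\cN}^{1/4}$. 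The bookkeeping that the Euler-characteristic exponents $\vme$ and the dimension products $d_{l^\circ}^{1/2}$, $d_{l_\cN}^{1/4}$ on the two sides agree is the tedious-but-routine part; it parallels the computation in \ocite{balsam-kirillov} one dimension higher, and I would organize it by checking it cell-by-cell — a single interior $1$-cell contributes one meridian loop and one factor of $\cD^{-1}$ on each side, a single interior $2$-cell contributes a $d_{l(f)}$ matched against \lemref{l:summation}, and so on.

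The main obstacle I anticipate is not any single identity but controlling the order of operations: $A_\cM$ is defined through an auxiliary PLCW structure $\cW$ on $M\times I$ and a choice of anchoring, and one must show the skein-side result is independent of these auxiliary choices and genuinely equals the $B_e$-loop insertion. The cleanest route is to use \lemref{l:state-space-moves} to reduce $A_\cM$ to a composition of single-cell moves (via \prpref{p:single-cell-subdivision}), track each such move through $\ups$, and observe that a single-cell move either does nothing on the skein side (when it only subdivides a top cell whose anchor can be isotoped across) or produces exactly one meridian loop with coefficient $\cD^{-1}$ (when it introduces a new interior edge), invoking \lemref{lem:sliding} to see that the resulting loop can be slid to the standard position around $e$. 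Assembling these local contributions and matching the global normalization then yields $\ups\circ A_\cM = B_\cM\circ\ups$.
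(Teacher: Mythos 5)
Your first two factorization claims and the discussion of $B_e$ are fine and match the paper. Your second paragraph also correctly identifies the mechanism behind the intertwining identity — and it is the same one the paper uses: one takes the explicit ``prism'' PLCW structure on $M\times I$ (one interior $(k+1)$-cell $C^W=C\times I$ for each $k$-cell $C$ of $\cM$), expands $\ZCY(\cW;\cN)(\Phi)$ in dual bases over the interior $3$-cells $F\times I$, and observes that the sum $\sum_{l_m} d_{l_m}$ over simple labelings of the $2$-cells $e\times I$ ($e$ an interior $1$-cell) assembles, after applying $\ups$, into the $\frac{1}{\cD}$-weighted regular loops around the interior edges, with the exponent $\vme(\cW\backslash\del\cW)+\frac12\vme(\del\cW\backslash\cN)=-e(\cM\backslash\cN)$ supplying exactly one $\cD^{-1}$ per such loop.

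However, the ``cleanest route'' you propose in your last paragraph — reducing $A_\cM$ to a composition of single-cell moves and claiming each move contributes one meridian loop with coefficient $\cD^{-1}$ — does not work, and this is a genuine gap. Single-cell moves and \lemref{l:state-space-moves} only control pre-states up to \emph{equivalence as states} (\defref{d:pre-state-equiv}), i.e.\ up to a further application of $A$; equivalently, they only see the image of a skein in $\ZCYsk(M;\VV)$, not in $\ZCYsk(M^\circ;\VV)$. But the content of this lemma lives precisely in the difference between these two: in $\ZCYsk(M;\VV)$ the loop around an interior edge is contractible and evaluates to $\cD$, so $B_e=\id$ there and the statement degenerates to $\ups\circ A_\cM=\ups$ (which is the later, and strictly weaker, \lemref{l:ups-A-invariant}). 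To see that the loops genuinely appear in $\ZCYsk(M^\circ;\VV)$ — which is what makes $\ups$ an isomorphism of $H(\cM;(\cN,l_\cN))$ onto the \emph{pre}-state space and identifies $\im A_\cM$ with $\im B_\cM$ in \lemref{l:ups-isom} — you cannot avoid expanding the cylinder state sum explicitly and tracking where each dashed line sits relative to the removed $1$-skeleton. So you should discard the single-cell-move reduction and instead carry out the bookkeeping of your second paragraph directly on the prism decomposition, as in \eqnref{e:ups-01}--\eqnref{e:ups-02}.
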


\begin{proof}
Recall that $A_\cM = \ZCY(W;\cN)$,
where $W = M \times I$ with corner $N \times 0$
(see \defref{d:state-space-boundary}).
Consider the PLCW decomposition $\cW$
that has exactly one interior $(k+1)$-cell $C^W$
for each $k$-cell $C$ of $\cM$
(essentially, $\cW = \cM \times I$, but the boundary $N \times I$ is squashed;
see \eqnref{e:AM}).

\begin{equation}
\centering
\begin{tikzpicture}
\draw (0,0) -- (2,0) -- (1,2) -- (-1,2) -- (-2,1) -- (0,0);
\draw (0,0) -- +(-135:0.5cm);
\draw (2,0) -- +(-63:0.5cm);
%\draw (1,2) -- +(117:0.5cm);
\draw (-1,2) -- +(135:0.5cm);
\draw (-2,1) -- +(180:0.5cm);
\begin{scope}[shift={(0,1)}]
\draw (0,0) -- (2,0) -- (1,2) -- (-1,2) -- (-2,1) -- (0,0);
\draw (0,0) -- +(-135:0.5cm);
\draw (2,0) -- +(-63:0.5cm);
%\draw (1,2) -- +(117:0.5cm);
\draw (-1,2) -- +(135:0.5cm);
\draw (-2,1) -- +(180:0.5cm);
\end{scope}
\draw (0,0) -- (0,1);
\draw (2,0) -- (2,1);
\draw (1,2) -- (1,3);
\draw (-1,2) -- (-1,3);
\draw (-2,1) -- (-2,2);
\draw (2,0) to[out=0,in=-135] (3.5,0.5);
\draw (2,1) to[out=0,in=135] (3.5,0.5);
\draw (1,2) to[out=0,in=-135] (2.5,2.5);
\draw (1,3) to[out=0,in=135] (2.5,2.5);
\draw[line width=2pt] (2,3.5) -- (4,-0.5);
\node at (3.4,1.5) {$\cN$};
\node at (0.7,0.5) {$F^W$};
\node at (0.6,-0.2) {$F$};
\end{tikzpicture}
%%%%%%%%%%%%%%%%%%%%%%%%%%%%%%%%%%%%%%%%%%%%%%%%%%%%
\;\;\;\;
%%%%%%%%%%%%%%%%%%%%%%%%%%%%%%%%%%%%%%%%%%%%%%%%%%%%
\begin{tikzpicture}
%bottom surface
\draw (0,0) -- (2,0) -- (1,2) -- (-1,2) -- (-2,1) -- (0,0);
\draw (0,0) -- +(-135:0.5cm);
\draw (2,0) -- +(-63:0.5cm);
%\draw (1,2) -- +(117:0.5cm);
\draw (-1,2) -- +(135:0.5cm);
\draw (-2,1) -- +(180:0.5cm);
\draw (1,2) to[out=0,in=-135] (2.5,2.5);
\draw (2,0) to[out=0,in=-135] (3.5,0.5);
%middle 3-cell
\draw (0.2,1.2) -- (1.7,1.2) -- (0.8,2.8) -- (-0.8,2.8) -- (-1.8,2) -- (0.2,1.2);
\draw (0,0) -- (0.2,1.2);
\draw (2,0) -- (1.7,1.2);
\draw (1,2) -- (0.8,2.8);
\draw (-1,2) -- (-0.8,2.8);
\draw (-2,1) -- (-1.8,2);
%bottom 3-cell
\draw (2,0) -- (2,0.8) -- (0.3,0.8) -- (0,0);
\draw (2,0.8) -- +(-63:0.5cm);
\draw (0.3,0.8) -- +(-135:0.5cm);
%bottom left 3-cell
\draw (0,0) -- (-0.2,1) -- (-2.2,1.8) -- (-2,1);
\draw (-0.2,1) -- +(-135:0.5cm);
\draw (-2.2,1.8) -- +(180:0.5cm);
%top left 3-cell
\draw (-2,1) -- (-2.2,2.2) -- (-1.2,3) -- (-1,2);
\draw (-2.2,2.2) -- +(180:0.5cm);
\draw (-1.2,3) -- +(135:0.5cm);
%top 3-cell
%\draw (-1,2) -- (-0.9,3.1) -- (0.6,3.1) -- (1,2);
%\draw (0.6,3.1) -- +(117:0.5cm);
\draw (-1,2) -- (-0.9,3.1) -- (1.1,3.1) to[out=0,in=135] (2.5,2.5);
\draw (1,2) -- (0.9,3.1);
\draw (-0.9,3.1) -- +(135:0.5cm);
%%top right 3-cell
%\draw (1,2) -- (1.1,3.1) to[out=0,in=135] (2.5,2.5);
%\draw (1.1,3.1) -- +(117:0.5cm);
%right 3-cell
\draw (1,2) -- (1.2,2.8) to[out=0,in=135] (2.5,2.5);
\draw (1.2,2.8) -- (2.1,1.2);
\draw (2,0) -- (2.1,1.2) to[out=0,in=135] (3.5,0.5);
%bottom right 3-cell
\draw (2,0) -- (2.3,0.8) to[out=0,in=135] (3.5,0.5);
\draw (2.3,0.8) -- +(-63:0.5cm);
\draw[line width=2pt] (2,3.5) -- (4,-0.5);
\end{tikzpicture}
%\caption{We draw analogous diagrams in one dimension lower
%for simplicity.}
%\label{f:AM}
\label{e:AM}
\end{equation}

Recall
\[
Z(\cW,l) = \ev(\bigotimes_C Z(C^W,l))
\]
where $\ev$ applies the pairing \eqnref{e:pairing}
to each interior 3-cell of $\cW$;
by construction, each interior 3-cell is of the form
$F^W = F \times I$ for some interior 2-cell $F$ of $\cM$.
Choose dual bases $\{\vphi_{F,\al}\},\{\vphi_{\ov{F},\al}\}$
for $H(F^W,l), H(\ov{F^W},l)$ for each such $F$.
Then, for $\Phi = \tnsr \vphi_C$ as in the proposition statement,
\[
\ev(Z(\cW,l), \Phi)
= \ev(\bigotimes_C Z(C^W,l),
	\bigotimes_C \vphi_C \tnsr \bigotimes_F \vphi_{F,\al} \tnsr
	\vphi_{\ov{F},\al})
= \sum_\al \bigotimes_C \ev(Z(C^W,l),
		\vphi_C \tnsr \bigotimes_F \vphi_{\ov{F},\al})
\]
where, in the middle expression,
the sum over $\al$ is implicit,
and in the last expression,
the $F$ runs over interior 2-cells in $\del C$.
(If $F$ has outward orientation with respect to $C$,
then $F^W = F \times I$ has the outward orientation with respect to $C^W$,
so we should evaluate $Z(C^W,l)$ against
$\vphi_{\ov{F},\al} \in H(\ov{F^W},l)$,
not $\vphi_{F,\al} \in H(F^W,l)$.)

Now each term
$\ev(Z(C^W,l), \vphi_C \tnsr \bigotimes_F \vphi_{\ov{F},\al})$
in the last expression is a local state in
$H(C,l)$,
and is $\vphi_{ev}$ as discussed in
\lemref{l:state-space-moves}.

Thus, if $\ups(\Phi)$ is of the form
% (\eqnref{e:ups-01})

\begin{equation}
\ups(\Phi) = 
	\cD^{y}
		%\frac{1}{2} \vme(\cM \backslash \cN) + \frac{1}{4} \vme(\cN)}
	d_{l^\circ}^{1/2} d_{l_\cN}^{1/4}
\begin{tikzpicture}
\begin{scope}[shift={(0,-1.5)}]
\draw[opacity=0.3] (0,0) -- (2,0) -- (2,2) -- (0,2.5) -- (-1,1) -- (0,0);
\draw[opacity=0.3] (2,0) -- (4,0);
\draw[opacity=0.3] (2,2) -- (4,2);
\draw[opacity=0.3] (0,0) -- +(-135:0.5cm);
\draw[opacity=0.3] (2,0) -- +(-90:0.5cm);
\draw[opacity=0.3] (0,2.5) -- +(90:1.2cm);
\draw[opacity=0.3] (-1,1) -- +(180:0.5cm);
\draw[line width=2pt] (4,-0.5) -- (4,3.5);
%%graph
\node[small_morphism] (ph1) at (0.7,1.1) {};
\node[small_morphism] (ph2) at (3,1) {};
\node[small_morphism] (ph3) at (2.1,3) {};
\draw (ph1) -- (ph2);
\draw (ph1) to[out=80,in=-150] (ph3);
\draw (ph2) to[out=90,in=-45] (ph3);
\draw (ph1) -- +(150:2cm);
\draw (ph1) -- +(-150:2cm);
\draw (ph1) to[out=-70,in=90] +(-80:1.6cm);
\draw (ph1) -- +(150:2cm);
\draw (ph2) -- +(-90:1.6cm);
\draw (ph2) -- +(0:1cm);
\draw (ph3) -- (4,3);
\draw (ph3) -- +(170:2.7cm);
\draw (ph3) -- +(90:0.5cm);
\end{scope}
\end{tikzpicture}
\label{e:ups-01}
\end{equation}
where the circles should be labeled by $\vphi_C$'s,
and $y = \frac{1}{2} \vme(\cM \backslash \cN)
	+ \frac{1}{4} \vme(\cM)$,
then we have,
writing
%$l_i = l_{in}^\circ$,
$l_i = l^\circ$,
$l_o = l_{out}^\circ$,
$l_m = l_{mid}$
(for the labeling on the 2-cells in the incoming boundary,
outgoing boundary, and interior of $\cW$, respectively),
%$\vme = \frac{1}{2} \vme(\cM \backslash \cN)
%	+ \frac{1}{4} \vme(\cM)$,
and
$y' = \vme(\cW \backslash \del \cW)
	+ \frac{1}{2} \vme(\del \cW \backslash \cN)
	= -v(\cM \backslash \cN) + \vme(\cM \backslash \cM)
	= -e(\cM \backslash \cN)$,
%(writing $v = v(\cM - \cN)$, $e = e(\cM - \cN)$
%for the number of vertices, edges in $\cM \backslash \cN$,
%$x = x(\cM;\cN) = \frac{1}{2}(v - e)$,
%$y = v(\cW;\cN) - e(\cW;\cN) = v - (e + v) = -e$,

\begin{align}
\begin{split}
\label{e:ups-02}
\ups(A(\Phi))
&=
\sum_{l_m,l_o}
\underbrace{\cD^y d_{l_o}^{1/2} d_{l_\cN}^{1/4}}_{\text{from } \ups}
\;
\underbrace{\cD^{y'} d_{l_i}^{1/2} d_{l_m} d_{l_o}^{1/2}}_{\text{from } \ZCY(\cW;\cN)}
\;\;\;
\begin{tikzpicture}
\begin{scope}[scale=1.6,shift={(0,-1.5)}]
\draw[opacity=0.3] (0,0) -- (2,0) -- (2,2) -- (0,2.5) -- (-1,1) -- (0,0);
\draw[opacity=0.3] (2,0) -- (4,0);
\draw[opacity=0.3] (2,2) -- (4,2);
\draw[opacity=0.3] (0,0) -- +(-135:0.5cm);
\draw[opacity=0.3] (2,0) -- +(-90:0.5cm);
\draw[opacity=0.3] (0,2.5) -- +(90:1.2cm);
\draw[opacity=0.3] (-1,1) -- +(180:0.5cm);
\draw[line width=2pt] (4,-0.5) -- (4,3.7);
\node at (4.2,2.5) {$l_\cN$};
%%graph nodes
\node[small_morphism] (ph1) at (0.7,1.1) {};
\node[small_morphism] (ph2) at (3,1) {};
\node[small_morphism] (ph3) at (2.1,3) {};
%%new dotnodes
%dots around ph1, starting from rightmost
\node[dotnode] (a10) at (1.8,1) {};
\node[dotnode] (a11) at (1,2) {};
\node[dotnode] (a12) at (-0.3,1.6) {};
\node[dotnode] (a13) at (-0.3,0.6) {};
\node[dotnode] (a14) at (1,0.2) {};
%dots around ph2, starting from top
\node[dotnode] (a20) at (3,1.8) {};
\node[dotnode] (a21) at (2.2,1) {};
\node[dotnode] (a22) at (3,0.2) {};
%%dots around ph3, starting from left
\node[dotnode] (a30) at (0.2,3.4) {};
\node[dotnode] (a31) at (1.2,2.4) {};
\node[dotnode] (a32) at (3,2.2) {};
%%other dots, top to bottom to right
\node[dotnode] (a40) at (-0.2,3.4) {};
\node[dotnode] (a41) at (-0.7,1.9) {};
\node[dotnode] (a42) at (-0.6,0.3) {};
\node[dotnode] (a43) at (1,-0.2) {};
\node[dotnode] (a44) at (3,-0.2) {};
%%edges from ph1,2,3
\draw (ph1) -- (a10);
\node at (1.3,1.2) {$l_i$};
\draw (ph1) -- (a11);
\node at (0.7,1.6) {$l_i$};
\draw (ph1) -- (a12);
\draw (ph1) -- (a13);
\draw (ph1) -- (a14);
\draw (ph2) -- (a20);
\node at (2.9,1.4) {$l_i$};
\draw (ph2) -- (a21);
\node at (2.6,1.15) {$l_i$};
\draw (ph2) -- (a22);
\draw (ph2) -- (4,1);
\draw (ph3) -- (a30);
\draw (ph3) -- (a31);
\draw (ph3) -- (a32);
\draw (ph3) -- (4,3);
\draw (ph3) -- +(90:0.7cm);
%%edges between alphas
\draw (a10) -- (a21);
\node at (2,1.15) {$l_o$};
\draw (a11) -- (a31);
\node at (1.25,2.1) {$l_o$};
\draw (a12) -- (a41);
\draw (a13) -- (a42);
\draw (a14) -- (a43);
\draw (a20) -- (a32);
\node at (2.85,2) {$l_o$};
\draw (a22) -- (a44);
\draw (a30) -- (a40);
%%edges from alpha to outside cells
\draw (a40) -- +(180:0.5cm);
\draw (a41) -- +(150:0.5cm);
\draw (a42) -- +(-150:0.5cm);
\draw (a43) -- +(-90:0.3cm);
\draw (a44) -- +(-90:0.3cm);
%%%edges around vertices
%%ph1
\draw (a10) to[out=90,in=-20] (a11);
\node at (1.75,1.7) {$l_m$};
\draw (a11) to[out=160,in=60] (a12);
\draw (a12) to[out=-120,in=135] (a13);
\draw (a13) to[out=-45,in=180] (a14);
\draw (a14) to[out=0,in=-90] (a10);
%%ph2
\draw (a20) to[out=180,in=90] (a21);
\node at (2.3,1.7) {$l_m$};
\draw (a21) to[out=-90,in=180] (a22);
%%ph3
\draw (a30) -- +(90:0.3cm);
\draw (a30) to[out=-90,in=160] (a31);
\draw (a31) to[out=-20,in=180] (a32);
\node at (2.1,2.35) {$l_m$};
%%top left
\draw (a40) -- +(90:0.3cm);
\draw (a40) to[out=-90,in=60] (a41);
\draw (a41) to[out=-120,in=0] (-1.5,1.2);
%%bottom left
\draw (-1.5,0.8) to[out=0,in=135] (a42);
\draw (a42) to[out=-45,in=70] (-0.5,-0.2);
%%bottom
\draw (-0.2,-0.5) to[out=45,in=180] (a43);
\draw (a43) to[out=0,in=120] (1.7,-0.5);
%%bottom right
\draw (2.3,-0.5) to[out=60,in=180] (a44);
\end{scope}
\end{tikzpicture}
%%%%%%%%%%%%%%%%%%%%%%%%%%%%%%%%%%%%%%%%%%%%%%%%%%%%
\\
&=
\sum_{l_m}
\cD^{-e(\cM \backslash \cN)} d_{l_m} \cdot \cD^y d_{l_i}^{1/2} d_{l_\cN}^{1/4}
%%%%%%%%%%%%%%%%%%%%%%%%%%%%%%%%%%%%%%%%%%%%%%%%%%%%
\;\;\;
\begin{tikzpicture}
\begin{scope}[scale=1.6,shift={(0,-1.5)}]
\draw[opacity=0.3] (0,0) -- (2,0) -- (2,2) -- (0,2.5) -- (-1,1) -- (0,0);
\draw[opacity=0.3] (2,0) -- (4,0);
\draw[opacity=0.3] (2,2) -- (4,2);
\draw[opacity=0.3] (0,0) -- +(-135:0.5cm);
\draw[opacity=0.3] (2,0) -- +(-90:0.5cm);
\draw[opacity=0.3] (0,2.5) -- +(90:1.2cm);
\draw[opacity=0.3] (-1,1) -- +(180:0.5cm);
\draw[line width=2pt] (4,-0.5) -- (4,3.7);
\node at (4.2,2.5) {$l_\cN$};
%%circles around vertices
\draw (0,0) circle (0.3cm);
\draw (2,0) circle (0.3cm);
\draw (2,2) circle (0.3cm);
\node at (1.7,1.7) {$l_m$};
\draw (0,2.5) circle (0.3cm);
\draw (-1,1) circle (0.3cm);
%%graph nodes
\node[small_morphism] (ph1) at (0.7,1.1) {};
\node[small_morphism] (ph2) at (3,1) {};
\node[small_morphism] (ph3) at (2.1,3) {};
%%old graph
\draw (ph1) -- (ph2);
\draw (ph1) to[out=80,in=-150] (ph3);
\draw (ph2) to[out=90,in=-45] (ph3);
\draw (ph1) -- +(150:2cm);
\draw (ph1) -- +(-150:2cm);
\draw (ph1) to[out=-70,in=90] +(-80:1.6cm);
\draw (ph1) -- +(150:2cm);
\draw (ph2) -- +(-90:1.6cm);
\draw (ph2) -- +(0:1cm);
\draw (ph3) -- (4,3);
\draw (ph3) -- +(170:2.7cm);
\draw (ph3) -- +(90:0.5cm);
\end{scope}
\end{tikzpicture}
%%%%%%%%%%%%%%%%%%%%%%%%%%%%%%%%%%%%%%%%%%%%%%%%
%\;\;\;
\\
&=
B(\ups(\Phi))
\end{split}
\end{align}

\end{proof}

\begin{lemma}
The maps $\ups$ is ``invariant under $A$'', that is,
if $\ups_\cM = \ups : H(\cM; (\cN,l)) \to \ZCYsk(M;\VV)$,
then $\ups_\cM = \ups_{\cM'} \circ A_{\cM,\cM'}$.
In other words, $\ups$ gives a well-defined map
$\ups : \ZCY(\cM;(\cN,l)) \to \ZCYsk(M;\VV)$.
\label{l:ups-A-invariant}
\end{lemma}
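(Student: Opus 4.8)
The goal is to show that the map $\ups : H(\cM;(\cN,l)) \to \ZCYsk(M;\VV)$ from \lemref{l:ups-projections-intertwine} descends to $\ZCY(\cM;(\cN,l)) = \im(A_\cM) \subset H(\cM;(\cN,l))$, and moreover is compatible with the canonical identifications $A_{\cM,\cM'}$ between state spaces for different PLCW decompositions. The key input is already in hand: by \lemref{l:ups-projections-intertwine}, $\ups$ intertwines the projection $A_\cM$ on $H(\cM;(\cN,l))$ with the projection $B_\cM = \prod_e B_e$ on $\ZCYsk(M^\circ;\VV)$, where the $B_e$ are the edge-loop projections. The first step is to observe that once we pass from $\ZCYsk(M^\circ;\VV)$ back to $\ZCYsk(M;\VV)$, each $B_e$ becomes the identity: the ribbon loop $\frac{1}{\cD}\cdot(\text{regular coloring})$ encircling an interior $1$-cell $e$ can be shrunk through $e$ (since $e$ is now part of the manifold, not removed) and evaluated, and by \lemref{l:dashed_circle} — more precisely the version where the loop bounds a disc meeting no other strands — this evaluates to $\id$. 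Hence under the natural surjection $\ZCYsk(M^\circ;\VV)\to \ZCYsk(M;\VV)$, the composite $B_\cM$ is carried to the identity, so the factored map \eqnref{e:equiv-map-morphism-factors} composed with this surjection is precisely \eqnref{e:equiv-map-morphism}, and it satisfies $\ups\circ A_\cM = \ups$.

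From this the first assertion is immediate: if $\Phi \in H(\cM;(\cN,l))$ and $\Phi' = A_\cM(\Phi) \in \ZCY(\cM;(\cN,l))$ is another lift of the same state, then $\ups(\Phi) = \ups(A_\cM(\Phi)) = B_\cM(\ups(\Phi))$ pushed to $\ZCYsk(M;\VV)$ equals $\ups(\Phi)$ again, so $\ups$ is constant on $A_\cM$-fibers and thus well-defined on $\ZCY(\cM;(\cN,l)) = \im(A_\cM)$. For the compatibility statement, I would unwind the definition $A_{\cM,\cM'} = \ZCY((M\times I);\cN) : H(\cM;(\cN,l))\to H(\cM';(\cN,l))$ from \defref{d:state-space-boundary}, and run essentially the same computation as in the proof of \lemref{l:ups-projections-intertwine}: building the PLCW decomposition $\cW$ of $M\times I$ with one $(k+1)$-cell per $k$-cell of $\cM$ (now interpolating to $\cM'$ rather than $\cM$ on the outgoing end), applying \eqnref{e:cy-basis-2} and \lemref{l:state-space-moves} to identify $\ups_{\cM'}(A_{\cM,\cM'}(\Phi))$ with $B$-decorated version of $\ups_\cM(\Phi)$, and finally killing the edge loops as above. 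The bookkeeping of the $\cD$ and $d_{l(f)}$ exponents — the terms $y = \frac12\vme(\cM\backslash\cN)+\frac14\vme(\cN)$ and $y' = -e(\cM\backslash\cN)$ appearing in \eqnref{e:ups-02} — must be checked to balance between the two sides; this is the analogue of the normalization chosen in \eqnref{e:equiv-map-morphism}, and the choice of normalization there is exactly what makes these exponents cancel.

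The main obstacle I anticipate is not conceptual but organizational: one must be careful that the map $\ZCYsk(M^\circ;\VV)\to\ZCYsk(M;\VV)$ really does carry $B_\cM$ to the identity and not merely to an idempotent with the right image — i.e. that filling in the $1$-skeleton does not introduce new relations that would make the map ill-defined going the other way. This is handled by noting that $\ups$ was \emph{already} defined with target $\ZCYsk(M;\VV)$ in \eqnref{e:equiv-map-morphism}, and the factorization through $M^\circ$ in \eqnref{e:equiv-map-morphism-factors} is an \emph{additional} refinement; so we are composing with the forgetful surjection in the safe direction and nothing needs to be inverted. A secondary subtlety is that the edge loop sits on the boundary between two (possibly equal) $3$-cells and one must confirm the sliding/evaluation argument of \lemref{lem:sliding} and \lemref{l:dashed_circle} applies regardless of whether those cells coincide; but since these lemmas are stated for arbitrary contents of the shaded region, this causes no difficulty. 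I would present the proof as: (1) reduce to showing $\ups\circ A_\cM = \ups$ via the $M^\circ$ factorization and edge-loop collapse; (2) deduce well-definedness on $\ZCY(\cM;(\cN,l))$; (3) verify compatibility with $A_{\cM,\cM'}$ by the same computation as \lemref{l:ups-projections-intertwine} with $\cM'$ on the outgoing end.
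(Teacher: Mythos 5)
Your steps (1)--(2) are fine and essentially reprove the $\cM'=\cM$ case: the identity $\ups\circ A_\cM = B_\cM\circ\ups$ from \lemref{l:ups-projections-intertwine}, followed by the observation that each edge loop $B_e$ becomes contractible once the $1$-skeleton is filled back in and hence (by \lemref{lem:sliding} and \lemref{l:dashed_circle}) acts as the identity on $\ZCYsk(M;\VV)$, does give $\ups_\cM\circ A_{\cM,\cM}=\ups_\cM$. But that is the easy half of the lemma. The actual content is the compatibility $\ups_\cM=\ups_{\cM'}\circ A_{\cM,\cM'}$ for \emph{two different} PLCW decompositions $\cM,\cM'$ of $M$, and there your plan is only a gesture: ``run the same computation with $\cM'$ on the outgoing end'' requires constructing an explicit PLCW structure on $M\times I$ interpolating between arbitrary $\cM$ and $\cM'$ and redoing the $\cD^{y}$, $d_{l^\circ}^{1/2}$, $d_{l_\cN}^{1/4}$ exponent bookkeeping of \eqnref{e:ups-02} from scratch in that setting, since the one-$(k{+}1)$-cell-per-$k$-cell decomposition $\cW$ used in \lemref{l:ups-projections-intertwine} only makes sense when the two ends agree.

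The paper's proof avoids all of this: it reduces to the case where $\cM$ and $\cM'$ differ by a single-cell move (\prpref{p:single-cell-subdivision} says these generate all changes of decomposition rel boundary), and for a single-cell move the identity $\ups_\cM=\ups_{\cM'}\circ A_{\cM,\cM'}$ is exactly \eqnref{e:lifts-relate} of \lemref{l:state-space-moves}, whose normalization constant $\cD^{\frac12\vme(\cM_{in}\backslash\cN)}d_{l^\circ}^{1/2}$ is precisely the ratio of the normalizations built into $\ups_\cM$ and $\ups_{\cM'}$ in \eqnref{e:equiv-map-morphism}. I would urge you to replace your step (3) with this reduction: it is the route the normalization of $\ups$ was designed for, and it makes your $B_\cM$ argument for step (1) unnecessary (the fixed-decomposition case is just the trivial single-cell move). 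As written, your proposal has a genuine gap at its crucial step, though the overall strategy could be pushed through with considerably more work.
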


\begin{proof}
It is clear from \lemref{l:state-space-moves}
that if $\cM,\cM'$ are related by a single-cell move,
then the lemma holds;
then by \prpref{p:single-cell-subdivision},
a sequence of single-cell moves relate any two PLCW decompositions,
so we are done.
\end{proof}

\begin{lemma}
The map $\ups : H(\cM; (\cN,l_\cN)) \to \ZCYsk(M^\circ; \VV)$
from \lemref{l:ups-projections-intertwine}
is an isomorphism.
\label{l:ups-isom}
\end{lemma}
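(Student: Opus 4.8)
The plan is to construct an explicit inverse map and check the two compositions are the identity. First I would observe that $M^\circ = M \backslash \sk^1(\cM)$ deformation retracts onto a disjoint union of pieces, one for each $2$-cell $F$ of $\cM$ thickened slightly into the adjacent $3$-cells, together with the interiors of the $3$-cells; more precisely, $M^\circ$ is an open handlebody-like neighborhood built from the open $3$-cells and the open $2$-cells, and removing $\sk^1$ means any ribbon graph in $M^\circ$ can be isotoped off a neighborhood of the $1$-skeleton. The key structural fact is that a colored ribbon graph $\Gamma$ in $M^\circ$, after isotopy, meets each open $2$-cell $F$ transversally in a collection of points (with colors given by a labeling extending $l_\cN$), and within each open $3$-cell $C$ the graph sits in a ball, so $\ZCYsk$ of that ball is computed by the Reshetikhin-Turaev evaluation into $H(C, l, \psi_C)$ (\prpref{p:rt-evaluation-ball}, using the anchor $\psi_C$ as the identification of $C$ with $B^3$). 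This assignment $\Gamma \mapsto \bigotimes_C \ZRT(\Gamma \cap C) \in \bigotimes_C H(C,l)$, summed over labelings $l$ of the $2$-cells compatible with $l_\cN$ and corrected by the same normalization factors $\cD^{\ldots}, d_{l^\circ}^{\ldots}, d_{l_\cN}^{\ldots}$ appearing in \eqnref{e:equiv-map-morphism-factors}, is the candidate inverse $\ups^{-1}$.

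Next I would verify $\ups^{-1}$ is well-defined on $\ZCYsk(M^\circ;\VV)$, i.e. it kills null graphs. For a null graph of isotopy type (case 1 of the definition), well-definedness follows because one can use \lemref{l:narrow-transverse} to make the isotopy respect transversality with all the $2$-cells, and then $\ZRT$ is isotopy-invariant in each ball; the subtle point is isotopies that push strands across a $2$-cell $F$, which change the point-count on $F$, but these are exactly accounted for by the composition rule \eqnref{e:composition} together with the sum over labelings of $F$ (inserting $\sum_i d_i^R \,\ev_{X_i}$ on the $F$-strand, cf. \lemref{l:summation}) — this is where the $d_{l^\circ}$ normalization is forced. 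For null graphs of evaluation type (case 2), supported in a ball $D \subset M^\circ$: if $D$ lies inside one open $3$-cell the claim is immediate from $\ZRT$-linearity; in general $D$ meets several cells, and one reduces to the previous case by isotoping $\Gamma$ so that the portion of $D$ meeting $\sk^2(\cM)$ is standardized, then cutting $D$ along the $2$-cells and applying multiplicativity of $\ZRT$ under gluing of balls. That $\ups \circ \ups^{-1} = \id$ and $\ups^{-1} \circ \ups = \id$ then follow from the ``furthermore'' clause of \prpref{p:rt-evaluation-ball} (evaluating the standard anchor colored by $\vphi_C$ just returns $\vphi_C$) together with \lemref{l:state-space-moves}, which is precisely the statement that merging the cells and re-evaluating is compatible with the normalization constants; the two sets of $\cD$- and $d$-factors are designed to cancel, and I would just bookkeep the exponents once using $\vme(\cN\times I) = 2\vme(\cN) - v(\cN)$ as in the proof of \lemref{l:zcysk-identity}.

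The main obstacle I expect is the well-definedness of $\ups^{-1}$ across the $2$-skeleton — specifically, showing that pushing a strand across a $2$-cell $F$ (thereby changing which labeling $l$ the graph contributes to, and changing the point-count and hence the target local state spaces $H(C,l)$ for the two $3$-cells adjacent to $F$) produces the same element of $\bigoplus_l \bigotimes_C H(C,l)$ after applying the normalization. This is the skein analog of proving invariance of $\ZCY$ under the $k=3$ elementary move (Case $k=3$ of \thmref{t:invariance-zcy}), and the argument is essentially the same: it comes down to the identity $\{\sqrt{d_i}\,\ev_{X_i}(\vphi^i_\al \tnsr \phi^i_\beta)\}$ forming dual bases, i.e. \lemref{l:summation} together with \lemref{l:summation-alpha-tunnel}. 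Everything else — transversality normalization via \lemref{l:narrow-transverse}, the reduction of arbitrary null balls to cell-by-cell computations, and the final cancellation of prefactors — is routine given the earlier lemmas, so I would state those steps and refer to \ocite{kirillov-stringnet} for the analogous one-dimension-lower computations, noting that the presence of boundary $\cN$ changes only the $d_{l_\cN}^{1/4}$ and $\vme(\cN)$ bookkeeping and not the substance of the argument.
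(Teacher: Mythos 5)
Your proposal is correct and follows essentially the same route as the paper: construct the inverse by isotoping/narrowing the graph to be transverse to the 2-cells (\lemref{l:narrow-transverse}), use \lemref{l:summation} and \lemref{l:summation-alpha-tunnel} to standardize the graph to a single simple-colored ribbon per 2-cell, and then evaluate inside each (slightly shrunk) 3-cell. The paper spends most of its effort making your "main obstacle" precise — it invokes \thmref{t:isotopy-open-cover-boundary} to decompose an arbitrary isotopy into moves supported either in the interior of a 3-cell or in a collar of a 2-cell, and checks each type separately — but this is exactly the mechanism your sketch points at.
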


\begin{proof}
We define an inverse map $\ups^\inv$.
Let $\Gamma$ be the graph built from arbitrarily chosen anchors,
as in \eqnref{e:ups-01}.
%Observe that $M^\circ$ deformation retracts onto $\Gamma$
%(essentially $\Gamma$ is the 1-skeleton of a dual PLCW structure to $\cM$).

Let $\Gamma'$ be some arbitrary colored ribbon graph in
$\ZCYsk(M^\circ;\VV)$.
We define $\ups^\inv(\Gamma')$ as follows.
%We can apply an isotopy to bring $\Gamma'$
%into a neighborhood of $\Gamma$
%by applying the deformation retract.
Apply an isotopy to make $\Gamma'$ weakly transverse to every 2-cell,
then by \lemref{l:narrow-transverse},
there is a strict narrowing that makes $\Gamma'$ transverse to every 2-cell
(we discuss how this choice of isotopy is irrelevant).
Using \lemref{l:summation},
we can make the graph meet each 2-cell in a single ribbon,
or more precisely,
$\Gamma'$ is equivalent as a skein to $\sum c_j \Gamma_j''$,
where each $\Gamma_j'$ agrees exactly with $\Gamma$
in a neighborhood of each 2-cell
(see \eqnref{e:ups-isom-01});
this is well-defined by \lemref{l:summation-alpha-tunnel}.
Then, for each 3-cell $C$, take a ball $D$ that is a just a
slightly shrunk $C$, and evaluate the graphs $\Gamma_j''$,
and replace it with a subgraph that looks like $\Gamma$
but with different colorings.

\begin{align}
\label{e:ups-isom-01}
\begin{split}
%%\begin{align}
\begin{tikzpicture}
\begin{scope}[shift={(0,-0.8)}]
\tikzmath{
	\lf = -0.25;
	\rt = 0.25;
	\md = 0;
	\lfmost = -1;
	\rtmost = 1;
	\ya = 0.3;
	\yb = 0.6;
	\yc = 0.9;
	\yd = 1.2;
	\ymid = \yb/2 + \yc/2;
}
\draw[fill=gray, fill opacity=0.2]
	(\lf,0) -- (\rt,-0.3) -- (\rt,1.5) -- (\md,2) -- (\lf,1.75) -- (\lf,0);
\node[small_morphism] (f) at (-0.6,\ya) {\tiny $f$};
\draw (\lfmost,\ya) -- (f) -- (\md,\ya);
\draw[opacity=0.3] (\md,\ya) -- (\rt,\ya);
\draw (\rt,\ya) -- (\rtmost,\ya);
\draw (\lfmost,\yb)
	.. controls +(0:0.5cm) and +(-90:0.1cm) .. (-0.4,\ymid)
	.. controls +(90:0.1cm) and +(0:0.5cm) .. (\lfmost,\yc);
\draw (\lfmost,\yd) -- (\md,\yd);
\draw[opacity=0.3] (\md,\yd) -- (\rt,\yd);
\draw (\rt,\yd) -- (\rtmost,\yd);
\end{scope}
\end{tikzpicture}
\;\;\;
\;\;\;\;\;\;
&=
\;\;\;\;\;\;
\;\;\;
\begin{tikzpicture}
\begin{scope}[shift={(0,-0.8)}]
\tikzmath{
	\lf = -0.25;
	\rt = 0.25;
	\md = 0;
	\lfmost = -1;
	\rtmost = 1;
	\ya = 0.3;
	\yb = 0.6;
	\yc = 0.9;
	\yd = 1.2;
	\ymid = \yb/2 + \yc/2;
}
\draw[fill=gray, fill opacity=0.2]
	(\lf,0) -- (\rt,-0.3) -- (\rt,1.5) -- (\md,2) -- (\lf,1.75) -- (\lf,0);
\node[small_morphism] (f) at (0.6,\ya) {\tiny $f$};
\draw (\lfmost,\ya) -- (\md,\ya);
\draw[opacity=0.3] (\md,\ya) -- (\rt,\ya);
\draw (\rt,\ya) -- (f) -- (\rtmost,\ya);
\draw (\lfmost,\yb) -- (\md,\yb);
\draw[opacity=0.3] (\md,\yb) -- (\rt,\yb);
\draw (\lfmost,\yc) -- (\md,\yc);
\draw[opacity=0.3] (\md,\yc) -- (\rt,\yc);
\draw (\rt,\yb)
	.. controls +(0:0.5cm) and +(-90:0.1cm) .. (0.85,\ymid)
	.. controls +(90:0.1cm) and +(0:0.5cm) .. (\rt,\yc);
\draw (\lfmost,\yd) -- (\md,\yd);
\draw[opacity=0.3] (\md,\yd) -- (\rt,\yd);
\draw (\rt,\yd) -- (\rtmost,\yd);
\end{scope}
\end{tikzpicture}
\\[10pt]
\begin{tikzpicture}
\begin{scope}[shift={(0,-0.8)}]
\tikzmath{
	\lf = -0.25;
	\rt = 0.25;
	\md = 0;
	\lfmost = -1.5;
	\rtmost = 1.5;
	\ya = 0.3;
	\yb = 0.6;
	\yc = 0.9;
	\yd = 1.2;
	\ymid = \yb/2 + \yc/2;
}
\draw[fill=gray, fill opacity=0.2]
	(\lf,0) -- (\rt,-0.3) -- (\rt,1.5) -- (\md,2) -- (\lf,1.75) -- (\lf,0);
\node[small_morphism] (a1) at (-0.5,\ymid) {\tiny $\al$};
\node[small_morphism] (a2) at (0.5,\ymid) {\tiny $\al$};
%%between alphas
\draw (a1) -- (\md,\ymid);
\draw[opacity=0.3] (\md,\ymid) -- (\rt,\ymid);
\draw (\rt,\ymid) -- (a2);
%%bottom strand to alpha to top strand
\node[small_morphism] (f) at (-1.1,\ya) {\tiny $f$};
\draw (\lfmost,\ya) -- (f) to[out=0,in=-120] (a1)
	to[out=120,in=0] (-1.1,\yd) -- (\lfmost,\yd);
%%right side
\draw (\rtmost,\ya) -- (1.1,\ya)
	to[out=180,in=-60] (a2) to[out=60,in=180] (1.1,\yd) -- (\rtmost,\yd);
%%left loop
\draw (\lfmost,\yb)
	.. controls +(0:0.5cm) and +(-90:0.1cm) .. (-0.9,\ymid)
	.. controls +(90:0.1cm) and +(0:0.5cm) .. (\lfmost,\yc);
\end{scope}
\end{tikzpicture}
\;\;\;
&=
\;\;\;
\begin{tikzpicture}
\begin{scope}[shift={(0,-0.8)}]
\tikzmath{
	\lf = -0.25;
	\rt = 0.25;
	\md = 0;
	\lfmost = -1.5;
	\rtmost = 1.5;
	\ya = 0.3;
	\yb = 0.6;
	\yc = 0.9;
	\yd = 1.2;
	\ymid = \yb/2 + \yc/2;
}
\draw[fill=gray, fill opacity=0.2]
	(\lf,0) -- (\rt,-0.3) -- (\rt,1.5) -- (\md,2) -- (\lf,1.75) -- (\lf,0);
\node[small_morphism] (a1) at (-0.5,\ymid) {\tiny $\al$};
\node[small_morphism] (a2) at (0.5,\ymid) {\tiny $\al$};
%%between alphas
\draw (a1) -- (\md,\ymid);
\draw[opacity=0.3] (\md,\ymid) -- (\rt,\ymid);
\draw (\rt,\ymid) -- (a2);
%%bottom strand to alpha to top strand
\draw (\lfmost,\ya) -- (-1.1,\ya) to[out=0,in=-120] (a1)
	to[out=120,in=0] (-1.1,\yd) -- (\lfmost,\yd);
%%right side
\node[small_morphism] (f) at (1.1,\ya) {\tiny $f$};
\draw (\rtmost,\ya) -- (f)
	to[out=180,in=-60] (a2) to[out=60,in=180] (1.1,\yd) -- (\rtmost,\yd);
%%middle two strands
\draw (\lfmost,\yb) to[out=0,in=-160] (a1);
\draw (\lfmost,\yc) to[out=0,in=160] (a1);
\draw (a2)
	.. controls +(-20:0.5cm) and +(-90:0.1cm) .. (1.2,\ymid)
	.. controls +(90:0.1cm) and +(20:0.5cm) .. (a2);
\end{scope}
\end{tikzpicture}
%%\end{align}
\end{split}
\end{align}

Now let us discuss why the isotopies of $\Gamma'$
to make it transverse to 2-cells is irrelevant.
It is easy to see that the choice of strict narrowing is irrelevant,
thus we will assume that all ribbon graphs are already ``narrow enough''
(see also \rmkref{r:narrowing-infinitesimal}).
Suppose we have isotoped $\Gamma'$ to
$\Gamma_1'$ and $\Gamma_2'$, each of which is weakly transverse
to every 2-cell.
Let $\Phi^t$ be an ambient isotopy that throws
$\Gamma_1'$ onto $\Gamma_2'$.

For a 2-cell $F$ of $\cM$,
let $U_F$ be a small open neighborhood of $\Int(F) \subset M^\circ$
(say, take a point on either side of $F$ close to it,
and take $U_F$ to be the cone over $F$, then remove its boundary).
Then $\{\Int(C),U_F\}_{C,F}$ is an open cover of $M^\circ$,
where $C,F$ run over all 3-,2-cells of $\cM$, respectively.
By \thmref{t:isotopy-open-cover-boundary},
the isotopy $\Phi^t$
can be replace with a sequence of moves, each of which is supported
in a ball in some $\Int(C)$ or $U_F$.

Let $\Gamma_0 = \Gamma_1',\Gamma_1, \ldots, \Gamma_l = \Gamma_2'$
be the graphs between the moves,
and let $h_k^t$ be the move that takes $\Gamma_k$ to $\Gamma_{k+1}$.

We show that we can assume that all $\Gamma_k$'s are weakly transverse to
all 2-cells, so that it suffices to show
that $\ups^\inv(\Gamma_k) = \ups^\inv(\Gamma_{k+1})$.
Consider a move supported in a ball $D \subset U_F$ for some 2-cell $F$;
say this move $h_k^t$ takes $\Gamma_k$ to $\Gamma_{k+1}$.
Suppose $\Gamma_k$ is weakly transverse to $F$,
If $\Gamma_{k+1}$ is not weakly transverse to $F$,
we apply a small isotopy $\phi^t$  (in particularly, supported in $D$
and away from all other supporting balls of moves supported in $\Int(C)$),
so that $\phi^1(\Gamma_{k+1})$ is weakly transverse to $F$.
Somewhere ``down the line'', there is some minimal $k' > k$ such that
the $k'$-th move $h_{k'}^t$ is supported in $U_F$
(because $\Gamma_{k+1}$ is not weakly transversal
while the target $\Gamma_l$ is,
and moves supported in $\Int(C)$ cannot help).
We modify all the graphs and moves up to $k'$
so that we have
\[
\ldots,\Gamma_k, \phi^1(\Gamma_{k+1}), \phi^1(\Gamma_{k+2}),\ldots,
\phi^1(\Gamma_{k'}), \Gamma_{k'+1},\ldots
\]
and for isotopies,
the $k$-th isotopy is post-concatenated with $\phi^t$,
while the $k'$-th isotopy is pre-concatenated with $\phi^{1-t}$.
Thus, we may assume that all $\Gamma_k$'s are weakly transverse to
all 2-cells.

Clearly the moves supported in $\Int(C)$
do not affect $\ups^\inv(\Gamma_k)$.
Now consider some $h_k^t$ supported in a ball $D \subset U_F$
for some 2-cell $F$ as above.
Let $h'^t$ be an isotopy supported in $U_F$
that moves $D$ into the interior of an adjacent cell $C$ of $F$.
We choose $h'^t$ to be composed of simple ``displacements'',
say, if $F$ is given a collar neighborhood $F \times [-1,1]$,
then such ``displacements'' should be essentially of the form
$(x,s) \mapsto (x,s+\al)$ for some fixed $\al$
(except perhaps near $\del F$);
furthermore, we ensure that each such ``displacement'' ends
with the graph being weakly transverse to $F$.
These ``displacements'' look like \eqnref{e:ups-isom-01},
and thus $h'$ do not affect $\ups^\inv(\Gamma_k)$.
Finally, the move that was supported in $D$ is now
the move $h'^1 \circ h_k^t$,
supported in $\Int(C)$.
Thus, we can replace $h_k^t$ by moves
$h'^t, h'^1 \circ h_k^t, h'^{1-t}$,
each of which does not affect the outcome of $\ups^\inv$,
so we are done.

(The arguments above on the irrelevance of isotopies
are very similar to the proof of \lemref{lem:alpha_independence}.)
\end{proof}

\begin{theorem}
For an oriented closed surface $N$,
the assignments $\Ups, \ups$ from 
\defref{d:marked-to-boundary-value},
\lemref{l:ups-projections-intertwine}
define an equivalence
\[
\Upsilon : \ZCY(N) \simeq \ZCYsk(N)
\]
\label{t:sk-equiv}
\end{theorem}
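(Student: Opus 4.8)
The plan is to assemble the equivalence $\Upsilon : \ZCY(N) \simeq \ZCYsk(N)$ out of the pieces already developed in this section. On objects, $\Ups$ of \defref{d:marked-to-boundary-value} sends a colored marked PLCW decomposition $(\cN,l)$ to a boundary value; since a colored marked PLCW decomposition on $N \times \{j\}$ is exactly the boundary data used to index $\Hom$ in both $\hatZCY(N)$ (\defref{d:cat-boundary-values}) and $\hatZCYsk(N)$, and since the marking $B$ of $(\cN,l)$ with the labeling $l(f)$ is precisely a boundary value, $\Ups$ is essentially a bijection on objects (up to the innocuous orientation-convention swap noted in \defref{d:marked-to-boundary-value}). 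On morphisms, for a fixed PLCW structure $\cM$ on $N \times I$ extending $(\cN,l)$ on the bottom and $(\cN',l')$ on the top, I would use the map $\ups$ of \lemref{l:ups-projections-intertwine} with target $\ZCYsk((N\times I)^\circ; \VV)$ — but here $N \times I$ is \emph{closed} in the sense relevant to $\Hom_{\hatZCYsk}$, so we are really looking at $\ZCYsk(N \times I; \Ups(\cN,l), \Ups(\cN',l'))$.

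The key steps, in order, are: (1) recall from \lemref{l:ups-A-invariant} that $\ups$ descends to a well-defined map on $\ZCY(\cM;(\cN,l))$, hence on $\Hom_{\hatZCY(N)}((\cN,l),(\cN',l')) = \ZCY(N\times I; (\cN,l),(\cN',l'))$; (2) invoke \lemref{l:ups-isom}, which says $\ups : H(\cM;(\cN,l_\cN)) \to \ZCYsk(M^\circ;\VV)$ is an isomorphism, and combine it with \lemref{l:ups-projections-intertwine}, which says $\ups$ intertwines the projectors $A_\cM$ and $B_\cM$: restricting to the images, $\ups$ then gives an isomorphism $\im A_\cM = \ZCY(\cM;(\cN,l)) \xrightarrow{\sim} \im B_\cM$. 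It remains to identify $\im B_\cM \subseteq \ZCYsk(M^\circ;\VV)$ with $\ZCYsk(M;\VV)$: the natural inclusion $M^\circ = M \setminus \sk^1(\cM) \hookrightarrow M$ induces a map on skein modules, and one checks that its image is exactly the subspace where each loop around an interior 1-cell has been symmetrized by the regular coloring, i.e. $\im B_\cM$ — intuitively, filling back in the 1-skeleton corresponds precisely to applying each $B_e$ (a skein passing an edge of $\cM$ can be slid off using \lemref{lem:sliding}). (3) Check functoriality: that $\Upsilon$ sends $\id_{(\cN,l)}$ to $\id_\VV$ and respects composition. For the identity, unwind \lemref{l:zcysk-identity} together with the normalization factors in \eqnref{e:equiv-map-morphism}: the ``vertical graph'' $B \times [0,1]$ is exactly the image under $\ups$ of the empty-pre-state-like element $\wdtld{\id}_{(\cN,l)} = \bigotimes \coev_{l(f)}$, once the $\cD$- and $d_l$-powers match up (and they are designed to match — this is the point of the exponents $\tfrac12\vme + \tfrac14\vme$ and $d_{l^\circ}^{1/2} d_{l_\cN}^{1/4}$). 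For composition, both sides amount to gluing two copies of $N \times I$ along $N$: on the $\ZCY$ side this is $A(\vphi' \tnsr \vphi)$, and by \lemref{l:ups-projections-intertwine} together with the gluing compatibility of $\ups$ (which follows from \lemref{l:state-space-moves} / \prpref{p:zcy-corner-gluing-alt}), this maps to the concatenation of the corresponding colored graphs, which is $\vphi' \cup_N \vphi = \vphi' \circ \vphi$ in $\hatZCYsk(N)$. (4) Finally, pass to Karoubi envelopes: an equivalence $\hatZCY(N) \simeq \hatZCYsk(N)$ of additive $\kk$-linear categories induces an equivalence $\Kar(\hatZCY(N)) \simeq \Kar(\hatZCYsk(N))$, i.e. $\ZCY(N) \simeq \ZCYsk(N)$, and one notes that $\Ups$ on objects is already essentially surjective onto boundary values, so $\Upsilon$ is fully faithful and essentially surjective.

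I expect the main obstacle to be step (2) — the precise identification of $\im B_\cM$ with the image of $\ZCYsk(M^\circ;\VV) \to \ZCYsk(M;\VV)$, and the verification that under this identification the two descriptions of the target agree. One direction is clear: any skein in $M^\circ$ can be pushed into $M$ and, by \lemref{lem:sliding}, is unchanged under each $B_e$-symmetrization once it lives in all of $M$, so the inclusion factors through $B_\cM$. The subtler direction is that $B_\cM$ is \emph{surjective} onto (the image of) $\ZCYsk(M;\VV)$, i.e. that symmetrizing over all interior edges is enough to kill the difference between a skein in $M^\circ$ and its pushforward-preimages; this is where one genuinely uses that $M^\circ$ deformation-retracts away from a codimension-$2$ set and that the regular color is the "transparent" object after symmetrization. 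A secondary nuisance is bookkeeping the normalization constants so that $\id \mapsto \id$ exactly (not merely up to scalar) and so that composition is on the nose; but these are forced once one computes $\vme(\cM \cup_\cN \cN\times I) - \vme(\cM)$ as in the proof of \lemref{l:zcysk-identity}, so this is routine rather than deep. Everything else — essential surjectivity, well-definedness, the Karoubi-envelope step — is immediate from the lemmas already in hand.
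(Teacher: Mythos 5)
Your proposal follows essentially the same route as the paper's proof: reduce to the hat versions, apply Lemmas \ref{l:ups-projections-intertwine}, \ref{l:ups-A-invariant} and \ref{l:ups-isom} with $M = N \times I$ to get the isomorphisms on $\Hom$-spaces, verify that the normalization factors make identities and composition match, and pass to Karoubi envelopes. The one point you flag as the main obstacle — identifying $\im B_\cM \subseteq \ZCYsk(M^\circ;\VV)$ with $\ZCYsk(M;\VV)$ via the sliding lemma — is real and is left implicit in the paper's (very terse) proof as well, so your more explicit treatment is if anything a strengthening rather than a deviation.
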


\begin{proof}
It suffices to prove equivalence for the hat versions:
\[
\Ups : \hatZCY(N) \simeq \hatZCYsk(N)
\]
$\Ups$ is clearly essentially surjective.
By the previous lemmas applied to $M = N \times I$,
we see that $\ups$ defines isomorphisms
$\ups : \Hom_{\ZCY(N)}( (\cN,l),(\cN',l') )
\simeq \Hom_{\ZCYsk(N)}( \Ups( (\cN,l) ), \Ups( (\cN',l') ) )$.
It is straightforward to check that $\Ups$ respects composition
(this is the reason for the $\cD^{\frac{1}{4}(\vme(\cN))} d_{l_\cN}^{1/4}$
factors in $\ups$),
and that the identity morphism $\id_{(\cN,l)}$
is sent to the identity morphism.
%It remains to note that $\ups$ respects composition
%(the $d_l^{1/2}$ term over $\cN'$ is canceled out by
%$\tnsr_{\cN'}$),
%as $\Ups$ is clearly essentially surjective,
%and \lemref{l:ups-isom} shows that it is fully faithful
%(since $\Hom_{\ZCYsk(N)}(\VV,\VV') = \ZCYsk(N \times I; \VV,\VV')$).
\end{proof}

\begin{theorem}
%\label{p:zcy-surface-functors-skein}
\label{t:sk-equiv-4mfld}
We have
\[
\ups : \cF_M \simeq \cF_M^{sk} \circ \Ups
\]
where $\ups, \Ups$ are from \thmref{t:sk-equiv} above,
and $\cF_M,\Fsk{M}$ are from
Propositions \ref{p:zcy-surface-functors},
\ref{p:zcy-surface-functors-skein}.

Furthermore, $\ups$ intertwines $\Fsk{W} \circ \Ups$ and $\cF_W$:
for $\VV = \Ups((\cN,l))$,
we have the commutative diagram
\[
\begin{tikzcd}[column sep=2cm]
\ZCY(M;(\cN,l)) \ar[r,"\ZCY(W;N)"] \ar[d,"\ups"']
	& \ZCY(M';(\cN,l)) \ar[d,"\ups"]
\\
\ZCYsk(M;\VV) \ar[r,"\ZCYsk(W;N)"]
	& \ZCYsk(M';\VV)
\end{tikzcd}
\]
\end{theorem}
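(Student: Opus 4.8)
The plan is to assemble $\ups$ at the level of $3$-manifolds out of the $2$-dimensional equivalence $\Upsilon\colon\ZCY(N)\simeq\ZCYsk(N)$ of \thmref{t:sk-equiv}, and then to prove the $4$-dimensional intertwining handle by handle. For the first statement, for each object $(\cN,l)$ of $\ZCY(N)$ I would define
$\ups_{(\cN,l)}\colon\ZCY(M;(\cN,l))\to\ZCYsk(M;\Ups((\cN,l)))$ by picking a PLCW structure $\cM$ on $M$ extending $\cN$, an anchoring, and taking the map $\ups\colon H(\cM;(\cN,l))\to\ZCYsk(M^\circ;\VV)$ of \lemref{l:ups-projections-intertwine} restricted to $\im A_\cM=\ZCY(\cM;(\cN,l))$; by that lemma $\ups$ intertwines $A_\cM$ with the projection $B_\cM$, so it lands in $\im B_\cM$, which the inclusion $M^\circ\hookrightarrow M$ identifies with $\ZCYsk(M;\VV)$ exactly as in the proof of \thmref{t:sk-equiv} (this identification is precisely why the target of $\ups$ there is the honest skein module). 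Independence of the PLCW structure is \lemref{l:ups-A-invariant}; independence of the anchoring follows from the canonical identification of local state spaces under anchor change (\defref{d:anchor-morphism}) and the anchor-functoriality of $\ZRT$; and $\ups_{(\cN,l)}$ is an isomorphism by \lemref{l:ups-isom}.

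The naturality of this family is where the normalization in \eqnref{e:equiv-map-morphism} is used: a direct bookkeeping of the exponents of $\cD$ and of the $d_{l(f)}$'s — the same computation that makes $\Ups$ respect composition in \thmref{t:sk-equiv} — shows that $\ups$ is compatible with gluing along a common boundary component, i.e.\ $\ups_\cM(\wdtld\vphi_1\cup_\cN\wdtld\vphi_2)=\ups_{\cM_1}(\wdtld\vphi_1)\cup_\cN\ups_{\cM_2}(\wdtld\vphi_2)$ on the nose. Applying this with $M_1=N\times I$ carrying a lift $\wdtld f$ of a morphism $f$, and using that $\ups$ factors through $A$, gives for $\vphi\in\ZCY(M;(\cN,l))$
\[
\ups\big(\cF_M(f)(\vphi)\big)=\ups\big(A(\wdtld f\cup_\cN\wdtld\vphi)\big)=\ups(\wdtld f)\cup_\cN\ups(\wdtld\vphi)=\Ups(f)\cup_\cN\ups(\vphi)=\big(\Fsk{M}\circ\Ups\big)(f)\big(\ups(\vphi)\big),
\]
which is the naturality square, since $\Ups$ on morphisms is $\ups$. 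This proves $\ups\colon\cF_M\simeq\Fsk{M}\circ\Ups$.

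For the $4$-dimensional intertwining, fix $(\cN,l)$ and $\VV=\Ups((\cN,l))$ and a handle decomposition $W=W_m\circ\cdots\circ W_1$ of the cornered cobordism $W$ (\prpref{p:handle-decomposition-exist}), each $W_i=\cH_{k_i}\wdtld\circ\id$ elementary. By definition $\ZCYsk(W;N)=\ZCYsk(W_m;N)\circ\cdots\circ\ZCYsk(W_1;N)$ (\defref{d:zcysk-corner}), while $\ZCY(W;N)=\ZCY(W_m;N)\circ\cdots\circ\ZCY(W_1;N)$ by the gluing/composition property \prpref{p:zcy-corner-gluing-map} (or \prpref{p:zcy-corner-gluing-alt}); since each $\ups_{(\cN,l)}$ is an isomorphism, pasting reduces us to proving the square for a single elementary $W=\cH_k\wdtld\circ\id_M$. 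There the handle is attached inside a ball ($k=1,2,3$) or adds/removes a disjoint $S^3$ ($k=0,4$), so by \lemref{l:cornered-cobordism-extend-zcysk} on the skein side and \prpref{p:zcy-corner-gluing-alt} on the state-sum side both operators act locally near the handle and as the identity elsewhere; it is therefore enough to compare the two on the model piece ($\cH_k$ glued to a ball, or a disjoint $B^4$). I would then run the five cases: for $k=0$ (resp.\ $k=4$) one computes $\ZCY(B^4)=\cD^{1/2}\,\emptystate{S^3}$ from \defref{d:zcy} and matches it, through $\ups$, against $\cD\cdot(-)\cup\emptyskein{S^3}$ (resp.\ against $\ZRT(-)\cdot(-)$) of \defref{d:zcyksk-elementary}, the stray factor of $\cD$ emerging from the $\cD^{\frac12\vme}$ in the normalization of $\ups$; for $k=1$ the $\cD^{-1}$ of \defref{d:zcyksk-elementary} matches the change in $\vme$; for $k=2$ the state-sum evaluation of the model $4$-cell produces exactly a dashed ribbon loop along the belt-sphere core with coefficient $\cD^{-1}$, and well-definedness on both sides is the sliding lemma \lemref{lem:sliding}; and for $k=3$ the model evaluation cuts a ribbon through the co-core and projects onto its $\one$-isotypic component, matching \defref{d:zcyksk-elementary} via \lemref{l:summation} and \lemref{l:summation-alpha-tunnel} (cf.\ \eqnref{e:ups-isom-01}). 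Since the left column $\ZCY(W;N)$ depends only on $W$ by \thmref{t:invariance-zcy-corner}, this argument also re-proves \prpref{p:zcysk-corner-indep} as a by-product.

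The main obstacle is the handle comparison for $k=2$ and $k=3$: one must fix an explicit PLCW structure on the model piece, carry out the Crane–Yetter evaluation of its $4$-cells relative to the corner $\cN$, and recognize the resulting subgraph as the local skein move of \defref{d:zcyksk-elementary}, all while tracking the $\cD^{\vme}$ and $d_{l(f)}$ weights so that they conspire with the normalization of $\ups$ — the same flavor of bookkeeping as in \lemref{l:state-space-moves} and in the proof of invariance \thmref{t:invariance-zcy}, but done relatively. Everything else — the cases $k=0,1,4$, the reduction to elementary cobordisms, and the whole first statement — is formal once the gluing-compatibility of $\ups$ is established.
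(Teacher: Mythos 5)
Your proposal follows essentially the same route as the paper: the first statement is assembled from \thmref{t:sk-equiv} and the lemmas preceding it, and the four-dimensional intertwining is reduced via a handle decomposition to the five elementary cornered cobordisms $\cH_k$, each verified on a model PLCW piece by tracking the $\cD^{\vme}$ and $d_{l(f)}$ normalizations (the paper likewise notes that this argument re-proves \prpref{p:zcysk-corner-indep} as a by-product). One small numerical correction: in the $k=2$ case the skein-side operator of \defref{d:zcyksk-elementary} inserts the regular-colored belt circle with no factor of $\cD^{-1}$, and the state-sum computation indeed produces $\sum_i d_i\,\phi_i$ on the nose rather than $\cD^{-1}$ times the dashed loop.
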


\begin{proof}
The first part follows directly from \thmref{t:sk-equiv}
and the preceding lemmas.

To prove the second part, it suffices to consider
elementary cornered cobordisms $W$.
More specifically, we just have to consider
handles $\cH_k$ with (mostly) ``empty'' input.
For a cornered cobordism $W: M \to_N M'$,
let $\Ftld{W} : \Fsk{M} \circ \Ups \to \Fsk{M'} \circ \Ups$
be the natural transformation
that is obtained from ``transferring'' $\Fpl{W}$ via $\Ups$;
more precisely, for $(\cN,l) \in \ZCY(N)$,
we define
\[
(\Ftld{W})_{(\cN,l)} = \ups_{(\cN,l)} \circ (\Fpl{W})_{(\cN,l)}
	\circ \ups_{(\cN,l)}^\inv
\]
(In other words, the second part claims that $\Ftld{W} = \Fsk{W} \circ \Ups$.)
%(more generally, if $f : \VV \simeq \Ups((\cN,l))$,
%conjugate the morphism above by
%$\Fpl{M'}(f^\inv) \circ - \circ \Fpl{M}(f)$.)

We compute $\Ftld{W}(\cH_k)$ case-by-case.
We choose different PLCW decompositions on $B^4 = B^k \times B^{4-k}$
for different cases,
but in each case, the interior of $\cH_k$ will consist of just
one 4-cell, i.e. we will choose a cell-like PLCW decomposition,
and we denote by $\cM_{in}, \cM_{out}$ the PLCW decomposition
on $B^k \times \del B^{4-k}, \del B^k \times B^{4-k}$.

\textbf{Case $k=4$:}
$\Ftld{\cH_4} (\emptyskein{S^3}) = 1$.
Take any PLCW structure $\cM_{in}$ on $S^3$.
The empty skein corresponds to
$\ups^\inv(\emptyskein{S^3}) = \cD^{-\frac{1}{2}x(\cM_{in})}
	\cdot \emptystate{\cM_{in}}$.
%where $x(\cM_{in}) = (v(\cM_{in}) - e(\cM_{in}))/2$
%(see \thmref{t:sk-equiv}).
Then
\begin{align*}
\Ftld{\cH_4}(\emptyskein{S^3})
&= \ZCY(\cH_4)(\cD^{-\frac{1}{2}x(\cM_{in})} \cdot \emptystate{\cM_{in}})
\\
&= \ev(\ZCY(\cH_4), \cD^{-\frac{1}{2}x(\cM_{in})} \cdot \emptystate{\cM_{in}})
\\
&= \ev(\cD^{\frac{1}{2}x(\cM_{in})} Z(B^4,l\equiv \one),
	\cD^{-\frac{1}{2}x(\cM_{in})} \cdot \emptystate{\cM_{in}})
\\
&= \ZRT(\emptystate{\cM_{in}}) = 1
\end{align*}
where in the last line, $\emptystate{\cM_{in}}$ is interpreted as a
colored graph as in \defref{d:ZCY3cell}.

\textbf{Case $k=0$:} $\Ftld{\cH_0} = \cD \cdot \emptyskein{S^3}$.
Consider the PLCW structure $\cM_{out}$ on $S^3 = \del \cH_0$
with exactly one $j$-cell for each dimension $j=0,1,2,3$
as follows:
%%note to self: old H4 is actually H0, etc.

\begin{equation}
\begin{tikzpicture}
\draw (0,-0.5) circle (2.5cm);
\node[dotnode] (d) at (-1,-1) {};
\node at (2.3,-2.3) {$S^3$};
%%ribbon graph
\node[small_morphism] (ph) at (0.5,-1.5) {};
\draw[ribbon] (ph)
	.. controls +(130:0.3cm) and +(-90:0.3cm) .. (0,-0.5);
\node at (1.1,-1) {\smallerer $\psi$};
%% draw the boundary of 2-cell
\draw (d)
	to[out=90,in=180] (0,0)
	to[out=0,in=60] (0.5,-0.5)
	to[out=-120,in=0] (d);
%%draw and fill the 2-cell
\draw[fill=med-gray,opacity=0.5] (d)
	to[out=90,in=180] (0,0)
	to[out=0,in=60] (0.5,-0.5)
	to[out=-120,in=0] (d);
%%ribbon graph
\draw[ribbon] (ph)
	.. controls +(60:1cm) and +(0:0.7cm) .. (0.5,0.2)
	.. controls +(180:0.5cm) and +(90:0.3cm) .. (0,-0.5);
%%label
\draw[line width=0.4pt] (-1,0) -- (-0.5,-0.5)
	node[pos=0,left] {\smallerer 2-cell};
\node at (-1,-2) {\smallerer 3-cell $C$};
\end{tikzpicture}
\end{equation}

For the labeling $l(f) = i$,
the state $\vphi = \coev_i \in H(C,l)$
should have coefficient $\frac{1}{d_i} \cdot d_i = 1$
in $\ZCY(\cH_0,l)$,
because the dual to $\coev_i$ is
$\frac{1}{d_i} \cdot \coev_i \in H(\overline{C},l)$,
and that graph evaluates to $d_i$.
Thus
\[
\ZCY(\cH_0) = \sum_i d_i^{1/2} \coev_i
\overset{\ups}{\mapsto}
\sum_i d_i^{1/2} (\Gamma, d_i^{1/2} \coev_i)
= \sum_i d_i^2 \cdot \emptyskein{S^3} = \cD \cdot \emptyskein{S^3}
\]
where $\Gamma$ is the ribbon graph obtained from the anchor.
(Note $(\Gamma,\coev_i) = d_i$.)
%$x(\cH_0) = 0$.)

\textbf{Case $k=1$:}
$\Ftld{\cH_1}(\emptyskein{B^3 \sqcup B^3})
= \cD^{-1} \cdot \emptyskein{S^2 \times B^1}$

(Strictly speaking, $\emptyskein{B^3 \sqcup B^3}$
has empty boundary value $\EE$,
which is not in the image of $\Ups$;
however, an object $(B,\{\one\}) = \Ups((\cN,l\cong \one))$
is canonically isomorphic to $\EE$
by the simplest possible graph
(one ribbon and one coupon for each marking,
labeled with $\one$ and $\coev_{\one}$ respectively.)

A 1-handle $\cH_1$ is a cornered cobordism
$\cH_1 : \cD^3 \sqcup \cD^3 \to_{S^2 \sqcup S^2} S^2 \times B^1$.
We use the following PLCW decomposition:

\begin{equation}
\label{e:H1-plcw}
\begin{tikzpicture}
%%S3 sphere
\node at (2.4,-2.2) {$S^3$};
\draw (0,0) circle (2.8cm);
%%graph
\node[small_morphism] (a) at (-1.4,0.1) {};
\node[small_morphism] (b) at (1.4,0.1) {};
\node[small_morphism] (c) at (0,-1.2) {};
\draw[ribbon] (a)
	.. controls +(-90:0.3cm) and +(120:0.3cm) .. (-1.1,-0.7);
\draw[ribbon] (b)
	.. controls +(-90:0.3cm) and +(60:0.3cm) .. (1.1,-0.7);
\draw[ribbon] (c) .. controls +(60:0.8cm) and +(-45:0.2cm) .. (0.1,0);
%%two spheres
\filldraw[med-gray,path fading=fade inside]
	(-1.4,0.1) circle (1cm);
\draw (-1.4,0.1) circle (1cm);
\filldraw[med-gray,path fading=fade inside]
	(1.4,0.1) circle (1cm);
%%vertics of middle 2-cell
\node[small_dotnode] at (-0.6,0.2) {};
\node[small_dotnode] at (-1,-0.2) {};
\node[small_dotnode] at (0.6,-0.2) {};
\node[small_dotnode] at (1,0.2) {};
%%middle 2-cell
\draw[fill=med-gray,opacity=0.5]
	(-0.6,0.2) -- (-1,-0.2) -- (0.6,-0.2) -- (1,0.2) -- (-0.6,0.2);
%%middle 2-cell again for border
\draw (0.4,0.2) --
	(-0.6,0.2) -- (-1,-0.2) -- (0.45,-0.2); -- (1,0.2); -- (-0.6,0.2);
%draw circle over middle 2-cell
\draw (1.4,0.1) circle (1cm);
%bottom part of graph
\draw[ribbon] (-1.1,-0.7) .. controls +(-60:0.5cm) and +(180:0.3cm) .. (c);
\draw[ribbon] (1.1,-0.7) .. controls +(-120:0.5cm) and +(0:0.3cm) .. (c);
\draw[ribbon] (c) .. controls +(120:1.2cm) and +(135:0.4cm) .. (0.1,0);
%%labels
\node[emptynode] (Min) at (-3.5,1.5) {$\cM_{in}$};
\node[emptynode] (Mout) at (0,1.5) {$\cM_{out}$};
\draw[line width=0.4pt] (Min) -- (-2,0.5);
\draw[line width=0.4pt] (Min) -- (1,0.5);
%\node (C) at (0,-1.5) {3-cells};
%\draw[line width=0.4pt] (C) -- (0,-0.8);
%\draw[line width=0.4pt] (C) -- (-1,-0.3);
%\draw[line width=0.4pt] (C) -- (1,-0.3);
%\node (F) at (0,1) {2-cells};
%\draw[line width=0.4pt] (F) -- (0,0);
%\draw[line width=0.4pt] (F) -- (-0.5,0.5);
%\draw[line width=0.4pt] (F) -- (0.5,0.5);
\end{tikzpicture}
\end{equation}

%The empty skein $\emptyskein{B^3}$ in each of the 3-balls
%would correspond to
%$\ups^\inv(\emptyskein{B^3}) =
%\cD^{-\frac{1}{4}} \cdot \emptystate{B^3}$,
%and the empty skein $\emptyskein{S^2 \times B^1}$
%would corresponed to 
%$\ups^\inv(\emptyskein{s^2 \times b^1}) =
%\cd^{-\frac{1}{2}} \cdot \emptystate{\cm_{out}}$.
We have
$\ups(\emptystate{B^3}) = 
\cD^{\frac{1}{4}} \cdot \emptyskein{B^3}$,
so
$\ups(\emptystate{\cM_{in}}) = 
\cD^{\frac{1}{2}} \cdot \emptyskein{B^3 \sqcup B^3}$,
and 
$\ups(\emptystate{\cM_{out}}) =
\cD^{-\frac{1}{2}} \cdot \emptyskein{S^2 \times B^1}$.
Then
%(the 0- and 1-cells are part of their boundaries, so
%$x(B^3 \sqcup B^3) = 0$).

\begin{align*}
\Ftld{\cH_1}(\emptyskein{B^3 \sqcup B^3})
%\ZCYsk(\cH_1; S^2 \sqcup S^2)(\emptyskein{B^3 \sqcup B^3})
&= \ZCY(\cH_1; S^2 \sqcup S^2)(\cD^{-1/2} \emptystate{\cM_{in}})
\\
&= \cD^{-1/2} \sum_i \ev(\cD^\inv d_i^{1/2} Z(\cH_1, l(f)=i), \emptystate{\cM_{in}})
\\
&= \cD^{-3/2} \sum_i d_i^{1/2} \coev_i
\\
&\overset{\ups}{\mapsto}
\cD^{-3/2} \sum_i d_i^2  \cD^{-1/2} \cdot \emptyskein{S^2 \times B^1}
%\;\;\;\;
%(x(\cM_{out}) = -1)
\\
&=
\cD^{-1} \cdot \emptyskein{S^2 \times B^1}
\end{align*}

\textbf{Case $k=3$:}
$\Ftld{\cH_3}(\zeta_i)
%\ZCYsk(\cH_1; S^2 \sqcup S^2)(\zeta_i)
= \delta_{i,\one} \cdot \emptyskein{B^3 \sqcup B^3}$,
where $\zeta_i$ is the identity morphism
for an object $(\{b\},\{X_i\}) \in \ZCYsk(S^2)$.

We will use the same PLCW decomposition as for $\cH_1$
(but $\cM_{in},\cM_{out}$ are reversed).
For the pre-state $\ups^\inv(\zeta_i)$
corresponding to the skein $\zeta_i$,
the 2-cell in the center of the diagram \eqnref{e:H1-plcw}
should be labeled $\one$,
and the 2-cells bounding the 3-balls should be labeled
$i$ and $i^*$ respectively.
Then for $i\neq \one$,
the local state space for the two 3-balls is 0,
thus we must have the coefficient $\delta_{i,\one}$.

For $i=\one$, $\zeta_{\one}$ is essentially
the empty skein $\emptyskein{B^1 \times S^2}$,
which corresponds to
$\ups^\inv(\emptyskein{B^1 \times S^2}) = \cD^{1/2} \cdot \emptystate{\cM_{in}}$
as before.
Thus 
\begin{align*}
\Ftld{\cH_3}(\emptyskein{B^1 \times S^2})
%\ZCYsk(\cH_3; S^2 \sqcup S^2)(\emptyskein{B^1 \times S^2})
&= \ZCY(\cH_3; S^2 \sqcup S^2)(\cD^{1/2} \cdot \emptystate{\cM_{in}})
\\
&= \cD^{1/2} \ev(\cD^\inv Z(\cH_3, l\equiv \one), \emptystate{\cM_{in}})
\\
&= \cD^{-1/2} \cdot \emptystate{\cM_{out}}
\\
&\overset{\ups}{\mapsto}
\emptyskein{B^3 \sqcup B^3}
\end{align*}

\textbf{Case $k=2$:}
$\Ftld{\cH_2}(\emptyskein{B^2 \times S^1})
%$\ZCYsk(\cH_2; S^1 \times S^1) (\emptyskein{\DD^2 \times S^1})
= \sum_i d_i \phi_i$,
where $\phi_i$ is the belt sphere labeled with object $i$.
(Technically, $\phi_i$ should be
$S^1 \times [-\eps,\eps] \times \{0\} \subset S^1 \times B^2$,
i.e. framed with 0 self-intersection
when included into $S^3 = \del \cH_2$.)
%the graph consisting of a single loop
%along the core circle $S^1 \times \{0\} \subset S^1 \times \DD^2$,

We use the following PLCW decomposition,
with corresponding anchor:
\begin{equation}
\begin{tikzpicture}
\node at (3,-2.7) {$S^3$};
\draw (0,0) circle (3.5cm);
%%morphism nodes
\node[small_morphism] (a) at (-3,0) {};
\node[small_morphism] (b) at (-1.9,0) {};
%%graph (more at end)
\draw[ribbon] (b) -- (-2.4,0);
\draw[ribbon] (b)
	.. controls +(90:0.6cm) and +(180:1cm) .. (0,1.1)
	.. controls +(0:1cm) and +(90:0.6cm) .. (1.9,0)
	.. controls +(-90:0.6cm) and +(0:1cm) .. (0,-1.1);
\draw[ribbon] (b)
	.. controls +(-90:0.6cm) and +(160:0.3cm) .. (-0.95,-0.9);
\draw[ribbon] (a)
	.. controls +(-90:0.9cm) and +(180:0.9cm) .. (-1.5,-1.5)
	.. controls +(0:0.9cm) and +(-90:0.9cm) .. (0,0);
%%outside ellipse
\draw (-2.5,0)
	.. controls +(-90:0.6cm) and +(180:2cm) .. (0,-1.5)
	.. controls +(0:2cm) and +(-90:0.6cm) .. (2.5,0)
	.. controls +(90:0.6cm) and +(0:2cm) .. (0,1.5)
	.. controls +(180:2cm) and +(90:0.6cm) .. (-2.5,0);
%%top arc
\draw (-1.23,0)
	.. controls +(60:0.3cm) and +(180:0.7cm) .. (0,0.5)
	.. controls +(0:0.7cm) and +(120:0.3cm) .. (1.23,0);
%%horizontal disc
\draw[fill=med-gray,opacity=0.7] (-1.4,0)
	.. controls +(-90:0.3cm) and +(180:1cm) .. (0,-0.7)
	.. controls +(0:1cm) and +(-90:0.3cm) .. (1.4,0)
	.. controls +(90:0.3cm) and +(0:1cm) .. (0,0.7)
	.. controls +(180:1cm) and +(90:0.3cm) .. (-1.4,0);
\node[dotnode] at (-0.55,-0.66) {};
%%horizontal disc again
%\draw (-1.4,0)
%	.. controls +(-90:0.3cm) and +(180:1cm) .. (0,-0.7)
%	.. controls +(0:1cm) and +(-90:0.3cm) .. (1.4,0)
%	.. controls +(90:0.3cm) and +(0:1cm) .. (0,0.7)
%	.. controls +(180:1cm) and +(90:0.3cm) .. (-1.4,0);
%%vertical disc
\draw[fill=med-gray,opacity=0.7] (-0.7,-0.31)
	.. controls +(150:0.4cm) and +(150:0.4cm) .. (-1,-1.4)
	.. controls +(-30:0.4cm) and +(-30:0.4cm) .. (-0.7,-0.31);
%%vertical disc again, left arc to make dark
\draw (-0.7,-0.31)
	.. controls +(150:0.4cm) and +(150:0.4cm) .. (-1,-1.4);
	%.. controls +(-30:0.4cm) and +(-30:0.4cm) .. (-0.7,-0.31);
%%top arc of horizontal disc to make dark
\draw (-1.37,0.15)
	.. controls +(70:0.3cm) and +(180:0.8cm) .. (0,0.7)
	.. controls +(0:0.8cm) and +(110:0.3cm) .. (1.37,0.15);
%%bottom arc
\draw (-1.4,0.2)
	.. controls +(-60:0.3cm) and +(180:0.8cm) .. (0,-0.4)
	.. controls +(0:0.8cm) and +(-120:0.3cm) .. (1.4,0.2);
%%part of graph
\draw[ribbon] (0,-1.1)
	.. controls +(180:0.3cm) and +(-20:0.3cm) .. (-0.95,-0.9);
%%outside ellipse again
\draw[fill=med-gray,opacity=0.3] (-2.5,0)
	.. controls +(-90:0.6cm) and +(180:2cm) .. (0,-1.5)
	.. controls +(0:2cm) and +(-90:0.6cm) .. (2.5,0)
	.. controls +(90:0.6cm) and +(0:2cm) .. (0,1.5)
	.. controls +(180:2cm) and +(90:0.6cm) .. (-2.5,0);
%%part of graph
\draw[ribbon] (a)
	.. controls +(90:0.9cm) and +(180:0.9cm) .. (-1.5,1.5)
	.. controls +(0:0.9cm) and +(90:0.9cm) .. (0,0);
\draw[ribbon] (a) -- (-2.4,0);
%%%labels
%\node[small_dotnode] at (-0.55,-0.65) {};
%\node[inner sep=1pt] (F) at (-2.1,-1.9) {\smallerer 2-cells};
%\draw[line width=0.4pt] (F) -- (-1.8,-1);
%\draw[line width=0.4pt] (F) -- (-0.9,-1.2);
%\draw[line width=0.4pt] (F) -- (-0.9,0);
%\node[inner sep=1pt] (F) at (2.5,-1.4) {\smallerer 3-cells};
%\draw[line width=0.4pt] (F) -- (1.6,-0.8);
%\draw[line width=0.4pt] (F) -- (2.7,-0.6);
\end{tikzpicture}
\end{equation}

We have $\ups(\emptystate{B^2 \times S^1})
	= \cD^{-1/4} \cdot \emptyskein{B^2 \times S^1}$ for both solid tori;
then
\begin{align*}
\ZCYsk(\cH_2; S^1 \times S^1)(\emptyskein{B^2 \times S^1})
&= \ZCY(\cH_2; S^1 \times S^1)(\cD^{1/4} \cdot \emptystate{B^2 \times S^1})
\\
&= \cD^{1/4} \sum_i d_i^{1/2} \coev_i
\\
&\overset{\ups}{\mapsto}
\sum_i d_i \phi_i
\end{align*}

It follows easily from the above computations that
$\Ftld{W} = \Fsk{W} \circ \Ups$ for elementary cornered cobordisms $W$,
thus we are done.
\end{proof}

\begin{corollary}
The skein pairing $\evsk$ (\defref{d:pairing-skein})
is equivalent to the reduced relative pre-state space pairing
$\evdelbar$
(\defref{d:reduced-relative-state-space-pairing}),
i.e. for $\wdtld{\vphi} \in H(\cM;(\cN,l)),
\wdtld{\vphi'} \in H(\ov{\cM};(\cN,l))$,
%that are lifts of $\vphi \in \ZCY(\cM;(\cN,l)),
%\vphi' \in \ZCY(\ov{\cM};(\cN,l))$,
\[
\evdelbar(\wdtld{\vphi},\wdtld{\vphi'})
= \evsk(\ups(\vphi), \ups(\vphi'))
\]
In particular, since $\evdelbar$, when restricted
to $\ZCY(M;(\cN,l))$, is non-degenerate,
it follows that $\evsk$ is non-degenerate as well.
\label{c:pairing-equiv}
\end{corollary}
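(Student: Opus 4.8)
The plan is to assemble the corollary from the two equivalences already established, namely \thmref{t:sk-equiv} (the functor $\Ups$ with its morphism-level maps $\ups$) and \thmref{t:sk-equiv-4mfld} (which says $\ups$ intertwines $\cF_W$ with $\Fsk{W} \circ \Ups$). First I would recall the two definitions being compared. By \lemref{l:reduced-pairing-TQFT}, the reduced relative pre-state space pairing is realized topologically: viewing $M \times I$ as a cobordism $M \cup_N \ov{M} \to \emptyset$, one has $\evdelbar(\wdtld{\vphi},\wdtld{\vphi'}) = \ZCY(M \times I)(\wdtld{\vphi} \tnsr \wdtld{\vphi'})$. On the skein side, \defref{d:pairing-skein} defines $\evsk(\ups(\vphi),\ups(\vphi')) = \ZCYsk(M \times I)(\ups(\vphi) \cup_N \ups(\vphi'))$, again using $M \times I : M \cup_N \ov{M} \to \emptyset$.

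The key step is then to apply \thmref{t:sk-equiv-4mfld} to the cobordism $W = M \times I : M \cup_N \ov{M} \to \emptyset$. For this one must be slightly careful that the theorem is stated for cornered cobordisms $W : M \to_N M'$, whereas here we want the ``closed-off'' version $M \times I : (M \cup_N \ov{M}) \to \emptyset$; but this is covered because $\ZCYsk(W)$ and $\ZCY(W)$ on such a cobordism are, by \lemref{l:skein-pairing-dual} and \lemref{l:zcy-adjoint} together with the gluing propositions (\prpref{p:zcy-corner-gluing-skein}, \prpref{p:zcy-corner-gluing-alt}), obtained from the cornered versions by the same gluing operation on both sides, and $\ups$ intertwines those gluings. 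Concretely, I would observe that $\ups$ on $N \times I$ is exactly the morphism-level map realizing the equivalence $\Ups$, and that $\ups$ applied to $\cM \cup_\cN \ov{\cM}$ (a PLCW structure on $M \cup_N \ov M$) sends $\wdtld{\vphi} \tnsr \wdtld{\vphi'}$ to $\ups(\vphi) \cup_N \ups(\vphi')$ up to the normalization factors built into \eqnref{e:equiv-map-morphism}, and that these factors are precisely the ones that make $\ups$ commute with the $4$-manifold maps (this compatibility is the content of \thmref{t:sk-equiv-4mfld} and was the reason the $\cD^{1/4}$, $d_{l_\cN}^{1/4}$ factors were inserted). Chaining: $\evsk(\ups(\vphi),\ups(\vphi')) = \ZCYsk(M\times I)(\ups(\vphi) \cup_N \ups(\vphi')) = \ZCYsk(M \times I)(\ups(\wdtld{\vphi}\tnsr\wdtld{\vphi'})) = \ZCY(M \times I)(\wdtld{\vphi}\tnsr\wdtld{\vphi'}) = \evdelbar(\wdtld{\vphi},\wdtld{\vphi'})$, where the middle equality is the commuting square of \thmref{t:sk-equiv-4mfld} applied to $W = M \times I$.

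For the ``in particular'' clause, I would invoke \defref{d:state-space-boundary}: $A = A_\cM$ is a projection onto $\ZCY(M;(\cN,l))$, and the relative pre-state space pairing $\evdel$ restricts to a non-degenerate pairing on the image of $A$ (this is the relative analog of the non-degeneracy statement in \prpref{p:projectors-compose}, and $\evdelbar(\vphi,\vphi') = \evdel(A\vphi,\vphi')$ agrees with $\ev$ on $\ZCY$ by the remark following \defref{d:state-space-pairing-reduced}). Since $\ups$ descends to an isomorphism $\ZCY(M;(\cN,l)) \simeq \ZCYsk(M;\VV)$ by \lemref{l:ups-A-invariant} and \lemref{l:ups-isom} (the latter gives an iso onto $\ZCYsk(M^\circ;\VV)$, and composing with the edge projections $B_\cM$ of \lemref{l:ups-projections-intertwine}, which correspond to $A_\cM$, lands in $\ZCYsk(M;\VV)$), transporting a non-degenerate pairing across an isomorphism yields a non-degenerate pairing, so $\evsk$ is non-degenerate.

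The main obstacle I anticipate is purely bookkeeping: making sure the normalization factors in the definition of $\ups$ (\eqnref{e:equiv-map-morphism}) match up correctly when $\ups$ is applied to the glued manifold $\cM \cup_\cN \ov{\cM}$ rather than to $\cM$ and $\ov{\cM}$ separately, i.e. verifying that $\ups(\wdtld{\vphi} \tnsr \wdtld{\vphi'}) = \ups(\vphi) \cup_N \ups(\vphi')$ on the nose (no stray powers of $\cD$ or $d_{l_\cN}$), since the $\cN$-faces become interior faces after gluing and the $\vme$ counts change. This is exactly the kind of computation that \thmref{t:sk-equiv-4mfld} was set up to handle — the $\cD^{\frac14 \vme(\cN)} d_{l_\cN}^{1/4}$ factors were designed so that $\Ups$ respects composition and the $4$-manifold operations — so in the write-up I would phrase the argument so that it reduces cleanly to invoking that theorem rather than re-deriving the factor matching by hand.
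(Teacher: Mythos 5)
Your proposal is correct and follows exactly the route the paper takes: its entire proof is "Follows immediately from \thmref{t:sk-equiv-4mfld} and \lemref{l:reduced-pairing-TQFT}," which are precisely the two ingredients you chain together (your extra care about the normalization factors in $\ups$ and the non-degeneracy transport is detail the paper leaves implicit, and it is handled correctly).
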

\begin{proof}
Follows immediately from \thmref{t:sk-equiv-4mfld}
and \lemref{l:reduced-pairing-TQFT}.
\end{proof}

\begin{remark}
To draw very loose parallels,
the PLCW definition of Crane-Yetter is to the skein definition
as simplicial homology is to singular homology.
$H(\cM)$ would play the role of the simplicial chain complex $C_*^\Delta(M)$,
both of which are already finite-dimensional,
but are dependent on the (PLCW) combinatorial structure;
$\VGraph(M)$ would play the role of the singular chain complex $C_*(M)$,
both of which are manifestly independent of
additional structures on $M$,
but are infinite-dimensional and unwieldy.
\end{remark}

\newpage
\section{Properties of Skein Categories}
\label{s:skein-prop}
\vspace{0.8in}

In this section, we consider properties
of skein modules and categories.
Subsection~\ref{s:skein_modules}
is focused on the space
of relations (i.e. the null graphs $Q \subset \VGraph(Y,\VV)$),
in particular how they are generated.
In Subsection~\ref{s:skprop-stacking},
we exhibit a ``stacking'' monoidal structure
on the category of boundary values
of manifolds of the form $P\times (0,1)$,
and show it to be pivotal.
Finally in Subsection~\ref{s:skprop-excision},
we show that skein categories satisfy excision.

%Denote by $I = (0,1)$, the \emph{open} interval.

Most of this section is taken from \ocite{KT},
but adapted to the PL category.
%We rely heavily on the PL topology results as quoted in
%\secref{s:bg-PLtop}.
All PL topology results used in this section
is quoted in \secref{s:bg-PLtop}
for the reader's convenience.

All surfaces in this section will be of finite type,
i.e. closed surfaces with finitely many (open or closed) disks removed.

\subsection{Skein Modules}
\label{s:skein_modules}
\par \noindent

Recall that a null graph in $M$ is null
with respect to some 3-ball $D$,
and $D$ is allowed to touch the boundary $\del M$.
In future applications,
it will be convenient to only consider balls
$D$ that do not meet $\del M$,
since such balls can be displaced by
ambient isotopy but balls meeting $\del M$ may not.
%Boundary vertices are univalent,
%so graphs have simple behavior near the boundary.
If we exclude balls $D$ that meet $\del M$,
the resulting space of null graphs $Q'$
will be strictly smaller than $Q$,
but not by much; the following lemma says
we just need to include
equivalence of graphs under ambient isotopy rel boundary:

\begin{lemma}
\label{l:null-boundary-ball}
%\label{lem:null_isotopy}
Let $M$ be a 3-manifold,
possibly with boundary or non-compact,
and let $\VV = \in \Obj \hatZCYsk(\del M)$
be a fixed boundary value.
Define $Q' \subset Q \subset \VGraph(M,\VV)$
to be the subspace generated by
graphs that are null with respect to a ball
that does not meet the boundary $\del M$.
Let $U$ be an open neighborhood of $\VV$ (more precisely,
of the set of arcs of $\VV$),
and let $Q'' \subset \VGraph(M,\VV)$
be relations obtained by ambient isotopy supported on $U$
fixing the boundary,
i.e. generated by graphs $\Gamma^1 - \Gamma^0$,
where $\Gamma^t = \zeta^t(\Gamma)$,
$\zeta^t$ is a compactly-supported ambient isotopy fixing $\del M$
and is supported on $U$.

Then $Q = Q' + Q''$.
\end{lemma}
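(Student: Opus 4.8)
The plan is to show that the only non-trivial content of $Q$ missing from $Q'$ comes from null graphs with respect to a ball $D$ that \emph{does} touch $\del M$, and that such relations can always be rewritten, modulo $Q'$, as isotopy relations supported near the boundary. The containment $Q' + Q'' \subseteq Q$ is immediate: $Q' \subseteq Q$ by definition, and a relation $\Gamma^1 - \Gamma^0$ coming from an ambient isotopy $\zeta^t$ supported on $U$ fixing $\del M$ is of the first type of null graph (take $D$ to be a ball containing the support of $\zeta^t$; since $\zeta^t$ fixes $\del M$, we may even take $D$ disjoint from $\del M$, so in fact $Q'' \subseteq Q'$, but keeping $Q''$ explicit clarifies the statement). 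So the real work is the reverse inclusion $Q \subseteq Q' + Q''$.

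First I would reduce to a single generator of $Q$: a graph null with respect to some ball $D$ meeting $\del M$. If $D$ is disjoint from $\del M$ we are already in $Q'$, so assume $D \cap \del M \neq \emptyset$. By \defref{d:coribbon-graph} and the general-position/narrowing results (\lemref{l:narrow-transverse}, together with \rmkref{r:narrowing-infinitesimal}), I would arrange that the graphs involved meet $D$ and $\del D$ transversally and that all ribbons are narrow enough; the RT-evaluation $\eval{\cdot}_D$ is unaffected. Now the key geometric move: $D$ meets $\del M$ in a disk (a collar argument, \thmref{t:collar-exist-unique}, lets us take $D \cap \del M$ to be a $2$-ball in $\del M$), and I want to "push $D$ off the boundary." Using a collar $\del M \times [0,1] \hookrightarrow M$ and an ambient isotopy $\zeta^t$ of $M$ supported in a neighborhood of $D$ that slides $D$ into $\Int(M)$ — dragging the part of the graph near $\del M \cap D$ along with it — the relation in $D$ becomes, after applying $\zeta^1$, a relation null with respect to a ball $\zeta^1(D)$ disjoint from $\del M$, i.e. an element of $Q'$; the discrepancy between the original relation and the pushed-off one is exactly $\sum c_i(\zeta^1(\Gamma_i) - \Gamma_i)$, a sum of isotopy relations. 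The only subtlety is that $\zeta^t$ must fix $\del M$ (so that the pushed-off graphs still have boundary value $\VV$), which forces the isotopy to act only inside the collar and hence be supported in a neighborhood $U$ of the arcs of $\VV$ — landing us in $Q''$.

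The main obstacle I expect is making the "push-off" isotopy genuinely respect the boundary value while moving a ball that straddles $\del M$: one must keep the arcs of $\VV$ pointwise fixed on $\del M$ while isotoping the interior of the graph, and simultaneously ensure the isotopy is supported in the prescribed neighborhood $U$. This is handled by working entirely within a collar $\del M \times [0,1]$: choose the collar thin enough that $D \cap (\del M \times [0,1])$ is a small product region, and define $\zeta^t$ to be of the form $(x,s) \mapsto (x, g_t(x,s))$ that is the identity at $s=0$ and pushes the relevant $s$-slices inward, so that $D$ gets dragged to $D \times \{s_0\}$-ish for some $s_0 > 0$; then extend by the identity. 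The support is automatically contained in the collar over $D \cap \del M \subset U$. Care is needed that the graph remains a valid colored ribbon graph throughout (ribbons meeting $\del M$ only along bases, coupons away from $\del M$), but this is exactly what narrowing and the infinitesimal-ribbon-graph viewpoint of \rmkref{r:narrowing-infinitesimal} guarantee. Once the push-off is in hand, decomposing an arbitrary generator of $Q$ as (element of $Q'$) $+$ (telescoping sum of isotopy relations in $Q''$) completes the argument.
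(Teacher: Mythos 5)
Your key geometric move does not exist: an ambient isotopy $\zeta^t$ of $M$ that fixes $\del M$ pointwise satisfies $\zeta^1(D)\cap \del M=\zeta^1(D\cap\del M)=D\cap\del M$, so it can never carry a ball meeting the boundary to a ball disjoint from it. Your own collar formula exhibits the problem: a homeomorphism $s\mapsto g_t(x,s)$ of $[0,1]$ with $g_t(x,0)=0$ cannot map a set containing $s=0$ into $\{s\ge s_0\}$ with $s_0>0$. So the "pushed-off" relation, null with respect to a ball $\zeta^1(D)$ in the interior, simply cannot be produced this way, and the telescoping decomposition built on it collapses. (Relatedly, your parenthetical claim that $Q''\subseteq Q'$ is false: an isotopy supported on a neighborhood $U$ of the arcs moves points accumulating on $\del M$, so any closed ball containing its support must meet $\del M$; if $Q''$ were contained in $Q'$ the lemma would be vacuous.)

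The repair is to decouple the two operations you conflated. One does \emph{not} transport the ball by an isotopy dragging the graphs; instead one first applies to each $\Gamma_i$ \emph{separately} a small isotopy $\zeta_i^t$ supported near an arc $b\in\VV\cap D$ and fixing $\del M$, chosen (via uniqueness of collars, \thmref{t:collar-exist-unique}) so that all the $\zeta_i^1(\Gamma_i)$ \emph{coincide} in a neighborhood of $b$; each difference $\zeta_i^1(\Gamma_i)-\Gamma_i$ lies in $Q''$. One then \emph{replaces} $D$ by a smaller ball $D'$ obtained by excising from $D$ a collar piece around $b$. Nullity transfers from $D$ to $D'$ because the excised region now contains only graph content common to all the $\Gamma_i$ (essentially straight legs), so the graphs agree outside $D'$ and the evaluation is unchanged; note that nullity does \emph{not} pass to arbitrary smaller balls, which is exactly why the graphs must first be made to agree on the excised region. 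Iterating over the finitely many arcs in $D$ reduces to the case $D\cap\VV=\emptyset$, where no $\Gamma_i$ touches $\del M$ inside $D$ and one can excise a thin boundary collar from $D$ outright, landing in $Q'$.
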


\begin{proof}
It is clear that $Q',Q'' \subset Q$, so
it suffices to show that $Q \subset Q' + Q''$.
Let $\Gamma = \sum c_i\Gamma_i$ be a null graph
with boundary value $\VV$,
null with respect to a ball $D \subset M$,
and suppose $D$ meets the boundary $\del M$.
We would like to shrink $D$ to not meet $\del M$
while maintaining that $\Gamma$ be null with respect to it.
Clearly if $D$ does not meet any point in $\VV$
then we can do this, and then $\Gamma \in Q'$.

Suppose $D$ does contain some arc $b \in \VV$.
For each $i$, apply a small ambient isotopy $\zeta_i^t$ supported
in a small neighborhood of $b$ so that
the resulting graphs $\zeta_i^1(\Gamma_i)$
agree in a (possibly smaller) neighborhood of $b$
(this follows from uniqueness of collar neighborhoods,
see \thmref{t:collar-exist-unique}).
\[
\begin{tikzpicture}
%% top boundary
\draw (0,1) -- (2,1)
  node[pos=0.9,above] {$\del M$};
\node[label={$b$}] (b) at (1,1) {};
%% boundar of ball D
\draw (0.6,1) to[out=180,in=60] (0,0.5);
\draw (1.4,1) to[out=0,in=120] (2,0.5);
\node at (0.2,0.5) {$D$};
%% graphs
\node[label={[shift={(0.4,-0.3)}] $\Gamma_1$}] (ga1) at (0.5,0.1) {};
\draw (1,1) to[out=-130,in=50] (ga1);
\node[label={[shift={(0.1,0)}] $\Gamma_2$}] (ga2) at (1.5,0) {};
\draw (1,1) to[out=-50,in=130] (ga2);
\end{tikzpicture}
%%%%%%%%%%%%%%%%%%%%%%%%%%%%%%%%%%%%%%%%%%%
\;\;
\tikz \node at (0,0.5) {$\longrightarrow$};
\;\;
%%%%%%%%%%%%%%%%%%%%%%%%%%%%%%%%%%%%%%%%%%%
\begin{tikzpicture}
%% top boundary
\draw (0,1) -- (2,1)
  node[pos=0.9,above] {$\del M$};
\node[label={$b$}] (b) at (1,1) {};
%% boundar of ball D
\draw (0.6,1) to[out=180,in=60] (0,0.5);
\draw (1.4,1) to[out=0,in=120] (2,0.5);
\draw (0.7,1) to[out=0,in=180] (1.05,0.9);
\draw (1.05,0.9) to[out=0,in=180] (1.3,1);
\node at (0.2,0.5) {$D$};
%% graphs
\node[label={[shift={(0.2,-0.3)}] \tiny $\zeta_1^1(\Gamma_1)$}] (ga1) at (0.5,0.1) {};
\draw (1,1) to[out=-50,in=50] (ga1);
\node[label={[shift={(0.35,-0.1)}] \tiny $\zeta_2^1(\Gamma_2)$}] (ga2) at (1.5,0) {};
\draw (1,1) to[out=-50,in=130] (ga2);
\end{tikzpicture}
%%%%%%%%%%%%%%%%%%%%%%%%%%%%%%%%%%%%%%%%%%%
\;\;
\tikz \node at (0,0.5) {$\longrightarrow$};
\;\;
%%%%%%%%%%%%%%%%%%%%%%%%%%%%%%%%%%%%%%%%%%%
\begin{tikzpicture}
%% top boundary
\draw (0,1) -- (2,1)
  node[pos=0.9,above] {$\del M$};
\node[label={$b$}] (b) at (1,1) {};
%% boundar of ball D
\draw (0,0.5) to[out=60,in=180] (0.5,0.9);
\draw (0.5,0.9) -- (1.5,0.9);
\draw (1.5,0.9) to[out=0,in=120] (2,0.5);
%\draw (0.6,1) to[out=180,in=60] (0,0.5);
%\draw (1.4,1) to[out=0,in=120] (2,0.5);
%\draw (0.7,1) to[out=0,in=180] (1.05,0.9);
%\draw (1.05,0.9) to[out=0,in=180] (1.3,1);
\node at (0.2,0.5) {$D$};
%% graphs
\node[label={[shift={(0.2,-0.3)}] \tiny $\zeta_1^1(\Gamma_1)$}] (ga1) at (0.5,0.1) {};
\draw (1,1) to[out=-50,in=50] (ga1);
\node[label={[shift={(0.35,-0.1)}] \tiny $\zeta_2^1(\Gamma_2)$}] (ga2) at (1.5,0) {};
\draw (1,1) to[out=-50,in=130] (ga2);
\end{tikzpicture}
\]
Then we can push $D$ slightly inwards away from the boundary at $b$,
and note that this new graph
$\Gamma' = \sum c_i \zeta_i^1(\Gamma_i)$
will be null with respect to the deformed $D$.
This reduces the number of arcs of $\VV$ that $D$ meets,
so after performing this finitely many times,
we are back to the case considered above where $D$
does not meet $\VV$.
Thus we see that repeated applications of isotopies
(i.e. relations in $Q''$) takes $\Gamma$ to another graph
$\Gamma' \in Q'$; in other words,
$\Gamma \in Q'' + Q'$.
\end{proof}

%\begin{lemma}
%Isotopic ribbon graphs are related by null graphs;
%more precisely, if ribbon graphs $\Gamma^0,\Gamma^1$ are related by
%an isotopy of graphs $\Gamma^t$,
%then $\Gamma^1 - \Gamma^0 = \sum (\Gamma_j^1 - \Gamma_j^0)$,
%where $\Gamma_j^t$ is an isotopy of graphs supported on some ball $D$.
%
%
%Moreover, given a finite open cover $\{U_al\}$ of the ambient manifold,
%we may choose $D$'s that are each contained in some $U_\al$.
%\label{l:null-isotopy}
%\end{lemma}
%\begin{proof}
%This follows immediately from \ocite{hudson}*{Theorem 6.2.1}.
%\end{proof}

\begin{proposition}
\label{prp:null_cover}
Let $M$ be an 3-manifold,
possibly with boundary or non-compact.
Let $\VV \in \Obj \hatZCYsk(\del M)$
be a fixed boundary value.
Let $\{U_\al\}$ be a finite open cover of $M$
such that each arc of $\VV$ is contained in some $U_\al$.
Define $Q_\al \subset Q \subset \VGraph(M,\VV)$
to be the subspace of null graphs
in $M$ with boundary value $\VV$
that are null with respect to some closed ball $D$
contained in $U_\al$.
Then the space of null graphs is generated by $Q_\al$'s,
i.e.
\[
  Q = \sum Q_\al
\]
\end{proposition}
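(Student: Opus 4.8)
The plan is to reduce the claim to the local statement already established in \lemref{l:null-boundary-ball}, and then to run a standard "partition of a ball into small pieces" argument using \thmref{t:isotopy-open-cover-boundary}. First I would note that one inclusion is trivial: each $Q_\al \subseteq Q$, so $\sum Q_\al \subseteq Q$. For the reverse, pick a generating null graph $\Gamma = \sum c_i \Gamma_i$, null with respect to some closed ball $D \subseteq M$. By \lemref{l:null-boundary-ball} (applied with $U$ a suitable open neighborhood of the arcs of $\VV$ contained in $\bigcup U_\al$), it suffices to handle two kinds of generators: (a) graphs null with respect to a ball $D$ \emph{not} meeting $\del M$, and (b) isotopy relations $\Gamma^1 - \Gamma^0$ where $\Gamma^t = \zeta^t(\Gamma^0)$ for an ambient isotopy $\zeta^t$ supported in $U$ and fixing $\del M$. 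For case (b), since the isotopy is supported in $U$, and $U$ can be taken to lie inside $\bigcup U_\al$ (we may shrink $U$ at the outset), applying \thmref{t:isotopy-open-cover-boundary} to $\zeta^1$ expresses it as a finite composite of moves each supported in a ball contained in some $U_\al$; telescoping the difference $\Gamma^1 - \Gamma^0 = \sum_k (\Gamma_{k+1} - \Gamma_k)$, each term $\Gamma_{k+1} - \Gamma_k$ is of the form $h^t(\Gamma_k) - \Gamma_k$ for a move $h^t$ supported in a ball inside some $U_\al$, hence lies in $Q_\al$.

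The substantive case is (a): a type-(2) null graph $\Gamma = \sum c_i \Gamma_i$, where all $\Gamma_i$ coincide outside a closed ball $D \subseteq M^\circ := M \setminus \del M$, meet $\del D$ transversally, and satisfy $\sum c_i \eval{\Gamma_i}_D = 0$. Here $D$ need not be small, so I would subdivide it. Choose a fine cubulation (or simplicial subdivision) of $D$ so that each closed top-dimensional piece $\sigma$ lies inside some $U_\al$; by a general-position isotopy supported in a neighborhood of $D$ (which introduces only relations of type (b), already handled), arrange that the common ``outside'' part of the $\Gamma_i$ together with the pieces of each $\Gamma_i$ inside $D$ meet the boundary $\del \sigma$ of each piece transversally. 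Now process the pieces $\sigma_1, \sigma_2, \ldots$ one at a time: using \lemref{l:summation} (applied across the $2$-dimensional faces of the cubulation, exactly as in the proof of \lemref{l:ups-isom}, cf.\ \eqnref{e:ups-isom-01}) one rewrites $\Gamma$ as a finite linear combination $\sum_j d_j \Gamma^{(j)}$ of graphs that agree piece-by-piece with a fixed ``standard'' graph near every face; this rewriting only changes $\Gamma$ modulo relations in the $Q_\al$'s, since each such move is supported near a $2$-face, hence in some $U_\al$. After this, the Reshetikhin--Turaev evaluation $\eval{\cdot}_D$ factors as a composition (a tensor contraction) of the local evaluations $\eval{\cdot}_{\sigma_m}$ in the individual pieces. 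The vanishing $\sum c_i \eval{\Gamma_i}_D = 0$ then says a contracted product of local evaluation vectors is zero; by elementary (multi)linear algebra — picking dual bases for the spaces of colorings on the cutting faces — one expresses $\Gamma$ as a sum of graphs each of which is a difference of graphs agreeing outside a single piece $\sigma_m$, with the local evaluations $\eval{\cdot}_{\sigma_m}$ matching. Each such difference is null with respect to a ball inside $\sigma_m \subseteq U_\al$, hence lies in $Q_\al$.

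Concretely, the inductive step is: if $\Gamma$ is null with respect to $D = \sigma_1 \cup D'$ (a single piece removed, $D'$ the rest), then modulo $\sum Q_\al$ one can replace $\Gamma$ by a graph null with respect to $D'$. Iterating over all pieces terminates with a graph null with respect to a ball contained in a single $U_\al$ — that is, an element of $Q_\al$ — plus accumulated terms in $\sum Q_\al$. Finiteness of the cover and of the cubulation guarantees the process terminates, giving $\Gamma \in \sum Q_\al$, which is what we want.

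\textbf{Main obstacle.} The delicate point is the bookkeeping in case (a): making precise that the evaluation $\eval{\cdot}_D$ really does decompose as an iterated contraction of local evaluations over the cut faces, and that the null condition can therefore be ``localized'' one piece at a time without leaving $\sum Q_\al$. This requires care because $D$ meets $\del M$ vacuously (it lies in $M^\circ$) but a cutting face of the subdivision may itself meet other faces, so one must choose the subdivision and the preliminary general-position isotopies compatibly; this is precisely the kind of argument carried out in the proof of \lemref{l:ups-isom}, and I would model the details on it, invoking \lemref{l:summation} and \lemref{l:summation-alpha-tunnel} to move graphs into standard position near faces and \thmref{t:isotopy-open-cover-boundary} to break any auxiliary isotopies into $U_\al$-supported moves.
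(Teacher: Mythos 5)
Your reduction via \lemref{l:null-boundary-ball} and the telescoping treatment of the isotopy relations in case (b) are exactly right and match the paper. But case (a) is where you part ways with the paper, and the route you choose is both much harder than necessary and not actually completed. The step you flag as the ``main obstacle'' --- that $\eval{\cdot}_D$ factors as an iterated contraction of local evaluations over the pieces of a cubulation, so that vanishing of the total evaluation can be localized piece by piece --- is not ``elementary (multi)linear algebra.'' The cut locus of a cubulation of $D$ is its entire $2$-skeleton, which is not a disjoint union of properly embedded disks: the internal $1$-cells and $0$-cells where several faces meet are precisely what forces the machinery of \lemref{l:ups-isom}, which works in the complement of the $1$-skeleton and then must account for the edge projectors $B_e$ of \eqnref{e:edge-projection}. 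Indeed, the quotient of $\VGraph(D)$ by relations supported in the individual pieces is \emph{not} isomorphic to the target of $\eval{\cdot}_D$ until one also quotients by relations in balls straddling the cut faces and edges; establishing that is essentially a re-proof of the proposition for a ball. So as written, the inductive step ``replace $\Gamma$ null w.r.t.\ $\sigma_1\cup D'$ by a graph null w.r.t.\ $D'$ modulo $\sum Q_\al$'' is asserted, not proved.

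The paper avoids all of this with one observation: since $D$ does not meet $\del M$ (after the reduction of case (a)), there is an ambient isotopy $\zeta^t$ of $M$ that shrinks $D$ and carries it entirely into a single $U_a$. Then $\zeta^1(\Gamma)$ is null with respect to $\zeta^1(D)\subset U_a$, hence lies in $Q_a$; and by \thmref{t:isotopy-open-cover-boundary} the isotopy $\zeta$ can be realized as a finite sequence of moves each supported in a ball contained in some $U_\al$, so the telescoped difference $\zeta^1(\Gamma)-\Gamma$ lies in $\sum Q_\al$ by the type-(1) relations --- exactly the telescoping you already used in case (b). No subdivision of $D$ and no localization of the evaluation is needed. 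I would either adopt this argument or, if you want to keep the subdivision approach, accept that the ``main obstacle'' requires importing the full proof of \lemref{l:ups-isom} rather than a linear-algebra remark.
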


\begin{proof}
Let $\Gamma \in Q$ be a null graph.
Let $V_b$ be an open neighborhood of $b \in \VV$
that is contained in some $U_\al$,
and let $V = \bigcup V_b$.
By \lemref{l:null-boundary-ball},
$\Gamma$ can be written as a sum of null graphs
$\Gamma' + \Gamma''$,
where $\Gamma' = \sum c_j' \Gamma_j'$
is a sum of graphs, each $\Gamma_j'$
is null with respect to some ball
not meeting $\del M$,
and $\Gamma'' = \sum ((\Gamma_j'')^1 - (\Gamma_j'')^0)$
for some smooth isotopies $(\Gamma_j'')^t$ supported on $V$.
Clearly $\Gamma'' \in \sum Q_\al$,
so it suffices to consider $\Gamma'$.

%Consider one such $(\Gamma_j'')^t$,
%and suppose that $\zeta^t:M\to M$ is an ambient isotopy
%supported on a compact subset $K\subset M$,
%such that $(\Gamma_j'')^t  = \zeta^t(\Theta)$
%for some graph $\Theta$.
%By \ocite{hudson}*{Theorem 6.2.1},
%there is a sequence of isotopies $\zeta_k^t$,
%such that each $\zeta_k^t$ is supported on some
%$U_{a_k} \cap K$, and the isotopies
%concatenate to give an isotopy from $\zeta^0$ to $\zeta^1$.
%Then
%$\Gamma_j'' = \zeta^1(\Theta) - \zeta^0(\Theta)
%  = \sum_k \zeta_k^1(\Theta) - \zeta_k^0(\Theta)
%  \in \sum Q_\al$.
%Thus, in the sum $\Gamma = \Gamma' + \Gamma''$,
%we have $\Gamma'' \in \sum Q_\al$.\\

Consider a term $\Gamma_j'$ in $\Gamma'$,
and suppose it is null with respect to some ball
$D$ not meeting $\del M$.
There exists an ambient isotopy
$\zeta^t: M \to M$ that moves $D$
into some open set $U_a$.
Then $\zeta^1(\Gamma_j') \in Q_a$.
But by \thmref{t:isotopy-open-cover-boundary}
the isotopy $\zeta$ can be chosen to be a sequence of
moves, each supported in some $U_\al$,
thus $\zeta^1(\Gamma_j') - \Gamma_j' \in \sum Q_\al$.
\end{proof}

\subsection{Properties of Categories of Boundary Values}
\label{s:skprop-stacking}
\par \noindent

Let $I' = (0,1)$, the open interval.

\begin{lemma}
Let $N_1,N_2$ be surfaces without boundary,
possibly non-compact.
Let $\vphi: N_1 \to N_2$
be an orientation-preserving embedding.
Then $\vphi$ induces an obvious inclusion functor
\[
  \vphi_*: \hatZCYsk(N_1) \to \hatZCYsk(N_2)
\]
that sends objects to their image under $\vphi$,
and sends morphisms to their image under $\vphi \times \id_I$.
This extends to the Karoubi envelopes
\[
  \vphi_*: \ZCYsk(N_1) \to \ZCYsk(N_2)
\]
Furthermore, an isotopy $\vphi^t: N_1 \to N_2$
induces a natural isomorphism from $\vphi_*^0$ to $\vphi_*^1$,
and isotopic isotopies induce the same natural isomorphisms.
\end{lemma}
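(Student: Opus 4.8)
The plan is to prove the statement in three parts: the functoriality of $\vphi_*$ on the hat-categories (and its extension to Karoubi envelopes), the construction of the natural isomorphism from an isotopy, and the independence under isotopy of isotopies. For the first part, $\vphi$ sends a boundary value $\VV = (B, \{V_{\vec b}\})$ on $N_1$ to $(\vphi(B), \{V_{\vec b}\})$ on $N_2$, which is again a boundary value since $\vphi$ is an embedding; on morphisms, $\vphi \times \id_{I}$ carries a colored ribbon graph in $N_1 \times I$ to one in $N_2 \times I$, and since $\vphi \times \id_I$ is an embedding, it respects null graphs (local Reshetikhin–Turaev evaluations and ambient isotopies supported in balls are preserved under embeddings), hence descends to skein modules $\ZCYsk(N_1 \times I; \ldots) \to \ZCYsk(N_2 \times I; \ldots)$. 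That composition is respected is immediate, since stacking graphs along $N' \times \{1\}$ commutes with applying $\vphi \times \id$, and identity morphisms (vertical graphs) are sent to vertical graphs. The extension to Karoubi envelopes is formal: a functor between additive $\kk$-linear categories extends uniquely to idempotent completions, sending $(\VV, p)$ to $(\vphi_*\VV, \vphi_* p)$.

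First I would construct the natural isomorphism associated to an isotopy $\vphi^t : N_1 \to N_2$. Given $\VV = (B, \{V_{\vec b}\})$ on $N_1$, the trace of the isotopy on $B$ gives an embedded collection of bands $\{(\vphi^t(b))_{t \in [0,1]}\}$ in $N_2 \times I$; coloring these in the obvious way yields a colored ribbon graph which defines a morphism $\eta_\VV \in \Hom_{\hatZCYsk(N_2)}(\vphi_*^0 \VV, \vphi_*^1 \VV)$, namely the image of the identity skein under the "sweep". Its inverse is the graph of the reversed isotopy $\vphi^{1-t}$: composing the two sweeps and then using an ambient isotopy of $N_2 \times I$ (constant near the two ends) to unknot the resulting graph into the vertical graph $B \times I$ shows $\eta^{-1} \circ \eta = \id$, and likewise $\eta \circ \eta^{-1} = \id$; here I would invoke the isotopy-extension machinery (\thmref{t:isotopy-ext}, with \rmkref{r:isotopy-ext-ribbongraph} to handle that ribbon graphs have boundary in the interior) to promote the isotopy of graphs to an ambient isotopy, which realizes the required null relation. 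Naturality in $\VV$ — that for $f : \VV \to \VV'$ a skein in $N_1 \times I$, one has $(\vphi_*^1 f) \circ \eta_\VV = \eta_{\VV'} \circ (\vphi_*^0 f)$ — follows because both sides are represented by the same colored ribbon graph in $N_2 \times I \times I \simeq N_2 \times I$, obtained by sweeping $f$ across the isotopy: one composes $f$ first then sweeps, or sweeps first then composes, and these graphs are ambient isotopic rel boundary. This extends to the Karoubi envelope because natural transformations between functors automatically do.

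For the final clause, suppose $\vphi^t$ and $\psi^t$ are isotopic isotopies from $N_1$ to $N_2$ (with the same endpoints $\vphi^0 = \psi^0$, $\vphi^1 = \psi^1$), i.e.\ there is a homotopy of isotopies $\Phi^{t,s}$ with $\Phi^{t,0} = \vphi^t$, $\Phi^{t,1} = \psi^t$. Then the two sweep-graphs defining $\eta^\vphi_\VV$ and $\eta^\psi_\VV$ in $N_2 \times I$ are ambient isotopic rel boundary — the homotopy $\Phi$ supplies an isotopy of the traced bands — so they represent the same skein, hence the same natural isomorphism. The main obstacle I anticipate is the careful bookkeeping in the second part: showing $\eta$ is genuinely an isomorphism and is natural requires translating "isotopy of colored ribbon graphs" into "equality of skeins," which means verifying that the relevant ambient isotopies can be taken rel boundary and applying \lemref{l:null-boundary-ball} / \prpref{prp:null_cover} so that the isotopy relations land in the span of null graphs; the topology (local unknottedness of the traced bands, collar neighborhoods near $N_2 \times \{0,1\}$) is where the real work lies, though all of it is supplied by the PL results recalled in \secref{s:bg-PLtop}.
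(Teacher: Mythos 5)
Your proposal is correct and is exactly the argument the paper has in mind — the paper's own proof of this lemma is simply ``Clear,'' and your write-up supplies the standard details (pushforward of graphs and null graphs under the embedding, the sweep graph of an isotopy as the natural isomorphism, and ambient isotopy rel boundary for invertibility, naturality, and independence of the choice of isotopy).
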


\begin{proof}
Clear.
\end{proof}

\begin{lemma}
\label{lem:disjoint_union}
Under the same hypothesis above,
\begin{align*}
  \hatZCYsk(N_1 \sqcup N_2) \simeq \hatZCYsk(N_1) \boxtimes \hatZCYsk(N_2) \\
  \ZCYsk(N_1 \sqcup N_2) \simeq \ZCYsk(N_1) \boxtimes \ZCYsk(N_2)
\end{align*}
\end{lemma}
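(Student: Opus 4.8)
The plan is to first establish the statement for the ``hat'' versions and then deduce it for the Karoubi envelopes, since $\Kar(\cC_1 \boxtimes \cC_2) \simeq \Kar(\cC_1) \boxtimes \Kar(\cC_2)$ for $\kk$-linear additive categories (the Karoubi envelope commutes with Deligne product). So the heart of the matter is the equivalence $\hatZCYsk(N_1 \sqcup N_2) \simeq \hatZCYsk(N_1) \boxtimes \hatZCYsk(N_2)$. First I would define the functor in the natural direction: an object of $\hatZCYsk(N_1) \boxtimes \hatZCYsk(N_2)$ is (a direct sum of) pairs $(\VV_1, \VV_2)$ of boundary values, and we send it to $\VV_1 \sqcup \VV_2$, regarded as a boundary value on $N_1 \sqcup N_2$ via the obvious inclusions (using the previous lemma's inclusion functors $\vphi_*$ for $N_j \hookrightarrow N_1 \sqcup N_2$). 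On morphisms, a pair $\vphi_1 \tnsr \vphi_2$ with $\vphi_j \in \ZCYsk(N_j \times I'; \ldots)$ is sent to the disjoint union $\vphi_1 \sqcup \vphi_2$, which is a skein in $(N_1 \sqcup N_2) \times I' = (N_1 \times I') \sqcup (N_2 \times I')$. Since the Deligne product $\boxtimes$ of $\kk$-linear categories has $\Hom$-spaces given by the tensor product of $\Hom$-spaces, what must be checked is that the natural map
\[
\ZCYsk(N_1 \times I'; \VV_1, \VV_1') \tnsr_\kk \ZCYsk(N_2 \times I'; \VV_2, \VV_2')
\to \ZCYsk\big((N_1 \sqcup N_2) \times I'; \VV_1 \sqcup \VV_2, \VV_1' \sqcup \VV_2'\big)
\]
is an isomorphism, and that it respects composition and identities.

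The key structural input is that for a disjoint union of 3-manifolds $M = M' \sqcup M''$ with boundary values $\VV', \VV''$, one has $\ZCYsk(M; \VV' \sqcup \VV'') \simeq \ZCYsk(M'; \VV') \tnsr_\kk \ZCYsk(M''; \VV'')$. This I would prove at the level of $\VGraph$ first: a colored ribbon graph in a disjoint union is uniquely a disjoint union of colored ribbon graphs in the components (a compact graph has finitely many components, each lying in one piece), so $\VGraph(M' \sqcup M''; \VV' \sqcup \VV'') \simeq \VGraph(M'; \VV') \tnsr_\kk \VGraph(M''; \VV'')$. Then I would show the subspace $Q$ of null graphs matches up: any embedded ball $D$ in $M' \sqcup M''$ lies entirely in one component (balls are connected), so a graph null with respect to $D$ is (a disjoint union with a fixed graph in the other component of) a null graph in that component; hence $Q_{M' \sqcup M''} = Q_{M'} \tnsr \VGraph(M'') + \VGraph(M') \tnsr Q_{M''}$, and the quotient is the tensor product of the quotients. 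Applying this with $M' = N_1 \times I'$, $M'' = N_2 \times I'$ gives the desired isomorphism on $\Hom$-spaces. Compatibility with composition follows because composition in $\hatZCYsk$ is gluing of skeins along $N \times \{1\} = N \times \{0\}$, and gluing of a disjoint union is the disjoint union of the gluings (\defref{d:glue-skeins}); the identity morphism $\id_{\VV_1 \sqcup \VV_2}$ is the vertical graph over $\VV_1 \sqcup \VV_2$, which is the disjoint union of the vertical graphs. Full faithfulness is then immediate from the $\Hom$-space isomorphism, and essential surjectivity holds because every boundary value on $N_1 \sqcup N_2$ restricts to boundary values on each $N_j$ (the finitely many arcs each lie in one component) and reassembles as their disjoint union.

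The main obstacle I anticipate is the careful bookkeeping in the $Q = Q' + Q''$ style argument for the disjoint union, specifically ensuring that the $\kk$-linear structure is handled correctly: a ``null with respect to $D$'' relation in $M' \sqcup M''$ is a linear combination $\sum c_i \Gamma_i$ where all $\Gamma_i$ agree outside $D$; if $D \subset M'$, then all $\Gamma_i$ share the same component $\Gamma''$ in $M''$, so the relation is $(\sum c_i \Gamma_i') \tnsr \Gamma''$ for a null combination $\sum c_i \Gamma_i'$ in $M'$ — but one must also invoke \prpref{prp:null_cover} (or at least the disjoint-union case of it) to know these ball-local relations actually generate all of $Q$, which they do since the two components form an open cover. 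The isotopy-type relations (case (1) in the definition of null) also behave well since an ambient isotopy supported in a ball stays in one component. Once these are in hand the rest is formal. I do not expect orientation issues: $N_j \times I'$ inherits its orientation convention componentwise, matching $(N_1 \sqcup N_2) \times I'$.
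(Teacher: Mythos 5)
Your proposal is correct and takes essentially the same route as the paper: the paper's proof simply states that the inclusions of $N_1$ and $N_2$ induce a functor $\hatZCYsk(N_1)\boxtimes\hatZCYsk(N_2)\to\hatZCYsk(N_1\sqcup N_2)$ that is ``easily seen to be an isomorphism of categories,'' and then passes to $\ZCYsk$ by the universal property of the Deligne--Kelly product. Your $\Hom$-space analysis --- decomposing $\VGraph$ over the components and observing that every embedded ball, being connected, lies in a single component so that $Q$ splits as $Q_{M'}\tnsr\VGraph(M'')+\VGraph(M')\tnsr Q_{M''}$ --- is precisely the content the paper elides, and it is sound.
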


\begin{proof}
The proof for $\hatZCYsk$ is clear:
the inclusions of $N_1$ and $N_2$ into $N_1\sqcup N_2$
together induce
$\hatZCYsk(N_1) \boxtimes \hatZCYsk(N_2) \to \hatZCYsk(N_1 \sqcup N_2)$,
and this is easily seen to be an isomorphism of categories.
The equivalence for $\ZCYsk$ then follows by universal property,
and the fact that the Deligne-Kelly tensor product
of two finite semisimple abelian categories
is also a finite semisimple  abelian category.
\end{proof}

Next we discuss the ``stacking'' monoidal structure
of some special surfaces.
Let $\iI = (0,1)$,
and let $m : \iI \sqcup \iI \to \iI$
be $x/2$ on the first $\iI$ and $(x+1)/2$
on the second $\iI$.
This can be made part of an $A_\infty$-space structure,
as defined in \ocite{stasheff}:
$m$ is not associative,
but there is an isotopy
$m_3^t: \iI \sqcup \iI \sqcup \iI \to \iI$
from $m_3^0 = m \circ (m \sqcup \id_\iI)$
to $m_3^1 = m \circ (\id_\iI \sqcup m)$,
relating the two ways of including three intervals
into one.
Note that the ``straight line isotopy'' that is often used
to define $m_3^t$ is not a PL isotopy;
we may explicitly define $m_3^t = \vphi^t \circ m_3^0$,
where
\[
\vphi^t(x) = 
\begin{cases}
	2x & \text{ if } x < t/4
	\\
	(1+x)/2 & \text{ if } x > 1 - t/2
	\\
	x + t/4 & \text{ else}
\end{cases}
\]

Let $P$ be a finite collection of open intervals and circles.
By abuse of notation, we denote $\id_P \times m$,
$\id_P \times m_3^t$ by $m,m_3^t$, respectively.
%Let
%\begin{align*}
%  \tilde{m} &: P \times \iI \sqcup P \times \iI
%    = P \times (\iI \sqcup \iI)
%    \to P \times \iI \\
%  \tilde{m}_3^t &: P \times \iI \sqcup P \times \iI \sqcup P \times \iI
%    = P \times (\iI \sqcup \iI \sqcup \iI) \to P \times \iI
%\end{align*}

\begin{proposition}
\label{prp:stacking}
There is a monoidal structure on $\hatZCYsk(P\times \iI)$
given as follows:
\begin{itemize}
  \item The tensor product is
  \[
    \tnsr := m_* : \hatZCYsk(P\times \iI) \boxtimes \hatZCYsk(P\times \iI)
      \to \hatZCYsk(P\times \iI)
  \]
  \item The unit $\one$ is the empty configuration $\EE$.
    (Left, right unit constraints are given in proof.)
  \item The associativity constraint $\alpha$ is the natural isomorphism
    that is induced by $m_3^t$.
\end{itemize}
This extends to a monoidal structure on $\ZCYsk(P\times \iI)$.
\end{proposition}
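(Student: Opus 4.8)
The plan is to verify directly that the data $(\tnsr, \EE, \alpha, \lmb, \rho)$ defined in the statement satisfies the axioms of a monoidal category, using the two previous lemmas about functoriality of $\vphi_*$ and about disjoint unions. First I would set up the unit constraints: since $m|_{\iI \sqcup \emptyset}$ and $m|_{\emptyset \sqcup \iI}$ are the embeddings $x \mapsto x/2$ and $x \mapsto (x+1)/2$ of $\iI$ into itself, and each of these is isotopic (through embeddings of $\iI$ into $\iI$) to $\id_\iI$, the preceding lemma on isotopies inducing natural isomorphisms gives natural isomorphisms $\lmb : \EE \tnsr - \simeq \id$ and $\rho : - \tnsr \EE \simeq \id$. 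I would record these explicitly as $(\id_P \times (\text{isotopy contracting } \iI/2 \text{ or } (\iI+1)/2 \text{ to } \iI))_*$.

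Next I would check the pentagon and triangle axioms. The associativity constraint $\alpha$ on a triple of objects is $(m_3^t)_*$, the natural isomorphism induced by the isotopy $m_3^t = (\id_P \times \vphi^t) \circ m_3^0$ from $m \circ (m \sqcup \id)$ to $m \circ (\id \sqcup m)$. For the pentagon, both composites around the pentagon are natural isomorphisms between $m$-iterated-four-times functors, each induced by a concatenation of reparametrizing isotopies of $P \times \iI$; by the clause in the previous lemma that \emph{isotopic isotopies induce the same natural isomorphism}, it suffices to check that the two concatenated isotopies of $\iI$ (the ones landing four copies of $\iI$ into one) are isotopic rel endpoints. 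This is a standard fact: the space of orientation-preserving PL embeddings $\iI^{\sqcup 4} \hookrightarrow \iI$ with a fixed cyclic/linear arrangement is connected and simply connected (it deformation retracts onto a point, being convex up to PL subtleties), so any two paths with the same endpoints are homotopic through paths. The triangle axiom reduces similarly to comparing two isotopies of $\iI^{\sqcup 3} \hookrightarrow \iI$ that degenerate the middle factor. I would present this as: the relevant configuration spaces of PL embeddings are contractible, hence the higher coherences are automatic.

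Then I would address extension to the Karoubi envelope $\ZCYsk(P \times \iI) = \Kar(\hatZCYsk(P \times \iI))$. This is routine: a monoidal structure on an additive category extends uniquely to its idempotent completion, with $(A, p) \tnsr (B, q) := (A \tnsr B, p \tnsr q)$, using bilinearity of $m_* = \tnsr$ on morphisms (which follows from $m_*$ being a $\kk$-linear additive functor, combined with \lemref{lem:disjoint_union} identifying $\hatZCYsk((P\times\iI) \sqcup (P\times\iI))$ with the Deligne--Kelly product); the associativity and unit constraints extend by naturality. I would cite the standard fact (or spell it out in one line) that $\Kar$ of a monoidal category is monoidal.

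The main obstacle I expect is the coherence bookkeeping in the pentagon: one must be careful that the isotopy $m_3^t$ as explicitly defined (via the piecewise-linear $\vphi^t$) genuinely gives, after the two ways of bracketing, isotopic-rel-endpoints isotopies of $P \times \iI$, rather than merely homotopic maps. The clean way around this, which I would adopt, is to prove once and for all a small lemma: for a finite collection $P$ of intervals and circles, the simplicial/PL space of orientation-preserving embeddings of $\bigsqcup_{i=1}^n (P \times \iI)$ into $P \times \iI$ that restrict to the identity on the $P$ factor and land the $\iI$-factors in a prescribed order is contractible; then \emph{every} coherence diagram (pentagon, triangle, and all higher ones) commutes automatically by the ``isotopic isotopies induce equal natural transformations'' clause of the functoriality lemma. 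With that lemma in hand, the verification of \prpref{prp:stacking} is essentially formal.
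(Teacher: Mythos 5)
Your proposal is correct and follows essentially the same route as the paper: the paper's proof likewise gives the unit constraints by ``straight line'' graphs (i.e.\ the graphs tracing the contracting isotopies you describe), derives the pentagon from the fact that any two inclusions $\iI^{\sqcup 4}\hookrightarrow \iI$ are isotopic and any two such isotopies are themselves isotopic, and extends to $\ZCYsk(P\times\iI)$ by the universal property of the Karoubi envelope. Your version merely makes explicit the contractibility-of-embedding-spaces lemma and the $(A,p)\tnsr(B,q)=(A\tnsr B,p\tnsr q)$ formula that the paper leaves implicit.
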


\begin{proof}
Left unit constraint $l_A : A \tnsr \one \to A$
is given by a ``straight line" graph,
likewise for right unit constraint.
That $\alpha$ satisfies the pentagon relations
follows from the fact that any two inclusions
$\iI^{\sqcup 4} \hookrightarrow \iI$
are isotopic, and any two isotopies are themselves isotopic.
The result for $\ZCYsk(P\times \iI)$ follows from universal property.
\end{proof}

\begin{proposition}
\label{p:stack_pivotal}
The monoidal structure on $\hatZCYsk(P\times \iI)$ and $\ZCYsk(P\times \iI)$
given in \prpref{prp:stacking}
is pivotal.
\end{proposition}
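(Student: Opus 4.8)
The plan is to construct the pivotal structure directly from the geometry of $P \times \iI$, using the fact that skein categories on thickened surfaces carry duals coming from ``reflection across the middle level.'' First I would establish rigidity: for an object $\VV = (B, \{V_{\vec b}\}) \in \hatZCYsk(P \times \iI)$, define the (left) dual $\VV^*$ to be the same collection of arcs $B$, but with each oriented arc $\vec b$ now assigned the dual object $V_{\vec b}^*$; equivalently, $\VV^*$ is obtained from $\VV$ by reversing all the chosen orientations, which is the ``same'' boundary value as a subset of $P \times \iI$ but with the dual labels. The evaluation $\ev_\VV : \VV^* \tnsr \VV \to \EE$ and coevaluation $\coev_\VV : \EE \to \VV \tnsr \VV^*$ are the ``cup'' and ``cap'' skeins in $P \times \iI \times \iI$ (thinking of the morphism space as skeins in the thickened surface), i.e. the graphs consisting of, for each arc $b \in B$, a single $\cup$- or $\cap$-shaped ribbon colored by $V_{\vec b}$, placed so that the two copies of $\VV$ coming from $m$ are joined. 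One then checks the zig-zag (triangle) identities: these reduce, arc-by-arc, to the zig-zag identities for $\ev_{V_{\vec b}}, \coev_{V_{\vec b}}$ in $\cA$, which hold because $\cA$ is rigid. This gives a left dual; a symmetric construction (or the spherical/ribbon structure of $\cA$) gives a right dual, so $\hatZCYsk(P \times \iI)$ is rigid, and the structure passes to $\ZCYsk(P\times\iI) = \Kar(\hatZCYsk(P\times\iI))$ as in \prpref{prp:stacking}.

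Next I would construct the pivotal isomorphism $\delta : \id \simeq (-)^{**}$. For an object $\VV$, we have $\VV^{**} = (B, \{V_{\vec b}^{**}\})$, which has the same underlying arcs as $\VV$. The natural candidate for $\delta_\VV : \VV \to \VV^{**}$ is the ``vertical'' identity-type skein $B \times \iI$, but with each ribbon labeled using the pivotal isomorphism $\delta_{V_{\vec b}} : V_{\vec b} \simeq V_{\vec b}^{**}$ of $\cA$ on the coupon at one end (recall \secref{s:graphical-multifusion}). I would check: (i) $\delta_\VV$ is natural in $\VV$ — this follows because naturality reduces, edge by edge and coupon by coupon, to the naturality of $\delta$ in $\cA$ together with the defining relations in $\ZCYsk$ (ambient isotopy and Reshetikhin–Turaev evaluation in balls), using that the graphical calculus in a ball is exactly the graphical calculus of $\cA$ (\prpref{p:rt-evaluation-ball}, \rmkref{r:ribbongraph-rt-evaluation}); (ii) $\delta$ is monoidal, i.e. $\delta_{\VV \tnsr \WW} = \delta_\VV \tnsr \delta_\WW$ under the identification coming from $m$ — this is immediate since $\tnsr = m_*$ just juxtaposes the arc collections and the labels, and $\delta_{V \tnsr Y} = \delta_V \tnsr \delta_Y$ in $\cA$ by virtue of $\delta$ being a monoidal isomorphism there; (iii) $\delta$ is compatible with the chosen $\ev, \coev$, i.e. it identifies the left dual with the right dual as required by \defref{d:...} (the definition of pivotal structure as a natural monoidal isomorphism $\id \simeq (-)^{**}$). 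After establishing rigidity and producing $\delta$, the only remaining verification is that $\delta$ is indeed an isomorphism of \emph{monoidal} functors, which I have just sketched, and that it restricts/extends to the Karoubi envelope compatibly with idempotents — routine, since $\delta$ is given by an honest morphism and $(P_1 \circ \delta_\VV \circ P_2)$ makes sense for any idempotents.

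The main obstacle I anticipate is bookkeeping the orientation conventions carefully. The morphism spaces in $\hatZCYsk(P \times \iI)$ are skein modules of $(P \times \iI) \times \iI$ with a prescribed orientation convention (the outward orientation at the top level is the orientation of $P \times \iI$), and the duality functor involves reflecting in the last coordinate, which flips orientations of the ambient $3$-manifold and hence — per \cvnref{cvn:braiding} and \rmkref{r:rt-mirror} — flips crossings and interacts with the ribbon twist $\theta$. The subtle point is that the cup/cap skeins defining $\ev_\VV$ and $\coev_\VV$ live in a copy of $P \times \iI \times \iI$ glued via $m$, and one must confirm that the zig-zag composite, after the gluing and after any required straightening isotopy, is genuinely the identity skein $B \times \iI$ and not the identity twisted by some framing anomaly; this is where one uses that $\cA$ is not merely rigid but \emph{ribbon} (and in fact spherical), so that the framing factors coming from the two halves of the zig-zag cancel. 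Once this orientation/framing point is pinned down — essentially the same care already exercised in \ocite{KT} and in the definition of anchors in \secref{s:cy-ribbongraphs} — every other check is a formal consequence of the corresponding property in $\cA$, so I would organize the write-up as: (1) define $\VV^*$, $\ev_\VV$, $\coev_\VV$; (2) verify triangle identities arc-by-arc; (3) define $\delta_\VV$; (4) verify naturality and monoidality of $\delta$; (5) pass to $\Kar$.
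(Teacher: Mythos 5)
Your proposal is correct and follows essentially the same route as the paper: duals are built from the reflection of $\iI$, evaluation and coevaluation are the cup/cap skeins obtained by folding the identity graph, the zig-zag identities reduce to isotopy plus rigidity of $\cA$, and the pivotal isomorphism is the vertical identity skein decorated with the pivotal morphisms $\delta_{V_{\vec b}}$ of $\cA$. Two small points of friction with your set-up. First, the paper takes the dual object to be $(\theta(B),\{V_b^*\})$ with the arcs \emph{reflected} by the flip $\theta$ of $\iI$, not the same arc collection $B$: after applying $m$, it is the reflected positions that make the cups nest without crossings and make the arc-by-arc zig-zag check literal; with your choice the dual is still a valid (isomorphic) dual, but the cup/cap graphs need either crossings or an interpolating isotopy, which muddies the ``reduces arc-by-arc to $\cA$'' step. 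Second, your worry that the ribbon structure is needed to cancel a framing anomaly in the zig-zag is an overstatement: the S-shaped composite of the blackboard-framed cup and cap straightens by an honest isotopy in $P\times \iI\times[0,1]$ introducing no twist, so rigidity of $\cA$ and isotopy invariance of skeins already suffice there (the ribbon/spherical structure of $\cA$ is of course available, but the resulting category need not be spherical, as the annulus example shows).
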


\begin{remark}
The input category $\cA$ has to be spherical,
but the resulting categories $\ZCYsk(P\times \iI)$ may not be;
we will see in \secref{s:annulus} that $\ZCYsk(S^1 \times \iI)$ is
pivotal but not spherical.
\end{remark}

\begin{proof}
It suffices to prove this for $\hatZCYsk(P\times \iI)$,
since its Karoubi envelope will inherit the pivotal structure.

The rigid and pivotal structures come from topological constructions.
Denote by $\theta: P \times \iI \to P \times \iI$
be the orientation-reversing diffeomorphism
which flips $\iI$,
i.e. $(p,x) \mapsto (p, 1-x)$.
Denote by $\Theta: P \times \iI \times [0,1] \to P \times \iI \times [0,1]$
the orientation-preserving diffeomorphism
that rotates the $\iI \times [0,1]$ rectangle by $180^\circ$,
i.e. $(p,x,t) \mapsto (p,1-x,1-t)$.

Denote by $\nu$ the map that takes $P \times \iI \times [0,1]$,
squeezes it in half along the $\iI$ direction,
bends it like an accordion so that the left side collapses,
and puts it back in $P \times \iI \times [0,1]$
so that the top and bottom are now attached to the top
(see \firef{f:nu-eta});
$\nu', \eta,\eta'$ are defined similarly.

\begin{figure}[ht]
$\nu,\nu',\eta, \eta':$
\begin{tikzpicture}
\begin{scope}[shift={(0,-0.5)}]
%\node at (-0.5,1) {0};
%\node at (-0.5,0) {1};
\draw (0,0) -- (1,0) node[pos=0, below] {\tiny $A$} node[below] {\tiny $B$};
\draw (0,1) -- (1,1) node[pos=0, above] {\tiny $C$} node[above] {\tiny $D$};
\draw[dotted] (0,0) -- (0,1);
\draw[dotted] (1,0) -- (1,1);
\end{scope}
\end{tikzpicture}
%%%%%%%%%
$\to$
%%%%%%%%% (1)
\begin{tikzpicture}
\begin{scope}[shift={(0,-0.5)}]
\draw[dotted] (0,1) arc (180:360:0.5cm);
\draw (0,1) -- (1,1)
  node[pos=0, above] {\tiny $B$}
  node[pos=0.5,above] {\tiny $A/C$}
  node[above] {\tiny $D$};
\draw[white] (0.5,0.95) -- (0.5,1.05);
\end{scope}
\end{tikzpicture}
%%%%%%%%%
,
%%%%%%%%% (2)
\begin{tikzpicture}
\begin{scope}[shift={(0,-0.5)}]
\draw[dotted] (0,1) arc (180:360:0.5cm);
\draw (0,1) -- (1,1)
  node[pos=0, above] {\tiny $C$}
  node[pos=0.5,above] {\tiny $D/B$}
  node[above] {\tiny $A$};
\draw[white] (0.5,0.95) -- (0.5,1.05);
\end{scope}
\end{tikzpicture}
%%%%%%%%%
,
%%%%%%%%% (3)
\begin{tikzpicture}
\begin{scope}[shift={(0,-0.5)}]
\draw[dotted] (1,0) arc (0:180:0.5cm);
\draw (0,0) -- (1,0)
  node[pos=0,below] {\tiny $A$}
  node[pos=0.5,below] {\tiny $B/D$}
  node[below] {\tiny $C$};
  \draw[white] (0.5,-0.05) -- (0.5,0.05);
\end{scope}
\end{tikzpicture}
%%%%%%%%%
,
%%%%%%%%% (4)
\begin{tikzpicture}
\begin{scope}[shift={(0,-0.5)}]
\draw[dotted] (1,0) arc (0:180:0.5cm);
\draw (0,0) -- (1,0)
  node[pos=0,below] {\tiny $D$}
  node[pos=0.5,below] {\tiny $C/A$}
  node[below] {\tiny $B$};
\draw[white] (0.5,-0.05) -- (0.5,0.05);
\end{scope}
\end{tikzpicture}
\caption{The maps $\nu,\nu',\eta,\eta'$ for $P = \{*\}$}
\label{f:nu-eta}
%\label{f:ups}
\end{figure}
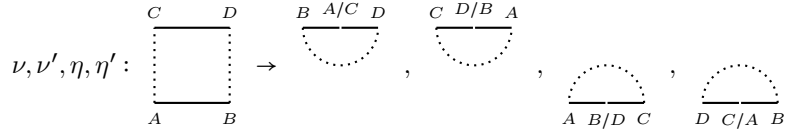

Let $\VV = (B, \{V_b\}) \in \Obj \hatZCYsk(P \times \iI)$.
Its left dual $\VV^*$ is given by
$(\theta(B), \{V_b^*\})$,
that is, apply the flipping diffeomorphism $\theta$
defined above to the marked points,
and label them by the left duals of the original labeling.
Similarly, the right dual is
$\rdual \VV = (\theta(B), \{\rdual V_b\})$.
(It is not too important to distinguish $V_b^*$ from $\rdual V_b$
since $\cA$ itself is pivotal.)

The left evaluation and coevaluation morphisms for $\VV$ are obtained by
applying $\nu$ and $\eta$ to $\id_\VV$, respectively.
Similarly, the right evaluation and coevaluation morphisms for $\VV$
are obtained by applying $\nu'$ and $\eta'$ to $\id_\VV$, respectively.
It is easy to see that these morphisms have the required properties.

Given a morphism
$f \in \Hom_{\hatZCYsk(P\times \iI)}(\VV, \VV')$
represented by a graph $\Gamma$,
it is easy to check that
its left and right duals are given by applying
the rotation $\Theta$ to $\Gamma$,
and keeping all orientations and labels of
the edges of $\Gamma$.

The pivotal structure is essentially the identity morphism,
but with one vertex on each vertical line labeled by $\delta$,
the pivotal structure of $\cA$.
\end{proof}

\begin{example}
  \label{xmp:cy_annulus}
Recall that $\ZCY(\iI \times \iI)  \simeq \ZCY(D^2) \simeq \cA$.
$\iI \times \iI$ can stack in two ways, along the first copy of $\iI$
(horizontal stacking) or the second (vertical stacking).
They both give monoidal structures equivalent to $\cA$'s.
This is explored further in \secref{s:elliptic}
\end{example}

Next we consider module categories over $\ZCYsk(P\times \iI)$.
%%First, $\iI$ is a left module over the $A_\infty$ space $\iI$,
%%as follows. Let $f : \iI \to (1/2, 1) \subset \iI$ be some inclusion
%%that is identity near 1.
%%The embedding $n = (\cdot/2) \sqcup f : \iI \sqcup \iI \to \iI$
%%gives left multiplication,
%%and it is associative up to some isotopy,
%%that is, the two inclusions
%%$n_3^0 := n \circ (\id_\iI \sqcup n)$ and $n_3^1 := n \circ (m \sqcup \id_\iI)$
%%are isotopic via some isotopy $n_3^t$.
%%\\
%It is not hard to see that any two such left module structures are
%equivalent.\\

\begin{definition}
Let $N$ be a surface,
and suppose there is a surface $N'$
with a boundary component $P \subseteq \del N'$,
such that $N = N' \backslash P$.

A \emph{0-collaring} of $N$ is a proper embedding
$\iota : P \times \iI \hookrightarrow N$
that extends to $\overline{\iota} : P \times [0,1) \hookrightarrow N'$;
a \emph{1-collaring}
extends to $P \times (0,1] \hookrightarrow N'$.
%We also call $\overline{\iota}$ a 0-collaring on $N'$.
\end{definition}

Let $N$ be a 0-collared surface.
By crossing with $P$,
we can carry, via the collaring, the left module structure $n$
on $\iI$ to $N$,
obtaining a left multiplication
$n : P \times \iI \sqcup N \to N$
and an isotopy $n_3^t$ from
$n \circ (\id_{P\times \iI} \sqcup n)$
to $n \circ (m \sqcup \id_N)$.

\begin{proposition}
\label{p:collared_module}
Given a 0-collared surface $N$,
the multiplication $m$ on $P \times \iI$
defines a left module map
\[
n : P \times \iI \sqcup N \to N
\]
and an isotopy
\[
\Big( n_3^t :
n \circ (\id_{P\times \iI} \sqcup n)
\to n \circ (m \sqcup \id_N) \Big)
: P \times \iI \sqcup P \times \iI \sqcup N
\to N
\]
Then there is a left $\hatZCYsk(P \times \iI)$-module category structure on
$\hatZCYsk(N)$
given by
\[
  \lact \, := n_* : \hatZCYsk(P\times \iI) \boxtimes \hatZCYsk(N)
    \to \hatZCYsk(N)
\]
and the associativity constraint is given by the
natural isomorphism induced by the isotopy $n_3^t$.
%Such a structure is unique up to equivalence.
This extends to a left $\ZCYsk(P\times \iI)$-module category structure on
$\ZCYsk(N)$.
\end{proposition}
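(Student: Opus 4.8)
The plan is to proceed exactly in parallel with the proof of \prpref{prp:stacking}, since the module structure is just the ``one-sided'' version of the monoidal structure and all the ingredients are already in place. First I would define the functor $\lact = n_* : \hatZCYsk(P\times \iI) \boxtimes \hatZCYsk(N) \to \hatZCYsk(N)$ by pushing forward boundary values and graphs along the embedding $n : P\times \iI \sqcup N \to N$ induced by the $0$-collaring. Concretely, given a collaring $\iota : P \times \iI \hookrightarrow N$, the map $n$ places the collar copy of $P \times \iI$ ``on top of'' $N$ near the boundary component $P$ (using $m$ in the collar coordinate), and $n_*$ on morphisms is $(n \times \id_I)_*$; this is well-defined on skein modules because $n \times \id_I$ is an embedding and pushing forward a null graph along an embedding yields a null graph (the witnessing ball pushes forward).

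Next I would produce the associativity constraint. The isotopy $n_3^t$ exists because the collaring lets us transport the $A_\infty$-relation $m_3^t$ from the interval to $N$; explicitly $n_3^t = (\id_P \times m_3^t)$ carried through $\iota$, interpolating between $n \circ (\id_{P\times\iI} \sqcup n)$ and $n \circ (m \sqcup \id_N)$. By the lemma on isotopies inducing natural isomorphisms (the first lemma of Subsection~\ref{s:skprop-stacking}), $n_3^t$ induces a natural isomorphism $\beta$ between the two composite functors $\hatZCYsk(P\times\iI) \boxtimes \hatZCYsk(P\times\iI) \boxtimes \hatZCYsk(N) \to \hatZCYsk(N)$, and this is the module associativity constraint. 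The unit constraint $\one \lact M \simeq M$, where $\one = \EE$, comes from a ``straight-line'' graph, just as the left unit constraint in \prpref{prp:stacking}.

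The axioms to check are the module-category pentagon (mixing $\alpha$ of $\hatZCYsk(P\times\iI)$ with $\beta$) and the triangle axiom relating $\beta$ to the unit constraints. Both follow from the same coherence principle used in \prpref{prp:stacking}: any two embeddings $\iI^{\sqcup k} \sqcup N \hookrightarrow N$ (respecting the collar and the orientations) that arrange $k$ intervals and $N$ in the same left-to-right order are isotopic, and any two such isotopies are themselves isotopic (rel endpoints); since ``isotopic isotopies induce the same natural isomorphism,'' all the coherence diagrams commute automatically. Finally, the extension to the Karoubi envelopes $\ZCYsk(P\times\iI)$ and $\ZCYsk(N)$ is formal: an additive bifunctor that is a module action on $\hatZCYsk(N)$ extends to idempotent completions, and the constraints, being natural isomorphisms, extend as well. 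I expect the only mildly delicate point to be bookkeeping the orientation and collar conventions so that $n$ is genuinely orientation-compatible and the transported $m_3^t$ is a \emph{PL} isotopy — but this is handled exactly as in \prpref{prp:stacking} using the explicit PL formula for $\vphi^t$ given there, so there is no real obstacle.
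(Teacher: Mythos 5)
Your proposal is correct and follows exactly the route the paper intends: the paper's proof of this proposition is literally ``Similar to \prpref{prp:stacking},'' and your argument is the faithful one-sided transcription of that proof (pushforward along $n$, associativity from $n_3^t$, coherence from isotopic isotopies inducing equal natural isomorphisms, and formal extension to Karoubi envelopes). No gaps.
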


\begin{proof}
Similar to \prpref{prp:stacking}.
\end{proof}

There is a similar story for right module structure,
where $N$ is a collared $n$-manifold so that 1 escapes to infinity.
Of course, the results above extend to surfaces with collared boundaries,
as $\ZCY$ is defined by removing the boundaries.

\subsection{Excision for $\ZCYsk$}
\label{s:skprop-excision}
\par \noindent

In this subsection, we prove the main result of this section,
that $\ZCYsk$ satisfies excision.
Let $I' = (0,1)$ and $I = [0,1]$ as before.

Let $N$ be a finite type surface without boundary.
%with finitely many components,
%possibly non-compact.
To present $N$ as the quotient of some surface $N'$
by some gluing,
consider a map
$f: N \to S^1 = \RR / 2\ZZ$,
together with a trivialization of $P$-bundles
$\PI \simeq f^\inv(I')$,
where $P$ is some finite collection of open intervals and circles,
as in \secref{s:skprop-stacking}.
Take $N'$ to be the ``preimage of $(0,3)$ under $f$'';
more precisely, pullback $f$ along the universal
covering map $\RR \to \RR/2\ZZ$ to get
$\wdtld{f}: \wdtld{N} \to \RR$,
and take $N' = \wdtld{f}^\inv((0,3))$
(see figure below).
So $N$ is obtained from $N'$ by gluing the parts over
$(0,1)$ and $(2,3)$.

\[
\begin{tikzpicture}
%% N'
\draw (1.5,1) circle (0.4cm);
\draw (3.5,1) circle (0.4cm);
\draw[line width=1mm, white] (-0.4,1) -- (4.4,1);
\draw (-0.4,1.05) -- (4.4,1.05);
\draw (-0.4,0.95) -- (4.4,0.95);
\draw[fill, white] (1.5,1) circle (0.37cm);
\draw[fill, white] (3.5,1) circle (0.37cm);
% brace for N'
\draw[decorate,decoration={brace,amplitude=10pt,raise=4pt}]
 (0,1.5) -- (3,1.5)
 node[pos=0.5, yshift=0.8cm] {$N'$};
\draw[dotted] (0,1.5) -- (0,-1);
\draw[dotted] (3,1.5) -- (3,-1);
\draw[decorate,decoration={brace,amplitude=3pt,raise=2pt}]
 (0,1.1) -- (1,1.1)
 node[pos=0.5, yshift=0.4cm] {\small $P \times I'$};
\draw[decorate,decoration={brace,amplitude=3pt,raise=2pt}]
 (2,1.1) -- (3,1.1)
 node[pos=0.5, yshift=0.4cm] {\small $P \times I'$};

% arrow N' \to \RR
\draw[->] (1.5,0.4) -- (1.5,-0.6) node[pos=0.5,right] {$\tilde{f}$};
% N, R/Z
\node at (-1.5,1) {$\tilde{N}$};
\node at (-1.5,-1) {$\RR$};

% picture for \RR
\draw (-0.4,-1) -- (4.4,-1);
%\draw[dashed] (-1,-1) -- (5,-1);
\foreach \x in {0,...,4} {
 \node at (\x,-1.3) {\x};
 \draw (\x,-0.9) -- (\x,-1.1);
}
\node at (0.5,-0.7) {$I'$};

\end{tikzpicture}
%%%%%%%%%%%%%% mapsto %%%%%%%%%%%%%%%%%%%%%%%%%%%%%%%%%%%%
\hspace{5pt}
\begin{tikzpicture}
\node at (0,1) {$\to$};
\node at (0,-1) {$\to$};
\end{tikzpicture}
\hspace{5pt}
%%%%%%%%%%%%%% last picture %%%%%%%%%%%%%%%%%%%%%%%%%%%%%%%%%%%%
\begin{tikzpicture}

%% N
%\draw[thick_black, gray] (-0.5,1) -- (1.5,1);
%\draw[fill, gray] (0.5,1) circle (0.4cm);
\draw (0.5,1) circle (0.4cm);
\draw[line width=1mm, white] (-0.5,1) -- (1.5,1);
\draw (-0.5,1.05) -- (1.5,1.05);
\draw[dotted] (-0.7,1.05) -- (1.7,1.05);
\draw (-0.5,0.95) -- (1.5,0.95);
\draw[dotted] (-0.7,0.95) -- (1.7,0.95);
\draw[fill, white] (0.5,1) circle (0.37cm);

% arrow N \to \RR
\draw[->] (0.5,0.4) -- (0.5,-0.6) node[pos=0.5,right] {$f$};
% N, R/Z
\node at (2.5,1) {$N$};
\node at (2.5,-1) {$\RR/2\ZZ$};

% picture for \RR
\draw (-0.5,-1) -- (1.5,-1);
\draw[dotted] (-0.7,-1) -- (1.7,-1);
\node at (0,-1.3) {1};
\node at (1,-1.3) {2/0};
\foreach \x in {0,...,1} {
 \draw (\x,-0.9) -- (\x,-1.1);
}
\end{tikzpicture}
\]

\begin{remark}
\label{rmk:gluing}
Excision is usually phrased in terms of gluing
two collared manifolds.
In the above language,
that will correspond to the case when
$N' = N_1 \sqcup N_2$,
where $N_1 = \wdtld{f}^\inv((0,1.5))$,
$N_2 = \wdtld{f}^\inv((1.5,3))$,
so that the pullback map $N' \to N$
is the gluing/overlapping of $N_1$ and $N_2$
over $(0,1)$, the collared neighborhoods.
\end{remark}

Since $\wdtld{f}^\inv((0,1)) \simeq \wdtld{f}^\inv((2,3))$
naturally,
the trivialization $\PI \simeq f^\inv(I')$
gives a left and right $\PI$-module structure on $N'$,
and makes $\hatZCYsk(N')$ a $\hatZCYsk(\PI)$-bimodule category
(likewise for $\ZCYsk$).

The natural gluing map $N' \to N$
is the composition $N' \subset \wdtld{N} \to N$.
We can also embed $N'$ in $N$ as follows:
consider a following sequence of maps
$N' \to \PI \sqcup N' \sqcup \PI \to N' \to N$;
the first map is just the obvious inclusion,
the second is the left and right module maps
``squeezing" $N'$ into itself,
and the third map is the natural quotient map.
It is easy to see that the composition
is an embedding, in fact a diffeomorphism
onto $N \backslash f^\inv(0.5)$.

We denote the composition above by $i$,
and denote $X = f^\inv(0.5)$,
which we call the \emph{seam}.

Since $i:N' \to N$ is an embedding,
it induces a functor
$i_* : \hatZCYsk(N') \to \hatZCYsk(N)$.
Recall that there is a natural functor
$\htr: \hatZCYsk(N') \to \htr(\hatZCYsk(N'))$
that is identity on objects.

\begin{lemma}
The inclusion functor $i_*: \hatZCYsk(N') \to \hatZCYsk(N)$
extends along $\htr$ to a functor
$i_*: \htr(\hatZCYsk(N')) \to \hatZCYsk(N)$.
\end{lemma}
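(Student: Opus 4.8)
The plan is to show that the functor $i_* : \hatZCYsk(N') \to \hatZCYsk(N)$ \emph{coequalizes} the two ways of inserting a $\hatZCYsk(\PI)$-morphism, which is exactly the universal property defining the horizontal trace as a coend. Recall that $N'$ is a $\hatZCYsk(\PI)$-bimodule category, where the left and right actions come from the two collared copies of $\PI$ sitting over $(0,1)$ and $(2,3)$ inside $N'$, and that the embedding $i : N' \hookrightarrow N$ ``wraps $N'$ around'' so that these two copies of $\PI$ become identified (up to the seam $X = f^\inv(0.5)$) inside $N$. Concretely, for a boundary value $\WW$ on $\PI$ and an object $\VV$ of $\hatZCYsk(N')$, the object $\WW \lact \VV$ (insert $\WW$ at the left collar) and $\VV \ract \WW$ (insert $\WW$ at the right collar) become canonically isomorphic after applying $i_*$, via an isotopy of $N$ that slides the inserted marked points once around through the seam region.

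First I would make precise the data of the extension. By \defref{d:htr}, a functor out of $\htr(\hatZCYsk(N'))$ that restricts to $i_*$ along $\htr$ is the same as: (i) the functor $i_*$ on objects, together with (ii) for each $\WW \in \hatZCYsk(\PI)$ and each $\vphi \in \ihom{\hatZCYsk(N')}{\WW}(\VV_1, \VV_2) = \Hom_{\hatZCYsk(N')}(\WW \lact \VV_1, \VV_2 \ract \WW)$, a morphism $i_*(\WW \lact \VV_1) = i_*(\VV_2 \ract \WW) \to \cdots$ — more precisely, a choice of morphism $i_*(\VV_1) \to i_*(\VV_2)$ in $\hatZCYsk(N)$ — such that the equivalence relation generated by ``$f \circ \psi \sim \psi \circ f$'' for $f \in \Hom_{\hatZCYsk(\PI)}$ is respected, and such that composition in $\htr$ goes to composition in $\hatZCYsk(N)$. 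So the key step is to produce, for each such $\vphi$, the morphism $i_*(\vphi) : i_*(\VV_1) \to i_*(\VV_2)$. The recipe: $\vphi$ is represented by a colored ribbon graph $\Gamma$ in $(P \times \iI) \times \iI$ glued appropriately to $N' \times \iI$; using the identification of the left and right $\PI$-collars inside $N$, the graph $\Gamma$ closes up to a ribbon graph in $N \times \iI$ whose bottom boundary value is $i_*(\VV_1)$ and top is $i_*(\VV_2)$. This defines $i_*(\vphi)$.

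Then I would verify the two required properties. For well-definedness (respecting the coend relation), given $f : \WW \to \WW'$ in $\hatZCYsk(\PI)$ and $\psi \in \ihom{\hatZCYsk(N')}{\WW',\WW}(\VV_1,\VV_2)$, the graphs representing $f \circ \psi$ (with $f$ inserted at the right collar) and $\psi \circ f$ (with $f$ inserted at the left collar) become \emph{isotopic} in $N \times \iI$ once the collars are identified — the isotopy drags the subgraph representing $f$ once around the $S^1$-direction through the seam. Since isotopic graphs give equal morphisms in $\hatZCYsk(N)$, the relation is respected. For compatibility with composition, one checks that the composition rule in $\htr$ (which tensors the $\PI$-boundary-values, i.e. stacks them in the $\PI$-direction) corresponds under $i_*$ to ordinary composition (vertical stacking in the $\iI$-direction of $N \times \iI$) — this is a matter of isotoping the two ``inserted'' regions past each other, again using that any two ways of arranging the collars are isotopic as in \prpref{prp:stacking}. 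Functoriality of $i_*$ as an extension is then automatic, and it agrees with the original $i_*$ on morphisms coming from $\ihom{}{\one}$ (empty insertion) by construction.

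The main obstacle I expect is the bookkeeping around the seam and the precise geometric model: one must carefully set up how a morphism in $\hatZCYsk(\PI)$, living in $(P\times\iI)\times\iI$, is glued into $N \times \iI$ using the trivialization $\PI \simeq f^\inv(I')$ and the quotient $N' \to N$, and then check that the isotopies invoked (sliding a subgraph around $S^1$, and exchanging two inserted regions) genuinely exist as \emph{PL} isotopies with the right support — here the isotopy extension theorem \thmref{t:isotopy-ext} and the collar-uniqueness results \thmref{t:collar-exist-unique} do the work, but matching them up to the combinatorial definition of $\htr$ is where the care is needed. Everything else is formal once the picture is fixed; I would lean on the parallel treatment in \ocite{KT} and simply indicate the PL modifications.
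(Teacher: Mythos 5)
Your proposal is correct and follows essentially the same route as the paper: define $i_*$ on morphisms by gluing the graph representing $\vphi \in \ihom{\hatZCYsk(N')}{\WW}(\VV_1,\VV_2)$ into $N \times I$ so that the two $\WW$-labeled boundary arcs become the interior seam $X \times I$ (the paper's map $\Psi$), then verify the coend relation by the isotopy pushing an inserted $\hatZCYsk(\PI)$-morphism across the seam, and check compatibility with composition. The only imprecisions are cosmetic (the representing graph lives in $N' \times I$ itself rather than in a $(P\times\iI)\times\iI$ piece glued on, and the relevant isotopy is a local push across the seam rather than a full trip around the $S^1$-direction), and they do not affect the argument.
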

\begin{proof}
Consider a map $\Psi: N' \times I \to N \times I$
described as the following composition:
\begin{equation}
\Psi:\;
\begin{tikzpicture}
\begin{scope}[shift={(0,-0.5)}]
\draw (0,0) -- (0,1);
\draw (0,1) -- (1,1);
\draw (1,1) -- (1,0);
\draw (1,0) -- (0,0);
\draw (0,0) -- (0.5,1);
\draw (0.5,1) -- (1,0);
\draw[dashed] (1,0) -- (2,0);
\draw[dashed] (1,1) -- (2,1);
\draw (2,0) -- (2,1);
\draw (2,1) -- (3,1);
\draw (3,1) -- (3,0);
\draw (3,0) -- (2,0);
\draw (2,1) -- (2.5,0);
\draw (2.5,0) -- (3,1);
\node at (0.2,0.75) {\tiny 1};
\node at (0.5,0.3) {\tiny 2};
\node at (0.8,0.75) {\tiny 3};
\node at (2.2,0.25) {\tiny 4};
\node at (2.5,0.7) {\tiny 5};
\node at (2.8,0.25) {\tiny 6};
\end{scope}
\end{tikzpicture}
\;\;\;\;
\to
\;\;\;\;
\begin{tikzpicture}
\begin{scope}[shift={(0.5,-0.5)}]
\draw (0.5,0) -- (0.5,1);
\draw (0.5,1) -- (1,1);
\draw (1,1) -- (1,0);
\draw (1,0) -- (0.5,0);
\draw (0.5,1) -- (1,0);
\draw (0.5,1) -- (0.75,0);
\draw[dashed] (1,0) -- (2,0);
\draw[dashed] (1,1) -- (2,1);
\draw (2,0) -- (2,1);
\draw (2,1) -- (2.5,1);
\draw (2.5,1) -- (2.5,0);
\draw (2.5,0) -- (2,0);
\draw (2.5,0) -- (2,1);
\draw (2.5,0) -- (2.25,1);
\node at (0.6,0.15) {\tiny 1};
\node at (0.8,0.15) {\tiny 2};
\node at (0.8,0.75) {\tiny 3};
\node at (2.2,0.25) {\tiny 4};
\node at (2.2,0.85) {\tiny 5};
\node at (2.4,0.85) {\tiny 6};
\end{scope}
\end{tikzpicture}
\;\;
\to
\;\;\;\;\;\;
\begin{tikzpicture}
\begin{scope}[shift={(0.5,-0.5)}]
\draw (0.5,0) -- (0.5,1);
\draw (0.5,1) -- (1,1);
\draw (1,1) -- (1,0);
\draw (1,0) -- (0.5,0);
\draw (0.5,1) -- (1,0);
\draw (0.5,1) -- (0.75,0);
\draw[dashed] (1,0) -- (1.5,0);
\draw[dashed] (1,1) -- (1.5,1);
\draw[dashed] (0,0) -- (-0.5,0);
\draw[dashed] (0,1) -- (-0.5,1);
\draw (0,0) -- (0,1);
\draw (0,1) -- (0.5,1);
\draw (0.5,1) -- (0.5,0);
\draw (0.5,0) -- (0,0);
\draw (0.5,0) -- (0,1);
\draw (0.5,0) -- (0.25,1);
\node at (0.6,0.15) {\tiny 1};
\node at (0.8,0.15) {\tiny 2};
\node at (0.8,0.75) {\tiny 3};
\node at (0.2,0.25) {\tiny 4};
\node at (0.2,0.85) {\tiny 5};
\node at (0.4,0.85) {\tiny 6};
\end{scope}
\end{tikzpicture}
\end{equation}
The vertical direction corresponds to the $I$ factor in $N' \times I$,
while a horizontal slice is $N$ or $N'$
(the diagram depicts cross-sections of the form
$\{*\} \times I' \times I \subset N \times I$).
The numbered regions are in the $(P \times I') \times I$ portions
of $N' \times I$ (i.e. the collared part $\times I$);
$\Psi$ is linear on these regions.
The first map sends $N' \times I$ into itself,
and the second map is simply the gluing map $N' \to N$ (crossed with $I$);
the left and right vertical sides on the middle figure
get identified as $f^\inv(0.5) \subset N \times I$.

Below we depict a graph $\Gamma$ in $N' \times I$
with incoming boundary value $A \lact \VV$ and
outgoing boundary value $\VV' \ract A$,
representing an element of $\Hom_{\htr(\hatZCYsk(N'))}(\VV,\VV')$.
It is sent to a graph $\Psi(\Gamma)$ in $N \times I$
with incoming boundary value $\VV$ and outgoing boundary value $\VV'$,
representing an element of $\Hom_{\hatZCYsk(N)}(\VV,\VV')$.

\begin{equation}
\begin{tikzpicture}
\begin{scope}[shift={(0,-0.5)}]
%% N' \times [0,1]
% top row
\node at (-0.5,1) {0};
\draw (0,1) -- (1,1) node[pos=0.25,above] {$A$};
\filldraw (0.5,1) circle (0.5pt);
\draw[dashed] (1,1) -- (2,1) node[pos=0.5,above] {$\VV$};
\draw (2,1) -- (3,1);
% bottom row
\node at (-0.5,0) {1};
\draw (0,0) -- (1,0);
\draw[dashed] (1,0) -- (2,0) node[pos=0.5,below] {$\VV'$};
\draw (2,0) -- (3,0) node[pos=0.75,below] {$A$};
\filldraw (2.5,0) circle (0.5pt);
% vertical boundary
\draw[dotted] (0,1) -- (0,0);
\draw[dotted] (3,0) -- (3,1);
%%%%%%%%%%%%%%%%%%%%%%%%%%%%%%
%% Gamma, graph in N', left half
\node at (1.5,0.5) {\small $\Gamma$};
\node[dotnode] (c) at (0.45,0.5) {}; %left node
\node[dotnode] (b) at (0.75,0.5) {}; %second left node
\draw (c) -- (b);
\draw (c) to[out=-120,in=90] (0.35,0); %leg to \VV'
\draw (b) to[out=-120,in=90] (0.75,0); %leg to \VV'
\draw (c) to[out=120,in=-90] (0.25,1); %leg to A
\draw (b) -- +(10:0.3cm); %leg going right, up
\draw[dotted] (b) -- +(10:0.5cm); %continued
\draw (b) -- +(-10:0.3cm); %leg going right, down
\draw[dotted] (b) -- +(-10:0.5cm); %continued
%% Gamma, graph in N', right half
\node[dotnode] (a) at (2.25,0.5) {}; %node in middle
\node[dotnode] (d) at (2.25,0.2) {}; %node in bottom
\node[dotnode] (e) at (2.5,0.7) {}; %node in top right
\draw (a) -- (d);
\draw (a) to[out=60,in=-90] (2.35,1); %leg to \VV
\draw (a) to[out=-10,in=-100] (e);
\draw (d) to[out=0,in=90] (2.75,0); %leg to A
%\draw (a) to[out=-100, in=90] (2.2,0); %leg to \VV'
\draw (d) -- (2.25,0); %leg to \VV'
\draw (a) -- +(180:0.3cm); %leg to left
\draw[dotted] (a) -- +(180:0.5cm); %continued
\end{scope}
\end{tikzpicture}
%%%%%%%%%%%%%% mapsto %%%%%%%%%%%%%%%%%%%%%%%%%%%%%%%%%%%%
\;\;
\mapsto
\;\;
%%%%%%%%%%%%%% last picture %%%%%%%%%%%%%%%%%%%%%%%%%%%%%%%%%%%%
\begin{tikzpicture}
\begin{scope}[shift={(0,-0.5)}]
%% N
% top row
\draw[dashed] (-0.5,1) -- (0,1);
\draw (0,1) -- (1,1);
\node at (1,1.3) {$i_*(\VV)$};
\draw[dashed] (1,1) -- (1.5,1);
\filldraw (0.5,1) circle (0.5pt); %tiny node
% bottom row
\draw[dashed] (-0.5,0) -- (0,0);
\draw (0,0) -- (1,0);
\node at (1,-0.3) {$i_*(\VV')$};
\draw[dashed] (1,0) -- (1.5,0);
\filldraw (0.5,0) circle (0.5pt); %tiny node
%vertical seam for A
\draw[densely dotted] (0.5,0) -- (0.5,1);
\node at (0.6,0.75) {\tiny $A$};
% graph label Psi(Gamma)
\node at (1.7,0.5) {\small $\Psi(\Gamma)$};
%%%%%%%%%%%%%%%%%%%%%%%%%%%%%%%
%% i_*(Gamma), graph in N, left half
\node[emptynode] (A) at (0.5,0.5) {}; %for the seam
\node[dotnode] (a) at (0.2,0.5) {}; %node in middle
\node[dotnode] (d) at (0.2,0.2) {}; %node in bottom
\node[dotnode] (e) at (0.3,0.7) {}; %node in top right
\draw (d) .. controls +(0:0.3cm) and +(180:0.2cm) .. (A); %leg to A
\draw (a) -- (d) -- (0.2,0);
\draw (a) to[out=90,in=-80] (0.15,1); %leg to \VV
\draw (a) to[out=0,in=-90] (e);
%\draw (a) to[bend right=10] (0.2,0); %leg to \VV'
\draw (a) -- +(180:0.3cm); %leg to left
\draw[dotted] (a) -- +(180:0.5cm); %continued
%% i_*(Gamma), graph in N, right half
\node[dotnode] (b) at (0.85,0.5) {}; %node in middle
\node[dotnode] (c) at (0.7,0.5) {}; %left node
\draw (c) -- (A); %leg to A
\draw (c) -- (b);
\draw (c) to[out=-95,in=90] (0.7,0); %leg to \VV'
\draw (b) to[out=-100,in=80] (0.85,0); %leg to \VV'
\draw (b) -- +(10:0.3cm); %leg going right, up
\draw[dotted] (b) -- +(10:0.5cm); %continued
\draw (b) -- +(-10:0.3cm); %leg going right, down
\draw[dotted] (b) -- +(-10:0.5cm); %continued
\end{scope}
\end{tikzpicture}
\end{equation}

The only points in $N \times I$ that are hit more than once
are in $X \times I$;
we sometimes also call this the seam.
We denote $L_0 = X \times I$.

In the figure above, the seam $L_0$ is depicted as the vertical dotted line
labeled with $A$ in the right figure.
The seam is also the image of the top left and bottom right boundary pieces
(the parts labeled $A$ in the left figure).
The image of $\Psi|_{N' \times I'}$ is
exactly $N \times I' \backslash L_0$.

We claim that the following map is well-defined:
\begin{align*}
\Hom_{\htr(\hatZCYsk(N'))}(\VV,\VV') &\to \Hom_{\hatZCYsk(N)}(i_*(\VV),i_*(\VV'))
\\
\Gamma &\mapsto \Psi(\Gamma)
\end{align*}
It is not hard to see that the assignment
$\Gamma \mapsto \Psi(\Gamma)$
yields a well-defined map
$\ihom{\hatZCYsk(N')}{A}(\VV,\VV') \to \Hom_{\hatZCYsk(N)}(i_*(\VV),i_*(\VV'))$;
a graph $\Gamma=\sum c_i\Gamma_i$
that is null with respect to some ball $D$
would have image $\Psi(\Gamma)$ null with respect to $\Psi(D)$.
We need to check that the relations $\sim$ in
$\Hom_{\htr(\hatZCYsk(N'))}(\VV,\VV') = \bigoplus
\ihom{\hatZCYsk(N')}{A}(\VV,\VV')/\sim$
are satisfied.
Recall that relations are generated by
$\Theta \circ (\psi \lact \id_\VV) - (\id_{\VV'} \ract \psi) \circ \Theta$,
where $\Theta \in \ihom{\hatZCYsk(N')}{A,B}(\VV,\VV')$
and $\psi \in \Hom_{\hatZCYsk(P\times I')}(B,A)$.
We see that
\begin{equation}
\label{e:gluing-relation-zero}
%%%% picture for psi on top
\begin{tikzpicture}
\begin{scope}[shift={(0,-0.5)}]
\node at (1.5,0.5) {$\Theta$};
%% top row
\draw (0,1) -- (1,1)
  node[pos=0.35,above] {$B$};
\node[dotnode] at (0.7,1) {};
%\draw (0,1) to[out=-80,in=-100] (0.7,1);
\draw (0,1) -- (0.35,0.7) -- (0.7,1);
\node at (0.35,0.9) {\tiny $\psi$};
\draw[regular] (1,1) -- (2,1)
  node[pos=0.5,above] {$\VV$};
\draw (2,1) -- (3,1);
%% bottom row
\draw (0,0) -- (1,0);
\draw[regular] (1,0) -- (2,0)
  node[pos=0.5,below] {$\VV'$};
\draw (2,0) -- (3,0)
  node[pos=0.65,below] {$B$};
\node[dotnode] at (2.3,0) {};
\end{scope}
\end{tikzpicture}
\; - \;
%%%% picture for psi on bottom
\begin{tikzpicture}
\begin{scope}[shift={(0,-0.5)}]
\node at (1.5,0.5) {$\Theta$};
%% top row
\draw (0,1) -- (1,1)
  node[pos=0.35,above] {$B$};
\node[dotnode] at (0.7,1) {};
%\draw (0,1) to[out=-80,in=-100] (0.7,1);
%\node at (0.35,0.9) {\tiny $\psi$};
\draw[regular] (1,1) -- (2,1)
  node[pos=0.5,above] {$\VV$};
\draw (2,1) -- (3,1);
%% bottom row
\draw (0,0) -- (1,0);
\draw[regular] (1,0) -- (2,0)
  node[pos=0.5,below] {$\VV'$};
\draw (2,0) -- (3,0)
  node[pos=0.65,below] {$B$};
\node[dotnode] at (2.3,0) {};
%\draw (2.3,0) to[out=80,in=100] (3,0);
\draw (2.3,0) -- (2.65,0.3) -- (3,0);
\node at (2.65,0.1) {\tiny $\psi$};
\end{scope}
\end{tikzpicture}
\; \mapsto \;
%%%% picture after map, left one
\begin{tikzpicture}
\begin{scope} [shift={(0,-0.5)}]
%% top row
\draw[regular] (0,1) -- (2,1);
\draw (0.5,1) -- (1.5,1);
%% bottom row
\draw[regular] (0,0) -- (2,0);
\draw (0.5,0) -- (1.5,0);
%% seam and psi
\draw[dotted] (1,0) -- (1,1);
\draw[dotted] (1,0) -- (0.75,0.5) -- (1,1);
%\draw[dotted] (1,0) to[out=135,in=-135] (1,1);
\node at (0.9,0.5) {\tiny $\psi$};
\end{scope}
\end{tikzpicture}
\; - \;
%%%% picture after map, right one
\begin{tikzpicture}
\begin{scope} [shift={(0,-0.5)}]
%% top row
\draw[regular] (0,1) -- (2,1);
\draw (0.5,1) -- (1.5,1);
%% bottom row
\draw[regular] (0,0) -- (2,0);
\draw (0.5,0) -- (1.5,0);
%% seam and psi
\draw[dotted] (1,0) -- (1,1);
\draw[dotted] (1,0) -- (1.25,0.5) -- (1,1);
%\draw[dotted] (1,0) to[out=45,in=-45] (1,1);
\node at (1.1,0.5) {\tiny $\psi$};
\end{scope}
\end{tikzpicture}
= 0
\end{equation}
We leave checking that composition is respected as
a simple exercise.
\end{proof}

We want to show that $i_*$ is an equivalence,
and will be considering $\Psi^\inv$ applied to graphs.
It is not clear that this is well-defined,
e.g. moving parts of a graph in $N \times I$ across the seam $L_0$.
could result in different graphs with
different boundary conditions in $N' \times I$.
However, the relation
$\Theta \circ (\psi \lact \id_\VV) - (\id_{\VV} \ract \psi) \circ \Theta$
essentially takes care of this ambiguity.

Let us make this precise.
Let $\beta \in (0,0.5)$ be such that
$i_*(\VV)$ and $i_*(\VV')$ do not have any arcs
in the neighborhood
$f^\inv( (0.5-\beta, 0.5+\beta)) \subset N$
of $X$.
Let $\zeta_\al$, $\al \in (-\beta',\beta')$
for some smaller $0 < \beta' < \beta$,
be an ambient isotopy (indexed by $\al$)
of $N \times I$ % that is supported on $U_1$,
defined, in a smaller neighborhood
\begin{equation}
U_1 = f^\inv( (0.5-\beta',0.5+\beta')) \subset N \times I
\label{e:seam-U1}
\end{equation}
of the seam, by
\begin{equation}
\zeta_\al : (p,x,s)
\mapsto
\begin{cases}
(p,x \pm s,s) \;\;\;\text{if } x < |\al|
\\
(p,x \pm (1-s),s) \;\;\;\text{if } s > 1 - |\al|
\\
(p,x + \al, s) \;\;\;\text{else}
\end{cases}
\end{equation}
where the $\pm$ is the sign of $\al$,
$(p,x) \in P \times I'$ (so that $X = \{x=0\}$),
$s$ parameterizes the up-down axis
(see \eqnref{e:L_al}).
Let $L_\al = \zeta_\al(L_0)$.

Define $\Psi_\al = \zeta_\al \circ \Psi : N' \times I \to N \times I$,
so that the new ``seam'' is $L_\al$.

\begin{equation}
\begin{tikzpicture}
\begin{scope}[shift={(0,-0.5)}]
\draw[dashed] (-1,0) -- (1,0);
\draw[dashed] (-1,1) -- (1,1);
\draw (-0.5,0) -- (0.5,0);
\draw (-0.5,1) -- (0.5,1);
\draw (0,0) -- (0,1); %seam
\draw (0,0) -- (-0.25,0.25) -- (-0.25,0.75) -- (0,1);
\draw (0,0) -- (0.1,0.1) -- (0.1,0.9) -- (0,1);
\draw (0,0) -- (0.3,0.3) -- (0.3,0.7) -- (0,1);
\draw[line width=0.4pt] (-0.3,-0.15) -- (0,0.3);
\node at (0.1,-0.3) {\tiny $L_0 = $ seam};
%\draw[line width=0.4pt] (-0.9,0.2) -- (-0.3,0.3);
\node[emptynode] (x) at (0.5,0.8) {};
\node at (0.7,0.8) {\tiny $L_\al$};
\draw[line width=0.4pt] (x) -- (-0.25,0.7);
\draw[line width=0.4pt] (x) -- (0.1,0.6);
\draw[line width=0.4pt] (x) -- (0.3,0.6);
%%tiny axes
\draw[->] (-2.5,0.8) -- (-2,0.8);
\draw[->] (-2.5,0.8) -- (-2.5,0.3);
\node at (-1.8,0.8) {\smallerer $x$};
\node at (-2.5,0.1) {\smallerer $s$};
%%dummy node to center figure
\node[emptynode] at (2.5,0) {};
\end{scope}
\end{tikzpicture}
\label{e:L_al}
\end{equation}

Note that $L_\al$'s overlap near the top and bottom $X \times \{0,1\}$,
but away from there, they form parallel copies of the seam $X \times I$.
Thus, for a compact submanifold
that does not meet $X \times \{0,1\}$,
intersections with $L_\al$ coincide with level sets of $x$.

\begin{lemma}
\label{lem:alpha_independence}
Let $\Gamma$ be a graph in $N \times I$
that represents a morphism in
$\Hom_{\hatZCYsk(N)}(i_*(\VV),i_*(\VV'))$
for some $\VV,\VV' \in \htr(\hatZCYsk(N'))$,
and let $S(\Gamma)$ denote the surface of the graph.

Since $i_*(\VV),i_*(\VV')$ does not meet the seam,
$\al$ defines a function
\[
\al : S(\Gamma) \cap U_1 \to (0.5 - \beta', 0.5 + \beta')
\]

By \prpref{p:regular-level-set}
there exists an $\al$ such that $\Gamma$
is weakly transverse to $L_\al$,
and by \lemref{l:narrow-transverse},
there is a strict narrowing $\Gamma'$ of $\Gamma$
that is transverse to $L_\al$,
thus defining a boundary value $A_\al = \Gamma' \cup L_\al$ on $L_\al$.
We see that $\Psi_\al^\inv(\Gamma')$
is a graph in $N' \times I$ representing a morphism in
$\Hom_{\hatZCYsk(N')}(A_\al \lact \VV, \VV' \ract A_\al)$.
Then as a morphism in $\Hom_{\htr(\hatZCYsk(N'))}(\VV,\VV')$,
$\Psi_\al^\inv(\Gamma')$ is independent of
such a choice of $\al$ and strict narrowing of $\Gamma$.
\end{lemma}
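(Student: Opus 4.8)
The plan is to establish two independence statements separately and then combine them: independence of $\Psi_\al^\inv(\Gamma')$, as a morphism in $\htr(\hatZCYsk(N'))$, under the choice of strict narrowing, and under the choice of regular value $\al$. Having settled the narrowing question first, one is then free to re-narrow $\Gamma$ as convenient while varying $\al$, so no circularity arises.

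\emph{Narrowings.} Suppose $\Gamma_1', \Gamma_2'$ are two strict narrowings of $\Gamma$, each transverse to $L_\al$. Since strict narrowings preserve cores, they are essentially closed under intersection (cf.\ the discussion after \lemref{l:narrow-transverse}), so there is a common strict narrowing $\Gamma_3'$ of both; it is again transverse to $L_\al$ and carries the same boundary value $A_\al$ on $L_\al$ in the germ sense of \rmkref{r:narrowing-infinitesimal}. A narrowing is, up to isotopy extension (\rmkref{r:isotopy-ext-ribbongraph}), induced by an ambient isotopy, so $\Psi_\al^\inv(\Gamma_1')$ and $\Psi_\al^\inv(\Gamma_3')$ are isotopic skeins in $N' \times I$ with the same boundary values; hence they represent the same morphism already in $\hatZCYsk(N')$, a fortiori in $\htr(\hatZCYsk(N'))$, and likewise for $\Gamma_2'$. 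Thus the narrowing is irrelevant.

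\emph{The value $\al$.} By (the proof of) \prpref{p:regular-level-set} applied to $\al \colon S(\Gamma)\cap U_1 \to (0.5-\beta', 0.5+\beta')$, the set of $\al$ for which $\Gamma$ fails to be weakly transverse to $L_\al$ is discrete, so it suffices to prove invariance as $\al$ ranges over an interval of regular values and as $\al$ crosses a single critical value. For a regular interval $[\al_1,\al_2]$, the cut loci $\{\Gamma' \cap L_\al\}$ form an isotopy; equivalently there is an ambient isotopy of $N \times I$ supported in $U_1$ carrying $(L_{\al_1}, \Gamma'\cap L_{\al_1})$ to a configuration isotopic to $(L_{\al_2}, \Gamma'\cap L_{\al_2})$, so applying $\Psi_{\al_i}^\inv$ the two reconstructions differ by an isotopy in $N'\times I$ together with a canonical isomorphism $A_{\al_1} \simeq A_{\al_2}$ in $\hatZCYsk(\PI)$; the coend relations defining $\htr$, applied with $f$ this isomorphism, identify them. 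For a critical value $\al_0$ — the crux — after narrowing $\Gamma$ far enough and using that coupons may be kept small and away from the seam (\rmkref{r:narrowing-infinitesimal}), the only change in the intersection pattern between $L_{\al_0-\eps}$ and $L_{\al_0+\eps}$ is that a single ``finger'' of one ribbon passes across the wall $L_{\al_0}$, creating or destroying a cancelling pair of intersection points carrying dual labels. Under $\Psi_\al^\inv$ the two reconstructions then differ precisely by transferring this finger's worth of strands from one boundary collar of $N'\times I$ to the other, with the rest of the graph playing the role of $\Theta$ and the local skein carried by the finger the role of $\psi \in \hatZCYsk(\PI)$ — one of the generating relations of the coend, exactly as displayed in \eqnref{e:gluing-relation-zero}. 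Hence the two represent the same class in $\Hom_{\htr(\hatZCYsk(N'))}(\VV,\VV')$, and combining this with the narrowing independence proves the lemma.

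I expect the real work to lie in making the critical-value analysis precise: justifying the general-position reductions (no coupons near the seam, a single simple finger, $\Gamma$ narrow enough) without reintroducing choices, enumerating the local models (the finger being the tip of a ribbon versus a through-arc), and tracking orientations and labels so that the before/after pair is visibly an instance of \eqnref{e:gluing-relation-zero}. This bookkeeping runs closely parallel to the isotopy-decomposition argument in the proof of \lemref{l:ups-isom} (indeed that proof cites this lemma as a template), so I would model the write-up on it.
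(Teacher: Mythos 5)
Your proposal arrives at the right conclusion through the right mechanism — the generating relation of the coend, as pictured in \eqnref{e:gluing-relation-zero} — but it routes the $\al$-independence through a Cerf-style sweep that is both harder than necessary and not fully justified. The paper's key observation is simpler and stronger: if a single graph $\Gamma'$ is transverse to \emph{both} $L_{\al_1}$ and $L_{\al_2}$, then the entire sub-skein lying in the slab between the two walls, whatever it looks like, is itself a morphism $f$ in $\Hom_{\hatZCYsk(\PI)}(A_{\al_1},A_{\al_2})$ (the slab $(P\times(\al_1,\al_2))\times I$ is a copy of $(P\times I')\times I$ with the two interval factors swapped), and $\Psi_{\al_1}^\inv(\Gamma')$ and $\Psi_{\al_2}^\inv(\Gamma')$ are exactly the two sides of the generating relation for this $f$. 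One application of the relation identifies them, with no subdivision of $[\al_1,\al_2]$ into regular stretches and critical crossings. This renders your critical-value step dispensable — which matters, because its local model (``a single finger creating a cancelling dual pair'') is precisely the genericity claim you defer as ``the real work'', and it is not automatic in the PL category (several ribbons may turn at the same level, a whole cell of $S(\Gamma)$ may lie in a level set, etc.). The coend relation simply does not care what the intermediate piece $f$ is; your regular-interval case is the special case where $f$ is an isomorphism.

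Your treatment of narrowings (pass to a common strict narrowing; the arcs of $A_\al$ only shrink) is sound and matches the paper's second observation. The paper instead proves the stronger statement that \emph{isotopic} graphs transverse to possibly different walls yield the same class, by decomposing the isotopy into moves supported in the cover $\{U_\pm\}=\{N\times I\setminus L_{\pm\beta'}\}$ via \thmref{t:isotopy-open-cover-boundary}; that extra generality is what is actually used in \lemref{l:ups-isom} and \thmref{t:hat_excision}, but for the lemma as stated your common-refinement argument suffices.
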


\begin{proof}
First note that if $\Gamma$ is transverse (as a graph)
to $L_\al,L_{\al'}$,
then it is clear from the picture \eqnref{e:gluing-relation-zero}
that $\Psi_\al^\inv(\Gamma) = \Psi_{\al'}^\inv(\Gamma)$.

Next, it is easy to see that if $\Gamma$ is transverse to $L_\al$,
then a further strict narrowing of $\Gamma$
will also not affect $\Psi_\al^\inv(\Gamma)$
(the arcs of the object $A_\al$ just get shorter).
Thus we may assume that the graphs discussed below are
``sufficiently narrow''
(see \rmkref{r:narrowing-infinitesimal}
and proof of \lemref{l:ups-isom}).

Finally, we show that given $\al$,
the choice of strict narrowing is irrelevant;
then to prove the lemma, take $\al,\al'$
with $L_\al,L_{\al'}$ transverse to $S(\Gamma)$,
and find a narrowing of $\Gamma$ that is transverse to both
$L_\al,L_{\al'}$,
then by the first observation, we are done.

To show irrelevance of choice of strict narrowing,
consider the more general situation where
$\Gamma,\Gamma'$ are isotopic graphs
that are transverse to
$L_\al,L_{\al'}$ respectively.
Let $U_\pm = N \times \backslash L_{\pm \beta'}$;
$\{U_+,U_-\}$ forms an open cover of
$N \times I \backslash X \times \{0,1\}$.

By \thmref{t:isotopy-open-cover-boundary},
the isotopy from $\Gamma$ to $\Gamma'$
can be broken into moves supported in $U_\pm$.
In particular, each move is supported in some closed ball
that does not meet some $L_\al$;
more precisely,
if $\{h_k^t\}$ is the sequence of moves,
with $h_k^t$ supported on a closed ball $D_k$,
then there exists $\al_k \in (0.5-\beta',0.5+\beta')$ such that
$D_k$ does not meet $L_{\al_k}$,
and we may take $\al_k$ such that $L_{\al_k}$ is weakly transverse
to the surface of the graph during the isotopy $h_k^t$.

Let $\Gamma_k = h_k^1 \circ \cdots \circ h_1^1 (\Gamma)$
for $0 \leq k \leq l$
(with $\Gamma_0 = \Gamma, \Gamma_l = \Gamma'$),
and let $\al_0 = \al, \al_{l+1} = \al'$.
By the second observation above, we assume $\Gamma$
is narrow enough to begin with,
or more precisely,
we retroactively apply a strict narrowing to $\Gamma$
so that each $\Gamma_k$ is transverse to $L_{\al_k}$;
this does not affect
$\Psi_\al^\inv(\Gamma)$ nor $\Psi_{\al'}^\inv(\Gamma')$.
We consider the sequence of graphs in $N' \times I$:
\[
\Psi_{\al}^\inv(\Gamma)
= \Psi_{\al_0}^\inv(\Gamma_0),
\Psi_{\al_1}^\inv(\Gamma_0),
\Psi_{\al_1}^\inv(\Gamma_1),
\Psi_{\al_2}^\inv(\Gamma_1),
\ldots,
\Psi_{\al_l}^\inv(\Gamma_{l-1}),
\Psi_{\al_l}^\inv(\Gamma_l),
\Psi_{\al_{l+1}}^\inv(\Gamma_l)
=\Psi_{\al'}^\inv(\Gamma')
\]
The adjacent graphs
$\Psi_{\al_l}^\inv(\Gamma_l), \Psi_{\al_{l+1}}^\inv(\Gamma_l)$
are equivalent as skeins by the first observation,
while
$\Psi_{\al_l}^\inv(\Gamma_{l-1}), \Psi_{\al_l}^\inv(\Gamma_l)$
are isotopic.
\end{proof}

\begin{theorem}
  \label{t:hat_excision}
The extension $i_*: \htr(\hatZCYsk(N')) \to \hatZCYsk(N)$
is an equivalence.
\end{theorem}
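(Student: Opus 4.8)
The plan is to show that the functor $i_* \colon \htr(\hatZCYsk(N')) \to \hatZCYsk(N)$ is essentially surjective, full, and faithful. Essential surjectivity is the easiest: given a boundary value $\VV$ on $N$, apply a small isotopy so that $\VV$ misses the seam $X$; then $\VV$ lies in the image of the embedding $i \colon N' \to N$, so $\VV \simeq i_*(\VV'')$ for some $\VV''$ on $N'$, and $\VV''$ is an object of $\htr(\hatZCYsk(N'))$ since $\htr$ is the identity on objects.

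For fullness and faithfulness, the strategy is to construct an inverse map on morphisms using $\Psi_\al^\inv$. First I would fix $\VV, \VV'$ on $N'$ (with $i_*(\VV), i_*(\VV')$ missing a neighborhood of the seam) and, given a graph $\Gamma$ in $N \times I$ representing a morphism $i_*(\VV) \to i_*(\VV')$, choose $\al$ with $\Gamma$ weakly transverse to $L_\al$ (possible by \prpref{p:regular-level-set}), apply a strict narrowing (\lemref{l:narrow-transverse}) to make it transverse, and set the proposed preimage to be the class of $\Psi_\al^\inv(\Gamma)$ in $\Hom_{\htr(\hatZCYsk(N'))}(\VV,\VV')$. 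By \lemref{lem:alpha_independence} this is independent of the choice of $\al$ and of the narrowing. I would then check: (1) this assignment is well-defined on skein-equivalence classes — a null graph in $N \times I$ with respect to a ball $D$, after isotoping $D$ off the seam (using \lemref{l:null-boundary-ball} and \prpref{prp:null_cover} to move the ball into $N' \times I'$), pulls back to a null graph in $N' \times I$; (2) it respects composition, which amounts to the observation that two graphs stacked in $N \times I$ can be made transverse to a common $L_\al$, after which $\Psi_\al^\inv$ visibly takes the stacked graph to the composite in $\htr(\hatZCYsk(N'))$ (here the horizontal-trace composition rule, with its $\ract A$ on the outgoing side, is exactly what matches the gluing across the seam). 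Then $i_* \circ (\Psi_\al^\inv\text{-map})$ and $(\Psi_\al^\inv\text{-map}) \circ i_*$ are the respective identities: one direction is immediate since $\Psi_\al \circ \Psi_\al^\inv$ is isotopic to the identity off the seam and $i_* \Gamma$ is transverse to $L_\al$ for generic $\al$; the other follows because for a graph in $N' \times I$, pushing it through $\Psi$ and pulling back recovers it up to the relation $\Theta \circ (\psi \lact \id) \sim (\id \ract \psi) \circ \Theta$, which is precisely \eqnref{e:gluing-relation-zero}.

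The main obstacle I expect is verifying well-definedness of $\Psi_\al^\inv$ as a map to the \emph{quotient} $\bigoplus_A \ihom{\hatZCYsk(N')}{A}(\VV,\VV')/\sim$ — that is, checking that different valid choices (of the transversality parameter $\al$, of the narrowing, and of the representative graph within a skein class) all land in the same coend class. This is the content of \lemref{lem:alpha_independence}, whose proof hinges on breaking an isotopy into moves supported in the open cover $\{N \times I \setminus L_{\pm\beta'}\}$ via \thmref{t:isotopy-open-cover-boundary} and threading a compatible sequence of seam positions $\al_k$ through the moves; I would want to make sure the boundary-fixing and support conditions there interact correctly with the narrowing operations and with the fact that the $L_\al$ overlap near $X \times \{0,1\}$ (so that one must stay away from the corners, which is fine since graphs representing morphisms miss a neighborhood of the seam at top and bottom). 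The remaining checks — that $i_*$ composed either way with the inverse is the identity, and that composition is respected — are then essentially the pictorial manipulations already indicated in the construction of $i_*$ and in \eqnref{e:gluing-relation-zero}, and should be routine.
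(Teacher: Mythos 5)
Your proposal follows essentially the same route as the paper: essential surjectivity by nudging objects off the seam, and fullness/faithfulness by constructing the inverse on morphisms via $\Psi_\al^\inv$, with well-definedness on skein classes handled by moving null balls off the seam (via the open cover $\{U_\pm\}$ and \prpref{prp:null_cover}) and independence of $\al$ and of the narrowing supplied by \lemref{lem:alpha_independence}. The only cosmetic difference is that you separately verify compatibility with composition, which the paper folds into the earlier construction of $i_*$ as a functor; the argument is otherwise the same.
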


\begin{proof}
It was already evident from the object map that
$\htr(\hatZCYsk(N')) \to \hatZCYsk(N)$ is essentially surjective
- it only misses objects that have marked points on $f^\inv(0.5)$,
but such an object is isomorphic to an object
with points moved slightly off of $f^\inv(0.5)$.\\

To show that $i_*$ is fully faithful,
fix objects $\VV,\VV' \in \htr(\hatZCYsk(N'))$.
Let $\Gamma$ be a colored ribbon graph
in $N \times I$ with boundary values $i_*(\VV),i_*(\VV')$.
%We can isotope $\Gamma$ to a graph that is transverse to $L_0$;
%let $\beta^t$ be an ambient isotopy fixing the boundary
%that carries this isotopy.

By \lemref{lem:alpha_independence},
we have a well-defined map
$\Psi_-^\inv : \VGraph(N \times I; i_*(\VV),i_*(\VV'))
\to \Hom_{\htr(\hatZCYsk(N'))}(\VV,\VV')$.
This map factors through the quotient map
$\VGraph(N \times I; i_*(\VV),i_*(\VV')) \to
\Hom_{\hatZCYsk(N)}(i_*(\VV),i_*(\VV'))$:
using the same open cover $\{U_\pm\}$
as in the proof of \lemref{lem:alpha_independence},
a null graph in $N \times I$,
null with respect to some $D \subset U_\pm$,
is clearly sent to a null graph in $N' \times I$,
null with respect to $\Psi_\al^\inv(D)$.

It is clear that for a graph $\Gamma'$ in $N' \times I$
representing an element of $\Hom_{\htr(\hatZCYsk(N'))}(\VV,\VV')$,
$\Psi_0^\inv (\Psi(\Gamma')) = \Gamma'$.
It is also clear that for a graph $\Gamma$ in $N \times I$,
$\Psi(\Psi_\al^\inv(\Gamma))$ is isotopic to $\Gamma$.
Thus $i_*$ is fully faithful.
\end{proof}

Combining the topological result above with
the algebraic results of \secref{s:skein_modules},
we have the main result of this section:
\begin{theorem}
\label{t:excision-skein}
There is an equivalence
\[
  \cZ_{\ZCYsk(P\times I)}(\ZCYsk(N')) \simeq \ZCYsk(N)
\]
In particular, when $N = N_1 \cup N_2$
as in \rmkref{rmk:gluing},
\[
  \ZCYsk(N_1) \boxtimes_{\ZCYsk(P\times I)} \ZCYsk(N_2) 
  \simeq \cZ_{\ZCYsk(P\times I)}(\ZCYsk(N_1 \sqcup N_2))\simeq \ZCYsk(N)
\]
\end{theorem}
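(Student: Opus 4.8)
The plan is to reduce the statement to the topological equivalence of \thmref{t:hat_excision} together with the algebraic identification of the idempotent completion of a horizontal trace with a center (\thmref{t:htr-center} and \corref{cor:center}). Set $\cC' = \hatZCYsk(P\times I)$ and $\cC = \ZCYsk(P\times I) = \Kar(\cC')$; note $\ZCYsk(P\times I) = \ZCYsk(P\times I')$ since $\ZCYsk$ forgets boundary, so this is the stacking category of \secref{s:skprop-stacking}. By \prpref{p:stack_pivotal} the stacking structure makes $\cC'$, hence $\cC$, pivotal, and $\cC$ is finite semisimple — for $P$ an interval this is $\cC\simeq\cA$, for $P$ a circle it is the skein category of the annulus — so $\cC$ is pivotal multifusion and $\cC'$ is a pivotal category whose Karoubi envelope is multifusion. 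The collarings of $N'$ over $(0,1)$ and $(2,3)$, via \prpref{p:collared_module} and its right-handed analogue, make $\hatZCYsk(N')$ and therefore $\ZCYsk(N') = \Kar(\hatZCYsk(N'))$ a $\cC'$-bimodule category, equivalently (\lemref{l:ZM_sub}) a $\cC$-bimodule category, and it is semisimple abelian.

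First I would assemble the chain
\begin{align*}
\ZCYsk(N)
&= \Kar(\hatZCYsk(N))
\simeq \Kar\bigl(\htr_{\cC'}(\hatZCYsk(N'))\bigr) \\
&\simeq \Kar\bigl(\htr_{\cC'}(\ZCYsk(N'))\bigr)
\simeq \cZ_{\cC'}(\ZCYsk(N'))
\simeq \cZ_{\cC}(\ZCYsk(N')),
\end{align*}
in which: the first equality is the definition of $\ZCYsk(N)$; the first equivalence is \thmref{t:hat_excision} after applying $\Kar$; the second is \lemref{lem:M_Karoubi}, replacing the additive category $\hatZCYsk(N')$ by its Karoubi envelope $\ZCYsk(N')$; and the last two are \corref{cor:center} applied to the pivotal category $\cC'$ with $\Kar(\cC') = \cC$ multifusion. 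Since $\cC = \ZCYsk(P\times I)$, this proves $\cZ_{\ZCYsk(P\times I)}(\ZCYsk(N'))\simeq\ZCYsk(N)$.

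For the ``in particular'' assertion, I would take $N'=N_1\sqcup N_2$ as in \rmkref{rmk:gluing}. Then \lemref{lem:disjoint_union} gives $\ZCYsk(N')\simeq\ZCYsk(N_1)\boxtimes\ZCYsk(N_2)$, the collarings exhibiting $\ZCYsk(N_1)$ as a right and $\ZCYsk(N_2)$ as a left $\cC$-module category, and the identification $\cZ_\cC(\cM_1\boxtimes\cM_2)\simeq\cM_1\boxtimes_\cC\cM_2$ of \eqnref{e:center-product} (from \ocite{ENO10}) then yields $\ZCYsk(N_1)\boxtimes_\cC\ZCYsk(N_2)\simeq\cZ_\cC(\ZCYsk(N_1\sqcup N_2))\simeq\ZCYsk(N)$.

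The hard part will be bookkeeping about the various module structures and collarings rather than any new idea: one must verify that the bimodule structure on $\hatZCYsk(N')$ used (implicitly) in the construction of $i_*$ and in \thmref{t:hat_excision} — coming from the left and right actions of $P\times I$ on the collared neighborhoods over $(0,1)$ and $(2,3)$ — is literally the \prpref{p:collared_module} structure, so that the coend computing $\Hom_{\htr_{\cC'}(\hatZCYsk(N'))}$ there agrees with the horizontal trace of \defref{d:htr}; and one must confirm that the skein categories of $P\times I$ and of $N'$ are genuinely finite semisimple, so that $\cC$ is multifusion and the semisimplicity hypothesis of \corref{cor:center} is met. Granting those compatibilities, the displayed chain is a concatenation of equivalences already established in the paper.
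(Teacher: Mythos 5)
Your chain of equivalences is exactly the paper's: \thmref{t:hat_excision} extended to Karoubi envelopes, \lemref{lem:M_Karoubi}, and \corref{cor:center}, with the ``in particular'' clause handled by \lemref{lem:disjoint_union} and \eqnref{e:center-product}. So the route is the same. But the one point you defer to ``bookkeeping'' --- confirming that $\ZCYsk(P\times I)$ is finite semisimple, hence multifusion --- is in fact the only step of the proof that requires an argument rather than a citation, and you have not supplied it. For $P$ an interval it is indeed immediate from $\ZCYsk(I\times I)\simeq\cA$. For $P$ a circle, however, ``it is the skein category of the annulus'' is an identification, not a proof of semisimplicity; nothing established before this theorem tells you $\ZCYsk(\Ann)$ is semisimple, and without that, \corref{cor:center} (which needs $\Kar(\cC')$ multifusion) does not apply when $P$ has circle components.

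The paper closes this gap by a bootstrap: first run the theorem in the already-justified case $P=I$ with $N'=I\times I$ and $N=S^1\times I$, obtaining $\ZCYsk(S^1\times I)\simeq\cZ_{\cA}(\cA)\simeq\cZ(\cA)$; then \prpref{p:ZM_ss} gives finite semisimplicity of this center (and in fact $\cZ(\cA)$ is modular), so $\ZCYsk(S^1\times I)$ is pivotal multifusion, and the case of general $P$ follows by taking Deligne products over components. You should add this self-application explicitly; as written, your proof is complete only for $P$ a disjoint union of intervals. (A similar, milder remark applies to the semisimplicity of $\ZCYsk(N')$ as a bimodule category, which in the paper is part of the same inductive build-up culminating in \corref{c:skein_ss}.)
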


\begin{proof}
We claim that $\ZCYsk(P\times I)$ is multifusion; we justify this claim later.
By \prpref{p:stack_pivotal}, $\hatZCYsk(P\times I)$ is pivotal.
In reference to the notation in \secref{s:skein_modules},
take $\cC' = \hatZCYsk(P\times I), \cC = Z(P\times I),
\cM' = \hatZCYsk(N'), \cM = Z(N')$.
Then we have
\begin{align*}
\cZ_{\ZCYsk(P\times I)}(\ZCYsk(N'))
&\simeq \Kar(\htr_{\hatZCYsk(P\times I)}(\ZCYsk(N')))
  & \text{ by \corref{cor:center},}\\
&\simeq \Kar(\htr_{\hatZCYsk(P\times I)}(\hatZCYsk(N')))
  & \text{ by \lemref{lem:M_Karoubi},}\\
&\simeq \Kar(\hatZCYsk(N))
  & \text{ by extending $i_*$ from \thmref{t:hat_excision} to Kar} \\
&= \ZCYsk(N)
\end{align*}

The second statement follows from the first
by applying \lemref{lem:disjoint_union} and \eqnref{e:center-product}.

Now we need to justify $\ZCYsk(P\times I)$ being multifusion,
in particular, that it is semisimple,
as \prpref{p:stack_pivotal} guarantees it is rigid and pivotal.
%This is true for $P = \{*\}$ and for $P = I$.
This is true for $P = I$.
We apply this result with $N' = I \times I, N = S^1 \times I$,
and by \prpref{p:ZM_ss}, $\ZCYsk(S^1 \times I)$ is semisimple.
In fact, $\ZCYsk(S^1 \times I) \simeq \cZ_{\ZCYsk(I \times
I)}(\ZCYsk(I \times I)) \simeq \ZA$,
the Drinfeld center of $\cA$, which is known to be modular,
in particular multifusion;
we discuss this equivalence in more detail in \secref{s:ann-cy}.
%By \xmpref{xmp:annulus}, which uses the argument above for $P = I$,
%we have $\ZCYsk(S^1 \times I) \simeq \cZ(\cA)$ as a $\kk$-linear abelian category
%(but not as a monoidal category, see \rmkref{rmk:ZA_stacking}); in particular,
%this implies that it is semisimple with finitely many simple objects.  Since by
%\prpref{p:stack_pivotal}, the stacking monoidal structure on $\ZCYsk(P\times I)$ is
%rigid and pivotal, this shows that $\ZCYsk(P\times I)$ is a pivotal  multifusion
%category.

So $\ZCYsk(P\times I)$ is pivotal multifusion for any connected $P$;
the claim easily follows for a disjoint union of finitely many such $P$'s.
\end{proof}

\begin{corollary}
The functor
%$i_* : \hatZCYsk(N') \to \hatZCYsk(N)$
$i_* : \ZCYsk(N') \to \ZCYsk(N)$ is dominant.
\label{c:excision-dominant}
\end{corollary}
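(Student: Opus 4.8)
The plan is to deduce dominance of $i_*$ directly from the equivalence established in \thmref{t:excision-skein} together with the dominance properties of the functors appearing in its proof. Recall that $i_*$ is the composition of the canonical inclusion $\htr : \ZCYsk(N') \to \htr_{\hatZCYsk(P\times I)}(\ZCYsk(N'))$ followed by the equivalence $\htr_{\hatZCYsk(P\times I)}(\ZCYsk(N')) \simeq \ZCYsk(N)$ (more precisely, one passes to Karoubi envelopes as in the chain of equivalences in the proof of \thmref{t:excision-skein}, using \corref{cor:center} and \lemref{lem:M_Karoubi}). So it suffices to show that the inclusion functor $\htr_\cC : \cM \to \htr_\cC(\cM)$ is dominant in the sense defined just before \prpref{p:I_dominant}, i.e. that every object of $\htr_\cC(\cM)$ (equivalently, after Karoubi completion, every object of $\cZ_\cC(\cM)$) appears as a subquotient of some object in the image.

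First I would invoke \thmref{t:htr-center}, which identifies $\htr : \cM \to \htr_\cC(\cM)$ with $I : \cM \to \cZ_\cC(\cM)$ under the equivalence $\Kar(\htr(\cM)) \simeq \cZ_\cC(\cM)$, and then \prpref{p:I_dominant}, which states precisely that $I : \cM \to \cZ_\cC(\cM)$ is dominant. The substance of that proposition is \lemref{l:Mga_proj}: every object $(M,\ga) \in \cZ_\cC(\cM)$ is exhibited as a direct summand of $I(M) = \bigoplus_i X_i \lact M \ract X_i^*$ via the explicit projection $P_{(M,\ga)}$. Translating back through the excision equivalence, this says every object of $\ZCYsk(N)$ is a direct summand (hence a subquotient) of $i_*$ applied to an object of $\ZCYsk(N')$ — for instance, applied to $\bigoplus_i X_i \lact \VV \ract X_i^*$ where $\VV$ is the corresponding boundary value on $N'$, which geometrically just places parallel copies of simple-labeled arcs near the collar on both sides of $\VV$.

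Concretely, the steps in order are: (1) write $i_*$ as $\htr$ followed by the excision equivalence of \thmref{t:hat_excision} (extended to Karoubi envelopes as in \thmref{t:excision-skein}); (2) observe that an equivalence of categories is in particular dominant, so it suffices to prove $\htr : \ZCYsk(N') \to \htr_{\hatZCYsk(P\times I)}(\ZCYsk(N'))$ is dominant; (3) apply \thmref{t:htr-center} to identify this with $I : \cM \to \cZ_\cC(\cM)$ for $\cC = \ZCYsk(P\times I)$ (which is pivotal multifusion by the last paragraph of the proof of \thmref{t:excision-skein}) and $\cM = \ZCYsk(N')$; (4) invoke \prpref{p:I_dominant} (resting on \lemref{l:Mga_proj}) to conclude.

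I do not expect a serious obstacle here: the result is essentially a repackaging of dominance of $I$, which was already proved. The one point requiring a small amount of care is the bookkeeping in step (1)–(2): the excision equivalence of \thmref{t:hat_excision} is between $\htr(\hatZCYsk(N'))$ and $\hatZCYsk(N)$, and one must track how it interacts with passing to Karoubi envelopes and with the inclusion $\hatZCYsk(N') \to \ZCYsk(N')$, so that the composite functor on the nose equals $i_*$ as stated in \corref{c:excision-dominant}. This is exactly the kind of diagram-chase already carried out in the proof of \thmref{t:excision-skein}, and \lemref{lem:M_Karoubi} (which says $\Kar(\htr_{\cC'}(\cM')) \simeq \Kar(\htr_{\cC'}(\cM))$ for a dominant submodule $\cM' \subseteq \cM$) handles the passage between $\hatZCYsk(N')$ and its Karoubi envelope cleanly. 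Hence the proof is short: dominance of $i_*$ follows formally from dominance of $I$ via the excision equivalence.
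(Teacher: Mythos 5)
Your argument is correct and is exactly the route the paper takes: the paper's proof is the one-line remark that the claim "follows immediately from \prpref{p:I_dominant}," i.e.\ $i_*$ factors as $\htr$ followed by the excision equivalence, $\htr$ is identified with $I$ via \thmref{t:htr-center}, and $I$ is dominant by \lemref{l:Mga_proj}. Your version simply spells out the bookkeeping that the paper leaves implicit.
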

\begin{proof}
Follows immediately from \prpref{p:I_dominant}
\end{proof}

\begin{corollary} \label{c:skein_ss}
$\ZCYsk(N)$ is a finite semisimple category. 
\end{corollary}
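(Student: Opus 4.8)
The plan is to bootstrap finite semisimplicity of $\ZCYsk(N)$ from the corresponding statement for $\ZCYsk(P \times I)$, exactly mirroring the argument at the end of the proof of \thmref{t:excision-skein}, but now stated for a general finite-type surface $N$. First I would fix a finite-type surface $N$ and present it as a gluing as in \secref{s:skprop-excision}: choose $f : N \to S^1$ together with a trivialization $P \times I' \simeq f^{-1}(I')$, yielding the cut surface $N'$ together with its $\hatZCYsk(P\times I)$-bimodule structure. If $N$ is disconnected, handle each component separately using \lemref{lem:disjoint_union}; if some component has no essential curve along which to cut (e.g. a disk), one can instead use $\ZCYsk(\DD^2) \simeq \cA$ and the fact that $\cA$ is premodular, hence finite semisimple, directly. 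So I may assume $N$ is connected and has been written as $N = N'/\!\sim$ with $N'$ a disjoint union of at most two finite-type pieces of strictly simpler topology.

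Next I would invoke \thmref{t:excision-skein} to get the equivalence
\[
  \ZCYsk(N) \simeq \cZ_{\ZCYsk(P\times I)}(\ZCYsk(N')).
\]
Then I would apply \prpref{p:ZM_ss}: its hypotheses require that the base category $\cC = \ZCYsk(P\times I)$ be pivotal multifusion and that the bimodule category $\cM = \ZCYsk(N')$ be finite semisimple. The first holds by the justification already given inside the proof of \thmref{t:excision-skein} (namely $\ZCYsk(P\times I)$ is pivotal by \prpref{p:stack_pivotal}, and semisimple because $\ZCYsk(I\times I)\simeq \cA$ is finite semisimple and $\ZCYsk(S^1\times I)\simeq \cZ(\cA)$ is modular, with the disjoint-union case following formally). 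The second — finite semisimplicity of $\ZCYsk(N')$ — is precisely the inductive hypothesis: $N'$ has simpler topology than $N$ (fewer handles / lower complexity in whatever measure one sets up the induction on), so by induction $\ZCYsk(N')$ is finite semisimple. The base case of the induction is $N$ a disk (or disjoint union of disks), where $\ZCYsk \simeq \cA^{\boxtimes k}$ is finite semisimple. Hence \prpref{p:ZM_ss} gives that $\cZ_{\ZCYsk(P\times I)}(\ZCYsk(N'))$ is finite semisimple, and transporting along the equivalence above, $\ZCYsk(N)$ is finite semisimple.

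The main obstacle is setting up the induction cleanly: I need a complexity invariant of finite-type surfaces (say the number of cuts needed, or $-\chi(N)$ together with the number of boundary-removed disks) that strictly decreases when passing from $N$ to each component of $N'$, and I need to make sure that every finite-type surface of positive complexity admits a suitable essential cutting curve/collar family $P$ realizing the setup of \secref{s:skprop-excision} with a strictly simpler $N'$. This is a standard fact about surfaces — any finite-type surface that is not a disjoint union of disks and annuli contains an essential simple closed curve or essential arc along which cutting reduces complexity, and annuli are covered by $\ZCYsk(S^1\times I)\simeq \cZ(\cA)$ being modular — but it should be stated explicitly rather than left implicit. Everything else is a direct citation of \thmref{t:excision-skein}, \prpref{p:ZM_ss}, and \lemref{lem:disjoint_union}, so no serious computation is involved.
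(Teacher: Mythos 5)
Your proposal is correct and follows essentially the same route as the paper: the paper also builds $N$ from copies of $I^2$ (equivalently, cuts $N$ down to disks) by a sequence of collared gluings, applies \thmref{t:excision-skein} at each step to realize $\ZCYsk(N)$ as an iterated center $\cZ_{\ZCYsk(P\times I)}(\cM)$, and concludes by \prpref{p:ZM_ss} together with $\ZCYsk(I^2)\simeq\cA$ being finite semisimple. Your worry about making the complexity induction explicit is fair — the paper leaves the decomposition step equally implicit — but it is not a different argument.
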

\begin{proof}
Any connected $N$ can be built from $I^n$
by a sequence of gluings of collared manifolds.
For example, for $n=2$,
gluing opposite edges of a square gives an annulus,
and gluing boundaries of the annulus together
gives the torus. 

By \thmref{t:excision-skein}, the corresponding category $\ZCYsk(N)$ thus can be
constructed from the  Deligne product of several copies of  $\ZCYsk(I^n) \simeq \cA$
by repeatedly applying the center construction, replacing a category $\cM$ by
$\cZ_{\ZCYsk(P\times I)}(\cM)$. Since it was shown in the proof of
\thmref{t:excision-skein} that $\ZCYsk(P\times I)$ is pivotal multifusion, it now follows
from \prpref{p:ZM_ss} that applying the center construction always gives a
finite semisimple category. Thus, $\ZCYsk(N)$ is a finite semisimple category.

\end{proof}

\newpage
\section{The Reduced Tensor Product on $\ZCYsk(\Ann)$}
\label{s:annulus}
\vspace{0.8in}

In this section, we focus on the category of boundary values
on the annulus.
The results here are reproduced from \ocite{tham_reduced}.
We will fix a premodular category $\cA$;
concatenating objects denotes tensor product (of $\cA$).

\subsection{Background}
\par \noindent

Recall that $\ZA$, the Drinfeld center of $\cA$,
is the category with

Objects: pairs $(X,\ga)$, where $X\in \cA$ and $\ga$
is a half-braiding, i.e. a natural isomorphism of functors  
$\ga_A : A \tnsr X\to X \tnsr A$, $A\in \cC$
satisfying natural compatibility conditions.

Morphisms: $\Hom_\ZA((X,\ga), (X',\ga'))=\{f\in \Hom_\cA(X,X')\st f\ga=\ga' f\}$.

We recall some well-known properties of $\ZA$.
These do not require the braiding on $\cA$,
only its spherical fusion structure.

The following is standard (see e.g. \ocite{EGNO}*{Corollary 8.20.14}):
\begin{definition}
$\ZA$ is modular, with tensor product
\begin{equation}
(X,\ga) \tnsr (Y,\mu) := (X\tnsr Y,\ga \tnsr \mu)
  \label{e:tnsr_std}
\end{equation}
where
\begin{equation}
  (\ga \tnsr \mu)_A := (\id_X \tnsr \mu_A) \circ (\ga_A \tnsr \id_Y)
  \label{e:tnsr_hfbrd_std}
\end{equation}
\label{d:tnsr_std}
and left dual given by
\begin{equation}
(X,\ga)^* = (X^*,\ga^*),
\text{ where }
(\ga^*)_A =
(\ga_{\rdual{A}})^* = 
\begin{tikzpicture}
\node[small_morphism] (ga) at (0,0) {\tiny $\ga$};
\draw (ga) .. controls +(-45:0.5cm) and +(0:0.5cm) ..
  (-0.3,-0.9) .. controls +(180:0.5cm) and +(down:1cm) .. (-1,1)
  node[above] {$A$};
\draw (ga) .. controls +(135:0.5cm) and +(180:0.5cm) ..
  (0.3,0.9) .. controls +(0:0.5cm) and +(up:1cm) .. (1,-1)
  node[below] {$A$};
\draw (ga) .. controls +(-120:0.5cm) and +(0:0.2cm) ..
  (-0.25,-0.6) .. controls +(180:0.2cm) and +(down:1cm) .. (-0.5,1)
  node[above] {$X^*$};
\draw (ga) .. controls +(60:0.5cm) and +(180:0.2cm) ..
  (0.25,0.6) .. controls +(0:0.2cm) and +(up:1cm) .. (0.5,-1)
  node[below] {$X^*$};
\end{tikzpicture}
,
\end{equation}
and similarly the right dual is
$\rdual{(X,\ga)} = (\rdual{X}, \rdual{\ga})$,
where $(\rdual{\ga})_A = \rdual{(\ga_{A^*})}$.

The pivotal structure is given by that of $\cA$.
\defend
\end{definition}

The following is taken from \ocite{kirillov-stringnet}*{Theorem 8.2}:
(these are direct specializations of results in 
\secref{s:graphical-multifusion} and \secref{s:skein_modules},
with $\cM = \cA$ as a bimodule over itself.)

\begin{proposition}\label{p:center}\par\noindent
Let $F : \ZA \to \cA$ be the natural forgetful functor
$F: (X,\ga) \mapsto X$. 
Then it has a two-sided adjoint functor
$I : \cA \to \ZA$, given by 
\begin{equation}
\label{e:induction}
I(A)=\big( \bigoplus_{i\in \Irr(\cA)} X_i \tnsr A \tnsr X_i^*, \Gamma \big)
\end{equation}
where $\Gamma$ is the half-braiding given by
\begin{equation}
\Gamma_B = 
\displaystyle{\sum_{i,j \in \Irr(\cA)}\sqrt{d_i}\sqrt{d_j}}\quad 
\begin{tikzpicture}
\draw (0,1)--(0,-1);
\node[above] at (0,1) {$A$};
\node[below] at (0,-1) {$A$};
\node[small_morphism] (L) at (-0.5,0) {$\al$};
\node[small_morphism] (R) at (0.5,0) {$\al$};
\draw (L)-- +(0,1) node[above] {$i$}; \draw (L)-- +(0,-1) node[below] {$j$}; 
\draw (R)-- +(0,1)node[above] {$i^*$}; \draw (R)-- +(0,-1)node[below] {$j^*$}; 
\draw (-1, 1) .. controls +(down:0.5cm) and +(135:0.5cm) .. (L);
\draw (1, -1) .. controls +(up:0.5cm) and +(-45:0.5cm) .. (R);
\node[above] at (-1,1) {$B$};
\node[below] at (1,-1) {$B$};
\end{tikzpicture}
.
\end{equation}
We refer the reader to \ocite{kirillov-stringnet} for more details.
\end{proposition}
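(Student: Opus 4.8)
The statement to prove is Proposition \ref{p:center}, which asserts that the forgetful functor $F\colon \ZA \to \cA$ has a two-sided adjoint $I$ given by the explicit formula with the displayed half-braiding $\Gamma$. The plan is to recognize this as a direct specialization of \thmref{t:center-adjoint}, applied in the case $\cM = \cA$ regarded as a bimodule over itself via left and right multiplication; in that setting $\cZ_\cC(\cM) = \cZ_\cC(\cC) = \ZA$ is the usual Drinfeld center, and the general induction functor $I(M) = \bigoplus_i X_i \lact M \ract X_i^*$ specializes to $\bigoplus_i X_i \tnsr A \tnsr X_i^*$. The hypotheses of \thmref{t:center-adjoint} are met since $\cA$ is premodular, hence in particular pivotal (indeed spherical) multifusion, and $\cA$ is certainly a semisimple abelian $\cA$-bimodule category.

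The key steps I would carry out are as follows. First, I would explicitly identify the bimodule structure: $B \lact A = B \tnsr A$ and $A \ract B = A \tnsr B$, so that the half-braiding condition $\ga_B\colon B\lact X \to X \ract B$ of \defref{d:center} becomes the usual half-braiding $\ga_B\colon B \tnsr X \to X \tnsr B$ of the Drinfeld center, and the compatibility conditions match. Second, I would observe that the half-braiding \eqnref{e:I(M)} from \thmref{t:center-adjoint}, built via $\al = \albar$ using \lemref{l:halfbrd}, becomes precisely the displayed $\Gamma_B$ once the $\lact,\ract$ are rewritten as $\tnsr$; here I should note that since $\cA$ is spherical we may use circular coupons rather than semicircular ones (per \rmkref{r:circular-coupons}), which is why the formula in the proposition uses $\al$ with the $\sqrt{d_i}\sqrt{d_j}$ coefficients and why the square-root-of-dimension conventions collapse to $\sqrt{d_i}$. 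Third, the adjunction isomorphisms are inherited verbatim from \eqnref{e:adj_isom}--\eqnref{e:adj_isom_2} of \thmref{t:center-adjoint}, specialized to $\cM = \cA$.

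I do not expect any serious obstacle here, as the work has already been done in full generality in \thmref{t:center-adjoint} and its supporting lemmas; the only thing requiring genuine attention is the bookkeeping translating the non-spherical, non-fusion conventions of \secref{s:bg-htr} (semicircular morphisms $\al$, separate left/right dimensions $d_i^L, d_i^R$) back into the spherical fusion conventions used in this section (circular morphisms, a single dimension $d_i$). Concretely, the mildly delicate point is to confirm that the two $\sqrt{d_i^R}\sqrt{d_j^R}$ factors attached to the two halves of $\Gamma_X$ in \lemref{l:halfbrd} combine correctly under sphericality to give the $\sqrt{d_i}\sqrt{d_j}$ prefactor in the proposition's formula, and that no extra factors are introduced by passing from semicircles to circles. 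Accordingly, the proof is essentially a one-line citation of \thmref{t:center-adjoint} together with this translation of conventions, and I would present it as such, referring the reader to \ocite{kirillov-stringnet}*{Theorem 8.2} for the original treatment in the spherical fusion case.

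\begin{proof}
This is the special case $\cM = \cA$ (viewed as a $\cA$-bimodule over itself by left and right multiplication) of \thmref{t:center-adjoint}. In this case $\cZ_\cC(\cM) = \ZA$, the half-braiding condition of \defref{d:center} reduces to the usual one defining the Drinfeld center, the induction functor \eqnref{e:induction} of \thmref{t:center-adjoint} becomes $I(A) = \bigoplus_i X_i \tnsr A \tnsr X_i^*$, and the half-braiding \eqnref{e:I(M)}, rewritten using that $\cA$ is spherical so that circular coupons may be used in place of semicircular ones (\rmkref{r:circular-coupons}), becomes the displayed formula for $\Gamma_B$. The two-sided adjunction, with adjunction isomorphisms as in \eqnref{e:adj_isom}--\eqnref{e:adj_isom_2}, is then immediate from \thmref{t:center-adjoint}. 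See \ocite{kirillov-stringnet}*{Theorem 8.2} for further details.
\end{proof}
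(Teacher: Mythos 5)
Your proposal matches the paper's own treatment: the paper presents \prpref{p:center} with no proof beyond the remark that it is a direct specialization of the general center/adjunction results (in particular \thmref{t:center-adjoint}) to $\cM=\cA$ as a bimodule over itself, deferring details to \ocite{kirillov-stringnet}*{Theorem 8.2}. Your convention bookkeeping (sphericality collapsing $d_i^L,d_i^R$ to $d_i$ so that $\albar$ yields the $\sqrt{d_i}\sqrt{d_j}$ prefactor, and semicircular coupons becoming circular) is correct and, if anything, more explicit than what the paper records.
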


\begin{proposition}
The adjoint functor $I: \cA \to \ZA$ above is dominant.
More explicitly,
the object $(X,\ga)$ is a direct summand of $I(X)$,
given by the projection $P_{(X,\ga)}$, described below:

\begin{equation}
P_{(X,\ga)} :=
\sum_{i,j \in \Irr(\cA)} \frac{\sqrt{d_i}\sqrt{d_j}}{\cD}
\begin{tikzpicture}
\tikzmath{
  \toprow = 1;
  \bottom = -1;
  \lf = -0.7;
  \rt = 0.7;
}
\node[small_morphism] (ga) at (0,0.3) {$\ga$}; 
\node[small_morphism] (ga2) at (0,-0.3) {$\ga$};
%% vertical lines
\draw (ga) -- (0,\toprow) node[above] {$X$};
\draw (ga) -- (ga2);
\draw (ga2) -- (0,\bottom) node[below] {$X$}; 
%% cup branches
\draw[midarrow_rev] (ga) to[out=180,in=-90] (\lf,\toprow);
\node at (-0.6,0.35) {\tiny $i$};
\draw[midarrow] (ga) to[out=0,in=-90] (\rt,\toprow);
%% cap branches
\draw[midarrow] (ga2) to[out=180,in=90] (\lf,\bottom);
\node at (-0.6,-0.25) {\tiny $j$};
\draw[midarrow_rev] (ga2) to[out=0,in=90] (\rt,\bottom);
\end{tikzpicture}
.
\end{equation}
\end{proposition}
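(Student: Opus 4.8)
The plan is to specialize the general machinery already developed in Section~\ref{s:bg-htr} (in particular \thmref{t:center-adjoint}, \lemref{l:Mga_proj}, and \prpref{p:I_dominant}) to the case $\cM = \cA$, viewed as a $\cA$-bimodule category over itself with the left and right actions both given by $\tnsr$. Since $\cA$ is spherical fusion (being premodular), one has $d_i^L = d_i^R = d_i$, the unit $\one$ is simple so $|\Irr_0(\cA)| = 1$ and $\cD = \sum d_i^2$, and all the semicircular morphisms $\al$ reduce to ordinary circular ones. Under this identification, $\cZ_\cA(\cA)$ is precisely the Drinfeld center $\ZA$ as described in \defref{d:tnsr_std}, and the induction functor $I$ of \eqnref{e:induction} in \thmref{t:center-adjoint} becomes exactly the formula $I(A) = \big(\bigoplus_i X_i \tnsr A \tnsr X_i^*, \Gamma\big)$ of \prpref{p:center}, with $\Gamma$ the half-braiding $\eqnref{e:I(M)}$ rewritten with $\cM = \cA$ (the leftmost and rightmost strands entering the $\al$ coupons being the $B$-strands).

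First I would invoke \prpref{p:I_dominant}, which asserts $I : \cM \to \cZ_\cC(\cM)$ is dominant under the hypotheses of \thmref{t:center-adjoint}; applied with $\cC = \cM = \cA$ this immediately gives dominance of $I : \cA \to \ZA$, so every object of $\ZA$ is a direct summand of $I(X)$ for some $X$. Next, to get the \emph{explicit} projection onto the summand $(X,\ga) \hookrightarrow I(X)$, I would quote \lemref{l:Mga_proj}: the morphism $P_{(M,\ga)} = \frac{1}{|\Irr_0(\cC)|\cD} G(\sum d_i^R \ga_{X_i})$ of \eqnref{e:P_Mga} is a projection in $\End_{\cZ_\cC(\cM)}(I(M))$, and it factors as $\hat P_{(M,\ga)} \circ \check P_{(M,\ga)}$ with $\check P_{(M,\ga)} \circ \hat P_{(M,\ga)} = \id_{(M,\ga)}$, exhibiting $(M,\ga)$ as a direct summand. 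Specializing $\cM = \cA$, $M = X$, using $|\Irr_0(\cA)| = 1$ and $d_i^R = d_i$, the second expression for $P_{(M,\ga)}$ in \eqnref{e:P_Mga} (the one obtained by pulling $\al$ through $\ga$ and applying \lemref{l:summation}) becomes exactly the claimed formula
\[
P_{(X,\ga)} = \sum_{i,j \in \Irr(\cA)} \frac{\sqrt{d_i}\sqrt{d_j}}{\cD}
\begin{tikzpicture}
\node[small_morphism] (ga) at (0,0.3) {$\ga$};
\node[small_morphism] (ga2) at (0,-0.3) {$\ga$};
\draw (ga) -- (0,1) node[above] {$X$};
\draw (ga) -- (ga2);
\draw (ga2) -- (0,-1) node[below] {$X$};
\draw[midarrow_rev] (ga) to[out=180,in=-90] (-0.7,1);
\node at (-0.6,0.35) {\tiny $i$};
\draw[midarrow] (ga) to[out=0,in=-90] (0.7,1);
\draw[midarrow] (ga2) to[out=180,in=90] (-0.7,-1);
\node at (-0.6,-0.25) {\tiny $j$};
\draw[midarrow_rev] (ga2) to[out=0,in=90] (0.7,-1);
\end{tikzpicture}.
\]

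The only real work is bookkeeping: one must check that the general half-braiding $\eqnref{e:I(M)}$, the adjunction isomorphisms $\eqnref{e:adj_isom}$--$\eqnref{e:adj_isom_2}$, and the projector $\eqnref{e:P_Mga}$ all collapse to the stated spherical-fusion formulas when $\cM = \cA$, $|\Irr_0(\cA)|=1$, and $d^L = d^R = d$; in particular that the $\sqrt{d_i^R}$ normalizations in $\al$ match the $\sqrt{d_i}\sqrt{d_j}$ prefactors in $\Gamma$ and $P_{(X,\ga)}$, and that the forgetful/induction adjunction of \thmref{t:center-adjoint} indeed recovers the classical $F \dashv I \dashv F$ for the Drinfeld center. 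I expect the main obstacle to be purely notational — reconciling the non-spherical, non-fusion conventions of Section~\ref{s:bg-htr} (semicircles, left/right dimensions, $\Irr_0$ sums) with the cleaner spherical statements of \prpref{p:center} — rather than any genuine mathematical difficulty; no new argument is needed beyond \prpref{p:I_dominant} and \lemref{l:Mga_proj}. I would simply remark that this is the specialization of those results, cite \ocite{kirillov-stringnet}*{Theorem 8.2} for the original spherical treatment, and leave the diagram-chase identifications to the reader.
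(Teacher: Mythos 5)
Your proposal is correct and is precisely what the paper does: the paper states (just before \prpref{p:center}) that these propositions "are direct specializations of results in \secref{s:graphical-multifusion} and \secref{s:bg-htr}, with $\cM = \cA$ as a bimodule over itself," and gives no further argument. Your reduction via \prpref{p:I_dominant} and \lemref{l:Mga_proj}, with $|\Irr_0(\cA)|=1$ and $d^L=d^R=d$ in the spherical fusion case, is exactly the intended bookkeeping.
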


\subsection{Reduced Tensor Product on $\ZA$}
\label{s:reduced}
\par \noindent

From here on, we will assume that $\cA$ is premodular.
We define a different monoidal structure on $\ZA$,
which we call the \emph{reduced tensor product},
denoted by $\tnsrbar$;
we emphasize that \emph{braiding} is required
to define this monoidal structure.
We will see in the coming sections that
the definitions and results proved here
have topological origin,
but we first present them purely algebraically.

%We note that many of the definitions and formulas here
%can also be found in \ocite{tham_elliptic}*{Section 3},
%where we used them to define a tensor product
%on a different category, $\Zel(\cA)$.
%It is no coincidence that there is overlap,
%as $\Zel(\cA)$ is the category associated to the once-punctured
%torus, $\ZCY(\punctorus)$ (see \ocite{KT}*{Proposition 9.5}, and later
%\secref{s:elliptic}),
%and both the tensor product defined here
%and the one in \ocite{tham_elliptic} have similar topological origins
%(compare \ocite{tham_elliptic}*{Remark 3.21} and \rmkref{r:tnsr_topology}).
We also note again that \ocite{wassermansymm}
defines a similar tensor product that coincides with ours
when $\cA$ is symmetric.

The definition of $\tnsrbar$ will be given in several steps.

\begin{definition}
\label{d:tnsr_red}
Let $X,Y$ be objects in $\cA$,
and let $\ga,\mu$ be half-braidings
on $X,Y$ respectively.
The \emph{reduced tensor product of $X$ and $Y$
with respect to $\ga,\mu$}
is defined as the image of the
projection $Q_{\ga,\mu}:X \tnsr Y \to X \tnsr Y$ defined
as follows:
\begin{equation} \label{e:Qproj}
X \tnsrproj{\ga}{\mu} Y :=
\im (Q_{\ga,\mu})
%%%%%%%%%%%%%%%%%%%%%
\hspace{5pt},\hspace{5pt}
%%%%%%%%%%%%%%%%%%%%%
Q_{\ga,\mu} :=
%%%%%%%%%%%%%%%%%%%%%
\frac{1}{\cD}
\begin{tikzpicture}
\draw (0.3,-0.8) -- (0.3,0.8)
  node[above] {$Y$};
\draw[regular, overline] (0,0) circle (0.5cm); 
\node[small_morphism] (n2) at (0.3,0.4) {\tiny $\mu$};
\draw[overline] (-0.3,-0.8) -- (-0.3,0.8)
  node[above] {$X$};
\node[small_morphism] (n1) at (-0.3,-0.4) {\tiny $\ga$};
\end{tikzpicture}
\end{equation}
It is easy to check that $Q^2_{\ga,\mu} = Q_{\ga,\mu}$.
\defend
\end{definition}
%essentially \ocite{tham_elliptic}*{Definition 3.14},
(Compare \ocite{wassermansymm}*{Equation (11)})

There is an accompanying definition of $\tnsrbar$
for half-braidings:
\begin{definition}
Let $\ga,\mu$ be half-braidings on $X,Y$ respectively.
Define $\ga \tnsrbar \mu$ to be natural transformation
$- \tnsr XY \to XY \tnsr -$ given by
\[
(\ga \tnsrbar \mu)_A =
\frac{1}{\cD}
\begin{tikzpicture}
\draw (0.3,-0.8) -- (0.3,0.8)
  node[above] {$Y$}
  node[pos=0,below] {$Y$};
\draw[regular, overline] (0,0) circle (0.5cm); 
\node[small_morphism] (n2) at (0.3,0.4) {\tiny $\mu$};
\draw[overline] (-0.3,-0.8) -- (-0.3,0.8)
  node[above] {$X$}
  node[pos=0,below] {$X$};
\node[small_morphism] (n1) at (-0.3,-0.4) {\tiny $\ga$};
%% cross strand
\node[small_morphism] (a1) at (-0.5,0) {\tiny $\al$};
\draw (a1) to[out=150,in=-90] (-1,0.8)
  node[above] {$A$};
\node[small_morphism] (a2) at (0.5,0) {\tiny $\al$};
\draw (a2) to[out=-30,in=90] (1,-0.8)
  node[below] {$A$};
\end{tikzpicture}
\]
\defend
\end{definition}

In general, $\ga \tnsrbar \mu$ fails to be a
half-braiding, but only insofar as it is not an isomorphism.
Observe that $\ga \tnsrbar \mu$ commutes with
$Q_{\ga,\mu}$, so it descends to a
natural transformation
$- \tnsr (X\tnsrproj{\ga}{\mu} Y)
\to (X\tnsrproj{\ga}{\mu} Y) \tnsr -$.
This is in fact a half-braiding
on $X\tnsrproj{\ga}{\mu} Y$:

\begin{lemma}
Let $\ga,\mu$ be half-braidings on $X,Y$ respectively.
Consider the half-braidings $\ga \tnsr c$ and $c^\inv \tnsr \mu$
on $X \tnsr Y$, where recall $c$ is the braiding on $\cA$.
Observe that the projection $Q_{\ga,\mu}$ intertwines
both $\ga \tnsr c$ and $c^\inv \tnsr \mu$,
thus they restrict to half-braiding
on $X \tnsrproj{\ga}{\mu} Y$.
Then as half-braidings on
$X \tnsrproj{\ga}{\mu} Y$,
we have
\begin{equation*}
\ga \tnsrbar \mu = \ga \tnsr c = c^\inv \tnsr \mu
\end{equation*}
\label{l:hfbrd_red}
\end{lemma}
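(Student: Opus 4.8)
The plan is to establish both equalities at once by a graphical-calculus computation on $X \tnsrproj{\ga}{\mu} Y = \im(Q_{\ga,\mu})$. First comes the standard reduction: since each of the natural transformations $\ga \tnsr c$, $c^\inv \tnsr \mu$ and $\ga \tnsrbar \mu$ is known to commute with the idempotent $Q_{\ga,\mu}$ (the first two by the observation made in the statement, the last from the definition of $\tnsrbar$ on half-braidings), two of them induce the same half-braiding on $\im(Q_{\ga,\mu})$ as soon as they agree as morphisms $A \tnsr X \tnsr Y \to X \tnsr Y \tnsr A$ after precomposition with $\id_A \tnsr Q_{\ga,\mu}$; indeed, writing $\iota,\pi$ for the inclusion and projection with $\iota\pi = Q_{\ga,\mu}$, one has $Q_{\ga,\mu}\iota = \iota$, so agreement after $\id_A \tnsr Q_{\ga,\mu}$ forces agreement after $\id_A \tnsr \iota$. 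Hence it suffices to prove $(\ga \tnsr c)_A \circ (\id_A \tnsr Q_{\ga,\mu}) = (\ga \tnsrbar \mu)_A \circ (\id_A \tnsr Q_{\ga,\mu})$ and $(c^\inv \tnsr \mu)_A \circ (\id_A \tnsr Q_{\ga,\mu}) = (\ga \tnsrbar \mu)_A \circ (\id_A \tnsr Q_{\ga,\mu})$ for every $A \in \cA$.

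Both of these I would prove by a ``pull the $A$-strand off the loop'' argument. In the diagram defining $(\ga \tnsrbar \mu)_A$ the $A$-strand enters and leaves the dashed loop of $Q_{\ga,\mu}$ through two trivalent coupons labelled $\al$, i.e. summed over dual bases. Using \lemref{l:summation} and \lemref{l:summation-alpha-tunnel} to manipulate these coupons, one drags the $A$-strand entirely off the dashed loop; because the loop carries the regular coloring it can itself be slid freely by the sliding lemma \lemref{lem:sliding}, and because $\cA$ is spherical its orientation is immaterial (\lemref{l:dashed_circle}). Dragging the $A$-strand off on the $X$-side, the crossing it inherits from the loop's crossing with $X$ collapses --- by the half-braiding axiom for $\ga$ --- to a single $\ga$-crossing, while the crossing with $Y$ becomes an ordinary braiding crossing $c$; the result is exactly $(\ga \tnsr c)_A \circ (\id_A \tnsr Q_{\ga,\mu})$. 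Dragging it off symmetrically on the $Y$-side instead produces a $\mu$-crossing with $Y$ and an inverse braiding crossing with $X$, i.e. $(c^\inv \tnsr \mu)_A \circ (\id_A \tnsr Q_{\ga,\mu})$. Since both are rewritings of the single diagram $(\ga \tnsrbar \mu)_A \circ (\id_A \tnsr Q_{\ga,\mu})$, all three morphisms coincide. These moves are of exactly the type already used to prove naturality and $\tnsr$-compatibility of the half-braiding $\Gamma_X$ in \lemref{l:halfbrd} and of the induced half-braiding in \thmref{t:center-adjoint}.

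The main obstacle is the diagrammatic bookkeeping: one must carry the over/under data of the $A$-, $X$- and $Y$-strands, the dashed loop, and the placements of the $\ga$- and $\mu$-insertions consistently through the isotopies and fusion moves, and verify in particular that after pulling the $A$-strand off it links neither the dashed loop nor the ``wrong'' strand, so that precisely one half-braiding (resp.\ braiding) crossing with each of $X$ and $Y$ survives. Once the starting diagram is drawn with care this is a finite check using only \lemref{l:summation}, \lemref{l:summation-alpha-tunnel}, \lemref{lem:sliding}, sphericality, and naturality of $\ga$, $\mu$ and $c$; no input beyond the premodularity of $\cA$ is needed, and it is worth noting that the braiding $c$ enters precisely at this step, which is why this half-braiding identity --- and hence the reduced tensor product --- is unavailable for a merely spherical fusion $\cA$.
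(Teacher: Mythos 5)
Your proposal is correct and follows essentially the same route as the paper: the paper's entire proof is the single chain of diagram equalities \eqnref{e:hfbrd_sym}, which starts from $(\ga\tnsr c)$ composed with $Q_{\ga,\mu}$, absorbs the $A$-strand onto the dashed loop via the dual-basis coupons of \lemref{l:summation} to reach the defining diagram of $(\ga\tnsrbar\mu)_A$, and then releases it on the other side to obtain $(c^\inv\tnsr\mu)$ composed with $Q_{\ga,\mu}$ — exactly your ``pull the $A$-strand on/off the loop'' computation read from one end to the other. The only difference is presentational: you make explicit the (correct) reduction that agreement after composing with $Q_{\ga,\mu}$ suffices, which the paper leaves implicit.
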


\begin{proof}
This follows from the following computation:
\begin{equation}
\label{e:hfbrd_sym}
\begin{tikzpicture}
\draw (0.3,-1) -- (0.3,1);
\draw[regular, overline] (0,0.2) circle (0.5cm); 
\node[small_morphism] (n2) at (0.3,0.6) {\tiny $\mu$};
\draw[overline] (-0.3,-1) -- (-0.3,1);
\node[small_morphism] (n1) at (-0.3,-0.2) {\tiny $\ga$};
%% cross strand
\node[small_morphism] (n3) at (-0.3,-0.7) {\tiny $\ga$};
\draw (n3) to[out=150,in=-90] (-1,1);
\draw[overline] (n3) to[out=-20,in=90] (1,-1);
\end{tikzpicture}
%%%%%%%%%%%%%%%%%%%%%%%%%%%%%
=
%%%%%%%%%%%%%%%%%%%%%%%%%%%%%
\begin{tikzpicture}
%% circle
\draw (0.3,-1) -- (0.3,1);
\draw[regular, overline] (0,0.2) circle (0.5cm); 
\node[small_morphism] (n2) at (0.3,0.6) {\tiny $\mu$};
\draw[overline] (-0.3,-1) -- (-0.3,1);
\node[small_morphism] (n1) at (-0.3,-0.2) {\tiny $\ga$};
%% cross strand
\node[small_morphism] (n3) at (-0.3,-0.7) {\tiny $\ga$};
\draw (n3) to[out=150,in=-90] (-1,1);
\node[small_morphism] (a1) at (0.1,-0.25) {\tiny $\al$};
\draw (n3) to[out=-20,in=-90] (a1);
\node[small_morphism] (a2) at (0.5,0.2) {\tiny $\al$};
\draw (a2) to[out=-50,in=90] (1,-1);
\end{tikzpicture}
%%%%%%%%%%%%%%%%%%%%%%%%%%%%%
=
%%%%%%%%%%%%%%%%%%%%%%%%%%%%%
\begin{tikzpicture}
%% circle
\draw (0.3,-1) -- (0.3,1);
\draw[regular, overline] (0,0) circle (0.5cm); 
\node[small_morphism] (n2) at (0.3,0.4) {\tiny $\mu$};
\draw[overline] (-0.3,-1) -- (-0.3,1);
\node[small_morphism] (n1) at (-0.3,-0.4) {\tiny $\ga$};
%% cross strand
\node[small_morphism] (a1) at (-0.5,0) {\tiny $\al$};
\draw (a1) to[out=150,in=-90] (-1,1);
\node[small_morphism] (a2) at (0.5,0) {\tiny $\al$};
\draw (a2) to[out=-30,in=90] (1,-1);
\end{tikzpicture}
%%%%%%%%%%%%%%%%%%%%%%%%%%%%%
=
%%%%%%%%%%%%%%%%%%%%%%%%%%%%%
\begin{tikzpicture}
%% cross strand
\node[small_morphism] (n3) at (0.3,0.7) {\tiny $\mu$};
\draw (n3) to[out=160,in=-90] (-1,1);
\draw (n3) to[out=-30,in=90] (1,-1);
%% circle
\draw (n3) -- (0.3,1);
\draw (n3) -- (0.3,-1);
\draw[regular, overline] (0,-0.2) circle (0.5cm); 
\node[small_morphism] (n2) at (0.3,0.2) {\tiny $\mu$};
\draw[overline] (-0.3,-1) -- (-0.3,1);
\node[small_morphism] (n1) at (-0.3,-0.6) {\tiny $\ga$};
\end{tikzpicture}
\end{equation}
\end{proof}
%(essentially \ocite{tham_elliptic}*{Lemma 3.16},
(Compare \ocite{wassermansymm}*{Lemma 10}.)

\begin{definition}
\label{d:tnsr_red_ZA}
Let $(X,\ga),(Y,\mu)$ be objects in $\ZA$.
Their \emph{reduced tensor product}
is defined as follows:
\begin{equation*}
  (X,\ga) \tnsrbar (Y,\mu) := (X \tnsrproj{\ga}{\mu} Y, \ga \tnsrbar \mu)
%\label{e:tnsr_red}
\end{equation*}

For $f : (X,\ga) \to (X,\ga'), g : (Y,\mu) \to (Y',\mu')$,
their reduced tensor product is
\[
  f \tnsrbar g := Q_{\ga',\mu'} \circ (f \tnsr f') \circ Q_{\ga,\mu}
\]
or more simply, it is $f \tnsr f'$ restricted to $X \tnsrproj{\ga}{\mu} Y$.
\defend
\end{definition}

\begin{lemma}
The reduced tensor product of \defref{d:tnsr_red_ZA}
is associative. More precisely,
if $a : (X_1 \tnsr X_2) \tnsr X_3 \simeq X_1 \tnsr (X_2 \tnsr X_3)$
is the associativity constraint of $\cA$,
and $\ga_1,\ga_2,\ga_3$ are half-braidings on $X_1,X_2,X_3$
respectively,
then $a$ restricts to an isomorphism
\[
  a : (X_1 \tnsrproj{\ga_1}{\ga_2} X_2) \tnsrproj{\ga_1 \tnsrbar \ga_2}{\ga_3} X_3
  \simeq
  X_1 \tnsrproj{\ga_1}{\ga_2 \tnsrbar \ga_3} (X_2 \tnsrproj{\ga_2}{\ga_3} X_3)
\]
and hence
\[
  a : \big( (X_1,\ga_1) \tnsrbar (X_2,\ga_2) \big) \tnsrbar (X_3,\ga_3)
  \simeq
  (X_1,\ga_1) \tnsrbar \big( (X_2,\ga_2) \tnsrbar (X_3,\ga_3) \big)
\]
Furthermore, $a$ is natural in $(X_i,\ga_i)$,
and satisfies the pentagon equation.
\label{l:assoc}
\end{lemma}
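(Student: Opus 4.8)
The plan is to establish \lemref{l:assoc} by reducing the statement about the restricted tensor product $\tnsrbar$ to a statement about the unrestricted tensor product $\tnsr$ of $\cA$, for which associativity is already known. Concretely, the associativity constraint $a$ of $\cA$ is an isomorphism $(X_1 \tnsr X_2)\tnsr X_3 \simeq X_1 \tnsr (X_2\tnsr X_3)$, and the key point is to show that $a$ carries the projection defining the left-hand reduced triple product to the projection defining the right-hand one. Write $P = Q_{\ga_1\tnsrbar \ga_2, \ga_3}\circ (Q_{\ga_1,\ga_2}\tnsr \id_{X_3})$ for the composite idempotent on $(X_1\tnsr X_2)\tnsr X_3$ cutting out $(X_1\tnsrproj{\ga_1}{\ga_2}X_2)\tnsrproj{\ga_1\tnsrbar\ga_2}{\ga_3}X_3$ — note these two idempotents commute, since $\ga_1\tnsrbar\ga_2$ commutes with $Q_{\ga_1,\ga_2}$ as observed before \lemref{l:hfbrd_red} — and similarly $P' = Q_{\ga_1,\ga_2\tnsrbar\ga_3}\circ(\id_{X_1}\tnsr Q_{\ga_2,\ga_3})$ on $X_1\tnsr(X_2\tnsr X_3)$. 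The crux is the identity $a\circ P = P'\circ a$.

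First I would prove this identity graphically. Using \lemref{l:hfbrd_red}, rewrite each $\ga_i\tnsrbar\ga_j$ appearing inside a $Q$ as the more convenient form $\ga_i\tnsr c$ or $c^\inv\tnsr\ga_j$; this is legitimate because we are only ever applying these half-braidings after the relevant $Q$-projection has already been applied. Then $P$ becomes a diagram with three ``dashed-loop-plus-$\ga$'' gadgets — one encircling the $X_1$ strand carrying $\ga_1$, one encircling $X_2$ carrying $\ga_2$, and one encircling the composite $X_1 X_2$ strand carrying a half-braiding which, after the substitution, can be taken to be $\ga_1\tnsr c$. A short manipulation with the sliding lemma (\lemref{lem:sliding}) and \lemref{l:summation}, together with the fact that the dashed loop around $X_1 X_2$ can be split using \lemref{l:summation-separate}, should transform this into the mirror configuration defining $P'$, where now the composite strand is $X_2 X_3$ carrying $c^\inv\tnsr\ga_3$. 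In effect, the associativity constraint of $\cA$ just reparenthesizes the strands, and all three encircling gadgets are intrinsic (the $\ga_i$ loops on individual strands are untouched, and the ``middle'' loop migrates from encircling $X_1X_2$ to encircling $X_2X_3$ because $a$ slides $X_1$ out of and $X_3$ into the bundle). I would present this as a sequence of three or four displayed diagram equalities, citing the relevant lemmas at each step; this is the step I expect to be the main obstacle, since keeping track of which strands the dashed loops encircle and ensuring the braidings compose correctly under $a$ requires care, though it is conceptually routine.

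Once $a\circ P = P'\circ a$ is established, it follows formally that $a$ restricts to an isomorphism $\im P \simeq \im P'$ with inverse the restriction of $a^\inv$, which is exactly the claimed isomorphism of objects $\big((X_1,\ga_1)\tnsrbar(X_2,\ga_2)\big)\tnsrbar(X_3,\ga_3) \simeq (X_1,\ga_1)\tnsrbar\big((X_2,\ga_2)\tnsrbar(X_3,\ga_3)\big)$. That this is a morphism in $\ZA$, i.e. intertwines the half-braidings $(\ga_1\tnsrbar\ga_2)\tnsrbar\ga_3$ and $\ga_1\tnsrbar(\ga_2\tnsrbar\ga_3)$, is another graphical check of the same flavor: apply \lemref{l:hfbrd_red} to express both composite half-braidings on a common footing (say, push all the ``action strands'' through to act as $\ga_1\tnsr c_{X_2X_3}$ type expressions) and observe that $a$ visibly intertwines them, since naturality of the braiding $c$ of $\cA$ with respect to $a$ is one of the hexagon-type axioms.

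Finally, naturality of $a$ in each $(X_i,\ga_i)$ is inherited directly from naturality of the associativity constraint of $\cA$, combined with the definition $f\tnsrbar g = Q\circ(f\tnsr g)\circ Q$: for $f_i:(X_i,\ga_i)\to(X_i',\ga_i')$ one has $a\circ\big((f_1\tnsrbar f_2)\tnsrbar f_3\big) = a\circ P'\circ(f_1\tnsr f_2\tnsr f_3)\circ P = P''\circ a\circ (f_1\tnsr f_2\tnsr f_3)\circ P = P''\circ(f_1\tnsr f_2\tnsr f_3)\circ a\circ P$, which rearranges to $\big(f_1\tnsrbar(f_2\tnsrbar f_3)\big)\circ a$. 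And the pentagon equation for $a$ on $\ZA$ with $\tnsrbar$ follows from the pentagon equation for $a$ on $\cA$: both sides of the pentagon, when composed appropriately with the relevant $Q$-projections, reduce to the corresponding identity in $\cA$, because each reduced tensor product is a direct summand (cut out by an idempotent built from $Q$'s) of the ordinary iterated tensor product, and the pentagon diagram in $\cA$ restricts to these summands. I would state this last part briefly, as it is a routine consequence of $\cA$ being monoidal together with the compatibility identity $a\circ P = P'\circ a$ proved above.
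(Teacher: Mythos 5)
Your proposal is correct and follows the same route as the paper, whose entire proof is the one-line remark that the lemma ``follows easily from \lemref{l:hfbrd_red}'' (with a pointer to Wasserman's Lemma 17): the substance in both cases is using \lemref{l:hfbrd_red} to replace the composite half-braidings $\ga_i\tnsrbar\ga_j$ by $\ga_i\tnsr c$ or $c^\inv\tnsr\ga_j$ on the relevant images, checking that the two composite idempotents agree under the associator, and then deducing naturality and the pentagon formally from those of $\cA$. You have simply written out the graphical verification that the paper leaves implicit, and you correctly identify the bookkeeping of the dashed loops as the only place requiring care.
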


\begin{proof}
Follows easily from \lemref{l:hfbrd_red}.
%See also \ocite{tham_elliptic}*{Corollary 3.17},
(Compare \ocite{wassermansymm}*{Lemma 17}.)
\end{proof}

\begin{proposition}
\label{p:reduced_pivotal}
$(\ZA, \tnsrbar)$ is a pivotal multifusion category.
More precisely,
\begin{itemize}
\item the associativity constraint is given by
  the associativity constraint of $\cA$
  (see \lemref{l:assoc});
\item the unit object, denoted $\onebar$, is $I(\one)$
  (see \prpref{p:center}),
  with left and right unit constraints given by
\begin{equation}
l_{(X,\ga)} :=
\sum_i \frac{\sqrt{d_i}}{\sqrt{\cD}}
\begin{tikzpicture}
\node[small_morphism] (ga) at (0,0) {\tiny $\ga$};
\draw (ga) -- (0,0.8) node[above] {$X$};
\draw (ga) -- (0,-0.6) node[below] {$X$};
\draw[midarrow_rev] (ga) to[out=180,in=-45] (-1,0.8);
\draw[overline] (ga) to[out=0,in=-45] (-0.5,0.8);
\node at (-0.8,0.35) {\tiny $i$};
\node[above] at (-0.75,0.8) {$I(\one)$};
\end{tikzpicture}
%%%%%%%%%%%%%%%%%%%%%%%%%%
\hspace{10pt}
%%%%%%%%%%%%%%%%%%%%%%%%%%
r_{(X,\ga)} := \sum_i \frac{\sqrt{d_i}}{\sqrt{\cD}}
\begin{tikzpicture}
\node[small_morphism] (ga) at (0,0) {\tiny $\ga$};
\draw (ga) -- (0,-0.6) node[below] {$X$};
\draw (ga) to[out=180,in=-135] (0.5,0.8);
\draw[midarrow] (ga) to[out=0,in=-135] (1,0.8);
\node at (0.8,0.3) {\tiny $i$};
\draw[overline] (ga) -- (0,0.8) node[above] {$X$};
\node[above] at (0.75,0.8) {$I(\one)$};
\end{tikzpicture}
\end{equation}
\item the duals are given by
\begin{equation*}
(X,\ga)^\vee := (X^*,\ga^\vee),
\hspace{10pt}
\rvee (X,\ga) := (\rdual{X}, \rvee \ga)
\end{equation*}

where

\begin{equation} \label{e:brd_duals}
\ga^\vee := 
%%%%%%%%%%%%%%%%%%%%%%%%%
\begin{tikzpicture}
\node[small_morphism] (ga) at (0,0) {\tiny $\ga$};
\draw (ga) -- (-0.5,0.8);
\draw[overline={1.5}]
  (ga) .. controls +(-120:0.3cm) and +(down:0.2cm) ..
  (-0.25,0) .. controls +(up:0.2cm) and +(down:0.5cm) ..
  (0,0.8);
\node[above] at (0,0.8) {$X^*$};
\draw 
  (ga) .. controls +(60:0.3cm) and +(up:0.2cm) ..
  (0.25,0) .. controls +(down:0.2cm) and +(up:0.5cm) ..
  (0,-0.8);
\draw[overline] (ga) -- (0.5,-0.8);
\end{tikzpicture}
%%%%%%%%%%%%%%%%%%%%%%%%%
=
%%%%%%%%%%%%%%%%%%%%%%%%%
\begin{tikzpicture}
\node[small_morphism] (ga) at (0,0) {\tiny $\ga *$};
\draw (ga) -- (0,-0.8);
\draw[overline={1.5}] (ga) .. controls +(120:0.5cm) and +(150:1.5cm) .. (0.5,-0.8);
\draw (ga) .. controls +(-60:0.5cm) and +(-30:1.5cm) .. (-0.5,0.8);
\draw[overline={1.5}] (ga) -- (0,0.8) node[above] {$X^*$};
\end{tikzpicture}
%%%%%%%%%%%%%%%%%%%%%%%%%
=
%%%%%%%%%%%%%%%%%%%%%%%%%
\begin{tikzpicture}
\node[small_morphism] (ga) at (0,0) {\tiny $\ga *$};
\draw (ga) -- (0,0.8);
\node[above] at (0,0.8) {$X^*$};
\draw (ga) .. controls +(-60:0.4cm) and +(-80:1.5cm) .. (-0.5,0.8);
\draw[thin_overline={1}] (ga) -- (0,-0.8);
\draw[thin_overline={1}] (ga) .. controls +(120:0.4cm) and +(100:1.5cm) .. (0.5,-0.8);
\end{tikzpicture}
%%%%%%%%%%%%%%%%%%%%%%%%%
, \hspace{10pt}
%%%%%%%%%%%%%%%%%%%%%%%%%
\rvee \ga
:=
%%%%%%%%%%%%%%%%%%%%%%%%%
\begin{tikzpicture}
\node[small_morphism] (ga) at (0,0) {\tiny $\ga$};
\draw
  (ga) .. controls +(-100:0.3cm) and +(down:0.3cm) ..
  (0.25,0) .. controls +(up:0.2cm) and +(down:0.4cm) ..
  (0,0.8);
\node[above] at (0,0.8) {$\rdual{X}$};
\draw[overline={1.5}] (ga) -- (0.5,-0.8);
\draw (ga) -- (-0.5,0.8);
\draw [overline={1.5}]
  (ga) .. controls +(80:0.3cm) and +(up:0.3cm) ..
  (-0.25,0) .. controls +(down:0.2cm) and +(up:0.4cm) ..
  (0,-0.8);
\end{tikzpicture}
%%%%%%%%%%%%%%%%%%%%%%%%%
=
%%%%%%%%%%%%%%%%%%%%%%%%%
\begin{tikzpicture}
\node[small_morphism] (ga) at (0,0) {\tiny $* \ga$};
\draw (ga) -- (0,0.8);
\node[above] at (0,0.8) {$\rdual{X}$};
\draw (ga) .. controls +(-60:0.4cm) and +(-80:1.5cm) .. (-0.5,0.8);
\draw[overline={1.5}] (ga) -- (0,-0.8);
\draw[overline={1.5}] (ga) .. controls +(120:0.4cm) and +(100:1.5cm) .. (0.5,-0.8);
\end{tikzpicture}
%%%%%%%%%%%%%%%%%%%%%%%%%
=
%%%%%%%%%%%%%%%%%%%%%%%%%
\begin{tikzpicture}
\node[small_morphism] (ga) at (0,0) {\tiny $* \ga$};
\draw (ga) -- (0,-0.8);
\draw[overline={1.5}] (ga) .. controls +(120:0.5cm) and +(150:1.5cm) .. (0.5,-0.8);
\draw (ga) .. controls +(-60:0.5cm) and +(-30:1.5cm) .. (-0.5,0.8);
\draw[overline={1.5}] (ga) -- (0,0.8) node[above] {$X^*$};
\end{tikzpicture}
%%%%%%%%%%%%%%%%%%%%%%%%%
\end{equation}

with evaluation and coevaluation maps given by
(the projections $Q_{\ga^\vee,\ga}$ are implicit)

\begin{equation}
\ev_{(X,\ga)} :=
\sum_i \frac{\sqrt{d_i}}{\sqrt{\cD}}
\begin{tikzpicture}
\node[above] at (-0.2,0.7) {$X^*$ \;};
\node[above] at (0.2,0.7) {\;\; $X$};
\draw (-0.2,0.7) .. controls +(down:1cm) and +(180:0.1cm) ..
  (0,-0.3) .. controls +(0:0.1cm) and +(down:1cm) ..  (0.2,0.7);
\node[ellipse_morphism={0}] (brd) at (0,0.2) {\tiny $\ga^\vee \tnsrbar \ga$};
\draw[midarrow={0.8}] (brd) .. controls +(170:0.8cm) and +(up:0.3cm) .. (-0.1,-0.7);
\node at (-0.3,-0.5) {\tiny $i$};
\draw (brd) .. controls +(-10:0.7cm) and +(up:0.2cm) .. (0.1,-0.7);
\node[below] at (0,-0.7) {$I(\one)$};
\end{tikzpicture}
%%%%%%%%%%%%%%%%%%%%%%%%
=
%%%%%%%%%%%%%%%%%%%%%%%%
\sum_i \frac{\sqrt{d_i}}{\sqrt{\cD}}
\begin{tikzpicture}
\node[small_morphism] (ga) at (0,0.1) {\tiny $\ga$};
\draw[midarrow={0.8}] (ga) .. controls +(120:0.5cm) and +(up:1cm) .. (-0.3,-0.7);
\node at (-0.4,-0.5) {\tiny $i$};
\draw[overline] (ga) .. controls +(-150:0.3cm) and +(down:1cm) .. (-0.7,0.7);
\draw (ga) .. controls +(30:0.3cm) and +(down:0.4cm) .. (0.4,0.7);
\draw[midarrow_rev={0.7}] (ga) .. controls +(-60:0.3cm) and +(up:0.3cm) .. (0,-0.7);
\node at (0.15,-0.5) {\tiny $i$};
\node[above] at (-0.7,0.7) {$X^*$};
\node[above] at (0.4,0.7) {$X$};
\node[below] at (-0.15,-0.7) {$I(\one)$};
\end{tikzpicture}
%%%%%%%%%%%%%%%%%%%%%%%%
, \hspace{10pt}
%%%%%%%%%%%%%%%%%%%%%%%%
\coev_{(X,\ga)} :=
\sum_i \frac{\sqrt{d_i}}{\sqrt{\cD}}
\begin{tikzpicture}
\node[small_morphism] (gav) at (0,-0.1) {\tiny $\ga$};
\draw[midarrow_rev={0.7}] (gav) .. controls +(120:0.3cm) and +(down:0.3cm) .. (0,0.7);
\node at (-0.15,0.5) {\tiny $i$};
\draw (gav) .. controls +(-150:0.3cm) and +(up:0.4cm) .. (-0.4,-0.7);
\draw (gav) .. controls +(30:0.3cm) and +(up:1cm) .. (0.7,-0.7);
\draw[overline, midarrow={0.8}] (gav) .. controls +(-60:0.5cm) and +(down:1cm) .. (0.3,0.7);
\node at (0.4,0.5) {\tiny $i$};
\node[above] at (0.15,0.7) {$I(\one)$};
\node[below] at (-0.4,-0.7) {$X$};
\node[below] at (0.7,-0.7) {$X^*$};
\end{tikzpicture}
;
\end{equation}
and similarly for right duals.
\item the pivotal structure is given by that of $\cA$.
\end{itemize}
\label{t:ZA_reduced}
\end{proposition}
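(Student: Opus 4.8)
The plan is to verify the axioms of a pivotal multifusion category one by one, leaning heavily on the graphical calculus identities already established (especially the Killing Lemma \lemref{l:killing}, the sliding lemma \lemref{lem:sliding}, and the summation conventions of \secref{s:graphical-multifusion}), together with \lemref{l:hfbrd_red} and \lemref{l:assoc} which are proved just above. First I would observe that since $\ZA$ is already finite semisimple abelian and $\tnsrbar$ is manifestly bilinear on morphisms (it is a restriction of $\tnsr$ composed with the idempotent $Q_{\ga,\mu}$, which is linear), the only substantive points are: (i) $\tnsrbar$ is a bifunctor valued in $\ZA$ — i.e. $\ga\tnsrbar\mu$ really is a half-braiding on $X\tnsrproj{\ga}{\mu}Y$ — which is exactly \lemref{l:hfbrd_red}; (ii) associativity with pentagon, which is \lemref{l:assoc}; (iii) the unit object $\onebar = I(\one)$ with its constraints; (iv) rigidity via the explicit $\ev,\coev$; and (v) compatibility of the pivotal structure of $\cA$ with the reduced structure.

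For the unit, I would first check that $l_{(X,\ga)}$ and $r_{(X,\ga)}$ as drawn land in the correct Hom-spaces, i.e. that they intertwine the half-braidings $\ga\tnsrbar(\Gamma)$ (the reduced product with the half-braiding $\Gamma$ on $I(\one)$ from \prpref{p:center}) and $\ga$; this is a diagram chase using \lemref{l:summation-alpha-tunnel} to slide the $\al$-node through $\ga$, plus the defining relation of $Q_{\ga,\Gamma}$. The key computational input is that $X\tnsrproj{\ga}{\Gamma}I(\one)\simeq X$: applying the definition of $Q_{\ga,\Gamma}$ and expanding $I(\one)=\bigoplus_i X_iX_i^*$ with its half-braiding, the $\mu$-loop in \eqref{e:Qproj} becomes, after using \lemref{l:summation} and then the Killing Lemma \lemref{l:killing} (which forces the circling strand to be $\one$), a projection onto the $i=\one$ summand, so $Q_{\ga,\Gamma}$ has rank equal to $\dim X$. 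Then I would verify the triangle axiom relating $l,r$ through the associator, again graphically. Establishing these unit isomorphisms is essentially the content that $I(\one)$ is idempotent for $\tnsrbar$, and it is where the premodularity (nondegeneracy of the $S$-matrix, hence the Killing Lemma) is genuinely used.

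For rigidity, I would check the two zig-zag identities for $(\ev_{(X,\ga)},\coev_{(X,\ga)})$ and their right-dual analogues: substitute the explicit formulas, use \lemref{l:hfbrd_red} to replace $\ga^\vee\tnsrbar\ga$ by the ordinary braiding form, then collapse the resulting $i$-loops using \lemref{l:summation} and a Killing-Lemma step, landing on $\id_{(X,\ga)^\vee}$ resp. $\id_{(X,\ga)}$ up to the $\sqrt{d_i}/\sqrt{\cD}$ normalizations which are designed precisely to cancel. I would also check that $\ga^\vee$ and $\rvee\ga$ as defined are indeed half-braidings on $X^*$ and $\rdual X$ compatible with $Q$, and that the three graphical expressions given for each in \eqref{e:brd_duals} agree (routine, using naturality of the braiding and the pivotal structure of $\cA$). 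Finally, the pivotal structure: since $\delta_{X\tnsr Y}=\delta_X\tnsr\delta_Y$ in $\cA$ and $\delta$ is a morphism in $\ZA$ for the standard structure, one checks it commutes with $Q_{\ga,\mu}$ and with $\ga\tnsrbar\mu$, hence descends; monoidality of $\delta$ with respect to $\tnsrbar$ then follows from monoidality with respect to $\tnsr$ restricted to the image of $Q$.

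The main obstacle I expect is (iii), the unit axioms — specifically showing cleanly that $Q_{\ga,\Gamma}$ projects $X\tnsr I(\one)$ onto a copy of $X$ with $l_{(X,\ga)}$ an isomorphism, and then that the triangle identity holds. All the other axioms are either quoted (\lemref{l:hfbrd_red}, \lemref{l:assoc}) or are short zig-zag/naturality checks, but the unit computation requires carefully tracking the $d_i$-weights and invoking the Killing Lemma at the right moment, and it is the step most sensitive to the normalization choices in the statement. I would do that calculation first, in full, and treat everything else as a sequence of parallel graphical verifications modeled on the proofs in \ocite{kirillov-stringnet} and \ocite{wassermansymm}.
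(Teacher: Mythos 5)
Your overall architecture (reduce everything to \lemref{l:hfbrd_red} and \lemref{l:assoc}, then verify the unit, the zig-zag identities, and $\ga^{\vee\vee}=\ga^{**}$ graphically) matches the paper's proof, but there is a genuine gap in the tool you lean on for the hard steps. You propose to invoke the Killing Lemma (\lemref{l:killing}) both to show that $Q_{\ga,\Gamma}$ projects $X\tnsr I(\one)$ onto the $i=\one$ summand and to collapse the loops in the zig-zag computation, and you justify this by saying that premodularity gives nondegeneracy of the $S$-matrix. That conflates premodular with modular: premodular means ribbon fusion, \lemref{l:killing} is stated (and is only true) for \emph{modular} $\cA$, and Section 7 assumes only that $\cA$ is premodular. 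The proposition is meant to cover precisely the non-modular cases (e.g.\ symmetric fusion categories, the setting of \ocite{wassermansymm}), where encircling a strand by the regular color does \emph{not} kill the non-unit transparent objects, so the step ``the Killing Lemma forces the circling strand to be $\one$'' fails exactly where the statement has content beyond \thmref{t:AA_ZA}.

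The paper's proof avoids the Killing Lemma entirely. For the unit it writes down $l^{-1}_{(X,\ga)}$ explicitly (the vertical reflection of $l_{(X,\ga)}$) and computes the composite directly: pulling the $\al$-nodes through $\ga$ via \lemref{l:summation-alpha-tunnel} and contracting with \lemref{l:summation}, one finds $l^{-1}\circ l = Q_{\Gamma,\ga}$, which \emph{is} the identity of $I(\one)\tnsrbar(X,\ga)$ by definition of the reduced product --- no identification of the image of $Q_{\Gamma,\ga}$ with a single summand is needed or attempted. For rigidity, $\ev_{(X,\ga)}\circ\coev_{(X,\ga)}$ reduces, after sliding the $\ga$-nodes together, to the $X$-strand threaded through two dashed loops decorated with $\ga$-nodes; these loops are unlinked from the strand using the naturality of the half-braiding (the same mechanism as \lemref{lem:sliding}, not the Killing Lemma), and each free dashed circle then evaluates to $\cD$ by sphericality, cancelling the normalization $1/\cD^2$. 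You should rework your unit and zig-zag computations along these lines; the remaining items in your plan (bifunctoriality, the pentagon, commutation of $\delta$ with $Q_{\ga,\mu}$) are consistent with what the paper does.
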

(Compare \ocite{wassermansymm}*{Theorem 24})

\begin{proof}
The inverses of the unit constraints are given by reflecting the diagrams
vertically. For example, we have
\begin{equation*}
\sum_{i,j} \frac{\sqrt{d_i} \sqrt{d_j}}{\cD}
\begin{tikzpicture}
\draw (0,-1) -- (0,1);
%% top half
\node[small_morphism] (ga) at (0,0.3) {\tiny $\ga$};
\draw[midarrow_rev] (ga) to[out=-170,in=-45] (-1,1);
\draw[overline] (ga) to[out=10,in=-45] (-0.5,1);
\node at (-0.8,0.4) {\tiny $i$};
%% bottom half
\node[small_morphism] (ga2) at (0,-0.3) {\tiny $\ga$};
\draw[midarrow] (ga2) to[out=170,in=45] (-1,-1);
\draw[overline] (ga2) to[out=-10,in=45] (-0.5,-1);
\node at (-0.8,-0.4) {\tiny $j$};
\end{tikzpicture}
%%%%%%%%%%%%%%%%%%%%%%%%%%%
=
%%%%%%%%%%%%%%%%%%%%%%%%%%%
\sum_{i,j} \frac{\sqrt{d_i} \sqrt{d_j}}{\cD}
\begin{tikzpicture}
\draw (0,-1) -- (0,1);
\node[small_morphism] (a1) at (-0.8,0) {\tiny $\al$};
\node[small_morphism] (a2) at (0.4,0) {\tiny $\al$};
%% a1 to top,bottom
\draw[midarrow_rev] (a1) to[out=90,in=-90] (-1,1);
\node at (-1.1,0.5) {\tiny $i$};
\draw[midarrow] (a1) to[out=-90,in=90] (-1,-1);
\node at (-1.1,-0.5) {\tiny $j$};
%% a2 to top,bottom
\draw[midarrow={0.3},overline] (a2) to[out=90,in=-90] (-0.5,1);
\node at (0.4,0.5) {\tiny $i$};
\draw[midarrow_rev={0.3},overline] (a2) to[out=-90,in=90] (-0.5,-1);
\node at (0.4,-0.5) {\tiny $j$};
%% dashed middle strand
\draw[regular] (a1) -- (a2);
\node[small_morphism] at (0,0) {\tiny $\ga$};
\end{tikzpicture}
%%%%%%%%%%%%%%%%%%%%%%%%%%%
=
%%%%%%%%%%%%%%%%%%%%%%%%%%%
\sum_{i,j} \frac{\sqrt{d_i} \sqrt{d_j}}{\cD}
\begin{tikzpicture}
\node[small_morphism] (b1) at (-0.8,-0.2) {\tiny $\beta$};
\node[small_morphism] (b2) at (0.4,-0.2) {\tiny $\beta$};
\node[small_morphism] (ga) at (0,0.2) {\tiny $\ga$};
%% middle strand
\draw[regular] (b1) to[out=160,in=180] (ga);
\draw[regular] (b2) to[out=20,in=0] (ga);
%% b1 to top,bottom
\draw[overline,midarrow_rev] (b1) to[out=90,in=-90] (-1,1);
\node at (-1.05,0.5) {\tiny $i$};
\draw[midarrow] (b1) to[out=-95,in=90] (-1,-1);
\node at (-1.05,-0.5) {\tiny $j$};
%% ga to top,bottom
\draw (ga) -- (0,1);
\draw (ga) -- (0,-1);
%% b2 to top,bottom
\draw[overline,midarrow={0.4}] (b2) .. controls +(up:1cm) and +(down:0.5cm) .. (-0.5,1);
\node at (0.4,0.5) {\tiny $i$};
\draw[overline,midarrow_rev={0.3}] (b2) to[out=-90,in=90] (-0.5,-1);
\node at (0.4,-0.6) {\tiny $j$};
\end{tikzpicture}
%%%%%%%%%%%%%%%%%%%%%%%%%%%
=
%%%%%%%%%%%%%%%%%%%%%%%%%%%
Q_{\Gamma,\ga}
=
\id_{I(\one) \tnsrbar (X,\ga)}
\end{equation*}

The last two forms of $\ga^\vee$ are equivalent
from the following consideration:
pulling the diagonal strands across the $\ga^*$ morphism
accumulates a (left) right-hand twist,
i.e. (inverse) Drinfeld morphism,
and these cancel out by the naturality of half-braidings.

It is also easy to check that the (co)evaluation maps
described have the desired properties.
The only potentially confusing thing is that
the ``empty strand" in graphical calculus is no longer
the unit of $\ZA$,
so the unit object and unit constraints have to be explicitly included.
For example, (all dot nodes represent $\ga$,
and sum over $i,j \in \Irr(\cA)$ is implicit)

\begin{equation*}
\ev_{(X,\ga)} \circ \coev_{(X,\ga)}
=
\frac{d_i d_j}{\cD^2}
\begin{tikzpicture}
%% top half
\draw (1,0) -- (1,1.2) node[above] {\small $X$};
\node[dotnode] (g1) at (1,1) {};
\node[dotnode] (g2) at (0.2,0.3) {};
\draw (0,0) to[out=90,in=-160] (g2);
\draw (g2) to[out=20,in=90] (0.5,0);
\draw[midarrow] (g1) .. controls +(150:0.2cm) and +(120:0.2cm) .. (g2);
\draw[overline] (g2) .. controls +(-60:0.2cm) and +(-120:0.2cm) ..
  (0.6,0.5) .. controls +(60:0.3cm) and +(-30:0.4cm) .. (g1);
\node at (0.5,1) {\tiny $i$};
%% bottom half
\draw (0,0) -- (0,-1.2);
\node[dotnode] (g3) at (0,-1) {};
\node[dotnode] (g4) at (0.8,-0.3) {};
\draw[midarrow] (g3) .. controls +(-30:0.2cm) and +(-60:0.2cm) .. (g4);
\draw (g4) .. controls +(120:0.2cm) and +(60:0.2cm) ..
  (0.4,-0.5) .. controls +(-120:0.3cm) and +(150:0.4cm) .. (g3);
\draw (1,0) to[out=-90,in=20] (g4);
\draw[overline] (g4) to[out=-160,in=-90] (0.5,0);
\draw[overline] (g3) -- (0,0);
\node at (0.5,-1) {\tiny $j$};
\end{tikzpicture}
%%%%%%%%%%%%%%%%%%%%%%%%%
=
%%%%%%%%%%%%%%%%%%%%%%%%%
\frac{d_i d_j}{\cD^2}
\begin{tikzpicture}
\draw (0,-1.2) .. controls +(up:1.2cm) and +(down:1.2cm) .. (1,1.2)
  node[above] {\small $X$};
\node[dotnode] (g1) at (0.93,0.7) {};
\node[dotnode] (g2) at (0.7,0.25) {};
\node[dotnode] (g3) at (0.3,-0.25) {};
\node[dotnode] (g4) at (0.07,-0.7) {};
\draw (g2) .. controls +(160:0.5cm) and +(160:0.5cm) .. (g4);
\draw[midarrow=0.2] (g4) .. controls +(-20:0.5cm) and +(-20:0.5cm) .. (g2);
\draw[overline, midarrow=0.2] (g1) .. controls +(160:0.5cm) and +(160:0.5cm) .. (g3);
\draw[overline] (g1) .. controls +(-20:0.5cm) and +(-20:0.5cm) .. (g3);
\node at (0.8, 0.9) {\tiny $i$};
\node at (0.3, -0.9) {\tiny $j$};
\end{tikzpicture}
%%%%%%%%%%%%%%%%%%%%%%%%%%
=
%%%%%%%%%%%%%%%%%%%%%%%%%%
\frac{1}{\cD^2}
\begin{tikzpicture}
\draw (0,-1.2) to (0,1.2) node[above] {\small $X$};
\draw[regular] (0,0.6) circle (0.4cm);
\draw[regular] (0,-0.6) circle (0.4cm);
\node[dotnode] at (0,1) {};
\node[dotnode] at (0,0.2) {};
\node[dotnode] at (0,-0.2) {};
\node[dotnode] at (0,-1) {};
\end{tikzpicture}
%%%%%%%%
=
%%%%%%%%
\frac{1}{\cD^2}
\begin{tikzpicture}
\draw (0.5,-1.2) to (0.5,1.2) node[above] {\small $X$};
\draw[regular] (0,0.6) circle (0.4cm);
\draw[regular] (0,-0.6) circle (0.4cm);
\end{tikzpicture}
%%%%%%%%
=
%%%%%%%%
\id_{(X,\ga)}
\end{equation*}

As for the pivotal structure, it is straightforward
to check that $\ga^{\vee \vee} = \ga^{**}$.
\end{proof}

\begin{remark}
$\tnsrbar$ is not braided;
indeed, we will see later
(e.g. \eqnref{e:cAA_not_symm})
that there can be objects
$(X,\ga),(Y,\mu) \in \ZA$ such that
$(X,\ga) \tnsrbar (Y,\mu) \not\simeq 0$
and $(Y,\mu) \tnsrbar (X,\ga) \simeq 0$.
In particular,
this necessitates the ``multi" in multifusion.
\rmkend
\end{remark}

When considering the usual monoidal structure on $\ZA$,
the forgetful functor $F: \ZA \to \cA$
is naturally a tensor functor, but its adjoint,
$I : \cA \to \ZA$ is not.
With the reduced tensor product, however,
$F$ is clearly not tensor,
but $I$ is:

\begin{proposition}
\label{p:I_tensor}
The functor $I$ from \prpref{p:center} is a tensor functor.
More precisely, for $X,Y \in \cA$, define the morphism
\begin{equation*}
J_{X,Y} : I(X) \tnsrbar I(Y) \to I(X \tnsr Y)
\end{equation*}
by
\begin{equation*}
J_{X,Y} :=
\sqrt{d_i} \sqrt{d_j} \sqrt{d_k}
\begin{tikzpicture}
\node[above] at (-0.4,0.7) {\tiny $I(X)$};
\node[above] at (0.4,0.7) {\tiny $I(Y)$};
\node[below] at (0,-0.7) {\tiny $I(X\tnsr Y)$};
%% Y strand
\draw (0.4,0.7) .. controls +(down:0.5cm) and +(up:0.5cm) .. (0.1,-0.7);
\node[small_morphism] (al1) at (-0.6,0) {\tiny $\alpha$};
\node[small_morphism] (al2) at (0.6,0) {\tiny $\alpha$};
%% al1 stuff; i,j,k
\draw[midarrow_rev] (al1) .. controls +(100:0.3cm) and +(down:0.3cm) .. (-0.5,0.7);
\node at (-0.7,0.4) {\tiny $i$};
\draw[midarrow] (al1) .. controls +(down:0.3cm) and +(up:0.3cm) .. (-0.2,-0.7);
\node at (-0.6,-0.4) {\tiny $j$};
\draw[midarrow_rev={0.9}] (al1) .. controls +(45:0.3cm) and +(down:0.3cm) .. (0.3,0.7);
%% X strand goes under
\draw[overline] (-0.4,0.7) .. controls +(down:0.5cm) and +(up:0.5cm) .. (-0.1,-0.7);
%% al2 stuff
\draw[midarrow={0.8}] (al2) .. controls +(125:0.3cm) and +(down:0.3cm) .. (0.5,0.7);
%\node at (0.7,0.4) {\tiny $i$};
\draw[midarrow_rev] (al2) .. controls +(down:0.3cm) and +(up:0.3cm) .. (0.2,-0.7);
\node at (0.6,-0.4) {\tiny $j$};
\draw[thin_overline={1.5},midarrow={0.9}]
  (al2) .. controls +(70:0.3cm) and +(down:0.3cm) .. (-0.3,0.7);
\node at (0.65,0.6) {\tiny $k$};
\end{tikzpicture}
\end{equation*}

Then
\[
(I,J) : (\cA, \tnsr) \to (\ZA,\tnsrbar)
\]
is a pivotal tensor functor.
\end{proposition}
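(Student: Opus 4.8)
The plan is to verify directly that the pair $(I, J)$ satisfies the axioms of a pivotal tensor functor between $(\cA, \tnsr)$ and $(\ZA, \tnsrbar)$. The verification breaks into four pieces: (1) $J_{X,Y}$ is a well-defined morphism in $\ZA$, i.e. it intertwines the half-braidings $\Gamma_X \tnsrbar \Gamma_Y$ on $I(X) \tnsrbar I(Y)$ (which by \lemref{l:hfbrd_red} we may take to be $\Gamma_X \tnsr c$ or $c^\inv \tnsr \Gamma_Y$) and $\Gamma_{X\tnsr Y}$ on $I(X\tnsr Y)$; (2) $J$ is natural in $X$ and $Y$; (3) $J$ satisfies the hexagon/associativity compatibility with the associativity constraints of $\cA$ and of $(\ZA, \tnsrbar)$ (the latter being just $\cA$'s associativity constraint by \lemref{l:assoc}); (4) $I(\one)$ is the unit $\onebar$ of $(\ZA,\tnsrbar)$ (which holds by \prpref{p:reduced_pivotal}, where $\onebar := I(\one)$), with the unit constraints matching, and $(I,J)$ respects the pivotal structures (both inherited from $\cA$, so this should be formal).

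First I would set up the graphical calculus carefully, writing the projection $Q_{\Gamma_X, \Gamma_Y}$ defining $I(X) \tnsrproj{\Gamma_X}{\Gamma_Y} I(Y)$ explicitly using the formula for $\Gamma$ from \prpref{p:center} and \defref{d:tnsr_red}. The key computational tool throughout is \lemref{l:summation} together with \lemref{l:summation-alpha-tunnel} (``tunneling through $\al$''), exactly as in the proof of \lemref{l:halfbrd} and \prpref{p:I_tensor}'s cousins in \ocite{kirillov-stringnet}; indeed the structure of the argument mirrors the computations in the proof of \thmref{t:center-adjoint}. For step (1), I would take the diagram for $J_{X,Y}$ with an external $B$-strand attached via the half-braiding on the source, push the $B$-strand through the $\al$-nodes using \lemref{l:summation-alpha-tunnel}, contract the summed-over internal strands with \lemref{l:summation}, and recognize the result as the $B$-strand attached via $\Gamma_{X\tnsr Y}$ on the target — the bookkeeping of the $\sqrt{d_i}\sqrt{d_j}\sqrt{d_k}$ coefficients (so that pairs combine into a $d_k$ suitable for \lemref{l:summation}) is the same trick used in the proof of \lemref{l:halfbrd}. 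For step (3), the hexagon, I would draw $J_{X, Y\tnsr Z} \circ (\id_{I(X)} \tnsrbar J_{Y,Z})$ and $J_{X\tnsr Y, Z} \circ (J_{X,Y} \tnsrbar \id_{I(Z)})$ and show both equal a common ``triple'' version of $J$; here I expect to use \lemref{l:halfbrd}(1) ($\Gamma_-$ respects tensor products) to simplify $\Gamma_{Y\tnsr Z}$, $\Gamma_{X\tnsr Y}$ appearing inside, then \lemref{l:summation} to contract.

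The main obstacle will be step (3), the hexagon axiom: the two composites involve different patterns of nested $\al$-summations and $Q$-projections, and matching them requires a genuine (if routine-in-spirit) diagram chase reconciling the $\sqrt{d}$-normalizations on both sides. A secondary subtlety is that in $(\ZA, \tnsrbar)$ the monoidal product of morphisms $f \tnsrbar g$ is only $f \tnsr g$ \emph{restricted} to the subobject cut out by $Q$, so one must be careful that all the diagrams are understood as living inside the appropriate images of projections; I would handle this by noting (via \lemref{l:hfbrd_red}) that the relevant projections commute with everything in sight, so they can be inserted or removed freely. I expect naturality (step (2)) to follow immediately from \lemref{l:summation-alpha-tunnel} exactly as in \lemref{l:halfbrd}(2), and the unit and pivotal compatibilities (step (4)) to be short formal checks using the unit constraints $l_{(X,\ga)}, r_{(X,\ga)}$ of \prpref{p:reduced_pivotal} and the fact that both pivotal structures are literally $\delta$ from $\cA$. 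Since the referenced paper \ocite{tham_reduced} (and the analogous \ocite{wassermansymm}*{Theorem 24}) already contain the full details, I would present the proof as a guided diagram chase, displaying the two or three crucial equalities and citing \lemref{l:summation}, \lemref{l:summation-alpha-tunnel}, \lemref{l:halfbrd}, and \lemref{l:hfbrd_red} at each step, leaving the coefficient bookkeeping to the reader.
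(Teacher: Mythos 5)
Your plan is correct and matches the paper's approach: the paper's own proof of \prpref{p:I_tensor} consists entirely of the words ``Straightforward computations,'' and your outline — verifying that $J_{X,Y}$ intertwines half-braidings, naturality, the hexagon, and the unit/pivotal compatibilities via \lemref{l:summation}, \lemref{l:summation-alpha-tunnel}, \lemref{l:halfbrd}, and \lemref{l:hfbrd_red} — is exactly the diagram chase the author is waving at (compare the displayed computation in the proof of \prpref{p:hA_ZA_tnsr}, which carries out the same kind of manipulation). The only point worth adding is a remark on whether $J_{X,Y}$ is invertible, which your proposal does not address but which is needed if one wants the stronger (strong monoidal) reading of ``tensor functor.''
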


\begin{proof}
Straightforward computations.
%we refer the reader to \ocite{tham_elliptic}*{Prop 3.19},
%where we prove essentially the same result
%(the only thing new here is the pivotal structure).
\end{proof}

\begin{remark} \label{r:2fold}
In \ocite{wasserman2fold},
Wasserman showed that the Drinfeld center
of a symmetric $\cA$ is a ``bilax 2-fold
tensor category", which loosely means
a category with two monoidal structures that
almost commute. More precisely,
it consists of a pair of natural transformations
\[
\eta : ((X,\ga) \tnsr (X',\ga')) \tnsrbar
  ((Y,\mu) \tnsr (Y',\mu'))
\rightleftharpoons
((X,\ga) \tnsrbar (Y,\mu)) \tnsr
  ((X',\ga') \tnsrbar (Y',\mu'))
: \zeta
\]
such that $\eta \circ \zeta = \id$,
together with several morphisms
relating the units $\one$ and $\onebar$,
and they satisfy a cocktail of compatibility axioms.
%As his symmetric tensor product is the same
%as the reduced tensor product,
%one might try to show that $\ZA$ also has such a
%structure when $\cA$ is not symmetric.
We claim that $\ZA$ also has such a
structure when $\cA$ is not symmetric.
The only difference to the structure maps is
where we have to distinguish the braiding $c$
from its inverse:
$\eta = \id_X \tnsr c^\inv_{X',Y} \tnsr \id_{Y'}$
and
$\zeta = \id_X \tnsr c_{Y,X'} \tnsr \id_{Y'}$
(with the various projections implicit).
The proof that they satisfy the various compatibility
axioms is a lengthy calculation that we do not share
here.
There is a more topological approach,
see \rmkref{r:2fold_cy}.
We do not prove this claim in this thesis,
as it will take us too far afield.
\rmkend
\end{remark}

%TODO: explore the $\ZZ/2$-action in
%https://arxiv.org/abs/0905.3117

\subsection{Horizontal Trace of $\cA$}
\label{s:ann-htr}
\par \noindent

In this section, we will consider the
\emph{horizontal trace} of $\cA$,
as defined in \ocite{BHLZ}*{Section 2.4},
which is a generalization of Ocneanu's tube algebra \ocite{Ocn}.
We follow the exposition in \secref{s:bg-htr} (see also \ocite{KT}),
where we also considered a minor generalization
to bimodule categories $\cM$;
here we only consider $\cM = \cA$.

\begin{definition}
\label{d:hA}
Consider $\cA$ as a bimodule category over itself
by left and right multiplication.
Its \emph{horizontal trace},
denoted $\htr(\cA)$ or simply $\hA$,
is the category with the following objects and morphisms:

Objects: same as in $\cA$

Morphisms: $\Hom_\hA(X,X') := \bigoplus_A \ihom{\cA}{A}(X,X')/\sim$,
where $\ihom{\cA}{A}(X,X') := \Hom_\cA(A \tnsr X, X' \tnsr A)$,
the sum is over all objects $A\in \cA$,
and $\sim$ is the equivalence relation generated by 
the following:
%\begin{itemize}

For any $\psi\in \ihom{\cA}{B,A}(X,X') := \Hom_\cA(B \tnsr X,X' \tnsr A)$
  and $f\in \Hom_\cC(A,B)$, we have 

\begin{equation}
\ihom{\cA}{A}(X,X') \ni
%%%%%%%%%%%%%%%%%%%%%%
\begin{tikzpicture}
\node[small_morphism] (psi) at (0,0) {\small $\psi$};
\draw (psi) -- +(0,0.8) node[above] {$X$};
\draw (psi) -- +(0,-0.8) node[below] {$X'$};
\coordinate (L) at (-0.8,0.6);
\coordinate (R) at (0.8,-0.6);
\node[left] at (L) {$A$};
\node[right] at (R) {$A$};
\draw (L) -- (psi);
\draw (psi) -- (R);
\node[small_morphism] at (-0.48,0.36) {\tiny $f$};
\node at (-0.35,0.05) {\tiny $B$};
\end{tikzpicture}
%%%%%%%%%%%%%%%%%%%%%%%%
\quad \sim \quad
%%%%%%%%%%%%%%%%%%%%%%%%
\begin{tikzpicture}
\node[small_morphism] (psi) at (0,0) {\small $\psi$};
\draw (psi) -- +(0,0.8) node[above] {$X$};
\draw (psi) -- +(0,-0.8) node[below] {$X'$};
\coordinate (L) at (-0.8,0.6);
\coordinate (R) at (0.8,-0.6);
\node[left] at (L) {$B$};
\node[right] at (R) {$B$};
\draw (L) -- (psi);
\draw (psi) -- (R);
\node[small_morphism] at (0.48,-0.36) {\tiny $f$};
\node at (0.35,-0.1) {\tiny $A$};
\end{tikzpicture}
%%%%%%%%%%%%%%%%%%%%%%
\in \ihom{\cA}{B}(X,X')
\end{equation}

%\item Linearity
%\end{itemize}
In other words, $\Hom_\hA(X,X') =
\displaystyle\int^A \ihom{\cA}{A}(X,X')$.
\defend
\end{definition}

\begin{definition}
\label{d:htr}
Let $\htr: \cA \to \hA$
be the natural inclusion functor
that is identity on objects,
and on morphisms is the natural map
$\Hom_\cA(X,X') = \ihom{\cA}{\one}(X,X') \to \Hom_\hA(X,X')$.
\defend
\end{definition}

The adjective ``inclusion" further justified by the
following proposition
(along with the fact that $I$ is faithful),
which implies $\htr$ is faithful also.

\begin{proposition}{\ocite{KT}*{Theorem 3.9}}
\label{p:hA_ZA}
Let $G: \hA \to \ZA$ be defined as follows:
on objects, $G(X) = I(X)$, and on morphisms,
for $\psi \in \ihom{\cM}{A}(X,X')$,

\begin{equation}
G(\psi) = 
\sum_{i,j \Irr(\cA)} \sqrt{d_i}\sqrt{d_j}
%%%%%%%%
\begin{tikzpicture}
\node[small_morphism] (psi) at (0,0) {$\psi$}; 
\node[small_morphism] (L) at (-0.7,0) {$\al$};
\node[small_morphism] (R) at (0.7,0) {$\al$};
\draw (psi)-- +(0,1) node[above] {$X$};
\draw (psi)-- +(0,-1) node[below] {$X'$}; 
\draw[midarrow_rev] (L)-- +(0,1) node[pos=0.5,left] {\tiny $i$};
\draw[midarrow] (L) -- +(0,-1) node[pos=0.5,left] {\tiny $j$}; 
\draw[midarrow] (R) -- +(0,1) node[pos=0.5,right] {\tiny $i$};
\draw[midarrow_rev] (R)-- +(0,-1) node[pos=0.5,right] {\tiny $j$};
\draw[midarrow={0.7}] (L) -- (psi);
\node at (-0.4, -0.2) {\tiny $A$};
\draw[midarrow_rev={0.7}] (R) -- (psi);
\node at (0.4, -0.2) {\tiny $A$};
\end{tikzpicture}
%%%%%%%%
\end{equation}

Then the extension to the Karoubi envelope
is an equivalence of abelian categories:
\[
  \Kar(G) : \Kar(\hA) \simeq \ZA
\]
Under this equivalence, the natural functor $\htr: \cA \to \hA$
is identified with $I: \cA \to \ZA$,
i.e. we have the commutative diagram
\begin{equation}
\label{e:comm_diag}
\begin{tikzcd}
\cA \ar[r,"\htr"] \ar[d,"I"']
& \hA \ar[ld,"G"'] \ar[d,"\Kar"]
\\
\ZA
& \Kar(\hA) \ar[l,"\Kar(G)"] \ar[l,"\simeq"']
\end{tikzcd}
\end{equation}
\end{proposition}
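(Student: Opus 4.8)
The plan is to transfer the abstract equivalence $\Kar(\htr_\cC(\cM)) \simeq \cZ_\cC(\cM)$ from \thmref{t:htr-center}, specialized to the case $\cM = \cA$ regarded as a bimodule category over itself (with left and right action being tensor multiplication), to the present setting. Since $\cA$ is premodular, it is in particular spherical fusion, hence pivotal multifusion, so \thmref{t:htr-center} applies directly: it produces a functor $G : \htr_\cA(\cA) \to \cZ_\cA(\cA)$ whose Karoubi extension is an equivalence of abelian categories, and identifies $\htr : \cA \to \htr_\cA(\cA)$ with $I : \cA \to \cZ_\cA(\cA)$. The first thing I would do is verify that the functor $G$ written down in the proposition statement coincides with the one constructed in the proof of \thmref{t:htr-center} (via \lemref{l:isom1}); with $\cM = \cA$ this is a routine translation of the general formula, the only simplifications being that $\one$ is simple (so $\Irr_0(\cA)$ is a point and $|\Irr_0(\cC)|\cD = \cD$), that left and right dimensions agree ($d_i^L = d_i^R = d_i$), and that semicircular morphisms $\al$ collapse to circular morphisms because of sphericality. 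Once the two descriptions of $G$ are matched, well-definedness on morphisms (respecting the coend relation) follows from \lemref{l:halfbrd}, and bijectivity on morphisms follows from the adjunction of \thmref{t:center-adjoint} combined with \lemref{l:isom1} exactly as in the proof of \thmref{t:htr-center}.

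Next I would spell out the identifications needed to phrase the result specifically for $\cA$: the objects and morphisms of $\cZ_\cA(\cA)$ in \defref{d:center} are precisely the pairs $(X,\ga)$ with $\ga$ a half-braiding and the morphisms intertwining them, which is the standard Drinfeld center $\ZA$ recalled at the start of Section~\ref{s:annulus}; similarly $\htr_\cA(\cA)$ as in \defref{d:htr} is exactly $\hA$ as in \defref{d:hA}, and the inclusion $\htr : \cA \to \htr_\cA(\cA)$ is the functor of \defref{d:htr} (the one denoted $\htr$ in Section~\ref{s:ann-htr}). With these dictionary entries in place, \thmref{t:htr-center} says precisely that $\Kar(G) : \Kar(\hA) \simeq \ZA$ is an equivalence and that the triangle identifying $\htr$ with $I$ commutes, which is the commutative diagram \eqnref{e:comm_diag}. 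I would also remark that $\ZA$ is already abelian (indeed semisimple, by \prpref{p:ZM_ss} applied to the semisimple $\cM = \cA$), so the target needs no Karoubi completion; this is why the statement reads $\Kar(\hA) \simeq \ZA$ rather than $\Kar(\hA) \simeq \Kar(\ZA)$.

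I do not expect a genuine obstacle here, since the proposition is essentially a specialization of already-established general machinery; the only real work is bookkeeping. The mildest point of friction will be confirming that the coefficient $\sqrt{d_i}\sqrt{d_j}$ appearing in the displayed formula for $G(\psi)$ is the correct specialization of the coefficients in the general $G$ (which carried factors like $\sqrt{d_i^R}\sqrt{d_j^R}$ and normalization by $|\Irr_0(\cC)|\cD$), and that the normalization is absorbed consistently when one writes the inverse map via \eqnref{e:isom2}; I would check this by tracing through \lemref{l:isom1} with $\cM = \cA$. A secondary cosmetic point is that in the spherical case one may and should use circular rather than semicircular summation nodes, so I would note that the notation of \nref{n:summation} (spherical version) is the one in force. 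Beyond that, the proof is: quote \thmref{t:htr-center} with $\cM = \cA$, unwind the definitions to match $\cZ_\cA(\cA) = \ZA$ and $\htr_\cA(\cA) = \hA$, match the two formulas for $G$, and conclude.
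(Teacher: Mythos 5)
Your proposal is correct and matches the paper's intent exactly: the paper offers no independent proof of \prpref{p:hA_ZA}, but simply cites \ocite{KT}*{Theorem 3.9}, which is the general equivalence \thmref{t:htr-center} specialized to $\cM=\cA$ as a bimodule over itself, with $\cZ_\cA(\cA)=\ZA$, $\htr_\cA(\cA)=\hA$, and the coefficients collapsing via sphericality ($d_i^L=d_i^R=d_i$, $\Irr_0(\cA)$ a point, circular $\al$-nodes) just as you describe.
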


We note that in \ocite{KT}, the proposition
would establish an equivalence
$\Kar(\htr(\cM)) \simeq \cZ(\cM)$,
where $\htr(\cM)$ is the horizontal trace
of an $\cA$-bimodule category,
and $\cZ(\cM)$ is the center of $\cM$
\ocite{GNN}*{Definition 2.1}
which is analogous to the Drinfeld center.

It is useful to construct an inverse to $\Kar(G)$:

\begin{proposition}
\label{p:G_inv}
An inverse to $\Kar(G)$ is given by:
\begin{align*}
\Kar(G)^\inv : \ZA &\simeq \Kar(\hA) \\
(X,\ga) &\mapsto (X, \hP_\ga)
\end{align*}
where
\[
\hP_\ga := \sum_{i \in \Irr(\cA)} \frac{d_i}{\cD} \ga_{X_i} =
%\sum_{i \in \Irr(\cA)} \frac{d_i}{\cD}
\frac{1}{\cD}
\begin{tikzpicture}
\node[small_morphism] (ga) at (0,0) {\tiny $\ga$};
\draw (ga) -- (0,0.6) node[above] {\small $X$};
\draw (ga) -- (0,-0.6);
\draw[regular] (ga) -- +(135:0.6cm);
\draw[regular] (ga) -- +(-45:0.6cm);
%\draw (ga) -- +(135:0.6cm);
%\draw[midarrow={0.6}] (ga) -- +(-45:0.6cm);
%\node at (0.5,-0.3) {\tiny $i$};
\end{tikzpicture}
\]
and on morphisms, for $f\in \Hom_\ZA((X,\ga),(Y,\mu))$,
\[
f \mapsto \hP_\mu \circ f = f \circ \hP_\ga
\]
\end{proposition}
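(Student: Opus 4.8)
\textbf{Proof plan for Proposition \ref{p:G_inv}.}

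The plan is to verify directly that the assignments in the statement define a functor $\ZA \to \Kar(\hA)$, and that this functor is a two-sided inverse to $\Kar(G)$. First I would check that $(X,\hP_\ga)$ is a well-defined object of $\Kar(\hA)$, i.e. that $\hP_\ga$ is an idempotent in $\End_\hA(X)$. That $\hP_\ga \in \Hom_\hA(X,X)$ is automatic since $\hP_\ga = \sum_i \tfrac{d_i}{\cD}\ga_{X_i}$ is a sum of morphisms $\ga_{X_i} \in \ihom{\cA}{X_i}(X,X)$. For idempotency, I would write $\hP_\ga \circ \hP_\ga$ using the composition rule of $\hA$ (stacking and tensoring the $A$-legs), and then use the half-braiding compatibility of $\ga$ together with \lemref{l:summation} to contract the resulting pair of $X_i,X_j$ loops; this is essentially the same computation as in \lemref{l:Mga_proj} (which establishes that $P_{(M,\ga)}$ is a projection), specialized to $\cM = \cA$, so I would cite that lemma rather than redo it. Note also that $\hP_\mu \circ f = f \circ \hP_\ga$ for $f \in \Hom_\ZA((X,\ga),(Y,\mu))$ follows from $f$ intertwining the half-braidings, which shows $f$ indeed defines a morphism $(X,\hP_\ga) \to (Y,\hP_\mu)$ in $\Kar(\hA)$; functoriality (respecting composition and identities) is then immediate.

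Next I would show $\Kar(G) \circ \Kar(G)^\inv \simeq \id_\ZA$. On objects, $\Kar(G)(X,\hP_\ga) = (G(X), G(\hP_\ga)) = (I(X), G(\hP_\ga))$ as an object of $\ZA$ viewed inside $\Kar(\ZA) \simeq \ZA$; I would compute $G(\hP_\ga)$ and identify it with the projection $P_{(X,\ga)}$ from \prpref{p:hA_ZA}'s underlying lemma (this is \lemref{l:Mga_proj} with $\cM = \cA$), whose image is precisely $(X,\ga)$. The key input here is that $G$ was \emph{defined} so that the projection onto the summand $(X,\ga)$ of $G(X) = I(X)$ is (up to normalization) $G(\sum d_i \ga_{X_i})$, which is exactly $G(\cD\,\hP_\ga) = \cD\, G(\hP_\ga)$; so $G(\hP_\ga)$ is the idempotent cutting out $(X,\ga)$, and the natural isomorphism $(I(X), G(\hP_\ga)) \simeq (X,\ga)$ is the one exhibited in \lemref{l:Mga_proj}. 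Naturality in $(X,\ga)$ of this isomorphism follows from naturality of the $\check{P}, \hat{P}$ maps.

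For the other composite, $\Kar(G)^\inv \circ \Kar(G) \simeq \id_{\Kar(\hA)}$: since $\Kar(G)$ is already known to be an equivalence by \prpref{p:hA_ZA}, it suffices that $\Kar(G)^\inv$ is a one-sided inverse, so strictly speaking this last step is free once the previous paragraph is done. Alternatively, and perhaps more transparently, one notes that on objects $X \in \hA$ the composite sends $X \mapsto (X, \hP_{\one\text{-ish}})$ where the relevant half-braiding is the canonical one on $I(X)$ restricted appropriately, and the identification with $X$ in $\Kar(\hA)$ comes from the commuting triangle \eqnref{e:comm_diag} identifying $\htr$ with $I$. The main obstacle I anticipate is bookkeeping: making sure the normalization constants ($d_i/\cD$ versus $\sqrt{d_i}\sqrt{d_j}/\cD$, and the $|\Irr_0(\cC)|$ factors, which here collapse since $\cA$ is fusion so $\Irr_0 = \{\one\}$ and $\End(\one) \cong \kk$) line up so that $G(\hP_\ga)$ is exactly the idempotent of \lemref{l:Mga_proj} and not a scalar multiple of it; once that is pinned down, every step reduces to a citation of \lemref{l:Mga_proj} or \lemref{l:summation}.
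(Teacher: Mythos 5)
Your proposal is correct and is exactly the direct verification that the paper compresses into the single word ``Straightforward'': idempotency of $\hP_\ga$ and the identification $G(\hP_\ga)=P_{(X,\ga)}$ both reduce to \lemref{l:Mga_proj} (with $|\Irr_0(\cA)|=1$ since $\cA$ is fusion), and the second composite is free once $\Kar(G)$ is known to be an equivalence by \prpref{p:hA_ZA}. No gaps.
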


\begin{proof}
Straightforward.
\end{proof}

When $\cA$ is premodular,
in particular when $\cA$ is \emph{braided},
$\hA$ has a natural monoidal structure.
This was discussed in \ocite{KT}*{Example 8.2};
here we spell it out more explicitly.

\begin{proposition}
\label{p:hA-tnsr}
There is a tensor product on $\hA$,
denoted $\htnsr$:
on objects, it is simply the same as $\cA$,
and on morphisms,

\begin{equation*}
\begin{tikzpicture}
\node at (0,3) {$\htnsr: $};
\end{tikzpicture}
%%% \hA stuff %%%%%%%%%%%%%%%%%%%%%%%%%%%%%%%%%%%%%%%%%%%
\begin{tikzpicture}
\node at (0,3) {$\ihom{\cA}{A}(X,X')$};
\node at (0,2) {$X$};
\node at (0,0) {$X'$};
\node[small_morphism] (ph) at (0,1) {$\ph$};
\draw (ph) -- (0,0.2);
\draw (ph) -- (0,1.8);
\draw (ph) -- (-0.5,1.5) node[pos=1,above left] {$A$};
\draw (ph) -- (0.5,0.5) node[pos=1,below right] {$A$};
\end{tikzpicture}
%%%%%%%%%%%%%%%%%%%%%%%%%%%%%%%
\begin{tikzpicture}
\node at (0,1) {$\boxtimes$};
\node at (0,3) {$\boxtimes$};
\end{tikzpicture}
%%% \hA stuff %%%%%%%%%%%%%%%%%%%%%%%%%%%%%%%%%%%%%%%%%%%
\begin{tikzpicture}
\node at (0,3) {$\ihom{\cA}{B}(Y,Y')$};
\node at (0,2) {$Y$};
\node at (0,0) {$Y'$};
\node[small_morphism] (ph) at (0,1) {\small $\psi$};
\draw (ph) -- (0,0.2);
\draw (ph) -- (0,1.8);
\draw (ph) -- (-0.5,1.5) node[pos=1,above left] {$B$};
\draw (ph) -- (0.5,0.5) node[pos=1,below right] {$B$};
\end{tikzpicture}
%%%%%%%%%%%%%%%%%%%%%%%%%%%%%%%
\begin{tikzpicture}
\node at (0,1) {$\mapsto$};
\node at (0,3) {$\longrightarrow$};
\end{tikzpicture}
%%% \hA stuff %%%%%%%%%%%%%%%%%%%%%%%%%%%%%%%%%%%%%%%%%%%
\begin{tikzpicture}
\node at (0,3) {$\ihom{\cA}{AB}(XY, X'Y'$)};
\node at (0,2) {$XY$};
\node at (0,0) {$X'Y'$};
\node[small_morphism] (ph1) at (-0.2,0.9) {\tiny $\ph$};
\node[small_morphism] (ph2) at (0.2,1.1) {\tiny $\psi$};
\draw (ph2) -- (0.2,1.8);
\draw (ph2) -- (0.2,0.2);
\draw (ph2) -- +(150:1cm);
\draw (ph2) -- +(-30:0.8cm);
\draw[overline] (ph1) -- (-0.2,1.8);
\draw (ph1) -- (-0.2,0.2);
\draw (ph1) -- +(150:0.8cm);
\draw[overline] (ph1) -- +(-30:1cm);
\node at (-1.3,1.7) {$AB$};
\node at (1.3,0.3) {$AB$};
\end{tikzpicture}
\end{equation*}

The unit object is the same as that of $\cA$.

Furthermore, this tensor product structure
is compatible with the rigid and pivotal structures
of $\cA$.

In other words, this is an
extension the tensor product on $\cA$,
so that the inclusion functor $\htr: \cA \to \hA$
is a pivotal tensor functor.
\end{proposition}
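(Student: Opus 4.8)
The plan is to verify directly that the formula for $\htnsr$ displayed in the statement defines a bifunctor $\hA\boxtimes\hA\to\hA$ extending $\tnsr$, and then to deduce the monoidal and pivotal structure essentially for free. The one piece of structure doing all the work is the braiding $c$ of $\cA$, via its naturality and the two hexagon axioms: the over/under crossings appearing in the picture for $\varphi\htnsr\psi$ are exactly those needed to slide the $A$-strand of $\varphi$ past the strands $Y,Y'$ of $\psi$ and, symmetrically, the $B$-strand of $\psi$ past $X,X'$, so that the underlying $\cA$-morphism $A\tnsr X\tnsr B\tnsr Y\to X'\tnsr Y'\tnsr A\tnsr B$ makes sense. (Alternatively, one may just invoke \ocite{KT}*{Example~8.2}, where this monoidal structure on the horizontal trace of a braided category was already recorded; what follows is the explicit verification.)

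First I would check that the assignment descends to the coends, i.e.\ respects the relation $\sim$ of \defref{d:hA}. Fix $\psi\in\ihom{\cA}{B,A}(X,X') := \Hom_\cA(B\tnsr X,X'\tnsr A)$ and $f\in\Hom_\cA(A,B)$, so that the two representatives obtained by placing $f$ on the top leg or on the bottom leg are identified in $\Hom_\hA(X,X')$, and fix $\chi\in\ihom{\cA}{C}(Y,Y')$. Applying the formula to each representative yields morphisms in $\ihom{\cA}{B\tnsr C}(X\tnsr Y,X'\tnsr Y')$ and $\ihom{\cA}{A\tnsr C}(X\tnsr Y,X'\tnsr Y')$; using naturality of $c$ to pull $f$ through every braiding it meets, one reduces to the defining relation of the coend for the looping object $A\tnsr C$ (resp.\ $B\tnsr C$) and the morphism $f\tnsr\id_C$. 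Varying the second tensor factor is the same argument with the two strands exchanged. Next comes bifunctoriality. Preservation of identities is immediate: $\id_X=[\id_X]$ is represented by a diagram with trivial looping object $\one$, so no braiding is inserted and $\id_X\htnsr\id_Y=\id_{X\tnsr Y}$. For composition one must show $(\varphi'\circ\varphi)\htnsr(\psi'\circ\psi)=(\varphi'\htnsr\psi')\circ(\varphi\htnsr\psi)$; composition in $\hA$ stacks the underlying $\cA$-morphisms vertically and tensors the looping objects, so the right-hand side has two successive layers of crossings of the $A,A'$ strands past $Y,Y'$ (and symmetrically past $X,X'$), and the hexagon axioms, expressing $c_{A'\tnsr A,-}$ and $c_{-,X\tnsr X'}$ through single crossings, let one isotope this to the single layer appearing on the left-hand side. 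I expect this interchange check to be the fiddliest step, since it requires two hexagon applications on each side, but it is a routine diagram manipulation with no conceptual obstacle.

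Finally I would install the remaining data. On objects $\htnsr$ agrees with $\tnsr$, so the associativity constraint is $[a_{X,Y,Z}]$, the unit is $\one$, and the unit constraints are $[\lambda_X],[\rho_X]$; naturality of these in $\hA$ follows from naturality of $a,\lambda,\rho$ in $\cA$ together with the hexagons (to rearrange the braidings in $\htnsr$ of a morphism), and the pentagon and triangle identities in $\hA$ then follow from those in $\cA$, since they only involve the associator and unit constraints tensored with identities. The key observation for the pivotal part is that any $g\in\Hom_\cA(X,X')$ maps under $\htr$ to the class of a diagram with trivial looping object, whence $\htr(g)\htnsr\htr(h)=\htr(g\tnsr h)$ on the nose; thus $\htr$ is monoidal (strict up to the unit constraints of $\cA$). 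Transporting the rigid structure: the left and right duals of $X$ in $\hA$ are $X^*$ and $\rdual{X}$, with (co)evaluations $\htr(\ev_X),\htr(\coev_X)$ and their right-handed analogues, and the zig-zag identities in $\hA$ reduce to those in $\cA$ because each side is $\htr$ of the corresponding $\cA$-composite. Likewise the pivotal isomorphism $\htr(\delta_X)\colon X\simeq X^{**}$ is an isomorphism of monoidal functors because $\delta_{X\tnsr Y}=\delta_X\tnsr\delta_Y$ in $\cA$. This shows $\hA$ is pivotal and that $\htr\colon\cA\to\hA$ is a pivotal tensor functor, completing the proof.
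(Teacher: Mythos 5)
Your proposal is correct and supplies exactly the routine verification that the paper compresses into the single word ``Straightforward'': descent to the coend via naturality of $c$, bifunctoriality, and transport of the rigid/pivotal data along $\htr$ using the fact that $\cA$-morphisms have trivial looping object. The only step I would flag as slightly underplayed is the interchange law: besides the two hexagon applications, the two sides live a priori in $\ihom{\cA}{A'B'AB}$ and $\ihom{\cA}{A'AB'B}$ respectively, so you must also invoke the defining relation of the coend along the braiding isomorphism $A'\tnsr B'\tnsr A\tnsr B\simeq A'\tnsr A\tnsr B'\tnsr B$ (which is where the specific over/under pattern of the crossings in the defining picture matters); once that is said, the argument closes as you describe.
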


\begin{proof}
Straightforward.
\end{proof}

It is useful to work out the left dual of a morphism
(right dual being similar):
\begin{equation}
\label{e:hA_dual}
\begin{tikzpicture}
\node at (0,1.5) {$\ihom{\cA}{A}(X,X')$};
\node[small_morphism] (ph) at (0,0) {\small $\ph$};
\draw (ph) -- (0,0.6) node[above] {$X$};
\draw (ph) -- (0,-0.6) node[below] {$X'$};
\draw (ph) -- (-0.6,0.3) node[left] {$A$};
\draw (ph) -- (0.6,-0.3) node[right] {$A$};
\end{tikzpicture}
%%%%%%%%%%%%%%%%%%%%%%%%%%
\begin{tikzpicture}
\node at (0,1.5) {$\to$};
\node at (0,0) {$\mapsto$};
\end{tikzpicture}
%%%%%%%%%%%%%%%%%%%%%%%%%%
\begin{tikzpicture}
\node at (0,1.5) {$\ihom{\cA}{A}(X'^*,X^*)$};
\node[small_morphism] (ph) at (0,0) {\tiny $\ph$};
\draw (ph) .. controls +(up:0.3cm) and +(up:0.3cm) ..
  (0.2,0) .. controls +(down:0.2cm) and +(up:0.2cm) ..
  (0,-0.6)
  node[below] {$X^*$};
\draw (ph) -- (-0.6,0.3) node[left] {$A$};
\draw[overline={1.5}] (ph) -- (0.6,-0.3)
  node[right] {$A$};
\draw[overline={1.25}]
  (ph) .. controls +(down:0.3cm) and +(down:0.3cm) ..
  (-0.2,0) .. controls +(up:0.2cm) and +(down:0.2cm) ..
  (0,0.6)
  node[above] {$X'^*$};
\end{tikzpicture}
\end{equation}

In particular, when $X' = X$, and $\ph=\ga$ is a half-braiding on $X$,
the right side is nothing but $\ga^\vee$.

The following proposition is an upgrade of
\prpref{p:hA_ZA}:

\begin{proposition}
\label{p:hA_ZA_tnsr}
When $\ZA$ is endowed with the reduced tensor product,
the equivalence in \prpref{p:hA_ZA} is an equivalence
of pivotal multifusion categories.
More precisely, let $J$ be the tensor structure on $I$
from \prpref{p:I_tensor}.
Then
\[
(G,J) : (\hA, \htnsr) \to (\ZA, \tnsrbar)
\]
is a pivotal tensor functor,
and hence its completion to $\Kar(\hA)$
is a pivotal tensor equivalence.
\end{proposition}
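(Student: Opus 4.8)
The statement to be proved is that $(G,J)\colon(\hA,\htnsr)\to(\ZA,\tnsrbar)$ is a pivotal tensor functor, where $G$ is the equivalence of \prpref{p:hA_ZA} and $J$ is the tensor structure of \prpref{p:I_tensor} (note that $G$ and $I$ agree on objects, and $G$ followed by forgetting to $\hA$ is $\htr$ followed by $I$, so $J$ makes sense as a would-be tensor structure on $G$). Since \prpref{p:hA_ZA} already gives that $\Kar(G)$ is an equivalence of abelian categories, the only new content is: (i) $J_{X,Y}\colon G(X)\tnsrbar G(Y)\to G(X\tnsr Y)$ is natural in $X,Y$ \emph{as morphisms in $\hA$} (not just in $\cA$); (ii) $(G,J)$ satisfies the hexagon/associativity coherence and is unital; and (iii) $(G,J)$ respects the pivotal structures. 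Then completing to Karoubi envelopes is automatic, since the Karoubi envelope of a pivotal tensor category inherits the structure and the completion of a pivotal tensor equivalence is one.

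The main steps, in order, are as follows. First I would reduce everything to a graphical-calculus verification in $\cA$: under $G$, a morphism $[\psi]\in\Hom_\hA(X,X')$ with $\psi\in\ihom{\cA}{A}(X,X')$ becomes the explicit string diagram displayed in \prpref{p:hA_ZA}, and the reduced tensor product $\tnsrbar$ on $\ZA$ is computed via the projections $Q_{\ga,\mu}$ of \defref{d:tnsr_red} together with the half-braiding identity $\ga\tnsrbar\mu=\ga\tnsr c=c^\inv\tnsr\mu$ of \lemref{l:hfbrd_red}. Second, for naturality in $\hA$, I would take $f\in\Hom_\cC(A,B)$ and $\psi\in\ihom{\cA}{B,A}(X,X')$ and check that the two string diagrams obtained by applying $G$ then $J$, versus $J$ then $G$, are equal after using the defining relation of $\htr(\cA)$ — concretely, that sliding $f$ around the $\al$-loops in $G(\psi)$ corresponds, on the $\tnsrbar$ side, to sliding $f$ through the projection, which is exactly the coend relation \eqnref{e:gluing-relation-zero}-style move; \lemref{l:summation-alpha-tunnel} ("tunnel through $\al$") is the workhorse here. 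Third, for the tensor coherence I would verify directly that $J_{X\tnsr Y,Z}\circ(J_{X,Y}\tnsrbar\id)$ and $J_{X,Y\tnsr Z}\circ(\id\tnsrbar J_{Y,Z})$ agree (up to the associators, which are those of $\cA$ on both sides by \lemref{l:assoc} and \prpref{p:reduced_pivotal}), reducing it to associativity of $\tnsr$ in $\cA$ and to combining $\sqrt{d_i}$-coefficients via \lemref{l:summation}, just as in \lemref{l:halfbrd}; unitality is the statement that $J_{\one,Y}$ and $J_{X,\one}$ are the unit constraints $l,r$ of \prpref{p:reduced_pivotal} after identifying $I(\one)=\onebar$, which is a short computation. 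Fourth, for the pivotal structure I would note that both $\hA$ and $\ZA$ carry the pivotal structure of $\cA$ (\prpref{p:hA-tnsr}, \prpref{p:reduced_pivotal}), and that $G$ sends the left dual of a morphism computed by \eqnref{e:hA_dual} to $\ga^\vee$ as in \prpref{p:reduced_pivotal}, so compatibility of $J$ with evaluation/coevaluation reduces to matching the explicit $\ev,\coev$ formulas; here \prpref{p:I_tensor} already asserts $(I,J)$ is pivotal tensor on $\cA$, so I would leverage that plus the identification $\htr\leftrightarrow I$ from \eqnref{e:comm_diag} rather than recompute.

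\textbf{Main obstacle.} The genuinely delicate point is step two: verifying that $J$ is natural with respect to morphisms in $\hA$ and not merely in $\cA$. A morphism in $\hA$ is an equivalence class, and $J$ is defined by an explicit formula on representatives; one must show the formula is compatible with the coend relation \emph{and} that the resulting square commutes. This is where one must be careful about which of the two strands at the extremes of a semicircular $\al$ is which (the warning in \lemref{l:halfbrd}), about the direction of the braiding $c$ versus $c^\inv$ (since $\tnsrbar$ is not symmetric, \lemref{l:hfbrd_red} must be applied with the correct chirality), and about bookkeeping the $\sqrt{d_i}\sqrt{d_j}\sqrt{d_k}$ factors so that contractions legitimately invoke \lemref{l:summation}. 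I expect this to occupy the bulk of the proof; once naturality is in hand, the coherence and pivotality checks are routine applications of \lemref{l:summation}, \lemref{l:summation-alpha-tunnel}, and \lemref{l:summation-separate}, entirely parallel to the computations already carried out in the proof of \thmref{t:center-adjoint}.
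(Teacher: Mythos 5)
Your proposal is correct and follows essentially the same route as the paper: the paper likewise observes that \prpref{p:I_tensor} already supplies the tensor structure, coherence, unit and pivotality on objects, so the only remaining content is naturality of $J$ with respect to the larger class of morphisms of $\hA$, which it verifies by exactly the kind of string-diagram computation you describe (checking $J_{X',Y}\circ(G(\ph)\tnsrbar G(\id_Y))=G(\ph\htnsr\id_Y)\circ J_{X,Y}$ for $\ph\in\ihom{\cA}{A}(X,X')$ using \eqnref{e:summation_convention} and \lemref{l:summation}). The one simplification you could make is that compatibility with the coend relation need not be rechecked, since $G$ is already known to be well-defined on $\hA$-morphisms by \prpref{p:hA_ZA}, so only the naturality square on representatives requires verification.
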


\begin{proof}
As \prpref{p:I_tensor} takes care of objects,
it remains to check naturality of $J$ with respect to morphisms
of $\hA$. This is easy to do:
for example, for $\ph \in \ihom{\cA}{A}(X,X')$,
we see that
$J_{X',Y} \circ (I(\ph) \tnsrbar I(\id_Y))
=
I(\ph \tnsr \id_Y) \circ J_{X,Y}$ by

\begin{equation*}
%%%%%%%%%%%%%%%%%%%%%%%%%%%%%%%%%
\sqrt{d_i} \sqrt{d_j} \sqrt{d_k} d_l
\begin{tikzpicture}
\tikzmath{ \toprow = 1; \bottom = -1; }
%\node[above] at (-0.4,\toprow) {\tiny $I(X)$};
%\node[above] at (0.4,\toprow) {\tiny $I(Y)$};
\node[above] at (0,\toprow) {\tiny $I(X) \tnsrbar I(Y)$};
\node[below] at (0,\bottom) {\tiny $I(X'\htnsr Y)$};
\node[small_morphism] (be1) at (-0.6,0) {\tiny $\beta$};
\node[small_morphism] (be2) at (0.6,0) {\tiny $\beta$};
\node[small_morphism] (al1) at (-0.7,0.7) {\tiny $\al$};
\node[small_morphism] (al2) at (-0.1,0.7) {\tiny $\al$};
\node[small_morphism] (ph) at (-0.4,0.7) {\tiny $\ph$};
%% Y strand
\draw (0.4,\toprow) .. controls +(down:0.5cm) and +(up:0.5cm) .. (0.1,\bottom);
%% be1 stuff; i,j,k
\draw (be1) .. controls +(100:0.3cm) and +(down:0.3cm) .. (al1);
\node at (-0.8,0.4) {\tiny $l$};
\draw (be1) .. controls +(down:0.3cm) and +(up:0.3cm) .. (-0.3,\bottom);
\node at (-0.6,-0.5) {\tiny $j$};
\draw (be1) .. controls +(45:0.3cm) and +(down:0.3cm) .. (0.3,\toprow);
%% X strand goes over
\draw (ph) -- (-0.4,\toprow);
\draw[thin_overline={1.5}] (ph) .. controls +(down:0.5cm) and +(up:0.5cm) .. (-0.1,\bottom);
%% be2 stuff
\draw (be2) .. controls +(125:0.3cm) and +(down:0.3cm) .. (0.5,\toprow);
%\node at (0.7,0.4) {\tiny $i$};
\draw (be2) .. controls +(down:0.3cm) and +(up:0.3cm) .. (0.3,\bottom);
\draw[thin_overline={1.5}]
  (be2) .. controls +(70:0.3cm) and +(down:0.3cm) .. (al2);
\node at (0.6,0.6) {\tiny $k$};
%% als to top
\draw (al1) -- (-0.7,\toprow);
\node at (-0.8,0.9) {\tiny $i$};
\draw (al2) -- (-0.1,\toprow);
%% cross strand on X
\draw (al1) -- (ph);
\draw (ph) -- (al2);
\end{tikzpicture}
%%%%%%%%%%%%%%%%%%%%%%%%%%%%%%%%%
=
%%%%%%%%%%%%%%%%%%%%%%%%%%%%%%%%%
\sqrt{d_i} \sqrt{d_j} \sqrt{d_k}
\begin{tikzpicture}
\tikzmath{ \toprow = 1; \bottom = -1; }
%\node[above] at (-0.4,\toprow) {\tiny $I(X)$};
%\node[above] at (0.4,\toprow) {\tiny $I(Y)$};
\node[above] at (0,\toprow) {\tiny $I(X) \tnsrbar I(Y)$};
\node[below] at (0,\bottom) {\tiny $I(X'\htnsr Y)$};
\node[small_morphism] (ga1) at (-0.6,0) {\tiny $\ga$};
\node[small_morphism] (ga2) at (0.6,0) {\tiny $\ga$};
\node[small_morphism] (ph) at (-0.2,0) {\tiny $\ph$};
%% Y strand
\draw (0.4,\toprow) .. controls +(down:0.5cm) and +(up:0.5cm) .. (0.1,\bottom);
%% ga1 stuff; i,j,k
\draw (ga1) .. controls +(100:0.3cm) and +(down:0.3cm) .. (-0.6,\toprow);
\node at (-0.75,0.5) {\tiny $i$};
\draw (ga1) .. controls +(down:0.3cm) and +(up:0.3cm) .. (-0.3,\bottom);
\node at (-0.6,-0.5) {\tiny $j$};
\draw (ga1) .. controls +(45:0.3cm) and +(down:0.3cm) .. (0.3,\toprow);
%% X strand goes over
\draw[thin_overline={1.5}] (ph) .. controls +(100:0.3cm) and +(down:0.5cm) .. (-0.4,\toprow);
\draw (ph) .. controls +(-80:0.5cm) and +(up:0.5cm) .. (-0.1,\bottom);
%% ga2 stuff
\draw (ga2) .. controls +(125:0.3cm) and +(down:0.3cm) .. (0.5,\toprow);
%\node at (0.7,0.4) {\tiny $i$};
\draw (ga2) .. controls +(down:0.3cm) and +(up:0.3cm) .. (0.3,\bottom);
\draw[thin_overline={1.5}]
  (ga2) .. controls +(70:0.3cm) and +(down:0.3cm) .. (-0.2,\toprow);
\node at (0.6,0.6) {\tiny $k$};
%% cross strand on X
\draw (ga1) -- (ph);
\draw[thin_overline={1.5}] (ph) -- (ga2);
\node at (0.1,0.1) {\tiny $A$};
\end{tikzpicture}
%%%%%%%%%%%%%%%%%%%%%%%%%%%%%%%%%
=
%%%%%%%%%%%%%%%%%%%%%%%%%%%%%%%%%
\sqrt{d_i} \sqrt{d_j} \sqrt{d_k} d_l
\begin{tikzpicture}
\tikzmath{ \toprow = 1; \bottom = -1; }
%\node[above] at (-0.4,\toprow) {\tiny $I(X)$};
%\node[above] at (0.4,\toprow) {\tiny $I(Y)$};
\node[above] at (0,\toprow) {\tiny $I(X) \tnsrbar I(Y)$};
\node[below] at (0,\bottom) {\tiny $I(X'\htnsr Y)$};
\node[small_morphism] (be1) at (-0.6,0) {\tiny $\beta$};
\node[small_morphism] (be2) at (0.6,0) {\tiny $\beta$};
\node[small_morphism] (al1) at (-0.4,-0.7) {\tiny $\al$};
\node[small_morphism] (al2) at (0.4,-0.7) {\tiny $\al$};
\node[small_morphism] (ph) at (-0.1,-0.7) {\tiny $\ph$};
%% Y strand
\draw (0.4,\toprow) .. controls +(down:0.5cm) and +(up:0.5cm) .. (0.1,\bottom);
%% be1 stuff; i,j,k
\draw (be1) .. controls +(100:0.3cm) and +(down:0.3cm) .. (-0.6,\toprow);
\node at (-0.75,0.5) {\tiny $i$};
\draw (be1) .. controls +(down:0.3cm) and +(up:0.3cm) .. (al1);
\node at (-0.6,-0.45) {\tiny $l$};
\draw (be1) .. controls +(45:0.3cm) and +(down:0.3cm) .. (0.3,\toprow);
%% X strand goes over
\draw[thin_overline={1.5}] (ph) .. controls +(up:0.3cm) and +(down:0.5cm) .. (-0.4,\toprow);
\draw (ph) -- (-0.1,\bottom);
%% be2 stuff
\draw (be2) .. controls +(125:0.3cm) and +(down:0.3cm) .. (0.5,\toprow);
%\node at (0.7,0.4) {\tiny $i$};
\draw (be2) .. controls +(down:0.3cm) and +(up:0.3cm) .. (al2);
\draw[thin_overline={1.5}]
  (be2) .. controls +(70:0.3cm) and +(down:0.3cm) .. (-0.2,\toprow);
\node at (0.6,0.6) {\tiny $k$};
%% al1, al2 to bottom
\draw (al1) -- (-0.4,\bottom);
\node at (-0.5,-0.9) {\tiny $j$};
\draw (al2) -- (0.4,\bottom);
%% cross strand on X
\draw (al1) -- (ph);
\draw[thin_overline={1.5}] (ph) -- (al2);
\end{tikzpicture}
%%%%%%%%%%%%%%%%%%%%%%%%%%%%%%%%%
\end{equation*}
using \eqnref{e:summation_convention} and \lemref{l:summation}.
\end{proof}

\begin{proposition}
The inverse functor $\Kar(G)^\inv$ defined in
\prpref{p:G_inv}
is naturally a tensor functor.
\end{proposition}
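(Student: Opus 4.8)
The plan is to reduce to the standard categorical fact that a quasi-inverse of a strong monoidal equivalence is itself canonically strong monoidal, and to observe that the pivotal compatibility then comes for free. Write $G = \Kar(G) : \Kar(\hA) \to \ZA$ for the equivalence and $H = \Kar(G)^\inv : \ZA \to \Kar(\hA)$ for the quasi-inverse of \prpref{p:G_inv}, and fix natural isomorphisms $\eta : \id_{\Kar(\hA)} \Rightarrow HG$ and $\eps : GH \Rightarrow \id_{\ZA}$ exhibiting the equivalence (satisfying the usual zigzag identities). By \prpref{p:hA_ZA_tnsr}, $(G,J)$ is a pivotal tensor functor with $J_{A,B} : G(A) \tnsrbar G(B) \xrightarrow{\ \sim\ } G(A \htnsr B)$; recall also that $G(\one_\hA) = I(\one) = \onebar$.

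First I would equip $H$ with the tensor structure defined, for $P,Q \in \ZA$, by the composite of isomorphisms
\[
\wdtld{J}_{P,Q} :\ HP \htnsr HQ
\xrightarrow{\ \eta\ } HG(HP \htnsr HQ)
\xrightarrow{\ H(J^\inv_{HP,HQ})\ } H(GHP \tnsrbar GHQ)
\xrightarrow{\ H(\eps_P \tnsrbar \eps_Q)\ } H(P \tnsrbar Q),
\]
together with the evident unital isomorphism $\one_\hA \simeq H(\onebar)$ obtained from $\eta,\eps$ and the (identity) unital datum of $(G,J)$. Naturality of $\wdtld{J}_{P,Q}$ in $P$ and $Q$ is immediate from naturality of $\eta$, $\eps$, and $J$, and each constituent arrow is invertible, so $\wdtld{J}$ is a natural isomorphism.

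Next I would verify the hexagon coherence for $(H,\wdtld{J})$ and the two unit triangles. This is the one genuine check, but it is the routine diagram chase: expand both composites in the hexagon, substitute the defining formula for each occurrence of $\wdtld{J}$, then apply (i) the hexagon for $J$, which holds because $(G,J)$ is monoidal; (ii) naturality of $\eta$ and $\eps$ to commute them past $\htnsr$ and $\tnsrbar$; and (iii) the zigzag identities to cancel the redundant $\eta$--$\eps$ pairs. The unit triangles go the same way. This is exactly the argument that a monoidal equivalence admits a monoidal quasi-inverse (cf.\ \ocite{EGNO}), applied to $\Kar(G)$. For the pivotal structure: both $\Kar(\hA)$ and $\ZA$ carry the pivotal structure induced by $\delta$ on underlying $\cA$-objects, and $H$ sends $(X,\ga)$ to an object of $\Kar(\hA)$ with underlying $\cA$-object $X$, compatibly with the forgetful functors; hence compatibility of $\wdtld{J}$ with duals and with $\delta$ is inherited directly from the corresponding statement for $(G,J)$ in \prpref{p:hA_ZA_tnsr}.

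The main obstacle is purely bookkeeping in the hexagon chase, and even that is dispatched by the general lemma once \prpref{p:hA_ZA_tnsr} is in hand — nothing substantively new is needed. As an alternative (which I would not pursue in detail), one can write $\wdtld{J}_{(X,\ga),(Y,\mu)}$ explicitly in the graphical calculus in terms of $\hP_\ga$, $\hP_\mu$, and $Q_{\ga,\mu}$, and check the hexagon by a direct computation using \lemref{l:summation} and \eqnref{e:summation_convention}; but the abstract transport argument is cleaner and avoids this.
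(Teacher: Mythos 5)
Your proof is correct, but it takes a genuinely different route from the paper. You invoke the general categorical fact that a quasi-inverse of a strong monoidal equivalence inherits a canonical monoidal structure by transport along $\eta$, $\eps$, and $J^{-1}$, with coherence following from the hexagon for $J$ and the zigzag identities; this is sound, since $\Kar(G)$ has already been established as a tensor equivalence in \prpref{p:hA_ZA_tnsr}. The paper instead constructs the tensor structure explicitly: it observes that $\Kar(G)^\inv(X,\ga)\htnsr\Kar(G)^\inv(Y,\mu)=(XY,\hP_\ga\tnsr\hP_\mu)$, proves the identity $\hP_\ga\tnsr\hP_\mu=\hP_{\ga\tnsrbar\mu}$ in $\End_\hA(XY)$ via \lemref{l:summation}, and then shows that the projection $Q_{\ga,\mu}$ is an isomorphism $(XY,\hP_{\ga\tnsrbar\mu})\simeq(X\tnsrproj{\ga}{\mu}Y,\hP_{\ga\tnsrbar\mu})$ in the Karoubi envelope, together with an explicit unit isomorphism \eqnref{e:unit_isom_G}. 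Your abstract argument is cleaner and avoids the graphical computations, but it buys less: the paper's explicit formulas (in particular $Q_{\ga,\mu}$ and the unit isomorphism) are what get reused downstream, e.g.\ in the topological comparison with $\CYA$, whereas your transported structure would require an extra (routine, by uniqueness of monoidal structures on a fixed monoidal quasi-inverse) identification to recover them. You correctly flag the explicit computation as the alternative route at the end; that alternative is precisely the paper's proof.
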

\begin{proof}
For the tensor structure on $\Kar(G)^\inv$,
there is a natural isomorphism
$(X,\hP_\ga) \tnsr (Y,\hP_\mu)
= (XY,\hP_\ga \tnsr \hP_\mu)
\simeq (X\tnsrproj{\ga}{\mu} Y,\hP_{\ga\tnsrbar\mu})$,
which is given by the natural projection
$Q_{\ga,\mu} : XY \to X\tnsrproj{\ga}{\mu} Y$.
This follows from two simple observations:
\[
\hP_\ga \tnsr \hP_\mu = \hP_{\ga\tnsrbar\mu}
\in \End_\hA(XY)
\]
(using \lemref{l:summation}),
which implies $(XY,\hP_\ga \tnsr \hP_\mu)
= (XY,\hP_{\ga\tnsrbar\mu})$,
and
\[
\hP_{\ga\tnsrbar\mu} =
\hP_\ga \circ Q_{\ga,\mu} =
\hP_\ga \circ Q_{\ga,\mu} \circ Q_{\ga,\mu} =
\hP_{\ga\tnsrbar\mu} \circ Q_{\ga,\mu}
\in \End_\hA(XY)
\]
(see \eqnref{e:hfbrd_sym})
which implies $Q_{\ga,\mu} : (XY,\hP_{\ga\tnsrbar\mu})
\to (X\tnsrproj{\ga}{,\mu} Y,\hP_{\ga\tnsrbar\mu})$
is an isomorphism.
It is easy to check that this satisfies
the hexagon axiom for tensor structure.

The isomorphism $\Kar(G)^\inv (\onebar)
\simeq \one$ is given by
\begin{equation}
\label{e:unit_isom_G}
\sum_{i\in \Irr(\cA)}
\frac{\sqrt{d_i}}{\sqrt{\cD}}
\begin{tikzpicture}
\draw (-0.2,0.5) .. controls +(down:0.5cm) and +(-30:0.5cm) .. (-0.6,0.3)
  node[pos=0,above] {\small $X_i$};
\draw (0.2,0.5) .. controls +(down:0.5cm) and +(150:0.5cm) .. (0.6,-0.3)
  node[pos=0,above] {\small $X_i^*$};
 \node at (0,-0.5) {\small $\one$};
\end{tikzpicture}
\end{equation}
\end{proof}

\subsection{Equivalence to Stacking Product}
\label{s:ann-cy}
\par \noindent

As we have seen in the proof of \thmref{t:excision-skein},
$\ZCYsk(\Ann) \simeq \ZA$ as abelian categories.
Here we show that the reduced tensor product on $\ZA$
is equivalent to the stacking product on $\ZCYsk(\Ann)$.
The following is a visual summary of results in \secref{s:skprop-stacking}
specialized to $N = \Ann$:

\begin{proposition}[\ocite{KT}*{Proposition 6.7}, \ocite{tham_reduced}]
The stacking tensor product on $\ZCYsk(\Ann)$,
as defined in \secref{s:skprop-stacking},
denoted by $\tnsrst$,
makes $\hCYA$ a pivotal multifusion category as follows:
\begin{itemize}
\item For an object $\VV = (B, \{V_b\})$,
its left dual is the object $\VV^\vee
:= (\theta(B), \{V_b^*\})$,
where $\theta$ is the operation on $\Ann$ that flips $(0,1)$,
and has the following evaluation and coevaluation maps:
\[
\begin{tikzpicture}
\node (a1) at (0,0) {$\VV^\vee \tnsrst \VV$};
\node (a2) at (0,-1) {$\onest$};
\node (a3) at (0, -2) {$\VV \tnsrst \VV^\vee$};
\draw[->] (a1) -- (a2) node[left,pos=0.5] {$\ev_\VV$};
\draw[->] (a2) -- (a3) node[left,pos=0.5] {$\coev_\VV$};
\end{tikzpicture}
\begin{tikzpicture}
%% bottom ellipses
%\draw (0,-1) ellipse (0.5cm and 0.16cm);
%\draw (0,-1) ellipse (1.5cm and 0.5cm);
%% inner ellipse
\draw (0,0) ellipse (0.5cm and 0.16cm);
%% nodes
\node[dotnode] (Vb) at (-0.75, 0) {};
\node[dotnode] (Vbp) at (-1.25, 0) {};
\node[dotnode] (Vb1) at (0.75, -0.1) {};
\node[dotnode] (Vb1p) at (1, -0.2) {};
%% the evaluation graph
\draw (Vb) .. controls +(down:0.8cm) and +(down:0.8cm) .. (Vbp);
\draw (Vb1) .. controls +(down:0.5cm) and +(down:0.5cm) .. (Vb1p);
%% inner vertical lines
\draw (-0.5, 0) -- (-0.5, -1);
\draw (0.5, 0) -- (0.5, -1);
%% gray and outside ellipse
\draw[overline={1}, gray] (0,0) ellipse (1cm and 0.33cm);
\draw[overline={1}] (0,0) ellipse (1.5cm and 0.5cm);
%% outside vertical lines
\draw (-1.5, 0) -- (-1.5, -1);
\draw (1.5, 0) -- (1.5, -1);
%% node labels
\node at (-0.7, 0.2) {\tiny $V_b$};
\node at (-1.22,0.2) {\tiny $V_b^*$};
\node at (0.75, 0.1) {\tiny $V_{b'}$};
\node at (1.22, -0.05) {\tiny $V_{b'}^*$};
%%%%%
%% bottom half
%%%%%
\draw (0,-2) ellipse (0.5cm and 0.16cm);
%% nodes
\node[dotnode] (Vbp) at (-0.75, -2) {};
\node[dotnode] (Vb) at (-1.25, -2) {};
\node[dotnode] (Vb1p) at (0.75, -2.1) {};
\node[dotnode] (Vb1) at (1, -2.2) {};
%% gray and outside ellipse
\draw[gray] (0,-2) ellipse (1cm and 0.33cm);
\draw (0,-2) ellipse (1.5cm and 0.5cm);
%% the coevaluation graph
\draw[overline={1}] (Vb) .. controls +(up:0.8cm) and +(up:0.8cm) .. (Vbp);
\draw[overline={1}] (Vb1) .. controls +(up:0.5cm) and +(up:0.5cm) .. (Vb1p);
%% vertical lines
%% vertical lines
\draw (-0.5, -1) -- (-0.5, -2);
\draw (0.5, -1) -- (0.5, -2);
\draw (-1.5, -1) -- (-1.5, -2);
\draw (1.5, -1) -- (1.5, -2);
%% node labels
\node at (-0.7, -2.2) {\tiny $V_b^*$};
\node at (-1.22,-2.2) {\tiny $V_b$};
\node at (0.55, -2.3) {\tiny $V_{b'}^*$};
\node at (1.12, -2.4) {\tiny $V_{b'}$};
\end{tikzpicture}
\]
(The gray lines indicate $S^1 \times \{1/2\}$
which separates $\VV^\vee$ from $\VV$
in the tensor product, and play no role in defining
these morphisms.)
We think of the outside half of the annulus as the left side
in the tensor product.
The right dual is $\rvee{\VV}:= (\theta(B), \{\rdual{V_b}\})$,
with essentially the same (co)evaluation maps.

\item The pivotal map
$\delta_\VV : \VV \to \VV^{\vee\vee}$
is simply the graph with vertical strands running down,
and one node on each strand labeled with $\delta_{V_b}$.
\end{itemize}

$\CYA$, as the Karoubi envelope of $\hCYA$,
naturally inherits these structures.
\end{proposition}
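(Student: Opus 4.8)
The plan is to read this proposition as the specialization of \prpref{prp:stacking} and \prpref{p:stack_pivotal} to the surface $N = \Ann = S^1 \times \iI$, taking $P = S^1$ (a legitimate choice in \secref{s:skprop-stacking}, where $P$ may be any finite collection of open intervals and circles). Under this identification $\hCYA = \hatZCYsk(\Ann)$ and the stacking product $\tnsrst$ is exactly the functor $m_* : \hCYA \boxtimes \hCYA \to \hCYA$ of \prpref{prp:stacking}, so it is a monoidal structure with unit $\onest = \EE$ and associator induced by $m_3^t$; and \prpref{p:stack_pivotal} already supplies the rigid and pivotal structures. Hence the assertions that are not literally restatements of those results are only: (i) the explicit annular shapes of the duals, the unit and counit maps, and the pivotal morphism, and (ii) that $\hCYA$ is \emph{multi}fusion rather than merely rigid and pivotal.

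For (i) I would unwind, for $P = S^1$, the topological maps used in the proof of \prpref{p:stack_pivotal}. The orientation-reversing flip $\theta$ of $S^1 \times \iI$ carries a boundary value $\VV = (B,\{V_b\})$ to $(\theta(B),\{V_b^*\})$, which is the stated $\VV^\vee$, while $\rvee{\VV}$ is the same with right duals. The left evaluation and coevaluation are $\nu$ and $\eta$ applied to $\id_\VV$ and the right ones are $\nu'$ and $\eta'$ applied to $\id_\VV$; for $P = S^1$ the accordion-folded cylinders of \figref{f:nu-eta} become precisely the annular diagrams drawn in the statement, and the gray circle $S^1 \times \{1/2\}$ simply records which collar has been designated the left tensor factor. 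The left (resp. right) dual of a morphism represented by a graph $\Gamma$ in $(S^1 \times \iI) \times I$ is obtained by applying the $180^\circ$ rotation $\Theta$ to $\Gamma$ and retaining every label and orientation, and the pivotal morphism $\delta_\VV$ is the vertical-strand graph carrying one $\delta_{V_b}$-node per strand, i.e. the pivotal structure of $\cA$ transported through the collaring. Each of these is a routine picture-tracing once the conventions are fixed (the ``outer half is the left factor'' choice, and the ambient-orientation convention of \rmkref{r:co-ribbon}); the triangle identities making these into genuine duality data are already part of the proof of \prpref{p:stack_pivotal}.

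For (ii) I would promote ``rigid pivotal'' to ``pivotal multifusion'' via excision, exactly as in the proof of \thmref{t:excision-skein}. Apply \thmref{t:excision-skein} with $N' = \iI \times \iI$, $N = \Ann$ and $P = \{*\}$: here $\ZCYsk(\iI \times \iI) \simeq \cA$ is spherical fusion, in particular pivotal multifusion, so by \prpref{p:ZM_ss} the center $\cZ_{\ZCYsk(\iI\times\iI)}(\ZCYsk(\iI\times\iI)) \simeq \ZA$ is finite semisimple; hence $\CYA \simeq \ZA$ is finite semisimple, and being rigid and pivotal it is pivotal multifusion. (There is no circularity, as the ``input'' category $\ZCYsk(P \times I)$ here is needed to be multifusion only for $P = \iI$, where it is literally $\cA$.) Since $\hCYA$ is a full dominant subcategory of $\CYA$ with $\Kar(\hCYA) \simeq \CYA$, all of the structures above pass from $\hCYA$ to $\CYA$, as rigidity and pivotal structures always extend along Karoubi completion --- this is the final sentence of the proposition.

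The main obstacle is the bookkeeping in step (i): faithfully matching the abstractly-defined folding and rotation maps $\nu, \nu', \eta, \eta', \Theta$ of \secref{s:skprop-stacking} with the concrete annular pictures in the statement, and in particular keeping track of which of the two $S^1$-collars becomes the outer (left) factor and of the orientation conventions, so that left versus right duals come out on the correct sides of the diagrams. Everything else is either a direct quotation of \prpref{prp:stacking}, \prpref{p:stack_pivotal}, \thmref{t:excision-skein}, and \prpref{p:ZM_ss}, or a routine diagram manipulation.
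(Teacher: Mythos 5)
Your proposal is correct and matches the paper's own treatment: the paper presents this proposition explicitly as a "visual summary" of the results of \secref{s:skprop-stacking} specialized to $N=\Ann$ (i.e. $P=S^1$ in \prpref{prp:stacking} and \prpref{p:stack_pivotal}), with the multifusion property supplied exactly as you describe, by the excision argument in the proof of \thmref{t:excision-skein} together with \prpref{p:ZM_ss}. The only work beyond quotation is the unwinding of $\nu,\nu',\eta,\eta',\Theta$ into the annular pictures, which you identify correctly.
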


It is not hard to see that the left dual of a morphism
is obtained by turning the solid annulus ``inside-out";
for example,

\begin{equation*}
%%% Ann %%%%%%%%%%%%%%%%%%%%%%%%%%%%%%%%%%%%%%%%%%%%%%%%
\begin{tikzpicture}
\tikzmath{
  \toprow = 0.8;
  \bottom = -0.8;
  \midrow = 0.2;
  \bigx = 1; %size of big ellipse
  \bigy = 0.2;
  \smlx = 0.5;
  \smly = 0.1;
  \medx = 0.75;
  \medy = 0.15;
}
%% bottom ellipse
\draw (0,\bottom) ellipse (\bigx cm and \bigy cm);
%\draw (0,0) ellipse (0.5cm and 0.25cm); don't want to get covered
%% back half of graph
\draw (\medx,\midrow) arc(0:180:\medx cm and \medy cm);
%% inner vertical lines
\draw[overline={1.5}] (-0.5,\bottom) -- (-0.5,\toprow);
\draw[overline={1.5}] (0.5,\bottom) -- (0.5,\toprow);
\draw (0,\bottom) ellipse (\smlx cm and \smly cm); %bottom inner ellipse
%% gray arrows indicate flipping operation
\draw[gray,overline={1},->] (-0.4,-0.55)
  to[out=60,in=-60] (-0.4,0.55);
\draw[gray,overline={1},->] (0.4,-0.55)
  to[out=120,in=-120] (0.4,0.55);
\draw[gray,->] (-1.1,0.55)
  to[out=-120,in=120] (-1.1,-0.55);
\draw[gray,->] (1.1,0.55)
  to[out=-60,in=60] (1.1,-0.55);
%% graph
\node[dotnode] (top) at (-\medx,\toprow) {}; %% make label later because of overline
\node[dotnode, label={[shift={(0.,-0.6)}] \small $Y'$}] (bottom) at (-\medx,\bottom) {};
\draw[overline={1.5}, midarrow_rev]
  (\medx,\midrow) arc(0:-180:\medx cm and \medy cm)
  node[pos=0.5, below] {\tiny $A$};
\draw[overline={1}] (top) -- (bottom);
\node[small_morphism] at (-\medx,\midrow) {\tiny $\ph$};
%% top ellipse
\draw[overline={1}] (0,\toprow) ellipse (\bigx cm and \bigy cm);
\draw (0,\toprow) ellipse (\smlx cm and \smly cm);
%% vertical lines
\draw (-1,\bottom) -- (-1,\toprow);
\draw (1,\bottom) -- (1,\toprow);
%% node label
\node[label={\small $Y^*$}] at (-\medx,\toprow) {};
\end{tikzpicture}
%%%%%%%%%%%%%%%%%%%%%%%%%%%%%%%%%%%%%%%%%%%%%%%%%%%%%%%%%%
\mapsto
%%%%%%%%%%%%%%%%%%%%%%%%%%%%%%%%%%%%%%%%%%%%%%%%%%%%%%%%%%
\begin{tikzpicture}
\tikzmath{
  \toprow = 0.8;
  \bottom = -0.8;
  \midrow = -0.2;
  \bigx = 1; %size of big ellipse
  \bigy = 0.2;
  \smlx = 0.5;
  \smly = 0.1;
  \medx = 0.75;
  \medy = 0.15;
}
%% bottom ellipse
\draw (0,\bottom) ellipse (\bigx cm and \bigy cm);
%\draw (0,0) ellipse (0.5cm and 0.25cm); don't want to get covered
%% back half of graph
\draw (\medx,\midrow) arc(0:180:\medx cm and \medy cm);
%% inner vertical lines
\draw[overline={1.5}] (-0.5,\bottom) -- (-0.5,\toprow);
\draw[overline={1.5}] (0.5,\bottom) -- (0.5,\toprow);
\draw (0,\bottom) ellipse (\smlx cm and \smly cm); %bottom inner ellipse
%% graph
\node[dotnode] (top) at (-\medx,\toprow) {}; %% make label later because of overline
\node[dotnode, label={[shift={(0,-0.6)}] \small $Y^*$}] (bottom) at (-\medx,\bottom) {};
\draw[overline={1.5}, midarrow_rev]
  (\medx,\midrow) arc(0:-180:\medx cm and \medy cm)
  node[pos=0.5, above] {\tiny $A$};
\draw[overline={1}] (top) -- (bottom);
\node[small_morphism] at (-\medx,\midrow)
  {\tiny $\rotatebox[origin=c]{180}{\(\ph\)}$};
%% top ellipse
\draw[overline={1}] (0,\toprow) ellipse (\bigx cm and \bigy cm);
\draw (0,\toprow) ellipse (\smlx cm and \smly cm);
%% vertical lines
\draw (-1,\bottom) -- (-1,\toprow);
\draw (1,\bottom) -- (1,\toprow);
%% node label
\node[label={\small $Y'$}] at (-\medx,\toprow) {};
\end{tikzpicture}
%%%%%%%%%%%%%%%%%%%%%%%%%%%%%%%%%%%%%%%%%%%%%%%%%%%%%%%%%%
=
%%%%%%%%%%%%%%%%%%%%%%%%%%%%%%%%%%%%%%%%%%%%%%%%%%%%%%%%%%
\begin{tikzpicture}
\tikzmath{
  \toprow = 0.8;
  \bottom = -0.8;
  \midrow = 0;
  \bigx = 1; %size of big ellipse
  \bigy = 0.2;
  \smlx = 0.5;
  \smly = 0.1;
  \medx = 0.75;
  \medy = 0.15;
}
%% bottom ellipse
\draw (0,\bottom) ellipse (\bigx cm and \bigy cm);
%\draw (0,0) ellipse (0.5cm and 0.25cm); don't want to get covered
%% back half of graph
\draw (\medx,\midrow) arc(0:180:\medx cm and \medy cm);
%% inner vertical lines
\draw[overline={1.5}] (-0.5,\bottom) -- (-0.5,\toprow);
\draw[overline={1.5}] (0.5,\bottom) -- (0.5,\toprow);
\draw (0,\bottom) ellipse (\smlx cm and \smly cm); %bottom inner ellipse
%% graph
\node[dotnode] (top) at (-\medx,\toprow) {}; %% make label later because of overline
\node[dotnode, label={[shift={(0.,-0.6)}] \small $Y^*$}] (bottom) at (-\medx,\bottom) {};
\draw[overline={1}]
  (top) .. controls +(down:0.5cm) and +(up:0.3cm) ..
  (-0.95,\midrow) .. controls +(down:0.3cm) and +(down:0.3cm) ..
  (-\medx,\midrow) .. controls +(up:0.3cm) and +(up:0.3cm) ..
  (-0.55,\midrow) .. controls +(down:0.3cm) and +(up:0.5cm) .. (bottom);
\draw[overline={1.5}, midarrow_rev]
  (\medx,\midrow) arc(0:-180:\medx cm and \medy cm)
  node[pos=0.5, below] {\tiny $A$};
\node[small_morphism] at (-\medx,\midrow) {\tiny $\ph$};
%% top ellipse
\draw[overline={1}] (0,\toprow) ellipse (\bigx cm and \bigy cm);
\draw (0,\toprow) ellipse (\smlx cm and \smly cm);
%% vertical lines
\draw (-1,\bottom) -- (-1,\toprow);
\draw (1,\bottom) -- (1,\toprow);
%% node label
\node[label={\small $Y'$}] at (-\medx,\toprow) {};
\end{tikzpicture}
\end{equation*}

The gray arrows indicate the ``inside-out" operation.
\footnote{Imagine pulling your hand out of the sleeve of
a tight sweater - the second diagram is
inside-out the same way the sleeve is.}
Note the upside-down `$\ph$' in the second diagram;
the last diagram can be turned into the second
by pulling on the upward and downward strands,
forcing the $\ph$ node to turn upside-down.

The last diagram is reminiscent of duals in $\hA$
(see \eqnref{e:hA_dual}).
In particular, when $Y' = Y$,
and $\ph$ is a half-braiding on $Y$,
the extra bending in the last diagram can be incorporated into
the node to become $\ph^\vee$ (see
\prpref{p:reduced_pivotal}).

In \ocite{KT}, in Example 8.2, we provided an explicit equivalence
$H = H_p:\hA \simeq \hCYA$, where $p\in \Ann$, given as follows:
\footnote{
$H_p$ is dependent on the choice of a point $p\in \Ann$,
but all $H_p$ are naturally isomorphic (by non-unique
natural isomorphism).
One can consider the full subcategory with objects
of the form $(\{p\},\{X\})$. This is strictly speaking not
a tensor subcategory, since the tensor product of such objects
would have two arcs.
However, one can put a different tensor product,
$(\{p\},\{X\}) \tnsrst' (\{p\},\{Y\}) = (\{p\},\{XY\})$,
and the inclusion can be made a pivotal tensor equivalence.
}

\begin{equation}
\label{e:def_H}
%%% \hA stuff %%%%%%%%%%%%%%%%%%%%%%%%%%%%%%%%%%%%%%%%%%%
\begin{tikzpicture}
\node at (0,2) {$\hA$};
\node at (0,1.2) {$Y$};
\node at (0,-1.2) {$Y'$};
\node[small_morphism] (ph) at (0,0) {$\vphi$};
\draw (ph) -- (0,-1);
\draw (ph) -- (0,1);
\draw (ph) -- (-0.5,0.5) node[left] {$A$};
\draw (ph) -- (0.5,-0.5) node[right] {$A$};
\end{tikzpicture}
%%%%%%%%%%%%%%%%%%%%%%%%%%%%%%%
\begin{tikzpicture}
\node at (0,0) {$\mapsto$};
\node at (0,-1.2) {$\mapsto$};
\node at (0,1.2) {$\mapsto$};
\node at (0,2) {$\overset{H}{\simeq}$};
\end{tikzpicture}
%%%%%%%%%%%%%%%%%%%%%%%%%%%%%%%%%%%%%%%%%%%%%%%%%%%%%%%%%%
\begin{tikzpicture}
\node at (0,2) {$\hCYA$};
\tikzmath{
  \toprow = 0.7;
  \bottom = -0.7;
  \midrow = 0;
  \bigx = 1; %size of big ellipse
  \bigy = 0.2;
  \smlx = 0.5;
  \smly = 0.1;
  \medx = 0.75;
  \medy = 0.15;
}
%%%%% the objects, top and bottom Annulus
\draw (0,1.2) ellipse (\bigx cm and \bigy cm);
\draw (0,1.2) ellipse (\smlx cm and \smly cm);
\node[dotnode, label={[shift={(0,0.1)}] \small $Y$}] at (-\medx,1.2) {};
\draw (0,-1.2) ellipse (\bigx cm and \bigy cm);
\draw (0,-1.2) ellipse (\smlx cm and \smly cm);
\node[dotnode, label={[shift={(0,-0.6)}] \small $Y'$}] at (-\medx,-1.2) {};
%%%%%%%%%%%%%%%%%%%%%%%%%%%%%%%%%%%%%%%%%%%%%%%%%%%%
%%% the graph %%%%%%%%%%%%%%%%%%%%%%%%%%%%%%%%%%%%%%
%% bottom ellipse
\draw (0,\bottom) ellipse (\bigx cm and \bigy cm);
%\draw (0,0) ellipse (0.5cm and 0.25cm); don't want to get covered
%% back half of graph
\draw (\medx,\midrow) arc(0:180:\medx cm and \medy cm);
%% inner vertical lines
\draw[overline={1.5}] (-0.5,\bottom) -- (-0.5,\toprow);
\draw[overline={1.5}] (0.5,\bottom) -- (0.5,\toprow);
\draw (0,\bottom) ellipse (\smlx cm and \smly cm); %bottom inner ellipse
%% graph
\node[dotnode] (top) at (-\medx,\toprow) {}; %% make label later because of overline
\node[dotnode] (bottom) at (-\medx,\bottom) {};
\draw[overline={1.5}, midarrow_rev]
  (\medx,\midrow) arc(0:-180:\medx cm and \medy cm)
  node[pos=0.5, below] {\tiny $A$};
\draw[overline={1},midarrow={0.3},midarrow={0.9}]
  (top) -- (bottom);
\node[small_morphism] at (-\medx,\midrow) {\tiny $\ph$};
%% top ellipse
\draw[overline={1}] (0,\toprow) ellipse (\bigx cm and \bigy cm);
\draw (0,\toprow) ellipse (\smlx cm and \smly cm);
%% vertical lines
\draw (-1,\bottom) -- (-1,\toprow);
\draw (1,\bottom) -- (1,\toprow);
%% labels Y, Y'
\node[style={fill=white,circle,inner sep=0pt,
  minimum size=0pt, scale=0.8}] at (-0.95,0.4) {\tiny $Y$};
\node[style={fill=white,circle,inner sep=0pt,
  minimum size=0pt, scale=0.8}] at (-0.95,-0.4) {\tiny $Y'$};
\node at (-0.6,1.2) {\tiny $p$};
\node at (-0.6,-1.2) {\tiny $p$};
\end{tikzpicture}
%%%%%%%%%%%%%%%%%%%%%%%%%%%%%%%%%%%%%%%%%%%%%%%%%%%%%%%%%
\end{equation}

This is an equivalence by \thmref{t:hat_excision},
and it is not hard to see that,
endowing $\hA$ with the tensor product $\htnsr$,
this equivalence is in fact tensor and pivotal.
For example, for $X,Y \in \Obj \hA$,
the tensor product in $\hCYA$, $H(X) \tnsrst H(Y)$,
is an object with two arcs, labeled with $X$ and $Y$.
The tensor structure on $H$ would be a trivalent
graph connecting $H(X) \tnsrst H(Y)$ to $H(XY)$,
with the unique vertex labeled by
$\id_{XY}$
(which is naturally identified with
$\coev \in \Hom_\cA(\one,XY(XY)^*)$
).
The unit object $\onest$ in $\hCYA$, i.e. the empty configuration,
is isomorphic to the object $H(\one) = (\{p\},\{\one\})$.
Hence we have the following:

\begin{theorem}
\label{t:ann-main}
%\label{t:main}
We have the following commutative diagram,
where all functors are pivotal tensor functors:
\begin{equation*}
%\label{e:comm_diag}
\begin{tikzcd}
(\cA,\tnsr) \ar[r,"\htr"] \ar[d,"I"']
& (\hA,\htnsr) \ar[ld,"G"'] \ar[d,"\Kar"]
    \ar[r,"H"] \ar[r,"\simeq"']
& (\hCYA,\tnsrst) \ar[d, "\Kar"]
\\
(\ZA,\tnsrbar)
& (\Kar(\hA),\htnsr) \ar[l,"\Kar(G)"] \ar[l,"\simeq"']
    \ar[r, "\Kar(H)"'] \ar[r,"\simeq"]
& (\CYA,\tnsrst)
\end{tikzcd}
\end{equation*}
In other words, the reduced tensor product $\tnsrbar$ on $\ZA$
encodes the stacking tensor product on $\CYA$.
\end{theorem}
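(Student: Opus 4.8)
The plan is to assemble the diagram from the ingredients already in place and supply the one genuinely new piece, namely that the excision equivalence $H\colon\hA\to\hCYA$ of \ocite{KT} (recalled in \eqnref{e:def_H}) is a pivotal tensor functor once $\hA$ carries $\htnsr$ (\prpref{p:hA-tnsr}) and $\hCYA$ carries the stacking product $\tnsrst$. First I would observe that the left triangle of the diagram is exactly \prpref{p:hA_ZA_tnsr}: $(G,J)\colon(\hA,\htnsr)\to(\ZA,\tnsrbar)$ is a pivotal tensor functor with $J$ the structure of \prpref{p:I_tensor}, and its restriction along $\htr$ is $(I,J)$, because $G\circ\htr=I$ (part of \prpref{p:hA_ZA}) and $J_{X,Y}$ restricted to $\htr(\cA)$ is the tensor structure on $I$. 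Taking Karoubi envelopes is automatic: $\Kar$ of a pivotal tensor functor is again one, and $\Kar(G)$ is an equivalence by \prpref{p:hA_ZA}.

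The crux is to construct the tensor structure on $H$. Given $X,Y\in\Obj\hA$, the object $H(X)\tnsrst H(Y)\in\hCYA$ is the solid annulus with two parallel radial arcs labeled $X$ (outside) and $Y$, while $H(X\htnsr Y)=H(XY)$ is the annulus with one radial arc labeled $XY$. I would take $J^H_{X,Y}\colon H(X)\tnsrst H(Y)\to H(XY)$ to be the ``trousers'' graph in $\Ann\times I$ with one trivalent coupon merging the $X$- and $Y$-ribbons into an $XY$-ribbon, the coupon labeled by $\coev_{XY}\in\Hom_\cA(\one,XY\otimes(XY)^*)$ (equivalently $\id_{XY}$). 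Then I would verify: (a) naturality in $X,Y$ --- for $[\varphi]\in\Hom_{\hA}(X,X')$ represented by $\varphi\in\ihom{\cA}{A}(X,X')$, whose image under $H$ is the graph in \eqnref{e:def_H}, sliding the $A$-strand past the merging coupon identifies $J^H_{X',Y}\circ(H([\varphi])\tnsrst H(\id_Y))$ with $H([\varphi]\htnsr\id_Y)\circ J^H_{X,Y}$, the skein-theoretic analogue of the computation in the proof of \prpref{p:hA_ZA_tnsr}; (b) the hexagon axiom --- after unwinding $\tnsrst$ and the associativity constraint $\alpha$ (induced by the isotopy $m_3^t$ of \secref{s:skprop-stacking}), both composites are graphs with two merging coupons, agreeing because any two inclusions of three radial arcs into one are isotopic rel boundary, so the coupons compose associatively via \eqnref{e:cyclic}; (c) unitality --- the empty configuration $\onest$ is canonically isomorphic to $H(\one)=(\{p\},\{\one\})$ by the $\one$-labeled ribbon with a $\coev_\one$ coupon, and the unit constraints of $\tnsrst$ (\prpref{prp:stacking}, ``straight-line'' graphs) match those of $\htnsr$ (which reduce to those of $\cA$); (d) pivotality --- the left dual of a morphism in $\hA$ is \eqnref{e:hA_dual}, and the ``inside-out'' description of duals in $\hCYA$ following \eqnref{e:hA_dual} shows these agree, while $\delta_X$ on both sides is the graph with a $\delta_X$ node on the arc, so $H(\delta_X)=\delta_{H(X)}$ and $H(X^{\vee})=H(X)^{\vee}$.

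With $H$ established as a pivotal tensor equivalence, I would pass to Karoubi envelopes to get $\Kar(H)\colon(\Kar(\hA),\htnsr)\simeq(\CYA,\tnsrst)$ a pivotal tensor equivalence; the right square commutes on the nose since $\Kar$ is functorial and the two vertical arrows are the canonical embeddings into idempotent completions. Splicing the left triangle to the right square gives the full commutative diagram of pivotal tensor functors and hence the final sentence, that $\tnsrbar$ on $\ZA$ encodes the stacking product on $\CYA$.

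I expect the main obstacle to lie in (a) and (d): one must track orientations, the ``inside-out'' operation on the solid annulus, and the fact that a $\coev$-labeled coupon carries no data, so that the bookkeeping of the tensor and pivotal structures on $H$ matches the algebraic bookkeeping on the $\hA$ side exactly. Given \thmref{t:hat_excision} and \prpref{p:hA_ZA_tnsr} these checks are not conceptually deep, but they are precisely where a stray twist or sign would break the comparison, so they warrant care rather than the phrase ``straightforward.''
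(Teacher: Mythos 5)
Your proposal is correct and follows essentially the same route as the paper: the left triangle is exactly \prpref{p:hA_ZA_tnsr} together with $G\circ\htr = I$ from \eqnref{e:comm_diag}, and the tensor structure on $H$ is the same trivalent merging graph with coupon $\id_{XY}\cong\coev_{XY}$ that the paper describes just before the theorem, with the right square obtained by passing to Karoubi envelopes. The verifications (a)--(d) that you spell out are precisely the checks the paper compresses into ``it is not hard to see,'' so your version is simply a more explicit rendering of the same argument.
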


Together with \prpref{p:G_inv},
the pivotal tensor equivalence $(\ZA,\tnsrbar) \simeq \CYA$
is given by

\begin{align}
\Kar(H) \circ \Kar(G)^\inv :
  (\ZA,\tnsrbar) &\simeq \CYA
\\
\begin{tikzpicture}
\node (X) at (0,0.65) {$(X,\ga)$};
\node (Y) at (0,-0.65) {$(Y,\mu)$};
\draw[midarrow={0.55}] (X) -- (Y) node[pos=0.5,right] {$f$};
\end{tikzpicture}
%%%%%%%%%%%%%%%%%%%%
&
\begin{tikzpicture}
\node at (0,0.65) {$\mapsto$};
\node at (0,0) {$\mapsto$};
\node at (0,-0.65) {$\mapsto$};
\end{tikzpicture}
%%%%%%%%%%%%%%%%%%%%
\begin{tikzpicture}
\begin{scope}[shift={(0,-1)}]
%% changing ellipse ratio 5:1
% bottom ellipse
\draw (0,0) ellipse (1cm and 0.2cm);
%% back half of graph
\draw[regular] (0.75,1.5) arc(0:180:0.75cm and 0.15cm);
\draw[regular] (0.75,0.5) arc(0:180:0.75cm and 0.15cm);
%% inner vertical lines
\draw[overline={1}] (-0.5,0) -- (-0.5,2);
\draw[overline={1}] (0.5,0) -- (0.5,2);
\draw (0,0) ellipse (0.5cm and 0.1cm); %bottom inner ellipse
%% rest of graph
\node[dotnode] (top) at (-0.75,2) {}; %% make label later
\node[dotnode] (bottom) at (-0.75,0) {};
\node at (-0.75,-0.3) {\small $Y$};
\draw[regular, overline={1.5}]
  (0.75,1.5) arc(0:-180:0.75cm and 0.15cm);
\draw[regular, overline={1.5}]
  (0.75,0.5) arc(0:-180:0.75cm and 0.15cm);
\draw[overline={1}] (top) -- (bottom);
\node[small_morphism] at (-0.75,1.5) {\tiny $\ga$};
\node[small_morphism] at (-0.75,1) {\tiny $f$};
\node[small_morphism] at (-0.75,0.5) {\tiny $\mu$};
%% top ellipse
\draw[overline={1}] (0,2) ellipse (1cm and 0.2cm);
\draw (0,2) ellipse (0.5cm and 0.1cm);
%% vertical lines
\draw (-1,0) -- (-1,2);
\draw (1,0) -- (1,2);
%% node label
\node at (-0.75,2.3) {\small $X$};
%% curly braces
%\node at (-1.2,1.7) {\Huge $\{$};
%\node at (-1.2,0.3) {\Huge $\{$};
\draw[decoration={brace,amplitude=3pt},decorate] (-1.2,1.3) -- (-1.2,2);
\node at (-1.55,1.65) {im};
\draw[decoration={brace,amplitude=3pt},decorate] (-1.2,0) -- (-1.2,0.7);
\node at (-1.55,0.35) {im};
\end{scope}
\end{tikzpicture}
%%%%%%%%%%%%%%%%%%%%
\end{align}

Once again this functor doesn't send unit to unit;
the isomorphism is given by (compare \eqnref{e:unit_isom_G})

\[
\sum_{i\in \Irr(\cA)}
\frac{\sqrt{d_i}}{\sqrt{\cD}}
\begin{tikzpicture}
\begin{scope}[shift={(0,-0.5)}]
%% changing ellipse ratio 5:1
% bottom ellipse
\draw (0,0) ellipse (1cm and 0.2cm);
%% back half of graph
\draw (0.75,0.5) arc(0:180:0.75cm and 0.15cm);
%% inner vertical lines
\draw[overline={1}] (-0.5,0) -- (-0.5,1);
\draw[overline={1}] (0.5,0) -- (0.5,1);
\draw (0,0) ellipse (0.5cm and 0.1cm); %bottom inner ellipse
%% rest of graph
\node[dotnode] (n1) at (-0.75,1) {}; %% make label later
\draw[overline={1.5}, midarrow={0.6}]
  (0.75,0.5) arc(0:-180:0.75cm and 0.15cm);
\node at (0,0.5) {\tiny $i$};
\node[small_morphism] (n2) at (-0.75,0.5) {\tiny $\id$};
\draw (n1) -- (n2);
%% top ellipse
\draw[overline={1}] (0,1) ellipse (1cm and 0.2cm);
\draw (0,1) ellipse (0.5cm and 0.1cm);
%% vertical lines
\draw (-1,0) -- (-1,1);
\draw (1,0) -- (1,1);
%% node label
\node at (-0.75,1.3) {\tiny $X_i X_i^*$};
%% curly braces
\end{scope}
\end{tikzpicture}
\;\;
\text{, or more intuitively,}\;\;
\sum_{i\in \Irr(\cA)}
\frac{\sqrt{d_i}}{\sqrt{\cD}}
\begin{tikzpicture}
\begin{scope}[shift={(0,-0.5)}]
%% changing ellipse ratio 5:1
% bottom ellipse
\draw (0,0) ellipse (1cm and 0.2cm);
%% back half of graph
%\draw (0.75,1) arc(0:150:0.75cm and 0.15cm);
\node[dotnode] (n1) at (-0.75,1) {}; %% make label later
\node[dotnode] (n2) at (-0.6,0.97) {};
\draw (n1) .. controls +(down:0.4cm) and +(-160:0.1cm) ..
  (-0.65,0.5) .. controls +(20:0.5cm) and +(up:0.1cm) ..
  (0.75,0.5);
%% inner vertical lines
\draw[overline={1}] (-0.5,0) -- (-0.5,1);
\draw[overline={1}] (0.5,0) -- (0.5,1);
\draw (0,0) ellipse (0.5cm and 0.1cm); %bottom inner ellipse
%% rest of graph
%\draw[overline={1}]
%  (0.75,0.5) arc(0:-150:0.75cm and 0.15cm);
\draw[overline={1}]
  (n2) .. controls +(down:0.5cm) and +(170:0.1cm) ..
  (-0.5,0.4) .. controls +(-10:0.3cm) and +(down:0.15cm) ..
  (0.75,0.5);
%\draw[overline] (n2) -- (-0.6,0.45);
%% top ellipse
\draw[overline={1}] (0,1) ellipse (1cm and 0.2cm);
\draw (0,1) ellipse (0.5cm and 0.1cm);
%% vertical lines
\draw (-1,0) -- (-1,1);
\draw (1,0) -- (1,1);
%% node label
\node at (-0.75,1.3) {\tiny $X_i X_i^*$};
\end{scope}
\end{tikzpicture}
\]

As an application of \thmref{t:ann-main},
we describe $\ZCY(\torus)$ purely algebraically in terms
of $\cA$.
We can produce $\torus$  from $\Ann$ by
gluing (neighborhoods of) $S^1 \times \{0\}$
and $S^1 \times \{1\}$.
The excision property of $\ZCY$ as stated
in the introduction doesn't work as is,
but \ocite{KT}*{Theorem 7.5}
actually proves an apparently slightly more general but ultimately
equivalent form of excision,
which allows $\Sigma$ to be obtained by
gluing two boundaries of a single surface $\Sigma'$;
the balanced tensor product is then replaced by
the center of $\ZCY(\Sigma')$
as a $\CYA$-bimodule
(as defined in \ocite{GNN}*{Definition 2.1},
repeated in \ocite{KT}*{Definition 3.1};
for applications here,
it suffices to know that this notion of center
for a monoidal category as a bimodule over itself
coincides with the Drinfeld center.)

We can view $\CYA$ as a bimodule category
over itself, thinking of the left and right actions
as ``insertions" from the left
($S^1 \times \{0\}$) and right ($S^1 \times \{0\}$).
Thus we have the following corollary
of \ocite{KT}*{Theorem 7.5}:
%(taking $X' = S^1 \times (0,3)$ there):
\begin{proposition} \label{p:ZCY_torus}
\[
\ZCY(\torus) \simeq \cZ((\CYA,\tnsrst))
\]
\end{proposition}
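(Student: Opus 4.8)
The plan is to realize the torus as a self-gluing of the annulus and invoke the self-gluing form of excision. Take $P = S^1$ and let $f : \torus \to S^1 = \RR/2\ZZ$ be the projection onto the second factor, with the obvious trivialization $P \times \iI \simeq f^\inv(\iI)$. In the notation of \secref{s:skprop-excision}, this data has $N = \torus$ and $N' = \tilde f^\inv((0,3)) \simeq S^1 \times \iI = \Ann$, and $\torus$ is recovered from $N'$ by gluing the part over $(0,1)$ to the part over $(2,3)$ (equivalently, by identifying a $0$-collaring of $\Ann$ with a $1$-collaring); the seam $X = f^\inv(1/2)$ is a circle. By \ocite{KT}*{Theorem 7.5} (the self-gluing version of \thmref{t:excision-skein}, in which the balanced tensor product is replaced by the center of a single category as a bimodule over itself) this gives
\[
\ZCYsk(\torus) \;\simeq\; \cZ_{\ZCYsk(S^1\times \iI)}\big(\ZCYsk(\Ann)\big),
\]
where $\ZCYsk(\Ann)$ is regarded as a $\ZCYsk(S^1\times\iI)$-bimodule via the left and right actions of \prpref{p:collared_module} coming from the two collarings, and $\ZCYsk(S^1\times\iI)$ carries the stacking monoidal structure of \prpref{prp:stacking}.

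The crux is to identify this bimodule category with $(\CYA, \tnsrst)$, the category $\CYA = \ZCYsk(S^1\times\iI)$ regarded as a bimodule over itself by left and right $\tnsrst$-multiplication. This is essentially a tautology once the identifications are unwound: $\Ann$ and $S^1\times\iI$ are literally the same surface, so $\ZCYsk(\Ann) = \ZCYsk(S^1\times\iI) = \CYA$; and the $0$-collaring and $1$-collaring of $\Ann$ used to build $\torus$ are the obvious inclusions of $S^1\times\iI$ into the two ends of $\Ann$, so the left and right module maps $n_*$ of \prpref{p:collared_module} are exactly the stacking maps $m_* = \tnsrst$ on one side and the other, with associativity isotopies $n_3^t$ matching the $m_3^t$ of \prpref{prp:stacking}. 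Hence $\ZCYsk(\Ann)$, with its excision bimodule structure, is the regular $(\CYA,\tnsrst)$-bimodule. Consequently
\[
\cZ_{\ZCYsk(S^1\times\iI)}\big(\ZCYsk(\Ann)\big)
\;=\; \cZ_{(\CYA,\tnsrst)}\big((\CYA,\tnsrst)\big)
\;=\; \cZ\big((\CYA,\tnsrst)\big),
\]
the last equality being the standard fact (recalled before the statement, and already noted in \defref{d:center}) that the bimodule-center of a monoidal category over itself is its Drinfeld center. Finally, applying $\Upsilon : \ZCY(N) \simeq \ZCYsk(N)$ of \thmref{t:sk-equiv} with $N = \torus$ (and noting $\CYA = \ZCY(\Ann) \simeq \ZCYsk(\Ann)$ under the same equivalence) yields $\ZCY(\torus) \simeq \cZ((\CYA,\tnsrst))$.

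I expect the one genuinely fiddly point to be the bimodule identification in the middle step: one must check that the two collarings induce the \emph{left}- and \emph{right}-stacking actions in the correct order (and not one of them composed with an orientation-reversal $\theta$), and that the coherence isotopies $n_3^t$ coincide with those used to define $\tnsrst$ up to the canonical isotopies-of-isotopies. This amounts to carefully tracking orientations and the rescaling diffeomorphisms of $\iI$ implicit in \secref{s:skprop-excision} and \secref{s:skprop-stacking}; it introduces no new ideas, but it is where the content lies, and it is also the step that must be phrased so as to produce the \emph{pivotal monoidal} equivalence (not merely an equivalence of abelian categories), so that the result can be regarded as an equivalence of pivotal multifusion categories compatible with \thmref{t:ann-main}.
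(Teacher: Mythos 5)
Your proposal is correct and follows essentially the same route as the paper: the paper's proof is the one-line instruction to take $X' = S^1 \times (0,3)$ in \ocite{KT}*{Theorem 7.5}, i.e.\ exactly the self-gluing excision applied to the annulus with the regular $(\CYA,\tnsrst)$-bimodule structure coming from insertion at the two boundary circles. Your write-up simply makes explicit the bimodule identification that the paper leaves implicit in the preceding paragraph.
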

\begin{proof}
Take $X' = S^1 \times (0,3)$ in \ocite{KT}*{Theorem 7.5}.
\end{proof}
%We will return to $\ZCY(\torus)$ shortly.

As an immediate corollary of this
and \thmref{t:ann-main}, we have
\begin{corollary} \label{c:torus_ZA}
\[
\ZCY(\torus) \simeq \cZ((\ZA,\tnsrbar))
\]
as abelian categories.
\end{corollary}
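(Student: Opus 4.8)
The plan is to deduce this directly from \prpref{p:ZCY_torus} and \thmref{t:ann-main}, using only the fact that the (Drinfeld) center construction is invariant under monoidal equivalences. First I would invoke \prpref{p:ZCY_torus} to identify $\ZCY(\torus)$ with $\cZ((\CYA,\tnsrst))$, the center of the monoidal category $(\CYA,\tnsrst)$ regarded as a bimodule over itself. By the remark following \defref{d:center}, this is exactly the Drinfeld center of $(\CYA,\tnsrst)$, so there is no ambiguity in the notation. Note both categories in play, $\CYA$ and $(\ZA,\tnsrbar)$, are pivotal multifusion (the former by the argument in the proof of \thmref{t:excision-skein}, the latter by \prpref{p:reduced_pivotal}), so all the center constructions are well-behaved.

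Next I would apply \thmref{t:ann-main}, which provides a pivotal tensor equivalence $\Kar(H)\circ\Kar(G)^\inv : (\ZA,\tnsrbar)\simeq(\CYA,\tnsrst)$. The remaining ingredient is the standard fact that a monoidal equivalence $(F,J):\cC\simeq\cD$ induces an equivalence of Drinfeld centers $\cZ(\cC)\simeq\cZ(\cD)$: on objects one sends $(X,\gamma)$ to $F(X)$ equipped with the half-braiding obtained by conjugating $\gamma_{-}$ with the tensor constraint $J$ (so that $A\tnsr F(X)\simeq F(F^{-1}(A)\tnsr X)\xrightarrow{F(\gamma)} F(X\tnsr F^{-1}(A))\simeq F(X)\tnsr A$), with the hexagon/compatibility conditions for the transported half-braiding following from those of $\gamma$ together with the hexagon for $J$; a quasi-inverse is built the same way from a monoidal quasi-inverse of $F$. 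Composing the two equivalences then yields $\ZCY(\torus)\simeq\cZ((\ZA,\tnsrbar))$ as abelian categories, which is the claim.

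I do not anticipate a serious obstacle. The only point requiring any care is spelling out the half-braiding transport along a monoidal equivalence correctly, and observing that we need only a bare monoidal equivalence — not the full pivotal structure — since the corollary asserts only an equivalence of abelian categories; everything else is bookkeeping already supplied by the cited results.
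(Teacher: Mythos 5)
Your proposal is correct and matches the paper exactly: the corollary is stated as an immediate consequence of \prpref{p:ZCY_torus} together with the pivotal tensor equivalence of \thmref{t:ann-main}, with the transport of half-braidings along a monoidal equivalence supplying the equivalence of centers. Your added care about how the half-braiding is conjugated by the tensor structure $J$ is the only detail the paper leaves implicit.
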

%TODO Hmm should I discuss a tensor product
%on $\ZCY(\torus)$?
%We will study $\ZCY(\torus)$ for $\cA = \Rep(G)$
%in \secref{s:symm_torus}.\\

\begin{remark} \label{r:tnsr_topology}
Since $\tnsrbar$ on $\ZA$ has a nice topological
interpretation as $\tnsrst$ on $\CYA$,
it is also nice to have a topological interpretation
for the standard tensor product on $\ZA$.
This comes from the (thickened) pair of pants,
denoted $\POP$, in the following manner.
For $(X,\ga),(Y,\mu) \in \ZA$,
with corresponding objects
$\VV = (H(X),H(\hP_\ga)), \WW = (H(Y),H(\hP_\mu))
\in \CYA$,
the object $\VV \tnsr \WW$
which corresponds to $(X,\ga)\tnsr(Y,\mu)$
is characterized by a natural isomorphism
\[
\Hom_\CYA(\VV \tnsr \WW, \mathbf{X})
\simeq \Skein(\POP; \overline{\VV},\overline{\WW},
  \mathbf{X})
=
\Skein (
\begin{tikzpicture}[scale=0.6]
\tikzmath{
  \ellx = 1;
  \elly = 0.2;
  \ellxs = 0.8;
  \ellys = 0.16;
}
\draw (-1.5,1) ellipse (\ellx cm and \elly cm);
\draw (-1.5,1) ellipse (\ellxs cm and \ellys cm);
\draw (1.5,1) ellipse (\ellx cm and \elly cm);
\draw (1.5,1) ellipse (\ellxs cm and \ellys cm);
\draw (0,-1) ellipse (\ellx cm and \elly cm);
\draw (0,-1) ellipse (\ellxs cm and \ellys cm);
%% outside lines
\draw (-2.5,1) to[out=-90,in=90] (-1,-1);
\draw (-2.3,1) to[out=-90,in=90] (-0.8,-1);
\draw (2.5,1) to[out=-90,in=90] (1,-1);
\draw (2.3,1) to[out=-90,in=90] (0.8,-1);
%% middle
\draw (-0.5,1) .. controls +(down:0.5cm) and +(down:0.5cm)
  .. (0.5,1);
\draw (-0.7,1) .. controls +(down:0.6cm) and +(down:0.6cm)
  .. (0.7,1);
%% labels
\node at (-1.5,1.4) {\tiny $\overline{\VV}$};
\node at (1.5,1.4) {\tiny $\overline{\WW}$};
\node at (0,-1.4) {\tiny $\mathbf{X}$};
\end{tikzpicture}
)
\]
for all $\mathbf{X} \in \CYA$.
(This is well-known for the extended Turaev-Viro theory \ocite{TV},
where given spherical fusion $\cA$, one has
$Z_{\text{TV}}(S^1) = \ZA$ \ocite{kirillov-stringnet},
and the standard tensor product on $\ZA$ is given by
the pair of pants just as above.)
The stacking product can also be described this way,
but instead of the usual pair of pants $\POP$,
we use a different cobordism, $\YPOP$:
\[
\Hom_\CYA(\VV \tnsrst \WW, \mathbf{X})
\simeq \Skein(\YPOP; \overline{\VV},\overline{\WW},
  \mathbf{X})
=
\Skein (
\begin{tikzpicture}[scale=0.6]
\draw (0,1) ellipse (2cm and 0.4cm);
\draw (0,1) ellipse (1.8 cm and 0.36cm);
\draw (0,1) ellipse (1cm and 0.2cm);
\draw (0,1) ellipse (0.8cm and 0.16cm);
\draw (0,-1) ellipse (1.5cm and 0.3cm);
\draw (0,-1) ellipse (1.3cm and 0.26cm);
%\draw[gray] (0,0.25) ellipse (1.4cm and 0.28cm);
%% vertical lines
\draw (-2,1) to[out=-90,in=90] (-1.5,-1);
\draw (-0.8,1) to[out=-90,in=90] (-1.3,-1);
\draw (2,1) to[out=-90,in=90] (1.5,-1);
\draw (0.8,1) to[out=-90,in=90] (1.3,-1);
\draw (-1.8,1) .. controls +(down:1cm) and +(down:1cm)
  .. (-1,1);
\draw (1.8,1) .. controls +(down:1cm) and +(down:1cm)
  .. (1,1);
%% labels
\node at (-2.2,1) {\tiny $\overline{\VV}$};
\node at (-1.2,1) {\tiny $\overline{\WW}$};
\node at (0,-1.5) {\tiny $\mathbf{X}$};
\end{tikzpicture}
)
\]
$\YPOP$ is a thickened `Y' crossed with $S^1$.
We do not prove these claims here,
which are not hard to prove after all
the work in this section.
\rmkend
\end{remark}

\begin{remark} \label{r:2fold_cy}
The topological interpretations of $\tnsr$ and $\tnsrst$
above can also elucidate the structure morphisms
mentioned in \rmkref{r:2fold}.
Consider the two cobordisms below:
\[
\begin{tikzpicture}[scale=0.8]
\tikzmath{
  \md = -0.2;
}
\draw (-2.5,1) ellipse (2cm and 0.4cm);
\draw (-2.5,1) ellipse (1.8 cm and 0.36cm);
\draw (-2.5,1) ellipse (1cm and 0.2cm);
\draw (-2.5,1) ellipse (0.8cm and 0.16cm);
\draw (2.5,1) ellipse (2cm and 0.4cm);
\draw (2.5,1) ellipse (1.8 cm and 0.36cm);
\draw (2.5,1) ellipse (1cm and 0.2cm);
\draw (2.5,1) ellipse (0.8cm and 0.16cm);
\draw (0,\md) ellipse (2cm and 0.4cm);
\draw (0,\md) ellipse (1.8 cm and 0.36cm);
\draw (0,\md) ellipse (1cm and 0.2cm);
\draw (0,\md) ellipse (0.8cm and 0.16cm);
\draw (0,-1) ellipse (1.5cm and 0.3cm);
\draw (0,-1) ellipse (1.3cm and 0.26cm);
%%%%%% top half
%% vertical lines
\draw (-4.5,1)
  .. controls +(down:0.5cm) and +(up:0.3cm) .. (-2,\md);
\draw (4.5,1)
  .. controls +(down:0.5cm) and +(up:0.3cm) .. (2,\md);
\draw (-4.3,1)
  .. controls +(down:0.5cm) and +(up:0.3cm) .. (-1.8,\md);
\draw (4.3,1)
  .. controls +(down:0.5cm) and +(up:0.3cm) .. (1.8,\md);
\draw (-3.5,1)
  .. controls +(down:0.5cm) and +(up:0.3cm) .. (-1,\md);
\draw (3.5,1)
  .. controls +(down:0.5cm) and +(up:0.3cm) .. (1,\md);
\draw (-3.3,1)
  .. controls +(down:0.5cm) and +(up:0.3cm) .. (-0.8,\md);
\draw (3.3,1)
  .. controls +(down:0.5cm) and +(up:0.3cm) .. (0.8,\md);
%% inside
\draw (-0.5,1)
  .. controls +(down:0.4cm) and +(down:0.4cm) .. (0.5,1);
\draw (-0.7,1)
  .. controls +(down:0.45cm) and +(down:0.45cm) .. (0.7,1);
\draw (-1.5,1)
  .. controls +(down:0.6cm) and +(down:0.6cm) .. (1.5,1);
\draw (-1.7,1)
  .. controls +(down:0.65cm) and +(down:0.65cm) .. (1.7,1);
%%%%%%%%%%%%%%% bottom half
\draw (-2,\md) to[out=-90,in=90] (-1.5,-1);
\draw (-0.8,\md) to[out=-90,in=90] (-1.3,-1);
\draw (2,\md) to[out=-90,in=90] (1.5,-1);
\draw (0.8,\md) to[out=-90,in=90] (1.3,-1);
\draw (-1.8,\md)
  .. controls +(down:0.5cm) and +(down:0.5cm) .. (-1,\md);
\draw (1.8,\md)
  .. controls +(down:0.5cm) and +(down:0.5cm) .. (1,\md);
%%%%%%%%%%%%%%%% gray curve for projector
\draw[gray]
(0,0.7) .. controls +(left:0.5cm) and +(left:0.2cm) ..
(-0.3,-0.8) .. controls +(right:0.1cm) and +(left:0.2cm) ..
(0,0.55) .. controls +(right:0.1cm) and +(left:0.1cm) ..
(0.2,-0.3) .. controls +(right:0.2cm) and +(right:0.2cm) ..
(0,0.7);
%% labels
\node at (-4.7,1) {$\overline{\VV}$};
\node at (-3.7,1) {$\overline{\WW}$};
\node at (0.3,1.05) {$\overline{\VV}'$};
\node at (1.3,1.05) {$\overline{\WW}'$};
\end{tikzpicture}
\;\;\;\;
\begin{tikzpicture}[scale=0.8]
\draw (-2.5,1) ellipse (2cm and 0.4cm);
\draw (-2.5,1) ellipse (1.8 cm and 0.36cm);
\draw (-2.5,1) ellipse (1cm and 0.2cm);
\draw (-2.5,1) ellipse (0.8cm and 0.16cm);
\draw (2.5,1) ellipse (2cm and 0.4cm);
\draw (2.5,1) ellipse (1.8 cm and 0.36cm);
\draw (2.5,1) ellipse (1cm and 0.2cm);
\draw (2.5,1) ellipse (0.8cm and 0.16cm);
\draw (-2.5,0) ellipse (1.5cm and 0.3cm);
\draw (-2.5,0) ellipse (1.3cm and 0.26cm);
\draw (2.5,0) ellipse (1.5cm and 0.3cm);
\draw (2.5,0) ellipse (1.3cm and 0.26cm);
\draw (0,-1) ellipse (1.5cm and 0.3cm);
\draw (0,-1) ellipse (1.3cm and 0.26cm);
%%%%%%%%%%%%%%% top left half
\draw (-4.5,1) to[out=-90,in=90] (-4,0);
\draw (-3.3,1) to[out=-90,in=90] (-3.8,0);
\draw (-0.5,1) to[out=-90,in=90] (-1,0);
\draw (-1.7,1) to[out=-90,in=90] (-1.2,0);
\draw (-4.3,1)
  .. controls +(down:0.5cm) and +(down:0.5cm) .. (-3.5,1);
\draw (-0.7,1)
  .. controls +(down:0.5cm) and +(down:0.5cm) .. (-1.5,1);
%%%%%%%%%%%%%%% top right half
\draw (4.5,1) to[out=-90,in=90] (4,0);
\draw (3.3,1) to[out=-90,in=90] (3.8,0);
\draw (0.5,1) to[out=-90,in=90] (1,0);
\draw (1.7,1) to[out=-90,in=90] (1.2,0);
\draw (4.3,1)
  .. controls +(down:0.5cm) and +(down:0.5cm) .. (3.5,1);
\draw (0.7,1)
  .. controls +(down:0.5cm) and +(down:0.5cm) .. (1.5,1);
%%%%%% bottom half
%% vertical lines
\draw (-4,0)
  .. controls +(down:0.4cm) and +(up:0.2cm) .. (-1.5,-1);
\draw (4,0)
  .. controls +(down:0.4cm) and +(up:0.2cm) .. (1.5,-1);
\draw (-3.8,0)
  .. controls +(down:0.4cm) and +(up:0.2cm) .. (-1.3,-1);
\draw (3.8,0)
  .. controls +(down:0.4cm) and +(up:0.2cm) .. (1.3,-1);
%% inside
\draw (-1,0)
  .. controls +(down:0.3cm) and +(down:0.3cm) .. (1,0);
\draw (-1.2,0)
  .. controls +(down:0.5cm) and +(down:0.5cm) .. (1.2,0);
%% labels
\node at (-4.7,1) {$\overline{\VV}$};
\node at (-3.7,1) {$\overline{\WW}$};
\node at (0.3,1) {$\overline{\VV}'$};
\node at (1.3,1) {$\overline{\WW}'$};
\end{tikzpicture}
\]
The cobordism on the left (ignoring the gray curve)
corresponds to
$(\VV \tnsr \VV') \tnsrst (\WW \tnsr \WW')$,
or $((X,\ga) \tnsr (X',\ga')) \tnsrbar
  ((Y,\mu) \tnsr (Y',\mu'))$,
while the cobordism on the right corresponds to
$(\VV \tnsrst \WW) \tnsr (\VV' \tnsrst \WW')$,
or $((X,\ga) \tnsrbar (Y,\mu)) \tnsr
  ((X',\ga') \tnsrbar (Y',\mu'))$.
They are different, but not by much:
the right can be obtained from the left
by surgery, specifically,
by attaching a 2-handle along the gray curve
%(the gray curve is in the cobordism, lying close to
%the ``inner" boundary;
(the gray curve starts in the outside $\POP$ for $\VV$'s,
goes down into the bottom $\YPOP$,
back up into the $\POP$ for $\WW$'s,
then goes down again into $\YPOP$, and closes up,
all the while staying close to the ``inner boundary",
i.e. keeping as tight as possible like a rubber band).
One way to visualize this is to consider the
reverse process of removing a 2-handle:
start with the right side, push the two ``troughs"
- between $\VV$ and $\WW$ and between $\VV'$ and $\WW'$
- downward and into the bottom pair of pants,
and when they are about to meet in the middle,
drill a hole through the wall.

On the level of skein modules,
this surgery is the same as
adding the gray curve colored by the regular coloring
(up to a factor, see \eqnref{e:sliding});
this is the topological interpretation of $\eta$.
Conversely, graphs in the right cobordism can be
lifted to a graph in the left cobordism
plus the gray curve with regular coloring;
this is the topological interpretation of $\zeta$.

The other structure morphisms can also be described
very easily. For example, one of them is
a morphism $v_2: \onebar \tnsr \onebar \to \onebar$,
which is simply the empty graph in $\POP$
(up to a factor)
interpreted as a morphism
$\onest \tnsrst \onest \to \onest$.

Thus, checking the compatibility axioms
becomes an exercise in topology.
Once again, we do not show the work in this paper.

As mentioned in the introduction, such 2-fold monoidal
structures are related to iterated loop spaces.
It may be interesting to see if these two topological
aspects of $(\ZA,\tnsr,\tnsrbar)$ are directly related.
\rmkend
\end{remark}

\subsection{$\cA$ Modular}
\label{s:ann-modular}
\par \noindent

When $\cA$ is modular, we have
(\ocite{muger}, see also \ocite{EGNO}*{Proposition 8.20.12}):

\begin{align}
\label{e:modular-split}
\begin{split}
\cAAbop &\simeq_{\tnsr, \text{br}} (\ZA,\tnsr) \\
X \boxtimes Y &\mapsto (X \tnsr Y, c^\inv \tnsr c) = (X,c^\inv) \tnsr (Y,c)
\end{split}
\end{align}
where $\cA^\bop$ is $\cA$ with the opposite braiding,
and $c$ is the braiding on $\cA$.
Here the monoidal structure on $\cAAbop$ is defined component-wise.
In particular, duals are given by
$(X\boxtimes Y)^* = X^* \boxtimes Y^*$.

It is natural to ask: what is the reduced tensor product on
$\cAAbop$ under this equivalence?
We claim (proven below in \thmref{t:AA_ZA})
that the following definition is the answer,
which justifies the repeated use of the name ``reduced"
and notation like $\tnsrbar$ and $\onebar$.
(The reduced tensor product on $\ZA$ cannot in general be
braided, as we shall see soon,
so we will ignore the difference in braiding on $\cA$.)

\begin{definition}
\label{d:AA_red}
Let $W_1 \boxtimes Y_1, W_2 \boxtimes Y_2 \in \cAA$.
Define their \emph{reduced tensor product}
to be
\[
(W_1 \boxtimes Y_1) \tnsrbar (W_2 \boxtimes Y_2)
:= \eval{Y_1,W_2} \cdot W_1 \boxtimes Y_2
\]
where recall $\eval{Y_1,W_2} :=
  \Hom_\cA(\one,Y_1 W_2)$.
$\tnsrbar$ naturally extends to direct sums,
and is clearly associative.

For morphisms $f_1 \boxtimes g_1 :
W_1 \boxtimes Y_1 \to W_1' \boxtimes Y_1'$,
$f_2 \boxtimes g_2 :
W_2 \boxtimes Y_2 \to W_2' \boxtimes Y_2'$,
their \emph{reduced tensor product} is
given by
\[
(f_1 \boxtimes g_1) \tnsrbar (f_2 \boxtimes g_2)
:= \eval{g_1,f_2} \cdot f_1 \boxtimes g_2
=
\begin{tikzpicture}
\draw (-0.2,0.5) -- (-0.2,-0.5)
  node[pos=0,above] {\small $Y_1$}
  node[below] {\small $Y_1'$};
\node[small_morphism] at (-0.2,0) {\tiny $g_1$};
\draw (0.2,0.5) -- (0.2,-0.5)
  node[pos=0,above] {\small $W_2$}
  node[below] {\small $W_2'$};
\node[small_morphism] at (0.2,0) {\tiny $f_2$};
\end{tikzpicture}
\cdot
\begin{tikzpicture}
\draw (-0.3,0.5) -- (-0.3,-0.5)
  node[pos=0,above] {\small $W_1$}
  node[below] {\small $W_1'$};
\node[small_morphism] at (-0.3,0) {\tiny $f_1$};
\draw (0.3,0.5) -- (0.3,-0.5)
  node[pos=0,above] {\small $Y_2$}
  node[below] {\small $Y_2'$};
\node[small_morphism] at (0.3,0) {\tiny $g_2$};
\node at (0,0) {$\boxtimes$};
\end{tikzpicture}
\]
where the left side, the ``coefficient" $\eval{g_1,f_2}$,
is to be interpreted as a linear map
$\eval{Y_1,W_2} \to \eval{Y_1',W_2'}$
by composition.
\defend
\end{definition}

For example,
\begin{equation} \label{e:cAA_simple}
(X_i \boxtimes X_j^*) \tnsrbar (X_k \boxtimes X_l^*)
\simeq \delta_{j,k} X_i \boxtimes X_l^*
\end{equation}
In particular, when $i\neq j$,
\begin{equation} \label{e:cAA_not_symm}
\begin{split}
(X_i \boxtimes X_i^*) \tnsrbar (X_i \boxtimes X_j^*)
&\simeq X_i \boxtimes X_j^* \\
(X_i \boxtimes X_j^*) \tnsrbar (X_i \boxtimes X_i^*)
&\simeq 0
\end{split}
\end{equation}
so $\tnsrbar$ cannot be braided.

\begin{proposition}
$(\cAA, \tnsrbar)$ is a pivotal multifusion category.
More precisely,
\begin{itemize}
\item The unit object, denoted $\onebar$, is
  $\bigoplus_{i\in \Irr(\cA)} X_i \boxtimes X_i^*$;

\item $(X \boxtimes Y)^\vee = Y^* \boxtimes X^*$,
  $\rvee (X \boxtimes Y) = \rvee Y \boxtimes \rdual{X}$,
  the (co)evaluation maps are described in the proof;

\item The pivotal structure is defined component-wise:
  $\delta_{X\boxtimes Y} = \delta_X \boxtimes \delta_Y$.

\end{itemize}
\end{proposition}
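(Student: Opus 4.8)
The plan is to transport the pivotal multifusion structure of $(\ZA,\tnsrbar)$ from \prpref{p:reduced_pivotal} across the M\"uger equivalence \eqnref{e:modular-split}. Write $M\colon \cAA \to (\ZA,\tnsr)$ for the functor $X\boxtimes Y\mapsto (X\tnsr Y, c^\inv\tnsr c)$; by \eqnref{e:modular-split} this is an equivalence of abelian categories (the difference in braiding on $\cA$ plays no role here, since $\tnsrbar$ is not braided anyway). As $\cAA=\cA\boxtimes\cA$ is already finite semisimple, it suffices to upgrade $M$ to a tensor equivalence $(\cAA,\tnsrbar)\to(\ZA,\tnsrbar)$ compatible with duals and pivotal structure; the proposition, together with the explicit description of $\onebar$, of the duals $(X\boxtimes Y)^\vee=Y^*\boxtimes X^*$, of the (co)evaluation maps, and of the pivotal maps, then follows by transporting the corresponding data from \prpref{p:reduced_pivotal}.

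The central step is to produce the tensor structure $J^M$ on $M$, i.e.\ a natural isomorphism $M(W_1\boxtimes Y_1)\tnsrbar M(W_2\boxtimes Y_2)\simeq M\big((W_1\boxtimes Y_1)\tnsrbar(W_2\boxtimes Y_2)\big)=\eval{Y_1,W_2}\cdot M(W_1\boxtimes Y_2)$. By \defref{d:tnsr_red_ZA} the left-hand side is $\im Q_{\ga_1,\ga_2}$ on $W_1 Y_1\tnsr W_2 Y_2$ with the transported half-braidings $\ga_i=c^\inv\tnsr c$; expanding \eqnref{e:Qproj} and sliding the $\ga_i$ through, the projection encircles the two inner strands $Y_1,W_2$ with a dashed (regular-colored) loop. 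This is exactly the setting of the Killing Lemma \lemref{l:killing}: since $\cA$ is modular, an encircling dashed loop projects onto the $\one$-isotypic part, so $\im Q_{\ga_1,\ga_2}\simeq W_1\tnsr\big(\Hom(\one,Y_1\tnsr W_2)\otimes\one\big)\tnsr Y_2\simeq \eval{Y_1,W_2}\cdot W_1 Y_2$, matching the right-hand side. I would then check that under this identification the half-braiding $\ga_1\tnsrbar\ga_2$ becomes $c^\inv\tnsr c$ on $W_1 Y_2$ (using \lemref{l:hfbrd_red}), so the isomorphism is one of objects of $\ZA$; that it is natural in all four arguments; and that $J^M$ satisfies the hexagon axiom, which reduces to the near-strict associativity of $\tnsrbar$ on both sides (\defref{d:AA_red} and \lemref{l:assoc}).

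It then remains to match the rest of the data. For the unit I would verify directly that $\onebar:=\bigoplus_k X_k\boxtimes X_k^*$ is the $\tnsrbar$-unit on $\cAA$ — the unit constraints come from the canonical decomposition $Y\simeq\bigoplus_k\eval{Y,X_k}\otimes X_k^*$ in $\cA$ — so that $M(\onebar)\simeq I(\one)=\onebar^{\ZA}$ automatically once $M$ is a tensor functor. For duals, $M$ sends $(X\boxtimes Y)^\vee=Y^*\boxtimes X^*$ to $(Y^*X^*, c^\inv\tnsr c)$, which is identified with $M(X\boxtimes Y)^\vee$ via the formula for $\ga^\vee$ in \prpref{p:reduced_pivotal}; transporting $\ev$ and $\coev$ along $M$ then yields the asserted (co)evaluation maps and the zigzag identities for free. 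Finally the pivotal maps on both sides are $\delta$ of $\cA$ applied componentwise, so they agree, and \prpref{p:reduced_pivotal} transfers to $(\cAA,\tnsrbar)$.

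The main obstacle is the bookkeeping in the central step: tracking which strands the dashed loop and the half-braidings act on while collapsing $Y_1\tnsr W_2$ to its multiplicity space, and then verifying the coherence of $J^M$. Conceptually nothing is hard — modularity, via \lemref{l:killing}, does the real work — but it is the place where a mismatched half-braiding or a dropped factor of $\cD$ would break the argument. As an alternative that avoids $M$ altogether, one could verify the pivotal multifusion axioms for $(\cAA,\tnsrbar)$ directly from \defref{d:AA_red}: associativity is essentially on the nose, the unit constraints are as above, and the zigzag identities for $(X\boxtimes Y)^\vee=Y^*\boxtimes X^*$ reduce to those of $\cA$, with the (co)evaluation maps written out explicitly.
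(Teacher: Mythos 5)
Your proposal is correct in substance but runs in the opposite logical direction from the paper. The paper proves this proposition by brute force: it writes down the unit constraints $l_{X\boxtimes Y}, r_{X\boxtimes Y}$ and the (co)evaluation maps explicitly as sums over dual bases $\al$ with $\sqrt{d_k}$ coefficients, and declares the zigzag and coherence checks straightforward; only afterwards does it prove \thmref{t:AA_ZA}, the tensor equivalence $K\colon (\cAA,\tnsrbar)\simeq(\ZA,\tnsrbar)$. You instead propose to establish the tensor structure on $K$ first and transport rigidity and pivotality back from \prpref{p:reduced_pivotal}. Your central computation — that $\im Q_{c^\inv\tnsr c,\,c^\inv\tnsr c}$ on $W_1Y_1\tnsr W_2Y_2$ collapses via the Killing Lemma to $\eval{Y_1,W_2}\cdot W_1Y_2$ — is exactly the "helpful observation" the paper uses inside its proof of \thmref{t:AA_ZA}, so that step is sound. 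The one place to be careful is the mild circularity you partially acknowledge: to speak of $K$ as a tensor functor out of $(\cAA,\tnsrbar)$ you must already have $(\cAA,\tnsrbar)$ monoidal, so the associativity (free from \defref{d:AA_red}) and, crucially, the unit object with its constraints must be verified directly on $\cAA$ before anything can be transported — which is precisely the part of the paper's proof you cannot skip. Note also that the comparison $K(\onebar)\simeq I(\one)$ is not innocuous: the paper points out it is given by the $S$-matrix, so its invertibility is exactly where modularity enters, consistent with your observation that the Killing Lemma does the real work. What your route buys is that the duals, (co)evaluations, and pivotal structure come for free from $(\ZA,\tnsrbar)$ rather than needing the zigzag identities checked by hand; what it costs is that the explicit formulas asserted in the proposition (and used later, e.g.\ in the dimension computation $d^L_{X_i^j}=d_j/d_i$) still have to be matched against the transported ones. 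Your closing alternative — direct verification from \defref{d:AA_red} — is the paper's actual proof.
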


\begin{proof}
Fix $X \boxtimes Y \in \cAA$.
The left and right unit constraint are given by
\[
l_{X\boxtimes Y} :=
\sum_{k \in \Irr(\cA)} \sqrt{d_k}
\begin{tikzpicture}
\draw (-0.3,0.3) .. controls +(-90:0.5cm) and +(-90:0.5cm) .. (0.3,0.3)
  node[pos=0,above] {\small $X_k^*$}
  node[above] {\small $X$};
\node[small_morphism] at (-0.25,0.1) {\tiny $\al$};
\end{tikzpicture}
\cdot
\begin{tikzpicture}
\draw (-0.3,-0.3) -- (-0.3,0.3)
  node[above] {\small $X_k$}
  node[pos=0,below] {\small $X$};
\node[small_morphism] at (-0.3,0) {\tiny $\al$};
\node at (0,0) {$\boxtimes$};
\draw (0.3,-0.3) -- (0.3,0.3)
  node[above] {\small $Y$}
  node[pos=0,below] {\small $Y$};
\end{tikzpicture}
%%%%%%%%%%%%%%%%%%%%%%%%
\;\; ; \;\;
%%%%%%%%%%%%%%%%%%%%%%%%
r_{X\boxtimes Y} :=
\sum_{k \in \Irr(\cA)} \sqrt{d_k}
\begin{tikzpicture}
\draw (-0.3,0.3) .. controls +(-90:0.5cm) and +(-90:0.5cm) .. (0.3,0.3)
  node[pos=0,above] {\small $Y$}
  node[above] {\small $X_k$};
\node[small_morphism] at (-0.25,0.1) {\tiny $\al$};
\end{tikzpicture}
\cdot
\begin{tikzpicture}
\draw (-0.3,-0.3) -- (-0.3,0.3)
  node[above] {\small $X$}
  node[pos=0,below] {\small $X$};
\node at (0,0) {$\boxtimes$};
\draw (0.3,-0.3) -- (0.3,0.3)
  node[above] {\small $X_k^*$}
  node[pos=0,below] {\small $Y$};
\node[small_morphism] at (0.3,0) {\tiny $\al$};
\end{tikzpicture}
\]
where we recall that $\al$ is a sum over
a pair of dual bases
(see \eqnref{e:summation_convention}).
Their inverses are given by
flipping the diagram upside down.

The left (co)evaluation maps are given by
\[
\ev_{X\boxtimes Y} :=
\sum_{k \in \Irr(\cA)} \sqrt{d_k}
\begin{tikzpicture}
\draw (-0.3,0.3) .. controls +(-90:0.5cm) and +(-90:0.5cm) .. (0.3,0.3)
  node[pos=0,above] {\small $X^*$}
  node[above] {\small $X$};
\end{tikzpicture}
\cdot
\begin{tikzpicture}
\draw (-0.3,-0.3) -- (-0.3,0.3)
  node[above] {\small $Y^*$}
  node[pos=0,below] {\small $X_k$};
\node[small_morphism] at (-0.3,0) {\tiny $\al$};
\node at (0,0) {$\boxtimes$};
\draw (0.3,-0.3) -- (0.3,0.3)
  node[above] {\small $Y$}
  node[pos=0,below] {\small $X_k^*$};
\node[small_morphism] at (0.3,0) {\tiny $\al$};
\end{tikzpicture}
%%%%%%%%%%%%%%%%%%%%%%%%
\;\; ; \;\;
%%%%%%%%%%%%%%%%%%%%%%%%
\coev_{X \boxtimes Y} :=
\sum_{k \in \Irr(\cA)} \sqrt{d_k}
\begin{tikzpicture}
\draw (-0.3,-0.3) .. controls +(90:0.5cm) and +(90:0.5cm) .. (0.3,-0.3)
  node[pos=0,below] {\small $Y$}
  node[below] {\small $Y^*$};
\end{tikzpicture}
\cdot
\begin{tikzpicture}
\draw (-0.3,-0.3) -- (-0.3,0.3)
  node[above] {\small $X_k$}
  node[pos=0,below] {\small $X$};
\node[small_morphism] at (-0.3,0) {\tiny $\al$};
\node at (0,0) {$\boxtimes$};
\draw (0.3,-0.3) -- (0.3,0.3)
  node[above] {\small $X_k^*$}
  node[pos=0,below] {\small $X^*$};
\node[small_morphism] at (0.3,0) {\tiny $\al$};
\end{tikzpicture}
\]
The right (co)evaluation maps are given by
similar diagrams.
It is straightforward to check that these have
the right properties.
\end{proof}

\begin{theorem} \label{t:AA_ZA}
There is an equivalence of pivotal multifusion categories
\begin{align*}
K: \cAA &\simeq_\tnsr (\ZA,\tnsrbar) \\
X \boxtimes Y &\mapsto (XY, c^\inv c)
\end{align*}
\end{theorem}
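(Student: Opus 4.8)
The plan is to bootstrap from the abelian equivalence underlying \eqref{e:modular-split}, which already says that $K:\cAA\to\ZA$, $X\boxtimes Y\mapsto(XY,c^\inv c)$, is an equivalence of abelian categories (we only need the underlying functor, not the braided refinement, since $\tnsrbar$ is not braided anyway, so the distinction between $\cAA$ and $\cAAbop$ is harmless), and then to upgrade it to a pivotal tensor equivalence for the \emph{reduced} structures. Thus it remains to (i) produce a tensor structure $J'_{a,b}\colon K(a)\tnsrbar K(b)\simeq K(a\tnsrbar b)$ compatible with $\tnsrbar$ on both sides, (ii) verify that $J'$ satisfies the hexagon and unit axioms, and (iii) check that $K$ intertwines the pivotal structures.

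For (i), by additivity of $K$ and of $\tnsrbar$ on both sides it suffices to treat $a=W_1\boxtimes Y_1$, $b=W_2\boxtimes Y_2$. Write $\gamma^1,\gamma^2$ for the half-braidings of $K(a)=(W_1Y_1,\gamma^1)$, $K(b)=(W_2Y_2,\gamma^2)$; by construction these are \emph{geometric} (the $A$-strand passes under the $W$-factors via $c^\inv$ and over the $Y$-factors via $c$). By \defref{d:tnsr_red_ZA}, $K(a)\tnsrbar K(b)=\big(W_1Y_1\tnsrproj{\gamma^1}{\gamma^2}W_2Y_2,\ \gamma^1\tnsrbar\gamma^2\big)$, and because the half-braidings are geometric the projection $Q_{\gamma^1,\gamma^2}$ of \eqref{e:Qproj} is a regular-colored loop encircling $W_1Y_1W_2Y_2$ that is geometrically linked only with the middle portion $Y_1W_2$ and can be slid off the outer portions $W_1$ and $Y_2$. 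Around $Y_1W_2$ it acts by charge conservation: by \lemref{l:killing} — this is where modularity of $\cA$ is essential — encircling $Y_1\tnsr W_2$ with the regular coloring is $\cD$ times the projection onto the $\one$-isotypic summand, whose image is $\eval{Y_1,W_2}\tnsr\one$. Hence the underlying object is $W_1\tnsr\eval{Y_1,W_2}\tnsr Y_2=\eval{Y_1,W_2}\cdot(W_1\tnsr Y_2)$, which is exactly that of $K(a\tnsrbar b)=\eval{Y_1,W_2}\cdot(W_1Y_2,c^\inv c)$. For the half-braiding, \lemref{l:hfbrd_red} gives $\gamma^1\tnsrbar\gamma^2=\gamma^1\tnsr c$ on the image, and restricting this geometric half-braiding one reads off $c^\inv$ on the $W_1$-strand and $c$ on the $Y_2$-strand, i.e.\ precisely $c^\inv c$. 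This defines $J'_{a,b}$, whose naturality follows from \lemref{l:summation} and functoriality of $Q$.

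For (ii), the hexagon reduces, via \lemref{l:assoc} on the $\ZA$ side and the evident associativity of $\tnsrbar$ on $\cAA$ from \defref{d:AA_red}, to the commutativity of the two ``slide-off and apply charge conservation'' moves, which plainly commute. For the unit, the unit of $(\cAA,\tnsrbar)$ is $\onebar=\bigoplus_i X_i\boxtimes X_i^*$, so $K(\onebar)=\bigoplus_i(X_iX_i^*,c^\inv c)$; by \eqref{e:modular-split} (or directly from \prpref{p:center} and \eqref{e:induction}, using modularity to decompose the induced object) this is isomorphic to $I(\one)=\onebar$, the unit of $(\ZA,\tnsrbar)$, compatibly with the unit constraints of \prpref{p:reduced_pivotal} and of its $\cAA$-analogue. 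For (iii), the pivotal structures on both $(\cAA,\tnsrbar)$ and $(\ZA,\tnsrbar)$ are defined componentwise by that of $\cA$, and $K$ already intertwines the componentwise duals and pivotal morphisms (this is part of \eqref{e:modular-split}), so this is immediate.

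The main obstacle is the computation in (i): reading the geometric picture of $Q_{\gamma^1,\gamma^2}$ correctly, justifying rigorously that the encircling loop links precisely $Y_1W_2$ and nothing else, and matching the restricted half-braiding with $c^\inv c$ — together with the normalization bookkeeping (the $\sqrt{d_i}$ factors appearing in $\tnsrbar$, in $Q$, and implicitly in the identification $\eval{Y_1,W_2}\tnsr\one\simeq$ the $\one$-isotypic summand) needed so that the coherence diagrams of (ii) close exactly rather than up to scalars.
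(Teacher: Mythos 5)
Your proposal is correct and follows essentially the same route as the paper: the tensor structure is obtained exactly by sliding the regular-colored loop of $Q_{c^\inv c,\,c^\inv c}$ off the outer $W_1$ and $Y_2$ strands and applying charge conservation to the $Y_1W_2$ portion, identifying the image with $\eval{Y_1,W_2}\cdot(W_1Y_2)$, with the half-braiding read off from \lemref{l:hfbrd_red} and the pivotal structures agreeing componentwise. The one detail worth making explicit (as the paper does) is that $K$ does \emph{not} send the unit to the unit on the nose: the isomorphism $K(\onebar)\simeq\onebar$ is given by the $S$-matrix, which is precisely where the nondegeneracy hypothesis on $\cA$ enters.
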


\begin{proof}
The tensor structure $L$ on $K$ is given as follows:
for $W_1 \boxtimes Y_1, W_2 \boxtimes Y_2$,
the isomorphism
$L : K(W_1\boxtimes Y_1) \tnsrbar K(W_2\boxtimes Y_2)
\simeq K((W_1\boxtimes Y_1) \tnsrbar (W_2 \boxtimes Y_2))$
is given by
\begin{align*}
L: (W_1 Y_1, c^\inv c) \tnsrbar (W_2 Y_2, c^\inv c)
&\simeq
\eval{Y_1,W_2} \cdot (W_1 Y_2, c^\inv c)
\\
\begin{tikzpicture}
\node[small_morphism] (al) at (0,0) {\tiny $\al$};
\draw (al) to[out=-135,in=90] (-0.2,-0.5)
  node[below] {\tiny $Y_1$};
\draw (al) to[out=-45,in=90] (0.2,-0.5)
  node[below] {\tiny $W_2$};
\end{tikzpicture}
&\cdot
\begin{tikzpicture}
\node[small_morphism] (al) at (0,0) {\tiny $\al$};
\draw (al) to[out=135,in=-90] (-0.2,0.5)
  node[above] {\tiny $Y_1$};
\draw (al) to[out=45,in=-90] (0.2,0.5)
  node[above] {\tiny $W_2$};
\draw (-0.5,0.5) .. controls +(down:0.3cm) and +(up:0.3cm) .. (-0.3,-0.5)
  node[pos=0,above] {\tiny $W_1$}
  node[below] {\tiny $W_1$};
\draw (0.5,0.5) .. controls +(down:0.3cm) and +(up:0.3cm) .. (0.3,-0.5)
  node[pos=0,above] {\tiny $Y_2$}
  node[below] {\tiny $Y_2$};
\end{tikzpicture}
\end{align*}

The inverse to $L$ is given by flipping the diagram upside down.
The following observation is helpful:
for $(W_1 Y_1, c^\inv c),(W_2 Y_2, c^\inv c) \in \ZA$,
we have
\[
W_1 Y_1 \tnsrproj{c^\inv c}{c^\inv c} W_2 Y_2
=
\im\Big(
\frac{1}{\cD}
%%%%%%%%%%%%%%%%%%%
\begin{tikzpicture}
\tikzmath{
  \toprow = 0.6;
  \bottom = -0.6;
}
\draw (0.5,\bottom) -- (0.5,\toprow)
  node[above] {\tiny $Y_2$};
\draw (-0.2,\bottom) -- (-0.2,\toprow)
  node[above] {\tiny $Y_1$};
\draw (0.2,\bottom) -- (0.2,\toprow)
  node[above] {\tiny $W_2$};
\draw[regular,overline={1.5}] (0,0) ellipse (0.7cm and 0.3cm);
\draw[overline={1.5}] (-0.5,\bottom) -- (-0.5,\toprow)
  node[above] {\tiny $W_1$};
\draw[overline={1.5}] (-0.2,0) -- (-0.2,\toprow);
\draw[overline={1.5}] (0.2,0) -- (0.2,\toprow);
\end{tikzpicture}
%%%%%%%%%%%%%%%%%%%
\Big)
=
\im\Big(
%%%%%%%%%%%%%%%%%%%
\begin{tikzpicture}
\tikzmath{
  \toprow = 0.6;
  \bottom = -0.6;
}
\draw (-0.5,\bottom) -- (-0.5,\toprow);
\draw (0.5,\bottom) -- (0.5,\toprow);
\node[small_morphism] (al1) at (0,0.2) {\tiny $\al$};
\node[small_morphism] (al2) at (0,-0.2) {\tiny $\al$};
\draw (al1) to[out=135,in=-90] (-0.2,\toprow);
\draw (al1) to[out=45,in=-90] (0.2,\toprow);
\draw (al2) to[out=-135,in=-90] (-0.2,\bottom);
\draw (al2) to[out=-45,in=-90] (0.2,\bottom);
\end{tikzpicture}
%%%%%%%%%%%%%%%%%%%
\Big)
\]
%(see \eqnref{e:charge_conservation}).
It is easy to check that $L$ satisfies the hexagon axiom.

Note that $K$ does not send unit to unit - the half-braiding on
$K(\onebar) = (\bigoplus X_i X_i^*,c^\inv c)$
is not the same as
$\onebar = (\bigoplus X_i X_i^*, \Gamma)$;
the isomorphism $K(\onebar) \simeq \onebar$
is essentially given by the $S$-matrix:
\[
S = \sum_{i,j\in \Irr(\cA)} \sqrt{d_i}\sqrt{d_j}
\begin{tikzpicture}
\tikzmath{
  \tp = 0.4;
  \bt = -0.4;
  \lf = -0.15;
  \rt = 0.15;
  \midtp = 0.1;
  \middn = -0.1;
}
\draw (\rt,\tp) to[out=-90,in=0] (0,\middn);
\draw[midarrow_rev={0.4}] (\lf,\bt) to[out=90,in=180] (0,\midtp);
\node at (-0.3,0.3) {\tiny $i$};
\draw[overline={1.5}] (\rt,\bt) to[out=90,in=0] (0,\midtp);
\draw[overline={1.5},midarrow={0.4}]
  (\lf,\tp) to[out=-90,in=180] (0,\middn);
\node at (-0.3,-0.3) {\tiny $j$};
\end{tikzpicture}
\]
%(use \eqnref{e:combine} to check $S$ intertwines
%half-braidings).
Clearly the pivotal structures agree.
\end{proof}

The equivalence given above has a nice interpretation
in $\CYA$. Namely, the composition
$\Kar(H) \circ \Kar(G)^\inv \circ K :
  \cAA \simeq \CYA$
is naturally isomorphic to the following functor:
\begin{align*}
\cAA &\simeq \CYA
\\
\begin{tikzpicture}
\node (a) at (0,0.65) {$X\boxtimes Y$};
\node (b) at (0,-0.65) {$X' \boxtimes Y'$};
\draw[midarrow={0.55}] (a) -- (b) node[pos=0.5,right]
  {$f \boxtimes g$};
\end{tikzpicture}
%%%%%%%%%%%%%%%%%%%%
&
\begin{tikzpicture}
\node at (0,0.65) {$\mapsto$};
\node at (0,0) {$\mapsto$};
\node at (0,-0.65) {$\mapsto$};
\end{tikzpicture}
%%%%%%%%%%%%%%%%%%%%
\begin{tikzpicture}
\tikzmath{
  \toprow = 1;
  \bottom = -1;
  \midrow = 0;
  \qttop = 0.5;
  \qtbot = -0.5;
}
%% changing ellipse ratio 5:1
% bottom ellipse
\draw (0,\bottom) ellipse (1cm and 0.2cm);
%% back half of graph
\draw[regular] (0.75,\qttop) arc(0:180:0.75cm and 0.15cm);
\draw[regular] (0.75,\qtbot) arc(0:180:0.75cm and 0.15cm);
%% inner vertical lines
\draw[overline={1}] (-0.5,\bottom) -- (-0.5,\toprow);
\draw[overline={1}] (0.5,\bottom) -- (0.5,\toprow);
\draw (0,\bottom) ellipse (0.5cm and 0.1cm); %bottom inner ellipse
%% rest of graph
\node[dotnode] (n1) at (-0.85,\toprow) {};
\node[dotnode] (n2) at (-0.65,\toprow) {};
\node[dotnode] (n3) at (-0.85,\bottom) {};
\node[dotnode] (n4) at (-0.65,\bottom) {};
\draw[overline={1}] (n1) -- (n3);
\draw[overline={1}] (n2) -- (n4);
\node[dotnode] at (-0.85,\midrow) {};
\node at (-0.95,0.05) {\tiny $f$};
\node[dotnode] at (-0.65,\midrow) {};
\node at (-0.55,0.05) {\tiny $g$};
\draw[regular, overline={1.5}]
  (-0.75,\qttop) arc(180:360:0.75cm and 0.15cm);
\draw[regular, overline={1.5}]
  (-0.75,\qtbot) arc(180:360:0.75cm and 0.15cm);
%% top ellipse
\draw[overline={1}] (0,\toprow) ellipse (1cm and 0.2cm);
\draw (0,\toprow) ellipse (0.5cm and 0.1cm);
%% vertical lines
\draw (-1,\bottom) -- (-1,\toprow);
\draw (1,\bottom) -- (1,\toprow);
%% node label
\node at (-0.75,1.3) {\tiny $X Y$};
\node at (-0.75,-1.35) {\tiny $X' Y'$};
%% curly braces
\draw[decoration={brace,amplitude=3pt},decorate]
  (-1.2,0.3) -- (-1.2,\toprow);
\node at (-1.55,0.65) {im};
\node at (1.3,0.65) {$\frac{1}{\cD}$};
\draw[decoration={brace,amplitude=3pt},decorate]
  (-1.2,\bottom) -- (-1.2,-0.3);
\node at (-1.55,-0.65) {im};
\node at (1.3,-0.65) {$\frac{1}{\cD}$};
\end{tikzpicture}
%%%%%%%%%%%%%%%%%
\end{align*}

The composition $\Kar(H) \circ \Kar(G)^\inv \circ K$
itself only hits objects with one marked point $p$.
The functor presented above is more intuitive
from the following perspective.
Restricting to the first factor,
i.e. setting $Y=\one$,
this is like including $\cA$ into $\CYA$
along the outer boundary; likewise,
the second factor is including $\cA$ into $\CYA$
along the inner boundary:
\[
X \mapsto (X, c^\inv) \mapsto
\im \big( \frac{1}{\cD}
\begin{tikzpicture}
\tikzmath{
  \toprow = 0.5;
  \bottom = -0.5;
  \midrow = 0;
}
%% changing ellipse ratio 5:1
% bottom ellipse
\draw (0,\bottom) ellipse (1cm and 0.2cm);
%% back half of graph
\draw[regular] (0.6,\midrow) arc(0:180:0.6cm and 0.1cm);
%% inner vertical lines
\draw[overline={1}] (-0.5,\bottom) -- (-0.5,\toprow);
\draw[overline={1}] (0.5,\bottom) -- (0.5,\toprow);
\draw (0,\bottom) ellipse (0.5cm and 0.1cm); %bottom inner ellipse
%% rest of graph
\node[dotnode] (n1) at (-0.75,\toprow) {};
\node[dotnode] (n2) at (-0.75,\bottom) {};
\draw[overline={1}] (n1) -- (n2);
\draw[regular, overline={1.5}]
  (0.6,\midrow) arc(0:-180:0.6cm and 0.1cm);
%% top ellipse
\draw[overline={1}] (0,\toprow) ellipse (1cm and 0.2cm);
\draw (0,\toprow) ellipse (0.5cm and 0.1cm);
%% vertical lines
\draw (-1,\bottom) -- (-1,\toprow);
\draw (1,\bottom) -- (1,\toprow);
%% node label
\node at (-0.75,0.8) {\tiny $X$};
\node at (-0.75,-0.8) {\tiny $X$};
\end{tikzpicture}
\big)
%%%%%%%%%%%%%%%%%
\;\;\;
;
\;\;\;
%%%%%%%%%%%%%%%%%
Y \mapsto (Y, c) \mapsto
\im \big( \frac{1}{\cD}
\begin{tikzpicture}
\tikzmath{
  \toprow = 0.5;
  \bottom = -0.5;
  \midrow = 0;
}
%% changing ellipse ratio 5:1
% bottom ellipse
\draw (0,\bottom) ellipse (1cm and 0.2cm);
%% back half of graph
\draw[regular] (0.875,\midrow) arc(0:180:0.875cm and 0.15cm);
%% inner vertical lines
\draw[overline={1}] (-0.5,\bottom) -- (-0.5,\toprow);
\draw[overline={1}] (0.5,\bottom) -- (0.5,\toprow);
\draw (0,\bottom) ellipse (0.5cm and 0.1cm); %bottom inner ellipse
%% rest of graph
\node[dotnode] (n1) at (-0.75,\toprow) {};
\node[dotnode] (n2) at (-0.75,\bottom) {};
\draw[overline={1}] (n1) -- (n2);
\draw[regular, overline={1.5}]
  (0.875,\midrow) arc(0:-180:0.875cm and 0.1cm);
%% top ellipse
\draw[overline={1}] (0,\toprow) ellipse (1cm and 0.2cm);
\draw (0,\toprow) ellipse (0.5cm and 0.1cm);
%% vertical lines
\draw (-1,\bottom) -- (-1,\toprow);
\draw (1,\bottom) -- (1,\toprow);
%% node label
\node at (-0.75,0.8) {\tiny $Y$};
\node at (-0.75,-0.8) {\tiny $Y$};
\end{tikzpicture}
\big)
\]
The functor presented before
is the $\tnsrst$-tensor product of these two.
This picture also elucidates the definition of
$\tnsrbar$ on $\cAA$: if we take the tensor product
of the two functors above in opposite order,
essentially looking at
$(\one \boxtimes X) \tnsrbar (Y \boxtimes \one)$,
we get
\[
K(\one \boxtimes X) \tnsrst K(Y \boxtimes \one)
=
\im \big(
\frac{1}{\cD^2}
\begin{tikzpicture}
\tikzmath{
  \toprow = 0.5;
  \bottom = -0.5;
  \midrow = 0;
  %\qttop = 0.5;
  %\qtbot = -0.5;
}
%% changing ellipse ratio 5:1
% bottom ellipse
\draw (0,\bottom) ellipse (1cm and 0.2cm);
%% back half of graph
\draw[regular] (0.55,\midrow) arc(0:180:0.55cm and 0.1cm);
\draw[regular] (0.95,\midrow) arc(0:180:0.95cm and 0.2cm);
%% inner vertical lines
\draw[overline={1}] (-0.5,\bottom) -- (-0.5,\toprow);
\draw[overline={1}] (0.5,\bottom) -- (0.5,\toprow);
\draw (0,\bottom) ellipse (0.5cm and 0.1cm); %bottom inner ellipse
%% rest of graph
\node[dotnode] (n1) at (-0.85,\toprow) {};
\node[dotnode] (n2) at (-0.65,\toprow) {};
\node[dotnode] (n3) at (-0.85,\bottom) {};
\node[dotnode] (n4) at (-0.65,\bottom) {};
\draw[overline={1}] (n1) -- (n3);
\draw[overline={1}] (n2) -- (n4);
\draw[regular, overline={1.5}]
  (-0.55,\midrow) arc(180:360:0.55cm and 0.1cm);
\draw[regular, overline={1.5}]
  (-0.95,\midrow) arc(180:360:0.95cm and 0.2cm);
%% top ellipse
\draw[overline={1}] (0,\toprow) ellipse (1cm and 0.2cm);
\draw (0,\toprow) ellipse (0.5cm and 0.1cm);
%% vertical lines
\draw (-1,\bottom) -- (-1,\toprow);
\draw (1,\bottom) -- (1,\toprow);
%% node label
\node at (-0.75,0.8) {\tiny $XY$};
\node at (-0.75,-0.85) {\tiny $XY$};
\end{tikzpicture}
%%%%%%%%%%%%%%%%%%%%%%%%%%%%%%%%%%%%%%%%%
\big)
=
\im \big(
%%%%%%%%%%%%%%%%%%%%%%%%%%%%%%%%%%%%%%%%%
\frac{1}{\cD^2}
\begin{tikzpicture}
\tikzmath{
  \toprow = 0.5;
  \bottom = -0.5;
  \midrow = 0;
  %\qttop = 0.5;
  %\qtbot = -0.5;
}
%% changing ellipse ratio 5:1
% bottom ellipse
\draw (0,\bottom) ellipse (1cm and 0.2cm);
%% back half of graph
\draw[regular] (0.55,-0.1) arc(0:180:0.55cm and 0.1cm);
\draw[regular] (-0.55,0.1) arc(0:180:0.2cm and 0.04cm);
%\draw[regular] (0.95,\midrow) arc(0:180:0.95cm and 0.2cm);
%% inner vertical lines
\draw[overline={1}] (-0.5,\bottom) -- (-0.5,\toprow);
\draw[overline={1}] (0.5,\bottom) -- (0.5,\toprow);
\draw (0,\bottom) ellipse (0.5cm and 0.1cm); %bottom inner ellipse
%% rest of graph
\node[dotnode] (n1) at (-0.85,\toprow) {};
\node[dotnode] (n2) at (-0.65,\toprow) {};
\node[dotnode] (n3) at (-0.85,\bottom) {};
\node[dotnode] (n4) at (-0.65,\bottom) {};
\draw[overline={1}] (n1) -- (n3);
\draw[overline={1}] (n2) -- (n4);
\draw[regular, overline={1.5}]
  (-0.55,-0.1) arc(180:360:0.55cm and 0.1cm);
\draw[regular, overline={1.5}]
  (-0.95,0.1) arc(180:360:0.2cm and 0.04cm);
%\draw[regular, overline={1.5}]
%  (-0.95,\midrow) arc(180:360:0.95cm and 0.2cm);
%% top ellipse
\draw[overline={1}] (0,\toprow) ellipse (1cm and 0.2cm);
\draw (0,\toprow) ellipse (0.5cm and 0.1cm);
%% vertical lines
\draw (-1,\bottom) -- (-1,\toprow);
\draw (1,\bottom) -- (1,\toprow);
%% node label
\node at (-0.75,0.8) {\tiny $XY$};
\node at (-0.75,-0.85) {\tiny $XY$};
\end{tikzpicture}
%%%%%%%%%%%%%%%%%%%%%%%%%%%%%%%%%%%%%%%%%
\big)
=
\im \big(
%%%%%%%%%%%%%%%%%%%%%%%%%%%%%%%%%%%%%%%%%
\frac{1}{\cD}
\begin{tikzpicture}
\tikzmath{
  \toprow = 0.5;
  \bottom = -0.5;
  \midrow = 0;
  %\qttop = 0.5;
  %\qtbot = -0.5;
}
%% changing ellipse ratio 5:1
% bottom ellipse
\draw (0,\bottom) ellipse (1cm and 0.2cm);
%% back half of graph
\draw[regular] (0.55,0) arc(0:180:0.55cm and 0.1cm);
%\draw[regular] (-0.55,0.1) arc(0:180:0.2cm and 0.04cm);
%\draw[regular] (0.95,\midrow) arc(0:180:0.95cm and 0.2cm);
%% inner vertical lines
\draw[overline={1}] (-0.5,\bottom) -- (-0.5,\toprow);
\draw[overline={1}] (0.5,\bottom) -- (0.5,\toprow);
\draw (0,\bottom) ellipse (0.5cm and 0.1cm); %bottom inner ellipse
%% rest of graph
\node[dotnode] (n1) at (-0.85,\toprow) {};
\node[dotnode] (n2) at (-0.65,\toprow) {};
\node[dotnode] (n3) at (-0.85,\bottom) {};
\node[dotnode] (n4) at (-0.65,\bottom) {};
\node[dotnode] (a1) at (-0.75,0.1) {};
\node at (-0.9,0.1) {\tiny $\al$};
\node[dotnode] (a2) at (-0.75,-0.1) {};
\node at (-0.9,-0.1) {\tiny $\al$};
\draw[overline={1}] (n1) to[out=-90,in=120] (a1);
\draw[overline={1}] (n2) to[out=-90,in=60] (a1);
\draw[overline={1}] (n3) to[out=90,in=-120] (a2);
\draw[overline={1}] (n4) to[out=90,in=-60] (a2);
\draw[regular, overline={1.5}]
  (-0.55,0) arc(180:360:0.55cm and 0.1cm);
%\draw[regular, overline={1.5}]
%  (-0.95,0.1) arc(180:360:0.2cm and 0.04cm);
%\draw[regular, overline={1.5}]
%  (-0.95,\midrow) arc(180:360:0.95cm and 0.2cm);
%% top ellipse
\draw[overline={1}] (0,\toprow) ellipse (1cm and 0.2cm);
\draw (0,\toprow) ellipse (0.5cm and 0.1cm);
%% vertical lines
\draw (-1,\bottom) -- (-1,\toprow);
\draw (1,\bottom) -- (1,\toprow);
%% node label
\node at (-0.75,0.8) {\tiny $XY$};
\node at (-0.75,-0.85) {\tiny $XY$};
\end{tikzpicture}
\big)
\simeq
\eval{X,Y} \cdot \onest
\]
where we used \eqnref{e:sliding} and
\lemref{l:killing}.

Note that the equivalence
$\cAAbop \simeq_{\tnsr, \text{br}} (\ZA,\tnsr)$
mentioned in the beginning of this section
is also built by tensoring the same two functors together;
it just happens that
\footnote{Coincidence? I think NOT!}
\[
(X,c^\inv) \tnsr (Y,c) = (X,c^\inv) \tnsrbar (Y,c)
\]

\begin{remark}
One can consider a similar tensor product
on $\cAA$ when $\cA$ is not modular.
\defref{d:AA_red} of $\tnsrbar$ will look the same,
but $\eval{Y_1,W_2}$ would be replaced by
the symmetric part of $Y_1 W_2$,
that is, the direct summands of $Y_1 W_2$
that belong to the symmetric center of $\cA$.
The functor $K: \cA \boxtimes \cA \to \ZA$
will still respect $\tnsrbar$,
but $K$ will not be an equivalence.
Furthermore, it is not clear whether
$\tnsrbar$ on $\cAA$ possesses a unit.
As evidence suggestive of this,
recall that in the modular case,
even showing $K(\onebar) \simeq \onebar$
required the non-degeneracy of the $S$-matrix.
\rmkend
\end{remark}

Finally, we compare $(\cAA,\tnsrbar)$ to
$\matvec{}$,
the category of $\Vctsp$-valued matrices
(see \ocite{EGNO}*{Example 4.1.3}).
Let $\cS$ be some finite set.
The objects of $\matvec{\cS}$
are bigraded vector spaces
$V = \bigoplus_{i,j\in \cS} V_i^j$,
and the tensor product is given by
\[
(V \tnsr W)_i^j = \bigoplus_{k\in \cS} V_i^k \tnsr W_k^j
\]
Let $\kk_i^j$ be $\kk$ with bigrading $i,j$.
Then the unit is $\bigoplus_{i\in \cS} \kk_i^i$.
Duals are given by transposing the matrix
and then taking duals componentwise,
i.e. $(V^*)_i^j = (V_j^i)^*$.

\begin{proposition} \label{p:ZA_matvec}
There is a tensor equivalence
\begin{align*}
\matvec{\Irr(\cA)} &\simeq_\tnsr (\cAA,\tnsrbar)
\\
\kk_i^j &\mapsto X_i \boxtimes X_j^*
\end{align*}
\end{proposition}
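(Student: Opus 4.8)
The plan is to build the equivalence on simple objects and extend by semisimplicity, then read off the tensor structure from the explicit formula for $\tnsrbar$ in \defref{d:AA_red}. Since $\cA$ is semisimple, $\cAA = \cA \boxtimes \cA$ is finite semisimple with simple objects exactly the $X_i \boxtimes X_j$, $(i,j)\in\Irr(\cA)^2$, and $\matvec{\Irr(\cA)}$ is finite semisimple with simple objects $\kk_i^j$. Because the duality involution $j \mapsto j^*$ is a bijection of $\Irr(\cA)$, the assignment $\kk_i^j \mapsto X_i \boxtimes X_j^*$ is a bijection on isomorphism classes of simples; combined with the canonical identifications $\End_{\matvec{}}(\kk_i^j) = \kk = \End_{\cAA}(X_i \boxtimes X_j^*)$, this extends uniquely to a $\kk$-linear equivalence of abelian categories $K : \matvec{\Irr(\cA)} \simeq (\cAA,\tnsrbar)$, under which an object $V = \bigoplus_{i,j} V_i^j$ is sent to $\bigoplus_{i,j} V_i^j \tnsr (X_i \boxtimes X_j^*)$, i.e.\ $V_i^j$ becomes the multiplicity space of $X_i \boxtimes X_j^*$.

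Next I would produce a tensor structure $J$ on $K$. By additivity of both tensor products in each variable and by naturality, $J$ is determined by its values on simple objects, so it suffices to match $\kk_i^j \tnsr \kk_k^l$ with $K(\kk_i^j) \tnsrbar K(\kk_k^l)$ canonically. On the matrix side, the definition $(V \tnsr W)_a^b = \bigoplus_c V_a^c \tnsr W_c^b$ immediately gives $\kk_i^j \tnsr \kk_k^l = \delta_{jk}\,\kk_i^l$. On the other side, \defref{d:AA_red} gives
\[
K(\kk_i^j) \tnsrbar K(\kk_k^l) = (X_i \boxtimes X_j^*) \tnsrbar (X_k \boxtimes X_l^*) = \eval{X_j^*, X_k} \cdot (X_i \boxtimes X_l^*),
\]
and since $X_j, X_k$ are simple, $\eval{X_j^*, X_k} = \Hom_\cA(\one, X_j^* \tnsr X_k) \cong \Hom_\cA(X_j, X_k)$ is canonically $\kk$ for $j = k$ and $0$ otherwise. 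Thus both sides are canonically $\delta_{jk}(X_i \boxtimes X_l^*)$, and I take $J_{\kk_i^j,\kk_k^l}$ to be this identification (trivialising $\eval{X_j^*,X_j}$ via $\coev_{X_j}$), extended to all objects by bilinearity; naturality is then automatic.

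It remains to check coherence. The unit of $\matvec{\Irr(\cA)}$ is $\bigoplus_i \kk_i^i$, and $K\bigl(\bigoplus_i \kk_i^i\bigr) = \bigoplus_i X_i \boxtimes X_i^* = \onebar$, compatibly with the explicit left/right unit constraints of $(\cAA,\tnsrbar)$ and of $\matvec{}$. For the hexagon/pentagon axiom, bilinearity and naturality reduce the relevant diagrams to an identity among the canonical isomorphisms $\eval{X_j^*,X_k} \cong \kk\,\delta_{jk}$ and the associativity constraints of $\matvec{}$ and of $(\cAA,\tnsrbar)$ — the latter inherited componentwise — so that all arrows are the obvious $\delta$-function isomorphisms and the coherence squares commute on the nose; one also checks $K$ respects duals, sending $(\kk_i^j)^* = \kk_j^i$ to $(X_i \boxtimes X_j^*)^\vee \simeq X_j \boxtimes X_i^*$. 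The main obstacle I anticipate is purely organisational: threading the relabelling $j \mapsto j^*$ and the canonical trivialisations $\eval{X_j^*,X_j} \cong \kk$ consistently through every coherence square, so that $J$ is a genuine monoidal structure and not merely a summand-wise isomorphism; this is routine given the explicit formulas for $\tnsrbar$, its unit, and the matrix tensor product, so I would present it as a direct verification rather than invoking abstract nonsense.
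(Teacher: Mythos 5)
Your proposal is correct and follows essentially the same route as the paper: identify simples, compute $(X_i\boxtimes X_j^*)\tnsrbar(X_k\boxtimes X_l^*)=\eval{X_j^*,X_k}\cdot(X_i\boxtimes X_l^*)$, and define the tensor structure by the canonical trivialisation of $\eval{X_k^*,X_k}$ (the paper uses composition with $\ev_{X_k}$ where you use the coevaluation basis vector — these differ only by a factor of $d_k$, and either consistent choice makes the hexagon commute).
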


\begin{proof}
Clearly the functor above is an equivalence of
abelian categories,
and sends unit to unit.

Denote $X_i^j := X_i \boxtimes X_j^*$,
so $(X_i^j)^\vee = X_j^i$.
One has
$X_i^j \tnsrbar X_j^l = \eval{X_j^*,X_k} X_i^l$,
which is 0 if $j \neq k$.
When $j=k$, define $J_k$ to be the map
\[
(\ev_{X_k} \circ -) \cdot \id_{X_i^l}:
  X_i^k \tnsrbar X_k^l \simeq X_i^l
\]
Then it is easy to check that
\[
J_{\kk_i^j,\kk_k^l} = \delta_{j,k} J_k
\]
is a tensor structure on the functor above.
\end{proof}

While $(\cAA,\tnsrbar)$ and $\matvec{\Irr(\cA)}$ are tensor equivalent,
they are not pivotal tensor equivalent
(assuming the natural pivotal structure on $\matvec{\Irr(\cA)}$
coming from $\Vctsp$).
This can be seen from computing dimensions.
For example, the left trace of $\id_{X_i^j}$ is
\[
\sum_{k\in \Irr(\cA)} d_k
\begin{tikzpicture}
\draw[midarrow] (0,0) circle (0.3cm);
\node at (-0.35,0.2) {\tiny $j^*$};
\end{tikzpicture}
\; \cdot
\begin{tikzpicture}
\draw[midarrow={0.55}] (-0.3,0.6) -- (-0.3,-0.6)
  node[pos=0,above] {\small $X_k$}
  node[below] {\small $X_k$};
\node at (-0.45,0) {\tiny $i$};
\node[small_morphism] at (-0.3,0.3) {\tiny $\al$};
\node[small_morphism] at (-0.3,-0.3) {\tiny $\beta$};
\node at (0,0) {$\boxtimes$};
\draw[midarrow_rev={0.55}] (0.3,0.6) -- (0.3,-0.6)
  node[pos=0,above] {\small $X_k^*$}
  node[below] {\small $X_k^*$};
\node at (0.45,0) {\tiny $i$};
\node[small_morphism] at (0.3,0.3) {\tiny $\al$};
\node[small_morphism] at (0.3,-0.3) {\tiny $\beta$};
\end{tikzpicture}
%%%%%%%%%%%%%%%%%%%
=
%%%%%%%%%%%%%%%%%%%
\sum_k \delta_{i,k} \frac{d_j}{d_k}
  \id_{X_k^k}
\]
so the left dimension of $X_i^j$ is
$d_{X_i^j}^L = \frac{d_j}{d_i}$,
which cannot always be 1 for all pairs $i,j$.
Its right dimension is
$d_{X_i^j}^R = d_{X_j^i}^L = \frac{d_i}{d_j}$.
Thus $(\cAA,\tnsrbar)$ can be thought of
as a non-pivotal deformation of $\matvec{}$.

\newpage
\section{Computations for Other Surfaces}
\label{s:surfaces}
\vspace{0.8in}

In this section, we consider the categories associated to
other surfaces.
We will dedicate one subsection to study the once-punctured torus.

We want to relate the skein category of a surface $N$
with that of a punctured one $N_0$, that is,
$N_0 = N \backslash \{p\}$.
We will think of $N$ as obtained from $N_0$ by
gluing with an open disk, ``sealing'' the puncture:
$N = N_0 \cup \DD^2$,
implicitly choosing some collared structure on $N_0$ and $\DD^2$.\\

Recall $\hA := \htr(\cA)$ from \secref{s:ann-htr}.
The unit object $\one$ in $\hA$ corresponds to
the empty configuration in $\hatZCYsk(N_0)$.
There is a right action of $\Hom_\hA(\one,\one)$ on the
morphisms of $\hatZCYsk(N_0)$, by
``pushing in'' from the puncture, i.e.
$\Hom_{\hatZCYsk(N_0)}(Y,Y') \tnsr \Hom_\hA(\one,\one)
\to \Hom_{\hatZCYsk(N_0)}(Y \ract \one, Y' \ract \one)
\cong \Hom_{\hatZCYsk(N_0)}(Y, Y')$.
We have that $\Gamma \in \Hom_{\hatZCYsk(N_0)}(Y,Y')$
and $f,g \in \Hom_\hA(\one,\one)$,
$\Gamma \ract (f \circ g) = (\Gamma \ract f) \ract g$.
Moreover, for $\Gamma' \in \Hom_{\hatZCYsk(N_0)}(Y',Y'')$,
$(\Gamma' \circ \Gamma) \ract (f \circ g) =
(\Gamma' \ract f) \circ (\Gamma \ract g)$.\\

Let $\pi = \sum d_i/\cD \cdot \id_{X_i}
\in \bigoplus \ihom{\cA}{X_i}(\one,\one) = \Hom_\hA(\one,\one)$.
(Note: $\cD$ and simples $X_i$ are of $\cA$, and not of $\cZ(\cA)$.)
$\pi$ is an idempotent in $\Hom_\hA(\one,\one)$,
and hence also acts as an idempotent on $\Hom_{\hatZCYsk(N_0)}(Y,Y')$.
%Under the equivalence $\ZA \simeq \ZCYsk(\Ann)$,
As a morphism in $\ZCYsk(\Ann)$ under the equivalence
discussed in \secref{s:annulus} (see \thmref{t:ann-main}),
$\pi$ is a graph in $\Ann \times [0,1] \simeq S^1 \times D^2$
(a solid torus),
consisting of a loop going along a core of the solid torus,
labeled with the regular coloring$/\cD$.

\begin{proposition}
\label{prp:punctured_glue}
Let $N_0 = N \backslash \{p\}$ as above.
Consider the category $\hB$ 
consisting of the same objects as $\hatZCYsk(N_0)$,
but morphisms given by
\[
  \Hom_\hB(Y,Y') = \im(\Hom_{\hatZCYsk(N_0)}(Y,Y') \rcirclearrowleft \pi)
\]
Then the restriction to $\hB$ of the inclusion functor
corresponding to $i: N_0 \hookrightarrow N$
is an equivalence:
\[
  i_*|_\hB : \hB \simeq \hatZCYsk(N)
\]
The Karoubi envelope $\cB := \Kar(\hB)$ is obtained from $\ZCYsk(N_0)$ the same way:
same objects, and morphisms are
\[
  \Hom_\cB(Y,Y') = \im(\Hom_{\ZCYsk(N_0)}(Y,Y') \rcirclearrowleft \pi)
\]
and the equivalence above extends to an equivalence
\[
  i_*|_\cB : \cB \simeq \ZCYsk(N)
\]
\end{proposition}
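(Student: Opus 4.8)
The plan is to establish the statement for $\hB$ first; the claim for $\cB=\Kar(\hB)$ then follows formally, since idempotent completion commutes with the operation of cutting morphism spaces down to the image of $\rcirclearrowleft\pi$. Indeed $\pi$ is an idempotent in $\Hom_{\hA}(\one,\one)$ and, as recorded just before the proposition, its right action on $\hatZCYsk(N_0)$ is compatible with composition; hence applying $\Kar$ to the $\pi$-cutdown of $\hatZCYsk(N_0)$ gives the same thing as the $\pi$-cutdown of $\Kar(\hatZCYsk(N_0))=\ZCYsk(N_0)$, which is exactly $\cB$. Conceptually the whole proposition is an incarnation of excision: writing $N=N_0\cup\DD^2$ as a gluing of the disjoint pieces $N'=N_0\sqcup\DD^2$ along $S^1\times I'$, \thmref{t:hat_excision} gives $\hatZCYsk(N)\simeq\htr_{\hatZCYsk(S^1\times I)}(\hatZCYsk(N_0)\boxtimes\hatZCYsk(\DD^2))$, and the content is that this horizontal trace, with the ``inert'' $\DD^2$-factor, collapses to the $\pi$-cutdown of $\hatZCYsk(N_0)$.

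So it suffices to show $i_*|_{\hB}\colon \hB\to\hatZCYsk(N)$ is essentially surjective, full, and faithful. Essential surjectivity is immediate: any boundary value on $N$ can be isotoped off the point $p$, so it is isomorphic in $\hatZCYsk(N)$ to $i_*(Y)$ for a boundary value $Y$ on $N_0$, and $\hB$ has the same objects as $\hatZCYsk(N_0)$. For fullness the key observation is that $i_*(\pi)=\emptyskein{N}$: under the sealing $N_0\subset N$, the loop representing $\pi$ — which lives in a collar of the puncture and carries the regular coloring divided by $\cD$ — becomes a dashed circle bounding an embedded disc inside $\DD^2$, and by \lemref{l:dashed_circle} (with $|\Irr_0(\cA)|=1$, as $\cA$ is fusion) this evaluates to $\tfrac1\cD\cdot\cD\cdot\emptyskein{\DD^2}$, the empty skein. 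Hence $i_*(\Gamma\rcirclearrowleft\pi)=i_*(\Gamma)$ for every $\Gamma$. Now, given a morphism in $\Hom_{\hatZCYsk(N)}(i_*Y,i_*Y')$, represent it by a graph and isotope that graph off $\DD^2\times I$ into $N_0\times I$, obtaining $\Gamma_0\in\Hom_{\hatZCYsk(N_0)}(Y,Y')$ with $i_*(\Gamma_0)$ equal to the given morphism; then $\Gamma_0\rcirclearrowleft\pi\in\Hom_{\hB}(Y,Y')$ and $i_*(\Gamma_0\rcirclearrowleft\pi)=i_*(\Gamma_0)$, so $i_*|_{\hB}$ is full.

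Faithfulness is the main obstacle. Suppose $\Gamma_0\in\Hom_{\hB}(Y,Y')$, so $\Gamma_0=\Gamma_0\rcirclearrowleft\pi$ in $\hatZCYsk(N_0)$, and $i_*(\Gamma_0)=0$ in $\hatZCYsk(N)$; I must deduce $\Gamma_0=0$ in $\hatZCYsk(N_0)$. Viewing $\Gamma_0$ as a graph in $N\times I$ supported in $N_0\times I$, vanishing of its class means it lies in the null-graph subspace of $\VGraph(N\times I;\cdot)$. Applying \prpref{prp:null_cover} with an open cover consisting of subsets of $N_0\times I$ together with a single ball $W$ that is a neighbourhood of $\DD^2\times I$ meeting $N_0\times I$ in $(\text{collar of }p)\times I$, one writes $\Gamma_0$ as a sum of relations supported in the cover elements. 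Those supported in $N_0\times I$ are already relations of $\hatZCYsk(N_0)$, so the task reduces to identifying the relations supported in $W$. Pushing a piece of graph across the seam $S^1\times I'\times I$ separating $W$ from $N_0\times I$ and evaluating inside the ball $W\cong\DD^2\times I$ realizes precisely the coend relations defining $\htr_{\hatZCYsk(S^1\times I)}(-\boxtimes\hatZCYsk(\DD^2))$ with the $\DD^2$-factor as base module; transported back to $N_0\times I$, the $W$-relations are therefore generated by (i) relations of $\hatZCYsk(N_0)$ and (ii) identifications $\mu\sim\mu\rcirclearrowleft\xi$ for $\xi\in\Hom_{\hA}(\one,\one)$. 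Since $\pi$ is exactly the idempotent in $\Hom_{\hA}(\one,\one)$ cutting out the monoidal unit of $(\ZA,\tnsr)$ from $\htr(\one)$ — under the equivalence of \prpref{p:G_inv} it corresponds to $(\one,\sum_i\tfrac{d_i}\cD\,\id_{X_i})$ — a morphism fixed by $\rcirclearrowleft\pi$ already respects every relation in (ii); hence $i_*(\Gamma_0)=0$ forces $\Gamma_0$ to be a combination of relations of type (i) only, i.e. $\Gamma_0=0$ in $\hatZCYsk(N_0)$, hence in $\hB$.

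The delicate point, and where I expect the real work to lie, is in the last step: matching the null-graph relations supported near the sealing disc with the coend relations of the horizontal trace, and verifying that the resulting cutdown is by the specific idempotent $\pi$ and not some other projection. A cleaner route that avoids this bookkeeping would be to prove the general excision-style identity $\htr_{\hatZCYsk(S^1\times I)}(\cM\boxtimes\hatZCYsk(\DD^2))\simeq(\pi\text{-cutdown of }\cM)$ directly — the disc being the base module over $\hatZCYsk(S^1\times I)\simeq$ pre-$\ZA$ via the forgetful functor — using the adjunctions $I\dashv F\dashv I$ and the dominance of $I$ (\prpref{p:center}, \prpref{p:I_dominant}); the proposition would then follow by combining this with \thmref{t:hat_excision}.
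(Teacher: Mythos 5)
Your Karoubi reduction, essential surjectivity, and fullness arguments are sound (in particular the observation that $i_*(\pi)$ is the empty skein, since the regular-colored circle bounds a disk in $\DD^2$ and evaluates to $\cD$, is exactly why fullness works). The gap is in faithfulness, and it sits precisely where you flagged it. You assert that the null-graph relations supported in a neighbourhood $W$ of the sealing disk are ``generated by (i) relations of $\hatZCYsk(N_0)$ and (ii) identifications $\mu\sim\mu\rcirclearrowleft\xi$'', but this list is the entire content of the proposition and is nowhere established: a ball $D\subset W$ that meets the pole $\{p\}\times I$ does not remain a ball when the pole is removed, so a graph that is null with respect to such a $D$ is not visibly a relation of $\hatZCYsk(N_0)$ at all, and your item (ii) is asserted rather than derived. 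Moreover, even granting the list, the claim that ``a morphism fixed by $\rcirclearrowleft\pi$ already respects every relation in (ii)'' needs the identity $\xi\cdot\pi=\epsilon(\xi)\,\pi$ in $\Hom_{\hA}(\one,\one)$, where $\epsilon(\xi)$ is the closed evaluation of $\xi$ in a ball; this is true (it follows from $\sum_i N_{ji}^k d_i = d_jd_k$, or from the fact that $\pi$ is the primitive idempotent cutting out $(\one,\id)\subset I(\one)$ together with $\dim\Hom_{\ZA}((\one,\id),I(\one))=1$), but you neither state nor prove it, and without it the step is circular.

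The clean way to close the gap --- and the route the paper takes --- is to build the inverse functor explicitly rather than analyze the relation space: given a graph $\Gamma$ in $N\times I$, push it off the pole by an isotopy supported in a small neighbourhood $\cN$ of the pole and set $j(\Gamma)=\Gamma'\ract\pi$. Two things must then be checked. First, $j$ is independent of the chosen isotopy: two push-offs differ by strands passing the pole on different sides, and the sliding lemma (\lemref{lem:sliding}) lets strands slide through the appended regular-colored loop, so the results agree. Second, $j$ kills null graphs: by \prpref{prp:null_cover} with the two-set cover $\{\cN,\,N_0\times I\}$ one reduces to balls contained in one of the two sets, and a ball $D\subset\cN$ can be displaced off the pole by an isotopy supported in $\cN$, which does not change $j$ by the first point. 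These two verifications are exactly the content you tried to package as ``relations of type (ii)'', and the sliding lemma is the tool that makes them go through; your alternative suggestion of deducing everything from a general horizontal-trace identity for $\cM\boxtimes\hatZCYsk(\DD^2)$ would also work, but proving that identity requires the same local arguments in disguise.
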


\begin{proof}
First note that $\hB$ is indeed closed under composition
of morphisms because $\pi$ is idempotent:
\begin{equation}
(\Gamma' \ract \pi) \circ (\Gamma \ract \pi)
= (\Gamma' \circ \Gamma) \ract (\pi \circ \pi)
= (\Gamma' \circ \Gamma) \ract \pi
\label{e:hB-closed-composition}
\end{equation}
This also immediately implies that $\Kar(\hB)$ is indeed
has the description in the proposition statement.

It is clear that $i_*|_\hB$ is essentially surjective.
To prove fully faithfulness, consider two objects
$Y,Y' \in \hatZCYsk(N_0)$.
Abusing notation, we also denote $i_*(Y), i_*(Y')
\in \Obj \hatZCYsk(N_0)$ by $Y,Y'$.
We call the vertical segment
$p\times \clI \subset N \times \clI$
the \emph{pole},
so that $N_0 \times \clI = N \times \clI 
\backslash $pole.

We construct an inverse map to $i_*$.
Let $U$ be a small open neighborhood of $p$ in $N$,
and let $\cN = U \times \clI \subset N \times \clI$
be a small open neighborhood of the pole.
Choose $U$ small enough so that it does not contain any
marked points of $Y, Y'$.
Consider a graph $\Gamma \in \Graph(N \times \clI; Y^*,Y')$.
Define $j(\Gamma)$ as follows: if $\Gamma$ intersects the pole,
then use an isotopy supported in $\cN$
to push $\Gamma$ off of it, resulting in a new graph $\Gamma'$.
Now $\Gamma'$ can be considered a graph in
$\Graph(N_0 \times \clI; Y^*,Y')$.
Then we define $j(\Gamma) = \Gamma' \ract \pi$.

We need to check that $j$ is well-defined.
Firstly, the (linear combination of) graphs
$\Gamma' \ract \pi$ is independent of the choice of isotopy
- this follows from the sliding lemma (\lemref{lem:sliding}).
More generally, it means that for any isotopy $\vphi$ of
$N\times \clI$ supported on $\cN$,
$j(\Gamma) = j(\vphi(\Gamma))$.

Now we check that $j$ sends null graphs to 0.
Take the two set open cover $\{\cN, N_0\times \clI\}$
of $N \times \clI$, and apply \prpref{prp:null_cover}.
Let $\Gamma = \sum c_i\Gamma_i$ be null with respect to
some ball $D$.
If $D \subset N_0\times \clI$, clearly
$j(\Gamma)$ is null with respect to $D$.
If $D \subset \cN$, we may assume $D$ doesn't touch the
boundary (by choice of $U$),
so we can isotope it with some isotopy $\vphi$
supported on $\cN$ so that $\vphi(D)$ doesn't meet the pole.
Then clearly $j(\vphi(\Gamma))$ is null with respect to $\vphi(D)$.

Finally, it is easy to see that $j$ is inverse to $i_*$.
For example, $i_* \circ j$ amounts
to adding a trivial dashed circle, which is equivalent to 1
by \lemref{l:dashed_circle}.
\end{proof}

\subsection{The Sphere, $S^2$}
\par \noindent

\begin{definition}[\ocite{muger}*{Definition 2.9}]
The \emph{M\"uger center} of a braided tensor category $\cA$,
denoted $\ZMu(\cA)$,
is the full subcategory consisting of \emph{transparent objects},
i.e. objects $X$ such that for all objects $Y\in \cA$,
\[
c_{Y,X} \circ c_{X,Y} = \id_{XY}
\]
\label{d:muger}
\end{definition}

\begin{theorem}
The category of boundary values associated to the sphere is
the Muger center:
\[
\ZCYsk(S^2) \simeq \ZMu(\cA)
\]
and in particular, when $\cA$ is modular,
$\ZCYsk(S^2) \simeq \Vect$.
\label{t:sphere-muger}
\end{theorem}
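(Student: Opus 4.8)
The plan is to obtain $\ZCYsk(S^2)$ from $\ZCYsk(\DD^2) \simeq \cA$ by gluing two disks along their boundary circles, using the punctured-surface machinery of \prpref{prp:punctured_glue} together with the excision results of \secref{s:skprop-excision}. First I would identify $S^2$ as $\DD^2 \cup_{S^1} \DD^2$, or equivalently, present the sphere as the once-punctured disk $\DD^2 \backslash \{p\}$ (which is an annulus) capped off with a disk. Actually the cleanest route: write $S^2 = S^2_0 \cup \DD^2$ where $S^2_0 = S^2 \backslash \{p\} \simeq \DD^2$, so by \prpref{prp:punctured_glue}, $\hatZCYsk(S^2)$ has the same objects as $\hatZCYsk(\DD^2) \simeq \cA$, with morphisms given by the image of the $\pi$-action, where $\pi = \sum_i (d_i/\cD) \id_{X_i} \in \Hom_{\hA}(\one,\one)$ acts by ``pushing in a regular-colored loop from the puncture.'' The key point is then to compute this action concretely: for $f \in \Hom_\cA(X, X') = \Hom_{\hatZCYsk(\DD^2)}(X,X')$, the morphism $f \ract \pi$ is represented by $f$ with a regular-colored circle encircling the strand colored by the object (running around the puncture, i.e. around the strand).

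The main computation is therefore: $f \ract \pi = \frac{1}{\cD}\sum_i d_i \cdot (\text{$f$ with an $X_i$-colored loop linking the vertical strand})$. Using the graphical calculus conventions of \secref{s:graphical-multifusion}, an $X_i$-colored loop linking a strand colored $X$ computes the operator $\frac{1}{d_X}(\text{sum of } s_{iX}\text{-type coefficients})$ acting on $\Hom_\cA(X,X')$; more precisely, encircling decomposes according to the $S$-matrix entries, and $\frac{1}{\cD}\sum_i d_i c_{X,X_i}c_{X_i,X}$ is precisely the projection onto the transparent part of $X$ — this is essentially the content of \lemref{l:killing} (Killing Lemma / charge conservation) when $\cA$ is modular, and more generally is the idempotent onto the M\"uger center. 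So the image of $\pi$ acting on $\Hom_\cA(X,X')$ is $\Hom_{\ZMu(\cA)}(X', X)$ restricted appropriately; after taking the Karoubi envelope, $\cB = \Kar(\hB)$ becomes exactly $\ZMu(\cA)$, since the idempotent completion of ``$\cA$ with morphisms cut down by the transparency projector'' recovers the full subcategory of transparent objects (every object of $\cA$ decomposes into a transparent summand plus a part killed by $\pi$). Then \prpref{prp:punctured_glue} gives $\ZCYsk(S^2) \simeq \cB \simeq \ZMu(\cA)$.

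The steps in order: (1) apply \prpref{prp:punctured_glue} with $N = S^2$, $N_0 = S^2 \backslash \{p\} \cong \DD^2$, reducing the problem to computing the image of the $\pi$-action on $\Hom_{\ZCYsk(\DD^2)}(\cdot,\cdot) = \Hom_\cA(\cdot,\cdot)$; (2) translate the $\pi$-action into graphical calculus — the object $\pi \in \Hom_{\hA}(\one,\one) \simeq \Hom_{\ZCYsk(\Ann)}(\EE,\EE)$ is the regular-colored core loop of the solid torus, and pushing it in from the puncture wraps it around the strand; (3) evaluate this graphically: by \lemref{l:summation} and the sliding lemma \lemref{lem:sliding}, $f \ract \pi$ equals $f$ precomposed/postcomposed with the operator $\frac{1}{\cD}\sum_i d_i^? \, c_{-,X_i} c_{X_i,-}$, and identify this with the projector onto the transparent subobject; (4) conclude that $\im(\pi\text{-action})$ consists exactly of morphisms between transparent objects, so $\cB \simeq \ZMu(\cA)$; (5) for the modular case, invoke that $\ZMu(\cA) \simeq \Vect$ by definition of modularity (non-degeneracy of the $S$-matrix forces every transparent object to be a multiple of $\one$) — this is standard, e.g. a consequence of \lemref{l:killing}.

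The main obstacle I anticipate is step (3): making precise that the ``regular-colored loop encircling a strand'' operator is exactly the transparency projector, with the correct normalization, and that its image on each simple $X_j$ is $X_j$ if $j$ is transparent and $0$ otherwise. This requires care with left-vs-right dimensions (since $\cA$ is premodular, not necessarily modular, one must be careful that the relevant $d_i$'s appearing are consistent — though here $\cA$ \emph{is} required spherical for $\ZCYsk$ to be defined, per the remark after \prpref{prp:stacking}, so $d_i^L = d_i^R = d_i$ and this simplifies), and with the fact that encircling by the full regular coloring $\sum_i d_i X_i$ gives $\cD$ times the projection onto transparent objects — essentially a repackaging of \lemref{l:killing} and the definition of $\ZMu$. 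A secondary subtlety: one must check the $\pi$-action on $\Hom$-spaces is compatible with composition so that $\hB$ is a well-defined category and $\Kar(\hB)$ is genuinely a full subcategory of $\cA$ rather than something larger — but this is exactly \eqnref{e:hB-closed-composition} from the proof of \prpref{prp:punctured_glue}, so it comes for free.
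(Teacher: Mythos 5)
Your proposal follows essentially the same route as the paper: identify $S^2$ as the disk with its puncture sealed, apply \prpref{prp:punctured_glue} to reduce to the image of the $\pi$-action on $\Hom_\cA(A,A')$, recognize the regular-colored encircling loop as the projector killing non-transparent simples (the paper cites M\"uger's Corollary 2.14 for this where you invoke the killing lemma, but the content is the same), and conclude that $\Kar(\hB) = \ZMu(\cA)$, with the modular case following since only $\one$ is transparent. The argument is correct.
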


\begin{proof}
Think of the disk $\disk$ as a punctured sphere,
so by \prpref{prp:punctured_glue}, we have that
$\hatZCYsk(S^2) \simeq \hB$,
where, as in \prpref{prp:punctured_glue}, $\hB$ is the category
with the same objects as $\hatZCYsk(\disk) \simeq \cA$,
but morphisms are, for $A,A'\in \cA$,
\[
\Hom_\hB(A,A') =
\Bigg\{
\frac{1}{\cD}
\begin{tikzpicture}
\draw[regular] (-0.3,-0.1) to[out=90,in=90] (0.3,-0.1);
\draw[overline] (0,0.6) -- (0,-0.6);
\node[small_morphism] at (0,0.3) {\tiny $f$};
\draw[overline,regular] (-0.3,-0.1) to[out=-90,in=-90] (0.3,-0.1);
\end{tikzpicture}
\st f \in \Hom_\cA(A,A')
\Bigg\}
\]
In particular, when $A = A' = X_i$ a simple object,
it follows from \ocite{muger}*{Corollary 2.14} that
simple objects that are not transparent,
i.e. not in the M\"uger center,
are killed:
\[
\End_\hB(X_i) =
\begin{cases}
  \End_\cA(X_i) & \text{ if } X_i \in \ZMu(\cA)\\
  0 & \text{ otherwise}
\end{cases}
\]
It follows that $\hB$ coincides with the M\"uger center,
which is already abelian,
and so $\ZCYsk(S^2) \simeq \Kar(\hB) = \ZMu(\cA)$.
When $\cA$ is modular,
only the unit object is transparent, so
$\ZCYsk(S^2) \simeq \ZMu(\cA) \simeq \Vect$.
\end{proof}

\subsection{The Once-Punctured Torus, $\punctorus$}
\label{s:elliptic}
\par \noindent

In \ocite{tham_elliptic}, we defined a category, which we called
the \emph{elliptic Drinfeld center},
and claimed that it is equivalent to the category of boundary values
associated to the once-punctured torus.
We then proved this claim in \ocite{KT}.
We present the results here.

\begin{definition}
\label{d:elliptic-center}
Let $\cA$ be a premodular category.
The \emph{elliptic Drinfeld center of $\cA$},
denoted $\Zel(\cA)$, is defined as follows:
the objects are tuples
$(A,\lambda^1,\lambda^2)$, where $\lambda^1,\lambda^2$
are half-braidings on $A$ that satisfy
\begin{align} \label{e:COMM}
\begin{tikzpicture}
\begin{scope} [shift={(0,-0.7)}]
%% all the vertical stuff
\node[small_morphism] (lmb1) at (0,0.4) {\tiny $\lmb^1$};
\node[small_morphism] (lmb2) at (0,1) {\tiny $\lmb^2$};
\draw (0,0) -- (lmb1)
  node[pos=0,below] {$A$};
\draw (lmb1) -- (lmb2);
\draw (lmb2) -- (0,1.4)
  node[pos=1,above] {$A$};
%% line through lmb1
\draw (lmb1) to[out=180,in=-90] (-1,1.4);
\draw (lmb1) to[out=0,in=90] (1,0);
%% line through lmb2, goes over lmb1's
\draw (lmb2) to[out=180,in=-90] (-0.5,1.4);
\draw[overline] (lmb2) to[out=0,in=90] (0.5,0);
\end{scope}
\end{tikzpicture}
=
\begin{tikzpicture}
\begin{scope} [shift={(0,-0.7)}]
%% all the vertical stuff
\node[small_morphism] (lmb1) at (0,1) {\tiny $\lmb^1$};
\node[small_morphism] (lmb2) at (0,0.4) {\tiny $\lmb^2$};
\draw (0,0) -- (lmb2)
  node[pos=0,below] {$A$};
\draw (lmb2) -- (lmb1);
\draw (lmb1) -- (0,1.4)
  node[pos=1,above] {$A$};
%% line through lmb2
\draw (lmb2) to[out=180,in=-90] (-0.5,1.4);
\draw (lmb2) to[out=0,in=90] (0.5,0);
%% line through lmb1, goes over lmb1's
\draw[overline] (lmb1) to[out=180,in=-90] (-1,1.4);
\draw (lmb1) to[out=0,in=90] (1,0);
\end{scope}
\end{tikzpicture}
\end{align}
We call the relation \eqnref{e:COMM} ``COMM''.
The morphisms $\Hom_\ZZA((A,\lmb^1,\lmb^2),(A',\mu^1,\mu^2)$
are morphisms of $\cA$ that intertwine both half-braidings,
i.e.
\[
  \Hom_\ZZA((A,\lmb^1,\lmb^2),(A',\mu^1,\mu^2))
    :=
  \Hom_{\ZA}((A,\lmb^1),(A',\mu^1))
  \cap \Hom_{\ZA}((A,\lmb^2),(A',\mu^2))
\]
\comment{
\[
  \Hom_\ZZA((A,\lmb^1,\lmb^2),(A',\mu^1,\mu^2)
    := \{\vphi \in \Hom_\cA(A,A') \st
        (\id_{A'}\tnsr \vphi) \circ \lmb^i = \mu^i \circ (\vphi \tnsr \id_A)\}
\]
}

\end{definition}

In \ocite{tham_elliptic}, we proved the following:

\begin{proposition}[\ocite{tham_elliptic}, Prop 3.4]
$\ZZA$ is a finite semisimple category.
\end{proposition}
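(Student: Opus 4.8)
The plan is to realize $\ZZA$ as the center of $\ZA$ with respect to a suitable module structure, then invoke the semisimplicity results already established in the paper. Concretely, I would first observe that an object $(A,\lmb^1,\lmb^2)$ of $\ZZA$ satisfying COMM is the same data as an object $(A,\lmb^1) \in \ZA$ together with a half-braiding $\lmb^2$ that is compatible with $\lmb^1$ in the sense of \eqnref{e:COMM}; the content of COMM is precisely that $\lmb^2$ intertwines the $\ZA$-module structure on $\ZA$ coming from the reduced tensor product $\tnsrbar$ (or, dually, that $(A,\lmb^1,\lmb^2)$ defines a half-braiding for $(A,\lmb^1)$ as an object of $\ZA$ viewed as a bimodule over itself). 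Making this identification precise is the first key step: I would set $\cC = \ZA$ with its reduced tensor product $\tnsrbar$, regard $\cC$ as a $\cC$-bimodule category over itself, and check that $\cZ_\cC(\cC) \simeq \ZZA$ as abelian categories, matching the half-braiding data of \defref{d:center} with the pair of half-braidings plus COMM.

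Once that identification is in hand, the semisimplicity follows from the structural results already proved. By \prpref{p:reduced_pivotal}, $(\ZA, \tnsrbar)$ is a pivotal multifusion category, and $\ZA$ is finite semisimple (it is equivalent to $\CYA$ via \thmref{t:ann-main}, and $\CYA$ is finite semisimple by \corref{c:skein_ss}, or alternatively one cites directly that the Drinfeld center of a spherical fusion category is finite semisimple). Thus $\cM := \ZA$ is a finite semisimple $\cC$-bimodule category with $\cC = (\ZA,\tnsrbar)$ pivotal multifusion, and \prpref{p:ZM_ss} gives immediately that $\cZ_\cC(\cM) = \cZ_{\ZA}(\ZA)$ is finite semisimple. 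Transporting back along the equivalence $\cZ_{\ZA}(\ZA) \simeq \ZZA$ established in the first step then yields that $\ZZA$ is finite semisimple, as claimed.

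An alternative route, which may be cleaner to write, is purely topological: by the excision theorem \thmref{t:excision-skein} applied to building $\punctorus$ from $\Ann$ (gluing a handle, i.e. identifying two collared boundary pieces of $S^1 \times \iI$), one gets $\ZCYsk(\punctorus) \simeq \cZ_{\ZCYsk(P \times \iI)}(\ZCYsk(\Ann))$ for appropriate $P$, which is finite semisimple by \corref{c:skein_ss}; then one cites the identification $\ZCYsk(\punctorus) \simeq \ZZA$ from \ocite{KT} (or \ocite{tham_elliptic}) asserted in the paragraph preceding the statement. Either way, the main obstacle is the first step — verifying carefully that the COMM relation is exactly the condition defining a half-braiding for the reduced-tensor-product module structure, so that $\ZZA$ genuinely is the center $\cZ_{(\ZA,\tnsrbar)}(\ZA)$ (or the excision-theoretic center) rather than merely something that looks like it. This is a matched-diagram computation with the braiding and its inverse, and the bookkeeping of which strand goes over which is where care is needed; everything after that is a direct appeal to \prpref{p:ZM_ss} or \corref{c:skein_ss}.
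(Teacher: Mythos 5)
Your first (main) route contains a genuine error in its key identification. You propose to realize $\ZZA$ as $\cZ_{\cC}(\cC)$ for $\cC = (\ZA,\tnsrbar)$ regarded as a bimodule over \emph{itself}. That is the Drinfeld center of $(\ZA,\tnsrbar)$, and by \corref{c:torus_ZA} this is $\ZCY(\torus)$ --- the category of the \emph{closed} torus --- not $\ZCY(\punctorus) \simeq \ZZA$. These are genuinely different: for $\cA$ modular, $\cZ((\ZA,\tnsrbar)) \simeq \ZCY(\torus) \simeq \Vect$ by \thmref{thm:cy_modular}, whereas $\ZZA \simeq \cA$ by \thmref{t:elliptic-modular}. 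The mismatch is already visible at the level of the data: a half-braiding for an object of $\cZ_{(\ZA,\tnsrbar)}(\ZA)$ is indexed by objects $(B,\mu)$ of $\ZA$ and involves the projections $Q_{\mu,\lmb}$, whereas the second half-braiding $\lmb^2$ in \defref{d:elliptic-center} is indexed by objects of $\cA$. The correct identification, which is the content of \prpref{p:hA-bimodule}, is $\ZZA \simeq \cZ_{\cA}(\ZA)$: the center of $\ZA$ over the \emph{base} category $\cA$, where $\ZA$ carries the $\cA$-bimodule structure $X \lact (A,\ga) \ract Y = (X\tnsr A\tnsr Y,\, c\tnsr\ga\tnsr c^\inv)$. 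Under this structure, a half-braiding $\gamma'_X : X\lact(A,\ga)\to(A,\ga)\ract X$ is in particular required to be a morphism \emph{in} $\ZA$, and that requirement is exactly the COMM relation \eqnref{e:COMM}. With this correction the rest of your argument goes through verbatim: $\cA$ is pivotal multifusion, $\ZA$ is finite semisimple, and \prpref{p:ZM_ss} (applied with $\cC=\cA$, $\cM=\ZA$) gives finiteness and semisimplicity of $\cZ_\cA(\ZA)\simeq\ZZA$.

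Your alternative, topological route is essentially the paper's argument and is sound in outline: excision presents $\ZCYsk(\punctorus)$ as $\cZ_{\ZCYsk(I\times\iI)}(\ZCYsk(\Ann))$ --- note the gluing in \prpref{prp:zza_punctorus} is over $\hatZCY(I\times I)\simeq\cA$, i.e.\ over two square collars of the annulus, not over $\ZCYsk(S^1\times\iI)$ --- and \corref{c:skein_ss} gives semisimplicity. But be aware that the identification $\ZCYsk(\punctorus)\simeq\ZZA$ you propose to cite is itself proved via the very same matching of COMM with the $\cA$-bimodule half-braiding condition, so you cannot avoid that verification; it is the heart of the proof in either formulation.
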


\begin{proposition}[\ocite{tham_elliptic}, Prop 3.5, Prop 3.8]
\label{prp:elliptic_adjoint}
The forgetful functor $\mathcal{F}^\text{el} : \ZZA \to \cA$ has a two-sided
adjoint $\Iel: \cA \to \ZZA$,
where on objects, $\Iel$ sends
\[
  A \mapsto (\dirsum_{i,j} X_iX_jAX_j^*X_i^*, \Gamma^1, \Gamma^2)
\]
where
\[
\Gamma^1 =
\begin{tikzpicture}
%% middle three vertical ones
\draw (0,-1) -- (0,1);
\draw (-0.3,-1) -- (-0.3,1);
\draw (0.3,-1) -- (0.3,1);
\node[small_morphism] (al1) at (-0.6,0) {\tiny $\albar$};
\draw (al1) -- (-0.6,-1);
\draw (al1) -- (-0.6,1);
\draw (al1) to[out=180,in=-90] (-1.2,1);
\node[small_morphism] (al2) at (0.6,0) {\tiny $\albar$};
\draw (al2) -- (0.6,-1);
\draw (al2) -- (0.6,1);
\draw (al2) to[out=0,in=90] (1.2,-1);
\end{tikzpicture}
,
\Gamma^2 =
\begin{tikzpicture}
%% middle and side vertical ones
\draw (0,-1) -- (0,1);
%\draw (-0.6,-1) -- (-0.6,1); %draw later under
\draw (0.6,-1) -- (0.6,1);
\node[small_morphism] (al1) at (-0.3,0) {\tiny $\albar$};
\draw (al1) -- (-0.3,-1);
\draw (al1) -- (-0.3,1);
\draw (al1) to[out=180,in=-90] (-1.2,1);
\draw[overline] (-0.6,-1) -- (-0.6,1);
\node[small_morphism] (al2) at (0.3,0) {\tiny $\albar$};
\draw (al2) -- (0.3,-1);
\draw (al2) -- (0.3,1);
\draw[overline] (al2) to[out=0,in=90] (1.2,-1);
\end{tikzpicture}
\]
where $\albar$ is defined in \lemref{l:halfbrd}.

On morphisms, $f\in \Hom_\cA(A,A')$,
\[
  \Iel(f) = \dirsum_{i,j} \id_{X_i X_j} \tnsr f \tnsr \id_{X_j^* X_i^*}
\]

We refer to \ocite{tham_elliptic} for the functorial isomorphisms
giving the adjunction.

Furthermore, $\Iel$ is dominant.
\end{proposition}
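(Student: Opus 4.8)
The plan is to establish that $\Iel$ is dominant by exhibiting, for every object $(A,\lmb^1,\lmb^2) \in \ZZA$, an explicit idempotent endomorphism of $\Iel(A)$ whose image is isomorphic to $(A,\lmb^1,\lmb^2)$. This is the exact analogue of what is done for the ordinary Drinfeld center in \prpref{p:hA_ZA} and \lemref{l:Mga_proj}, where the projection is built out of the half-braiding $\ga$ by summing $\ga_{X_i}$ against dimensions. Here, because we have two commuting half-braidings, the projection should be a ``doubled'' version: roughly,
\[
P_{(A,\lmb^1,\lmb^2)} := \frac{1}{|\Irr_0(\cA)|^2 \cD^2}
\sum_{i,j,k,l} d_i^R d_j^R d_k^R d_l^R \cdot (\text{graph involving } \lmb^1,\lmb^2),
\]
that is, we first project using $\lmb^1$ and then using $\lmb^2$ (these commute on the nose by the COMM relation \eqnref{e:COMM}, so the order does not matter). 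The first thing I would check is that the naive composite of the two ``center projections'' — one built from $\lmb^1$ closing a loop in one direction, one from $\lmb^2$ closing a loop in the transverse direction — is still an idempotent, and this is precisely where COMM is needed: without \eqnref{e:COMM} the two partial projections would not commute and the composite would fail to be idempotent.

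First I would record the adjunction isomorphisms of \prpref{prp:elliptic_adjoint} (quoted from \ocite{tham_elliptic}), in particular
\[
\Hom_\ZZA((A,\lmb^1,\lmb^2),\Iel(A')) \simeq \Hom_\cA(A,A') \simeq \Hom_\ZZA(\Iel(A'),(A,\lmb^1,\lmb^2)),
\]
and let $\hat\iota \in \Hom_\ZZA((A,\lmb^1,\lmb^2),\Iel(A))$, $\check\iota \in \Hom_\ZZA(\Iel(A),(A,\lmb^1,\lmb^2))$ be the (suitably normalized) images of $\id_A$ under these. Then I would follow the template of \lemref{l:Mga_proj}: show that $P := \hat\iota \circ \check\iota$ is an endomorphism of $\Iel(A)$ in $\ZZA$, that $\check\iota \circ \hat\iota = \id_{(A,\lmb^1,\lmb^2)}$, and hence that $P^2 = P$ and $(A,\lmb^1,\lmb^2)$ is a direct summand of $\Iel(A)$ cut out by $P$. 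The computation $\check\iota \circ \hat\iota = \id$ is the heart of the matter; it should reduce, via \lemref{l:summation}, \lemref{l:summation-alpha-tunnel}, \lemref{l:dashed_circle}, and \lemref{l:halfbrd}, to contracting two dashed circles (one for each factor of $X_iX_j$ wrapping $A$), each contributing a factor $|\Irr_0(\cA)|\cD$ that cancels against the normalization, exactly as in the last display of the proof of \lemref{l:Mga_proj} but performed twice.

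I expect the main obstacle to be bookkeeping: $\Iel(A) = \bigoplus_{i,j} X_iX_jAX_j^*X_i^*$ involves a \emph{nested} pair of sums, and the half-braidings $\Gamma^1,\Gamma^2$ of \prpref{prp:elliptic_adjoint} act on the inner and outer pairs in an interleaved way (the $\albar$'s ``tunnel through'' each other). Verifying that $\hat\iota$ and $\check\iota$ genuinely intertwine \emph{both} half-braidings $\Gamma^1,\Gamma^2$ with $\lmb^1,\lmb^2$ simultaneously — rather than just one of them — is where care is needed, and this is again where the COMM relation \eqnref{e:COMM} gets used in an essential way. Once domination is in hand, I would note the immediate corollary (parallel to \prpref{p:ZM_ss}, \prpref{p:I_dominant}, \corref{c:skein_ss}) that, since $\Iel$ is dominant and $\cA$ is finite semisimple, $\ZZA$ is finite semisimple and $\Kar$ of its image under $\Iel$ already exhausts it — though that finiteness is stated separately as \ocite{tham_elliptic} Prop 3.4, so here I only need the domination statement itself.
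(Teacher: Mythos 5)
Your proposal is correct, and at its computational core it coincides with how the result is actually established: the idempotent you describe is exactly the projection $P_{(A,\lmb^1,\lmb^2)}$ that appears later in the thesis in \eqnref{e:P_elliptic}, where the equality of the two orderings of the $\lmb^1$- and $\lmb^2$-loops is justified precisely by the COMM relation \eqnref{e:COMM}, and the splitting maps you call $\hat\iota,\check\iota$ are the analogues of $\hat P'_{(M,\gamma)},\check P'_{(M,\gamma)}$ from \lemref{l:IM_htr_proj} (cf.\ \eqnref{e:proj-A-IA}). The organizational difference is worth noting: the thesis does not run the doubled computation in one go, but instead factors the construction through the iterated center, identifying $\ZZA \simeq \cZ_\cA(\ZA)$ and $\Iel(A) \cong I(G(A))$ (\prpref{p:hA-bimodule} and the discussion around \eqnref{e:htrhA-zza}), so that dominance follows by applying the general statement \prpref{p:I_dominant} (whose proof is \lemref{l:Mga_proj}) twice — once for $I:\cA\to\ZA$ and once for $I:\ZA\to\cZ_\cA(\ZA)$. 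Your direct route buys an explicit formula for the idempotent on $\Iel(A)$ (useful later, e.g.\ in the proof of \thmref{t:elliptic-modular}), at the cost of having to verify by hand that $\hat\iota,\check\iota$ intertwine both half-braidings and that the two partial projections commute; the factored route hides that bookkeeping inside the already-proved \lemref{l:Mga_proj}, and immediately gives finite semisimplicity of $\ZZA$ via \prpref{p:ZM_ss} as a bonus. One small caution on your phrasing: the two partial projections do not commute merely because $\lmb^1$ and $\lmb^2$ can be reordered — the two composites also differ in which dashed loop crosses over the other, and \eqnref{e:COMM} is exactly the identity equating the two full pictures, so you should invoke it for the whole diagram rather than just for swapping the $\lmb$'s.
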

%This is essentially a 

\begin{theorem}[\ocite{tham_elliptic}, Theorem 4.3]
\label{t:elliptic-modular}
%\label{thm:modular}
When $\cA$ is modular,
there is an equivalence
\begin{align*}
\cA &\simeq \ZZA \\
A &\mapsto (\bigoplus_i X_i A X_i^*, \Gamma, \Omega)
\end{align*}
where $\Gamma$ is the half-braiding on $I(X)$ in \thmref{t:center-adjoint},
%and $\Omega = c_{X_i^*,-}^\inv \circ c_{-,A} \circ c_{-,X_i}$,
and $\Omega = c^\inv \tnsr c^\inv \tnsr c$,
where $c_{-,-}$ is the braiding on $\cA$.
\end{theorem}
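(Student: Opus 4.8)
The plan is to write down the functor in the statement explicitly, $T\colon \cA\to\Zel(\cA)$ with $T(A)=(\bigoplus_{i\in\Irr(\cA)}X_iAX_i^*,\Gamma,\Omega)$, where $\Gamma$ is the half-braiding of \prpref{p:center} (so that the first two components of $T(A)$ are exactly the object $I(A)\in\ZA$) and $\Omega=c^\inv\tnsr c^\inv\tnsr c$, and then to prove that $T$ is fully faithful and dominant. Since $\Zel(\cA)$ is finite semisimple, hence idempotent complete, these two properties force $T$ to be an equivalence. The argument runs parallel to the proof of \thmref{t:AA_ZA} and to Müger's equivalence $\cAAbop\simeq\ZA$, but now carrying two commuting half-braidings, one for each generator of $\pi_1$ of the torus; modularity enters, as there, through the nondegeneracy of the $S$-matrix, in the guise of the Killing Lemma \lemref{l:killing}.

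First I would check that $T$ is well defined. That $\Gamma$ is a half-braiding on $\bigoplus_i X_iAX_i^*$ is \prpref{p:center}. That $\Omega=c^\inv\tnsr c^\inv\tnsr c$ is one is immediate: it is the tensor product of the half-braidings $c^\inv,c^\inv,c$ carried by the factors $X_i$, $A$, $X_i^*$, each of which is a genuine half-braiding, and a tensor product of half-braidings is again a half-braiding; naturality in $A$ and compatibility with direct sums are clear. The substantive point is the relation COMM of \defref{d:elliptic-center} between $\Gamma$ and $\Omega$: this is a direct diagrammatic computation, pulling the $\albar$-summation vertices of $\Gamma$ past the $c^{\pm1}$-crossings of $\Omega$ by naturality of the braiding and of half-braidings together with \lemref{l:summation-alpha-tunnel}. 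Functoriality and additivity of $T$ are then formal.

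Next, full faithfulness. Fix $A,B\in\cA$. Since a morphism of $\Zel(\cA)$ intertwines both half-braidings and the first slot of $T(A)$ and $T(B)$ is $\Gamma$, one has $\Hom_{\Zel(\cA)}(T(A),T(B))=\{\,f\in\Hom_{\ZA}(I(A),I(B)):f\text{ intertwines }\Omega\,\}$. By the adjunction $I\dashv F$ of \prpref{p:center}, $\Hom_{\ZA}(I(A),I(B))\cong\Hom_\cA(A,F(I(B)))=\bigoplus_{j\in\Irr(\cA)}\Hom_\cA(A,X_jBX_j^*)$, so such an $f$ is recorded by components $f_j\colon A\to X_jBX_j^*$. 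Writing out the $\Omega$-intertwining condition on these components and sliding the resulting braiding loop along the $I(B)$-strand, I would apply \lemref{l:killing} (this is the only place modularity of $\cA$ is used) to conclude that $f_j=0$ unless $j=\one$, whence $\Hom_{\Zel(\cA)}(T(A),T(B))\cong\Hom_\cA(A,B)$, with the bijection sending $T(g)$ to $g$ and respecting composition. I expect this to be the main obstacle: one must organize the $\Omega$-intertwining condition so that charge conservation applies cleanly and collapses the sum over $j$.

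Finally, dominance and the conclusion. The key claim here is that for modular $\cA$ one has $\Iel(A)\simeq\bigoplus_{j\in\Irr(\cA)}T(X_jAX_j^*)$ in $\Zel(\cA)$: after regrouping the summands of $\bigoplus_{i,j}X_iX_jAX_j^*X_i^*$ the underlying objects agree, and the half-braidings $\Gamma^1,\Gamma^2$ of $\Iel(A)$ (built from $\albar$) are identified, summand by summand, with $\Gamma$ and $\Omega$ by the same $S$-matrix isomorphism that realizes $K(\onebar)\simeq\onebar$ in the proof of \thmref{t:AA_ZA}, which is invertible precisely because $\cA$ is modular. Since $\Iel$ is dominant (\prpref{prp:elliptic_adjoint}), $T$ is then dominant: every object of $\Zel(\cA)$ is a direct summand of some $T(B)$. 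As $T$ is fully faithful, its essential image is a full subcategory equivalent to $\cA$, hence a semisimple abelian subcategory in which idempotents split; transporting a splitting idempotent from such an object to $\cA$, splitting it there and comparing images shows that this essential image is closed under direct summands. Combined with dominance this yields essential surjectivity, so $T\colon\cA\simeq\Zel(\cA)$. (An alternative route is topological: by \ocite{KT}, \ocite{tham_elliptic} one has $\Zel(\cA)\simeq\ZCY(\punctorus)$, and $\ZCY(\punctorus)$ can be computed for modular $\cA$ by excising the once-punctured torus along a non-separating curve into a pair of pants; the algebraic argument above stays within the present exposition.)
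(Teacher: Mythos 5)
Your proposal is correct in outline but takes a genuinely different route from the one this thesis uses. The paper does not argue inside $\Zel(\cA)$ at all: it first transports the statement across the equivalence $\ZZA \simeq \ZCY(\punctorus)$ of \prpref{prp:zza_punctorus}, so that $T(A)$ becomes the image of the single skein projector $P_\Omega$ (a regular-colored loop along one handle of the punctured torus), and then proves fullness by the graphical computation \eqnref{e:punctorus-modular-cut} (sliding lemma plus Killing Lemma), faithfulness by extracting the $j=\one$ component of $f\circ P_\Omega$, and essential surjectivity by combining dominance of $i_*\colon \ZCY(\DD^2)\to\ZCY(\punctorus)$ with M\"uger's splitting $(B,\ga)\simeq\bigoplus_k(B_k'B_k'',c^\inv\tnsr c)$ from \eqnref{e:modular-split}, followed by an isotopy that trades the "1--3" loop for the "2--4" loop. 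Your version stays algebraic (it is in fact closer in spirit to the original proof in \ocite{tham_elliptic} than to the proof given here); what the topological route buys is that the Killing Lemma and the diagonalization of the half-braiding become one-line skein moves, whereas your route avoids setting up the skein dictionary and makes the role of the adjunction $I\dashv F$ explicit. The full-faithfulness step is sound: identifying $\Hom_{\Zel(\cA)}(T(A),T(B))$ with the $\Omega$-intertwiners inside $\bigoplus_j\Hom_\cA(A,X_jBX_j^*)$ and collapsing to $j=\one$ via \lemref{l:killing} is exactly the algebraic shadow of \eqnref{e:punctorus-modular-cut}.

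The one step that needs more than you have written is the dominance claim $\Iel(A)\simeq\bigoplus_j T(X_jAX_j^*)$. After regrouping, the underlying objects and the first half-braidings do agree, but the second half-braiding $\Gamma^2$ of $\Iel(A)$ is an $\albar$-type half-braiding that \emph{mixes} the $j$-summands, while $\bigoplus_j\Omega$ is block-diagonal in $j$; an isomorphism between them must simultaneously diagonalize $\Gamma^2$ into braiding-type blocks while commuting with $\Gamma^1$. This is not achieved "summand by summand" by the single $S$-matrix morphism realizing $K(\onebar)\simeq\onebar$ — it requires the full M\"uger equivalence $\cAAbop\simeq\ZA$ applied to the object $(\,\cdot\,,\Gamma^2)$, which is how the paper proceeds (and which, incidentally, produces a decomposition of $\Iel(A)$ indexed by the M\"uger splitting of $I(A)$ rather than by $j\in\Irr(\cA)$; the two agree only after a multiplicity count). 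For the purposes of dominance any decomposition of $\Iel(A)$ into objects $T(\cdot)$ suffices, so the fix is to replace your explicit formula by an appeal to \eqnref{e:modular-split}. Your closing argument — fully faithful plus dominant plus splitting of idempotents through $\cA$ gives essential surjectivity — is fine as stated.
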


These results were proved purely algebraically
in \ocite{tham_elliptic}.
We will reprove them here, making use of the dictionary
between algebra and topology developed in previous sections.

\begin{proposition}
\label{p:hA-bimodule}
Let $\hA = \htr(\cA)$ as in \secref{s:ann-htr}.
Consider an $\cA$-bimodule structure on $\hA$,
where on objects it is the usual
$X \lact A \ract Y = X \tnsr A \tnsr Y$,
and on morphisms,
for $f\in \Hom_\cA(X,X'), g \in \Hom_\cA(Y,Y')$
and $\vphi \in \ihom{\hA}{B}(A,A')$,
we have

\[
%%%shows $\cA$-bimodule action on $\hA$.
%% f in \cA
\begin{tikzpicture}
\begin{scope}[shift={(0,-1.5)}]
\node at (0,3) {$\Hom_\cA(X,X')$};
\node[small_morphism] (f) at (0,1) {\small $f$};
\draw (f) -- +(90:1cm) node[pos=1,above] {$X$};
\draw (f) -- +(-90:1cm) node[pos=1,below] {$X'$};
\end{scope}
\end{tikzpicture}
%% tnsr 
\begin{tikzpicture}
\begin{scope}[shift={(0,-1.5)}]
\node at (0,3) {$\tnsr$};
\node at (0,1) {$\tnsr$};
\end{scope}
\end{tikzpicture}
%% hA morphism 
\begin{tikzpicture}
\begin{scope}[shift={(0,-1.5)}]
\node at (0,3) {$\Hom_{\hA}(A,A')$};
\node[small_morphism] (ph) at (0,1) {\small $\vphi$};
\draw (ph) -- +(90:1cm) node[pos=1,above] {$A$};
\draw (ph) -- +(-90:1cm) node[pos=1,below] {$A'$};
\draw[midarrow_rev] (ph) -- +(150:1cm) node[pos=1,above left] {\small $B$};
\draw[midarrow] (ph) -- +(-30:1cm) node[pos=1,below right] {\small $B$};
\end{scope}
\end{tikzpicture}
%% tnsr 
\begin{tikzpicture}
\begin{scope}[shift={(0,-1.5)}]
\node at (0,3) {$\tnsr$};
\node at (0,1) {$\tnsr$};
\end{scope}
\end{tikzpicture}
%% g in \cA
\begin{tikzpicture}
\begin{scope}[shift={(0,-1.5)}]
\node at (0,3) {$\Hom_\cA(X,X')$};
\node[small_morphism] (g) at (0,1) {\small $g$};
\draw (g) -- +(90:1cm) node[pos=1,above] {$Y$};
\draw (g) -- +(-90:1cm) node[pos=1,below] {$Y'$};
\end{scope}
\end{tikzpicture}
%% arrow
\begin{tikzpicture}
\begin{scope}[shift={(0,-1.5)}]
\node at (0,3) {$\to$};
\node at (0,1) {$\mapsto$};
\end{scope}
\end{tikzpicture}
%% result punctorus morphism 
%% drawn out of usual order to incorporate front/back
\begin{tikzpicture}
\begin{scope}[shift={(0,-1.5)}]
\node at (0,3) {$\Hom_{\hA}(X \tnsr A \tnsr Y, X' \tnsr A' \tnsr Y')$};
%% node f
\node[small_morphism] (f) at (-0.4,0.55) {\tiny $f$};
\draw (f) -- (-0.4,2) node[above] {$X$};
\draw (f) -- (-0.4,0) node[below] {$X'$};
%% node phi
\node[small_morphism] (ph) at (0,1) {\tiny $\vphi$};
\draw[overline] (ph) -- +(90:1cm) node[pos=1,above] {$A$};
\draw (ph) -- +(-90:1cm) node[pos=1,below] {$A'$};
\draw[overline] (ph) -- +(150:1cm) node[pos=1,above left] {Y};
\draw (ph) -- +(-30:1cm) node[pos=1,below right] {Y};
%% node g
\node[small_morphism] (g) at (0.4,1.45) {\tiny $g$};
\draw (g) -- (0.4,2) node[above] {$Y$};
\draw[overline] (g) -- (0.4,0) node[below] {$Y'$};
\end{scope}
\end{tikzpicture}
=:
f \lact \vphi \ract g
\]

The $\cA$-bimodule structure extends to $\ZA = \Kar(\hA)$,
\[
X \lact (A,\gamma) \ract Y = (X \tnsr A \tnsr Y, c \tnsr \gamma \tnsr c^\inv)
\]
(see \eqnref{e:tnsr_hfbrd_std} for $\tnsr$ of half-braidings),
and the action of morphisms is simply by tensor product.

Then we have
\[
\Kar(\htr(\hA)) =
\cZ_\cA(\hA) \simeq \cZ_\cA(\ZA) \simeq \Zel(\cA)
\]
In particular, by \prpref{p:ZM_ss}, $\Zel(\cA)$ is finite semisimple.
\label{p:hA-bimodule}
\end{proposition}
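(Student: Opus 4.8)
The plan is to reduce the whole statement to the single equivalence $\cZ_\cA(\ZA) \simeq \Zel(\cA)$ and then build that equivalence by hand from the explicit $\cA$-bimodule structure. The first identification $\Kar(\htr(\hA)) = \cZ_\cA(\hA)$ is the content of \thmref{t:htr-center} applied to the $\cA$-bimodule category $\hA$: here $\hA$ is only additive, so $\cZ_\cA(\hA)$ is to be read as the Karoubi-completed notion $\Kar(\htr_\cA(\hA))$, which for the abelian semisimple category $\ZA = \Kar(\hA)$ (see \prpref{p:hA_ZA}) agrees with the honest center of \defref{d:center}. Since $\ZA = \Kar(\hA)$ and the $\cA$-bimodule structure on $\ZA$ extends that on $\hA$ along the full, dominant bimodule functor $\htr_\cA \colon \hA \to \htr_\cA(\hA)$, \lemref{lem:M_Karoubi} (or \corref{cor:center}) gives $\Kar(\htr_\cA(\hA)) \simeq \Kar(\htr_\cA(\ZA)) \simeq \cZ_\cA(\ZA)$. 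So everything reduces to producing an equivalence $\cZ_\cA(\ZA) \simeq \Zel(\cA)$.

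For that equivalence I would write down the obvious comparison functor and check it is an isomorphism on the nose. An object of $\cZ_\cA(\ZA)$ is a pair $\big((A,\ga),\eta\big)$, where $(A,\ga)\in\ZA$ and $\eta$ is a half-braiding for the $\cA$-bimodule structure, so each $\eta_X$ is a morphism $X\lact(A,\ga)\to(A,\ga)\ract X$ in $\ZA$ whose underlying morphism in $\cA$ is a map $XA\to AX$. Send $\big((A,\ga),\eta\big)$ to the triple $(A,\lmb^1,\lmb^2)$ with $\lmb^1:=\ga$ and $\lmb^2:=\eta$ (viewed as the family $\eta_X\colon XA\to AX$), and send a morphism of $\cZ_\cA(\ZA)$ — by definition a morphism of $\cA$ intertwining both $\ga$ and $\eta$ — to itself, now intertwining $\lmb^1$ and $\lmb^2$, i.e. a morphism of $\Zel(\cA)$. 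This functor is the identity on underlying objects and morphisms, so it remains only to match the defining conditions on the two sides.

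The heart of the argument is exactly that matching. First, $X\lact(A,\ga)=(XA,\,c\tnsr\ga)$ and $(A,\ga)\ract X=(AX,\,\ga\tnsr c^\inv)$ (with $\tnsr$ of half-braidings as in \eqnref{e:tnsr_hfbrd_std}), so the requirement that $\eta_X$ be a morphism \emph{in $\ZA$} is that, as a map $XA\to AX$ in $\cA$, it intertwine the half-braidings $c\tnsr\ga$ and $\ga\tnsr c^\inv$; drawn out in the graphical calculus, the two braiding crossings $c$ coming from the $\cA$-action on $\ZA$ are precisely the two crossings in the relation ``COMM'' of \eqnref{e:COMM}, so this condition is equivalent to COMM for $(\lmb^1,\lmb^2)=(\ga,\eta)$. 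Second, the remaining bimodule half-braiding axioms for $\eta$ — naturality in $X$, compatibility with tensor products of acting objects, and unitality — unpack after the same identifications to exactly the axioms making $\lmb^2$ an ordinary half-braiding on $A$ in the sense of \defref{d:center} with $\cM=\cA$. Conversely, given $(A,\lmb^1,\lmb^2)\in\Zel(\cA)$, set $\ga:=\lmb^1$ and $\eta:=\lmb^2$: COMM is precisely what is needed for each $\eta_X$ to be a morphism in $\ZA$, and the half-braiding axioms for $\lmb^2$ give the bimodule-center compatibility, so this defines the inverse functor; since both functors are the identity on underlying data they are mutually inverse. I expect this bookkeeping — verifying that the braiding-twisted half-braiding datum on $\ZA$ is literally the ``two commuting half-braidings'' datum of $\Zel(\cA)$, keeping careful track of $c$ versus $c^\inv$ and of the precise crossing patterns so that the unpacked identity is exactly \eqnref{e:COMM} and not a variant — to be the main obstacle.

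Finally, for finite semisimplicity: $\cA$ is premodular, hence fusion, hence pivotal multifusion; $\ZA$ is the Drinfeld center and is finite semisimple (indeed modular), and the left and right $\cA$-actions on it are exact, so $\ZA$ is a finite semisimple $\cA$-bimodule category. Hence by \prpref{p:ZM_ss} the center $\cZ_\cA(\ZA)$ is finite semisimple, and therefore so is $\Zel(\cA)$.
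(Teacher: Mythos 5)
Your proposal is correct and follows essentially the same route as the paper: reduce to $\cZ_\cA(\ZA)$ via the Karoubi-envelope comparison (\lemref{lem:M_Karoubi} / \thmref{t:htr-center}, which the paper implements by equipping $G\colon\hA\to\ZA$ with the bimodule structure $J'=c\tnsr\id\tnsr c$ and noting dominance), and then identify $\cZ_\cA(\ZA)$ with $\Zel(\cA)$ by unpacking the condition that the bimodule half-braiding consist of $\ZA$-morphisms, which is exactly the COMM relation \eqnref{e:COMM}. The only detail you elide is verifying that the stated bimodule structure $(XAY,\,c\tnsr\gamma\tnsr c^\inv)$ on $\ZA$ is the one transported from $\hA$ along the equivalence $\Kar(\hA)\simeq\ZA$, which is what the explicit $J'$ accomplishes.
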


\begin{proof}
Let us first establish the last equivalence,
$\cZ_\cA(\ZA) \simeq \Zel(\cA)$.
An object in $\cZ_\cA(\ZA)$ is of the form $((A,\gamma),\gamma')$,
where $\gamma'$ is a half-braiding on $(A,\gamma)$ as an $\cA$-bimodule
category:
for $X \in \cA$, $\gamma'$ gives an isomorphism
\[
\gamma_X' : X \lact (A,\gamma) = (X \tnsr A, c \tnsr \gamma)
\simeq (A \tnsr X, \gamma \tnsr c^\inv) = (A,\gamma) \ract X
\]
Moreover, $\gamma_X'$ must be a $\ZA$-morphism,
%\[
%\includegraphics[width=5cm]{gamma_gammaprime.jpg}
%\]
%In other words, $\gamma$ and $\gamma'$ satisfy the COMM relation
%\eqnref{e:COMM},
and it is easy to see that
that is equivalent to $\gamma,\gamma'$ satisfying
the COMM relation \eqnref{e:COMM}.
Thus, $((A,\gamma),\gamma')$ defines an object
$(A,\gamma,\gamma') \in \Zel(\cA)$, and vice versa.
Hence, there is a bijection between the objects of
$\cZ_\cA(\ZA)$ and $\Zel(\cA)$.
It is also clear that
$\Hom_{\cZ_\cA(\ZA)}(((A,\gamma),\gamma'), ((B,\mu),\mu'))
= \Hom_{\Zel(\cA)}((A,\gamma,\gamma'), (B,\mu,\mu'))$
as subspaces of $\Hom_\cA(A,B)$.

The functor $G : \hA \to \ZA$
from \prpref{p:hA_ZA} admits a bimodule structure $J'$,
The structure $J'$ should provide a natural isomorphism
$J' : G(B \lact A \ract C) \simeq B \lact G(A) \ract C$.
It is easy to check that
$J' = c \tnsr \id \tnsr c$ works.
Then we have a functor
$\htr(G,J') : \htr(\hA) \to \htr(\ZA)$
which is dominant (because $G$ is dominant);
thus, by \lemref{lem:M_Karoubi},
the Karoubi envelopes are equivalent:
$\Kar(\htr(G,J')) : \cZ_\cA(\hA) \simeq \cZ_\cA(\ZA)$.

It is useful to work out the functors
$\htr(\hA) \to \htr(\ZA) \to \cZ_\cA(\ZA)$ explicitly.
For a morphism $\vphi \in \ihom{\hA}{X}(A,A')$,
Recall that, for $A,A' \in \Obj \cA$,
\[
  \Hom_{\htr(\hA)}(A,A')
  \cong \int^{B_2} \Hom_{\hA}(B_2 \lact A, A' \ract B_2)
  \cong \int^{B_2} \int^{B_1}
      \Hom_\cA(B_1 \lact B_2 \lact A, A' \ract B_2 \ract B_1)
\]
For a morphism $\vphi \in \ihom{\hA}{B_2}(A,A') \to \Hom_{\htr(\hA)}(A,A')$,

\begin{equation}
\label{e:htrhA-zza}
\begin{tikzpicture}
\begin{scope}[shift={(0,-1.5)}]
\node at (0,3.5) {$\ihom{\hA}{B_2}(A,A')$};
\node[morphism] (ph) at (0,1.5) {$\vphi$};
\draw (ph) -- +(90:1cm) node[pos=1,above] {$A$};
\draw (ph) -- +(-90:1cm) node[pos=1,below] {$A'$};
\draw[midarrow_rev] (ph) -- +(-150:1cm)
  node[pos=1,below left] {$B_1$};
\draw[midarrow_rev] (ph) -- +(150:1cm)
  node[pos=1,above left] {$B_2$};
\draw[midarrow] (ph) -- +(30:1cm)
  node[pos=1,above right] {$B_1$};
\draw[midarrow] (ph) -- +(-30:1cm)
  node[pos=1,below right] {$B_2$};
%\draw[postaction={decoration}] (ph) -- +(-40:3cm);
\end{scope}
\end{tikzpicture}
%%%%%%%%%%%%%%%%%%%%%%%%%%%%%%%%%%%%%%%%%%%%%%%%%%%%%%%%%%%%%%%%
\begin{tikzpicture}
\node at (0,0) {$\mapsto$};
\node at (0,2) {$\to$};
\node at (0,2.3) {$\htr(G,J')$};
\end{tikzpicture}
%%%%%%%%%%%%%%%%%%%%%%%%%%%%%%%%%%%%%%%%%%%%%%%%%%%%%%%%%%%%%%%%
\begin{tikzpicture}
\node at (0,1.4) {$A$};
\node at (0.6,1.4) {$i^*$};
\node at (-0.6,1.4) {$i$};
\node at (-1.2,1.4) {$B_2$};
\node at (0,-1.4) {$A'$};
\node at (-0.6,-1.4) {$i'$};
\node at (0.6,-1.4) {$i'^*$};
\node at (1.2,-1.4) {$B_2$};
\node at (0,2) {$\ihom{\ZA}{B_2}(G(A),G(A'))$};
%%top (inverse) J' structure
\draw (-1.2,1.2) to[out=-90,in=90] (-0.6,0.5);
\draw[overline] (-0.6,1.2) to[out=-90,in=90] (-1.2,0.5);
%\draw (0,1.2) -- (0,0.5);
\draw (0.6,1.2) to[out=-90,in=90] (1.2,0.5);
%%bottom J' structure
\draw (0.6,-1.2) to[out=90,in=-90] (1.2,-0.5);
\draw[overline] (1.2,-1.2) to[out=90,in=-90] (0.6,-0.5);
%\draw (0,-1.2) -- (0,-0.5);
\draw (-0.6,-1.2) to[out=90,in=-90] (-1.2,-0.5);
%%middle stuff
\node[small_morphism] (ph) at (0,0) {$\vphi$};
\node[small_morphism] (al1) at (-1.2,-0.3) {\small $\bar{\al}$};
\node[small_morphism] (al2) at (1.2,0.3) {\small $\bar{\al}$};
\draw (ph) -- (0,1.2);
\draw (ph) -- (0,-1.2);
\draw[midarrow_rev] (ph) -- (al1)
  node[pos=0.5,above] {\tiny $B_1$};
\draw (ph) -- (al2);
\draw (al1) -- (-1.2,0.5);
\draw (al2) -- (1.2,-0.5);
\draw (-0.6,0.5) to[out=-90,in=135] (ph);
\draw (0.6,-0.5) to[out=90,in=-45] (ph);
\end{tikzpicture}
%%%%%%%%%%%%%%%%%%%%%%%%%%%%%%%%%%%%%%%%%%%%%%%%%%%%%%%%%%%%%%%%
\begin{tikzpicture}
\node at (0,0) {$\mapsto$};
\node at (0,2) {$\to$};
\node at (0,2.3) {$I$};
\end{tikzpicture}
%%%%%%%%%%%%%%%%%%%%%%%%%%%%%%%%%%%%%%%%%%%%%%%%%%%%%%%%%%%%%%%%
\begin{tikzpicture}
\node at (0,2) {$\Hom_{\cZ_\cA(\ZA)}(I(G(A)), I(G(A')))$};
\node at (0,1.4) {$A$};
\node at (-0.6,1.4) {$i$};
\node at (0.6,1.4) {$i^*$};
\node at (-1.2,1.4) {$j$};
\node at (1.2,1.4) {$j^*$};
\node at (0,-1.4) {$A'$};
\node at (-0.6,-1.4) {$i'$};
\node at (0.6,-1.4) {$i'^*$};
\node at (-1.2,-1.4) {$j'$};
\node at (1.2,-1.4) {$j'^*$};
\node[small_morphism] (ph) at (0,0) {$\vphi$};
\draw (ph) -- +(90:1.2cm);
\draw (ph) -- +(-90:1.2cm);
% left beta
\node[small_morphism] (be1) at (-1.2,0.5) {\small $\bar{\beta}$};
\draw[midarrow_rev={0.3}] (ph) -- (be1)
  node[pos=0.2,above] {\tiny $B_2$};
\draw (be1) -- (-1.2,1.2);
\draw (be1) -- (-1.2,-1.2);
% left alpha
\node[small_morphism] (al1) at (-0.6,-0.5) {$\albar$};
\draw[midarrow_rev] (ph) -- (al1)
  node[pos=0.4,below] {\tiny $B_1$};
\draw[overline] (al1) -- (-0.6,1.2);
\draw (al1) -- (-0.6,-1.2);
% right alpha
\node[small_morphism] (al2) at (0.6,0.5) {$\albar$};
\draw (ph) -- (al2);
\draw (al2) -- (0.6,1.2);
\draw (al2) -- (0.6,-1.2);
% right beta
\node[small_morphism] (be2) at (1.2,-0.5) {\small $\bar{\beta}$};
\draw[overline] (ph) -- (be2);
\draw (be2) -- (1.2,1.2);
\draw (be2) -- (1.2,-1.2);
\end{tikzpicture}
\end{equation}
where $I$ is the two-sided adjoint to the forgetful functor
(see \thmref{t:center-adjoint}, \thmref{t:htr-center}).
\end{proof}

It is easy to see that $I(G(A))$ is naturally isomorphic to
$\Iel(A)$.

\begin{proposition}
%\label{p:zza_punctorus}
\label{prp:zza_punctorus}
Let $\punctorus$ be the once-punctured torus.
There is an equivalence
\[
  \ZCY(\punctorus) \simeq \ZZA
\]
Under this equivalence, the inclusion functor
$\cA \simeq \ZCY(\DD^2) \to \ZCY(\punctorus)$
is identified with $\Iel: \cA \to \ZZA$.
\end{proposition}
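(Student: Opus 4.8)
The plan is to realize $\punctorus$ as a one–gluing construction built from the annulus, apply the excision machinery of \secref{s:skprop-excision} (in the ``glue two boundaries of a single surface'' form used in \prpref{p:ZCY_torus}), and then invoke \prpref{p:hA-bimodule}. First I would observe that $\punctorus$ is obtained from $\Ann = S^1 \times I'$ by gluing a collar half-strip near one end to a collar half-strip near the other end, where the gluing region is $P \times I'$ with $P = I$ an \emph{interval} — not a circle. Gluing a full circle's worth ($P = S^1$) produces the closed torus, as in \prpref{p:ZCY_torus}; gluing only an arc's worth attaches a $1$-handle to the cylinder and leaves the rest of the ``would-be torus'' punctured, giving $\Sigma_{1,1} = \punctorus$ (a check of Euler characteristic, $\chi = -1$, together with orientability and the count of boundary components, confirms genus $1$ with one puncture). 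Equivalently, cutting $\punctorus$ along a suitable properly embedded essential arc recovers $\Ann$, so in the excision setup I may take $N' = \Ann$, $P = I$, and $\ZCY(P\times I) \simeq \ZCY(I \times I) \simeq \cA$. Generalized excision then yields
\[
\ZCY(\punctorus) \simeq \cZ_\cA\bigl(\ZCY(\Ann)\bigr),
\]
the center of $\ZCY(\Ann)$ regarded as an $\cA$-bimodule category via ``insertions'' of objects of $\cA$ along the two end half-strips of $\Ann$.

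Next I would identify $\ZCY(\Ann)$ with $\hA = \htr_\cA(\cA)$, equivalently with $\ZA = \Kar(\hA)$, \emph{as an $\cA$-bimodule category}, where $\hA$ carries exactly the bimodule structure of \prpref{p:hA-bimodule}. The underlying equivalence of categories is already in hand (\thmref{t:excision-skein}, \thmref{t:ann-main}, via the explicit functor $H$ of \eqnref{e:def_H}); the work is to check that the topological bimodule structure — insert $X \in \cA$ along a half-strip near the inner end, $Y \in \cA$ along a half-strip near the outer end — is carried under $H$ to the algebraic one $X \lact (A,\gamma) \ract Y = (XAY,\, c \tnsr \gamma \tnsr c^{-1})$. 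The braidings $c, c^{-1}$ are forced because the half-braiding on an object of $\hA$ already wraps around the core of the annulus, so sliding an inserted strand past it produces a crossing; this is a picture computation of the same flavor as \prpref{prp:stacking} and \prpref{p:collared_module}. Granting this, \prpref{p:hA-bimodule} (which itself handles the Karoubi-envelope passage via \lemref{lem:M_Karoubi}) gives
\[
\cZ_\cA\bigl(\ZCY(\Ann)\bigr) \simeq \cZ_\cA(\Kar(\hA)) = \cZ_\cA(\ZA) \simeq \Zel(\cA),
\]
establishing the first assertion.

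For the identification of the inclusion functor, I would factor the disk inclusion $\DD^2 \hookrightarrow \punctorus$ as $\DD^2 \hookrightarrow \Ann \hookrightarrow \punctorus$. The first functor $\ZCY(\DD^2) \simeq \cA \to \ZCY(\Ann) \simeq \ZA$ is the induction functor $I$ of \prpref{p:center} — equivalently $\htr \colon \cA \to \hA$, using \prpref{p:hA_ZA} and the fact that the disk-into-annulus inclusion $I^2 \hookrightarrow \Ann$ is the inclusion $i_*$ appearing in the annulus excision. The second functor $\ZCY(\Ann) \to \ZCY(\punctorus)$ is the excision inclusion $i_*$, which by \thmref{t:htr-center} is identified with the induction $\hA \to \cZ_\cA(\hA)$. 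Composing, the disk inclusion becomes this iterated induction, which on objects sends $A \mapsto \bigoplus_{i,j} X_i X_j A X_j^* X_i^*$; matching the two half-braidings accumulated from the two inductions against $\Gamma^1,\Gamma^2$ of \prpref{prp:elliptic_adjoint} — this is exactly the statement ``$I(G(A))$ is naturally isomorphic to $\Iel(A)$'' recorded after \prpref{p:hA-bimodule} — shows the composite is $\Iel$.

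The main obstacle I anticipate is the bimodule-structure matching in the second paragraph: verifying that the topological $\cA$-bimodule structure on $\ZCY(\Ann)$ coming from end-strip insertions is carried by $H$ precisely to $(XAY,\, c \tnsr \gamma \tnsr c^{-1})$, and not to a braiding-free variant (which would instead compute $\ZCY$ of the pair of pants $\Sigma_{0,3}$, the ``unlinked'' gluing). Pinning this down correctly — and, correspondingly, being careful that the excision presentation of $\punctorus$ uses $P = I$ with the orientation-respecting band attachment — is where the genuine content lies; everything else is assembly of results already proved, together with the explicit formulas of \prpref{prp:elliptic_adjoint}.
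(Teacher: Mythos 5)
Your proposal is correct and follows essentially the same route as the paper: present $\punctorus$ as $\Ann$ glued along a $P\times I'$ collar with $P=I$, apply excision to get $\Kar(\htr_{\cA}(\hatZCYsk(\Ann)))$, match the topological bimodule structure with that of \prpref{p:hA-bimodule} to land in $\Zel(\cA)$, and identify the disk inclusion with the iterated induction $\cA \to \hA \to \cZ_\cA(\hA)$, hence with $\Iel$. You have also correctly isolated the one step the paper treats as routine — that the end-strip insertions on $\ZCYsk(\Ann)$ correspond under $H$ to $X\lact(A,\gamma)\ract Y=(XAY,\,c\tnsr\gamma\tnsr c^{-1})$ — as the place where the actual verification lives.
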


\begin{proof}
Think of the once-punctured torus as an open disk,
drawn like a `+' sign, with opposite sides identified
($\Ann = S^1 \times I$):
\[
%% disk
%% all lines are drawns in clockwise order, beginning
%% with the top '2' edge
\begin{tikzpicture}
\node at (1,1) {$\DD^2$};
%\draw (0.5,2) -- (1.5,2) node[pos=0.5,above] {2};
% gray rectangle labelled 2
\draw[fill=gray!30] (0.5,2) rectangle (1.5,2.5);
\draw[fill=gray!30,draw=none] (0.51,1.9) rectangle (1.49,2.1);
\node at (1,2.25) {2};
\draw (1.5,2) to[out=-90,in=180] (2,1.5);
%\draw (2,1.5) -- (2,0.5) node[pos=0.5,right] {3};
% gray rectangle labelled 3
\draw[fill=gray!30] (2,1.5) rectangle (2.5,0.5);
\draw[fill=gray!30,draw=none] (1.9,1.49) rectangle (2.1,0.51);
\node at (2.25,1) {3};
\draw (2,0.5) to[out=180,in=90] (1.5,0);
%\draw (1.5,0) -- (0.5,0) node[pos=0.5,below] {4};
% gray rectangle labelled 4
\draw[fill=gray!30] (1.5,0) rectangle (0.5,-0.5);
\draw[fill=gray!30,draw=none] (1.49,0.1) rectangle (0.51,-0.1);
\node at (1,-0.25) {4};
\draw (0.5,0) to[out=90,in=0] (0,0.5);
%\draw (0,0.5) -- (0,1.5) node[pos=0.5,left] {1};
% gray rectangle labelled 4
\draw[fill=gray!30] (0,0.5) rectangle (-0.5,1.5);
\draw[fill=gray!30,draw=none] (0.1,0.51) rectangle (-0.1,1.49);
\node at (-0.25,1) {1};
\draw (0,1.5) to[out=0,in=-90] (0.5,2);
\end{tikzpicture}
%% arrow
\begin{tikzpicture}
  \draw[->] (0,1) -- (1,1) node[pos=0.5,above] {glue 1,3};
\end{tikzpicture}
%% disk after gluing edges 1 and 3
\begin{tikzpicture}
\node at (1,1) {$\Ann$};
%\draw (0.5,2) -- (1.5,2) node[pos=0.5,above] {2};
% gray rectangle labelled 2
\draw[fill=gray!30] (0.5,2) rectangle (1.5,2.5);
\draw[fill=gray!30,draw=none] (0.51,1.9) rectangle (1.49,2.1);
\node at (1,2.25) {2};
\draw (1.5,2) to[out=-90,in=180] (2,1.5);
\draw[regular] (2,1.5) -- (2.5,1.5);
\draw[regular] (2,0.5) -- (2.5,0.5);
\draw (2,0.5) to[out=180,in=90] (1.5,0);
%\draw (1.5,0) -- (0.5,0) node[pos=0.5,below] {4};
% gray rectangle labelled 4
\draw[fill=gray!30] (1.5,0) rectangle (0.5,-0.5);
\draw[fill=gray!30,draw=none] (1.49,0.1) rectangle (0.51,-0.1);
\node at (1,-0.25) {4};
\draw (0.5,0) to[out=90,in=0] (0,0.5);
\draw[regular] (0,0.5) -- (-0.5,0.5);
\draw[regular] (0,1.5) -- (-0.5,1.5);
\draw (0,1.5) to[out=0,in=-90] (0.5,2);
\end{tikzpicture}
%% arrow
\begin{tikzpicture}
  \draw[->] (0,1) -- (1,1) node[pos=0.5,above] {glue 2,4};
\end{tikzpicture}
%% disk after gluing both pairs
\begin{tikzpicture}
\node at (1,1) {$A$};
% top right
\draw[regular] (1.5,2) -- (1.5,2.5);
\draw (1.5,2) to[out=-90,in=180] (2,1.5);
\draw[regular] (2,1.5) -- (2.5,1.5);
% bottom right
\draw[regular] (2,0.5) -- (2.5,0.5);
\draw (2,0.5) to[out=180,in=90] (1.5,0);
\draw[regular] (1.5,0) -- (1.5,-0.5);
% bottom left
\draw[regular] (0.5,0) -- (0.5,-0.5);
\draw (0.5,0) to[out=90,in=0] (0,0.5);
\draw[regular] (0,0.5) -- (-0.5,0.5);
% top left
\draw[regular] (0,1.5) -- (-0.5,1.5);
\draw (0,1.5) to[out=0,in=-90] (0.5,2);
\draw[regular] (0.5,2) -- (0.5,2.5);
\end{tikzpicture}
\]

The left most figure shows how $\ZCY(\DD^2) \simeq \cA$
is a module category over $\ZCY(I\times I) \simeq \cA$
in four ways;
we think of the 1,2 edges as acting on the left,
3,4 edges as acting on the right.
The actions are just usual left and right multiplication.

By \thmref{t:hat_excision},
the first ``glue 1,3" arrow induces an equivalence
\begin{equation*}
i_*^{1,3} : \htr_{\hatZCY(I\times I)}(\hatZCY(\DD^2)) \simeq \hatZCY(\Ann)
%\label{e:punctorus-ann-hA}
\end{equation*}

Again by \thmref{t:hat_excision},
the second ``glue 2,4" arrow induces an equivalence
\begin{equation*}
i_*^{2,4} :
\htr_{\hatZCY(I\times I)}(\hatZCY(\Ann))
\simeq \hatZCY(\punctorus)
%\label{e:punctorus-hhA}
\end{equation*}

It is easy to see that, under the equivalences
$\cA \simeq \hatZCY(I\times I) \simeq \hatZCY(\DD^2)$,
$\hA \simeq \hatZCY(\Ann)$,
the $\cA$-bimodule structure on $\hA$
and the $\hatZCY(I \times I)$-bimodule structure on
$\hatZCY(\Ann)$ are equivalent.
Thus we have, by \prpref{p:hA-bimodule},
\[
\begin{tikzcd}
\cA \ar[r,"\htr"] \ar[d,"\simeq"]
& \hA \ar[r,"\htr(G{,}J')"] \ar[d,"\simeq"]
& \htr(\hA) \ar[d,"\simeq"] \ar[rr,"\Kar"]
& & \ZZA \ar[d,"\simeq"]
\\
\hatZCY(\DD^2) \ar[r,"i_*^{1,3}"]
& \hatZCY(\Ann) \ar[r,"\htr"]
& \htr_{\hatZCY(I \times I)}(\hatZCY(\Ann)) \ar[r,"i_*^{2,4}", "\simeq"']
& \hatZCY(\punctorus) \ar[r,"\Kar"]
& \ZCY(\punctorus)
\end{tikzcd}
\]
As noted before, the top arrows compose to a functor
that is naturally isomorphic to $\Iel$.

Let us give a more explicit description of the equivalence
$\ZCY(\punctorus) \simeq \ZZA$.
The equivalence $\htr(\hA) \simeq \hatZCY(\punctorus)$
sends a morphism
$\vphi \in \Hom_\cA(B_1 \lact B_2 \lact A, A' \ract B_2 \ract B_1)
\to \Hom_{\htr(\hA)}(A,A')$
to the graph in $\punctorus \times [0,1]$ shown on the right
(which we represent more conveniently by the diagram in the middle):
\begin{equation}
\label{eqn:morphism_punctorus}
\begin{tikzpicture}
\begin{scope}[shift={(0,-1.5)}]
\node[morphism] (ph) at (0,1.5) {$\vphi$};
\draw (ph) -- +(90:1cm) node[pos=1,above] {$A$};
\draw (ph) -- +(-90:1cm) node[pos=1,below] {$A'$};
\draw[midarrow_rev] (ph) -- +(-150:1cm)
  node[pos=1,below left] {$B_1$};
\draw[midarrow_rev] (ph) -- +(150:1cm)
  node[pos=1,above left] {$B_2$};
\draw[midarrow] (ph) -- +(30:1cm)
  node[pos=1,above right] {$B_1$};
\draw[midarrow] (ph) -- +(-30:1cm)
  node[pos=1,below right] {$B_2$};
%\draw[postaction={decoration}] (ph) -- +(-40:3cm);
\end{scope}
\end{tikzpicture}
%%%%%%%%%%%%%%%%%%%%%%%%%%%%%%%%%%%%%%%%%%%%%%%%%%%%%%%
\mapsto
%%%%%%%%%%%%%%%%%%%%%%%%%%%%%%%%%%%%%%%%%%%%%%%%%%%%%%%
\begin{tikzpicture}
\begin{scope}[shift={(0,-1.5)}]
\node[morphism] (ph) at (0,1.5) {$\vphi$};
\draw (ph) -- +(90:1cm) node[pos=1,above] {$A$};
\draw (ph) -- +(-90:1cm) node[pos=1,below] {$A'$};
\draw[midarrow_rev] (ph) -- +(-150:1cm)
  node[pos=1,below left] {1}
  node[pos=0.5,below] {\tiny $B_1$};
\draw[midarrow_rev] (ph) -- +(150:1cm)
  node[pos=1,above left] {2}
  node[pos=0.5,above] {\tiny $B_2$};
\draw[midarrow] (ph) -- +(30:1cm)
  node[pos=1,above right] {3}
  node[pos=0.5,above] {\tiny $B_1$};
\draw[midarrow] (ph) -- +(-30:1cm)
  node[pos=1,below right] {4}
  node[pos=0.5,below] {\tiny $B_2$};
%\draw[postaction={decoration}] (ph) -- +(-40:3cm);
\end{scope}
\end{tikzpicture}
%%%%%%%%%%%%%%%%
:=
%%%%%%%%%%%%%%%%
\begin{tikzpicture}
\begin{scope}[shift={(0,-1.5)}]
%% top face; drawing curves in clockwise order,
%% starting from top left
\draw (-0.2,2.75) to[out=0,in=-60] (0.3,3);
\draw (1.3,3) to[out=-60,in=180] (2,2.75);
\draw (2.2,2.25) to[out=180,in=120] (1.7,2);
\draw (0.7,2) to[out=120,in=0] (0,2.25);
%% bottom face; drawing curves in clockwise order,
%% starting from top left
\draw (-0.2,0.75) to[out=0,in=-60] (0.3,1);
\draw (1.3,1) to[out=-60,in=180] (2,0.75);
\draw (2.2,0.25) to[out=180,in=120] (1.7,0);
\draw (0.7,0) to[out=120,in=0] (0,0.25);
%% vertical lines
%% drawn going in order of appearance in above
\draw[dotted] (-0.2,0.75) -- (-0.2,2.75);
\draw[dotted] (0.3,1) -- (0.3,3);
\draw[dotted] (1.3,1) -- (1.3,3);
\draw[dotted] (2,0.75) -- (2,2.75);
\draw[dotted] (2.2,0.25) -- (2.2,2.25);
\draw[dotted] (1.7,0) -- (1.7,2);
\draw[dotted] (0.7,0) -- (0.7,2);
\draw[dotted] (0,0.25) -- (0,2.25);
%% the graph
\node[small_morphism] (ph) at (1,1.5) {\tiny $\vphi$};
\node[dotnode] (top) at (1,2.5) {};
\node[dotnode] (bottom) at (1,0.5) {};
\draw (ph) -- (top) node[pos=1,above] {$A^*$};
\draw (ph) -- (bottom) node[pos=1,below] {$A'$};
\draw[midarrow_rev] (ph) -- (-0.1,1.5)
  node[pos=1,left] {\small 1}
  node[pos=0.4,above] {\tiny $B_1$};
\draw (ph) -- (0.8,2)
  node[pos=1,above] {\small 2};
\draw (ph) -- (2.1,1.5)
  node[pos=1,right] {\small 3};
\draw[midarrow={0.6}] (ph) -- (1.2,1)
  node[pos=1,below] {\small 4};
\node at (1.3, 1.3) {\tiny $B_2$};
\end{scope}
\end{tikzpicture}
\end{equation}
As we can see, the two diagrams on the left are essentially the same,
except for the extra labels 1,2,3,4 in the middle diagram.
We may sometimes abuse notation and conflate them.

Then $\ZZA \simeq \ZCY(\punctorus)$
is given as follows:
on objects,
\begin{equation}
\label{e:P_elliptic}
(A,\lmb^1,\lmb^2) \mapsto
\im(P_{(A,\lmb^1,\lmb^2)})
\text{, where }
P_{(A,\lmb^1,\lmb^2)} :=
\frac{1}{\cD^2}
%%%%%%%%
\begin{tikzpicture}
\begin{scope}[shift={(0,-1)}]
\node[small_morphism] (lmb1) at (0,1.3) {\tiny $\lmb^1$};
\node[small_morphism] (lmb2) at (0,0.7) {\tiny $\lmb^2$};
\draw (lmb1) -- (0,2) node[pos=1,above] {$A$};
\draw (lmb1) -- (lmb2);
\draw (lmb2) -- (0,0) node[pos=1,below] {$A$};
\draw[regular] (lmb2) -- +(150:1.5cm) node[pos=1,above left] {2};
\draw[regular] (lmb2) -- +(-30:0.6cm) node[pos=1,below right] {4};
\draw[thick_overline,regular] (lmb1) -- +(-150:1.5cm) node[pos=1,below left] {1};
\draw[regular] (lmb1) -- +(30:0.6cm) node[pos=1,above right] {3};
\end{scope}
\end{tikzpicture}
%%%%%
=
\frac{1}{\cD^2}
%%%%%
\begin{tikzpicture}
\begin{scope}[shift={(0,-1)}]
\node[small_morphism] (lmb1) at (0,0.7) {\tiny $\lmb^1$};
\node[small_morphism] (lmb2) at (0,1.3) {\tiny $\lmb^2$};
\draw (lmb2) -- (0,2) node[pos=1,above] {$A$};
\draw (lmb2) -- (lmb1);
\draw (lmb1) -- (0,0) node[pos=1,below] {$A$};
\draw[regular] (lmb1) -- +(-150:0.6cm) node[pos=1,below left] {1};
\draw[regular] (lmb1) -- +(30:1.5cm) node[pos=1,above right] {3};
\draw[regular] (lmb2) -- +(150:0.6cm) node[pos=1,above left] {2};
\draw[thick_overline,regular] (lmb2) -- +(-30:1.5cm) node[pos=1,below right] {4};
\end{scope}
\end{tikzpicture}
\end{equation}
where the equality of diagrams follows from the COMM requirement
\eqnref{e:COMM},
and the dashed line represents a weighted sum over simples
(see Notation~\ref{n:dashed}).
On morphisms,
\[
\Hom_\ZZA((A,\lmb^1,\lmb^2),(A',\mu^1,\mu^2)) \ni f
  \mapsto P_{(A',\mu^1,\mu^2)} \circ f \circ P_{(A,\lmb^1,\lmb^2)}
\]

Thus we have a 2-commutative diagram
\begin{equation}
\label{e:2comm}
\begin{tikzcd}
\cA \ar[r,"\Iel"] \ar[d,"\simeq"]
& \ZZA \ar[d,"\simeq"]
\\
\ZCY(\disk) \ar[r,"\text{incl.}_*"]
& \ZCY(\punctorus)
\end{tikzcd}
\end{equation}
\end{proof}

Next we will prove \thmref{t:elliptic-modular},
that, when $\cA$ is modular, $\ZZA \simeq \cA$.
We present a different proof,
one that uses the equivalence $\ZCY(\punctorus) \simeq \ZZA$
from \prpref{prp:zza_punctorus}.

\begin{proof}[Proof of \thmref{t:elliptic-modular}]
Under the equivalence \eqnref{e:P_elliptic},
the object $(\dirsum X_i A X_i^*, \Gamma, \Omega)$
is sent to
\begin{equation}
\label{e:P-Omega}
(\dirsum X_i A X_i^*, \Gamma, \Omega) \mapsto
\im \Bigg( \frac{1}{\cD^2}
%\sum_{i,j} \frac{\sqrt{d_i}\sqrt{d_j}}{\cD^2}
\begin{tikzpicture}
\draw (0.3,1) -- (0.3,-1) % right vertical strand, goes below everything
  node[pos=0,above] {\small \text{ }$i^*$}
  node[pos=1,below] {\small \text{ }$j^*$};
\draw[overline,regular] (-1,0.5) -- (0.86,-0.7)
  node[pos=0,above left] {$2$}
  node[pos=1,below right] {$4$};
\draw[overline] (0,1) -- (0,-1)
  node[pos=0,above] {\small $A$}
  node[pos=1,below] {\small $A$};
\draw[overline] (-0.3,1) -- (-0.3,-1)
  node[pos=0,above] {\small $i$}
  node[pos=1,below] {\small $j$};
\node[small_morphism] (al) at (-0.3,-0.15) {\tiny $\albar$};
\draw[regular] (al) -- (-1,-0.5) node[pos=1,below left] {1};
\node[small_morphism] (al2) at (0.3,0.15) {\tiny $\albar$};
\draw[regular] (al2) -- (1,0.5) node[pos=1,above right] {3};
\end{tikzpicture}
\Bigg)
\cong
\im \Bigg(
\frac{1}{\cD}
\begin{tikzpicture}
\draw[regular] (-0.86,0.5) -- (0.86,-0.5)
  node[pos=0,above left] {2}
  node[pos=1,below right] {4};
\draw[thick_overline] (0,1) -- (0,-1)
  node[pos=0,above] {$A$}
  node[pos=1,below] {$A$};
\end{tikzpicture}
=: P_\Omega
\Bigg)
\end{equation}
where the isomorphism essentially follows from
\lemref{l:IM_htr_proj}.

Let $F: \ZCY(\DD^2) \to \ZCY(\punctorus)$ be the functor
that sends objects $A \mapsto \im(P_\Omega)$,
and morphisms $f \in \Hom_\cA(A,A')$
to $P_\Omega \circ f \circ P_\Omega$
(here we've identified $\cA \simeq \ZCY(\DD^2)$
by choosing a marked point,
so $A \in \ZCY(\DD^2)$ is the object with one marked point
labeled by $A$).
Note that in fact $P_\Omega \circ f \circ P_\Omega
= P_\Omega \circ f = f \circ P_\Omega$,
but this is not true in general if $f$ is replaced by some
arbitrary morphism of $\Hom_{\ZCY(\punctorus)}(F(A),F(A'))$.

We first show that $F$ is full.
Denote by $i_*(A) \in \ZCY(\punctorus)$ the object
with one marked point labeled by $A$,
that is, the object pushed forward under an inclusion
$i : \DD^2 \hookrightarrow \punctorus$.
By definition, $F(A)$ is a summand of $i_*(A)$,
as $P_\Omega$ is a projection on $i_*(A)$,
$P_\Omega : i_*(A) \to i_*(A)$.
Now
\[
\Hom_{\ZCY(\punctorus)}(F(A),F(A'))
= P_\Omega \circ \Hom_{\hatZCY(\punctorus)}(i_*(A),i_*(A')) \circ P_\Omega
\]
as subspaces of $\Hom_{\hatZCY(\punctorus)}(i_*(A),i_*(A'))$.
In other words, any morphism
$\vphi' \in \Hom_{\ZCY(\punctorus)}(F(A),F(A'))$
is of the form
$P_\Omega \circ \vphi \circ P_\Omega$.
Then

\begin{align}
\label{e:punctorus-modular-cut}
\begin{split}
&\vphi' = P_\Omega \circ \vphi \circ P_\Omega
=
\frac{1}{\cD^2}
\begin{tikzpicture}
\node[small_morphism] (ph) at (0,0) {$\vphi$};
\draw[regular] (0,-0.5) -- +(150:1.7cm)
  node[pos=1,left] {2};
\draw[regular,overline] (0,-0.5) -- +(-30:1.3cm)
  node[pos=1,right] {4};
\draw[overline] (ph) -- +(-90:1.5cm) node[pos=1,below] {$A'$};
\draw (ph) -- +(150:1.5cm)
  node[pos=1,left] {2};
\draw (ph) -- +(30:1.5cm)
  node[pos=1,right] {3};
\draw[regular] (0,0.5) -- +(150:1.3cm)
  node[pos=1,left] {2};
\draw[regular,overline] (0,0.5) -- +(-30:1.7cm)
  node[pos=1,right] {4};
\draw[overline] (ph) -- +(90:1.5cm) node[pos=1,above] {$A$};
\draw (ph) -- +(-30:1.5cm)
  node[pos=1,right] {4};
\draw[overline] (ph) -- +(-150:1.5cm)
  node[pos=1,left] {1};
\end{tikzpicture}
%%%%%%%%%%%%%%%%%%%%%%%%%%%%%%%%%%%%%%%%%%
=
%%%%%%%%%%%%%%%%%%%%%%%%%%%%%%%%%%%%%%%%%%
\frac{1}{\cD^2}
\begin{tikzpicture}
\node[small_morphism] (ph) at (0,0) {$\vphi$};
\node[small_morphism] (a1) at (-0.5,0.788) {\tiny $\al$};
\node[small_morphism] (a2) at (0.7,0.14) {\tiny $\al$};
\draw[regular] (a1) to[out=-90,in=150] (0.1,-0.5) to[out=-30,in=-90] (a2);
\draw[overline] (ph) -- +(-90:1.5cm) node[pos=1,below] {$A'$};
\draw (ph) -- +(150:1.5cm)
  node[pos=1,left] {2};
\draw (ph) -- +(30:1.5cm)
  node[pos=1,right] {3};
\draw[regular] (a1) -- +(150:0.7cm)
  node[pos=1,left] {2};
\draw[regular,overline] (a2) -- +(-30:0.9cm)
  node[pos=1,right] {4};
\draw[regular,overline] (a1) -- (a2);
\draw[overline] (ph) -- +(90:1.5cm) node[pos=1,above] {$A$};
\draw (ph) -- +(-30:1.5cm)
  node[pos=1,right] {4};
\draw[overline] (ph) -- +(-150:1.5cm)
  node[pos=1,left] {1};
\end{tikzpicture}
%%%%%%%%%%%%%%%%%%%%%%%%%%%%%%%%%%%%%%%%%%
=
%%%%%%%%%%%%%%%%%%%%%%%%%%%%%%%%%%%%%%%%%%
\frac{1}{\cD^2}
\begin{tikzpicture}
\node[small_morphism] (ph) at (0,0) {$\vphi$};
\draw[overline] (ph) -- +(-90:1.5cm) node[pos=1,below] {$A'$};
\draw (ph) -- +(150:1.5cm)
  node[pos=1,left] {2};
\draw (ph) -- +(30:1.5cm)
  node[pos=1,right] {3};
\draw[regular,overline] (0.35,0.2) circle (0.15cm);
\draw[overline] (ph) -- +(30:0.4cm);
\draw[regular] (0,0.7) -- +(150:1.3cm)
  node[pos=1,left] {2};
\draw[regular,overline] (0,0.7) -- +(-30:1.7cm)
  node[pos=1,right] {4};
\draw[overline] (ph) -- +(90:1.5cm) node[pos=1,above] {$A$};
\draw (ph) -- +(-30:1.5cm)
  node[pos=1,right] {4};
\draw[overline] (ph) -- +(-150:1.5cm)
  node[pos=1,left] {1};
\end{tikzpicture}
%%%%%%%%%%%%%%%%%%%%%%%%%%%%%%%%%%%%%%%%%%
\\
&\;\;\;\;=
%%%%%%%%%%%%%%%%%%%%%%%%%%%%%%%%%%%%%%%%%%
\frac{1}{\cD}
\begin{tikzpicture}
\node[small_morphism] (ph) at (0,0) {$\vphi$};
\draw[overline] (ph) -- +(-90:1.5cm) node[pos=1,below] {$A'$};
\draw (ph) -- +(150:1.5cm)
  node[pos=1,left] {2};
\draw[regular] (0,0.5) -- +(150:1.3cm)
  node[pos=1,left] {2};
\draw[regular,overline] (0,0.5) -- +(-30:1.7cm)
  node[pos=1,right] {4};
\draw[overline] (ph) -- +(90:1.5cm) node[pos=1,above] {$A$};
\draw (ph) -- +(-30:1.5cm)
  node[pos=1,right] {4};
\end{tikzpicture}
%%%%%%%%%%%%%%%%%%%%%%%%%%%%%%%%%%%%%%%%%%
=
%%%%%%%%%%%%%%%%%%%%%%%%%%%%%%%%%%%%%%%%%%
\frac{1}{\cD}
\begin{tikzpicture}
\node[small_morphism] (ph) at (0,0) {$\vphi$};
\node[small_morphism] (a1) at (-0.5,0.788) {\tiny $\al$};
\node[small_morphism] (a2) at (0.7,0.14) {\tiny $\al$};
\draw[overline] (ph) -- +(-90:1.5cm) node[pos=1,below] {$A'$};
\draw[regular] (a1) -- +(150:0.7cm)
  node[pos=1,left] {2};
\draw[regular,overline] (a2) -- +(-30:0.9cm)
  node[pos=1,right] {4};
\draw[regular,overline] (a1) -- (a2);
\draw[overline] (ph) -- +(90:1.5cm) node[pos=1,above] {$A$};
\draw (ph) to[out=150,in=-90] (a1);
\draw (ph) to[out=-30,in=-90] (a2);
\end{tikzpicture}
%%%%%%%%%%%%%%%%%%%%%%%%%%%%%%%%%%%%%%%%%%
=
%%%%%%%%%%%%%%%%%%%%%%%%%%%%%%%%%%%%%%%%%%
\frac{1}{\cD}
\begin{tikzpicture}
\node[small_morphism] (ph) at (0,-0.3) {$\vphi$};
\draw[overline] (ph) -- +(-90:1.2cm) node[pos=1,below] {$A'$};
\draw[regular] (0,0.5) -- +(150:1.5cm)
  node[pos=1,left] {2};
\draw[regular] (0,0.5) -- +(-30:1.5cm)
  node[pos=1,right] {4};
\draw (ph) to[out=150,in=180] (0,0.2) to[out=0,in=-30] (ph);
\draw[overline] (ph) -- +(90:1.8cm) node[pos=1,above] {$A$};
\draw[decorate,decoration={brace,amplitude=3pt,,raise=4pt}]
	(-0.2,-0.5) -- (-0.2,0.2);
\node at (-0.7,-0.15) {\smallerer $\vphi''$};
\end{tikzpicture}
=\vphi'' \circ P_\Omega = F(\vphi'')
\end{split}
\end{align}

so any $\vphi' \in \Hom_{\ZCY(\punctorus)}(F(A),F(A'))$
is equal to 
$\vphi'' \circ P_\Omega = F(\vphi'')$
for some $\vphi'' \in \Hom_\cA(A,A')$.

Next we show that $F$ is faithful.
Suppose some $f \in \Hom_\cA(A,A')$
is sent to $F(f) = f \circ P_\Omega = 0
\in \Hom_{\ZCY(\punctorus)}(F(A), F(A'))$,
viewed as a subspace of
$\Hom_{\hatZCY(\punctorus)}(i_*(A), i_*(A'))$
as above.
By \prpref{prp:zza_punctorus}
and \lemref{l:isom1},
\begin{align*}
\Hom_{\hatZCY(\punctorus)}(i_*(A), i_*(A'))
\cong \int^{B_2} \int^{B_1}
      \Hom_{\ZCY(\DD^2)}(B_1 \lact B_2 \lact A, A' \ract B_2 \ract B_1)
\cong
\bigoplus_{i, j \in \Irr(\cA)} 
      \Hom_\cA(X_i \lact X_j \lact A, A' \ract X_j \ract X_i)
\end{align*}
It is clear that
$f \circ P_\Omega \in 
\bigoplus_{j \in \Irr(\cA)} 
	\Hom_\cA(\one \lact X_j \lact A, A' \ract X_j \ract \one)$,
and in fact the $j=0$ (i.e. $X_j=\one$) component
of $f \circ P_\Omega$ is simply
$1/\cD \cdot f$.
Thus, since $f \circ P_\Omega = 0$, $f = 0$ as well.

Finally, we show that $F$ is essentially surjective.
Since $i_* : \ZCY(\DD^2) \to \ZCY(\punctorus)$ is dominant
by \corref{c:excision-dominant},
and $\ZCY(\DD^2) \simeq \cA$ is semisimple,
it suffices to show that any $i_*(A)$
is isomorphic to some $F(A')$.

Consider 
\begin{equation}
\label{e:proj-A-IA}
%copy of {l:IM_htr_proj}
\sum_{i\in \Irr(\cA)}
\frac{\sqrt{d_i}}{\sqrt{\cD}}
\begin{tikzpicture}
\node at (0,0.8) {$A$};
\node at (0.1,-0.8) {$i\, A \, i^*$};
\draw (0,0.5) -- (0,-0.5);
\draw (0.2,-0.5) .. controls +(up:0.5cm) and +(150:0.5cm) .. (0.6,-0.3)
  %node[pos=0,above] {\small $X_i$}
	node[pos=1,right] {4};
\draw (-0.2,-0.5) .. controls +(up:0.5cm) and +(-30:0.5cm) .. (-0.6,0.3)
  %node[pos=0,above] {\small $X_i^*$}
	node[pos=1,left] {2};
\end{tikzpicture}
%%%%%%%%%%%%%%%%%%%%%%%%%%%%%%%%%%%%%%%%%%%%%%%%%%%%%%%%%%%%
\;\;,\;\;
%%%%%%%%%%%%%%%%%%%%%%%%%%%%%%%%%%%%%%%%%%%%%%%%%%%%%%%%%%%%
\sum_{i\in \Irr(\cA)}
\frac{\sqrt{d_i}}{\sqrt{\cD}}
\begin{tikzpicture}
\node at (0.1,0.8) {$i\, A \, i^*$};
\node at (0,-0.8) {$A$};
\draw (0,0.5) -- (0,-0.5);
\draw (-0.2,0.5) .. controls +(down:0.5cm) and +(-30:0.5cm) .. (-0.6,0.3)
  %node[pos=0,above] {\small $X_i$}
	node[pos=1,left] {2};
\draw (0.2,0.5) .. controls +(down:0.5cm) and +(150:0.5cm) .. (0.6,-0.3)
  %node[pos=0,above] {\small $X_i^*$}
	node[pos=1,right] {4};
\end{tikzpicture}
\end{equation}

which are analogs of the
$\hat{P}'_{(M,\gamma)}, \check{P}'_{(M,\gamma)}$,
in \lemref{l:IM_htr_proj}.
By similar reasoning, these morphisms are isomorphisms
to and from the object
\[
\im \Bigg(
%\sum_{i\in \Irr(\cA)}
\frac{1}{\cD}
\begin{tikzpicture}
\node at (0.1,0.8) {$i\, A \, i^*$};
\node at (0.1,-0.8) {$j\, A \, j^*$};
\node[small_morphism] (al1) at (-0.2,0.1) {\tiny $\albar$};
\node[small_morphism] (al2) at (0.2,-0.1) {\tiny $\albar$};
\draw (0,0.5) -- (0,-0.5);
\draw (al1) -- (-0.2,0.5);
\draw (al1) -- (-0.2,-0.5);
\draw (al2) -- (0.2,0.5);
\draw (al2) -- (0.2,-0.5);
\draw[regular] (al1) -- +(150:0.5cm)
	node[pos=1,left] {2};
\draw[regular] (al2) -- +(-30:0.5cm)
	node[pos=1,right] {4};
\end{tikzpicture}
\Bigg)
%%%%%%%%%%%%%%%%%%%%%%%%%%%%%%%%%%%%%%%%%%%%%%%%%%%%%%%%%%%%%
\;\;=\;\;
%%%%%%%%%%%%%%%%%%%%%%%%%%%%%%%%%%%%%%%%%%%%%%%%%%%%%%%%%%%%%
\im \Bigg(
\frac{1}{\cD}
\begin{tikzpicture}
\node at (0,0.8) {$B$};
\node at (0,-0.8) {$B$};
\node[small_morphism] (ga) at (0,0) {\tiny $\ga$};
\draw (ga) -- (0,0.5);
\draw (ga) -- (0,-0.5);
\draw[regular] (ga) -- +(150:0.5cm)
	node[pos=1,left] {2};
\draw[regular] (ga) -- +(-30:0.5cm)
	node[pos=1,right] {4};
\end{tikzpicture}
\Bigg)
\]
where of course $(B,\ga) = (\dirsum X_i A X_i^*, \Gamma)$.
Now, from \eqnref{e:modular-split},
there is an isomorphism
\[
\psi : (B,\ga) \simeq \dirsum_k (B_k' \tnsr B_k'',c^\inv \tnsr c)
\]
for some finite collection of objects $B_k',B_k''$.
Thus, the object above is a direct sum of objects
\[
\im \Bigg(
\frac{1}{\cD}
\begin{tikzpicture}
\node at (0,0.8) {$B_k' \, B_k''$};
\node at (0,-0.8) {$B_k' \, B_k''$};
\draw (0.2,0.5) -- (0.2,-0.5);
\draw[overline,regular] (-0.6,0.3) -- (0.6,-0.3)
	node[pos=0,left] {2}
	node[pos=1,right] {4};
\draw[overline] (-0.2,0.5) -- (-0.2,-0.5);
\end{tikzpicture}
\Bigg)
%%%%%%%%%%%%%%%%%%%%%%%%%%%%%%%%%%%%%%%%%%%%%%%%%%%%%%%%%%
\;\;
\;\;
%%%%%%%%%%%%%%%%%%%%%%%%%%%%%%%%%%%%%%%%%%%%%%%%%%%%%%%%%%
\begin{tikzpicture}
\begin{scope}[shift={(0,1)}]
\draw (-0.2,0.5) to[out=-90,in=90] (0.2,-0.5);
\draw (-0.2,-0.5) .. controls +(up:0.5cm) and +(30:0.5cm) .. (-0.6,-0.3)
	node[pos=1,left] {1};
\draw (0.2,0.5) .. controls +(down:0.5cm) and +(-150:0.5cm) .. (0.6,0.3)
	node[pos=1,right] {3};
%%giant arrow
\draw[-stealth] (-1.2,-1) -- (1.2,-1);
\node at (0,-1.2) {$\simeq$};
\end{scope}
\end{tikzpicture}
%%%%%%%%%%%%%%%%%%%%%%%%%%%%%%%%%%%%%%%%%%%%%%%%%%%%%%%%%%
\;\;
\;\;
%%%%%%%%%%%%%%%%%%%%%%%%%%%%%%%%%%%%%%%%%%%%%%%%%%%%%%%%%%
\im \Bigg(
\frac{1}{\cD}
\begin{tikzpicture}
\node at (0,0.8) {$B_k'' \, B_k'$};
\node at (0,-0.8) {$B_k'' \, B_k'$};
\draw[regular] (-0.6,0.3) -- (0.6,-0.3)
	node[pos=0,left] {2}
	node[pos=1,right] {4};
\draw[overline] (-0.2,0.5) -- (-0.2,-0.5);
\draw[overline] (0.2,0.5) -- (0.2,-0.5);
\end{tikzpicture}
\Bigg)
=
F(B_k'' \tnsr B_k')
\]

In summary, we have thing following chain of isomorphisms:
\[
\begin{tikzpicture}
\begin{scope}[shift={(0,0)}]
\node at (0,0.8) {$A$};
\draw (0,0.5) -- (0,-0.5);
\draw (0.2,-0.5) .. controls +(up:0.5cm) and +(150:0.5cm) .. (0.6,-0.3)
  %node[pos=0,above] {\small $X_i$}
	node[pos=1,right] {4};
\draw (-0.2,-0.5) .. controls +(up:0.5cm) and +(-30:0.5cm) .. (-0.6,0.3)
  %node[pos=0,above] {\small $X_i^*$}
	node[pos=1,left] {2};
\end{scope} %%bottom at -0.5
%%%%%%%%%%%%%%%%%%%%%%%%%%%%%%%%%%%%%%%%%%%%%%%%%%%
\begin{scope}[shift={(0,-1)}] %%rel top 0.5
\node[small_morphism] (al1) at (-0.2,0.1) {\tiny $\albar$};
\node[small_morphism] (al2) at (0.2,-0.1) {\tiny $\albar$};
\draw (0,0.5) -- (0,-0.5);
\draw (al1) -- (-0.2,0.5);
\draw (al1) -- (-0.2,-0.5);
\draw (al2) -- (0.2,0.5);
\draw (al2) -- (0.2,-0.5);
\draw[regular] (al1) -- +(150:0.5cm)
	node[pos=1,left] {2};
\draw[regular] (al2) -- +(-30:0.5cm)
	node[pos=1,right] {4};
%%%
\node at (-1.6,0) {$\im \Bigg( {\LARGE \frac{1}{\cD}}$};
\node at (1.2,0) {$\Bigg)$};
\end{scope} %%rel bottom -0.5, bottom -1.5
%%%%%%%%%%%%%%%%%%%%%%%%%%%%%%%%%%%%%%%%%%%%%%%%%%%
\begin{scope}[shift={(0,-2)}] %%rel top 0.5
\node[small_morphism] (psi) at (0,0) {\tiny $\psi$};
\draw (psi) to[out=120,in=-90] (-0.2,0.5);
\draw (psi) to[out=90,in=-90] (0,0.5);
\draw (psi) to[out=60,in=-90] (0.2,0.5);
\draw (psi) to[out=-120,in=90] (-0.2,-0.5);
\draw (psi) to[out=-60,in=90] (0.2,-0.5);
\end{scope} %%rel bottom -0.5, bottom -2.5
%%%%%%%%%%%%%%%%%%%%%%%%%%%%%%%%%%%%%%%%%%%%%%%%%%%
\begin{scope}[shift={(0,-3)}] %%rel top 0.5
\draw (0.2,0.5) -- (0.2,-0.5);
\draw[overline,regular] (-0.6,0.3) -- (0.6,-0.3)
	node[pos=0,left] {2}
	node[pos=1,right] {4};
\draw[overline] (-0.2,0.5) -- (-0.2,-0.5);
%%%
\node at (-1.8,0) {$\dirsum_k \im \Bigg( {\LARGE \frac{1}{\cD}}$};
\node at (1.2,0) {$\Bigg)$};
\end{scope} %%rel bottom -0.5, bottom -3.5
%%%%%%%%%%%%%%%%%%%%%%%%%%%%%%%%%%%%%%%%%%%%%%%%%%%
\begin{scope}[shift={(0,-4)}] %%rel top 0.5
\draw (-0.2,0.5) to[out=-90,in=90] (0.2,-0.5);
\draw (-0.2,-0.5) .. controls +(up:0.5cm) and +(30:0.5cm) .. (-0.6,-0.3)
	node[pos=1,left] {1};
\draw (0.2,0.5) .. controls +(down:0.5cm) and +(-150:0.5cm) .. (0.6,0.3)
	node[pos=1,right] {3};
\end{scope} %%rel bottom -0.5, bottom -4.5
%%%%%%%%%%%%%%%%%%%%%%%%%%%%%%%%%%%%%%%%%%%%%%%%%%%
\begin{scope}[shift={(0,-5)}] %%rel top 0.5
\draw[regular] (-0.6,0.3) -- (0.6,-0.3)
	node[pos=0,left] {2}
	node[pos=1,right] {4};
\draw[overline] (-0.2,0.5) -- (-0.2,-0.5);
\draw[overline] (0.2,0.5) -- (0.2,-0.5);
\node at (0,-0.8) {$B_k'' \, B_k'$};
%%%
\node at (-1.8,0) {$\dirsum_k \im \Bigg( {\LARGE \frac{1}{\cD}}$};
\node at (1.2,0) {$\Bigg)$};
\end{scope} %%rel bottom -0.5, bottom -5.5
\end{tikzpicture}
\]
\end{proof}

%In particular, when $\cA$ is modular,
%we have an equivalence 
%$\text{incl.}_* : \ZCY(\disk) \xrightarrow{\simeq} \ZCY(\punctorus)$.
Our next task is to upgrade this equivalence to an equivalence
of left $\ZCY(\Ann)$-modules.
In the rightmost figure in \eqnref{e:ell_obj}, the gray area
is a collar neighborhood of the puncture of $\punctorus$.
By \prpref{p:collared_module},
there is a (left) $\hatZCY(\Ann)$-module structure
on $\hatZCY(\punctorus)$: on objects,

\begin{equation}
\label{e:ell_obj}
\begin{tikzpicture}
\draw (0,0) circle (1cm);
\draw (0,0) circle (0.7cm);
\node[dotnode] at (0.6,0.6) {};
\node at (0.9,0.8) {$C$};
\end{tikzpicture}
%%%%%%%%%%%%%%%%%%%%%%%%%%%%%%%%%%%%%%%%%%%%%%%%%
\boxtimes
%%%%%%%%%%%%%%%%%%%%%%%%%%%%%%%%%%%%%%%%%%%%%%%%%
\begin{tikzpicture}
\begin{scope}[shift={(0,-1)}]
\node[dotnode] at (1,1) {};
\node at (1.2,1) {$A$};
% top right
\draw[regular] (1.5,2) -- (1.5,2.5);
\draw (1.5,2) to[out=-90,in=180] (2,1.5);
\draw[regular] (2,1.5) -- (2.5,1.5);
% bottom right
\draw[regular] (2,0.5) -- (2.5,0.5);
\draw (2,0.5) to[out=180,in=90] (1.5,0);
\draw[regular] (1.5,0) -- (1.5,-0.5);
% bottom left
\draw[regular] (0.5,0) -- (0.5,-0.5);
\draw (0.5,0) to[out=90,in=0] (0,0.5);
\draw[regular] (0,0.5) -- (-0.5,0.5);
% top left
\draw[regular] (0,1.5) -- (-0.5,1.5);
\draw (0,1.5) to[out=0,in=-90] (0.5,2);
\draw[regular] (0.5,2) -- (0.5,2.5);
\end{scope}
\end{tikzpicture}
%%%%%%%%%%%%%%%%%%%%%%%%%%%%%%%%%%%%%%%%%%%%%%%%%
\mapsto
%%%%%%%%%%%%%%%%%%%%%%%%%%%%%%%%%%%%%%%%%%%%%%%%%
\begin{tikzpicture}
\begin{scope}[shift={(0,-1)}]
\node[dotnode] at (1,1) {};
\node at (1.2,1) {$A$};
%% note: changed 2 to 1.99 because it leaves weird gap
% top right
\draw[really_thick] (1.35,1.99) -- (1.35,2.5);
\draw[regular] (1.5,2) -- (1.5,2.5);
\draw[really_thick] (1.35,2) to[out=-90,in=180] (2,1.35);
\draw (1.5,2) to[out=-90,in=180] (2,1.5);
\draw[really_thick] (1.99,1.35) -- (2.5,1.35);
\draw[regular] (2,1.5) -- (2.5,1.5);
% bottom right
\draw[really_thick] (1.99,0.65) -- (2.5,0.65);
\draw[regular] (2,0.5) -- (2.5,0.5);
\draw[really_thick] (2,0.65) to[out=180,in=90] (1.35,0);
\draw (2,0.5) to[out=180,in=90] (1.5,0);
\draw[really_thick] (1.35,0.01) -- (1.35,-0.5);
\draw[regular] (1.5,0) -- (1.5,-0.5);
% bottom left
\draw[really_thick] (0.65,0.01) -- (0.65,-0.5);
\draw[regular] (0.5,0) -- (0.5,-0.5);
\draw[really_thick] (0.65,0) to[out=90,in=0] (0,0.65);
\draw (0.5,0) to[out=90,in=0] (0,0.5);
\draw[really_thick] (0.01,0.65) -- (-0.5,0.65);
\draw[regular] (0,0.5) -- (-0.5,0.5);
% top left
\draw[really_thick] (0.01,1.35) -- (-0.5,1.35);
\draw[regular] (0,1.5) -- (-0.5,1.5);
\draw[really_thick] (0,1.35) to[out=0,in=-90] (0.65,2);
\draw (0,1.5) to[out=0,in=-90] (0.5,2);
\draw[really_thick] (0.65,1.99) -- (0.65,2.5);
\draw[regular] (0.5,2) -- (0.5,2.5);
% dotnode C
\node[dotnode] at (0.45,0.45) {};
\node at (0.65,0.45) {$C$};
\end{scope}
\end{tikzpicture}
\end{equation}
while on morphisms, the module structure
%employing the representation of \eqnref{eqn:morphism_punctorus},
is given as follows:

\begin{equation}
%% morphism in Ann
\begin{tikzpicture}
\begin{scope}[shift={(0,-1.5)}]
\node at (0,3) {$\Hom_{\hatZCY(\Ann)}(C,C')$};
\node[small_morphism] (psi) at (0,1) {\small $\psi$};
\draw (psi) -- +(90:1cm) node[pos=1,above] {$C$};
\draw (psi) -- +(-90:1cm) node[pos=1,below] {$C'$};
\draw[midarrow_rev={0.7}] (psi) -- +(150:0.5cm) node[pos=1,above left] {$D$};
\draw[midarrow={0.7}] (psi) -- +(-30:0.5cm) node[pos=1,below right] {$D$};
\end{scope}
\end{tikzpicture}
%% tnsr 
\begin{tikzpicture}
\begin{scope}[shift={(0,-1.5)}]
\node at (0,3) {$\tnsr$};
\node at (0,1) {$\tnsr$};
\end{scope}
\end{tikzpicture}
%% punctorus morphism 
\begin{tikzpicture}
\begin{scope}[shift={(0,-1.5)}]
\node at (0,3) {$\Hom_{\hatZCY(\punctorus)}(A,A')$};
\node[small_morphism] (ph) at (0,1) {\small $\vphi$};
\draw (ph) -- +(90:1cm) node[pos=1,above] {$A$};
\draw (ph) -- +(-90:1cm) node[pos=1,below] {$A'$};
\draw[midarrow_rev] (ph) -- +(-150:1cm) node[pos=1,below left] {1}
                          node[pos=0.5,below] {\small $B_1$};
\draw[midarrow_rev] (ph) -- +(150:1cm) node[pos=1,above left] {2}
                         node[pos=0.5,above] {\small $B_2$};
\draw[midarrow]  (ph) -- +(30:1cm) node[pos=1,above right] {3}
                        node[pos=0.5,above] {\small $B_1$};
\draw[midarrow] (ph) -- +(-30:1cm) node[pos=1,below right] {4}
                         node[pos=0.5,below] {\small $B_2$};
\end{scope}
\end{tikzpicture}
%% arrow
\begin{tikzpicture}
\begin{scope}[shift={(0,-1.5)}]
\node at (0,3) {$\to$};
\node at (0,1) {$\to$};
\end{scope}
\end{tikzpicture}
%% result punctorus morphism 
%% drawn out of usual order to incorporate front/back
\begin{tikzpicture}
\begin{scope}[shift={(0,-1.5)}]
\node at (0,3) {$\Hom_{\hatZCY(\punctorus)}(A,A')$};
%% drawing the curves going around vphi first,
%% except the bottom one,
%% because these go behind everything
\draw[midarrow_rev] (-0.93, 0.6) to[out=30,in=-30] (-0.93,1.4); % 1 -> 2
\draw[midarrow_rev={0.7}] (-0.8,1.6) to[out=-30,in=-150] (0.8,1.6);
\draw[midarrow_rev] (0.93, 1.4) to[out=-150,in=150] (0.93,0.6);
%% node phi and all the 1,2,3,4
\node[small_morphism] (ph) at (0,1) {\tiny $\vphi$};
\draw[overline] (ph) -- +(90:1cm) node[pos=1,above] {$A$};
\draw (ph) -- +(-90:1cm) node[pos=1,below] {$A'$};
\draw (ph) -- +(-150:1cm) node[pos=1,below left] {1};
\draw (ph) -- +(150:1cm) node[pos=1,above left] {2};
\draw (ph) -- +(30:1cm) node[pos=1,above right] {3};
\draw (ph) -- +(-30:1cm) node[pos=1,below right] {4};
%% node psi
\node[small_morphism] (psi) at (-0.3,0.55) {\tiny $\psi$};
\draw[overline] (psi) -- (-0.3,2) node[above] {$C$};
\draw (psi) -- (-0.3,0) node[below] {$C'$};
\draw (psi) to[out=-170,in=30] (-0.8,0.4);
\draw[midarrow,overline] (psi) to[out=10,in=150] (0.8,0.4);
\node at (0.4,0.4) {\tiny $D$};
\end{scope}
\end{tikzpicture}
\end{equation}
(The $D$-labeled strand originally goes around the annulus
in $\Ann \times [0,1]$;
after inserting into $\punctorus \times [0,1]$,
it wraps around like the gray area in \eqnref{e:ell_obj}).
This extends to a left $\ZCY(\Ann)$-module structure
on $\ZCY(\punctorus)$.

Similarly, there is a left $\hatZCY(\Ann)$-module structure
on $\hatZCY(\disk)$ (which extends to $\ZCY$):
\begin{equation}
%% morphism in Ann
\begin{tikzpicture}
\node at (0,3) {$\Hom_{\hatZCY(\Ann)}(C,C')$};
\node[small_morphism] (psi) at (0,1) {\small $\psi$};
\draw (psi) -- +(90:1cm) node[pos=1,above] {$C$};
\draw (psi) -- +(-90:1cm) node[pos=1,below] {$C'$};
\draw[midarrow_rev={0.7}] (psi) -- +(150:0.5cm) node[pos=1,above left] {$D$};
\draw[midarrow={0.7}] (psi) -- +(-30:0.5cm) node[pos=1,below right] {$D$};
\end{tikzpicture}
%% tnsr 
\begin{tikzpicture}
\node at (0,3) {$\tnsr$};
\node at (0,1) {$\tnsr$};
\end{tikzpicture}
%% disk morphism 
\begin{tikzpicture}
\node at (0,3) {$\Hom_{\hatZCY(\disk)}(A,A')$};
\node[small_morphism] (ph) at (0,1) {\small $\vphi$};
\draw (ph) -- +(90:1cm) node[pos=1,above] {$A$};
\draw (ph) -- +(-90:1cm) node[pos=1,below] {$A'$};
\end{tikzpicture}
%% arrow
\begin{tikzpicture}
\node at (0,3) {$\to$};
\node at (0,1) {$\to$};
\end{tikzpicture}
%% result disk morphism 
%% drawn out of usual order to incorporate front/back
\begin{tikzpicture}
\node at (0,3) {$\Hom_{\hatZCY(\disk)}(A,A')$};
%% node phi
\node[small_morphism] (ph) at (0,1) {\tiny $\vphi$};
%\draw (ph) -- +(90:1cm) node[pos=1,above] {$A$}; %draw last
\draw (ph) -- +(-90:1cm) node[pos=1,below] {$A'$};

%% node psi
\node[small_morphism] (psi) at (-0.3,0.7) {\tiny $\psi$};
%\node (ref) at (0.3,1.3) {};
\draw (psi) to[out=150,in=150] (0.3,1.3);
\draw[overline,midarrow_rev] (0.3,1.3) to[out=-30,in=-30] (psi);
\draw[overline] (psi) -- (-0.3,2) node[above] {$C$};
\draw (psi) -- (-0.3,0) node[below] {$C'$};
\node at (0.6,1) {\small $D$};

\draw[overline] (ph) -- +(90:1cm) node[pos=1,above] {$A$};
\end{tikzpicture}
\end{equation}

In light of \eqnref{e:2comm},
the following theorem is an upgrade
of \thmref{t:elliptic-modular}:

\begin{theorem}
\label{thm:modular_module}
Let $\cA$ be modular. There is an equivalence of
left $\ZCY(\Ann)$-modules
\[
  \ZCY(\disk) \simeq \ZCY(\punctorus)
\]
\end{theorem}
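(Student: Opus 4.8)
The plan is to take the categorical equivalence $F\colon \ZCY(\disk)\simeq\ZCY(\punctorus)$ constructed in the proof of \thmref{t:elliptic-modular} --- where $F(A)=\im(P_\Omega^A)$ for the idempotent $P_\Omega^A$ of \eqnref{e:P-Omega}, regarded as a natural endomorphism of the pushforward functor $i_*\colon\ZCY(\disk)\to\ZCY(\punctorus)$ --- and to promote it to a functor of left $\ZCY(\Ann)$-modules. Since a module functor whose underlying functor is an equivalence is automatically an equivalence of module categories, it suffices to produce natural isomorphisms $J_{C,A}\colon F(C\lact A)\xrightarrow{\ \sim\ } C\lact F(A)$ (for $C\in\ZCY(\Ann)$, $A\in\ZCY(\disk)$) satisfying the module coherence axioms. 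The first thing I would do is make both module structures completely explicit at the level of graphs, using \prpref{p:collared_module}: on $\ZCY(\disk)$ the action inserts a graph in $\Ann\times I$ into a collar of $\del\disk$ inside $\disk\times I$, and on $\ZCY(\punctorus)$ it inserts into a collar of the puncture; I would then track how each insertion behaves on the generating objects and morphisms recorded in \secref{s:elliptic}.

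The heart of the matter is to see that $F$ intertwines these two actions. My approach would be to work through the two-step presentation of $\punctorus$ in the proof of \prpref{prp:zza_punctorus} (the ``$+$''-polygon $\disk$ with opposite arms glued), keeping track of where the puncture collar of $\punctorus$ sits: it is the union of the side-edges of the ``$+$'', which is disjoint from the two handle curves. The idempotent $P_\Omega$ is, by \eqnref{e:P-Omega}, represented by a dashed loop running around one handle curve and linking the $A$-strand, hence it can be isotoped into $\punctorus\times I$ so as to miss a collar of the puncture entirely. Granting that, the action of $\id_C$ on $P_\Omega^A$ simply inserts the $C$-graph into the puncture collar without interacting with the loop, so it equals $P_\Omega^{C\lact A}$ transported across the obvious graph-level identification; this both supplies $J_{C,A}$ and yields the coherence axioms, which reduce to isotopies of graphs supported in a collar of $\punctorus\times I$.

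The main obstacle I anticipate is exactly this localization step. First, one has to pin down the embedding $\disk\hookrightarrow\punctorus$ and the two collars carefully enough that inserting near $\del\disk$ really does match inserting near the puncture after passing to $F$ --- a priori the boundary circle of an embedded disk in $\punctorus$ is contractible while the puncture is not, so the match must be mediated by the handle-loop of $P_\Omega$, and here I expect to invoke the Killing Lemma \lemref{l:killing} (which is where modularity of $\cA$ re-enters) to show that the contractible insertion circle can be slid across the handle-loop onto a curve parallel to the puncture. Second, one has to make the isotopy-of-graphs manipulations rigorous, which I would do by the now-familiar technology of \secref{s:skein_modules}: narrowings (\lemref{l:narrow-transverse}), the open-cover presentation of the space of relations (\prpref{prp:null_cover}), and the move-by-move argument of \lemref{lem:alpha_independence}, adapted to the open cover of $\punctorus\times I$ by the complement of the handle curves together with a neighborhood of the puncture collar.

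Finally, I would record that under the identifications $\ZCY(\disk)\simeq\cA$ and $\ZCY(\punctorus)\simeq\ZZA$ of \eqnref{e:2comm}, this upgrades the statement that the inclusion $\cA\to\ZZA$ is $\Iel$ to the statement that $\Iel$ is a functor of left $\ZCY(\Ann)$-modules, consistent with \thmref{t:elliptic-modular}.
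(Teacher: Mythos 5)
Your proposal follows essentially the same route as the paper: take the equivalence $F(A)=\im(P_\Omega)$ from \thmref{t:elliptic-modular} and verify directly that $F$ intertwines the two module actions by sliding the inserted annulus-graph's winding strands across the regular-colored loop of $P_\Omega$ (the paper carries this out as a short graphical computation $\psi\cdot F(\vphi)=\cdots=F(\psi\cdot\vphi)$, with the identity serving as the module structure map, so no separate coherence check is needed). One correction: the sliding step is an application of \lemref{l:summation} and the sliding lemma (\lemref{lem:sliding}), not the Killing Lemma (\lemref{l:killing}) --- modularity enters only through \thmref{t:elliptic-modular} itself --- and your claim that the inserted $C$-graph ``does not interact with the loop'' is too quick whenever the morphism (or the Karoubi idempotent defining an object of $\ZCY(\Ann)$) has a nontrivial longitudinal $D$-strand, since that strand winds around the puncture and must be slid through the $P_\Omega$ loop to become the small loop encircling the marked point that the disk action produces; this interaction is precisely what the computation resolves.
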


\begin{proof}
For $\psi \in \Hom_{\hatZCY(\Ann)}(C,C')$,
$\vphi' = \cD \cdot \vphi \in \Hom_{\hatZCY(\DD^2)}(A,A')$,
the action of $\psi$ on $F(\vphi')$
(where $F$ is the functor from
\thmref{t:elliptic-modular};
the factor of $\cD$ is to simplify the diagrams below)
is given by:
\begin{align*}
\psi \cdot F(\vphi) =
%% result object %%%%%%%%%%%%%%%%%%%%%%%%%%%%%%%%%%
\begin{tikzpicture}
%% drawing the curves going around vphi first,
%% except the bottom one,
%% because these go behind everything
\draw (-0.93, -0.4) to[out=30,in=-30] (-0.93,0.4); %1-2
\draw (-0.8,0.6) to[out=-30,in=-150] (0.8,0.6); %2-3
\draw (0.93, 0.4) to[out=-150,in=150] (0.93,-0.4); %3-4
%% original object from ZCY(punctorus)
\draw[regular] (-0.86,0.5) -- (0.86,-0.5)
  node[pos=0,above left] {2}
  node[pos=1,below right] {4};
\node[small_morphism] (ph) at (0,0.7) {\tiny $\vphi$};
\draw (ph) -- (0,1) node[pos=1,above] {$A$};
\draw[overline] (ph) -- (0,-1) node[pos=1,below] {$A'$};
%% node psi
\node[small_morphism] (psi) at (-0.3,-0.45) {\tiny $\psi$};
\draw[overline] (psi) -- (-0.3,1) node[above] {$C$};
\draw (psi) -- (-0.3,-1) node[below] {$C'$};
\draw (psi) to[out=-170,in=30] (-0.8,-0.6); %psi-1
\draw[overline] (psi) to[out=10,in=150] (0.8,-0.6); %psi-4
%% put numbers 1,3
\node at (-1,-0.7) {1};
\node at (1,0.7) {3};
\end{tikzpicture}
=
%% result object %%%%%%%%%%%%%%%%%%%%%%%%%%%%%%%%%%
\begin{tikzpicture}
%% drawing the curves going around vphi first,
%% except the bottom one,
%% because these go behind everything
\draw (-0.93, -0.4) to[out=30,in=-30] (-0.93,0.4); %1-2
%\draw (-0.8,0.6) to[out=-30,in=-150] (0.8,0.6); %2-3
%\draw (0.93, 0.4) to[out=-150,in=150] (0.93,-0.4); %3-4
%% original object from ZCY(punctorus)
\draw[regular] (-0.86,0.5) -- (0.86,-0.5)
  node[pos=0,above left] {2}
  node[pos=1,below right] {4};
\node[small_morphism] (ph) at (0,0.7) {\tiny $\vphi$};
\draw (ph) -- (0,1) node[pos=1,above] {$A$};
\draw[overline] (ph) -- (0,-1) node[pos=1,below] {$A'$};
%% alphas on dashed line
\node[small_morphism] (al1) at (-0.43,0.25) {\tiny $\al$};
\node[small_morphism] (al2) at (0.35,-0.15) {\tiny $\al$};
\draw (al1) to[out=0,in=-150] (0.8,0.6); %al1-3
\draw (al2) to[out=60,in=-150] (0.93,0.4); %al2-3
%% node psi
\node[small_morphism] (psi) at (-0.3,-0.45) {\tiny $\psi$};
\draw[overline] (psi) -- (-0.3,1) node[above] {$C$};
\draw (psi) -- (-0.3,-1) node[below] {$C'$};
\draw (psi) to[out=-170,in=30] (-0.8,-0.6); %psi-1
\draw[overline] (psi) to[out=10,in=150] (0.8,-0.6); %psi-4
%% put numbers 1,3
\node at (-1,-0.7) {1};
\node at (1,0.7) {3};
\end{tikzpicture}
=
%% fully perform slide %%%%%%%%%%%%%%%%%%%%%%%%%%%%%%%%%%
\begin{tikzpicture}
\draw (-0.93, -0.4) to[out=30,in=-30] (-0.93,0.4);
%\draw (-0.8,0.6) to[out=-30,in=-150] (0.8,0.6); %these two slide off
%\draw (0.93, 0.4) to[out=-150,in=150] (0.93,-0.4);
\draw (0.8,0.6) to[out=-150,in=150] (0.3,0.2); %%u-shape at 3
\draw (0.3,0.2) to[out=-30,in=-150] (0.93,0.4); %%u-shape at 3
%% original object from ZCY(punctorus)
\draw[regular] (-0.86,0.5) -- (0.86,-0.5)
  node[pos=0,above left] {2}
  node[pos=1,below right] {4};
\node[small_morphism] (ph) at (0,0.6) {\tiny $\vphi$};
\draw (ph) -- (0,1) node[pos=1,above] {$A$};
\draw[overline] (ph) -- (0,-1) node[pos=1,below] {$A'$};
%% node psi
\node[small_morphism] (psi) at (-0.3,-0.45) {\tiny $\psi$};
\draw[overline] (psi) -- (-0.3,1) node[above] {$C$};
\draw (psi) -- (-0.3,-1) node[below] {$C'$};
\draw (psi) to[out=-170,in=30] (-0.8,-0.6);
\draw[overline] (psi) to[out=10,in=150] (0.8,-0.6);
%% put numbers 1,3
\node at (-1,-0.7) {1};
\node at (1,0.7) {3};
\end{tikzpicture}
=
%% bring loop to other side %%%%%%%%%%%%%%%%%%%%%%%%%%%%%%%%%%
\begin{tikzpicture}
%\draw (-0.93, -0.4) to[out=30,in=-30] (-0.93,0.4); %1-2
%\draw (-0.8,0.6) to[out=-30,in=-150] (0.8,0.6); %these two slide off
%\draw (0.93, 0.4) to[out=-150,in=150] (0.93,-0.4);
%\draw (0.8,0.6) to[out=-150,in=150] (0.3,0.2); %%u-shape at 3
%\draw (0.3,0.2) to[out=-30,in=-150] (0.93,0.4); %%u-shape at 3
%% original object from ZCY(punctorus)
\draw[regular] (-0.86,0.5) -- (0.86,-0.5)
  node[pos=0,above left] {2}
  node[pos=1,below right] {4};
\node[small_morphism] (ph) at (0,0.6) {\tiny $\vphi$};
\draw (ph) -- (0,1) node[pos=1,above] {$A$};
\draw[overline] (ph) -- (0,-1) node[pos=1,below] {$A'$};
%% node psi
\node[small_morphism] (psi) at (-0.3,-0.45) {\tiny $\psi$};
\draw[overline] (psi) -- (-0.3,1) node[above] {$C$};
\draw (psi) -- (-0.3,-1) node[below] {$C'$};
%\draw (psi) to[out=-170,in=30] (-0.8,-0.6); %psi-1
\draw (psi) to[out=-170,in=-30] (-0.93,0.4); %psi-2
\draw[overline] (psi) to[out=10,in=150] (0.8,-0.6);
%% put numbers 1,3
%\node at (-1,-0.7) {1};
%\node at (1,0.7) {3};
\end{tikzpicture}
=
%% final %%%%%%%%%%%%%%%%%%%%%%%%%%%%%%%%%%%%%%%
\begin{tikzpicture}
%% regular line goes behind
\draw[regular] (-0.86,0.5) to[out=-30,in=120] (0.86,-0.3);
\node at (-1,0.6) {\small 2};
\node at (1,-0.4) {\small 4};
%% node phi
\node[small_morphism] (ph) at (0,-0.3) {\tiny $\vphi$};
%\draw (ph) -- +(90:1cm) node[pos=1,above] {$A$}; %draw last
\draw (ph) -- (0,-1) node[pos=1,below] {$A'$};
%% node psi
\node[small_morphism] (psi) at (-0.3,-0.6) {\tiny $\psi$};
%% loop
\draw (psi) to[out=150,in=150] (0.3,0);
\draw[overline] (0.3,0) to[out=-30,in=-30] (psi);
\draw[overline] (psi) -- (-0.3,1) node[above] {$C$};
\draw (psi) -- (-0.3,-1) node[below] {$C'$};
\draw[overline] (ph) -- (0,1) node[pos=1,above] {$A$};
\end{tikzpicture}
= F(\psi \cdot \vphi)
\end{align*}
The first and last equalities are by definition.
The second and third equalities follow from \lemref{l:summation}
in order to perform the ``sliding lemma'' as in
proof of \lemref{lem:sliding}.
The fourth equality is by isotopy.
The fifth equality applies the similar method of the
second and third equalities.
Hence, the equivalence does respect the module structure,
and we are done.
\end{proof}

Finally, we state the main result of this section:

\begin{theorem}\label{thm:cy_modular}
Let $\cA$ be modular.
Let $N$ be a connected compact oriented surface with
$b$ boundary components and genus $g$,
and let $S_{0,b} = S^2 \backslash (\DD^2)^{\sqcup b}$ be a genus 0 surface
with $b$ boundary components.
Then
\[
  \ZCY(N) \simeq \ZCY(S_{0,b})
\]
In particular,
$\ZCY(\text{closed surface}) \simeq \ZCY(S^2) \simeq \Vect$
and $\ZCY(\text{once-punctured surface}) \simeq \ZCY(\DD^2) \simeq \cA$.
\end{theorem}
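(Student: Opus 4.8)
The plan is to trade handles for disks one at a time, using the handle reduction of \thmref{thm:modular_module} together with excision (\thmref{t:excision-skein}); throughout I work with $\ZCYsk$, moving between $\ZCY$ and $\ZCYsk$ via the equivalence of \thmref{t:sk-equiv}. Write $S_{g,b}$ for the compact oriented genus-$g$ surface with $b$ boundary circles, so $N \cong S_{g,b}$, $S_{0,1} = \disk$ and $S_{1,1} = \punctorus$ (and recall that $\ZCY$ only sees the interiors, so all surfaces below are implicitly open). The topological input is that, for $g \ge 1$, removing an open disk from the interior of $S_{g-1,b}$ yields $S_{g-1,b+1}$, that gluing $\punctorus$ to $S_{g-1,b+1}$ along the new circle (with a collar) yields $S_{g,b}$, and that capping that same circle with a disk instead gives back $S_{g-1,b}$. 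Thus
\[
S_{g,b} = S_{g-1,b+1} \cup_{S^1} \punctorus,
\qquad
S_{g-1,b} = S_{g-1,b+1} \cup_{S^1} \disk,
\]
both along a collared circle, which exhibits $\ZCYsk(S_{g-1,b+1})$ as a right $\ZCYsk(\Ann)$-module category (insertion at the collar) and $\ZCYsk(\punctorus)$, $\ZCYsk(\disk)$ as left $\ZCYsk(\Ann)$-module categories (insertion at their boundary), as in \prpref{p:collared_module}.

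First I would apply excision (\thmref{t:excision-skein} with seam $P = S^1$, so $P \times I = \Ann$) to the first decomposition:
\[
\ZCYsk(S_{g,b}) \simeq \ZCYsk(S_{g-1,b+1}) \boxtimes_{\ZCYsk(\Ann)} \ZCYsk(\punctorus).
\]
By \thmref{thm:modular_module}, modularity of $\cA$ gives an equivalence $\ZCYsk(\punctorus) \simeq \ZCYsk(\disk)$ of left $\ZCYsk(\Ann)$-module categories. Since the balanced tensor product depends only on its arguments up to module equivalence (immediate from the defining universal property in terms of balanced functors), I may substitute:
\[
\ZCYsk(S_{g,b}) \simeq \ZCYsk(S_{g-1,b+1}) \boxtimes_{\ZCYsk(\Ann)} \ZCYsk(\disk).
\]
Applying excision once more, now to $S_{g-1,b} = S_{g-1,b+1} \cup_{S^1} \disk$, identifies the right-hand side with $\ZCYsk(S_{g-1,b})$. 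Hence $\ZCYsk(S_{g,b}) \simeq \ZCYsk(S_{g-1,b})$, and inducting on $g$ (base case $g=0$ trivial) gives $\ZCYsk(S_{g,b}) \simeq \ZCYsk(S_{0,b})$, hence $\ZCY(N) \simeq \ZCY(S_{0,b})$.

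For the consequences: a closed surface is $S_{g,0}$, and the argument above applies verbatim when $b = 0$ (removing an interior disk from $S_{g-1,0}$), so $\ZCY(S_{g,0}) \simeq \ZCY(S_{0,0}) = \ZCY(S^2)$, which equals $\ZMu(\cA) \simeq \Vect$ by \thmref{t:sphere-muger} and modularity; a once-punctured genus-$g$ surface is (the interior of) $S_{g,1}$, so $\ZCY(S_{g,1}) \simeq \ZCY(S_{0,1}) = \ZCY(\disk) \simeq \cA$. The real obstacle was already overcome in establishing \thmref{thm:modular_module} — the point being that trading a handle for a disk is an equivalence of $\ZCYsk(\Ann)$-module categories, not merely of abelian categories. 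What remains here is only the bookkeeping of matching left versus right module structures at each gluing circle (which genuinely matters, since $\ZCYsk(\Ann) \simeq \ZA$ is not braided in general), so that excision and the substitution apply cleanly.
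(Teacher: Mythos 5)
Your proposal is correct and is essentially the paper's own argument: decompose $N$ as a once-punctured genus-$(g-1)$ surface glued to $\punctorus$ along an annulus, apply excision, substitute $\ZCYsk(\punctorus)\simeq\ZCYsk(\disk)$ as $\ZCYsk(\Ann)$-modules via \thmref{thm:modular_module}, reassemble by excision, and induct on the genus. Your version just spells out the left/right module bookkeeping and the base case a bit more explicitly than the paper does.
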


\begin{proof}
Suppose $g>0$, so that we can present $N$
as a connect sum $N' \# \torus$,
where $N'$ is a connected compact oriented surface with
$b$ boundary components and genus $g-1$.
We think of the connect sum as
$N = N_0' \cup_{\Ann} (\punctorus)$,
where $N_0' = N' \backslash \{pt\}$
is a punctured surface.
Then by \thmref{t:excision-skein} and \thmref{thm:modular_module},
$\ZCY(N) \simeq \ZCY(N_0') \boxtimes_{\ZCY(\Ann)} \ZCY(\punctorus)
\simeq \ZCY(N_0') \boxtimes_{\ZCY(\Ann)} \ZCY(\DD^2)
\simeq \ZCY(N_0' \cup_{\Ann} \DD^2) = \ZCY(N')$.
Thus, by induction on the genus, we have
$\ZCY(N) \simeq \ZCY(S_{0,b})$.

The final statements follow from the $b=0,1$ cases.
%and \corref{cor:muger}.
\end{proof}

\subsection{Closed Surfaces}
\label{s:closed-surfaces}
\par \noindent

Let $\cA$ be modular.

Let us consider $\ZCY(N)$ for a closed surface
in more detail.
By \thmref{thm:cy_modular},
$\ZCY(N) \simeq \Vect$,
so a natural question to ask is: what do the simple objects,
i.e. the objects corresponding to $\kk \in \Vect$,
look like in $\ZCY(N)$?

\begin{definition}
Let $N$ be a surface,
and let $\xi$ be an embedded circle in $N$.
Let $E = i_*(\one) \in \hatZCY(N)$ be the empty configuration.
Define $P_\xi \in \End_{\hatZCY(N)}(E)$ to be the morphism
defined by the graph $\xi \times 1/2 \subset N \times [0,1]$,
colored with $1/\cD \cdot$ regular coloring.

Furthermore, we may extend this definition to
multicurves,
i.e. a finite collection of pairwise disjoint embedded circles
(see \ocite{farb-margalit}*{Section 1.2.4}).
Let $\Xi = \{\xi_i\}$ be a multicurve.
Then we define $P_\Xi = \prod P_{\xi_i}$.
\label{d:proj-multicurve}
\end{definition}

Note that $P_\Omega$ (with $A=\one$) from \eqnref{e:P-Omega}
in the proof of \thmref{t:elliptic-modular}
is $P_\xi$ for some loop.

Clearly, if two multicurves $\Xi,\Xi'$
are isotopic, then $P_{\Xi} = P_{\Xi'}$.
In particular, if a multicurve $\Xi$ has two components
$\xi_1,\xi_2 \in \Xi$ that are isotopic,
then $P_\Xi = P_{\Xi'}$, where $\Xi' = \Xi \backslash \{\xi_1\}$.

\begin{definition}
We say a multicurve in a closed surface $N$ is \emph{full}
if performing surgery along it
(removing a neighborhood and gluing in disks)
results in a sphere.
\label{d:multicurve-full}
\end{definition}

For example, each set of attaching curves in a Heegaard diagram
is full.
It is clear by definition that a full multicurve always looks,
under suitable diffeomorphism of $N$, as follows:
\[
\begin{tikzpicture}
\draw (0,-0.6)
	to[out=0,in=180] (2,-1.2)
	to[out=0,in=-90] (3.5,0)
	to[out=90,in=0] (2,1.2)
	to[out=180,in=0] (0,0.6)
	to[out=180,in=0] (-2,1.2)
	to[out=180,in=90] (-3.5,0)
	to[out=-90,in=180] (-2,-1.2)
	to[out=0,in=180] (0,-0.6);
\draw (-2.7,0.1) to[out=-45,in=-135] (-1.3,0.1);
\draw (-2.6,0) to[out=45,in=135] (-1.4,0);
\draw (2.7,0.1) to[out=-135,in=-45] (1.3,0.1);
\draw (2.6,0) to[out=135,in=45] (1.4,0);
%%curves
\draw (-2,-0.2) to[out=180,in=180] (-2,-1.2);
\draw[dotted] (-2,-0.2) to[out=0,in=0] (-2,-1.2);
\draw (2,-0.2) to[out=180,in=180] (2,-1.2);
\draw[dotted] (2,-0.2) to[out=0,in=0] (2,-1.2);
\end{tikzpicture}
\]

\begin{proposition}
Let $N$ be a closed surface,
and let $E$ be the empty configuration (as in \defref{d:proj-multicurve}).
For any full multicurve $\Xi$,
the object $(E,P_\Xi) \in \ZCY(N)$ is simple.
In particular, all such objects are isomorphic.
\label{p:simple-object-multicurve}
\end{proposition}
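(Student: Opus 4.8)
The plan is to pin down $(E,P_\Xi)$ as the unique simple object of $\ZCY(N)$ by probing it with the functor $\Fsk{H}$ of a carefully chosen handlebody $H$. Working under the equivalence $\ZCY(N)\simeq\ZCYsk(N)$, I regard $(E,P_\Xi)$ as the object $(\EE,P_\Xi)\in\ZCYsk(N)=\Kar(\hatZCYsk(N))$, where $\EE$ is the empty boundary value and $P_\Xi=\prod_i P_{\xi_i}$ is realized in $N\times I$ by the loops $\Xi\times\{1/2\}$ with the regular coloring divided by $\cD^{g}$. First I would reduce to the case that $\Xi$ has no two isotopic components: isotopic components give identical idempotents $P_{\xi_i}$, so deleting them does not change $(\EE,P_\Xi)$. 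An Euler characteristic count then forces $\Xi$ to have exactly $g$ components, where $g$ is the genus of $N$, and $\Xi$ is a complete cut system. Turning the surgery description of fullness (\defref{d:multicurve-full}) upside down, I build from $N\times I$, by attaching $3$-dimensional $2$-handles along $\Xi\times\{1\}$ and capping the resulting $S^2$ boundary component with a $3$-handle, a genus-$g$ handlebody $H$ with $\del H=N$ in which each $\xi_i$ bounds a meridian disk $D_i$, the family $\{D_i\}$ being a complete cut system (so that $H$ cut along $\bigcup_i D_i$ is a ball $B^3$, with $\xi_i$ isotopic in $N=\del H$ to a small pushoff of $\del D_i$).

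Next I would compute $\Fsk{H}((\EE,P_\Xi))$ using \prpref{p:zcy-surface-functors-skein}: by the definition of the functor on the Karoubi envelope this is $P_\Xi\cdot\ZCYsk(H;\EE)$, where $P_\Xi$ acts by gluing the loops $\Xi\times I$ (with regular coloring over $\cD^{g}$) onto $H$ near $\del H$. Any colored ribbon graph in $H$ can be isotoped transverse to all the $D_i$; inserting the resolution of the identity from \lemref{l:summation} at a collar of each $D_i$ expresses $\ZCYsk(H;\EE)$ as a sum over tuples $(j_1,\dots,j_g)\in\Irr(\cA)^{g}$ of contributions in which the graph meets $D_i$ in a single $X_{j_i}$-colored strand. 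Each factor $P_{\xi_i}$ is a regular-colored loop parallel to $\del D_i$, hence encircles that strand, so \lemref{l:killing} (the Killing lemma, available since $\cA$ is modular) kills all terms with $j_i\neq\one$. What survives is spanned by graphs meeting every $D_i$ trivially, i.e.\ graphs isotopic into $H\setminus\bigcup_i D_i\cong B^3$, and $\ZCYsk(B^3;\EE)\cong\kk$ by Reshetikhin--Turaev evaluation. Hence $\Fsk{H}((\EE,P_\Xi))$ is one-dimensional. Moreover $\Fsk{H}(\EE)=\ZCYsk(H;\EE)\neq 0$, since the empty skein $\emptyskein{H}$ pairs non-trivially with $\emptyskein{\ov{H}}$ (Example~\ref{x:skein-pairing-handlebody}, \corref{c:pairing-equiv}).

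Finally I would conclude by comparison with $\Vect$. By \thmref{thm:cy_modular}, $\ZCY(N)\simeq\Vect$, so it has a unique simple object $U$ and every object is isomorphic to $U^{\oplus n}$. Since $\Fsk{H}$ is $\kk$-linear and $\Fsk{H}(\EE)\neq 0$, necessarily $\Fsk{H}(U)\neq 0$; in particular $\EE\neq 0$, so $\EE\cong U^{\oplus k}$ with $k\ge 1$, and $(\EE,P_\Xi)$, being the image of the idempotent $P_\Xi\in\End(\EE)$, is isomorphic to $U^{\oplus m}$ with $m=\operatorname{rank}(P_\Xi)$. Applying $\Fsk{H}$ gives $m\cdot\dim_\kk\Fsk{H}(U)=\dim_\kk\Fsk{H}((\EE,P_\Xi))=1$, which forces $m=1$ (and incidentally $\dim_\kk\Fsk{H}(U)=1$). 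Thus $(E,P_\Xi)\cong U$ is simple, and since $U$ is the unique simple object, all objects $(E,P_\Xi)$ arising from full multicurves are mutually isomorphic.

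The step I expect to be the main obstacle is the geometric heart of the second paragraph: verifying that the handlebody $H$ constructed from $\Xi$ really does have $\Xi$ as a complete system of meridian curves, and checking carefully that the boundary-parallel loop $P_{\xi_i}$ links the $D_i$-strand with exactly the configuration required by the Killing lemma, which will use the sliding lemma (\lemref{lem:sliding}) to normalize positions. Once that is in place, the rest is routine bookkeeping with \lemref{l:summation} and the Verlinde-type identification of $\ZCYsk(H;\EE)$ together with the semisimplicity furnished by \thmref{thm:cy_modular} (and \corref{c:skein_ss}).
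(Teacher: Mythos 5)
Your proof is correct, but it takes a different route from the paper's. The paper works entirely inside $\End_{\ZCY(N)}((E,P_\Xi))$: it observes that any endomorphism of the empty configuration is represented by a single coupon with loops running over a generating set of $H_1(N)$, and then uses the regular-colored loops of $P_\Xi$ together with the killing lemma (\lemref{l:killing}) and the sliding lemma (\lemref{lem:sliding}) to cut all those loops, shrinking the skein into a ball; hence $\End((E,P_\Xi))\cong\kk$, which gives simplicity via semisimplicity of $\ZCY(N)$ (\corref{c:skein_ss}). You instead probe the object with the fiber functor $\Fsk{H}$ of the handlebody $H$ determined by $\Xi$, show $\dim\Fsk{H}((E,P_\Xi))=1$ by the same killing-lemma mechanism applied to meridian disks, and then invoke the global equivalence $\ZCY(N)\simeq\Vect$ from \thmref{thm:cy_modular} to force multiplicity one. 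The trade-off: the paper's argument needs only semisimplicity, not the full identification with $\Vect$, and avoids any discussion of the topology of $H$; your argument front-loads the geometric work (constructing $H$, checking $\Xi$ is a cut system bounding a complete disk system — which, as you note, is the delicate step, though the paper itself asserts the normal form of a full multicurve without proof) but yields as a by-product the statement $\ZCYsk(H;(E,P_\Xi))\cong\kk$, which the paper only establishes afterwards in \lemref{l:skein-add-2-handle}, \lemref{l:skein-add-handlebody} and \corref{c:skein-3mfld-1dim}. One small simplification available to you: in the final count $m\cdot\dim_\kk\Fsk{H}(U)=1$ both factors are nonnegative integers, so $m=1$ follows at once and the separate verification that $\Fsk{H}(U)\neq 0$ is not needed.
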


\begin{proof}
It is easy to see that an element of
$\End_{\ZCY(N)}(E)$ is always equivalent
to a sum of graphs as follows:

\begin{equation}
\begin{tikzpicture}
\draw (0,-0.8)
	to[out=0,in=180] (2,-1.4)
	to[out=0,in=-90] (3.7,0)
	to[out=90,in=0] (2,1.4)
	to[out=180,in=0] (0,0.8)
	to[out=180,in=0] (-2,1.4)
	to[out=180,in=90] (-3.7,0)
	to[out=-90,in=180] (-2,-1.4)
	to[out=0,in=180] (0,-0.8);
\draw[opacity=0.4] (0,-0.6)
	to[out=0,in=180] (2,-1.2)
	to[out=0,in=-90] (3.5,0)
	to[out=90,in=0] (2,1.2)
	to[out=180,in=0] (0,0.6)
	to[out=180,in=0] (-2,1.2)
	to[out=180,in=90] (-3.5,0)
	to[out=-90,in=180] (-2,-1.2)
	to[out=0,in=180] (0,-0.6);
\draw[opacity=0.4] (-2.7,0.1) to[out=-55,in=-125] (-1.3,0.1);
\draw[opacity=0.4] (-2.6,0) to[out=55,in=125] (-1.4,0);
\draw (-2.5,0.1) to[out=-45,in=-135] (-1.5,0.1);
\draw (-2.4,0) to[out=45,in=135] (-1.6,0);
\draw[opacity=0.4] (2.7,0.1) to[out=-125,in=-55] (1.3,0.1);
\draw[opacity=0.4] (2.6,0) to[out=125,in=55] (1.4,0);
\draw (2.5,0.1) to[out=-135,in=-45] (1.5,0.1);
\draw (2.4,0) to[out=135,in=45] (1.6,0);
%%phi
\node[small_morphism] (ph) at (0,-0.2) {\smallerer $\vphi$};
\draw[opacity=0.7] (ph)
	to[out=150,in=0] (-2,0.7)
	to[out=180,in=90] (-3,0)
	to[out=-90,in=180] (-2,-0.7)
	to[out=0,in=-170] (ph);
\draw[opacity=0.7] (-1.2,-1) to[out=0,in=-150] (ph) to[out=180,in=40] (-1.7,-0.15);
\draw[opacity=0.7, densely dotted] (-1.7,-0.15) to[out=-140,in=-180] (-1.2,-1);
\draw[opacity=0.7] (ph)
	to[out=30,in=180] (2,0.7)
	to[out=0,in=90] (3,0)
	to[out=-90,in=0] (2,-0.7)
	to[out=180,in=-10] (ph);
\draw[opacity=0.7] (1.2,-1) to[out=180,in=-30] (ph) to[out=0,in=140] (1.7,-0.15);
\draw[opacity=0.7, densely dotted] (1.7,-0.15) to[out=-40,in=0] (1.2,-1);
\end{tikzpicture}
\end{equation}
(each loop starting and ending at the node
represents a basis element of $H_1$.)

Then in $\End_{\ZCY(N)}((E,P_\Xi))$,
by killing and sliding lemma,
the graph can be ``shrunk'' into a ball:
\begin{equation}
%\begin{equation}
\frac{1}{\cD^2}\;\;
\begin{tikzpicture}
\node[small_morphism] (ph) at (0,0) {\smallerer $\vphi$};
%%part of outer circle
\draw[regular,opacity=0.7] (-1.7,-0.5) to[out=-80,in=80] (-1.7,-1);
%%inner circle
\draw[regular,opacity=0.7] (-1.9,-0.5)
	to[out=120,in=0] (-2,-0.45)
	to[out=180,in=180] (-2,-1.05)
	to[out=0,in=-120] (-1.9,-1);
%%part of inner circle
\draw[opacity=0.3, regular] (-1.9,-0.5) to[out=-60,in=60] (-1.9,-1);
%%vphi small strand
\draw[opacity=0.3] (-1.4,-0.4) to[out=-90,in=150] (-1.2,-0.8);
%%vphi big strand
\draw[opacity=0.7,thin_overline={1}] (ph)
	to[out=170,in=0] (-2,0.8)
	to[out=180,in=90] (-3.1,0)
	to[out=-90,in=180] (-2,-0.8)
	to[out=0,in=-170] (ph);
%outer circle
\draw[regular,opacity=0.7,thin_overline={1}] (-1.7,-0.5)
	to[out=110,in=0] (-2,-0.15)
	to[out=180,in=180] (-2,-1.35)
	to[out=0,in=-110] (-1.7,-1);
%%outer surface
\draw (0,0.8)
	to[out=180,in=0] (-2,1.4)
	to[out=180,in=90] (-3.7,0)
	to[out=-90,in=180] (-2,-1.4)
	to[out=0,in=180] (0,-0.8);
%%inner surface
\draw[opacity=0.4] (0,0.4)
	to[out=180,in=0] (-2,1)
	to[out=180,in=90] (-3.3,0)
	to[out=-90,in=180] (-2,-1)
	to[out=0,in=180] (0,-0.4);
%%inner surface
\draw[opacity=0.4] (-2.9,0.1) to[out=-80,in=180]
	(-2,-0.5) to[out=0,in=-100] (-1,0.1);
\draw[opacity=0.4] (-2.87,0) to[out=70,in=180]
	(-2,0.5) to[out=0,in=110] (-1.03,0);
%%outer surface
\draw (-2.5,0.1) to[out=-45,in=-135] (-1.5,0.1);
\draw (-2.4,0) to[out=45,in=135] (-1.6,0);
%%vphi small strand
\draw[opacity=0.7] (ph) to[out=180,in=90] (-1.4,-0.4);
%\draw (-1.4,-0.4) to[out=-90,in=150] (-1.2,-0.8);
\draw[opacity=0.7] (-1.2,-0.8) to[out=-30,in=-120] (ph);
\end{tikzpicture}
%%%%%%%%%%%%%%%%%%%%%%%%%%%%%%%%%%%%%%%%%%%%%%%%%%%%
\;\;=
\frac{1}{\cD^2}
\;\;
%%%%%%%%%%%%%%%%%%%%%%%%%%%%%%%%%%%%%%%%%%%%%%%%%%%%
\begin{tikzpicture}
\node[small_morphism] (ph) at (0,0) {\smallerer $\vphi$};
%%part of outer circle
\draw[regular,opacity=0.7] (-1.7,-0.5) to[out=-80,in=80] (-1.7,-1);
%%part of inner circle
%\draw[opacity=0.3, regular] (-1.9,-0.5) to[out=-60,in=60] (-1.9,-1);
%%vphi big strand
\draw[opacity=0.7] (ph)
	to[out=170,in=0] (-2,0.8)
	to[out=180,in=90] (-3.1,0)
	to[out=-90,in=180] (-2,-0.8)
	to[out=0,in=-170] (ph);
%%inner circle
\draw[densely dotted,opacity=0.7,thin_overline={1}] (-1.95,-0.75)
	to[out=100,in=-0] (-2.025,-0.7)
	to[out=180,in=90] (-2.1,-0.8)
	to[out=-90,in=180] (-2.025,-0.9)
	to[out=0,in=-100] (-1.95,-0.85);
%%block vphi big strand
\draw[color=white, line width=3] (-1.4,-0.7) -- (-1.5,-0.5);
%%vphi small strand
\draw[opacity=0.7] (ph)
	to[out=180,in=30] (-1.4,-0.5)
	to[out=-150,in=-150] (-1.3,-0.75)
	to[out=30,in=-150] (ph);
%outer circle
\draw[regular,opacity=0.7,thin_overline={1}] (-1.7,-0.5)
	to[out=110,in=0] (-2,-0.15)
	to[out=180,in=180] (-2,-1.35)
	to[out=0,in=-110] (-1.7,-1);
%%outer surface
\draw (0,0.8)
	to[out=180,in=0] (-2,1.4)
	to[out=180,in=90] (-3.7,0)
	to[out=-90,in=180] (-2,-1.4)
	to[out=0,in=180] (0,-0.8);
%%inner surface
\draw[opacity=0.4] (0,0.4)
	to[out=180,in=0] (-2,1)
	to[out=180,in=90] (-3.3,0)
	to[out=-90,in=180] (-2,-1)
	to[out=0,in=180] (0,-0.4);
%%inner surface
\draw[opacity=0.4] (-2.9,0.1) to[out=-80,in=180]
	(-2,-0.5) to[out=0,in=-100] (-1,0.1);
\draw[opacity=0.4] (-2.87,0) to[out=70,in=180]
	(-2,0.5) to[out=0,in=110] (-1.03,0);
%%outer surface
\draw (-2.5,0.1) to[out=-45,in=-135] (-1.5,0.1);
\draw (-2.4,0) to[out=45,in=135] (-1.6,0);
\end{tikzpicture}
%%%%%%%%%%%%%%%%%%%%%%%%%%%%%%%%%%%%%%%%%%%%%%%%%%%%
\;\;=
\frac{1}{\cD}
\;\;
%%%%%%%%%%%%%%%%%%%%%%%%%%%%%%%%%%%%%%%%%%%%%%%%%%%%
\begin{tikzpicture}
\node[small_morphism] (ph) at (0,0) {\smallerer $\vphi$};
%%part of outer circle
\draw[regular,opacity=0.7] (-1.7,-0.5) to[out=-80,in=80] (-1.7,-1);
%%part of inner circle
%\draw[opacity=0.3, regular] (-1.9,-0.5) to[out=-60,in=60] (-1.9,-1);
%%vphi small strand
\draw[opacity=0.7] (ph)
	to[out=180,in=120] (-0.5,-0.2)
	to[out=-60,in=-150] (ph);
%outer circle
\draw[regular,opacity=0.7,thin_overline={1}] (-1.7,-0.5)
	to[out=110,in=0] (-2,-0.15)
	to[out=180,in=180] (-2,-1.35)
	to[out=0,in=-110] (-1.7,-1);
%%outer surface
\draw (0,0.8)
	to[out=180,in=0] (-2,1.4)
	to[out=180,in=90] (-3.7,0)
	to[out=-90,in=180] (-2,-1.4)
	to[out=0,in=180] (0,-0.8);
%%inner surface
\draw[opacity=0.4] (0,0.4)
	to[out=180,in=0] (-2,1)
	to[out=180,in=90] (-3.3,0)
	to[out=-90,in=180] (-2,-1)
	to[out=0,in=180] (0,-0.4);
%%inner surface
\draw[opacity=0.4] (-2.9,0.1) to[out=-80,in=180]
	(-2,-0.5) to[out=0,in=-100] (-1,0.1);
\draw[opacity=0.4] (-2.87,0) to[out=70,in=180]
	(-2,0.5) to[out=0,in=110] (-1.03,0);
%%outer surface
\draw (-2.5,0.1) to[out=-45,in=-135] (-1.5,0.1);
\draw (-2.4,0) to[out=45,in=135] (-1.6,0);
\end{tikzpicture}
%\end{equation}
\end{equation}

Thus, any skein in $\End_{\ZCY(N)}((E,P_\Xi))$
is represented by a multiple of the empty graph in
$N \times [0,1]$,
hence $\End_{\ZCY(N)}((E,P_\Xi)) \cong \kk$.
\end{proof}

\begin{lemma}
\label{l:skein-add-2-handle}
Let $M$ be a 3-manifold with boundary component
$N \subseteq \del M$.
%Let $\xi \in \Xi$ be a component of the multicurve.
Let $\Xi \subset N$ be a multicurve,
and let $M_\xi$ be the 3-manifold obtained from $M$
by adding 2-handles along $\Xi$.
Then the map
$\ZCYsk(M; E_M) \to \ZCYsk(M_\xi; E_{M_\xi})$
induce by inclusion
restricts to an isomorphism
\begin{equation}
\ZCYsk(M; (E_M,P_\xi)) = P_\xi \cdot \ZCYsk(M; E_M)
\cong \ZCYsk(M_\xi; E_{M_\xi})
\label{e:skein-add-2-handle}
\end{equation}
where $E_M, E_{M_\xi}$ are the empty configurations in $\del M, \del M_\xi$.
\end{lemma}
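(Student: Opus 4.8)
\textbf{Plan for the proof of \lemref{l:skein-add-2-handle}.}
The plan is to observe that attaching a 2-handle to $M$ along a component $\xi_i$ of $\Xi$ is exactly the elementary cornered cobordism $\cH_2$, so the induced map on skein modules is governed by \defref{d:zcyksk-elementary} in the $k=2$ case. Recall that there, attaching $\cH_2$ to a 3-manifold sends a graph $\Gamma$ to $\Gamma \cup \Gamma'$, where $\Gamma'$ is a copy of the belt sphere's core circle colored with the regular coloring (\emph{not} divided by $\cD$); in our situation the belt sphere of the 2-handle attached along $\xi_i$ is a parallel copy of $\xi_i$ pushed slightly into $M_{\xi}$. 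Thus, up to the factor $\cD$ relating the normalizations, the composite
\[
\ZCYsk(M; E_M) \xrightarrow{P_{\xi_i}} \ZCYsk(M; E_M) \to \ZCYsk(M_{\xi_i}; E_{M_{\xi_i}})
\]
is (a multiple of) the map $\ZCYsk(\cH_2; \cdot)$, after identifying $M$ with $\del B^2 \times B^2$-collar-glued portion and noting $M_{\xi_i}$ is obtained by filling. Iterating over all components of $\Xi$ (using that the $P_{\xi_i}$ commute, as in the discussion after \defref{d:proj-multicurve}, and that the handle attachments are disjoint so can be reordered freely — cf. the reordering move in \defref{d:handle-decomp-modification}) reduces the general case to a single curve.

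First I would set up notation carefully: fix a collar neighborhood $\xi_i \times [-1,1] \times [0,\eps)$ of $\xi_i$ in a collar of $N$, so that the 2-handle $\cH_2 = B^2 \times B^2$ is attached with attaching region $\del B^2 \times B^2 \hookrightarrow \partial M$ a neighborhood of $\xi_i$; then $M_{\xi_i} = M \cup \cH_2$, and its boundary in that region is the co-core $B^2 \times \del B^2$, whose belt sphere $\{0\} \times \del B^2$ together with a neighborhood gives a solid torus $B^2 \times \del B^2 \hookrightarrow M_{\xi_i}$ just as in \defref{d:zcyksk-elementary}. I would then observe that the inclusion-induced map $\VGraph(M; E_M) \to \VGraph(M_{\xi_i}; E_{M_{\xi_i}})$ descends to skein modules (a graph null w.r.t.\ a ball $D \subset M$ maps to one null w.r.t.\ $D \subset M_{\xi_i}$), and that under it $P_{\xi_i} \cdot [\Gamma] = \frac{1}{\cD}[\Gamma \cup (\xi_i \times 1/2, \text{regular})]$ maps to $\frac{1}{\cD}[\Gamma \cup \Gamma']$ where $\Gamma'$ is exactly the core of the above solid torus with regular coloring. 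Comparing with \defref{d:zcyksk-elementary} ($k=2$), this says the restriction of the inclusion map to $P_{\xi_i} \cdot \ZCYsk(M;E_M)$ agrees with $\cD^{-1} \cdot \ZCYsk(\cH_2; N_{\xi_i})$ on the image of the projection.

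To conclude that the restricted map is an \emph{isomorphism}, I would construct an inverse. Given a graph $\Gamma$ in $M_{\xi_i}$, use an isotopy to push $\Gamma$ off the co-core $\{0\} \times B^2$ of $\cH_2$ (possible by general position since the co-core is codimension 1 and we may isotope to be transverse, then apply \lemref{l:summation} along the co-core just as in the $k=3$ case of \defref{d:zcyksk-elementary} / \eqnref{e:ups-isom-01}), so that $\Gamma$ lies in the complement $M_{\xi_i} \setminus \cH_2 \cong M$; this gives a graph in $M$, and I would show the resulting map $\ZCYsk(M_{\xi_i};E_{M_{\xi_i}}) \to \ZCYsk(M;E_M)$ lands in the image of $P_{\xi_i}$ and is a two-sided inverse (up to the $\cD$ factor) — the composite one way adds then removes a regular-colored belt circle, giving back $P_{\xi_i}$ via \lemref{l:dashed_circle}-type reasoning together with the sliding lemma \lemref{lem:sliding}; the composite the other way is the identity since pushing off and filling back is an isotopy. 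Well-definedness of the push-off (independence of the isotopy) follows from \lemref{lem:sliding} and \lemref{l:summation-alpha-tunnel}, exactly as in \lemref{lem:alpha_independence} and the proof of \lemref{l:ups-isom}. The main obstacle I anticipate is precisely this well-definedness / invariance bookkeeping — verifying that moving a graph across the co-core and then reassembling is independent of all choices of isotopy, which is the same kind of argument that took real work in \lemref{l:ups-isom} and \lemref{lem:alpha_independence}; the normalization factor of $\cD$ also needs to be tracked with care so that the stated identity \eqnref{e:skein-add-2-handle} comes out with the correct (trivial) constant after the $P_{\xi}$ normalization is absorbed. Finally, I would remark that by \prpref{p:zcysk-corner-indep} the map $\ZCYsk(\cH_2;\cdot)$ is independent of how we realize the handle attachment, so nothing depends on the collar or the presentation of $\Xi$.
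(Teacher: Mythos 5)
Your endgame (construct the inverse by pushing graphs out of the handle, and use the sliding lemma to handle the ambiguity) is the right idea and is essentially what the paper does, but your set-up rests on a dimensional misidentification that breaks several of the concrete steps. The 2-handles in this lemma are \emph{3-dimensional} handles $B^2 \times B^1$, glued to $M$ along an annular neighborhood $\del B^2 \times B^1$ of $\xi_i$ in the surface $N = \del M$; this enlarges $M$ to $M_\Xi$ with $M \subset M_\Xi$, and the map in the statement is the literal pushforward of graphs under this inclusion. They are \emph{not} the 4-dimensional elementary cornered cobordisms $\cH_2 = B^2 \times B^2$ of \defref{d:zcyksk-elementary}: that operation is attached along a framed solid torus and performs surgery, producing a 3-manifold that does not contain $M$ and whose boundary equals $\del M$ (whereas $\del M_{\xi_i}$ differs from $\del M$). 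So your claim that the composite $P_{\xi_i}$ followed by inclusion ``is a multiple of $\ZCYsk(\cH_2;\cdot)$'' does not typecheck — the targets are different 3-manifolds — and the geometric picture you build on it is wrong: the belt sphere of the actual handle is $\{0\}\times S^0$ (two points), not a circle, and its co-core is the arc $\{0\}\times B^1$, which has codimension 2, not 1. Consequently the proposed mechanism for the inverse, ``isotope $\Gamma$ transverse to the co-core and apply \lemref{l:summation} along it as in the $k=3$ case,'' does not apply: one cannot cut along a properly embedded arc that way.

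The correct mechanism, and the one the paper invokes, is that of \prpref{prp:punctured_glue}: the co-core arc plays the role of the ``pole,'' its neighborhood is the whole handle $B^2\times B^1$, and a graph in $M_\Xi$ is isotoped out of that neighborhood through the attaching annulus into $M$. The only ambiguity is which way a strand is pushed around the co-core arc; two choices differ by a handle slide across the meridian $\del B^2 \times \{t\}$ of the co-core, which is isotopic to $\xi_i \times 1/2$, i.e.\ precisely the regular-colored circle carried by $P_{\xi_i}$. The sliding lemma (\lemref{lem:sliding}) then makes the push-off well defined on the image of $P_{\xi_i}$, and \lemref{l:dashed_circle} disposes of the extra circle in the composite the other way. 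If you replace your first paragraph and the $\cH_2$/belt-sphere set-up with this picture (and cite \prpref{prp:punctured_glue} rather than \defref{d:zcyksk-elementary}), the remainder of your argument — commuting the $P_{\xi_i}$ to reduce to one curve, checking both composites, and the \lemref{lem:alpha_independence}-style bookkeeping for independence of the isotopy — goes through as you describe.
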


\begin{proof}
The inverse is given by
the operation that takes a graph $\Gamma$ in $M_\xi$
and gives a graph in $M$ by pushing (i.e. isotoping) the graph
out of the 2-handle. In general, the resulting graph may be
dependent on extra choices.
We claim that this operation gives a well-defined isomorphism
\eqnref{e:skein-add-2-handle}.

This claim is similar to \prpref{prp:punctured_glue}.
Indeed, we may consider the ``pole'' here to be
a cocore of the 2-handle in $M_\xi$.
Then $P_\xi$ corresponds to
the action $\pi$ in \prpref{prp:punctured_glue}.
We leave the details to the reader.
\end{proof}

\begin{lemma}
Let $M,N,\Xi$ be as in \lemref{l:skein-add-2-handle}.
Let $M'$ be a handlebody (i.e. a 3-ball with 1-handles)
such that $\del M' \simeq \overline{N}$.
Furthermore, let the diffeomorphism $\del M' \simeq \overline{N}$
send $\Xi$ to null-homotopic curves in $M'$.
Thus, $M_\Xi^* := M \cup_N M' = M_\Xi \cup 3-\text{balls}$.
Then
\[
\ZCYsk(M, (E_M,P_\Xi)) \cong \ZCYsk(M_\Xi^*, E_{M_\Xi^*})
\]
the map induced by inclusion $M \subset M_\Xi^*$.
\label{l:skein-add-handlebody}
\end{lemma}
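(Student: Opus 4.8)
\textbf{Proof plan for \lemref{l:skein-add-handlebody}.}
The plan is to reduce this to \lemref{l:skein-add-2-handle} together with the $k=3,4$-handle computations already recorded in the definition of $\ZCYsk$ on cornered cobordisms (\defref{d:zcyksk-elementary}) and the connected-sum/3-ball bookkeeping in \secref{s:sk-defn}. First I would observe that $M_\Xi^*$ is obtained from $M$ in two stages: attach 2-handles along $\Xi$ to get $M_\Xi$, then cap off the remaining $S^3$ boundary components with 3- and 4-handles — or, equivalently and more simply, $M_\Xi^* = M_\Xi \sqcup (\text{finitely many } B^3\text{'s})$ up to the gluing that fills in the handlebody $M'$. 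The hypothesis that the diffeomorphism $\del M' \simeq \ov N$ sends $\Xi$ to null-homotopic curves in $M'$ is exactly what guarantees that $M' $, viewed as built from $N$, consists of the 2-handles dual to $\Xi$ followed by 3-balls; this is the standard fact that a full system of curves bounding discs in a handlebody gives a handle decomposition with one 0-handle per resulting sphere component. I would spell out this topological identification first, as it is the only place the hypothesis on $\Xi$ is used.

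Next I would invoke \lemref{l:skein-add-2-handle} to get
\[
\ZCYsk(M;(E_M,P_\Xi)) = P_\Xi \cdot \ZCYsk(M;E_M)
\cong \ZCYsk(M_\Xi; E_{M_\Xi}).
\]
It then remains to identify $\ZCYsk(M_\Xi; E_{M_\Xi})$ with $\ZCYsk(M_\Xi^*; E_{M_\Xi^*})$ under the inclusion. Since $M_\Xi^*$ differs from $M_\Xi$ only by filling in 3-balls (equivalently, $M_\Xi^* \simeq M_\Xi$ after replacing some $S^3$ boundary components by $B^3$ and capping, then those $B^3$'s are absorbed), I would argue: for each $S^3$ boundary component of $M_\Xi$ that gets capped, the empty configuration is the only boundary value in play, and capping with a 3-ball identifies $\ZCYsk(S^3 \text{-boundary piece};E)$ with $\kk$ via Reshetikhin--Turaev evaluation (as in the $\ZCYsk(S^3)\simeq\kk$ discussion and the $k=4$ handle case of \defref{d:zcyksk-elementary}), and tensoring with $\kk$ changes nothing. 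More carefully, I would phrase the inverse map explicitly as in the proof of \lemref{l:skein-add-2-handle}: push any graph in $M_\Xi^*$ out of the attached 3-balls (possible since a graph in a ball is isotopic into the interior away from any chosen boundary region), landing it in $M_\Xi$, hence in $M$; well-definedness follows from $\ZCYsk(B^3;E)\simeq\kk$ and \prpref{prp:null_cover} applied to the cover by the interiors of the 3-balls and a neighborhood of $M$.

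The main obstacle I anticipate is bookkeeping rather than conceptual: making precise the claim ``$M'$ built from $N$ is 2-handles dual to $\Xi$ followed by 3-balls,'' because $\Xi$ need not be connected and the complement of $\Xi$ in $N$ may have several sphere pieces after surgery, so one must track which 3-balls cap which components and check the composite inclusion $M \hookrightarrow M_\Xi \hookrightarrow M_\Xi^*$ really is the inclusion $M \hookrightarrow M \cup_N M'$ claimed. I would handle this by choosing, once and for all, a handle decomposition of $M'$ as a cobordism $N \to \emptyset$ of the stated form (existence is \prpref{p:handle-decomposition-exist} specialized, combined with the disc-system observation), and then appeal to \lemref{l:cornered-cobordism-extend-zcysk} and \prpref{p:zcy-corner-gluing-skein} to see that $\ZCYsk$ of the glued manifold is computed handle-by-handle, so that the 2-handles contribute exactly the projector $P_\Xi$ (by the $k=2$ case of \defref{d:zcyksk-elementary} and \lemref{lem:sliding}) and the 3- and 4-handles contribute only scalars. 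A secondary, minor point is checking that the isomorphism does not depend on the chosen handle decomposition of $M'$, which follows from \prpref{p:zcysk-corner-indep}.
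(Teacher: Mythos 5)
Your proposal is correct and follows essentially the same route as the paper: apply \lemref{l:skein-add-2-handle} to identify $\ZCYsk(M;(E_M,P_\Xi))$ with $\ZCYsk(M_\Xi;E_{M_\Xi})$, then observe that filling in the remaining $S^2$ boundary spheres with 3-balls does not change the skein module since graphs and null graphs can be isotoped off the added balls (the paper dispatches this in one sentence, citing $\ZCY(S^2)\cong\Vect$). The only caveat is that your appeals to the $k=3,4$ cases of \defref{d:zcyksk-elementary}, to \lemref{l:cornered-cobordism-extend-zcysk}, and to \prpref{p:zcysk-corner-indep} conflate 4-dimensional handle attachments (which define maps $\ZCYsk(W;N)$ between skein modules) with the purely 3-dimensional operation of capping boundary spheres that is actually at issue here; these references are unnecessary and should be dropped.
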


\begin{proof}
Indeed, $M \cup M'$ is just $M_\Xi$ with an extra 3-handle
to ``fill in the hole in $M'$'',
and the addition of a 3-handle does not affect the
skein module of a 3-manifold,
as graphs and isotopies of graphs can avoid the hole.
(Here the bonudary values in question are the empty configurations,
but this works in general, since $\ZCY(S^2) \cong \Vect$.)
\end{proof}

\begin{corollary}
Let $M$ be a 3-manifold,
with boundary components $\del M = \cup N_k$.
Let $\Xi_k$ be a full multicurve for $N_k$,
and let $P_\Xi = \prod \Xi_k$.
Then
\[
\ZCYsk(M; (E,P_\Xi)) \cong \kk
\]
In particular, $\ZCYsk(M) \cong \kk$ for a closed 3-manifold.
\label{c:skein-3mfld-1dim}
\end{corollary}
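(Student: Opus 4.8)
The plan is to reduce everything, via the handlebody-attachment lemmas, to the skein module of a $3$-ball, using as the one genuinely nontrivial input the fact (Theorem \ref{thm:cy_modular}, which is where modularity of $\cA$ enters) that $\ZCYsk$ of a closed surface is equivalent to $\Vect$. First I would reduce to a closed manifold: for each boundary component $N_k$ the multicurve $\Xi_k$ is full (\defref{d:multicurve-full}), so surgering $N_k$ along $\Xi_k$ yields $S^2$; capping that $S^2$ with a $3$-ball realizes $\Xi_k$ as the meridian system of a handlebody $M_k'$ with $\del M_k' \cong \overline{N_k}$ in which $\Xi_k$ is null-homotopic. Gluing all the $M_k'$ onto $M$ gives a closed $3$-manifold $M^*$, and Lemma \ref{l:skein-add-handlebody} gives $\ZCYsk(M;(E,P_\Xi)) \cong \ZCYsk(M^*;E)$. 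Taking $\del M=\emptyset$, $\Xi=\emptyset$ recovers the ``in particular'' clause as a special case, so it suffices to treat closed $M^*$; and since skein modules of disjoint unions multiply and $\ZCYsk(\emptyset)\cong\kk$, we may assume $M^*$ connected.

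Next I would swap the curve system. Choose a Heegaard splitting $M^* = U_1 \cup_\Sigma U_2$ into handlebodies, and let $\Xi^{(i)} \subset \Sigma$ be a meridian system of $U_i$. Each $\Xi^{(i)}$ is a full multicurve on the closed surface $\Sigma$, since surgering $\Sigma$ along the meridian system of a handlebody gives $S^2$, and $\Xi^{(2)}$ bounds disks in $U_2$, so Lemma \ref{l:skein-add-handlebody} (with $M=U_1$, $M'=U_2$) gives $\ZCYsk(U_1;(E,P_{\Xi^{(2)}})) \cong \ZCYsk(M^*;E)$. Now the key point: by Proposition \ref{p:simple-object-multicurve} both $(E,P_{\Xi^{(1)}})$ and $(E,P_{\Xi^{(2)}})$ are simple, hence nonzero, objects of $\ZCYsk(\Sigma)$; but by Theorem \ref{thm:cy_modular} (applied to the closed surface $\Sigma$, using that $\cA$ is modular) $\ZCYsk(\Sigma)\simeq\Vect$, which has a single isomorphism class of nonzero simple object, so $(E,P_{\Xi^{(1)}}) \cong (E,P_{\Xi^{(2)}})$ in $\ZCYsk(\Sigma)$. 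Applying the functor $\Fsk{U_1} : \ZCYsk(\Sigma) \to \Vect$ of Proposition \ref{p:zcy-surface-functors-skein}, which on Karoubi objects sends $(E,P_\Xi) \mapsto P_\Xi \cdot \ZCYsk(U_1;E) = \ZCYsk(U_1;(E,P_\Xi))$, yields $\ZCYsk(U_1;(E,P_{\Xi^{(2)}})) \cong \ZCYsk(U_1;(E,P_{\Xi^{(1)}}))$.

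Finally I would compute $\ZCYsk(U_1;(E,P_{\Xi^{(1)}}))$ directly. By Lemma \ref{l:skein-add-2-handle} it equals $\ZCYsk((U_1)_{\Xi^{(1)}};E)$, where $(U_1)_{\Xi^{(1)}}$ is $U_1$ with $2$-handles attached along the curves of $\Xi^{(1)}$. Since $\Xi^{(1)}$ is a meridian system of the handlebody $U_1$ itself, each such $2$-handle (with its surface framing) is attached along the belt sphere of a $1$-handle of $U_1$ and therefore cancels it (the standard handle-pair cancellation, cf.\ \defref{d:handle-decomp-modification}); thus $(U_1)_{\Xi^{(1)}}$ is a $3$-ball and $\ZCYsk(U_1;(E,P_{\Xi^{(1)}})) \cong \ZCYsk(B^3) \cong \kk$. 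Chaining the isomorphisms produced above gives $\ZCYsk(M;(E,P_\Xi)) \cong \kk$.

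The main obstacle is not a computation but checking that the bookkeeping of the various lemmas lines up honestly: that a meridian system of a handlebody is genuinely a full multicurve on its boundary (so that $P_{\Xi^{(i)}}$ is a well-defined projector on the empty configuration and Proposition \ref{p:simple-object-multicurve} applies), that $\Xi^{(2)}$ transported to $\del U_1$ bounds disks in $U_2$ and $\Xi^{(1)}$ bounds disks in $U_1$ (so Lemmas \ref{l:skein-add-handlebody} and \ref{l:skein-add-2-handle} are applicable, both immediate from the definition of a Heegaard splitting), and that the $2$-handles in the last step really cancel the corresponding $1$-handles. The only substantive ingredient is Theorem \ref{thm:cy_modular} for the closed surface $\Sigma$: it is exactly what lets the \emph{a priori} different projectors $P_{\Xi^{(1)}}$ and $P_{\Xi^{(2)}}$ be exchanged, and it is the sole place where modularity of $\cA$ is used.
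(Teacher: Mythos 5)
Your proof is correct and follows essentially the same route as the paper's: reduce to a closed manifold via \lemref{l:skein-add-handlebody}, take a Heegaard splitting, and exchange the two full multicurve projectors using \prpref{p:simple-object-multicurve} together with $\ZCYsk(\Sigma)\simeq\Vect$. The only (harmless) variation is the endgame: the paper swaps to a curve system complementary to $\Xi''$ so that the second handlebody closes $M''$ up to $S^3$, whereas you swap to the meridian system of $U_1$ itself and cancel handles down to $B^3$ — both terminate in the same trivial computation.
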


\begin{proof}
By \lemref{l:skein-add-handlebody},
this reduces to the case when $M$ is closed.

Present $M$ by Heegaard splitting
$M = M' \cup_N M''$ (see \ocite{saveliev})
and let $\Xi', \Xi''$ be the multicurves
that define the attaching of $M',M''$ respectively.
Let $\Xi$ be some multicurve such that
the 3-manifold defined by $\Xi$ and $\Xi''$
is the 3-sphere.
Then again by \lemref{l:skein-add-handlebody},
\[
\ZCYsk(M) \cong \ZCYsk(M''; (E,P_{\Xi'}))
\cong \ZCYsk(M''; (E,P_{\Xi}))
\cong \ZCYsk(S^3) \cong \kk
\]
where the middle equality holds by
\prpref{p:simple-object-multicurve}.
\end{proof}

%TODO maybe use a better letter than $F$ for $\cA \simeq \ZZA$..

\newpage
%\section{Relation of $\ZCY$ to Classical Invariants of 4-Manifolds}
\section{Relation of $\ZCY$ to Other Invariants}
\label{s:signature}
\vspace{0.8in}

\subsection{Relation of $\ZCY$ to the Signature and Euler Characteristic of 4-Manifolds}
\par \noindent

In \ocite{CKY-eval}, the authors prove that, for $\cA = \Rep U_q sl_2$
(with $q$ a root of unity),
%(TODO do they also prove for general premodular?)

\begin{equation}
\ZCY(W) = \cD^{\chi(W)/2} \kappa^{\sigma(W)}
\label{e:classical}
\end{equation}
where $\chi, \sigma$ are the Euler characteristic
and signature of $W$, respectively,
and

\[
\kappa = \sqrt{p_+ / p_-}
\;\;,\;\;
p_\pm = \sum \theta_i^\pm d_i^2
=
\begin{tikzpicture}
\draw[regular] (0,0) circle (0.5cm);
\node[small_morphism] at (-0.5,0) {\smallerer $\theta^\pm$};
\end{tikzpicture}
\]
Note $\cD = p_+ p_-$
($= \eta^{-2}$ from \ocite{CKY-eval},\ocite{roberts-chainmail}).
In this section, we extended this result
to 4-manifolds with boundary and arbitrary modular categories $\cA$,
and then to 4-manifolds with corners.

%We will freely translate between the PLCW and skein definitions
%of $\ZCY$ for computations,
%so we suggest that the reader be familiar with \secref{s:sk-equiv}.

The Crane-Yetter invariant associated to a 4-manifold with boundary
is not a scalar,
but a vector $\ZCY(W) \in \ZCY(\del W)$.
By \corref{c:skein-3mfld-1dim},
$\ZCY(\del W)$ is 1-dimensional,
so we need to choose the correct identification
$\ZCY(\del W) \cong \kk$ to reproduce \eqnref{e:classical}.
It is natural to guess that the identification
$\ZCY(\del W) \cong \kk$ that we are after should send this
element $\emptyskein{M} \mapsto 1$,
but this is not correct;
the correct normalization is given in
\defref{d:emptyskein-normalization} below.
In order to motivate that normalization,
we first consider the following example:

\begin{example}
\label{x:zcy-compute-2-handle}
Let $\cA$ be modular.
Let $W_L = \cH_0 \cup (\cH_2^{(1)} \sqcup \cdots \sqcup \cH_2^{(n)})$,
where the 2-handle $\cH_2^{(m)}$
is attached along the $m$-th component $L_m$ of a framed link
$L = \bigsqcup L_m \subset \del \cH_0$;
%with boundary $M_L = \del W_L$,
Then $W_L$ has a natural handle decomposition
as a cobordism $W_L : \emptyset \to M_L$.
Consider its dual handle decomposition as a cobordism
$W_L : \ov{M_L} \to \emptyset$
(\defref{d:dual-handle-decomp}).
Recall that the belt spheres of the dual handles
are the attaching spheres of the original handles,
so in particular the belt sphere of $\cH_2^{*(m)}$,
the dual 2-handle to $\cH_2^{(m)}$,
is $L_m$.
Then, using \defref{d:zcyksk-elementary},
these 2-handles define a cobordism
$\bigsqcup \cH_2^{*(m)} : \ov{M_L} \to \ov{\del \cH_0}$,
such that
\begin{equation}
\ZCYsk(\bigsqcup \cH_2^{(m)}) (\emptyskein{\ov{M_L}})
= \Omega L \in \ZCYsk(\ov{\del \cH_0})
%\label{<+label+>}
\end{equation}
where $\Omega L$ means we color each component of $L$
by the regular color $\sum_i d_i \id_i$.
Thus, we have
\begin{equation}
\ZCYsk(W_L) (\emptyskein{\overline{M_L}})
= \ZCYsk(\cH_0^*) (\Omega L)
= \ZCYsk(\cH_4) (\Omega L)
= \ZRT(\Omega \ov{L})
%\label{<+label+>}
\end{equation}
where $\ov{L}$ is the mirror image of $L$
and $\cH_0^*$ is the dual handle to $\cH_0$.
(The formula in \defref{d:zcyksk-elementary} for a 4-handle
states that $\ZCYsk(\cH_4)(\Gamma) = \ZRT(\Gamma)$,
but $\Gamma$ should be a graph in $S^3 = \ov{\del \cH_4}$,
hence since $L$ was given as a link in $\del \cH_0$,
we need to take the mirror image $\ov{L}$,
which is isotopic to the image of $L$ under an orientation-reversing
homeomorphism of $S^3$.)

This is essentially the Reshetikhin-Turaev invariant of
$M_L$ \ocite{rt-3mfld},
%applied to $\ov{L}$ (note that $M_{\ov{L}} = \ov{M_L}$);
up to a normalizing factor of $\cD^{-1/2}$ (see \ocite{BK}):
\begin{equation}
\label{e:zrt-defn}
\ZRT(M_L) := \kappa^{-\sigma(L)} \cdot \cD^{(-|L|-1)/2}
	\cdot \ZRT(\Omega \ov{L})
\end{equation}
where $\sigma(L)$ is the signature of the linking matrix of $L$;
this invariant only depends on the 3-manifold $M_L$,
and not the link $L$.
(See \rmkref{r:rt-mirror} for clarification.)
%also maybe Kirby, Melvin ``3-mfld inv of WRT for sl(2,C)''
%https://tqft.net/svn/d4/trunk/links-article/papers/kirby-melvin.pdf

Thus, since $\sigma(L) = \sigma(W_L)$
%(the linking matrix of $L$ is the same as the intersection pairing
%on $H_2(W_L)$),
and $|L| + 1 = \chi(W_L) = \chi(\ov{W_L})$,
we see that
\[
\ZCYsk(W_L) (\frac{1}{\ZRT(M_L)} \cdot \emptyskein{\overline{M_L}})
= \kappa^{\sigma(W)} \cdot \cD^{\chi(W_L)/2}
\]

%\[
%\ZCYsk(\ov{W_L}) (\frac{1}{\ZRT(M_L)} \cdot \emptyskein{\overline{M_L}})
%= \kappa^{\sigma(\ov{W_L})} \cdot \cD^{\chi(\ov{W_L})/2}
%\]

Note that under this normalization of $\ZRT$,
we have
\begin{align*}
\ZRT(S^3) &= \cD^{-1/2}
\\
\ZRT(M_1 \# M_2) &= \cD^{1/2} \ZRT(M_1) \ZRT(M_2)
\end{align*}

We will discuss Reshetikhin-Turaev invariants
in greater detail in \secref{s:rt-boundary-theory}.
(see also \rmkref{r:rt-notation}).
\end{example}

\begin{remark}
%At first glance, the formaula \eqnref{e:zrt-defn}
%should define the Reshetikhin-Turaev invariant for
%$\ZRT(\ov{M_L}) (=\ZRT(M_{\ov{L}}))$.
In \ocite{BK}*{Theorem 4.1.16},
the formula for $\ZRT(M_L)$
is the same as \eqnref{e:zrt-defn}
except it is $\Omega L$ instead of $\Omega \ov{L}$.
However, this apparent discrepancy is due to our conventions.
More specifically, in our graphical calculus, we treat morphisms
as going from top to bottom,
and our braiding and twist is the left-hand twist,
while \ocite{BK} uses the opposite conventions.
In particular, flipping a diagram through a horizontal plane
switches between our and their conventions,
hence evaluating $\ZRT(\Omega \ov{L})$
with our conventions
is the same as evaluating $\ZRT(\Omega L)$ with the conventions
of \ocite{BK}.
\label{r:rt-mirror}
\end{remark}

Motivated by the example above, we define the following:

\begin{definition}
Let $\cA$ be modular. For a closed 3-manifold $M$,
define the \emph{normalized empty skein} $\emptynorm{M}$ by
\[
\emptynorm{M} := \frac{1}{\ZRT(\overline{M})} \cdot \emptyskein{M}
\in \ZCYsk(M)
\]
Note the orientation reversal in $\ZRT(\overline{M})$.
\label{d:emptyskein-normalization}
\end{definition}

Note that $\emptynorm{M}$ is sensitive to the orientation of $M$,
as $\ZRT(M) \neq \ZRT(\overline{M})$ in general.

\begin{proposition}
Let $\cA$ be modular. For a 4-manifold $W$,
\[
\ZCYsk(W) (\emptynorm{\overline{\del W}})
= \kappa^{\sigma(W)} \cD^{\chi(W)/2}
\]
More generally,
let $W$ be a cobordism $W : M \to M'$
between closed 3-manifolds.
Then
\[
\ZCYsk(W) (\emptynorm{M}) = \kappa^{\sigma(W)} \cD^{\chi(W)/2} \cdot
\emptynorm{M'}
\]
\label{p:eval-zcy}
\end{proposition}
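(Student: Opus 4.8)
The plan is to reduce the statement to the handle-decomposition computation already carried out in \exref{x:zcy-compute-2-handle}, combined with the multiplicativity properties of $\ZCYsk$ under handle attachments from \defref{d:zcyksk-elementary} and under composition. First I would handle the case of a general cobordism $W : M \to M'$ by relating it to the closed-boundary case: recall that the first claim, $\ZCYsk(W)(\emptynorm{\overline{\del W}}) = \kappa^{\sigma(W)} \cD^{\chi(W)/2}$, is really the special case of the second where $M = \emptyset$ (so $\overline{\del W} = \ov{M'}$, and $\emptynorm{\ov{M'}}$ is being paired against $\ZCYsk(W) \in \ZCYsk(\del W) = \ZCYsk(M')$); so it suffices to prove the general statement, and then one checks that the pairing $\evsk$ (\defref{d:pairing-skein}, \lemref{l:skein-pairing-dual}) carries the ``operator'' form $\ZCYsk(W)(\emptynorm{M}) = \kappa^{\sigma(W)}\cD^{\chi(W)/2}\emptynorm{M'}$ to the ``scalar'' form. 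Actually I would go the other way: prove the scalar form first for $W : \emptyset \to N$ closed, then deduce the cobordism form by cutting $W : M \to M'$ along $M'$ and using the non-degeneracy of $\evsk$ (\corref{c:pairing-equiv}) together with \corref{c:skein-3mfld-1dim}, which tells us $\ZCYsk(M')$ is one-dimensional, spanned by $\emptyskein{M'}$, hence $\ZCYsk(W)(\emptynorm{M})$ is necessarily a scalar multiple of $\emptynorm{M'}$; the scalar is then pinned down by capping off with any $W' : M' \to \emptyset$ and applying the closed case to $W' \circ W$, using additivity of $\sigma$ (Novikov additivity) and $\chi$ under gluing along a closed $3$-manifold.

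For the closed case $W : \emptyset \to N$, the plan is to choose a handle decomposition $W = W_l \circ \cdots \circ W_1$ (exists by \prpref{p:handle-decomposition-exist}) and compute $\ZCYsk(W)(\emptyskein{S^3})$, or rather $\ZCYsk(W)$ applied to the empty skein of the empty manifold, step by step through the handles using \defref{d:zcyksk-elementary}. The key arithmetic bookkeeping is: a $k$-handle changes $\chi$ by $(-1)^k$, so each $0$- and $4$-handle contributes $\cD^{\pm 1}$ exactly matching $\cD^{\chi/2}$ weighting $(\cD^{+1}$ for $\cH_0$, $\cD^{-1}$ for $\cH_4$ in \defref{d:zcyksk-elementary}$)$, while $1$- and $3$-handles contribute $\cD^{\mp 1}$, again matching. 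The signature only sees the $2$-handles, via the intersection form, which is the linking matrix of the attaching link; here I would invoke exactly the computation of \exref{x:zcy-compute-2-handle} --- that attaching $2$-handles along a framed link $L$ and then passing to the dual decomposition produces $\ZRT(\Omega\ov L)$, which by \eqnref{e:zrt-defn} equals $\kappa^{\sigma(L)}\cD^{(|L|+1)/2}\ZRT(M_L)$ --- and the normalization $\emptynorm{\ov{M'}} = \ZRT(M')^{-1}\emptyskein{\ov{M'}}$ is precisely designed to cancel the $\ZRT(M_L)$ factor. The general closed $W$ differs from a ``$0$-handles then $2$-handles'' $W_L$ by also having $1$- and $3$-handles; I would argue that these can be absorbed: $1$-handles before $2$-handles just change which framed link in which connected sum of $S^1\times S^2$'s one is doing surgery on, and the Reshetikhin--Turaev invariant $\ZRT(M)$ is by construction independent of the surgery presentation, so the identity $\ZCYsk(W)(\emptynorm{\ov{N}}) = \kappa^{\sigma(W)}\cD^{\chi(W)/2}$ persists; $3$- and $4$-handles are handled by the dual-handle argument symmetrically.

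The main obstacle I anticipate is making the ``$1$- and $3$-handles can be absorbed'' step fully rigorous without re-deriving surgery-presentation-independence of $\ZRT$ from scratch. Concretely, for a general handle decomposition one cannot assume the handles come in the clean order all-$0$, all-$1$, all-$2$, all-$3$, all-$4$; one must invoke \prpref{p:handle-decomp-relate} (any two handle decompositions are related by the modifications of \defref{d:handle-decomp-modification}) together with \prpref{p:zcysk-corner-indep} (the value $\ZCYsk(W)$ is independent of handle decomposition) to first reorder into a standard form, and then recognize the resulting $2$-handle link as a surgery presentation of a $\#^g(S^1\times S^2)$-with-$2$-handles picture whose $\ZRT$-evaluation is governed by \eqnref{e:zrt-defn}. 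The cleanest route is probably: reduce to a handle decomposition with a single $0$-handle and a single $4$-handle (possible in the connected case by handle trading / \defref{d:handle-decomp-modification}), slide all $1$-handles below all $2$-handles and all $3$-handles above, so that $W$ factors as $\cH_4 \circ (\text{2- and 3-handles}) \circ (\text{1-handles}) \circ \cH_0$; then the middle piece, read through \defref{d:zcyksk-elementary}, literally computes $\ZRT(\Omega\ov L)$ in $\#^g(S^1\times S^2)$ with the $3$-handles handled dually, and the identity $\ZRT(\overline{\del W})^{-1}\ZRT(\Omega\ov L)\cD^{(\text{handle count})/2} = \kappa^{\sigma}\cD^{\chi/2}$ follows from \eqnref{e:zrt-defn} and Novikov additivity of the signature. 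I would flag that the disconnected case follows from \prpref{p:zcysk-connectsum} and the connected case, with the $\cD^{-1}$ connect-sum factors matching $\chi(W_1\#W_2) = \chi(W_1)+\chi(W_2)-2$.
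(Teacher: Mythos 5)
Your proposal is correct and follows essentially the same route as the paper's proof: reduce the cobordism form to the closed case via Novikov additivity, additivity of $\chi$, and one-dimensionality of $\ZCYsk(M')$; then compute through a handle decomposition arranged by index, with the $0$-, $1$-, $3$-, $4$-handles contributing the $\cD^{\chi/2}$ bookkeeping and the $2$-handles producing $\ZRT(\Omega\ov{L})$ so that the normalization $\emptynorm{}$ cancels the $\ZRT$ factor. The paper makes your "absorb the $1$-handles" step concrete via the dotted-circle calculus (trading dotted circles for $\frac{1}{\cD}\cdot$regular-colored unknots by the killing lemma) and closes the signature bookkeeping with $\sigma(L\cup L')=\sigma(W)$ by cobordism invariance, which is exactly the rigor you flagged as needed.
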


\begin{proof}
The more general statement follows directly from the
first statement.
Indeed, let $W'$ be a 4-manifold
with boundary $\del W' = \overline{M'}$.
Then, assuming the first statement has been proven,
\[
\ZCYsk(W\cup_{M'} W') (\emptynorm{\overline{\del W}})
= \kappa^{\sigma(W \cup_{M'} W')} \cD^{\chi(W \cup_{M'} W')/2}
\]
By Novikov additivity \ocite{novikov}*{7.1},
$\sigma(W \cup_{M'} W') = \sigma(W) + \sigma(W')$,
and by additivity of Euler characteristic
(together with $\chi(M^3)=0$ for any closed 3-manifold),
$\chi(W \cup_{M'} W') = \chi(W) + \chi(W')$.
Then, using the fact that $\ZCY(M^3)$ is 1-dimensional
for closed 3-manifolds (\corref{c:skein-3mfld-1dim}),
$\ZCYsk(W)(\emptynorm{M}) = y \cdot \emptynorm{M'}$
for some scalar $y \in \kk$.
The above observations imply that
$y = \kappa^{\sigma(W)} \cD^{\chi(W)/2}$.
Thus, we will focus on proving the first statement.

Let $W$ be presented by a handle decomposition,
arranged in increasing index,
and let $W_k$ be the union of $j$-handles,
where $j \leq k$
(so $W_4 = W, W_\inv = \emptyset$).
Denote $M_k = \overline{\del W_k}$,
and $W_k' = W_k \backslash \Int(W_{k-1})$,
so $W_k'$ is a cobordism
\[
W_k' : M_k \to M_{k-1}
\]
which is a composition of several elementary cornered cobordisms
of index $4-k$ (dual to the original $k$-handles).
Let $n_k$ be the number of original $k$-handles
(i.e. the number of $4-k$-handles that make up $W_k'$).

Observe that $M_3 = M_4 \sqcup \coprod_{n_4} S^3$,
so
\[
\ZRT(\overline{M_3}) = \ZRT(\overline{M_4}) \cdot \cD^{-n_4/2}
\]
%By \prpref{p:zcy-handle-corner-map},
By \defref{d:zcyksk-elementary}
(using $k=0$),
we have
\[
\ZCYsk(W_4') (\emptyskein{M_4}) = \cD^{n_4} \cdot \emptyskein{M_3}
\]
So
\[
\ZCYsk(W_4') (\emptynorm{M_4}) = \cD^{n_4/2} \emptynorm{M_3}
\]

Now consider $k=3$.
%Again by \prpref{p:zcy-handle-corner-map},
Again by \defref{d:zcyksk-elementary} (using $k=1$),
\[
\ZCYsk(W_3') (\emptyskein{M_3}) = \cD^{-n_3} \cdot \emptyskein{M_2}
\]
%We may think of $W_3'$ as an operation of
%removing the 3-handles from $W_3$ to go down to $W_2$;
%in terms of the boundaries,
%this amounts to performing (self-)connected sums on $M_3$ to get $M_2$.

Suppose $n_3 = 1$, and the removal of that 3-handle
(i.e. addition of dual 1-handle)
connects two components of $M_3$,
so that if $M_3 = M_3' \sqcup M_3''$,
then $M_2 = M_3' \# M_3''$.
Then
\[
\ZRT(\overline{M_2}) = \cD^{1/2} \cdot \ZRT(\overline{M_3})
\]
Suppose the removal of the 3-handle does not connect two components,
but connects one component $M_3' \subseteq M_3$ to itself,
resulting in $M_2' \subseteq M_2$.
If $L$ is a framed link such that $M_3' = M_L$,
then $M_2' = M_{L \cup \bigcircle}$,
where $L \cup \bigcircle$ is $L$ with an extra unknotted 0-framed component
that is unlinked with $L$.
Then it is easy to see that we also have
$\ZRT(\overline{M_2}) = \cD^{1/2} \cdot \ZRT(\overline{M_3})$.
Thus, for multiple 3-handles,
\[
\ZRT(\overline{M_2}) = \cD^{n_3/2} \cdot \ZRT(\overline{M_3})
\]
and so
\[
\ZCYsk(W_3') (\emptynorm{M_3}) = \cD^{-n_3/2} \emptynorm{M_2}
\]

Finally we come to $k=2$. We will consider $W_2$ all at once.
By multiplicativity under disjoint unions,
we may assume that $W_2$ is connected.
Furthermore, we cancel 0- and 1-handles so that $n_0 = 1$.

%%For the rest of this proof, it is easier to visualize
%%with the outward orientation on $\del W_2$
%%as opposed to the orientation of $M_2$,
%%In particular, links $L,L'$ considered below
%%are drawn/visualized in $\del \cH_0 = S^3$,
%%so $M_2 = \overline{M_{L\cup L'}}
%%= M_{\overline{L \cup L'}}$
%%%where $\overline{L''}$ is the link obtained by reflecting
%%%through some plane.
%%(see \xmpref{x:zcy-compute-2-handle}).
%%%note that $\sum d_i \phi_i$ is invariant under
%%%orientation reversal of $B^2 \times S^1$.)

The attaching maps of 1-handles may be visualized
as a pair of balls in $S^3$.
The attaching maps of 2-handles form a framed link $L$,
and links that go through the 1-handle
go into one ball and appear at the other one.

We use the ``dotted circle'' notation for 1-handles,
as developed in \ocite{akbulut}
(see also \ocite{gompf-stipsicz}*{Chapter 5.4}).
We may bring the pairs of balls together,
joining the arcs from $L$ appropriately,
and put a new unknotted circle around them;
we put a dot on these new circles
to distinguish them from $L$,
and let $L'$ be the unlink consisting of these dotted circles.

Thus, $\del W_1$ is depicted as $S^3 = \del W_0$
with $n_1$ dotted circles forming an unlink $L'$,
and the attaching maps for the 2-handles is simply
the framed link $L$ in the complement of $L'$.
%By \prpref{p:zcy-handle-corner-map},
By \defref{d:zcyksk-elementary},
\[
\ZCYsk(W_2')(\emptyskein{M_2}) = \Omega L \in \ZCYsk(M_1)
\]
%Here $\Omega L$ is treated as a colored ribbon graph in
%$\del W_1$.

%Once again by \prpref{p:zcy-handle-corner-map},
Once again by \defref{d:zcyksk-elementary},
removing 1-handles (i.e. attaching dual 3-handles to $M_1$)
amounts to removing all strands
(after ``bunching'' them up with \lemref{l:summation})
with $i\neq \one$ that goes through the 1-handle.
Thus, by \lemref{l:killing},
this is equivalent to coloring the dotted circles by
$1/\cD \cdot$ regular color:
\[
\ZCYsk(W_2)(\emptyskein{M_2})
= \ZCYsk(W_0)
	(\cD^{-n_1} \cdot \Omega (L\cup L'))
= \cD^{-n_1} \cdot \ZRT(\Omega (\ov{L\cup L'}))
\]

Now $\ov{M_2} = \del W_2 \simeq M_{L \cup L'}$,
so
\[
\ZRT(\overline{M_2}) = \ZRT(M_{L \cup L'})
= \kappa^{-\sigma(L \cup L')} \cdot \cD^{(-|L'|-|L|-1)/2}
	\cdot \ZRT(\Omega(\ov{L \cup L'}))
\]
Since $|L| = n_2, |L'| = n_1$, we have
\[
\ZCYsk(W_2) (\emptynorm{M_2})
= \kappa^{\sigma(L\cup L')} \cdot \cD^{(n_2 - n_1 + 1)/2}
\]

Thus, we have
\[
\ZCYsk(W) (\emptynorm{\overline{\del W}})
= \kappa^{\sigma(L \cup L')}
\cdot \cD^{(n_4 - n_3 + n_2 - n_1 + 1)/2}
= \kappa^{\sigma(L \cup L')}
\cdot \cD^{\chi(W)/2}
\]

It remains to see that $\sigma(L \cup L') = \sigma(W)$.
Observe that $W_4' \cup W_3'$ is a disjoint union of
4-dimensional handlebodies ($\natural m B^3 \times S^1$),
so $\sigma(W_4' \cup W_3') = 0$.
and hence by Novikov additivity \ocite{novikov},
$\sigma(W) = \sigma(W_2)$.
Finally, replacing a dotted circle
by a 0-framed unknot can be achieved by surgery on the 4-manifold
(see \ocite{gompf-stipsicz}*{Chapter 5.4}),
and thus $W_2$ and $W_{L \cup L'}$ are cobordant,
and hence, by \ocite{thom-cobordism},
have the same signature
$\sigma(W_2) = \sigma(W_{L\cup L'}) = \sigma(L \cup L')$.
(It is also not hard to show directly
that $\sigma(L \cup L') = \sigma(W_2)$,
e.g. using handle cancellations on the homology level.)
\end{proof}

\begin{remark}
It seems that the key point where the proof requires $\cA$
to be modular is when we traded 1-handles for 2-handles.
It will be interesting to see what one might obtain
for a non-modular $\cA$.
\end{remark}

\begin{corollary}
Let $\cA$ be modular,
and let $M$ be a closed 3-manifold,
%and let $W = M \times [0,1]$.
Then
\[
\evsk(\emptyskein{M},\emptyskein{\ov{M}}) = \ZTV(M)
\]
%Then $\ev(\ZCY(W), \emptynorm{\overline{\del W}}) = 1$,
%and $\ev(\ZCY(W), \emptyskein{\overline{\del W}}) = \ZTV(M)$.
%The former implies that, with respect to the pairing \eqnref{e:pairing-skein},
%$\emptynorm{M}$ is dual to $\emptynorm{\overline{M}}$.
\end{corollary}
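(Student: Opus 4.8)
The plan is to unwind the definition of the skein pairing and then feed the result into \prpref{p:eval-zcy}. Since $M$ is closed, $\del M = \emptyset$ and the only boundary value on it is $\EE$, so by \defref{d:pairing-skein} we have $\evsk(\emptyskein M, \emptyskein{\ov M}) = \ZCYsk(M\times I)(\emptyskein M \cup \emptyskein{\ov M})$, where $M\times I$ is regarded as a cobordism $M \sqcup \ov M \to \emptyset$. Under the equivalence $\ZCYsk(M\sqcup\ov M)\simeq \ZCYsk(M)\boxtimes\ZCYsk(\ov M)$ of \lemref{lem:disjoint_union}, the element $\emptyskein M \cup \emptyskein{\ov M}$ is just $\emptyskein M \tnsr \emptyskein{\ov M}$. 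I would then pass to the normalized empty skeins of \defref{d:emptyskein-normalization}: $\emptyskein M = \ZRT(\ov M)\cdot\emptynorm M$ and $\emptyskein{\ov M} = \ZRT(M)\cdot\emptynorm{\ov M}$. Using that $\ZRT$ is multiplicative under disjoint unions in the $\cD^{-1/2}$-normalization of \ocite{rt-3mfld}, \ocite{BK} (and $\ov{M\sqcup\ov M} = M\sqcup\ov M$ as a set), this gives $\emptyskein M \tnsr \emptyskein{\ov M} = \ZRT(\ov M)\,\ZRT(M)\cdot\emptynorm{M\sqcup\ov M}$.

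Next I would apply \prpref{p:eval-zcy} to the $4$-manifold $W = M\times I$, whose oriented boundary is $\ov M \sqcup M$, so that $\ov{\del W} = M\sqcup\ov M$. The proposition yields
\[
\ZCYsk(M\times I)(\emptynorm{M\sqcup\ov M}) = \kappa^{\sigma(M\times I)}\,\cD^{\chi(M\times I)/2}.
\]
Here $\chi(M\times I) = \chi(M)\chi(I) = 0$, and $\sigma(M\times I) = 0$: gluing two copies of $M\times I$ along a copy of $M$ produces $M\times I$ again, so Novikov additivity forces $\sigma(M\times I) = 2\sigma(M\times I)$. Hence the right-hand side is $1 \in \ZCYsk(\emptyset)\simeq\kk$, and combining with the previous paragraph,
\[
\evsk(\emptyskein M, \emptyskein{\ov M}) = \ZRT(\ov M)\,\ZRT(M)\cdot \ZCYsk(M\times I)(\emptynorm{M\sqcup\ov M}) = \ZRT(M)\,\ZRT(\ov M).
\]

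It then remains to identify $\ZRT(M)\,\ZRT(\ov M)$ with $\ZTV(M)$. This is the Turaev--Walker theorem that the Turaev--Viro invariant is the product of the Reshetikhin--Turaev invariant with its orientation reversal (the ``squared modulus'' in the unitary case); see \ocite{roberts-chainmail}. The one point requiring care is that the normalization of $\ZTV$ agrees with the one implicit in this identity, and that our $\ZRT$ is the $\cD^{-1/2}$-normalized invariant, which is confirmed by evaluating both sides on $S^3$. I expect this matching of normalizations against the cited $\ZTV = \ZRT\otimes\ZRT$ theorem to be the only genuine obstacle; the rest is bookkeeping of powers of $\cD$ and $\kappa$, all of which are trivial precisely because $M\times I$ has vanishing signature and Euler characteristic.
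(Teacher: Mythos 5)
Your proposal is correct and follows essentially the same route as the paper: reduce to the normalized empty skeins, observe $\sigma(M\times I)=\chi(M\times I)=0$ so that \prpref{p:eval-zcy} gives $\ZCYsk(M\times I)(\emptynorm{M}\tnsr\emptynorm{\ov M})=1$, conclude $\evsk(\emptyskein{M},\emptyskein{\ov M})=\ZRT(M)\ZRT(\ov M)$, and cite Roberts for the identification with $\ZTV(M)$. The extra care you take with multiplicativity of $\ZRT$ under disjoint union and with the normalization check on $S^3$ is sound bookkeeping that the paper leaves implicit.
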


\begin{proof}
%Let $W = M \times [0,1]$.
Since $\sigma(M \times I) = \chi(M \times I) = 0$,
we have $\ZCYsk(M \times I)(\emptynorm{M} \tnsr \emptynorm{\ov{M}}) = 1$,
so $\evsk(\emptyskein{M},\emptyskein{\ov{M}})
= \ZCYsk(M \times I)(\emptyskein{M} \tnsr \emptyskein{\ov{M}})
= \ZRT(M) \ZRT(\overline{M}) = \ZTV(M)$.
(The last equality is proven in \ocite{roberts-chainmail},
where he works with $\cA = \Rep U_q sl_2$ (with $q$ a root of unity),
but all the arguments work for general modular category).
\end{proof}

\subsection{4-Manifolds with Corners, Wall Non-Additivity}
\par \noindent

Observe that, for 4-manifolds $W,W'$,
thought of as cobordisms $W : M \to M', W' : M' \to M''$,
composition aligns with additivity of signature
(Novikov additivity \ocite{novikov})
and Euler characteristic (since $\chi(M^3)=0$ for any closed 3-manifold).

As discussed in \secref{s:sk-equiv},
4-manifolds with corners may be viewed as
cornered cobordism,
and we might hope that a similar additivity result would follow.
However, Wall \ocite{wall} proved that when two 4-manifolds are glued along
a 3-manifold with boundary,
the signatures do not add on the nose,
but an error term is introduced.

Another issue that arises in this situation
is that the skein module of a 3-manifold with boundary
is not necessarily 1-dimensional.
This will be remedied by considering
an appropriate boundary value,
in particular, as one may naturally guess,
the simple objects from \prpref{p:simple-object-multicurve}.

Let us recall the main definitions and results
of \ocite{wall}, and refer the reader to the paper for details.

\begin{definition}
Let $L_1,L_2,L_3$ be Lagrangian subspaces of a symplectic
vector space $(V,\omega)$ over $\RR$.
For $x_1,x_1' \in L_1 \cap (L_2 + L_3)$,
define the form
\[
\Psi(x_1,x_1') = \omega(x_1,x_2') = \omega(x_1,-x_3')
\]
where $x_2' \in L_2, x_3' \in L_3$ such that
$x_1' + x_2' + x_3' = 0$.
It is easy to check
\begin{itemize}
\item $\Psi$ is independent of the choice of $x_2',x_3'$;
\item $\Psi$ is symmetric;
\item $\Psi$ vanishes on $L_1 \cap L_2 + L_1 \cap L_3$.
\end{itemize}
Thus, $\Psi$ descends to a symmetric bilinear form on
$\ov{V} = (L_1 \cap (L_2 + L_3)) / (L_1 \cap L_2 + L_1 \cap L_3)$;
We denote the signature of $\Psi$ by
\[
\sigma(V;L_1,L_2,L_3)
\]
\label{d:lag-triple}
\end{definition}

Note that the signature of $\Psi$ as a symmetric bilinear form
on $L_1 \cap (L_2 + L_3)$ and $\ov{V}$ are the same,
so there is no ambiguity.
It is clear that
\[
\sigma(V;L_{\tau(1)},L_{\tau(2)},L_3{\tau(3)})
= \text{sgn}(\tau) \cdot \sigma(V;L_1,L_2,L_3)
\]
We also note that if any pair of $L$'s are the same,
say $L_1 = L_2$, then $\sigma(V;L_1,L_2,L_3) = 0$.

The main result of \ocite{wall} is as follows,
recast in our setting:

\begin{theorem}[\ocite{wall}*{Theorem.}]
Let $N$ be a closed surface, and let
$W : M_1 \to_N M_2$, $W' : M_2 \to_N M_3$ be cornered cobordism.
Let $N$ be given the outward orientation with respect to $M_3$
(equivalently $M_1,M_2$).
Let $V = H_1(N;\RR)$, with the intersection pairing as the
symplectic structure.
Let $L_j = \ker(i_* : V \to H_1(M_j;\RR))$
be the kernel of the map on homology induced by the inclusion
$i : N \to M_j$;
it is known that $L_j$ is a Lagrangian subspace.
Then
\[
\sigma(W \cup_{M_2} W')
= \sigma(W) + \sigma(W') - \sigma(V;L_1,L_2,L_3)
\]
\label{t:wall-main}
\end{theorem}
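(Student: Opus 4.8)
The plan is to deduce Wall non-additivity from the TQFT-theoretic properties of $\ZCYsk$ established above, in particular from \prpref{p:eval-zcy} and its extension to cornered cobordisms, rather than re-deriving it by hand. The key point is that \prpref{p:eval-zcy} already tells us how $\ZCYsk$ evaluates on honest (cornerless) cobordisms: $\ZCYsk(W)(\emptynorm{M}) = \kappa^{\sigma(W)}\cD^{\chi(W)/2}\cdot\emptynorm{M'}$. So the strategy is to reduce the cornered-gluing computation to a cornerless one by choosing the right boundary value on the corner surface $N$, namely a simple object of the form $(E,P_\Xi)$ from \prpref{p:simple-object-multicurve}, where $\Xi$ is a full multicurve adapted to the Lagrangian data $L_1,L_2,L_3$.

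First I would set up the cornered composition $\ZCYsk(W';N)\circ\ZCYsk(W;N)$ using \prpref{p:zcy-corner-gluing-skein} (the skein analog), so that $\ZCYsk(W\cup_{M_2}W';N) = \ZCYsk(W';N)\circ\ZCYsk(W;N)$ as maps $\ZCYsk(M_1;\VV)\to\ZCYsk(M_3;\VV)$ for a boundary value $\VV$ on $N$. Next, I would fix $\VV = (E,P_\Xi)$ for a full multicurve $\Xi\subset N$. By \lemref{l:skein-add-handlebody} and \corref{c:skein-3mfld-1dim}, for each $j$ the space $\ZCYsk(M_j;(E,P_\Xi))$ is $1$-dimensional, spanned by a normalized empty skein relative to the $3$-manifold $M_j^\Xi := M_j\cup_N H_\Xi$ obtained by filling $N$ with a handlebody whose attaching curves are $\Xi$ (when $\Xi$ is compatible with $M_j$ in the appropriate sense, which requires choosing $\Xi$ so that $\Xi$ bounds in each $M_j^\Xi$; this is where the Lagrangian subspaces $L_j$ enter — $\Xi$ should be chosen generic with respect to the configuration $L_1,L_2,L_3$). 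Then $\ZCYsk(W;N)$ and $\ZCYsk(W';N)$ become, up to the $1$-dimensionality identifications, the scalars attached to the \emph{closed-up} cobordisms $W^\Xi := W\cup(\text{handlebody}\times I)$ and $(W')^\Xi$, which are cornerless, so \prpref{p:eval-zcy} applies to each and gives $\kappa^{\sigma(W^\Xi)}\cD^{\chi(W^\Xi)/2}$ and similarly for $(W')^\Xi$. The composite is then $\kappa^{\sigma(W^\Xi)+\sigma((W')^\Xi)}\cD^{(\chi(W^\Xi)+\chi((W')^\Xi))/2}$ up to normalization bookkeeping, while $\ZCYsk(W\cup_{M_2}W';N)$ closed up is $\kappa^{\sigma((W\cup W')^\Xi)}\cD^{\chi((W\cup W')^\Xi)/2}$.

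The Euler characteristic terms match trivially by additivity (and since $\chi$ of a closed $3$-manifold is $0$), so comparing exponents of $\kappa$ yields $\sigma((W\cup W')^\Xi) = \sigma(W^\Xi)+\sigma((W')^\Xi) - (\text{correction})$, where the correction must be computed. The main obstacle — and the crux of the argument — is to identify this correction term with $\sigma(V;L_1,L_2,L_3)$ of \defref{d:lag-triple}. This is genuinely a topological/linear-algebra computation: gluing the three pieces $W^\Xi$, $(W')^\Xi$, $(W\cup W')^\Xi$ along copies of $N\times I$ and a common handlebody, one must track how the signatures of the intersection forms on the various closed $4$-manifolds differ, and this difference is controlled by the relative positions of the Lagrangians $L_1 = \ker(H_1(N)\to H_1(M_1))$, $L_2$, $L_3$, together with the Lagrangian $L_\Xi = \langle\Xi\rangle$ coming from the filling handlebody. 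The cleanest way to finish is probably to observe that $\sigma(V;L_1,L_2,L_\Xi)$, $\sigma(V;L_2,L_3,L_\Xi)$, and $\sigma(V;L_1,L_3,L_\Xi)$ all vanish or cancel by the genericity of $\Xi$ (choosing $\Xi$ transverse so $L_\Xi\cap L_j = 0$), together with the cocycle identity for the Maslov/Wall triple index, reducing everything to $\sigma(V;L_1,L_2,L_3)$; alternatively, one invokes \ocite{wall}*{Theorem} directly for the closed-up pieces and notes the handlebody contributions cancel in the final tally. Either way, the $\cD$-normalization factors (from $\ZRT$ of the various $S^3$ and connect-sum pieces created in closing up) must be checked to cancel exactly, which is the routine-but-delicate part I would relegate to a lemma.
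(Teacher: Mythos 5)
This statement is not proved in the paper at all: it is Wall's 1969 non-additivity theorem, quoted verbatim (``recast in our setting'') from \ocite{wall} and used as an external input. Your proposal therefore has to be judged as an independent proof attempt, and as such it is circular. The only place the paper relates $\ZCYsk$ of a \emph{cornered} cobordism to signatures is \thmref{t:cornered-cobord-xi}, whose proof explicitly invokes \thmref{t:wall-main}; and your own fallback for identifying the correction term is to ``invoke \ocite{wall}*{Theorem} directly for the closed-up pieces,'' i.e.\ to apply the theorem you are trying to prove to the gluing $W^\Xi = W\cup (H_\Xi\times I)$. The cocycle identity for the triple index does not rescue this: it is a property of $\sigma(V;L_1,L_2,L_3)$ that lets you move between different fillings $\Xi$, but it cannot produce the base case that the gluing defect \emph{equals} a triple index in even one instance.

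There is also a structural reason the TQFT route cannot work even in principle. \prpref{p:eval-zcy} only sees signatures of cornerless cobordisms, and after closing everything up with $H_\Xi\times I$ the three quantities you can compute, $\sigma(W^\Xi)$, $\sigma((W')^\Xi)$, $\sigma((W\cup W')^\Xi)$, already satisfy exact additivity by Novikov (the gluing locus $M_2^\Xi$ is closed). So the equation $z_{W\cup W'} = z_{W'}\cdot z_W$ for the scalars on the one-dimensional spaces $\ZCYsk(M_j;(E,P_\Xi))$ is automatically consistent and yields no constraint relating $\sigma(W^\Xi)$ to $\sigma(W)$ itself; the quantity $\sigma(W)$ of the cornered piece never enters any TQFT computation independently of Wall's theorem. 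Finally, even if it did, the invariant only records $\kappa^{\sigma}$ with $\kappa$ a root of unity (by Vafa's theorem $p_+/p_-$ is a root of unity for $\cA$ modular), so at best one would recover the signature defect modulo the order of $\kappa$, not the integer identity claimed. The correct ``proof'' here is simply to cite Wall; any honest derivation of the defect formula is a direct computation with intersection forms of the glued $4$-manifold, not a consequence of the Crane--Yetter formalism built on top of it.
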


Note that if we change the orientations of $W$ and $W'$,
we need to either flip the orientations of $M_j$'s,
or reverse the direction of the cobordisms;
for the former, the orientation on $N$ is flipped,
while in the latter, $L_1$ and $L_3$ are swapped.
In both cases, the sign of $\sigma(V;L_1,L_2,L_3)$
is flipped,
which is consistent with the rest of the equation.

We want to incorporate the $\sigma(V;L_1,L_2,L_3)$ term
into our Crane-Yetter setting.
First, a generalization of \defref{d:emptyskein-normalization}
to 3-manifolds with boundary:

\begin{definition}
Let $M$ be a 3-manifold with boundary $\del M = N$.
Let $\Xi$ be a full multicurve on $N$
(see \prpref{p:simple-object-multicurve}.
Let $M_\Xi^*$ be the closed 3-manifold obtained by
gluing 2-handles to $M$ along $\Xi$,
and then adding 3-handles (see \lemref{l:skein-add-handlebody}).
Define
\[
\emptynorm{M,\Xi}
= \frac{1}{\ZRT(\overline{M_\Xi^*})} \cdot P_\Xi \circ \emptyskein{M}
\]
where $P_\Xi$ is the projection in $\End_\ZCY(N)(E)$
defined by $\Xi$ (see \defref{d:proj-multicurve}).
\label{d:emptyskein-normalization-corner}
\end{definition}

\begin{theorem}
Let $\cA$ be modular.
Let $W : M_1 \to_N M_2$ be a cornered cobordism,
$N$ outwardly oriented with respect to $M_1$ (and $M_2$),
and let $L_1,L_2 \subseteq V = H_1(N;\RR)$ be the kernels of
the inclusions of $N$ into $M_1,M_2$,
as in \thmref{t:wall-main}.
Let $[\Xi] \subseteq V$ be the subspace spanned by
$\{[\xi] \; | \; \xi \in \Xi\}$.
Then
\[
\ZCYsk(W) (\emptynorm{M_1,\Xi}) = \kappa^{\sigma(W) - \sigma(V;L_1,L_2,[\Xi])}
	\cD^{(\chi(W) - (1-g))/2} \cdot \emptynorm{M_2,\Xi}
\]
%\label{p:eval-zcy-cornered}
%\label{p:cornered-cobord-xi}
\label{t:cornered-cobord-xi}
\end{theorem}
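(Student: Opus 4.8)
The plan is to reduce the general cornered cobordism $W : M_1 \to_N M_2$ to the closed case handled in \prpref{p:eval-zcy} by ``capping off'' both the corner $N$ and the boundary components, and then tracking exactly how the Crane-Yetter evaluation, the normalization factor $1/\ZRT(\overline{M_i{}^*_\Xi})$, and the various topological quantities change under each capping operation. Concretely, fix a handlebody $H$ with $\del H \simeq \overline{N}$ such that the full multicurve $\Xi$ bounds discs in $H$; such an $H$ exists by the definition of fullness (\defref{d:multicurve-full}), and $H$ is attached to $W$ as a collar-thickened 3-handle--worth of 2- and 3-handles along $\Xi$, producing a genuine cobordism $\widehat{W} : M_1{}^*_\Xi \to M_2{}^*_\Xi$ between \emph{closed} 3-manifolds (here $M_i{}^*_\Xi$ is as in \defref{d:emptyskein-normalization-corner}). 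I would first check, using \lemref{l:skein-add-handlebody} and \prpref{p:zcy-corner-gluing-skein}, that under the identification $\ZCYsk(M_i;(E,P_\Xi)) \simeq \ZCYsk(M_i{}^*_\Xi)$ the vector $\emptynorm{M_i,\Xi}$ corresponds precisely to $\emptynorm{M_i{}^*_\Xi}$ of \defref{d:emptyskein-normalization}, and that $\ZCYsk(W)$ restricted to the $(E,P_\Xi)$-summand corresponds to $\ZCYsk(\widehat{W})$.

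Given that reduction, \prpref{p:eval-zcy} immediately yields $\ZCYsk(\widehat{W})(\emptynorm{M_1{}^*_\Xi}) = \kappa^{\sigma(\widehat{W})}\cD^{\chi(\widehat{W})/2}\cdot\emptynorm{M_2{}^*_\Xi}$, so everything comes down to two computations: relating $\sigma(\widehat{W})$ to $\sigma(W) - \sigma(V;L_1,L_2,[\Xi])$, and relating $\chi(\widehat{W})$ to $\chi(W) - (1-g)$. The Euler characteristic computation is routine bookkeeping of handles: attaching the 2- and 3-handles that turn $W$ into $\widehat{W}$ changes $\chi$ by a fixed amount depending only on the genus $g$ of $N$ (the multicurve $\Xi$ has $g$ components up to isotopy, contributing $g$ 2-handles and $1$ 3-handle, giving the shift $g - 1$, i.e. $-(1-g)$), and one checks this against the convention in the statement. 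The signature computation is the heart of the matter: I would view $\widehat{W}$ as the composite $\widehat{W} = W' \cup_{M_2{}^*_\Xi}$(something), or more efficiently, glue $W$ to the trivial cornered cobordism $H\times I$ (a cornered cobordism $H \to_N H$, which has signature $0$ since it is a product) to realize $\widehat W$ up to the handle attachments, and then apply Wall non-additivity (\thmref{t:wall-main}) to the gluing $W \cup_? (H\times I\text{-type piece})$. The Lagrangian triple appearing in Wall's formula is then $(L_1, L_2, \ker(H_1(N;\RR)\to H_1(H;\RR)))$, and the last entry is exactly $[\Xi]$ because $\Xi$ is a basis for the kernel of $H_1(\del H)\to H_1(H)$ when $H$ is chosen compatibly. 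Thus Wall's correction term is precisely $\sigma(V;L_1,L_2,[\Xi])$, and Novikov additivity accounts for the rest, yielding $\sigma(\widehat W) = \sigma(W) - \sigma(V;L_1,L_2,[\Xi]) + (\text{signature of the handlebody pieces, which is }0)$.

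The main obstacle I anticipate is making the Wall-non-additivity step rigorous: \thmref{t:wall-main} as quoted glues \emph{two} cornered cobordisms along a 3-manifold \emph{with the same corner $N$}, whereas here the operation that produces $\widehat W$ from $W$ is a sequence of 2- and 3-handle attachments, not obviously of that form. I would resolve this by finding an explicit cornered cobordism $W_H : M_1 \to_N M_1 \cup_N H$ (or its mirror) built from the collar of $N$ and the handlebody $H$, whose signature is zero, such that $\widehat W$ is the composite $W_H^{*} \cdot W \cdot (\text{analogous piece on the } M_2 \text{ side})$ glued along 3-manifolds-with-boundary, and then the Lagrangian subspace of $H_1(N)$ associated to the $H$-side is $[\Xi]$ by construction. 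Verifying that the homological kernel really is $[\Xi]$ (and not something larger) uses fullness of $\Xi$ in an essential way, and also that $\ker(H_1(\partial H)\to H_1(H))$ is Lagrangian of dimension $g$. Once the triple is correctly identified, the antisymmetry and degeneracy properties of $\sigma(V;-,-,-)$ recorded after \defref{d:lag-triple} handle the sign conventions, and the rest is assembling Novikov additivity with the already-computed $\chi$ and normalization shifts. I expect no further surprises, as every other ingredient --- \prpref{p:eval-zcy}, \lemref{l:skein-add-handlebody}, \prpref{p:zcy-corner-gluing-skein}, \corref{c:skein-3mfld-1dim} --- is in place.
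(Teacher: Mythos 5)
Your proposal is correct and follows essentially the same route as the paper: cap off with the handlebody $H$ in which $\Xi$ bounds discs to obtain a cobordism of closed manifolds, identify the normalized empty skeins via \lemref{l:skein-add-handlebody}, apply \prpref{p:eval-zcy}, and extract the correction terms from Wall non-additivity with the Lagrangian triple $(L_1,L_2,[\Xi])$ together with the Euler characteristic count $\chi(M_2)=1-g$. The only cosmetic difference is that the paper realizes the capping as the extended composition $W'=W\cup_{M_2}\id_{M_2'}$ with $M_2'=(M_2)^*_\Xi$ rather than as explicit $2$- and $3$-handle attachments, which are the same operation.
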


\begin{proof}
Let $M_j' = (M_j)_\Xi^*
= M_j \cup H$ (see \defref{d:emptyskein-normalization-corner}
above).
Let $\id_{M_2'} : M_2' \to M_2'$ be the identity cobordism on $M_2'$,
and let $W' = W \cup_{M_2} \id_{M_2'}$.
By \lemref{l:skein-add-handlebody},
$P_\Xi \circ \emptyskein{M_j} \cup \emptyskein{H}
= \emptyskein{M_j'}$.

Suppose $\ZCY(W)(\emptynorm{M_1,\Xi}) = z \cdot \emptynorm{M_2,\Xi}$.
Then
\begin{align*}
\ZCY(W')(\emptynorm{M_1'})
&=
\ZCY(W')(\emptynorm{M_1,\Xi} \cup \emptyskein{H})
\\ &= 
\ZCY(\id_{M_2'})(\ZCY(W)(\emptynorm{M_1,\Xi}) \cup \emptyskein{H})
\\ &=
z \cdot \emptynorm{M_2,\Xi} \cup \emptyskein{H}
\\ &=
z \cdot \emptynorm{M_2'}
\end{align*}

But by \prpref{p:eval-zcy},
$\ZCY(W')(\emptynorm{M_1'}) =
\kappa^{\sigma(W')} \cD^{\chi(W')/2} \cdot \emptynorm{M_2'}$.
By \thmref{t:wall-main},
$\sigma(W') = \sigma(W) + 0 - \sigma(V;L_1,L_2,[\Xi])$
(since $\sigma(id_{M_2'}) = \sigma(M_2' \times [0,1]) = 0$.
It is also easy to see that $\chi(M_2) = 1 - g$,
so $\chi(W') = \chi(W) + 0 - (1 - g)$.
(since $\chi(id_{M_2'}) = \chi(M_2' \times [0,1]) = 0)$.
Thus we are done.
\end{proof}

Note that one may verify directly that the formula given in
\thmref{t:cornered-cobord-xi}
%\prpref{p:eval-zcy-cornered}
respects composition of cornered cobordisms.
Namely, for $W' : M_2 \to_N M_3$
(not the $W'$ from the proof),
we should have
\[
\sigma(W \cup W') - \sigma(V;L_1,L_3,[\Xi])
=
(\sigma(W) - \sigma(V;L_1,L_2,[\Xi]))
+
(\sigma(W') - \sigma(V;L_2,L_3,[\Xi]))
\]
which, by \thmref{t:wall-main},
can be rearranged to
\[
\sigma(V;L_1,L_2,L_3)
+ \sigma(V;L_1,L_3,[\Xi])
=
\sigma(V;L_1,L_2,[\Xi]))
+ \sigma(V;L_2,L_3,[\Xi]))
\]
This also follows from \thmref{t:wall-main}:
consider $W'' = M_3' \times [0,1]$,
where $M_3' = M_3 \cup H$ as in the proof of \thmref{t:cornered-cobord-xi}
%\prpref{p:eval-zcy-cornered};
\lemref{l:skein-add-handlebody}
then the left side of the equation is the error term from
gluing $W$ to $W'$ first followed by $W''$,
while the right side is from
gluing $W''$ to $W'$ first followed by $W$.
In a sense, this equation expresses an associativity constraint.

\subsection{Relation to Reshetikhin-Turaev invariants}
\label{s:rt-boundary-theory}
\par \noindent

It is widely expected that the Reshetikhin-Turaev TQFT
is the ``boundary theory'' of the Crane-Yetter TQFT;
some version of this has appeared in \ocite{barrett-obs}.
Indeed, the heavy reliance of our constructions
of the Crane-Yetter invariants on evaluations of ribbon graphs
in terms of morphisms in $\cA$ would heavily suggest it
(although perhaps it is a self-fulfilling prophecy).
In this section we construct such a ``boundary theory''.
Throughout this section, we assume $\cA$ is modular.

First we address the state space in the Reshetikhin-Turaev TQFT.
As noted in \xmpref{x:skein-pairing-handlebody},
for a handlebody $H = \natural g S^1 \times B^2$
and some boundary value $\VV$ on $\del H$,
\begin{equation}
\ZCYsk(H;\VV) \simeq \dirsum
	\eval{(\bigotimes_b V_{\vec{b}})X_{i_1}X_{i_1}^* \cdots X_{i_g}X_{i_g}^*}
\simeq \tau(\del H, \VV)
\end{equation}
where $\tau(\del H, \VV)$ is the state space associated to
$(\del H,\VV)$ under the Reshetikhin-Turaev TQFT
(see \ocite{BK}*{(4.4.4)}).
This holds for any 3-manifold, as is expected from the
``boundary theory'' hypothesis:
%It is widely expected that this holds for any 3-manifold with boundary $M$.
%We prove it as a direct consequence of the previous theorem,
%thus extending \corref{c:skein-3mfld-1dim}:

\begin{theorem}
Let $\cA$ be modular.
For a 3-manifold with boundary $M$, the space of skeins with
some boundary value $\VV$
is isomorphic to the state space
$\tau(\del M, \VV)$ associated to its boundary
under the Reshetikhin-Turaev TQFT:
\begin{equation}
%\ZCYsk(M;\EE) \simeq \dirsum \eval{X_{i_1}X_{i_1}^* \cdots X_{i_g}X_{i_g}^*}
%\simeq \ZRT(\del M)
\ZCYsk(M;\VV) \simeq \tau(\del M, \VV)
\end{equation}
In particular, $\ZCYsk(M;\EE)$ only depends on $N = \del M$.
\label{t:cy-rt-3mfld}
\end{theorem}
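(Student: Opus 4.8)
\textbf{Proof plan for \thmref{t:cy-rt-3mfld}.}

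The plan is to reduce the general case to handlebodies by a handle-decomposition argument, using the explicit identification of $\ZCYsk(H;\VV)$ for a handlebody $H$ already recorded above and the fact that the Reshetikhin-Turaev state space $\tau(-,-)$ enjoys the same gluing/handle-attachment behavior. First I would fix a Heegaard-type setup: realize $M$ (with $\del M = N$) via a handle decomposition of $M$ built on a neighborhood of $N$, i.e.\ present $M$ as obtained from $N \times [0,1]$ by successively attaching handles of index $1,2,3$ to the $N \times 1$ side. Equivalently, think of $M$ as a cornered cobordism $\emptyset \to_N N$ (or use the ``sealing a puncture'' picture of \prpref{prp:punctured_glue} together with \lemref{l:skein-add-2-handle} and \lemref{l:skein-add-handlebody}, which already tell us how $\ZCYsk$ transforms under attaching $2$- and $3$-handles). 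The base case of the induction is $M = $ handlebody, where the isomorphism $\ZCYsk(H;\VV) \simeq \tau(\del H,\VV)$ is exactly \xmpref{x:skein-pairing-handlebody} / \ocite{BK}*{(4.4.4)}; concretely both sides are $\bigoplus \eval{(\bigotimes_b V_{\vec b}) X_{i_1}X_{i_1}^* \cdots X_{i_g}X_{i_g}^*}$.

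Next I would check compatibility of the two theories under each handle attachment. On the skein side, \defref{d:zcyksk-elementary} gives the map $\ZCYsk(W;N)$ associated to attaching an elementary $4$-dimensional handle, but here I want the effect of attaching a $3$-dimensional handle to $M$ on $\ZCYsk(M;\VV)$ — this is precisely the content of \lemref{l:skein-add-2-handle} for $2$-handles (quotient by the image of the central projection $P_\xi$, equivalently $\pi$ of \prpref{prp:punctured_glue}) and \lemref{l:skein-add-handlebody} for the combination giving handlebody-type fillings. On the Reshetikhin-Turaev side, attaching a $2$-handle along a framed curve $\xi$ corresponds to the cylinder-with-loop operator $\Omega\xi$ acting on $\tau$, and attaching a $1$-handle tensors in a new $\bigoplus_i X_i \tnsr (-) \tnsr X_i^*$ summand; these are standard (again \ocite{BK}*{Chapter 4}). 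The key point is that, under the base-case isomorphism, the regular-coloring loop $\tfrac{1}{\cD}\Omega\xi$ on the skein side is intertwined with the corresponding Reshetikhin-Turaev operator — this is the Killing/charge-conservation lemma (\lemref{l:killing}) together with \lemref{lem:sliding}, which is exactly what makes $P_\xi$ a projection with the same image as the RT construction produces. Then I would argue by induction on the number of handles that the isomorphism propagates: if $M' = M \cup (\text{handle})$, naturality of the base-case iso plus the matching of handle-operators gives $\ZCYsk(M';\VV) \simeq \tau(\del M',\VV)$.

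Two bookkeeping items need care. First, invariance: the handle decomposition is not unique, so I must check that the resulting isomorphism is independent of choices — this follows from \prpref{p:zcysk-corner-indep} (handle moves don't change $\ZCYsk$ of the relevant cornered cobordisms) together with the analogous well-definedness for $\tau$, or more cheaply one just notes that both $\ZCYsk(M;\VV)$ and $\tau(\del M, \VV)$ are intrinsic invariants of $(M,\VV)$ so it suffices to exhibit \emph{one} isomorphism. Second, for the last sentence of the theorem (``$\ZCYsk(M;\EE)$ depends only on $N = \del M$''), I would simply specialize $\VV = \EE$ and observe the right-hand side $\tau(\del M, \EE) = \tau(N)$ manifestly depends only on $N$; alternatively this is immediate from \corref{c:skein-3mfld-1dim}-style reasoning once one knows $\ZCYsk$ of closed $3$-manifolds is $1$-dimensional, but the cleanest route is via the established isomorphism. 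I expect the main obstacle to be the middle step: verifying that the skein-theoretic handle operators of \defref{d:zcyksk-elementary}/\lemref{l:skein-add-2-handle} are intertwined with the Reshetikhin-Turaev gluing maps under the handlebody base-case identification, since this requires carefully matching normalizations (the factors of $\cD^{\pm 1}$, $\kappa^{\pm\sigma}$, and the $d_i^{1/2}$-type coefficients) on both sides — exactly the kind of normalization bookkeeping flagged in \rmkref{r:rt-mirror} and \rmkref{r:z-zcy-confusion}.
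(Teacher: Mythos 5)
Your plan is viable, but it takes a genuinely different route from the paper. The paper does not induct over a 3-dimensional handle decomposition of $M$ at all: it picks a single 4-dimensional cornered cobordism $W : M \to_N \sqcup H$ to a disjoint union of handlebodies, decomposes $\VV$ into copies of the simple object $(\EE, P_\Xi)$ of the skein category of $N$ (\prpref{p:simple-object-multicurve}), and then observes that on each such summand the source and target of $\ZCYsk(W;N)$ are both one-dimensional (\corref{c:skein-3mfld-1dim}), while \thmref{t:cornered-cobord-xi} says this map sends the normalized empty skein to a nonzero multiple $\kappa^{\cdots}\cD^{\cdots}$ of the normalized empty skein. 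Hence $\ZCYsk(W;N)$ is an isomorphism $\ZCYsk(M;\VV)\simeq\ZCYsk(\sqcup H;\VV)$, and the target is the Reshetikhin-Turaev state space by \xmpref{x:skein-pairing-handlebody}. What this buys is that all the normalization bookkeeping you flag is already packaged into the signature formula of \thmref{t:cornered-cobord-xi}, and the decomposition into simples sidesteps tracking a general $\VV$ through handle attachments. Your route has to confront two things the paper avoids: (i) each 3-dimensional handle attachment changes $\del M$, so \lemref{l:skein-add-2-handle} (stated for empty boundary values) must be upgraded to carry $\VV$ along on the untouched boundary component; and (ii) you must independently verify that the RT state space transforms by the matching operators ($P_\xi$ for $2$-handles, the induction $\bigoplus_i X_i \tnsr(-)\tnsr X_i^*$ for $1$-handles) with compatible normalizations --- true for modular $\cA$, but itself a piece of RT theory you would need to cite or prove. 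Neither is a fatal gap, but the paper's argument shows both can be bypassed once the signature formula and the one-dimensionality of the simple summands are in hand.
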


\begin{proof}
Let $W : M \to_N \sqcup H$ be some cornered cobordism of $W$
from $M$ to a disjoint union of handlebodies.
Let $\VV \simeq \dirsum (\EE,P_\Xi)$ be a decomposition
of $\VV$ into simples (see \prpref{p:simple-object-multicurve}).
By \thmref{t:cornered-cobord-xi},
for each $(\EE,P_\Xi)$,
$\ZCYsk(W;N)$ is an isomorphism 
\[
\ZCYsk(W;N) : \ZCYsk(M;(\EE,P_\Xi)) \simeq \ZCYsk(\sqcup H;(\EE;P_\Xi))
\]
and thus it is an isomorphism
\[
\ZCYsk(W;N) : \ZCYsk(M;\VV) \simeq \ZCYsk(\sqcup H;\VV)
\]
By the preceding discussion, we are done.
\end{proof}

\begin{definition}
Let $\cA$ be premodular.
We define the \emph{category of extended cobordisms},
denoted $\cobx$, as follows.
An object of $\cobx$ is a pair
$(M,\VV)$, where $M$ is a 3-manifold,
and $\VV \in \ZCYsk(\del M)$
is a boundary value.

A morphism from $(M,\VV)$ to $(M',\VV')$
is a triple $(W,M_0,\vphi_0)$,
where $W$ is a 4-manifold with
$\del W = \ov{M} \cup_{\del M} M_0 \cup_{\del M'} M'$,
and $\vphi_0 \in \ZCYsk(M_0;\VV \cup \VV')$.

Composition is given by simply gluing the manifolds appropriately,
and composing the skeins;
more precisely, for morphisms
\begin{align*}
(W,M_0,\vphi_0) &: (M,\VV) \to (M',\VV')
\\
(W',M_0',\vphi_0') &: (M',\VV') \to (M'',\VV'')
\end{align*}
their composition is given by
\[
(W',M_0',\vphi_0') \circ (W,M_0,\vphi_0)
:= (W' \cup_{M'} W, M_0' \cup_{\del M'} M_0, \vphi_0' \cup_{\del M'} \vphi_0)
\]

The identity morphism of $(M,\VV)$ is given by
$(M \times I, \del M \times I, \id_\VV)$.
\label{d:cobx}
\end{definition}

\begin{lemma}
With disjoint union as the monoidal structure on $\cobx$,
it is rigid.
The unit object is the empty manifold $(\emptyset,\EE)$.
For an object $(M,\VV)$,
the same triple $(M \times I, \del M \times I, \id_\VV)$
that defines the identify morphism also defines the
(co)evaluation morphisms
\begin{align*}
\ev_{(M,\VV)} = (M \times I, \del M \times I, \id_\VV) &:
	(M,\VV) \sqcup (\ov{M},\VV) \to (\emptyset,\EE)
\\
\coev_{(M,\VV)} = (M \times I, \del M \times I, \id_\VV) &:
	(\emptyset,\EE) \to (M,\VV) \sqcup (\ov{M},\VV)
\end{align*}
\label{l:cobx-duals}
\end{lemma}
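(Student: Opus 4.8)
The plan is to check, one ingredient at a time, that $\cobx$ carries a rigid monoidal structure with unit $(\emptyset,\EE)$ and with the asserted (co)evaluation maps. First I would note that $\cobx$ is a well-defined category already by \defref{d:cobx}, once one adopts the usual conventions of a cobordism category: the $4$-manifold $W$ in a morphism is taken up to PL-homeomorphism fixing $\del W$, and the skein datum $\vphi_0$ up to the relations defining $\ZCYsk$. Disjoint union $\sqcup$ is then manifestly a bifunctor, and $(\emptyset,\EE)$ is a unit object because $\emptyset\sqcup M=M$, $\ZCYsk(\emptyset)\simeq\kk$, and $\EE$ is the identity for $\boxtimes$ under \lemref{lem:disjoint_union} ($\ZCYsk(N\sqcup N')\simeq\ZCYsk(N)\boxtimes\ZCYsk(N')$). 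The associativity and unit constraints are induced by the evident PL-homeomorphisms of disjoint unions together with the coherence isomorphisms of \lemref{lem:disjoint_union}; pentagon and triangle hold because all the structure homeomorphisms are canonical and any two choices are isotopic rel boundary, hence induce the same morphism of $\cobx$. This part is routine and I would dispatch it quickly. Note also that $\sqcup$ is \emph{symmetric}, so it suffices to exhibit a left dual of each object.

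The substantive step is producing duals. I claim $(M,\VV)^\vee=(\ov M,\VV)$, where $\VV$ is reread as a boundary value on $\del\ov M$: this is legitimate because, by \defref{d:boundary-value-skein}, a boundary value is a finite set of \emph{unoriented} arcs together with an assignment to the two orientations of each arc, hence does not depend on an orientation of the ambient surface. I would then check that the triple $(M\times I,\ \del M\times I,\ \id_\VV)$ genuinely defines the morphisms $\ev_{(M,\VV)}$ and $\coev_{(M,\VV)}$ of the statement: the $4$-manifold $M\times I$ has $\del(M\times I)=(M\times\{0,1\})\cup(\del M\times I)$, with $M\times\{1\}$ receiving the orientation of $M$ and $M\times\{0\}$ that of $\ov M$, so its boundary decomposes as $\ov M\cup_{\del M}M_0\cup_{\del M'}M'$ with $M_0=\del M\times I$ exactly as required; and $\id_\VV$ is a legitimate skein datum because $\ZCYsk(\del M\times I;\iota^0_*\VV,\iota^1_*\VV)=\Hom_{\hatZCYsk(\del M)}(\VV,\VV)$ contains the identity.

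It then remains to verify the two zig-zag identities, e.g. $(\id_{(M,\VV)}\sqcup\ev_{(M,\VV)})\circ(\coev_{(M,\VV)}\sqcup\id_{(M,\VV)})=\id_{(M,\VV)}$ and its mirror image. Unwinding the composition rule of \defref{d:cobx}, the left-hand side is a triple whose $4$-manifold is a chain of four copies of $M\times I$ glued along copies of $M$ and $\ov M$ in the standard zig-zag pattern, whose $M_0$-part is the corresponding zig-zag chain of copies of $\del M\times I$, and whose skein is $\id_\VV\cup\id_\VV$. The cancelling-zig-zag homeomorphism identifies the glued $4$-manifold with $M\times I$ rel boundary and the glued $3$-manifold with $\del M\times I$ rel boundary, while $\id_\VV\circ\id_\VV=\id_\VV$ in $\hatZCYsk(\del M)$; hence the composite equals $(M\times I,\ \del M\times I,\ \id_\VV)=\id_{(M,\VV)}$. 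The other zig-zag is identical after reversing all orientations. Thus $(\ov M,\VV)$ is a left dual of $(M,\VV)$, and — by the symmetry of $\sqcup$ — also a right dual, so $\cobx$ is rigid.

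The only real obstacle I anticipate is bookkeeping rather than conceptual: consistently tracking orientations in the decomposition $\del W=\ov M\cup_{\del M}M_0\cup_{\del M'}M'$ through the gluings, and making the cancelling-zig-zag identification precise in the presence of corners — the pieces $M\times I$ are PL $4$-manifolds with corners along $\del M\times\{0,1\}$, so the gluings and the final homeomorphism with $M\times I$ must be carried out after straightening the angle, exactly as in the PL-topology background of \secref{s:bg-PLtop}. This is where all the care goes, but none of it is hard.
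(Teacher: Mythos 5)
Your proposal is correct and is exactly the routine verification the paper intends (its own proof of \lemref{l:cobx-duals} is just the word ``Straightforward''): identify $(\ov M,\VV)$ as the dual, check the triple $(M\times I,\del M\times I,\id_\VV)$ has the right boundary decomposition for both (co)evaluation, and verify the zig-zag identities via the cancelling homeomorphism of the glued copies of $M\times I$ together with $\id_\VV\circ\id_\VV=\id_\VV$. Your observation that morphisms of $\cobx$ must be taken up to PL-homeomorphism rel boundary for any of this (including the identity axiom itself) to hold is a convention the paper leaves implicit, and you are right to make it explicit.
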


\begin{proof}
Straightforward.
\end{proof}

\begin{proposition}
For an object $(M,\VV)$ of $\cobx$,
define
\[
\cFx((M,\VV)) := \ZCYsk(M;\VV)
\]
and for a morphism
$(W,M_0,\vphi_0) : (M,\VV) \to (M',\VV')$,
we define
\begin{align*}
\cFx((W,M_0,\vphi_0)) : \cFx((M,\VV)) &\to \cFx((M',\VV'))
\\
\vphi &\mapsto \ZCYsk(W;\del M') (\vphi_0 \cup_{\del M} \vphi)
\end{align*}
Then $\cFx$ is a monoidal functor.
\label{p:cobx-functor}
\end{proposition}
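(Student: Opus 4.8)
The plan is to verify the three defining properties of a monoidal functor in turn: that $\cFx$ is well-defined on morphisms, that it respects composition, and that it is compatible with the monoidal (disjoint union) structures; functoriality with respect to identities and the monoidal unit then follow quickly from the constructions. For well-definedness, I would observe that $\cFx((W,M_0,\vphi_0))$ is built from two ingredients already known to be well-defined: the gluing of skeins $\vphi_0 \cup_{\del M} \vphi$ from \defref{d:glue-skeins}, and the cornered-cobordism map $\ZCYsk(W;N)$ from \defref{d:zcysk-corner}, which is independent of the handle decomposition by \prpref{p:zcysk-corner-indep}. Here one must be slightly careful: $W$ as a cornered cobordism goes from $M \cup_{\del M} M_0$ to $M' $ (equivalently, one reads off $\del W = \ov{M \cup M_0} \cup M'$ appropriately), and $\vphi_0 \cup_{\del M} \vphi \in \ZCYsk(M \cup_{\del M} M_0; \VV')$, so the composition is exactly an instance of applying $\ZCYsk(W;\del M')$ to an input skein; linearity in $\vphi$ is clear.

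The heart of the argument is respecting composition. Given $(W,M_0,\vphi_0): (M,\VV)\to (M',\VV')$ and $(W',M_0',\vphi_0'): (M',\VV')\to (M'',\VV'')$, I must show
\[
\ZCYsk(W';\del M'')\big(\vphi_0' \cup \ZCYsk(W;\del M')(\vphi_0 \cup \vphi)\big)
= \ZCYsk(W' \cup_{M'} W; \del M'')\big((\vphi_0' \cup \vphi_0) \cup \vphi\big).
\]
This is precisely the situation of \prpref{p:zcy-corner-gluing-skein} (the skein-theoretic extended-composition formula): one takes $W_1 = W$ with corner $\del M'$, $W_2 = W'$ with corner $\del M''$, and $M_1' = M'$ sitting inside the outgoing boundary, so that $W = W_1 \cup_{M'} W_2$ is the glued cobordism, and the proposition gives exactly $\ZCYsk(W;\del M'') = \ZCYsk(W_2;\del M'')\circ(\id \cup \ZCYsk(W_1;\del M'))$, with the $\id$ acting on the part of the skein living in $M_0$ (which is carried along by \lemref{l:cornered-cobordism-extend-zcysk}). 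Matching up the $M_0, M_0'$ pieces on both sides, and using associativity of the gluing operation $\cup$ on skeins, yields the desired equality. I would also record that the identity morphism $(M\times I, \del M \times I, \id_\VV)$ is sent to the identity, which follows because $\ZCYsk(M\times I)$ restricted to a cylinder acts as the identity on $\ZCYsk(M;\VV)$ (this is the skein analog of \lemref{l:zcysk-identity}, via the equivalence $\Upsilon$ of \thmref{t:sk-equiv}; alternatively, $M \times I$ has a handle-free decomposition as a product and \defref{d:zcysk-corner} gives the identity directly).

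Compatibility with $\sqcup$ is the easy part: for disjoint objects, $\ZCYsk(M_1 \sqcup M_2; \VV_1 \sqcup \VV_2) \simeq \ZCYsk(M_1;\VV_1) \tnsr \ZCYsk(M_2;\VV_2)$ since graphs in a disjoint union split, and disjoint cornered cobordisms act componentwise by \defref{d:zcysk-corner} (handle decompositions of $W_1 \sqcup W_2$ are disjoint unions of those of $W_1$ and $W_2$); this gives the monoidal structure transformation $J$ on $\cFx$, and its hexagon axiom is immediate from associativity of $\tnsr$ on $\Vect$. The main obstacle I anticipate is purely bookkeeping rather than conceptual: carefully tracking which $3$-manifold pieces ($M$, $M_0$, $M'$, $M_0'$, $M''$) are glued to which, and in particular getting the orientations and corner-data right so that \prpref{p:zcy-corner-gluing-skein} applies on the nose in the composition step — once the setup is correctly stated, the verification is a direct appeal to that proposition together with \lemref{l:cornered-cobordism-extend-zcysk} and associativity of skein gluing.
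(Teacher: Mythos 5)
Your proposal is correct and is exactly the routine verification the paper's proof (which just says ``straightforward'') has in mind: well-definedness from \prpref{p:zcysk-corner-indep}, composition from \prpref{p:zcy-corner-gluing-skein} applied with $M_1 = M\cup_{\del M}M_0$, $M_1'=M'$, $M_2 = M'\cup_{\del M'}M_0'$, and monoidality from the splitting of skein modules over disjoint unions. One tiny slip: in the composition step the identity factor in $\ZCYsk(W_2;N_2)\circ(\id\cup\ZCYsk(W_1;N_1))$ acts on the skein $\vphi_0'$ living in $M_0'$ (the piece $M_2\setminus M_1'$), not on the part in $M_0$, which is already absorbed into the input of $\ZCYsk(W_1;N_1)$.
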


\begin{proof}
Straightforward.

We note that the image of the evaluation morphism
under $\cFx$ gives the skein pairing (\defref{d:pairing-skein}).
\end{proof}

Let us recall the Reshetikhin-Turaev invariants of
3-manifolds with links in more detail \ocite{rt-3mfld}.
(The constructions here are adapted from the exposition in
\ocite{BK}*{Chapter 4.4}.)
Consider a coupon $c$ of a ribbon graph $\Gamma$,
and let $r_1,\ldots,r_g$ be some ribbons that attach to $c$ twice
(not necessarily all such ribbons).
Let $H(c,r_1,\ldots,r_g)$ be a small neighborhood of
$c \cup r_1 \cup \cdots \cup r_g$;
it is a genus $g$ handlebody.

\begin{definition}
A \emph{special graph} $X$ is an (uncolored) ribbon graph in $S^3$
with a distinguished collection of annuli $L$,
and a distinguished collection
$\Psi = \{(c,r_1,\ldots,r_g),\ldots\}$
of coupons and ribbons as above.
We define
\[
M_X = M_L \backslash H_\Psi
\]
where $M_L = \del W_L$,
$W_L$ is the 4-manifold obtained by attaching 2-handles to $B^4$
along $L \subset S^3 = \del B^4$,
and $H_\Psi := \bigcup H(c,r_1,\ldots,r_g)$
is the union of handlebodies associated to the distinguished coupons and ribbons.
\label{d:special-graph}
\end{definition}

In the following, $\ZCYsk(H_\Psi;\VV)$
replaces $\tau(\del H_\Psi,\VV)$
in the usual definition (say in \ocite{BK}).

\begin{definition}
Let $X$ be a special graph, and consider the graph
$\Gamma = (X \backslash L) \backslash H_\Psi$,
interpreted as a graph in $M_X$.
%$\Gamma = (X \backslash L) \backslash H_\Psi$ be the ribbon graph in $M_X$.
Given a coloring $\Phi$ of $\Gamma$,
and a coloring $\vphi$ of the coupons of $\Psi$,
we consider the evaluation
$\ZRT((X \backslash L,\Phi \cup \vphi) \cup \Omega L)$,
that is, $X \backslash L$ (as a graph in $S^3$) is
colored by $\Phi$ and $\vphi$,
and each component of $L$ is colored by the regular coloring
(see \xmpref{x:zcy-compute-2-handle}).
Thus, if we hold $\Phi$ fixed, so that it defines a boundary value $\VV$
on $\del H_\Psi$,
then the evaluation defines a functional
\begin{equation}
\ZRT((X \backslash L,\Phi \cup -) \cup \Omega L) :
	\ZCYsk(H_\Psi; \VV) \to \kk
\end{equation}
(it is clear that the possible colorings $\vphi$ of the coupons
of $\Psi$ span $\ZCYsk(H_\Psi;\VV)$).
Then by using the non-degenerate skein pairing \defref{d:pairing-skein},
this defines a vector

\begin{equation}
\tau(M_X, (\Gamma,\Phi)) \in \ZCYsk(\ov{H_\Psi};\VV)
\end{equation}

(By \xmpref{x:skein-pairing-handlebody},
the skein pairing agrees (up to a factor) with the pairing defined in
\ocite{BK}*{(4.4.5)}.)

The \emph{Reshetikhin-Turaev of the
	colored ribbon graph $(\Gamma,\Phi)$ in $M_X$}
is defined to be
\begin{equation}
\ZRT((M_X, (\Gamma,\Phi)))
= \kappa^{-\sigma(L)} \cdot \cD^{(-|L|-1)/2} \cdot \tau(M_X, (\Gamma,\Phi))
\in \ZCYsk(\ov{H_\Psi};\VV)
\end{equation}

More generally,
given a 3-manifold $M$ with a colored ribbon graph $\Gamma'$
with boundary value $\VV$',
if we have homeomorphisms $M \simeq M_X$
and $H \simeq H_\Psi$ that glue to a homeomorphism
$M \cup_{\del M \simeq \ov{\del H}} H \simeq M_X \cup H_\Psi = M_L$,
and it sends $(\Gamma',\Phi')$ to $(\Gamma,\Phi)$, $\VV'$ to $\VV$,
then we define
\[
	\ZRT((M,(\Gamma',\Phi'))) = \ZRT((M_X,(\Gamma,\Phi)))
\]
under the identification
$\ZCYsk(\ov{H};\VV') \simeq \ZCYsk(\ov{H_\Psi};\VV)$.
\label{d:rt-inv-BK}
\end{definition}

The definition above is a generalization of
\eqnref{e:zrt-defn} to 3-manifolds with boundary;
it is shown to be independent of the choice of special graph $X$
(i.e. the choice of homeomorphism in the last paragraph)
in \thmref{t:rt-inv-equiv}.
Now we give a definition of the Reshetikhin-Turaev invariant
that is based on $\cobx$:

\begin{definition}
Let $X$ be a special graph,
with $\Gamma = (X \backslash L) \backslash H_\Psi \subset M_X$
and $\Phi$ a coloring of $\Gamma$ as in \defref{d:rt-inv-BK} above.
Let $N = \del H_\Psi$, and let $\VV$ be the boundary value on $N$
defined by $\Phi$.
Consider the 4-manifold $W_L$ obtained from attaching
2-handles along $L$, so that $\del W_L = M_L$;
treat $\ov{W_L}$ as a cornered cobordism
$\ov{W_L} : M_X \to_N \ov{H_\Psi}$.
Then we define
\[
\taucy(M_X, (\Gamma,\Phi)) = \ZCYsk(\ov{W_L};N)((\Gamma,\Phi))
\in \ZCYsk(\ov{H_\Psi};\VV)
\]
Equivalently, we can consider the morphism
$(\ov{W_L},M_X,(\Gamma,\Phi)) : (\emptyset,\EE) \to (\ov{H_\Psi};\VV)$,
and define
\[
\taucy(M_X, (\Gamma,\Phi)) =
	\cFx((\ov{W_L},M_X,(\Gamma,\Phi)))(\emptyskein{\emptyset})
	\in \ZCYsk(\ov{H_\Psi};\VV)
\]
We then define
\[
\ZRTCY(M_X, (\Gamma,\Phi)) = %\kappa^{-\sigma(L)} \cdot \cD^{(-|L|-1)/2}
	\kappa^{-\sigma(\ov{W_L})} \cdot \cD^{-\chi(\ov{W_L})/2}
	\cdot \taucy(M_X, (\Gamma,\Phi))
\]
More generally, for an arbitrary 3-manifold $M$
with a colored graph
$(\Gamma,\Phi) \in \ZCYsk(M;\VV)$,
and a disjoint union of handlebodies $H$ with
$\del M = N = \ov{\del H}$,
we define
\[
\ZRTCY(M,(\Gamma,\Phi)) =
	\kappa^{-\sigma(W)} \cdot \cD^{-\chi(W)/2} \cdot
	\ZCYsk(W;N)((\Gamma,\Phi))
	\in \ZCYsk(\ov{H};\VV)
\]
where $W$ is some 4-manifold with boundary $\del W = \ov{M \cup_N H}$,
i.e. a cornered cobordism $W : M \to_N \ov{H}$.
\label{d:rt-inv-cobx}
\end{definition}

\begin{lemma}
The value of $\ZRTCY(M,(\Gamma,\Phi)) \in \ZCYsk(\ov{H};\VV)$
depends only on $M$ and $H$ (and the identification of their boundaries),
and is independent of $W$.
\label{l:zrtcy-indep}
\end{lemma}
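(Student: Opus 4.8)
The plan is to show that any two choices of $W$ (the cornered cobordism $M \to_N \ov{H}$) give the same value, by reducing to the fact that the Crane-Yetter invariant $\ZCYsk$ of closed 4-manifolds, suitably normalized, depends only on $\sigma$ and $\chi$ (Proposition \ref{p:eval-zcy}), together with the composition property \prpref{p:zcy-corner-gluing-skein}. First I would fix the 3-manifold $M$, the handlebody $H$, the identification $\del M = N = \ov{\del H}$, and the colored graph $(\Gamma,\Phi) \in \ZCYsk(M;\VV)$. Suppose $W_1, W_2$ are two 4-manifolds, each a cornered cobordism $W_i : M \to_N \ov{H}$. The key observation is that $\ZCYsk(\ov{H};\VV)$ is finite-dimensional (indeed, after decomposing $\VV$ into simples via \prpref{p:simple-object-multicurve}, each summand $\ZCYsk(H; (\EE,P_\Xi))$ is one-dimensional by \lemref{l:skein-add-handlebody} and \corref{c:skein-3mfld-1dim}), so it suffices to pair $\ZCYsk(W_i;N)((\Gamma,\Phi))$ against skeins in $\ZCYsk(H;\VV)$ via the non-degenerate skein pairing $\evsk$ (\defref{d:pairing-skein}, non-degenerate by \corref{c:pairing-equiv}).

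Next I would take any $\psi \in \ZCYsk(H;\VV)$ and compute $\evsk(\ZCYsk(W_i;N)((\Gamma,\Phi)), \psi)$. By \lemref{l:skein-pairing-dual}, this equals $\ZCYsk(W_i \cup_{\ov{H}} (\ov{H} \times I))((\Gamma,\Phi) \cup \psi)$, but more usefully, gluing $W_i$ to a 4-manifold $W'$ with $\del W' = H \cup_N \ov{H}$ that realizes the pairing, we get a closed 4-manifold $\hat{W}_i = W_i \cup_{\ov H} W'$ with a colored graph inside, and by \prpref{p:zcy-corner-gluing-skein} the pairing equals $\ZCYsk(\hat{W}_i)((\Gamma,\Phi) \cup (\text{stuff from } W'))$. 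The point is that $W_1$ and $W_2$, as cornered cobordisms with the same source and target, are cobordant rel boundary (both $\del W_i$ are canonically identified with $\ov{M \cup_N H}$), so $\hat{W}_1$ and $\hat{W}_2$ differ by a closed 4-manifold surgery; more precisely, $\hat{W}_1$ and $\hat{W}_2$ are both closed 4-manifolds containing the same colored graph $(\Gamma,\Phi)$ in the same region, and outside a ball containing the graph they are closed 4-manifolds. Applying \prpref{p:eval-zcy} (in the form relating $\ZCYsk$ of a 4-manifold containing a graph to $\ZRT$ of the graph times $\kappa^\sigma \cD^{\chi/2}$ — the graph-containing analog, which follows by the same handle-decomposition argument, absorbing the graph into the evaluation as in \xmpref{x:zcy-compute-2-handle}), we see that $\ZCYsk(\hat{W}_i)(\cdots) = \kappa^{\sigma(\hat{W}_i)} \cD^{\chi(\hat{W}_i)/2} \cdot \ZRT(\text{graph})$, and the normalization factor $\kappa^{-\sigma(W_i)} \cD^{-\chi(W_i)/2}$ in \defref{d:rt-inv-cobx} is designed precisely to cancel the $W_i$-dependent part, using Novikov additivity $\sigma(\hat W_i) = \sigma(W_i) + \sigma(W')$ and additivity of $\chi$ modulo the $\chi(N^2)$, $\chi(M^3)$ bookkeeping.

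The main obstacle I expect is bookkeeping the Wall non-additivity correction: when gluing $W_i$ to $W'$ along $\ov{H}$, which is a 3-manifold \emph{with boundary} $N$, the signature does not add on the nose but picks up a Maslov triple term $\sigma(V; L, L', L'')$ as in \thmref{t:wall-main}. However, this correction term depends only on the Lagrangian subspaces $L \subset H_1(N;\RR)$ coming from $M$, $W'$, and $\ov H$ — \emph{not} on $W_i$ — so it is the same for $\hat W_1$ and $\hat W_2$ and hence cancels in the comparison; I would need to state this carefully, noting that $\ker(H_1(N) \to H_1(\ov H))$ and $\ker(H_1(N) \to H_1(M))$ are fixed, and the third Lagrangian depends on the fixed auxiliary $W'$. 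Once this is pinned down, the independence follows: for every $\psi$, $\evsk(\ZRTCY_{W_1}, \psi) = \evsk(\ZRTCY_{W_2}, \psi)$, and non-degeneracy of $\evsk$ gives $\ZRTCY(M,(\Gamma,\Phi))$ well-defined. A cleaner alternative, which I would mention, is to first reduce to the case that $W$ is built from $M \times I$ by attaching handles of index $\geq 2$ (always possible by turning a handle decomposition upside down), then check invariance under handle modifications (\defref{d:handle-decomp-modification}) directly using \prpref{p:zcysk-corner-indep}, which already establishes handle-decomposition-independence of $\ZCYsk(W;N)$; the remaining point is just that changing $W$ within its rel-boundary cobordism class changes $\sigma$ and $\chi$ in a way exactly matched by the normalization, which is \prpref{p:eval-zcy} again.
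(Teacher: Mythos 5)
Your overall strategy is the right one and matches the paper's (one-line) proof, which simply invokes \prpref{p:eval-zcy}: pair against $\ZCYsk(H;\VV)$ with the non-degenerate skein pairing and reduce to the closed case. However, your execution takes an unnecessary detour that introduces one outright error and one unproved claim. First, the error: $\hat W_i = W_i \cup_{\ov H} W'$ is \emph{not} a closed 4-manifold — its boundary is still $\ov{M} \cup_N H$ — so the subsequent appeal to the closed-manifold formula does not literally apply to $\hat W_i$. Second, the detour through the auxiliary $W'$, the Wall correction term, and a ``graph-containing analog of \prpref{p:eval-zcy}'' is all avoidable. You already wrote down the short path and then abandoned it: by \lemref{l:skein-pairing-dual}, $\evsk(\ZCYsk(W_i;N)((\Gamma,\Phi)),\psi) = \ZCYsk(W_i)\bigl((\Gamma,\Phi)\cup_N\psi\bigr)$, where on the right $W_i$ is viewed as a cobordism from the \emph{closed} 3-manifold $\ov{\del W_i} \cong M\cup_N H$ to $\emptyset$. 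Since $\ZCYsk$ of a closed 3-manifold is one-dimensional (\corref{c:skein-3mfld-1dim}), the skein $(\Gamma,\Phi)\cup_N\psi$ equals $c\cdot\emptynorm{M\cup_N H}$ for a scalar $c$ depending only on $M$, $H$, $(\Gamma,\Phi)$ and $\psi$, and \prpref{p:eval-zcy} gives $\ZCYsk(W_i)(\emptynorm{}) = \kappa^{\sigma(W_i)}\cD^{\chi(W_i)/2}$. Hence $\kappa^{-\sigma(W_i)}\cD^{-\chi(W_i)/2}\,\evsk(\ZCYsk(W_i;N)((\Gamma,\Phi)),\psi) = c$ is independent of $i$, and non-degeneracy of $\evsk$ finishes the proof; no Novikov additivity, no Wall correction, and no new evaluation formula for graphs is needed. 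Finally, your ``cleaner alternative'' is the weakest part: \prpref{p:zcysk-corner-indep} only gives independence of the handle decomposition of a \emph{fixed} $W$, and the phrase ``changing $W$ within its rel-boundary cobordism class changes $\sigma$'' is confused, since $\sigma$ is a cobordism invariant — what actually varies between different choices of $W$ is handled exactly by the normalization, which is the content of the pairing argument above, not of handle-move invariance.
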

\begin{proof}
Follows easily from \prpref{p:eval-zcy}; details are left to the reader.
\end{proof}

The following theorem shows that
the invariants $\ZRT$ and $\ZRTCY$ are equivalent
(we repeat some definitions for the reader's convenience):

\begin{theorem}
Let $M$ be a 3-manifold, and let $\Gamma$ be a colored ribbon graph
in $M$ with boundary value $\VV$ on $N := \del M$.
Let $H$ be a disjoint union of handlebodies
with an identification  $\del H \simeq \ov{N}$,
so that we have an identification
of the Reshetikhin-Turaev state space $\tau(N,\VV)$
with the skein space $\ZCYsk(\ov{H};\VV)$.
Then for any cornered cobordism
$W: M \to \ov{H}$,
\[
\ZRT(M,\Gamma) = \kappa^{-\sigma(W)} \cdot \cD^{-\chi(W)/2}
	\cdot \ZCY(W;N)(\Gamma)
\in \ZCYsk(\ov{H};\VV)
\]
\label{t:rt-inv-equiv}
\end{theorem}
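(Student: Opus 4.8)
The plan is to compare the two sides directly using the handle-decomposition definition of $\ZCYsk$. First, I would reduce to the case where $M = M_X$ arises from a special graph $X$, since $\ZRT(M,\Gamma)$ was \emph{defined} that way (Definition \ref{d:rt-inv-BK}), and both sides are by construction independent of the auxiliary choices: the right side by \lemref{l:zrtcy-indep} (via \prpref{p:eval-zcy}), the left side is exactly what \thmref{t:rt-inv-equiv} is partly asserting. So fix a special graph $X$ presenting $M = M_L \backslash H_\Psi$ with distinguished link $L$ and handlebodies $H_\Psi$, set $H = H_\Psi$, $N = \del H_\Psi$, and take the canonical cornered cobordism $W = \ov{W_L} : M_X \to_N \ov{H_\Psi}$ obtained by turning the 2-handles attached along $L$ upside down. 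For this particular $W$, I claim $\ZCY(\ov{W_L};N)(\Gamma) = \taucy(M_X,(\Gamma,\Phi))$ agrees, up to the normalization factor $\kappa^{\sigma(\ov{W_L})}\cD^{\chi(\ov{W_L})/2}$, with $\tau(M_X,(\Gamma,\Phi))$ from Definition \ref{d:rt-inv-BK}.

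The heart of the argument is therefore to unwind $\ZCYsk(\ov{W_L};N)$ via \defref{d:zcysk-corner}–\defref{d:zcyksk-elementary} exactly as in \xmpref{x:zcy-compute-2-handle}. The dual 2-handles $\cH_2^{*(m)}$ have belt spheres equal to the components $L_m$; applying $\ZCYsk$ to each adds a loop along $L_m$ colored by the regular coloring, producing $\Gamma \cup \Omega L$ viewed in the appropriate sphere. Then I would use the skein pairing $\evsk$ (\defref{d:pairing-skein}, \lemref{l:skein-pairing-dual}) together with \thmref{t:sk-equiv-4mfld} and \corref{c:pairing-equiv} to identify the resulting element of $\ZCYsk(\ov{H_\Psi};\VV)$ with the vector obtained by pairing the functional $\ZRT((X\backslash L, \Phi \cup -)\cup \Omega L)$ against skeins in $\ZCYsk(H_\Psi;\VV)$ — which is precisely $\tau(M_X,(\Gamma,\Phi))$. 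Here I need the fact (from \xmpref{x:skein-pairing-handlebody}) that the skein pairing on handlebodies matches \ocite{BK}*{(4.4.5)} up to the genus-dependent factor of $\cD^{g/2}$; I would track that factor carefully and check it is absorbed into the $\cD^{-\chi(\ov{W_L})/2}$ normalization, since $\chi(\ov{W_L}) = \chi(W_L) = |L|+1$ and the genus count of $H_\Psi$ enters through $\chi$ of the complement. The signature normalizations match because $\sigma(\ov{W_L}) = -\sigma(L)$ (orientation reversal) and $\kappa^{-\sigma(\ov{W_L})} = \kappa^{\sigma(L)}$... actually $\kappa^{\sigma(W_L)}$ with the correct sign as in \eqnref{e:zrt-defn}; I would reconcile the sign conventions using \rmkref{r:rt-mirror}.

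Once the identity holds for the canonical $W = \ov{W_L}$, the general case follows: for an arbitrary cornered cobordism $W': M \to_N \ov{H}$, both $\kappa^{-\sigma(W')}\cD^{-\chi(W')/2}\ZCY(W';N)(\Gamma)$ and the expression for $\ov{W_L}$ compute the same element of the one-dimensional-per-boundary-value space, by \lemref{l:zrtcy-indep} — equivalently, glue $\ov{W'}$ to $\ov{W_L}$ along $M$ to get a cornered cobordism $\ov{H_\Psi}\to_N \ov{H}$ with underlying 4-manifold a disjoint union of handlebody-like pieces of zero signature and controlled Euler characteristic, and apply \prpref{p:eval-zcy} / \prpref{p:zcy-corner-gluing-alt} plus Novikov additivity to see that the normalization factors are consistent. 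Finally, for a $3$-manifold $M$ not presented by a special graph, I invoke Definition \ref{d:rt-inv-BK}'s last paragraph (any $M$ with graph is homeomorphic, compatibly, to some $M_X$) to transport the equality.

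The main obstacle I anticipate is bookkeeping of the normalization constants and the handedness conventions: matching $\cD^{-\chi(W)/2}$ against the $\cD^{(-|L|-1)/2}$ and $\cD^{g/2}$ factors floating around in Definitions \ref{d:rt-inv-BK}, \ref{d:rt-inv-cobx}, \xmpref{x:zcy-compute-2-handle}, and \xmpref{x:skein-pairing-handlebody}, and making sure the mirror-image issue of \rmkref{r:rt-mirror} is applied consistently on both sides so the $\kappa^{\pm\sigma}$ powers agree. The topology (handle cancellations, Novikov additivity, the structure of $M_X$ as $M_L$ minus handlebodies) is all quotable; it is the precise matching of scalars and the identification of the skein pairing with the Reshetikhin-Turaev pairing that will require care, but these are exactly the computations already set up in \xmpref{x:skein-pairing-handlebody} and \corref{c:pairing-equiv}.
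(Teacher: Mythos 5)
Your proposal is correct and follows essentially the same route as the paper: reduce to $W = \ov{W_L}$ for a special graph presentation via \lemref{l:zrtcy-indep}, then show $\tau = \taucy$ by pairing against arbitrary skeins in $H_\Psi$ using \lemref{l:skein-pairing-dual} and the $\Omega L$ computation of \xmpref{x:zcy-compute-2-handle}. The $\cD^{g/2}$ bookkeeping you worry about never actually enters, since Definition~\ref{d:rt-inv-BK} already defines $\tau(M_X,(\Gamma,\Phi))$ via the skein pairing $\evsk$ itself rather than the pairing of \ocite{BK}.
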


%\begin{lemma}
%The invariants $\tau,\taucy$ defined above are the same,
%and likewise, $\ZRT(M_X,(\Gamma,\Phi)) = \ZRTCY(M_X,(\Gamma,\Phi))$.
%\label{l:rt-inv-same}
%\end{lemma}
\begin{proof}
Choose some special graph $X$ with identifications
$M \simeq M_X$, $H \simeq H_\Psi$,
as in \defref{d:rt-inv-cobx}.

The right-hand side is $\ZRTCY(M,\Gamma)$,
which by \lemref{l:zrtcy-indep} is independent of the choice of $W$.
Take $W = \ov{W_L}$.
Since $\chi(W) = |L|+1$ and $\sigma(W) = \sigma(L)$,
it suffices to show that from $\tau = \taucy$.

As we have seen in \xmpref{x:zcy-compute-2-handle},
the 2-handles of $W_L$ give rise to $\Omega L$.
Thus, by \lemref{l:skein-pairing-dual},
\begin{align*}
\evsk(\vphi,\taucy(M_X, (\Gamma,\Phi)))
&= \evsk(\vphi, \ZCYsk(W_L;N)((\Gamma,\Phi)))
\\
&= \ZCYsk(W_L)((X \backslash L,\Phi \cup \vphi))
\\
&= \ZRT((X \backslash L,\Phi \cup \vphi) \cup \Omega L)
\\
&= \evsk(\vphi, \tau(M_X, (\Gamma,\Phi)))
\end{align*}
\end{proof}

The gluing axiom follows easily from \defref{d:rt-inv-cobx}:
\begin{lemma}
Let $M$ be a 3-manifold with boundary $\del M = N_1 \sqcup N_2 \sqcup N'$,
where $N_1,N_2$ are connected closed surfaces,
and let $f : N_1 \simeq \ov{N_2}$ be a homeomorphism.
Let $H_1,H_2$ be handlebodies with
$\del H_1 = N_1, \del H_2 = N_2$,
and suppose $f$ extends to a homeomorphism
$f : H_1 \simeq \ov{H_2}$.
Let $\Gamma$ be a colored ribbon graph
(omitting the coloring) with boundary value
$\VV = \VV_1 \sqcup \VV_2 \sqcup \VV'$,
and suppose that $f_*(\VV_1) = \VV_2$.
Then we have a graph $\Gamma' = \Gamma/f$ in $M' = M/f$, and
\[
\evsk(\ZRTCY(M,\Gamma)) = \ZRTCY(M',\Gamma')
\in \ZCY(H';\VV')
\]
for any handlebodies $H'$ with $\del H' = N'$,
where $\evsk$ applies the skein pairing
\[
\ZCYsk(H_1;\VV_1) \tnsr \ZCYsk(H_2;\VV_2) \to \kk
\]
\label{l:zrtcy-gluing}
\end{lemma}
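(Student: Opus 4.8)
The plan is to realize the gluing statement entirely at the level of cornered cobordisms and the functor $\cFx$ from \prpref{p:cobx-functor}, so that the formula follows from the compositionality of $\ZCYsk(-;N)$ (\prpref{p:zcysk-corner-indep}, \lemref{l:cornered-cobordism-extend-zcysk}) together with the multiplicativity of the correction factor $\kappa^{-\sigma}\cD^{-\chi/2}$ under gluing of $4$-manifolds. First I would fix a cornered cobordism $W : M \to_N \ov{H_1 \sqcup H_2 \sqcup H'}$ witnessing $\ZRTCY(M,\Gamma) = \kappa^{-\sigma(W)}\cD^{-\chi(W)/2} \cdot \ZCYsk(W;N)(\Gamma) \in \ZCYsk(\ov{H_1};\VV_1) \tnsr \ZCYsk(\ov{H_2};\VV_2) \tnsr \ZCYsk(\ov{H'};\VV')$; such a $W$ exists since every $3$-manifold bounds. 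The output of $\ZCYsk(W;N)(\Gamma)$ is a skein in the handlebody pieces, and applying $\evsk$ means composing with $\ZCYsk(H_1 \times I)$ via the skein pairing (\defref{d:pairing-skein}, \lemref{l:skein-pairing-dual}).

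\textbf{Key steps.} The heart of the argument is a cut-and-paste identification of $4$-manifolds. I would build a cornered cobordism $W' : M' \to_{N'} \ov{H'}$ witnessing $\ZRTCY(M',\Gamma')$, by taking $W$ and attaching $H_1 \times I$ (equivalently, using the extended composition of \defref{d:composition-extended} or \lemref{l:cornered-cobordism-extend-zcysk}) to glue the $\ov{H_1}$ and $\ov{H_2}$ boundary pieces together along $f$; since $f$ extends to $H_1 \simeq \ov{H_2}$, the result is a $4$-manifold with $\del W' = \ov{M' \cup_{N'} H'}$. By \prpref{p:zcy-corner-gluing-skein} (the extended composition formula for $\ZCYsk$), applying $\evsk$ to $\ZCYsk(W;N)(\Gamma)$ yields precisely $\ZCYsk(W';N')(\Gamma')$. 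It remains to compare the normalizing factors: I need $\sigma(W') = \sigma(W)$ and $\chi(W') = \chi(W)$. Gluing $W$ to the identity cobordism $H_1 \times I$ along a closed $3$-manifold piece (here, the union $N_1 \sqcup N_2$ is a closed surface only after identification — this is the subtle point) changes neither invariant: $\chi(H_1 \times I) = 0$ and $\chi$ is additive along closed pieces, while Novikov additivity (\ocite{novikov}) gives $\sigma(W \cup (H_1 \times I)) = \sigma(W) + \sigma(H_1 \times I) = \sigma(W)$ when the gluing is along a closed $3$-manifold. Then $\kappa^{-\sigma(W')}\cD^{-\chi(W')/2} = \kappa^{-\sigma(W)}\cD^{-\chi(W)/2}$, and combining with the $\evsk$ identification gives $\evsk(\ZRTCY(M,\Gamma)) = \ZRTCY(M',\Gamma')$, independent of the choice of $H'$ by \lemref{l:zrtcy-indep}.

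\textbf{Main obstacle.} The delicate point is that $N_1 \sqcup N_2$ is not a closed $3$-manifold, so a naive application of Novikov additivity does not directly apply to the gluing of $W$ with $H_1 \times I$ along the handlebody pieces $\ov{H_1} \cup_{N_1 \sqcup N_2} \ov{H_2}$; this is exactly the setting where Wall non-additivity (\thmref{t:wall-main}) would introduce a correction. The resolution is that after identifying via $f$, the relevant gluing locus in the $4$-manifold is $H_1 \cup_{N_1} \ov{H_2}$, which is a \emph{closed} $3$-manifold (a double of $H_1$), so Novikov additivity does apply and there is no Wall term; one must organize the cut-and-paste so that the gluing is performed along this closed double rather than along the handlebodies-with-boundary. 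I would verify this carefully by viewing $W'$ as $W$ with the two corner boundary components $\ov{H_1}, \ov{H_2}$ glued directly, treating this as a single closed-$3$-manifold gluing. Once that is set up correctly, everything else is the routine bookkeeping of \prpref{p:zcy-corner-gluing-skein} and the functoriality of $\cFx$.
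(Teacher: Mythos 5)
Your central move --- realizing $\evsk$ as composition with the evaluation morphism $(H_1\times I,\del H_1\times I,\id_{\VV_1})$ of \lemref{l:cobx-duals}, so that $W'':=W\cup_{H_1,H_2}(H_1\times I)$ becomes a cornered cobordism $M'\to_{N'}\ov{H'}$ computing $\ZRTCY(M',\Gamma')$ via \lemref{l:zrtcy-indep} --- is exactly the paper's (one-sentence) proof. The gap is in your verification that the normalizing factors match. First, $\chi(H_1\times I)=\chi(H_1)=1-g$, not $0$, and the two pieces of $W''$ intersect in $H_1\sqcup H_2$, so inclusion-exclusion gives $\chi(W'')=\chi(W)+(1-g)-2(1-g)=\chi(W)-(1-g)$, which is nonzero unless $g=1$. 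Second, the separating hypersurface in $W''$ is that same $H_1\sqcup H_2$, a $3$-manifold with boundary $N_1\sqcup N_2\subset\del W''$; this is precisely Wall's setting, and no reorganization makes the separating piece equal to the closed double $H_1\cup_{N_1}\ov{H_2}$ --- that double is simply not the intersection of the two pieces being glued. So Novikov additivity does not apply, and a Wall term $\sigma(V;L_1,L_2,L_3)$ with $V=H_1(N_1\sqcup N_2;\RR)$ can survive. Note that the paper's own neighbouring statements, \thmref{t:cornered-cobord-xi} and \lemref{l:tau-composition}, carry exactly the factors $\kappa^{\sigma(V;\cdot,\cdot,\cdot)}$ and $\cD^{(1-g)/2}$ that your argument claims vanish.

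That these factors do not cancel can be seen concretely: take $M=H\sqcup\ov{H}$ for $H$ a genus-$g$ handlebody, $f=\id$, $\Gamma=\emptyset$, so $M'=\#_g(S^1\times S^2)$. Using $W=(H\times I)\sqcup(\ov{H}\times I)$ one gets $\ZRTCY(M,\emptyset)=\cD^{g-1}\cdot\emptyskein{\ov{H}}\tnsr\emptyskein{H}$ and hence $\evsk(\ZRTCY(M,\emptyset))=\cD^{g-1}$, whereas $\ZRTCY(M',\emptyset)=\ZRT(\#_g(S^1\times S^2))=\cD^{(g-1)/2}$; the two sides differ by $\cD^{(g-1)/2}$, exactly accounting for the $\chi$ discrepancy above. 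So the step ``$\sigma(W'')=\sigma(W)$ and $\chi(W'')=\chi(W)$, hence the prefactors agree'' is where the proof breaks: either the statement must absorb a correction $\kappa^{\pm\sigma(V;L_1,L_2,L_3)}\cD^{\pm(1-g)/2}$ (as in \lemref{l:tau-composition}), or the discrepancy must be traced through the normalization of $\evsk$ --- but it cannot be argued away by Novikov additivity.
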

\begin{proof}
Applying the skein pairing amounts to composing with
$\ev_{(H_1,\VV_1)}$, so that,
if $W$ is a 4-manifold with $\del W = M \cup (H_1 \sqcup H_2 \sqcup H')$,
then $W \cup_{H_1,H_2} H_1 \times I$
has boundary $M'$.
\end{proof}

%\begin{proposition}
It is also helpful to connect the gluing result above
to \defref{d:rt-inv-BK}.
Let $X$ be a special graph,
and let $\psi_1,\psi_2 \in \Psi$ define handlebodies
$H_{\psi_1}, H_{\psi_2}$.
Let $\Phi$ be some coloring of
$\Gamma = (X \backslash L) \backslash H_\Psi$,
leaving a boundary value $\VV$ on $\del H_\Psi$.
Let $\Psi' = \Psi \backslash \{\psi_1,\psi_2\}$,
and let $\VV', \VV_1, \VV_2$ be the boundary value $\VV$
restricted to $\del H_{\Psi'},\del H_{\psi_1},H_{\psi_2}$,
respectively.
%By \defref{d:rt-inv-cobx}, we have
We have
\begin{equation}
\ZRT(M_X,(\Gamma,\Phi))
\in \ZCYsk(\ov{H_\Psi};\VV)
= \ZCYsk(\ov{H_{\psi_1}};\VV_1)
	\tnsr \ZCYsk(\ov{H_{\psi_1}};\VV_2)
	\tnsr \ZCYsk(\ov{H_{\Psi'}};\VV')
\end{equation}

Suppose that there is an identification
$f : \psi_1 \simeq \psi_2$,
extending to an identification
$f : H_{\psi_1} \simeq \ov{H_{\psi_2}}$
which also identifies the boundary values,
i.e. $f_*(\VV_1) = \VV_2$.
Then, as in the lemma above,
we may perform the skein pairing
$\ZCYsk(H_{\psi_1};\VV_1) \tnsr \ZCYsk(H_{\psi_2};\VV_2) \to \kk$
on $\ZRT(M_X,(\Gamma,\Phi))$,
%on $\taucy(M_X,(\Gamma,\Phi))$,
or equivalently, compose $(W_L,M_X,(\Gamma,\Phi))$
with the evaluation morphism for $(H_{\psi_1},\VV_1)$,
to obtain
\begin{equation}
%\evsk(\taucy(M_X,(\Gamma,\Phi)))
\evsk(\ZRT(M_X,(\Gamma,\Phi)))
= \ev_{(H_{\psi_1},\VV_1)} \circ (W_L,M_X,(\Gamma,\Phi))
	(\emptyskein{\emptyset})
\in \ZCYsk(H_{\Psi'};\VV')
\label{e:rt-inv-gluing}
\end{equation}
%\label{p:rt-inv-gluing}
%\end{proposition}

Again, the 4-manifold associated to
$\ev_{(H_{\psi_1},\VV_1)} \circ (W_L,M_X,(\Gamma,\Phi))$
i.e. $W' := W_L \cup_{H_{\psi_1},H_{\psi_2}} H_{\psi_1} \times I$,
has boundary $M' := \del W' = M_X / f$.
The graph $\Gamma \subset M_X$ glues up under $f$ to some colored graph
$\Gamma' = \Gamma / f$.

We want a surgery description for $M'$.
The 4-manifold $W'$ is built from $W_L$
by attaching 1- and 2-handles;
we can see this from the dual handle decomposition
to the description of the cobordism
$H\cup_N \ov{H} \to \emptyset$ in
\xmpref{x:skein-pairing-handlebody}:
attach a 1-handle to neighborhoods of $c_1$ and $c_2$
(the coupons in $\psi_1$ and $\psi_2$),
then corresponding ribbons would glue up into $g$ annuli,
so we attach 2-handles to these annuli.

More concretely,
start with the special graph $X$.
%Since dotted circles may be swapped for 0-framed annuli
%(see proof of \prpref{p:eval-zcy}),
%we may construct $X'$ such that $M' = M_{X'}$,
%as follows.
Bring $c_1$ and $c_2$ close together via an ambient isotopy.
Remove $c_1,c_2$ and join the ribbons together according
to the identification $f$,
then add a dotted circle around these ribbons.
Add the $g$ annuli formed from the ribbons of $\psi_1,\psi_2$
to $L$
(note that these ribbons are not part of $\Gamma$).
Finally, replace the dotted circle with a 0-framed annulus,
and add it to $L$
(see discussion in proof of \prpref{p:eval-zcy}).
Now we have a new special link $X'$,
with distinguished link $L'$ (having $|L| + g + 1$ components),
and distinguished collection $\Psi' = \Psi \backslash \{\psi_1,\psi_2\}$
of coupons and ribbons,
such that $M_{X'} = M'$.
(See \ocite{BK}*{Figure 4.11}).

Given a homeomorphism $f : N \simeq N'$
which sends some boundary value $\VV$ on $N$
to $\VV' = f_*(\VV)$,
the Reshetikhin-Turaev construction gives an isomorphism
$\tau(f) : \tau(N,\VV) \simeq \tau(N',\VV')$
that is defined up to some factors of $\kappa$.
The isomorphism $\tau(f)$ is unchanged under isotopy
(maintaining $f_*(\VV) = \VV'$),
so it gives projective representations of
certain mapping class groups of surfaces.
%TODO anomaly

Let us construct $\tau(f)$ in terms of Crane-Yetter theory.
We assume for simplicity that $N,N'$ are connected,
the disconnected case follows naturally.

\begin{definition}
Let $f : N \simeq N'$ be a homeomorphism between connected surfaces,
and let $f$ sends some boundary value $\VV$ on $N$
to boundary value $\VV' = f_*(\VV)$ on $N'$.

Let $H,H'$ be handlebodies with $\del H = N, \del H' = N'$,
so that $\ZCYsk(H;\VV),\ZCYsk(H';\VV')$
are stand-ins for the Reshetikhin-Turaev state spaces
$\tau(N,\VV),\tau(N',\VV')$.
Let $H''$ be some handlebody with $\del H'' = N$,
and such that $f : N \simeq N'$ extends to a homeomorphism
$f : H'' \simeq H'$.
\footnote{One may take $H'' = H'$, but this extra step
can be helpful for conceptual clarity
and simplify computation.
Note also that such an extension of $f$, if it exists,
is unique up to isotopy.}
For $\vphi \in \ZCYsk(H;\VV)$,
we have $\ZRT(H,\vphi) \in \ZCYsk(H'';\VV)$.
Then we define
\begin{equation}
\tau(f)(\vphi) = f_*(\ZRT(H,\vphi)) \in \ZCYsk(H';\VV')
\end{equation}

%%Let $W$ be some 4-manifold with boundary
%%$\del W = \ov{H} \cup_N H''$.
%%Then we define
%%\begin{equation}
%%\tau(f) = f_* \circ \kappa^{-\sigma(W)} \cdot \cD^{-\chi(W)/2}
%%	\cdot \ZCYsk(W;N) : \ZCYsk(H;\VV) \simeq \ZCYsk(H';\VV')
%%\end{equation}
%where $f_*$ simply applies $f$ to skeins
%(\defref{d:skein-pushforward}).
\end{definition}

In particular, if $f : N \simeq N'$ extends to $f : H \simeq H'$,
then $\tau(f) = f_*$ simply acts on the skeins directly.

Note that in general, $\tau(g \circ f) \neq \tau(g) \circ \tau(f)$;
the discrepancy is a factor arising from the error terms
in gluing cornered cobordisms:

\begin{lemma}
Let $f: N_1 \simeq N_2, f' : N_2 \simeq N_3$ be homeomorphisms between
closed genus $g$ surfaces,
sending boundary value $\VV_1$ to $\VV_2 = f_*(\VV_1)$ and
$\VV_2$ to $\VV_3 = f_*'(\VV_2)$.
Let $H_j$ be a handlebody with boundary $N_j$,
so that $\tau(f) : \ZCYsk(H_1;\VV_1) \simeq \ZCYsk(H_2;\VV_2)$
and $\tau(f') : \ZCYsk(H_2;\VV_2) \simeq \ZCYsk(H_3;\VV_3)$.
Let $L_j \subset H_1(N_j;\RR)$ be the kernel of the inclusion
$N_j \subset H_j$ as in \thmref{t:wall-main};
we identify them as subspaces of $V := H_1(N_3;\RR)$
under $f_*,f_*'$. Then

\begin{equation}
\tau(f' \circ f) =
	\kappa^{\sigma(V;L,L_2,L_3)} \cdot \cD^{(1-g)/2} \cdot
	\tau(f') \circ \tau(f)
\end{equation}
\label{l:tau-composition}
\end{lemma}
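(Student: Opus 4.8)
The plan is to realize each of $\tau(f)$, $\tau(f')$, $\tau(f'\circ f)$ through a choice of cornered cobordism via $\ZRTCY$ (\defref{d:rt-inv-cobx}, \thmref{t:rt-inv-equiv}), to glue the two cobordisms underlying $\tau(f')\circ\tau(f)$ into a single one, to recognize that glued cobordism as an admissible choice for computing $\tau(f'\circ f)$ using the cobordism-independence of \lemref{l:zrtcy-indep}, and then to read off the discrepancy in the $\kappa$- and $\cD$-normalizations from additivity of the Euler characteristic and from Wall non-additivity (\thmref{t:wall-main}). Since $\cA$ is modular, all of these ingredients are available.

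First I would fix the auxiliary data. Choose a genus $g$ handlebody $H_1''$ with $\del H_1''=N_1$ such that $f$ extends to a homeomorphism $f\colon H_1''\simeq H_2$, a genus $g$ handlebody $H_2''$ with $\del H_2''=N_2$ such that $f'$ extends to $f'\colon H_2''\simeq H_3$, and cornered cobordisms $W\colon H_1\to_{N_1}H_1''$ and $W'\colon H_2\to_{N_2}H_2''$. Unwinding \defref{d:rt-inv-cobx} (and \thmref{t:rt-inv-equiv}) together with the definition of $\tau$, for $\varphi\in\ZCYsk(H_1;\VV_1)$ and $\psi\in\ZCYsk(H_2;\VV_2)$ we have
\[
\tau(f)(\varphi)=\kappa^{-\sigma(W)}\cD^{-\chi(W)/2}\,f_*\,\ZCYsk(W;N_1)(\varphi),\qquad
\tau(f')(\psi)=\kappa^{-\sigma(W')}\cD^{-\chi(W')/2}\,f'_*\,\ZCYsk(W';N_2)(\psi).
\]

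Next I would glue. Gluing the outgoing end $H_1''$ of $W$ to the incoming end $H_2$ of $W'$ via $f$ (as in \defref{d:cornered-cobordism-composition}) produces a cornered cobordism $\widetilde W\colon H_1\to_{N_1}H_1'''$, where $H_1'''$ is the genus $g$ handlebody obtained from $H_2''$ by relabeling its boundary along $f$, so that $\del H_1'''=N_1$ and $f'\circ f$ extends to a homeomorphism $H_1'''\simeq H_3$. Because the construction of $\ZCYsk(W;N)$ in \defref{d:zcyksk-elementary} is equivariant under homeomorphisms, naturality of the cobordism maps together with the composition rule for cornered cobordisms (\defref{d:zcysk-corner}, \prpref{p:zcysk-corner-indep}; cf.\ \prpref{p:zcy-corner-gluing-skein}) gives
\[
(f'\circ f)_*\,\ZCYsk(\widetilde W;N_1)=f'_*\,\ZCYsk(W';N_2)\,f_*\,\ZCYsk(W;N_1),
\]
so that $\tau(f')\circ\tau(f)=\kappa^{-\sigma(W)-\sigma(W')}\cD^{-(\chi(W)+\chi(W'))/2}\,(f'\circ f)_*\,\ZCYsk(\widetilde W;N_1)$. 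On the other hand, $\widetilde W$ is itself a legitimate cornered cobordism $H_1\to_{N_1}H_1'''$ with $f'\circ f$ extending to $H_1'''\simeq H_3$, so by \defref{d:rt-inv-cobx} and the cobordism-independence of \lemref{l:zrtcy-indep} we may use it to compute $\tau(f'\circ f)$, obtaining $\tau(f'\circ f)=\kappa^{-\sigma(\widetilde W)}\cD^{-\chi(\widetilde W)/2}\,(f'\circ f)_*\,\ZCYsk(\widetilde W;N_1)$.

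Finally I would compare the two scalars. The $4$-manifolds $W$ and $W'$ are glued along the $3$-manifold $H_1''\simeq H_2$, a genus $g$ handlebody with $\chi(H_1'')=1-g$; additivity of the Euler characteristic then gives $\chi(\widetilde W)=\chi(W)+\chi(W')-(1-g)$, which accounts for the factor $\cD^{(1-g)/2}$. For the signature, \thmref{t:wall-main} applied to the decomposition $\widetilde W=W'\cup_{H_1''}W$ along $N_1$ yields $\sigma(\widetilde W)=\sigma(W)+\sigma(W')-\sigma(V;L^{(1)},L^{(2)},L^{(3)})$, where $L^{(1)},L^{(2)},L^{(3)}$ are the kernels of the maps on $H_1(N_1;\RR)$ induced by inclusion into the three $3$-manifolds meeting along $N_1$, namely $H_1$, $H_1''\simeq H_2$ and $H_1'''\simeq H_3$; under the identifications by $f_*$ and $f'_*$ these are precisely $L(=L_1)$, $L_2$ and $L_3$. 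Hence $\sigma(W)+\sigma(W')-\sigma(\widetilde W)=\sigma(V;L,L_2,L_3)$, which produces the factor $\kappa^{\sigma(V;L,L_2,L_3)}$, and combining with the Euler characteristic computation gives the asserted identity. The main obstacle I expect is the orientation and relabeling bookkeeping: checking that $\widetilde W$ is genuinely a cornered cobordism of the shape admitted in \defref{d:rt-inv-cobx} for $\tau(f'\circ f)$, that the naturality of $\ZCYsk$ under $f_*$ assembles the composite exactly as stated, and --- most delicately --- that with $N_1$ carrying the correct outward orientation the Lagrangian triple supplied by Wall's theorem lines up on the nose with $(L,L_2,L_3)$ as identified inside $V=H_1(N_3;\RR)$ via $f_*$ and $f'_*$. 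Throughout, the boundary values $\VV_1,\VV_2,\VV_3$ (with $\VV_3=f'_*f_*\VV_1$) and the underlying skeins are transported along by the homeomorphisms and cobordism maps and play no role in the signature/Euler count.
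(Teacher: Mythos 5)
Your proposal is correct and follows essentially the same route as the paper: realize each $\tau$ via an auxiliary cornered cobordism, glue $W$ and $W'$ along the middle handlebody to get a cobordism computing $\tau(f'\circ f)$, and read off the discrepancy from Wall non-additivity plus $\chi(\text{genus-}g\text{ handlebody})=1-g$. The only differences are cosmetic (you relabel the glued cobordism as based at $N_1$ rather than $N_2$, and you spell out the naturality and Lagrangian-identification bookkeeping that the paper leaves implicit).
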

\begin{proof}
Let $H,H'$ be the auxiliary handlebodies used to define
$\tau(f),\tau(f')$;
i.e. $\del H = N_1$, $\del H' = N_2$,
and $f,f'$ extend to $f : H \simeq H_2, f' : H' \simeq H_3$.
Let $W : H_1 \to_{N_1} H, W' : H_2 \to_{N_2} H'$
be cornered cobordisms.
They compose to give $W'' := W' \cup_f W : H_1 \to_{N_2} H'$.
Then
\begin{align*}
\tau(f) = f_* \circ
	\kappa^{-\sigma(W)} \cdot \cD^{-\chi(W)/2} \cdot \ZCYsk(W;N_1)
\\
\tau(f') = f_*' \circ
	\kappa^{-\sigma(W')} \cdot \cD^{-\chi(W')/2} \cdot \ZCYsk(W';N_2)
\\
\tau(f' \circ f) = f_*' \circ
	\kappa^{-\sigma(W'')} \cdot \cD^{-\chi(W'')/2} \cdot \ZCYsk(W'';N_2)
\end{align*}
Note that
$\ZCYsk(W'';N_2) = \ZCYsk(W';N_2) \circ f_* \circ \ZCYsk(W;N_1)$,
so
\[
\tau(f' \circ f) = \kappa^{\sigma(W) + \sigma(W') - \sigma(W'')}
	\cdot \cD^{(\chi(W) + \chi(W') - \chi(W''))/2}
	\cdot \tau(f') \circ \tau(f)
\]
and the result follows from \thmref{t:wall-main}
and a simple Euler characteristic computation.
\end{proof}
%(see proof of \prpref{p:sl2z-repn}).

Let us work out the action of the mapping class group of a torus.
Let $N = S^1 \times S^1$, and $H = B^2 \times S^1$.
Let $\vec{m} = \del B^2 \times \{*\},\vec{l} = \{*\} \times S^1$
be (directed) meridian and longitude,
depicted as follows:
\begin{equation}
\label{e:torus-meridian-longitude}
\begin{tikzpicture}
%%outside ellipse
\draw (-2.5,0)
	.. controls +(-90:0.6cm) and +(180:2cm) .. (0,-1.5)
	.. controls +(0:2cm) and +(-90:0.6cm) .. (2.5,0)
	.. controls +(90:0.6cm) and +(0:2cm) .. (0,1.5)
	.. controls +(180:2cm) and +(90:0.6cm) .. (-2.5,0);
%%top arc
\draw (-1.23,0)
	.. controls +(60:0.3cm) and +(180:0.6cm) .. (0,0.45)
	.. controls +(0:0.6cm) and +(120:0.3cm) .. (1.23,0);
%%bottom arc
\draw (-1.4,0.2)
	.. controls +(-60:0.3cm) and +(180:0.8cm) .. (0,-0.4)
	.. controls +(0:0.8cm) and +(-120:0.3cm) .. (1.4,0.2);
%\label
\node at (-2.3,-1.3) {$H$};
%%meridian
\draw[midarrow] (-0.5,-0.35)
	.. controls +(160:0.4cm) and +(160:0.4cm) .. (-0.7,-1.46);
\node at (-0.7,-0.8) {\smallerer $\vec{m}$};
%%longitude
\draw[midarrow] (-2.5,0)
	.. controls +(-90:0.3cm) and +(180:2cm) .. (0,-1.2)
	.. controls +(0:2cm) and +(-90:0.3cm) .. (2.5,0);
\node at (-0.1,-0.95) {\smallerer $\vec{l}$};
\end{tikzpicture}
\end{equation}
The outward orientation on $N$ is $(\vec{m},\vec{l})$.
A matrix $\begin{pmatrix} a & c \\ b & d \end{pmatrix}$
acts on $N$ by sending $\vec{m} \mapsto a \cdot \vec{m} + b \cdot \vec{l}$
and $\vec{l} \mapsto c \cdot \vec{m} + d \cdot \vec{l}$.
In particular, the $S$-matrix\footnotemark \;
$\begin{pmatrix} 0 & 1 \\ -1 & 0 \end{pmatrix}$
swaps the meridian and longitude,
$\vec{m} \mapsto \cev{l}, \vec{l} \mapsto \vec{m}$.
Note that with boundary values,
an integer matrix is not enough
for specifying an element of the mapping class group.

Let $\VV$ be a boundary value on $N$ with exactly one arc $b$,
with no projectors (i.e. $\VV \in \hatZCYsk(N)$),
and let $V_{\vec{b}} = Y$ so that an inwardly-directed boundary ribbon
in $H$ has label $Y$.
Then we have % a ``parametrization'' of the skein space
\begin{equation}
\label{e:H-sk-param}
\ZCYsk(H;\VV) \simeq \dirsum_j \Hom_\cA(Y, X_j X_j^*)
\end{equation}

Let $\wdtld{S}$ be a homeomorphism of $N$ preserving $\VV$
which descends to $S$ when we forget $\VV$,
and looks like this near $b$
(drawn from an outside perspective):
\begin{equation}
\label{e:S-tld}
\begin{tikzpicture}
\draw[line width=1.2pt] (-0.1,0) -- (0.1,0);
\draw[midarrow={0.9}] (0,0.6) -- (0,-0.6);
\node at (0.2,0) {\smallerer $b$};
\node at (0.3,-0.4) {\smallerer $\vec{m}$};
\end{tikzpicture}
\;\;\;\; \mapsto \;\;\;\;
\begin{tikzpicture}
\draw[line width=1.2pt] (-0.1,0) -- (0.1,0);
\draw[midarrow={0.8}] (0,0)
	.. controls +(-90:0.25cm) and +(0:0.4cm) .. (-0.6,0);
\draw (0,0)
	.. controls +(90:0.25cm) and +(180:0.4cm) .. (0.6,0);
\end{tikzpicture}
\end{equation}

Let us compute the map associated to $\wdtld{S}_*$.
Consider $\vphi \in \ZCYsk(H;\VV)$,
\begin{equation}
\vphi = 
%\label{e:skein-pairing-handlebodies-anchor}
\begin{tikzpicture}
%%morphism nodes
\node[ellipse_morphism={0}] (a) at (0,1) {\;\;};% {\tiny $\vphi$};
%%graph (more at end)
\draw[midarrow_rev={0.9}] (a)
	.. controls +(0:1cm) and +(90:0.6cm) .. (1.9,0)
	.. controls +(-90:0.6cm) and +(0:1cm) .. (0,-1)
	.. controls +(180:1cm) and +(-90:0.6cm) .. (-1.9,0)
	.. controls +(90:0.6cm) and +(180:1cm) .. (a);
\draw[midarrow_rev={0.7}] (a) -- (0,1.35);
\draw (-0.05,1.35) -- (0.05,1.35);
\node at (-0.25,1.25) {\smallerer $Y$};
\node at (-1.25,1) {\smallerer $j$};
%%outside ellipse
\draw (-2.5,0)
	.. controls +(-90:0.6cm) and +(180:2cm) .. (0,-1.5)
	.. controls +(0:2cm) and +(-90:0.6cm) .. (2.5,0)
	.. controls +(90:0.6cm) and +(0:2cm) .. (0,1.5)
	.. controls +(180:2cm) and +(90:0.6cm) .. (-2.5,0);
%%top arc
\draw (-1.23,0)
	.. controls +(60:0.3cm) and +(180:0.6cm) .. (0,0.45)
	.. controls +(0:0.6cm) and +(120:0.3cm) .. (1.23,0);
%%bottom arc
\draw (-1.4,0.2)
	.. controls +(-60:0.3cm) and +(180:0.8cm) .. (0,-0.4)
	.. controls +(0:0.8cm) and +(-120:0.3cm) .. (1.4,0.2);
%\label
\node at (-2.3,-1.3) {$H$};
\end{tikzpicture}
\end{equation}

By adding a 2-handle to $H$ along its core circle
(with blackboard framing,
depicted in \eqnref{e:s-matrix-2handle}
as the gray loop),
we get a cornered cobordism
$\cH_2 : H \to_N H''$,
and we have
%Take $H''$ so that $\ov{H} \cup_N H'' = S^3$,
%we have

\begin{equation}
\label{e:s-matrix-2handle}
\ZRT(H,\vphi) = 
\cD^{-1/2} \cdot
\begin{tikzpicture}
%%morphism nodes
\node[ellipse_morphism={0}] (a) at (0,1) {\;\;};% {\tiny $\vphi$};
%%graph (more at end)
\draw[midarrow_rev={0.9}] (a)
	.. controls +(0:1cm) and +(90:0.6cm) .. (1.9,0)
	.. controls +(-90:0.6cm) and +(0:1cm) .. (0,-1)
	.. controls +(180:1cm) and +(-90:0.6cm) .. (-1.9,0)
	.. controls +(90:0.6cm) and +(180:1cm) .. (a);
\draw[midarrow_rev={0.7}] (a) -- (0,1.35);
\draw (-0.05,1.35) -- (0.05,1.35);
\node at (-0.25,1.25) {\smallerer $Y$};
\node at (-1.25,1) {\smallerer $j$};
%%outside ellipse
\draw (-2.5,0)
	.. controls +(-90:0.6cm) and +(180:2cm) .. (0,-1.5)
	.. controls +(0:2cm) and +(-90:0.6cm) .. (2.5,0)
	.. controls +(90:0.6cm) and +(0:2cm) .. (0,1.5)
	.. controls +(180:2cm) and +(90:0.6cm) .. (-2.5,0);
%%top arc
\draw (-1.23,0)
	.. controls +(60:0.3cm) and +(180:0.6cm) .. (0,0.45)
	.. controls +(0:0.6cm) and +(120:0.3cm) .. (1.23,0);
%%bottom arc
\draw (-1.4,0.2)
	.. controls +(-60:0.3cm) and +(180:0.8cm) .. (0,-0.4)
	.. controls +(0:0.8cm) and +(-120:0.3cm) .. (1.4,0.2);
%\label
%\node at (-2.3,-1.3) {$H$};
%%%surgery
\draw[opacity=0.5,med-gray,line width=5pt] (0,0.7)
	.. controls +(0:0.8cm) and +(90:0.4cm) .. (1.5,0)
	.. controls +(-90:0.4cm) and +(0:0.8cm) .. (0,-0.7)
	.. controls +(180:0.8cm) and +(-90:0.4cm) .. (-1.5,0)
	.. controls +(90:0.4cm) and +(180:0.8cm) .. (0,0.7);
\draw[regular] (1,-0.5) circle (0.2cm);
%\draw[opacity=0.5,med-gray,line width=5pt]
%	(0.72,-0.62) -- (0.95,-0.55);
\end{tikzpicture}
%%%%%%%%%%%%%%%%%%%%%%%%%%%%%%%%%%%%%%%%%%%%%%%%%%%%%%%%%%
=
\cD^{-1/2} \cdot
%%%%%%%%%%%%%%%%%%%%%%%%%%%%%%%%%%%%%%%%%%%%%%%%%%%%%%%%%%
\begin{tikzpicture}
%%morphism nodes
\node[ellipse_morphism={0}] (a) at (-0.95,-0.2) {\;\;};% {\tiny $\vphi$};
%%graph (more at end)
\draw[midarrow_rev={0.9}] (a)
	.. controls +(0:0.6cm) and +(0:0.6cm) .. (-0.95,-0.6)
	.. controls +(180:0.6cm) and +(180:0.6cm) .. (a);
%\draw[midarrow_rev={0.7}] (a) -- +(90:0.35cm);
\draw (a) -- +(90:0.35cm);
\draw (-1,0.15) -- (-0.9,0.15);
%\node at (-0.25,1.25) {\smallerer $V$};
%\node at (-1.25,1) {\smallerer $j$};
\draw[thin_overline={1.5},regular] (-0.8,-0.5)
	.. controls +(100:0.8cm) and +(180:0.4cm) .. (0,1.8)
	.. controls +(0:0.4cm) and +(90:0.8cm) .. (0.85,0)
	.. controls +(-90:0.8cm) and +(0:0.4cm) .. (0,-1.8)
	.. controls +(180:0.4cm) and +(-80:0.3cm) .. (-0.78,-0.63);
%%outside ellipse
\draw (-1.5,0)
	.. controls +(-90:2cm) and +(180:0.6cm) .. (0,-2.5)
	.. controls +(0:0.6cm) and +(-90:2cm) .. (1.5,0)
	.. controls +(90:2cm) and +(0:0.6cm) .. (0,2.5)
	.. controls +(180:0.6cm) and +(90:2cm) .. (-1.5,0);
%%left arc
\draw (0,-1.23)
	.. controls +(150:0.3cm) and +(-90:0.6cm) .. (-0.45,0)
	.. controls +(90:0.6cm) and +(-150:0.3cm) .. (0,1.23);
%%right arc
\draw (-0.2,-1.4)
	.. controls +(30:0.3cm) and +(-90:0.8cm) .. (0.4,0)
	.. controls +(90:0.8cm) and +(-30:0.3cm) .. (-0.2,1.4);
%\label
\node at (1.7,-1.6) {$H''$};
\end{tikzpicture}
\end{equation}

Then applying $\wdtld{S}_*$,

\begin{equation}
\tau(\wdtld{S}) (\vphi) =
\cD^{-1/2} \cdot
\begin{tikzpicture}
%%morphism nodes
\node[ellipse_morphism={0}] (a) at (0,1.1) {\;\;};% {\tiny $\vphi$};
%%graph (more at end)
\draw (a) .. controls +(180:0.3cm) and +(180:0.3cm) .. (0,0.6);
\draw[regular,thin_overline={1.5}] (0,0.85)
	.. controls +(0:1cm) and +(90:0.6cm) .. (1.9,0)
	.. controls +(-90:0.6cm) and +(0:1cm) .. (0,-1)
	.. controls +(180:1cm) and +(-90:0.6cm) .. (-1.9,0)
	.. controls +(90:0.6cm) and +(180:1cm) .. (0,0.85);
\draw[thin_overline={1.5},midarrow_rev={0.9}] (a)
	.. controls +(0:0.3cm) and +(0:0.3cm) .. (0,0.6);
\draw (a) -- (0,1.35);
\draw (-0.05,1.35) -- (0.05,1.35);
%\node at (-0.25,1.25) {\smallerer $Y$};
%\node at (-1.25,1) {\smallerer $j$};
%%outside ellipse
\draw (-2.5,0)
	.. controls +(-90:0.6cm) and +(180:2cm) .. (0,-1.5)
	.. controls +(0:2cm) and +(-90:0.6cm) .. (2.5,0)
	.. controls +(90:0.6cm) and +(0:2cm) .. (0,1.5)
	.. controls +(180:2cm) and +(90:0.6cm) .. (-2.5,0);
%%top arc
\draw (-1.23,0)
	.. controls +(60:0.3cm) and +(180:0.6cm) .. (0,0.45)
	.. controls +(0:0.6cm) and +(120:0.3cm) .. (1.23,0);
%%bottom arc
\draw (-1.4,0.2)
	.. controls +(-60:0.3cm) and +(180:0.8cm) .. (0,-0.4)
	.. controls +(0:0.8cm) and +(-120:0.3cm) .. (1.4,0.2);
%\label
\node at (-2.3,-1.3) {$H$};
\end{tikzpicture}
\end{equation}

The $T$-matrix\footnotemark[\value{footnote}] \;
$T = \begin{pmatrix} 1 & -1 \\ 0 & 1 \end{pmatrix}$
acts by left-hand Dehn twist.
\footnotetext{Note that these are not the same as the $S$ and $T$-matrices
in \ocite{BK}, but are conjugate to them under $\text{diag}(-1,1)$.}
Choosing a good $\wdtld{T}$ that fixes $\VV$,
it is clear that
\begin{equation}
\tau(\wdtld{T})(\vphi) = 
%\label{e:skein-pairing-handlebodies-anchor}
\begin{tikzpicture}
%%morphism nodes
\node[ellipse_morphism={0}] (a) at (0,1) {\;\;};% {\tiny $\vphi$};
\node[small_morphism] (b) at (0,-1) {\smallerer $\theta$};
%%graph (more at end)
\draw (a)
	.. controls +(0:1cm) and +(90:0.6cm) .. (1.9,0)
	.. controls +(-90:0.6cm) and +(0:1cm) .. (b);
\draw[midarrow_rev={0.8}] (b)
	.. controls +(180:1cm) and +(-90:0.6cm) .. (-1.9,0)
	.. controls +(90:0.6cm) and +(180:1cm) .. (a);
\draw[midarrow_rev={0.7}] (a) -- (0,1.35);
\draw (-0.05,1.35) -- (0.05,1.35);
\node at (-0.25,1.25) {\smallerer $Y$};
\node at (-1.25,1) {\smallerer $j$};
%%outside ellipse
\draw (-2.5,0)
	.. controls +(-90:0.6cm) and +(180:2cm) .. (0,-1.5)
	.. controls +(0:2cm) and +(-90:0.6cm) .. (2.5,0)
	.. controls +(90:0.6cm) and +(0:2cm) .. (0,1.5)
	.. controls +(180:2cm) and +(90:0.6cm) .. (-2.5,0);
%%top arc
\draw (-1.23,0)
	.. controls +(60:0.3cm) and +(180:0.6cm) .. (0,0.45)
	.. controls +(0:0.6cm) and +(120:0.3cm) .. (1.23,0);
%%bottom arc
\draw (-1.4,0.2)
	.. controls +(-60:0.3cm) and +(180:0.8cm) .. (0,-0.4)
	.. controls +(0:0.8cm) and +(-120:0.3cm) .. (1.4,0.2);
%\label
\node at (-2.3,-1.3) {$H$};
\end{tikzpicture}
\end{equation}

It is clear that these agree with \ocite{BK}*{Definition 3.1.15}.

\begin{proposition}[\ocite{BK}*{Theorem 3.1.16}]
Using the identification
$\ZCYsk(H;\VV) \simeq \dirsum_j \Hom_\cA(Y, X_j X_j^*)$
from \eqnref{e:H-sk-param},
$\tau(\wdtld{S})^4 = - \circ \theta_Y^\inv$,
$(\tau(\wdtld{S})\tau(\wdtld{T}))^3 = \kappa \cdot \tau(\wdtld{S})^2$.
\label{p:sl2z-repn}
\end{proposition}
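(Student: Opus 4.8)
The plan is to verify the two relations directly by the explicit formulas for $\tau(\wdtld{S})$ and $\tau(\wdtld{T})$ derived just above, working in the model $\ZCYsk(H;\VV) \simeq \bigoplus_j \Hom_\cA(Y, X_jX_j^*)$. The strategy is to reduce everything to familiar identities about the modular $S$- and $T$-matrices of $\cA$, which are well-documented (e.g. in \ocite{BK}*{Chapter 3}). Since the paper explicitly observes that our $\wdtld{S},\wdtld{T}$ act on skeins exactly as the maps in \ocite{BK}*{Definition 3.1.15}, once the action formulas are pinned down the result is essentially \ocite{BK}*{Theorem 3.1.16}, conjugated by $\mathrm{diag}(-1,1)$ as noted in the footnote.

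First I would compute $\tau(\wdtld{S})^2$. Applying $\tau(\wdtld{S})$ twice to $\vphi$ amounts to adding two parallel copies of the core circle with the regular coloring and using the sliding lemma (\lemref{lem:sliding}) together with \lemref{l:summation} to reorganize the diagram; by the Killing Lemma (\lemref{l:killing}) and the structure of the $S$-matrix of the modular category $\cA$, this collapses to the map that, on the summand indexed by $j$, sends $\Hom_\cA(Y,X_jX_j^*) \to \Hom_\cA(Y,X_{j^*}X_{j^*}^*)$ by composing with the canonical duality isomorphism $X_jX_j^* \simeq X_{j^*}X_{j^*}^*$ (this is the ``charge conjugation'' behavior of $S^2$). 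Squaring once more gives $\tau(\wdtld{S})^4$, which on each summand is composition with $\theta_Y^{-1}$ — the extra twist arises because the double application of charge conjugation returns to the same summand but picks up the ribbon framing correction, exactly as in the computation of \eqnref{e:s-matrix-2handle} where the gray $2$-handle loop and the boundary strand $Y$ interact. I would present this by drawing the relevant graphs and invoking \lemref{lem:sliding} and \lemref{l:killing} to simplify, rather than grinding through the algebra.

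Next I would handle $(\tau(\wdtld{S})\tau(\wdtld{T}))^3$. Using the explicit form of $\tau(\wdtld{T})$ (insertion of a $\theta$-node on the core strand) and $\tau(\wdtld{S})$ (the $2$-handle/regular-coloring move), the composite $\tau(\wdtld{S})\tau(\wdtld{T})$ corresponds geometrically to a single elementary mapping-class-group move, and $(\tau(\wdtld{S})\tau(\wdtld{T}))^3$ corresponds to a genus-$1$ surface bundle whose monodromy is trivial up to a framing anomaly; the anomaly is precisely the Gauss sum $\kappa$. Concretely I would reduce the graphical identity to the numerical relation $(st)^3 = \kappa\, s^2$ for the $S$- and $T$-matrices $s_{ij} = \tr(c_{X_j,X_i}c_{X_i,X_j})$, $t_{ij} = \delta_{ij}\theta_i$, which holds in any modular category, and then transport it through the identification \eqnref{e:H-sk-param}, tracking the $\cD^{\pm 1/2}$ normalization factors appearing in $\tau(\wdtld{S})$ from the $2$-handle formula in \defref{d:zcyksk-elementary}.

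The main obstacle will be bookkeeping the normalizations and the $\mathrm{diag}(-1,1)$ conjugation: the footnote already flags that our $S,T$ differ from those of \ocite{BK} by this conjugation, so I must be careful that the exponent of $\kappa$ and the sign in $\tau(\wdtld{S})^4 = -\circ\theta_Y^{-1}$ come out correctly. In particular the sign in $\tau(\wdtld{S})^4$ traces back to $s^2$ being the charge-conjugation matrix times a sign, and I expect the cleanest route is to first establish $\tau(\wdtld{S})^2 = \pm\,(\text{charge conjugation})$ with the sign determined by evaluating on the summand $j=0$ (where $\Hom_\cA(Y,\one\cdot\one) = \Hom_\cA(Y,\one)$ and the computation is unknotted), and then bootstrap. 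The rest is a routine but careful translation of \ocite{BK}*{Theorem 3.1.16} into the skein language of \defref{d:pairing-skein} and \defref{d:rt-inv-cobx}.
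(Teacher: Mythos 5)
Your overall strategy---compute the operators explicitly in the model $\bigoplus_j \Hom_\cA(Y,X_jX_j^*)$ and reduce to the known relations for the modular data of $\cA$---is a viable alternative route, but the proposal has two concrete problems as written. First, you have misread the statement: in ``$\tau(\wdtld{S})^4 = -\circ\theta_Y^{-1}$'' the dash is a placeholder for the argument (the map is $\vphi\mapsto\vphi\circ\theta_Y^{-1}$, precomposition with the inverse twist), not a minus sign. There is no sign to chase, and your plan to first establish $\tau(\wdtld{S})^2 = \pm(\text{charge conjugation})$ and ``bootstrap'' the sign is aimed at a phantom. Relatedly, your explanation of where the $\theta_Y^{-1}$ comes from (interaction of the $2$-handle loop with the strand $Y$) is wrong: the twist has nothing to do with the surgery circle. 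It arises because $\wdtld{S}$ rotates the marked arc $b$ by $90^\circ$ (see \eqnref{e:S-tld}), so $\wdtld{S}^4$ is the identity mapping class realized by a $360^\circ$ rotation of $b$; extending that isotopy into the solid torus puts one full right-hand twist on the ribbon attached to $b$, i.e.\ precomposition with $\theta_Y^{-1}$.

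Second, you never address the projective (non-)functoriality of $\tau$, which is the actual content of the proposition. Both relations compare a power of $\tau(f)$ with $\tau$ of the corresponding power of $f$, and $\tau(g\circ f)$ differs from $\tau(g)\circ\tau(f)$ by $\kappa^{\sigma(V;L_1,L_2,L_3)}$ (\lemref{l:tau-composition}). The paper's proof consists entirely of (i) checking that these Maslov-type corrections vanish for the compositions $\wdtld{S}\cdot\wdtld{S}$, $\wdtld{S}^2\cdot\wdtld{S}^2$, $\wdtld{S}\cdot\wdtld{T}$, and for two of the three compositions building $(\wdtld{S}\wdtld{T})^3$, and (ii) computing the single nonvanishing correction $\sigma(V;f_*^2(L),f_*(L),L)=-1$ for $f=\wdtld{S}\wdtld{T}$, which is exactly where $\kappa$ comes from; the identities $\wdtld{S}^4\simeq\mathrm{id}$ and $(\wdtld{S}\wdtld{T})^3=\wdtld{S}^2$ are then purely topological, and no $S$-matrix computation is needed. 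Your route of citing $(st)^3=\kappa\, s^2$ from \ocite{BK} is not circular (that identity is proved there by direct graphical computation), but to transport it you would still have to match $\tau(\wdtld{S})$, $\tau(\wdtld{T})$ with the operators of \ocite{BK}*{Definition 3.1.15} including the $\cD^{\pm 1/2}$ normalizations, and you would still need the vanishing of the composition anomalies to interpret $\tau(\wdtld{S})^2$ and $\tau(\wdtld{S})^4$ as $\tau$ of mapping classes. As written, the proposal does not close either gap.
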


\begin{proof}
We will use \lemref{l:tau-composition} repeatedly.
Let us compare $\tau(\wdtld{S}^2)$ against $\tau(\wdtld{S})^2$.
Since $\chi(H) = 1-g = 0$, the $\cD$ factor does not contribute
to the error factor (this is the case for all computations here).
The kernel space $L \subset H_1(N;\RR) = V$ for the inclusion $N\subset H$
is spanned by the meridian.
The homeomorphism $\wdtld{S}$ swaps meridian $m$ and longitude $l$,
so if $L' \subset H_1(N;\RR)$ is spanned by $l$,
then the exponent of $\kappa$ in the error factor is
$\sigma(V;L,L',L) = 0$.
Thus, $\tau(\wdtld{S}^2) = \tau(\wdtld{S})^2$,
and similarly, $\tau(\wdtld{S}^4) = \tau(\wdtld{S}^2)^2 = \tau(\wdtld{S})^4$.

Since $\wdtld{S}^4$ is isotopic to the identity,
it extends to $H$.
More explicitly, it is easy to see from \eqnref{e:S-tld}
that $\wdtld{S}^4$ is the identity everywhere expect near $b$,
where it is rotated $360^\circ$ counter-clockwise,
dragging its surroundings with it.
It extends into the interior,
introducing a right-hand twist to the ribbon attached to $b$.
Thus, on the level of the skein space,
this amounts to precomposing with $\theta_Y^\inv$
(which corresponds to a right-hand twist under our convention).

Now let us consider the second equation.
Since $\wdtld{T}$ preserves the meridian,
by similar considerations as above, we have
$\tau(\wdtld{S}\wdtld{T}) = \tau(\wdtld{S})\tau(\wdtld{T})$.

Write $f = \wdtld{S}\wdtld{T}$ for simplicity,
and let $L \subseteq H_1(N;\RR) = V$ be spanned by the meridian as before.
We have 
\begin{align*}
\tau(f^2) &= \kappa^{\sigma(V;f_*^2(L),f_*(L),L)} \cdot \tau(f)^2
\\
\tau(f^3) &= \kappa^{\sigma(V;f_*^3(L),f_*(L),L)} \cdot \tau(f^2)\tau(f)
\end{align*}
It is easy to check that $f_*(\vec{m}) = - \vec{l}$,
$f_*^2(\vec{m}) = - \vec{m} - \vec{l}$,
$f_*^3(\vec{m}) = - \vec{m}$.
So $\sigma(V;f_*^3(L),f_*(L),L) = \sigma(V;L,f_*(L),L) = 0$.

Recall that $\sigma(V;f_*^2(L),f_*(L),L)$
is the signature of a certain symmetric bilinear form $\Psi$
on
\[
\ov{V} =
	(f_*^2(L) \cap (f_*(L) + L))/(f_*^2(L) \cap f_*(L) + f_*^2(L) \cap L)
	= f_*^2(L)
\]
Consider $\vec{m} + \vec{l} \in f_*^2(L)$,
and $-\vec{l} \in f_*(L), \vec{m} \in L$;
they sum to 0, so
\[
\Psi(\vec{m} + \vec{l}, \vec{m} + \vec{l})
= \omega(\vec{m} + \vec{l}, -\vec{l})
= -1
\]
where $\omega$ is the intersection form on $V$.
Thus,
\[
\tau(f^3) = \tau(f^2) \tau(f)
= \kappa^\inv \cdot \tau(f)^3
\]
Since $(\wdtld{S}\wdtld{T})^3 = \wdtld{S}^2$,
\[
(\tau(\wdtld{S})\tau(\wdtld{T}))^3
= \tau(f)^3
= \kappa \cdot \tau(f^3)
= \kappa \cdot \tau(\wdtld{S}^2)
= \kappa \cdot \tau(\wdtld{S})^2
\]
\end{proof}

\begin{remark}
Comparison with notation in \ocite{rt-3mfld}
(symbols on left are from \ocite{rt-3mfld},
which we recast in terms of our notation):
\begin{itemize}
\item $d_i = \dim X_i / p_+$ (see page 558, around (3.1.6))
\item $v_i = \theta_i$
\item $F$ simply means evaluate colored ribbon graph as morphism
\item $S_{i,j} = tr(c_{X_j,X_i} c_{X_i,X_j})$ but here $c$ is right-hand braid
\item $C = p_- / p_+$.
\item $\sigma_-(L)$ is number of non-positive eigenvalues.
Then $\sigma_-(L) = (|L| - \sigma(L) + null(L)) / 2$,
where $null(L) = \dim H_1$ is the nullity of the intersection matrix.
\item $\{L\}$ is label all strands by $d_i \id_i$, but here $d_i$ is their
definition,
so its $\{L\} = p_+^{-|L|} \cdot \Omega L$,
$\Omega L$ is label with regular coloring (our definition)
\item Thus $F(M) = F(M;L) = C^{-\sigma_-(L)} \{L\}
= (p_+/p_-)^{(|L| - \sigma(L) + null(L))/2} p_+^{-|L|} \cdot \Omega L
= \kappa^{-\sigma(L) + null(L)} \cD^{-|L|/2} \cdot \Omega L$
\end{itemize}

We also note that the $\Omega$ coloring in \ocite{roberts-chainmail}
is actually $\eta \cdot \Omega$ in our notation,
where $\eta = \cD^{-1/2}$.
\label{r:rt-notation}
\end{remark}

%\input{openquestions.tex}
%%\input{index.tex}
%%%%%%%%%%%%%%%%%%%%%%%%%%%%%%%%%%%%%%%%%%%%%%%%%%%%%%%%%%%%%%
\begin{bibdiv}
\begin{biblist}

%%%%%%%%%%%%%%%%%%%%%%%%%%%%%%%%%%%%%%%%%%%%%
\bib{akbulut}{article}{
   author={Akbulut, Selman},
   title={On $2$-dimensional homology classes of $4$-manifolds},
   journal={Math. Proc. Cambridge Philos. Soc.},
   volume={82},
   date={1977},
   number={1},
   pages={99--106},
   issn={0305-0041},
   review={\MR{433476}},
   doi={10.1017/S0305004100053718},
}
%%%%%%%%%%%%%%%%%%%%%%%%%%%%%%%%%%%%%%%%%%%%%
\bib{atiyah}{article}{
   author={Atiyah, Michael},
   title={Topological quantum field theories},
   journal={Inst. Hautes \'{E}tudes Sci. Publ. Math.},
   number={68},
   date={1988},
   pages={175--186 (1989)},
   issn={0073-8301},
   review={\MR{1001453}},
}
%%%%%%%%%%%%%%%%%%%%%%%%%%%%%%%%%%%%%%%%%%%%%%
\bib{novikov}{article}{
   author={Atiyah, M. F.},
   author={Singer, I. M.},
   title={The index of elliptic operators. III},
   journal={Ann. of Math. (2)},
   volume={87},
   date={1968},
   pages={546--604},
   issn={0003-486X},
   review={\MR{236952}},
   doi={10.2307/1970717},
}
%%%%%%%%%%%%%%%%%%%%%%%%%%%%%%%%%%%%%%%%%%%%%
\bib{PL-az}{article}{
   author={Armstrong, M. A.},
   author={Zeeman, E. C.},
   title={Piecewise linear transversality},
   journal={Bull. Amer. Math. Soc.},
   volume={73},
   date={1967},
   pages={184--188},
   issn={0002-9904},
   review={\MR{206964}},
   doi={10.1090/S0002-9904-1967-11708-7},
}
%%%%%%%%%%%%%%%%%%%%%%%%%%%%%%%%%%%%%%%%%%%%%
%\bib{AF19}{article}{
%   author={Ayala, David},
%   author={Francis, John},
%   title={A factorization homology primer},
%   date={2019},
%   eprint={arXiv:1903.1096},
%}
%
%
%%%%%%%%%%%%%%%%%%%%%%%%%%%%%%%%%%%%%%%%%%%%%%
%\bib{AFR}{article}{
%   author={Ayala, David},
%   author={Francis, John},
%   author={Rozenblyum, Nick},
%   title={Factorization homology I: Higher categories},
%   journal={Adv. Math.},
%   volume={333},
%   date={2018},
%   pages={1042--1177},
%   issn={0001-8708},
%   %review={\MR{3818096}},
%   %doi={10.1016/j.aim.2018.05.031},
%}
%
%
%%%%%%%%%%%%%%%%%%%%%%%%%%%%%%%%%%%%%%%%%%%%%%
\bib{BK}{book}{
   label={BakK2001},
   author={Bakalov, Bojko},
   author={Kirillov, Alexander, Jr.},
   title={Lectures on tensor categories and modular functors},
   series={University Lecture Series},
   volume={21},
   publisher={American Mathematical Society},
   place={Providence, RI},
   date={2001},
   pages={x+221},
   isbn={0-8218-2686-7},
   %review={\MR{1797619 (2002d:18003)}},
}
%%%%%%%%%%%%%%%%%%%%%%%%%%%%%%%%%%%%%%%%%%%%%%%%%%%%%
%%\bib{balmer}{article}{
%%   author={Balmer, Paul},
%%   author={Schlichting, Marco},
%%   title={Idempotent completion of triangulated categories},
%%   journal={J. Algebra},
%%   volume={236},
%%   date={2001},
%%   number={2},
%%   pages={819--834},
%%   issn={0021-8693},
%%   review={\MR{1813503 (2002a:18013)}},
%%   doi={10.1006/jabr.2000.8529},
%%}
%%%%%%%%%%%%%%%%%%%%%%%%%%%%%%%%%%%%%%%%%%%%%%
\bib{balsam-kirillov}{article}{ 
  author={Balsam, Benjamin },
  author={Kirillov, Alexander, Jr},
  title={Turaev-Viro invariants as an extended TQFT},
  eprint={arXiv:1004.1533},
}
%%%%%%%%%%%%%%%%%%%%%%%%%%%%%%%%%%%%%%%%%%%%%%%%%%%%%
\bib{barrett-obs}{article}{
   author={Barrett, John W.},
   author={Faria Martins, Jo\~{a}o},
   author={Garc\'{\i}a-Islas, J. Manuel},
   title={Observables in the Turaev-Viro and Crane-Yetter models},
   journal={J. Math. Phys.},
   volume={48},
   date={2007},
   number={9},
   pages={093508, 18},
   issn={0022-2488},
   review={\MR{2355097}},
   doi={10.1063/1.2759440},
}
%%%%%%%%%%%%%%%%%%%%%%%%%%%%%%%%%%%%%%%%%%%%%%%%%%%%%
%\bib{balsam2}{article}{ 
%  author={Balsam, Benjamin},
%  title={Turaev-Viro invariants as an extended TQFT II},
%  date={2010-10},
%}
%
%%%%%%%%%%%%%%%%%%%%%%%%%%%%%%%%%%%%%%%%%%%%%%%%%%%%%
%\bib{BPW}{article}{
%   author={Beliakova, Anna},
%   author={Putyra, Krzysztof K.},
%   author={Wehrli, Stephan M.},
%   title={Quantum link homology via trace functor I},
%   journal={Invent. Math.},
%   volume={215},
%   date={2019},
%   number={2},
%   pages={383--492},
%   issn={0020-9910},
%   %review={\MR{3910068}},
%   %doi={10.1007/s00222-018-0830-0},
%}
%		
%%%%%%%%%%%%%%%%%%%%%%%%%%%%%%%%%%%%%%%%%%%%%%%%%%%%%
\bib{BHLZ}{article}{
   author={Beliakova, Anna},
   author={Habiro, Kazuo},
   author={Lauda, Aaron D.},
   author={\v{Z}ivkovi\'{c}, Marko},
   title={Trace decategorification of categorified quantum $\mathfrak{sl}_2$},
   journal={Math. Ann.},
   volume={367},
   date={2017},
   number={1-2},
   pages={397--440},
   issn={0025-5831},
   %review={\MR{3606445}},
   %doi={10.1007/s00208-016-1389-y},
   eprint={arXiv:1404.1806},
}
%%%%%%%%%%%%%%%%%%%%%%%%%%%%%%%%%%%%%%%%%%%%%%%%%%%%%
\bib{BG-framed}{article}{
   author={Bellingeri, Paolo},
   author={Gervais, Sylvain},
   title={Surface framed braids},
   journal={Geom. Dedicata},
   volume={159},
   date={2012},
   pages={51--69},
   issn={0046-5755},
   review={\MR{2944520}},
   doi={10.1007/s10711-011-9645-5},
}
%%%%%%%%%%%%%%%%%%%%%%%%%%%%%%%%%%%%%%%%%%%%%%%%%%%%%
%\bib{BBJ1}{article}{
%   author={Ben-Zvi, David},
%   author={Brochier, Adrien},
%   author={Jordan, David},
%   title={Integrating quantum groups over surfaces},
%   journal={J. Topol.},
%   volume={11},
%   date={2018},
%   number={4},
%   pages={874--917},
%   issn={1753-8416},
%   %review={\MR{3847209}},
%   %doi={10.1112/topo.12072},
%}
%
%%%%%%%%%%%%%%%%%%%%%%%%%%%%%%%%%%%%%%%%%%%%%%%%%%%%%
%\bib{BBJ2}{article}{
%   author={Ben-Zvi, David},
%   author={Brochier, Adrien},
%   author={Jordan, David},
%   title={Quantum character varieties and braided module categories},
%   journal={Selecta Math. (N.S.)},
%   volume={24},
%   date={2018},
%   number={5},
%   pages={4711--4748},
%   issn={1022-1824},
%   %review={\MR{3874702}},
%   %doi={10.1007/s00029-018-0426-y},
%}
%%%%%%%%%%%%%%%%%%%%%%%%%%%%%%%%%%%%%%%%%%%%%%
\bib{bryant-PL}{article}{
   author={Bryant, John L.},
   title={Piecewise linear topology},
   conference={
      title={Handbook of geometric topology},
   },
   book={
      publisher={North-Holland, Amsterdam},
   },
   date={2002},
   pages={219--259},
   review={\MR{1886671}},
}
%%%%%%%%%%%%%%%%%%%%%%%%%%%%%%%%%%%%%%%%%%%%%%
\bib{cairns}{article}{
   author={Cairns, Stewart S.},
   title={A simple triangulation method for smooth manifolds},
   journal={Bull. Amer. Math. Soc.},
   volume={67},
   date={1961},
   pages={389--390},
   issn={0002-9904},
   review={\MR{149491}},
   doi={10.1090/S0002-9904-1961-10631-9},
}
%%%%%%%%%%%%%%%%%%%%%%%%%%%%%%%%%%%%%%%%%%%%%%
%https://www.researchgate.net/publication/227279708_A_note_about_bistellar_operations_on_PL-manifolds_with_boundary
\bib{casali-relative-pachner}{article}{
   author={Casali, Maria Rita},
   title={A note about bistellar operations on PL-manifolds with boundary},
   journal={Geom. Dedicata},
   volume={56},
   date={1995},
   number={3},
   pages={257--262},
   issn={0046-5755},
   review={\MR{1340786}},
   doi={10.1007/BF01263566},
}
%%%%%%%%%%%%%%%%%%%%%%%%%%%%%%%%%%%%%%%%%%%%%%
\bib{cooke}{article}{
    author={Cooke, Juliet},
    title={Excision of Skein Categories and Factorisation Homology},
    date={2019},
    eprint={arXiv:1910.02630},
}
%%%%%%%%%%%%%%%%%%%%%%%%%%%%%%%%%%%%%%%%%%%%%%
\bib{CKY-eval}{article}{
   author={Crane, Louis},
   author={Kauffman, Louis H.},
   author={Yetter, David},
   title={Evaluating the Crane-Yetter invariant},
   conference={
      title={Quantum topology},
   },
   book={
      series={Ser. Knots Everything},
      volume={3},
      publisher={World Sci. Publ., River Edge, NJ},
   },
   date={1993},
   pages={131--138},
   review={\MR{1273570}},
   %doi={10.1142/9789812796387_0006},
}
%%%%%%%%%%%%%%%%%%%%%%%%%%%%%%%%%%%%%%%%%%%%%%
\bib{CKY}{article}{
   author={Crane, Louis},
   author={Kauffman, Louis H.},
   author={Yetter, David N.},
   title={State-sum invariants of $4$-manifolds},
   journal={J. Knot Theory Ramifications},
   volume={6},
   date={1997},
   number={2},
   pages={177--234},
   issn={0218-2165},
   review={\MR{1452438}},
   doi={10.1142/S0218216597000145},
}
%%%%%%%%%%%%%%%%%%%%%%%%%%%%%%%%%%%%%%%%%%%%%%
\bib{CY}{article}{
    author={Crane, Louis},
    author={Yetter, David},
    title={A categorical construction of $4$D topological quantum field
        theories},
    conference={
        title={Quantum topology},
    },
    book={
        series={Ser. Knots Everything},
        volume={3},
        publisher={World Sci. Publ., River Edge, NJ},
    },
    date={1993},
    pages={120--130},
    %review={\MR{1273569}},
    %doi={10.1142/9789812796387_0005},
}
%%%%%%%%%%%%%%%%%%%%%%%%%%%%%%%%%%%%%%%%%%%%%%
\bib{DSSP}{article}{
    title={Dualizable tensor categories},
    author={Douglas, Christopher},
    author={Schommer-Pries, Christopher},
    author={Snyder, Noah},
    year={2013},
    eprint={ arXiv:1312.7188}
}
%%%%%%%%%%%%%%%%%%%%%%%%%%%%%%%%%%%%%%%%%%%%%
\bib{edwards-kirby}{article}{
    author={Edwards, Robert D.},
    author={Kirby, Robion C.},
    title={Deformations of spaces of imbeddings},
    journal={Ann. Math. (2)},
    volume={93},
    date={1971},
    pages={63--88},
    review={\MR{0283802}},
    doi={10.2307/1970753},
}
%%%%%%%%%%%%%%%%%%%%%%%%%%%%%%%%%%%%%%%%%%%%%%
\bib{ENO2005}{article}{
   label={ENO2005},
   author={Etingof, Pavel},
   author={Nikshych, Dmitri},
   author={Ostrik, Viktor},
   title={On fusion categories},
   journal={Ann. of Math. (2)},
   volume={162},
   date={2005},
   number={2},
   pages={581--642},
   issn={0003-486X},
   %review={\MR{2183279 (2006m:16051)}},
   %doi={10.4007/annals.2005.162.581},
}
%%%%%%%%%%%%%%%%%%%%%%%%%%%%%%%%%%%%%%%%%%%%%%
\bib{ENO10}{article}{
   author={Etingof, Pavel},
   author={Nikshych, Dmitri},
   author={Ostrik, Victor},
   title={Fusion categories and homotopy theory},
   note={With an appendix by Ehud Meir},
   journal={Quantum Topol.},
   volume={1},
   date={2010},
   number={3},
   pages={209--273},
   issn={1663-487X},
   %review={\MR{2677836}},
   %doi={10.4171/QT/6},
}
%%%%%%%%%%%%%%%%%%%%%%%%%%%%%%%%%%%%%%%%%%%%%%
\bib{EGNO}{book}{
   author={Etingof, Pavel},
   author={Gelaki, Shlomo},
   author={Nikshych, Dmitri},
   author={Ostrik, Victor},
   title={Tensor categories},
   series={Mathematical Surveys and Monographs},
   volume={205},
   publisher={American Mathematical Society, Providence, RI},
   date={2015},
   pages={xvi+343},
   isbn={978-1-4704-2024-6},
   %review={\MR{3242743}},
   %doi={10.1090/surv/205},
}	
%%%%%%%%%%%%%%%%%%%%%%%%%%%%%%%%%%%%%%%%%%%%%%
\bib{sphere-braid}{article}{
   author={Fadell, Edward},
   author={Van Buskirk, James},
   title={On the braid groups of $E^{2}$ and $S^{2}$},
   journal={Bull. Amer. Math. Soc.},
   volume={67},
   date={1961},
   pages={211--213},
   issn={0002-9904},
   review={\MR{125578}},
   doi={10.1090/S0002-9904-1961-10570-3},
}
%%%%%%%%%%%%%%%%%%%%%%%%%%%%%%%%%%%%%%%%%%%%%%
\bib{farb-margalit}{book}{
   author={Farb, Benson},
   author={Margalit, Dan},
   title={A primer on mapping class groups},
   series={Princeton Mathematical Series},
   volume={49},
   publisher={Princeton University Press, Princeton, NJ},
   date={2012},
   pages={xiv+472},
   isbn={978-0-691-14794-9},
   review={\MR{2850125}},
}
%%%%%%%%%%%%%%%%%%%%%%%%%%%%%%%%%%%%%%%%%%%%%%
\bib{freyd}{article}{
   author={Johnson-Freyd, Theo},
   title={Heisenberg-picture quantum field theory},
   date={2015},
   eprint={arXiv:1508.05908},
}
%%%%%%%%%%%%%%%%%%%%%%%%%%%%%%%%%%%%%%%%%%%%%%
%\bib{freed1}{article}{
%    title={3-dimensional TQFTs through the lens of
%        the cobordism hypothesis},
%    author={Freed, Dan},
%    year={2012},
%    eprint={https://www.ma.utexas.edu/users/dafr/StanfordLecture.pdf},
%}
%%%%%%%%%%%%%%%%%%%%%%%%%%%%%%%%%%%%%%%%%%%%%%
%\bib{freed2}{article}{
%    title={ 4-3-2-8-7-6},
%    author={Freed, Dan},
%    year={2012},
%    eprint={https://web.ma.utexas.edu/users/dafr/Aspects.pdf},
%}
%%%%%%%%%%%%%%%%%%%%%%%%%%%%%%%%%%%%%%%%%%%%%%
\bib{GNN}{article}{
    author={Gelaki, Shlomo},
    author={Naidu, Deepak},
    author={Nikshych, Dmitri},
    title={Centers of graded fusion categories},
    journal={Algebra Number Theory},
    volume={3},
    date={2009},
    number={8},
    pages={959--990},
    issn={1937-0652},
    %review={\MR{2587410}},
    %doi={10.2140/ant.2009.3.959},
}
%%%%%%%%%%%%%%%%%%%%%%%%%%%%%%%%%%%%%%%%%%%%%%
\bib{gompf-stipsicz}{book}{
   author={Gompf, Robert E.},
   author={Stipsicz, Andr\'{a}s I.},
   title={$4$-manifolds and Kirby calculus},
   series={Graduate Studies in Mathematics},
   volume={20},
   publisher={American Mathematical Society, Providence, RI},
   date={1999},
   pages={xvi+558},
   isbn={0-8218-0994-6},
   review={\MR{1707327}},
   doi={10.1090/gsm/020},
}
%%%%%%%%%%%%%%%%%%%%%%%%%%%%%%%%%%%%%%%%%%%%%%
\bib{hirschmazur}{book}{
   author={Hirsch, Morris W.},
   author={Mazur, Barry},
   title={Smoothings of piecewise linear manifolds},
   series={Annals of Mathematics Studies, No. 80},
   publisher={Princeton University Press, Princeton, N. J.; University of
   Tokyo Press, Tokyo},
   date={1974},
   pages={ix+134},
   review={\MR{0415630}},
}
%%%%%%%%%%%%%%%%%%%%%%%%%%%%%%%%%%%%%%%%%%%%%
\bib{hudson}{book}{
   author={Hudson, J. F. P.},
   title={Piecewise linear topology},
   note={University of Chicago Lecture Notes prepared with the assistance of
   J. L. Shaneson and J. Lees},
   publisher={W. A. Benjamin, Inc., New York-Amsterdam},
   date={1969},
   pages={ix+282},
   review={\MR{0248844}},
}
%%%%%%%%%%%%%%%%%%%%%%%%%%%%%%%%%%%%%%%%%%%%%%
\bib{jones}{article}{
   author={Jones, Vaughan F. R.},
   title={A polynomial invariant for knots via von Neumann algebras},
   journal={Bull. Amer. Math. Soc. (N.S.)},
   volume={12},
   date={1985},
   number={1},
   pages={103--111},
   issn={0273-0979},
   review={\MR{766964}},
   doi={10.1090/S0273-0979-1985-15304-2},
}
%%%%%%%%%%%%%%%%%%%%%%%%%%%%%%%%%%%%%%%%%%%%%%
%\bib{joyce}{article}{
%   author={Joyce, Dominic},
%   title={On manifolds with corners},
%   conference={
%      title={Advances in geometric analysis},
%   },
%   book={
%      series={Adv. Lect. Math. (ALM)},
%      volume={21},
%      publisher={Int. Press, Somerville, MA},
%   },
%   date={2012},
%   pages={225--258},
%   review={\MR{3077259}},
%}
%%%%%%%%%%%%%%%%%%%%%%%%%%%%%%%%%%%%%%%%%%%%%%
\bib{kauffman-skein}{article}{
   author={Kauffman, Louis H.},
   title={State models and the Jones polynomial},
   journal={Topology},
   volume={26},
   date={1987},
   number={3},
   pages={395--407},
   issn={0040-9383},
   review={\MR{899057}},
   doi={10.1016/0040-9383(87)90009-7},
}
%%%%%%%%%%%%%%%%%%%%%%%%%%%%%%%%%%%%%%%%%%%%%
%\bib{kearton-lickorish}{article}{
%   author={Kearton, C.},
%   author={Lickorish, W. B. R.},
%   title={Piecewise linear critical levels and collapsing},
%   journal={Trans. Amer. Math. Soc.},
%   volume={170},
%   date={1972},
%   pages={415--424},
%   issn={0002-9947},
%   review={\MR{310899}},
%   doi={10.2307/1996318},
%}
%%%%%%%%%%%%%%%%%%%%%%%%%%%%%%%%%%%%%%%%%%%%%%
\bib{kervairemilnor}{article}{
   author={Kervaire, Michel A.},
   author={Milnor, John W.},
   title={Groups of homotopy spheres. I},
   journal={Ann. of Math. (2)},
   volume={77},
   date={1963},
   pages={504--537},
   issn={0003-486X},
   review={\MR{148075}},
   doi={10.2307/1970128},
}
%%%%%%%%%%%%%%%%%%%%%%%%%%%%%%%%%%%%%%%%%%%%%%
\bib{kirillov-PL}{article}{
   author={Kirillov, Alexander, Jr.},
   title={On piecewise linear cell decompositions},
   journal={Algebr. Geom. Topol.},
   volume={12},
   date={2012},
   number={1},
   pages={95--108},
   issn={1472-2747},
   review={\MR{2889547}},
   doi={10.2140/agt.2012.12.95},
}
%%%%%%%%%%%%%%%%%%%%%%%%%%%%%%%%%%%%%%%%%%%%%
\bib{kirillov-stringnet}{article}{ 
  author={Kirillov, Alexander, Jr},
  title={String-net model of Turaev-Viro invariants},
  eprint={arXiv:1106.6033},
}
%%%%%%%%%%%%%%%%%%%%%%%%%%%%%%%%%%%%%%%%%%%%%
\bib{KT}{article}{
  label={KT},
  title={Factorization Homology and 4D TQFT},
  author={Kirillov, Alexander},
	author={Tham, Ying Hong},
  year={2020},
  eprint={2002.08571},
  archivePrefix={arXiv},
  primaryClass={math.QA}
}
%%%%%%%%%%%%%%%%%%%%%%%%%%%%%%%%%%%%%%%%%%%%%
\bib{ko-smolinsky}{article}{
   author={Ko, Ki Hyoung},
   author={Smolinsky, Lawrence},
   title={The framed braid group and $3$-manifolds},
   journal={Proc. Amer. Math. Soc.},
   volume={115},
   date={1992},
   number={2},
   pages={541--551},
   issn={0002-9939},
   review={\MR{1126197}},
   doi={10.2307/2159278},
}
%%%%%%%%%%%%%%%%%%%%%%%%%%%%%%%%%%%%%%%%%%%%%%
%\bib{levin-wen}{article}{ 
%   label={LW2005},
%  author={Levin, Michael},
%  author={Wen, Xiao-Gang},
%  title={String-net condensation: A physical mechanism for topological
%phases},  journal={Phys. Rev. B},
%  volume={71},
%  number={4},
%  date={2005},
%  doi={10.1103/PhysRevB.71.045110},
%}
%
%%%%%%%%%%%%%%%%%%%%%%%%%%%%%%%%%%%%%%%%%%%%%%
%\bib{lurie}{article}{
%   author={Lurie, Jacob},
%   title={On the classification of topological field theories},
%   conference={
%      title={Current developments in mathematics, 2008},
%   },
%   book={
%      publisher={Int. Press, Somerville, MA},
%   },
%   date={2009},
%   pages={129--280},
%   review={\MR{2555928}},
%}
%%%%%%%%%%%%%%%%%%%%%%%%%%%%%%%%%%%%%%%%%%%%%%
%\bib{blob}{article}{
%   author={Morrison, Scott},
%   author={Walker, Kevin},
%   title={Blob homology},
%   journal={Geom. Topol.},
%   volume={16},
%   date={2012},
%   number={3},
%   pages={1481--1607},
%   issn={1465-3060},
%   %review={\MR{2978449}},
%   %doi={10.2140/gt.2012.16.1481},
%}
%%%%%%%%%%%%%%%%%%%%%%%%%%%%%%%%%%%%%%%%%%%%%%
\bib{maclane}{book}{
   author={Mac Lane, Saunders},
   title={Categories for the working mathematician},
   series={Graduate Texts in Mathematics},
   volume={5},
   edition={2},
   publisher={Springer-Verlag, New York},
   date={1998},
   pages={xii+314},
   isbn={0-387-98403-8},
   review={\MR{1712872}},
}
%%%%%%%%%%%%%%%%%%%%%%%%%%%%%%%%%%%%%%%%%%%%%%
\bib{milnor-cobord}{book}{
   author={Milnor, John},
   title={Lectures on the $h$-cobordism theorem},
   note={Notes by L. Siebenmann and J. Sondow},
   publisher={Princeton University Press, Princeton, N.J.},
   date={1965},
   pages={v+116},
   review={\MR{0190942}},
}
%%%%%%%%%%%%%%%%%%%%%%%%%%%%%%%%%%%%%%%%%%%%%%
\bib{muger}{article}{
    author={M\"{u}ger, Michael},
    title={On the structure of modular categories},
    journal={Proc. London Math. Soc. (3)},
    volume={87},
    date={2003},
    number={2},
    pages={291--308},
    issn={0024-6115},
    review={\MR{1990929}},
    doi={10.1112/S0024611503014187},
}
%%%%%%%%%%%%%%%%%%%%%%%%%%%%%%%%%%%%%%%%%%%%%
\bib{Ocn}{article}{
  label={O},
  author={Ocneanu, Adrian},
  title={Chirality for operator algebras},
  conference={
    title={Subfactors},
     address={Kyuzeso},
     date={1993},
  },
  book={
     publisher={World Sci. Publ., River Edge, NJ},
  },
  date={1994},
  pages={39--63},
  review={\MR{1317353}},
}
%%%%%%%%%%%%%%%%%%%%%%%%%%%%%%%%%%%%%%%%%%%%%%
\bib{ooguri}{article}{
   author={Ooguri, Hirosi},
   title={Topological lattice models in four dimensions},
   journal={Modern Phys. Lett. A},
   volume={7},
   date={1992},
   number={30},
   pages={2799--2810},
   issn={0217-7323},
   review={\MR{1184565}},
   doi={10.1142/S0217732392004171},
}
%%%%%%%%%%%%%%%%%%%%%%%%%%%%%%%%%%%%%%%%%%%%%%
\bib{Pachner-kons}{article}{
   author={Pachner, U.},
   title={Konstruktionsmethoden und das kombinatorische Hom\"{o}omorphieproblem
   f\"{u}r Triangulationen kompakter semilinearer Mannigfaltigkeiten},
   language={German},
   journal={Abh. Math. Sem. Univ. Hamburg},
   volume={57},
   date={1987},
   pages={69--86},
   issn={0025-5858},
   review={\MR{927165}},
   doi={10.1007/BF02941601},
}
%%%%%%%%%%%%%%%%%%%%%%%%%%%%%%%%%%%%%%%%%%%%%%
\bib{Pachner-PL}{article}{
   author={Pachner, Udo},
   title={P.L. homeomorphic manifolds are equivalent by elementary
   shellings},
   journal={European J. Combin.},
   volume={12},
   date={1991},
   number={2},
   pages={129--145},
   issn={0195-6698},
   review={\MR{1095161}},
   doi={10.1016/S0195-6698(13)80080-7},
}
%%%%%%%%%%%%%%%%%%%%%%%%%%%%%%%%%%%%%%%%%%%%%%
\bib{penrose-string}{article}{
   author={Penrose, Roger},
   title={Applications of negative dimensional tensors},
   conference={
      title={Combinatorial Mathematics and its Applications (Proc. Conf.,
      Oxford, 1969)},
   },
   book={
      publisher={Academic Press, London},
   },
   date={1971},
   pages={221--244},
   review={\MR{0281657}},
}
%%%%%%%%%%%%%%%%%%%%%%%%%%%%%%%%%%%%%%%%%%%%%%
%\bib{rt}{article}{
\bib{rt-ribbon}{article}{
   author={Reshetikhin, N. Yu.},
   author={Turaev, V. G.},
   title={Ribbon graphs and their invariants derived from quantum groups},
   journal={Comm. Math. Phys.},
   volume={127},
   date={1990},
   number={1},
   pages={1--26},
   issn={0010-3616},
   review={\MR{1036112}},
}
%%%%%%%%%%%%%%%%%%%%%%%%%%%%%%%%%%%%%%%%%%%%%%
\bib{rt-3mfld}{article}{
   author={Reshetikhin, N.},
   author={Turaev, V. G.},
   title={Invariants of $3$-manifolds via link polynomials and quantum
   groups},
   journal={Invent. Math.},
   volume={103},
   date={1991},
   number={3},
   pages={547--597},
   issn={0020-9910},
   review={\MR{1091619}},
   doi={10.1007/BF01239527},
}
%%%%%%%%%%%%%%%%%%%%%%%%%%%%%%%%%%%%%%%%%%%%%
\bib{roberts-chainmail}{article}{
   author={Roberts, Justin},
   title={Skein theory and Turaev-Viro invariants},
   journal={Topology},
   volume={34},
   date={1995},
   number={4},
   pages={771--787},
   issn={0040-9383},
   review={\MR{1362787}},
   doi={10.1016/0040-9383(94)00053-0},
}
%%%%%%%%%%%%%%%%%%%%%%%%%%%%%%%%%%%%%%%%%%%%%
\bib{PL-rourke-sanderson}{book}{
   author={Rourke, C. P.},
   author={Sanderson, B. J.},
   title={Introduction to piecewise-linear topology},
   series={Ergebnisse der Mathematik und ihrer Grenzgebiete, Band 69},
   publisher={Springer-Verlag, New York-Heidelberg},
   date={1972},
   pages={viii+123},
   review={\MR{0350744}},
}
%%%%%%%%%%%%%%%%%%%%%%%%%%%%%%%%%%%%%%%%%%%%%
\bib{saveliev}{book}{
   author={Saveliev, Nikolai},
   title={Lectures on the topology of 3-manifolds},
   series={De Gruyter Textbook},
   edition={Second revised edition},
   note={An introduction to the Casson invariant},
   publisher={Walter de Gruyter \& Co., Berlin},
   date={2012},
   pages={xii+207},
   isbn={978-3-11-025035-0},
   review={\MR{2893651}},
}
%%%%%%%%%%%%%%%%%%%%%%%%%%%%%%%%%%%%%%%%%%%%%
\bib{stasheff}{article}{
    author={Stasheff, James Dillon},
    title={Homotopy associativity of $H$-spaces. I, II},
    journal={Trans. Amer. Math. Soc. 108 (1963), 275-292; ibid.},
    volume={108},
    date={1963},
    pages={293--312},
    issn={0002-9947},
    review={\MR{0158400}},
    doi={10.1090/s0002-9947-1963-0158400-5},
}
%%%%%%%%%%%%%%%%%%%%%%%%%%%%%%%%%%%%%%%%%%%%%%
\bib{tham_elliptic}{article}{
    author={Tham, Ying Hong},
    title={The Elliptic Drinfeld Center of a Premodular Category},
    year={2019},
    eprint={arxiv:1904.09511}
}
%%%%%%%%%%%%%%%%%%%%%%%%%%%%%%%%%%%%%%%%%%%%%%
\bib{tham_reduced}{article}{
    author={Tham, Ying Hong},
    title={Reduced Tensor Product on the Drinfeld Center},
    year={2020},
    eprint={arxiv:2004.09611}
}
%%%%%%%%%%%%%%%%%%%%%%%%%%%%%%%%%%%%%%%%%%%%%%
\bib{thom-cobordism}{article}{
   author={Thom, Ren\'{e}},
   title={Quelques propri\'{e}t\'{e}s globales des vari\'{e}t\'{e}s diff\'{e}rentiables},
   language={French},
   journal={Comment. Math. Helv.},
   volume={28},
   date={1954},
   pages={17--86},
   issn={0010-2571},
   review={\MR{61823}},
   doi={10.1007/BF02566923},
}
%%%%%%%%%%%%%%%%%%%%%%%%%%%%%%%%%%%%%%%%%%%%%%
\bib{TV}{article}{
   author={Turaev, V. G.},
   author={Viro, O. Ya.},
   title={State sum invariants of $3$-manifolds and quantum $6j$-symbols},
   journal={Topology},
   volume={31},
   date={1992},
   number={4},
   pages={865--902},
   issn={0040-9383},
   review={\MR{1191386}},
   doi={10.1016/0040-9383(92)90015-A},
}
%%%%%%%%%%%%%%%%%%%%%%%%%%%%%%%%%%%%%%%%%%%%%%
%\bib{walker}{article}{
%    title={TQFTs: early incomplete draft},
%    author={Walker, Kevin},
%    year={2006},
%    eprint={https://canyon23.net/math/tc.pdf}
%}
%%%%%%%%%%%%%%%%%%%%%%%%%%%%%%%%%%%%%%%%%%%%%%
\bib{wall}{article}{
   author={Wall, C. T. C.},
   title={Non-additivity of the signature},
   journal={Invent. Math.},
   volume={7},
   date={1969},
   pages={269--274},
   issn={0020-9910},
   review={\MR{246311}},
   doi={10.1007/BF01404310},
}
%%%%%%%%%%%%%%%%%%%%%%%%%%%%%%%%%%%%%%%%%%%%%
\bib{wassermansymm}{article}{
  label={Was},
  author={Wasserman, Thomas A.},
  title={The symmetric tensor product on the Drinfeld centre of a symmetric
  fusion category},
  journal={J. Pure Appl. Algebra},
  volume={224},
  date={2020},
  number={8},
  pages={106348},
  issn={0022-4049},
  review={\MR{4074586}},
  doi={10.1016/j.jpaa.2020.106348},
}
%%%%%%%%%%%%%%%%%%%%%%%%%%%%%%%%%%%%%%%%%%%%%
\bib{wasserman2fold}{article}{
  label={Was2},
  author={Wasserman, Thomas A.},
  title={The Drinfeld centre of a symmetric fusion category is 2-fold
  monoidal},
  journal={Adv. Math.},
  volume={366},
  date={2020},
  pages={107090},
  issn={0001-8708},
  review={\MR{4072794}},
  doi={10.1016/j.aim.2020.107090},
}
%%%%%%%%%%%%%%%%%%%%%%%%%%%%%%%%%%%%%%%%%%%%%%
\bib{witten}{article}{
   author={Witten, Edward},
   title={Topological quantum field theory},
   journal={Comm. Math. Phys.},
   volume={117},
   date={1988},
   number={3},
   pages={353--386},
   issn={0010-3616},
   review={\MR{953828}},
}
%%%%%%%%%%%%%%%%%%%%%%%%%%%%%%%%%%%%%%%%%%%%%%
\bib{yetter-hopf}{article}{
   author={Yetter, David},
   title={Portrait of the handle as a Hopf algebra},
   conference={
      title={Geometry and physics},
      address={Aarhus},
      date={1995},
   },
   book={
      series={Lecture Notes in Pure and Appl. Math.},
      volume={184},
      publisher={Dekker, New York},
   },
   date={1997},
   pages={481--502},
   review={\MR{1423189}},
}
%%%%%%%%%%%%%%%%%%%%%%%%%%%%%%%%%%%%%%%%%%%%%%
\end{biblist}
\end{bibdiv}

\end{document}